\documentclass[11pt,twoside]{amsart}
\usepackage[utf8]{inputenc}
\usepackage{hyperref, amsmath, amsthm, amssymb, amsfonts, pifont, bbm,  scrextend, verbatim, bbm, accents, enumitem, subcaption}
\usepackage[dvipsnames]{xcolor}

\usepackage[capitalize,noabbrev]{cleveref}
\hypersetup{
	colorlinks=true,
	linkcolor=blue,
	citecolor=black,
	filecolor=black,      
	urlcolor=black,
	linktoc=page
}

\usepackage{microtype}
\usepackage{graphicx}
\usepackage{stmaryrd}

\newtheorem{innercustomthm}{Theorem}
\newenvironment{customthm}[1]
{\renewcommand\theinnercustomthm{#1}\innercustomthm}
  {\endinnercustomthm}

\theoremstyle{plain}
\newtheorem{thm}{Theorem}[section]
\newtheorem{cor}[thm]{Corollary}
\newtheorem{conj}[thm]{Conjecture} 
\newtheorem{prop}[thm]{Proposition}
\newtheorem{lem}[thm]{Lemma}

\numberwithin{quest}{section}
\newtheorem*{theorem*}{Theorem}

\theoremstyle{remark}
\newtheorem{rem}{Remark}
\numberwithin{rem}{section}

\theoremstyle{definition}
\newtheorem{defn}[rem]{Definition}

\AddToHook{env/lem/begin}{\crefalias{thm}{lem}}
\AddToHook{env/cor/begin}{\crefalias{thm}{cor}}
\AddToHook{env/prop/begin}{\crefalias{thm}{prop}}
\AddToHook{env/defn/begin}{\crefalias{rem}{defn}}

\crefname{thm}{Theorem}{Theorems}
\crefname{lem}{Lemma}{Lemmas}
\crefname{cor}{Corollary}{Corollaries}
\crefname{prop}{Proposition}{Propositions}
\crefname{defn}{Definition}{Definitions}

\newcommand{\eps}{\varepsilon}
\newcommand{\Z}{\mathbb{Z}}

\newcommand{\Q}{\mathbb{Q}}
\newcommand{\N}{\mathbb{N}}
\newcommand{\R}{\mathbb{R}}
\renewcommand{\P}{\mathbb{P}}
\newcommand{\E}{\mathbb{E}}

\newcommand{\cA}{\mathcal{A}}
\newcommand{\cC}{\mathcal{C}}
\newcommand{\cS}{\mathcal{S}}
\newcommand{\cN}{\mathcal{N}}

\newcommand{\cF}{\mathcal{F}}
\newcommand{\cJ}{\mathcal{J}}
\newcommand{\cH}{\mathcal{H}}

\newcommand{\eqd}{\stackrel{d}{=}}

\newcommand{\cvgd}{\stackrel{d}{\to}}
\newcommand{\II}[1]{\llbracket #1 \rrbracket}

\newcommand{\cvgp}{\stackrel{\mathbb P}{\to}}

\renewcommand{\tt}[1]{{\mathtt{#1}}}
\newif\ifShowComments
\ShowCommentstrue

\newcounter{CommentCounter}

\title{The Airy line ensemble at the rough-smooth boundary}
\author{Sunil Chhita}
\address{Sunil Chhita, Department of Mathematical Sciences, 
	Durham University, Durham, UK}
\email{sunil.chhita@durham.ac.uk}

\author{Duncan Dauvergne}
\address{Duncan Dauvergne, Department of Mathematics, 
	University of Toronto, Toronto, Canada}
\email{duncan.dauvergne@utoronto.ca}

\author{Thomas Finn}
\subjclass[2020]{60K35,  82B23, 82B20}
\keywords{domino tilings, Aztec diamond, Airy line ensemble}

\date{}

\begin{document}

	\begin{abstract}
We study the rough-smooth boundary in the two-periodic Aztec diamond, a random domino tiling model exhibiting three types of macroscopic regions.   
We show that the height function at this boundary converges to an independent sum of an Airy surface and an i.i.d.\ noise field with fluctuations governed by the full-plane smooth phase. Going further, we prove convergence of a family of Temperleyan backbone paths to the Airy line ensemble. This gives the first convergence result for a family of undirected paths converging to the Airy line ensemble, as well as Airy convergence at a noisy boundary.
	\end{abstract}

\maketitle
    \begin{figure}
    \centering
    \includegraphics[width=0.75\linewidth]{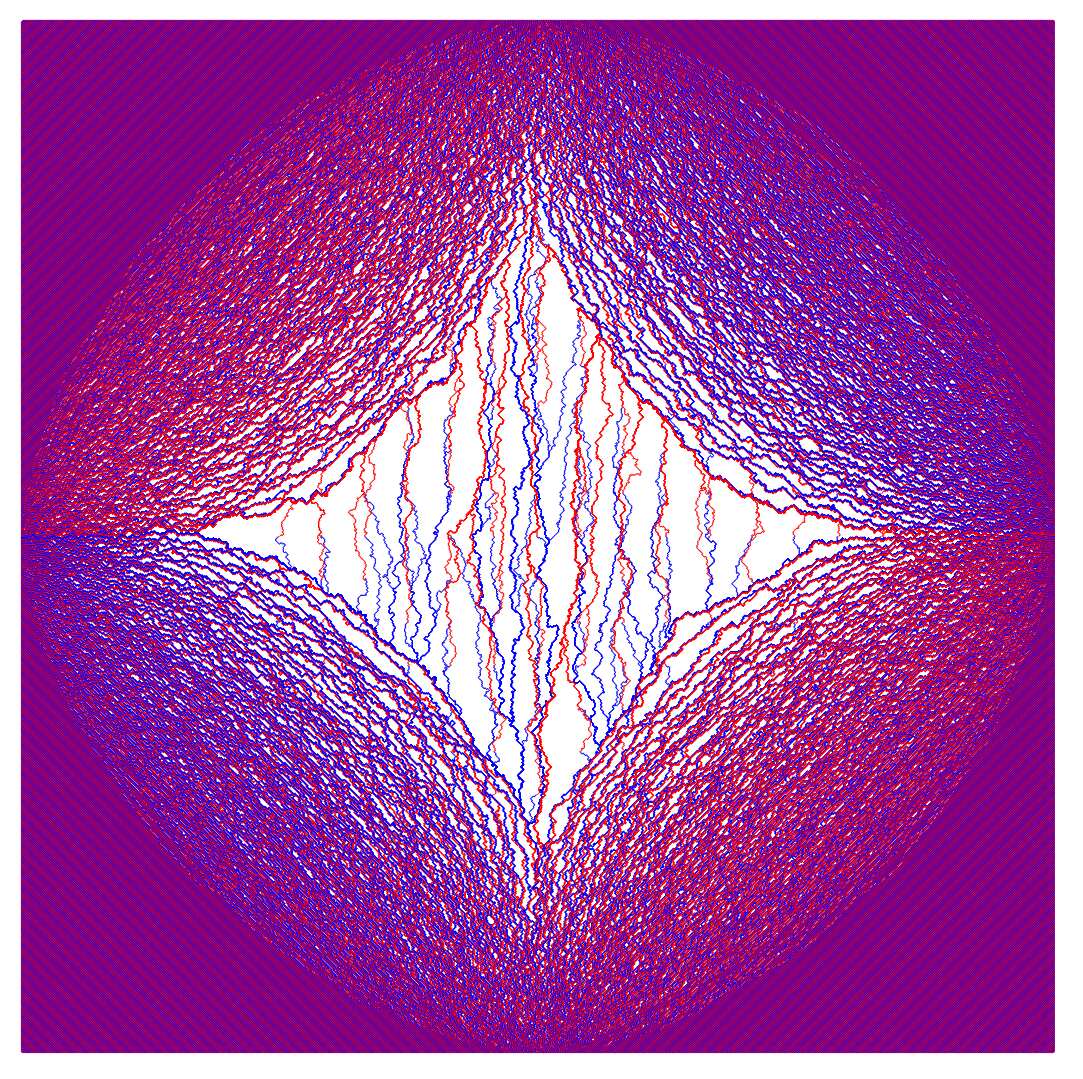}
\caption{Temperleyan paths for the two-periodic Aztec diamond ($n = 1200, a = 0.5$). }\label{fig:placeholder}
\end{figure}

	\newpage

	\setcounter{tocdepth}{1}
	\tableofcontents
	
	\section{Introduction}

	The two-periodic Aztec diamond with bias parameter $a \in (0,1)$ is a domino tiling model of a certain diamond shaped region of the square grid, known as an Aztec diamond. Each domino ($1\times 2$ or $2 \times 1$ rectangle) has weight $1$ or $a$ which depends on its location in a doubly periodic fashion. Large two-periodic Aztec diamonds exhibit three types of macroscopic regions, as characterized for random tiling models in \cite{KOS03}. These regions are known as \textit{frozen}, where tiles form a deterministic brickwork pattern; \textit{rough}, where there is polynomial decay of correlations between tiles; and \textit{smooth}, where there is exponential decay of correlations between tiles. In the simulation in \cref{fig:intro-tiling}, we can identify four frozen regions covering the corners of the Aztec diamond, a smooth region sitting in the center of the Aztec diamond (the region with no long structures), and a doubly-connected rough region which occupies the rest of the Aztec diamond. There are other random tiling models which exhibit all three types of macroscopic regions, such as the three-periodic lozenge tilings \cite{Kui24}, as well as other statistical mechanical models, such as the six vertex model with domain-wall boundary conditions and a certain choice of parameters \cite{allison2005numerical}. The two-periodic Aztec diamond is one of the simplest such models.

 \begin{figure}
		\centering
		\includegraphics[height=9cm]{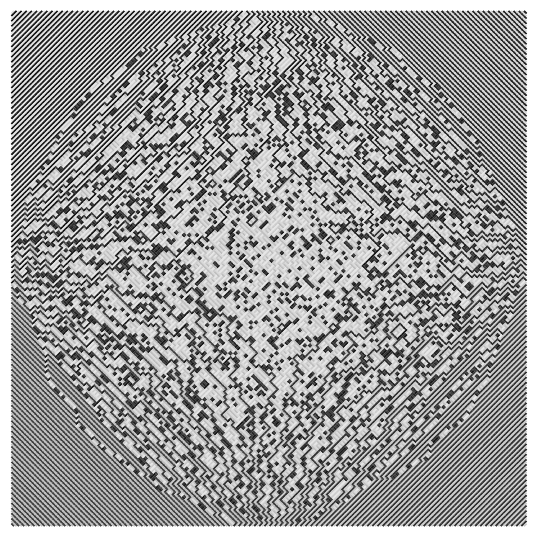}
        \caption{A simulation of a two-periodic Aztec diamond with $n=200$ and $a=0.7$ with the $a$-dominoes shaded darker than the $b$-dominoes. Note the underlying square grid is rotated by $\pi/4$, as is our coordinate convention in this paper.}
        \label{fig:intro-tiling}
\end{figure}

	One of the original motivations for studying the two-periodic Aztec diamond as articulated in \cite{CY14, CJ16} was to understand the probabilistic behavior at the rough-smooth boundary and to compare to the known probabilistic behavior at the frozen-rough boundary. Indeed, universality at the frozen-rough boundary has been recently achieved for a large class of lozenge tilings  \cite{AH25}, see also \cite{okounkov2003correlation, ferrari2003step, Jo03, baik2007discrete, petrov2014asymptotics, duse2018universal} for earlier tiling results and \cite{Gor21} for more references and background.
    In tiling models, there is usually a set of lattice paths that combinatorially defines the boundary between the frozen and rough regions. In the microscopic limit at the interface, these lattice paths converge to the \textit{Airy line ensemble}, a random sequence of continuous functions $\mathcal A = \{\mathcal A_i :\R \to\R, i \in \N\}$ with the ordering $\mathcal A_1 > \mathcal A_2 > \cdots$ believed to govern edge fluctuations of many statistical mechanical models. The Airy line ensemble also appears as a scaling limit in random matrix theory \cite{macedo1994universal, forrester1999correlations, adler2005pdes} and for growth models in the Kardar-Parisi-Zhang universality class, e.g.\ see \cite{prahofer2002scale, imamura2007dynamics, CH11, dauvergne2023uniform, wu2023convergence, aggarwal2024scaling}.

The Airy line ensemble is also expected to govern the fluctuations at the rough-smooth boundary in random tilings. However, here the picture is much murkier than at the rough-smooth boundary. One of the main difficulties is that unlike at the frozen-rough boundary, the rough-smooth boundary features mixing between two non-deterministic regions. This makes the boundary noisy, and means that there is no local combinatorial description of a boundary or a canonical ensemble of paths. Moreover, when one does try to define natural families of paths they are undirected, i.e.\ they cannot be written as graphs of one-dimensional functions and may feature significant backtracking or winding.

Our goal in this paper is to overcome these challenges and understand precisely what is happening at the rough-smooth boundary in the two-periodic Aztec diamond. Our first theorem shows convergence of the height function at the rough-smooth boundary.  For this theorem, we define the \textit{Airy surface} $\mathcal A:\R^2 \to \{0, 1, \dots\}$ by 
$$
\mathcal A(t, x) = \# \{i \in \N : \mathcal A_i(t) \ge x\},
$$
which should be viewed as the height function for the Airy line ensemble.

\begin{customthm}{1}[Informal version of \cref{T:main-1}]
\label{T:custom1}
Fix $a \in (0, 1)$. Let $\mathcal{H}_n$ be the height function of the two-periodic Aztec diamond of size $n$, and let $H_n$ be the average of $\mathcal H_n$ in the smooth region. Let $[\cdot]_n:\R^2 \to \R^2$ be a scaling map which takes limiting coordinates to an $O(n^{2/3}) \times O(n^{1/3})$-sized box where the rough-smooth boundary meets the antidiagonal in the two-periodic Aztec diamond of size $n$. Then, as $n\to \infty$, for any finite set $S \subset \R^2$,
\begin{equation*}
(H_n-\mathcal{H}_n[u]_n, u \in S) \implies (4\mathcal{A}(u) + X_u, u \in S)
\end{equation*}
where $\mathcal{A}$ is the Airy surface and $X = (X_u, u \in F)$ is an i.i.d.\ vector, independent of $\mathcal A$, whose individual entries are given by the  height of a face in the full-plane smooth phase.
\end{customthm}

\cref{T:custom1} makes precise the idea that the rough-smooth boundary is given by an Airy line ensemble $\mathcal A$ sitting on top of a smooth background, represented by the $X$. The central height $H_n$ itself converges in the limit to a discrete Gaussian random variable, see the definition \eqref{E:central-height} and surrounding discussion.

While \cref{T:main-1} confirms the idea that the height profile at the rough-smooth boundary looks like an Airy surface sitting in a smooth background, it says nothing about the existence of a family of paths converging to the Airy line ensemble. 
Most of the paper is devoted to understanding such a family. In particular, we are able to identify a `top path' which converges to the Airy process. The paths we consider are \textit{south and north backbone paths}, introduced briefly in \cite{CJ16}, which arise from a version of Temperley's bijection. See \cref{S:temperley-intro} for details and \cref{fig:intro-paths} for an example. Let $\{\mathcal S_i\}_{i=1}^{n/2}$ be the collection of south backbone paths starting on the bottom boundary and labelled from left to right. We let $1\leq I \leq n$ be the (random) index such that $\mathcal S_1,\dots, \mathcal S_I$ terminate on the left boundary and $\mathcal S_{I+1},\dots, \mathcal S_n$ terminate on the right. We can now state our main theorem. 

\begin{figure}
		\centering
		\includegraphics[height=9cm]{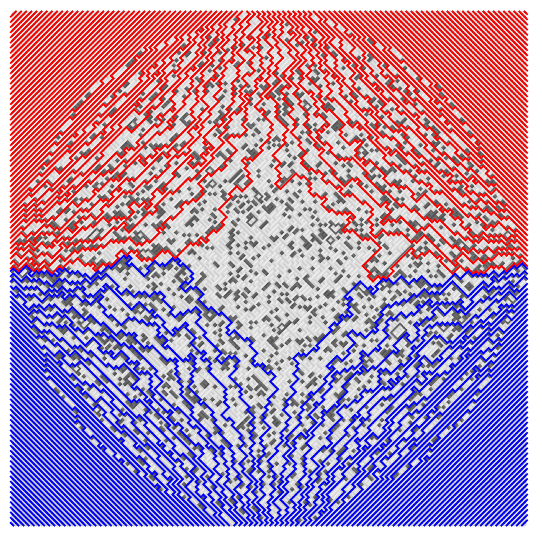}
		\caption{The same simulation as in \cref{fig:intro-tiling}, with an overlay of south ($\mathcal{S}$) and north ($\mathcal{N}$) backbone paths from the bottom and top of the Aztec diamond respectively.}
		\label{fig:intro-paths}
	\end{figure}

\begin{customthm}{2}[Informal version of \cref{T:main-2}]
\label{T:custom2}
Fix $a \in (0, 1)$. Around the main anti-diagonal of the Aztec diamond, as $n \to \infty$,  the collection $\{\mathcal S_{I+1-i}\}_{i \in \mathbb{N}}$ converges after rescaling to the Airy line ensemble, in the sense of finite dimensional distributions (FDD). This convergence can be taken jointly with the convergence in \cref{T:custom2}.
\end{customthm}

Note that since the curves $\mathcal S_i$ are general plane curves whereas the Airy line ensemble consists of one-dimensional functions, certain projections need to be taken to make sense of the FDD convergence above. Note that the splitting point $I$ retains randomness even in the limit, and ends up corresponding with high probability to the central height $H_n$ from \cref{T:custom1}.

To our knowledge, our results are the first to show Airy process or Airy line ensemble convergence for a model in a noisy background, or a model whose boundary is given by genuinely two-dimensional lattice paths. Indeed, without these difficulties the kind of FDD convergence in \cref{T:custom1} and \cref{T:custom2} would be immediate from the extended Airy kernel convergence established in \cite{BCJ18}, as is the case for other models. In our setting we need to worry about even the most basic properties of the model, such as establishing the presence of a last path at the boundary, controlling global or mesoscopic backtracks and path winding, and teasing out the differences between the local and global contributions to the height function.   What saves us is that we are working with a highly integrable model which is amenable to refined computations, and we have a Temperley-type correspondence with certain biased spanning forest measures, which allows us to import probabilistic tools coming from Wilson's algorithm and biased loop-erased random walks. We discuss our analysis in much greater detail below after the precise statement of results.

\subsection{Previous work on the model}

	Recent progress on the two-periodic Aztec diamond and its generalizations has hinged on an inherent algebraic structure.  This algebraic structure yields exact expressions for the determinantal point process correlations associated to the two-periodic Aztec diamond in three different ways \cite{CY14,DK21,BD19} which rely on the same structure \cite{CD23, KP25}.  Moreover, this algebraic structure has also been exploited to introduce bias on the orientation of dominoes \cite{BD22}, consider higher periodicities where the model has multiple smooth regions \cite{Ber21, BB23}, split weights where the model has two different adjacent smooth phases \cite{She25},  
    as well as completely general weights \cite{BdT24}. One feature of all of the aforementioned papers is that the correlation kernel associated to the determinantal process is amenable to precise asymptotic analysis, which gives access to limit shape curves and certain local statistics.  Indeed, this has been exploited for the two-periodic Aztec diamond to find local limit laws \cite{CJ16, DK21}, Airy kernel asymptotics at the rough-smooth boundary \cite{CJ16, BCJ18}, sine-Gordon asymptotics in the $a\to 1$ regime \cite{mason2022two}, precise dimer-dimer correlations at this boundary \cite{JM22}, and Gaussian free field fluctuations \cite{BN25}. This final result also holds for more general periodicities.  More recently, this integrable structure has been to shown to hold for certain randomly weighted domino tilings of the Aztec diamond \cite{DVP25}. 
	
	An alternate way to try to define the rough-smooth boundary is to introduce contractions to certain faces, a process called squishing \cite{BCJ22}, which results in a set of lattice paths known as squished paths.  From simulations, these squished paths seem to determine the rough-smooth boundary and it was shown there that the squished paths are responsible, in a certain sense, for producing the Airy kernel point process found in \cite{BCJ18} provided that $0<a<1/3$.   
    Part of the reason is that the squished paths are the level lines of the height function when restricted to certain faces of the Aztec diamond.  Furthermore, if $a = n^{-\gamma}$ for some $\gamma \in (2/3, 1)$, then  as $n \to \infty$ the squished path separating rough and smooth interfaces converges to the Airy process \cite{JM23}.  This result relies on the fact that the squished path, although combinatorially is a two-dimensional path, becomes a one-dimensional path with high probability if $a$ is sufficiently small given $n$, which makes analysis much easier than the setup considered here. We note that for all $a$, the squished and backbone paths should differ on a scale which is $o(n^{1/3})$ and hence converge to the same Airy limit. However, even our most optimistic ideas did not show this for $a$ greater than some small constant $a_0$. Indeed, we ran into many of the same issues around crude loop and path counting which lead to the upper bound of $1/3$ in \cite{BCJ22}, and significant new ideas would be needed to prove a version of \cref{T:custom2} for squished paths in the full range of $a$.

\subsection{Boundaries beyond tilings and vertex models}
\label{S:beyond-tilings}
Compared to the frozen-rough boundary, the rough-smooth boundary in random tilings is much closer in character to interfaces that appear in less integrable random surface models. In particular, there are many models where we expect Airy line ensemble convergence at noisy interfaces and where natural families of boundary paths are undirected, e.g.\ contour lines in the 3D Ising model or the $(2+1)$-dimensional SOS model and variants in certain parameter ranges, see \cite{ioffe2015interaction, ioffe2016low,  ferrari2023airy2} and references therein. Note that in many settings, the predicted limit laws are slightly different objects 
(e.g. Dyson-Ferrari-Spohn diffusions and related objects \cite{ioffe2018dyson, basu2025characterizing}) which exhibit their own limit transition to the Airy line ensemble \cite{ferrari2023airy2, dimitrov2025uniform}.

Because of the many extra difficulties in these and related models, establishing Airy line ensemble convergence in any regime seems incredibly difficult. However, there has nonetheless been substantial recent progress studying level lines and establishing limit laws in non-Airy regimes, e.g.\, see \cite{caputo2016scaling, caddeo2024level, chen2025limiting}. It would be interesting to see if ideas used in those settings could have relevance in random tiling or vertex models, where there is extra integrability but rough-smooth boundaries are still difficult to understand.

\subsection{Outline of the paper} \label{S:outline-intro} 
The next two sections are essentially an extended introduction. \cref{S:def-main-results} gives precise definitions and statements of our two main theorems.
In \cref{S:Two-periodicAztecdiamond}, we introduce the two-periodic Aztec diamond, in \cref{S:face-heights} we introduce the height function and in \cref{S:temperley-intro}, we introduce Temperley's bijection.  We state our main results in \cref{S:mainresults-intro}. \cref{sec:proof-sketch} contains a detailed proof sketch and includes two auxiliary results on the model which we believe may be of particular interest. 

The remainder of the paper is devoted to the proofs.
    Very roughly, after an initial combinatorial section (\cref{S:trees}) containing a proof of Temperley's bijection for the Aztec diamond and some auxiliary results, the paper is divided into an essentially probabilistic part (\cref{S:basic-tools} through to \cref{S:cvg-airy}) which contains the meat of the paper and an integrable part (\cref{subsec:KasteleynTheory} to \cref{subsec:Kinvasympproofs}) whose proofs are mostly standard asymptotic analysis or modifications of previous arguments for the model (the exception here is the symmetry in \cref{L:expected-middle-height-intro}).
    
    \cref{S:basic-tools} records the basic tools that we need for working with the model. Many results in this section have integrable proofs for this section are postponed until \cref{subsec:KasteleynTheory} and beyond. In \cref{S:proof-t1} we prove \cref{T:custom1}. 
    In \cref{S:global-control}, \cref{S:regularity}, and \cref{S:cvg-airy} build to the proof of \cref{T:custom2}. \cref{S:global-control} establishes global path control and one of our auxiliary results (\cref{T:main-3}), \cref{S:regularity} establishes the local path regularity and our other auxiliary result (\cref{T:overhang-thm}), and \cref{S:proof-t1} completes the proof of \cref{T:main-2}.

	\subsection*{Acknowledgments}
	S.C. and T.F. were supported by EPSRC EP\textbackslash T004290\textbackslash 1. D.D. was supported by an NSERC Discovery grant and a Sloan fellowship.
S.C. would like to thank both Maurice Duits and Kurt Johansson for the numerous conversations and fruitful discussions on this model over many years. 
	
	\section{Definitions and main results}
    \label{S:def-main-results}
	
	\subsection{The two-periodic Aztec diamond} \label{S:Two-periodicAztecdiamond}
	
	First, define vertex and face lattices in $\Z^2$:
	\begin{align*}
		\tt{\tilde V} = \{(i, j) \in \Z^2 : (i+j)\text{ mod } 2=1\}, \\
		\tt{\tilde F} = \{(i, j) \in \Z^2 : (i+j)\text{ mod } 2=0\}.
	\end{align*}
	We let the edge set $\tt{\tilde D}$ consist of all pairs $\{v, w\} \subset \tt{\tilde V}$ such that $v-w \in \{\pm e_1, \pm e_2\}$ where $e_1=(1,1)$ and $e_2=(-1,1)$. This gives an infinite bipartite graph $\tt{\tilde G} := (\tt{\tilde V}, \tt{\tilde D})$, consisting of white and black vertices
	\begin{align*}
		\tt{White} &= \{(i, j) \in \tt{\tilde V} : i\text{ mod } 2=1, j\text{ mod } 2=0\}, \\
		\tt{Black} &= \{(i, j) \in \tt{\tilde V} : i\text{ mod } 2 =0, j\text{ mod } 2=1\}.
	\end{align*}
	The set $\tilde{F}$ gives the faces of $\tilde{G}$, where a face is labelled by its centre. Next, fix a parameter $a \in (0, 1)$. If $(i,j) \in \tt{\tilde F}$, $i, j \in 2 \Z + 1$, and $i + j \in 4 \Z + 2$, then we label $(i,j)$ as an \emph{a-face}. If $i, j \in 2 \Z + 1$, and $i + j \in 4 \Z$, then we label $(i,j)$ as an \emph{b-face}. If an edge is incident to an $a$-face it has weight $a$ and if it is incident to a $b$-face it has weight $1$. We call the resulting weighted graph the \emph{two-periodic lattice} with parameter $a$. This weighting splits the white and black vertex sets into two pieces, which in total partitions $\tt{\tilde V}$ into four pieces, which we label as $\tt{\tilde N}, \tt{\tilde S}, \tt{\tilde E}, \tt{\tilde W}$ and are defined as follows:
	\begin{itemize}[nosep]
		\item First, every vertex $v \in \tt{\tilde V}$ is adjacent to exactly one $a$ face $f(v)$.
		\item If $v$ is south of $f(v)$ (i.e. $v - f(v) = (0, -1)$), let $v \in \tt{\tilde S}$. Similarly, if $v$ is north, east, or west of $f(v)$ (i.e. $v - f(v) = (0, 1), (1, 0), (-1, 0)$, respectively), let $v \in \tt{\tilde N}, \tt{\tilde E}, \tt{\tilde W}$. 
	\end{itemize}
	It is straightforward to check that $\tt{White} = \tt{\tilde N} \cup \tt{\tilde S}$ and $\tt{Black} = \tt{\tilde E} \cup \tt{\tilde W}$. 
	
	For $n \in 4\N$, the \emph{two-periodic Aztec diamond} $\tt{A}_n$ of size $n$ is the subgraph of the two-periodic lattice induced by the vertices in the box $[-n, n]^2$. We write $\tt{V} = \tt{\tilde V} \cap [-n, n]^2$ and similarly define $\tt{N}, \tt{S}, \tt{E}, \tt{W}$ for vertex classes in the two-periodic Aztec diamond. We let $\tt{D}$ be the edge set of $\tt{A}_n$. The bounded faces of the Aztec diamond are given by $\tt{F} = \tt{\tilde F} \cap (-n, n)^2$. It will also be useful to consider a larger set which includes boundary faces:
	$$
	\tt{\bar F} = \{f \in \tt{\tilde F} : \text{ there exists } v \in \tt{V} \text{ with } \|v - f\|_2 = 1\}. 
	$$
	The size parameter $n$ is suppressed from the above notation; its value will always be clear from context.
	See  \cref{fig:ttwoperiodicweights} for an illustration of the two-periodic Aztec diamond.
	\begin{figure}
		\centering
		\includegraphics[height=6cm]{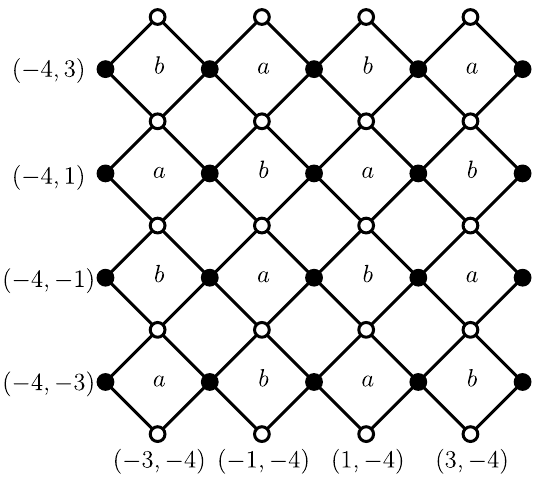}
		\caption{The two-periodic Aztec diamond of size $4$. The edges around each face labeled $a$ have weight $a$ while the edges around each face labeled $b$ have weight $1$.}
		\label{fig:ttwoperiodicweights}
	\end{figure}
	A \emph{dimer configuration} $D$ on $\tt{A}_n$ is a perfect matching of $\tt{V}$, i.e.\ a subset of $\tt{D}$ so that each vertex is incident to exactly one edge. Edges in a dimer configuration are called \emph{dimers}. The law of the {\it two-periodic Aztec diamond with parameter $a$} is the probability measure $\mathbb P_{a, n}$ on dimer configurations of $\tt{A}_n$, where each configuration $D$ has probability proportional to the product of the edge weights.
	Note that if we had started with any weighting that assigned an edge weight $a$ to $a$-faces and $b$ to $b$-faces, then as long as $a \ne b$, the resulting measure would be equivalent to $\mathbb P_{a', n}$ for some $a' \in (0, 1)$, possibly after a reflection.

    \begin{figure}
		\centering
		\includegraphics[height=6cm]{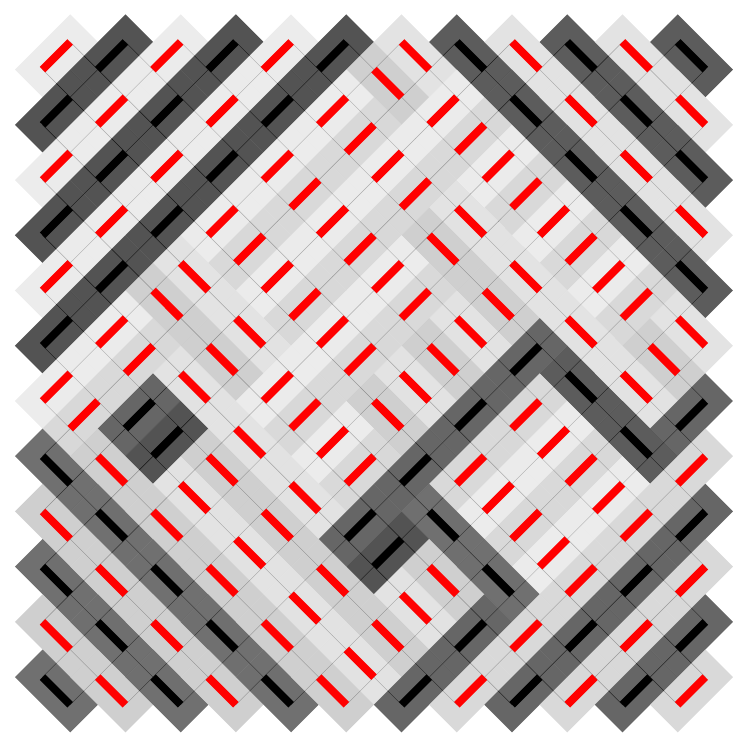}
		\includegraphics[height=6cm]{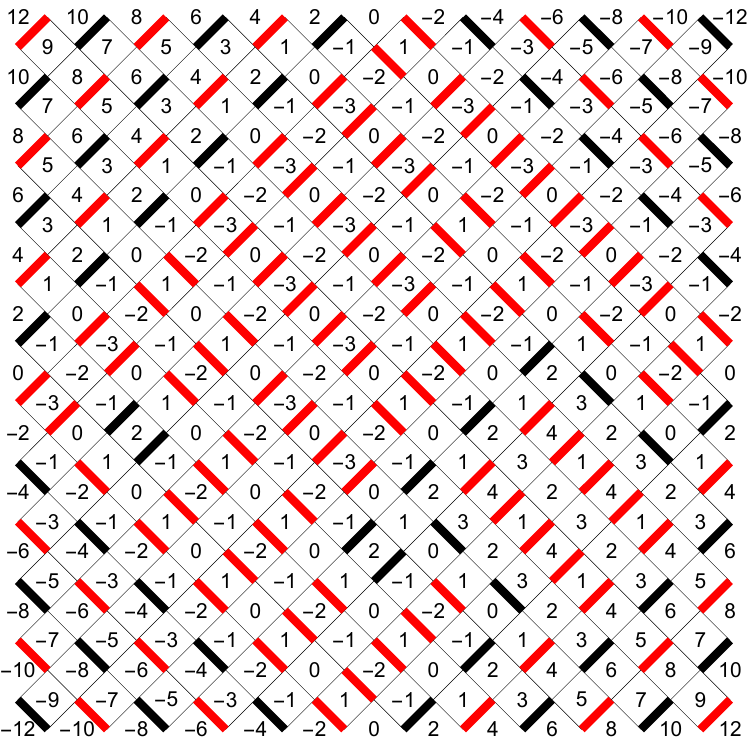}
		\caption{The relationship between dimers and dominoes is shown on the figure on the left while the figure on the right shows the height function for this dimer covering.}
		\label{fig:dimersandheights}
	\end{figure}
	
	\subsection{Height functions}
	\label{S:face-heights}
	Dimer coverings of bipartite graphs have an alternative \emph{height function} representation, an idea dating back to Thurston \cite{Thu:90}. For a dimer configuration $D$ on $\tt{A}_n$, we define its height function $h = h(D):\mathtt{\bar F} \to \Z$ with the following rules:
	\begin{itemize}
		\item $h(-n, -n) = -n$.
		\item As we traverse counterclockwise on the faces around each white vertex, we increase the height by $1$ provided that there is no dimer covering the shared edge between faces.  If there is a dimer covering the shared edge between faces, we decrease the height by 3.
		\item As we traverse clockwise on the faces around each black vertex, we increase the height by $1$  provided that there is no dimer covering the shared edge between faces.  If there is a dimer covering the shared edge between faces, we decrease the height by 3.
	\end{itemize}
	The height function on the boundary $\partial \mathtt{F} := \mathtt{\bar F} \setminus \mathtt{F}$ is deterministic since there are no dimers outside of $\mathtt{V} \cap [-n, n]^2$. We can easily check that:
	\begin{itemize}[nosep]
		\item If $f \in \partial \mathtt{F}$ is of the form $(i, -n)$ or $(i, -n-1)$, then $h(f) = i$.
		\item If $f \in \partial \mathtt{F}$ is of the form $(-n, i)$ or $(-n-1, i)$, then $h(f) = i$.
		\item If $f \in \partial \mathtt{F}$ is of the form $(i, n)$ or $(i, n+1)$, then $h(f) = -i$.
		\item If $f \in \partial \mathtt{F}$ is of the form $(n, i)$ or $(n+1, i)$, then $h(f) = -i$.
	\end{itemize}
	In other words, as we traverse the boundary $\partial \mathtt{F}$ clockwise, we increase along the west boundary, decrease along the north boundary, increase along the east boundary, and then decrease along the south boundary. See \cref{fig:dimersandheights} for an example. In the two-periodic Aztec diamond, the limit shape of the height function has maximal gradient in the frozen regions, and is flat throughout the smooth region. The rough region transitions between these two extremes.
	
	The above correspondence gives a bijection between the set of dimer configurations on $\tt{A}_n$ and the set of allowable height functions. These are functions $h:\mathtt{\bar F} \to \Z$ with the above boundary values, and such that around every vertex $w \in \mathtt{White}$ heights increase by $1$ as we move around counterclockwise except for between a single pair of faces, and for every vertex $w \in \mathtt{Black}$ heights increase by $1$ as we move around clockwise except for between a single pair of faces.
	\subsection{Temperley's bijection}
	\label{S:temperley-intro}
	
	Dimer coverings of $\tt{A}_n$ can also be understood through a version of Temperley's bijection. The classical Temperley's bijections maps dimer coverings of bipartite graphs to spanning trees of an associated graph in the case when the boundary of the graph has constant height, see \cite{Tem74, KPW00}. In the Aztec diamond, the height changes as we move around the boundary, and so if we construct a Temperley-type correspondence in this setting we get a bijection with certain classes of spanning forests. There are four versions of this bijection, which correspond to the north, south, east, and west vertices, but in the present paper we will only use the north and south bijections.

    	\begin{figure}
		\centering
		\includegraphics[height=6cm]{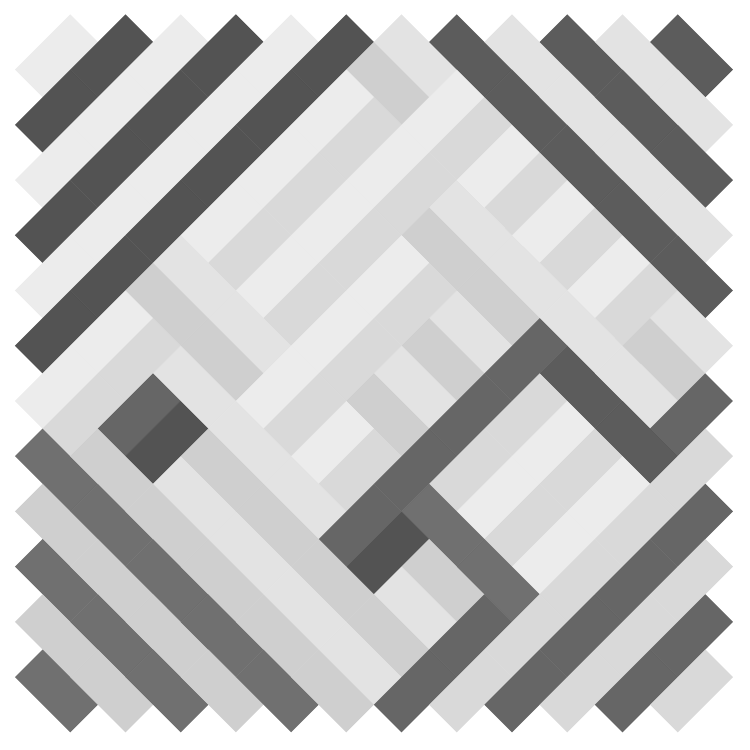}
		\includegraphics[height=6cm]{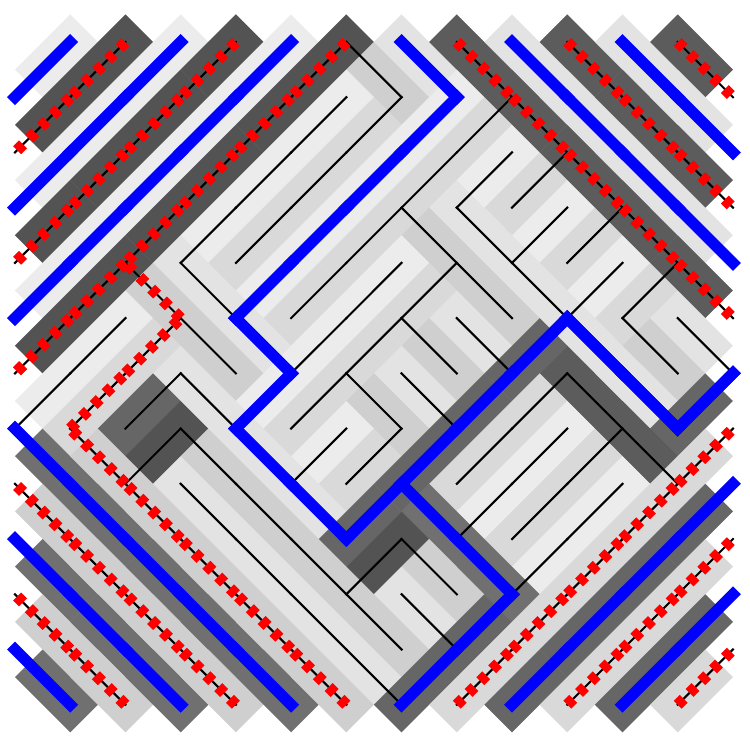}
		\caption{The left figure shows a dimer tiling, the right shows both the north and south Temperleyan forests. Backbone paths in the north and south forests are drawn in red and blue lines, respectively, as is our convention for all figures.}
		\label{fig:dimersandtreepaths}
	\end{figure}
	
	To define the south bijection, define a graph $\tt{G}(\mathtt{S})$ whose vertex set is a small extension of $\mathtt{S}$. Define
	\begin{equation}
		\label{E:tbW}
		\mathtt{\bar S} = \mathtt{\tilde S} \cap ([-n - 1, n + 1] \times [-n, n]).
	\end{equation}
	We let $\mathtt{\bar S}$ be the vertex set of $\tt{G}(\mathtt{S})$, and let $\{v, w\} \subset \mathtt{\bar S}$ be an edge whenever $v-w \in \{\pm 2 e_1, \pm 2 e_2\}$. Now, given a dimer configuration $D$ on $\tt{A}_n$, we can associate to it a subgraph $F_D \subset \tt{G}(\mathtt{S})$ with the same vertex set $\mathtt{\bar S}$, by including an edge $\{v, v + e\}$ in $F_D$ for $e \in \{\pm 2 e_1, \pm 2 e_2\}$ whenever $\{v, v + e/2\}$ is a dimer in $D$. We call $F_D$ the \textit{south Temperleyan forest} for $D$. See \cref{fig:dimersandtreepaths} for an example of this correspondence.
	
	The map $D \mapsto F_D$ is one-to-one, and we can identify its image as the set of \textit{dimer-compatible forests} of $\tt{G}(\mathtt{S})$, which we now define. First, divide the outer boundary of $\tt{G}(\mathtt{S})$ into four pieces:
	\begin{equation}
		\label{E:cardinal-boundaries}
		\begin{split}
			\partial_E \tt{S} &= (\{n + 1\} \times [-n, n]) \cap \tt{\bar S}, \quad 	\partial_W \tt{S} =  (\{-n - 1\} \times [-n, n]) \cap \tt{\bar S}, \\
			\partial_S \tt{S} &= ([-n, n] \times \{-n\}) \cap \tt{\bar S}, \quad 	\partial_N \tt{S}=  ([-n, n] \times \{n\}) \cap \tt{\bar S}.
		\end{split}
	\end{equation}
	We call $\partial^{\tt{x}} \tt{S} := \partial_E \tt{S} \cup \partial_W\tt{S} = \tt{\bar S} \setminus \tt{S}$ the \textbf{sink boundary} for $\tt{G}(\tt{S})$, we call $\partial^{\tt{o}} \tt{S} = \partial_N \tt{S} \cup \partial_S \tt{S}$ the \textbf{source boundary} for $\tt{G}(\tt{S})$, and let $\partial \tt{S}$ be the union of all four parts of the boundary.  
	\begin{defn}
		\label{D:dc-forest}
		A subgraph $F \subset \tt{G}(\mathtt{S})$ is a \textbf{dimer-compatible forest (DCF)} if:
		\begin{enumerate}[label=(\roman*)]
			\item $F$ is a spanning forest, i.e. its vertex set is $\tt{\bar S}$.
			\item Every component of $F$ contains exactly one element of $\partial^{\tt{x}} \mathtt{S}$.
			\item For every vertex $v \in \partial^{\tt{o}} \mathtt{S}$, let $\theta_F(v)$ denote the unique vertex in $\partial^{\tt{x}} \mathtt{S}$ in the same component as $v$. Then $\theta_F(v)$ is contained in the cross-shaped region $\{v + (n, \pm n) : n \in 2 \Z \}$. This region intersects $\partial^\tt{x} \tt{S}$ at exactly two points, once in $\partial_E \tt{S}$ and once in $\partial_W \tt{S}$.
		\end{enumerate} 
	\end{defn}

	We will typically think of all edges in each component of a DCF $F$ as being oriented towards the sink boundary  $\partial^{\tt{x}} \tt{S}$. It is easy to include this orientation in the map $D \mapsto F_D$ by including the oriented edge $(v, v + e)$ in $F_D$ whenever $\{v, v + e/2\}$ is a dimer in $D$. 

    The \textbf{north Temperleyan forest} is essentially the planar dual of the south forest, see \cref{S:trees} for the precise setup. Alternately, it can be obtained through the same south forest procedure after rotation by $\pi$.
	
	We can reconstruct of a dimer configuration the height function directly from any of its Temperleyan forests. Very roughly, the height of a vertex $v$ in a Temperleyan forest $F$ is equal to the height of the sink vertex $s_v$ in the component containing $v$ plus four times the winding number of the path from $v$ to $s_v$. Here the height of a vertex is the average height of its incident faces. Because of this correspondence, we can think different components of a Temperleyan forest as wanting to live at distinct heights, and Definition \ref{D:dc-forest}(iii) as a height-matching condition.
	
	The two-periodic Aztec diamond plays particularly well with Temperley's bijection. Indeed, if we let $\mathbb{Q}_{\tt{S}, a, n}$ be the pushforward of the measure $\P_{a, n}$ under the Temperley map $D \mapsto F_D$, then the probability of a DCF $F$ under $\mathbb{Q}_{\tt{S}, a, n}$ is simply proportional to $a^2$ times the number of directed edges which point northwest or northeast. In other words, we bias towards edges which point south. Given this structure, we can imagining sampling such a forest $F \sim \mathbb{Q}_{\tt{S}, a, n}$ in two steps:
	\begin{itemize}
		\item First, sample all of the paths connecting the source and sink boundaries $\partial^{\tt{o}} \mathtt{S}$ and $\partial^{\tt{x}} \mathtt{S}$. We call these the \textbf{backbone paths}.
		\item Given the first step, the corridors between backbone paths are conditionally independent, and each region can individually be sampled using Wilson's algorithm for sampling spanning trees on a weighted graph. In particular, the law of the path from any vertex until it hits a backbone path is just that of the loop-erasure of a south-biased random walk.
	\end{itemize}
	Because of this two-step process, most interesting information in the forest $F_D$ is carried by the backbone paths, which exhibit long-range correlations. The biased loop-erased random walks which fill in the corridors have correlations which decay exponentially fast. Moreover, it is unlikely that the loop-erasure of a biased random walk has a high winding number, and so components of $F_D$ are essentially flat. 
    
    Figure \ref{fig:placeholder} is a simulation of the north and south Temperleyan forests for a two-periodic Aztec diamond with $n = 1200, a = 0.5$. South backbone paths are given in blue, and north in red; this is our convention through the paper. A selection of paths off of the backbone moving through the smooth region are also included. These paths show a strong north/south drift prior to hitting the backbone, as we expect from the bias.

	\subsection{Main results} \label{S:mainresults-intro}

      \begin{figure}
        \centering
        \includegraphics[width=1\linewidth]{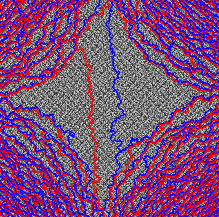}
        \caption{A simulation of the south forest in the two-periodic Aztec diamond with $n = 800$ and $a = 0.7$, with only a portion of the Aztec diamond around the smooth region pictured. The $\cS$-backbone paths are pictured in blue, and the (dual) $\cN$-backbone paths are red.}
        \label{fig:800a07}
    \end{figure}

     Recall from the overview that our first main result concerns convergence of the height functions to the Airy surface after rescaling the underlying lattice. This perspective avoids having to directly work with paths.

	To state \cref{T:custom1} precisely, let $\cH_n$ denote the height function for the two-periodic Aztec diamond of size $n$, let $\zeta_n$ be the set of all faces with $\|f\|_\infty \le n^{3/4}$, and set
	\begin{equation}
	    \label{E:central-height}
        H'_n = \frac{1}{|\zeta_n|} \sum_{f \in \zeta_n} \cH_n(f).
	\end{equation}
	Define the \textbf{central height} $H_n$ to be the unique integer in the interval $(H'_n - 1/2, H_n' + 1/2]$. The central height $H_n$ is the height at which the flat smooth region sits on average. In the definition, it is not important the exact set we are averaging over to define this height, as long as it grows to infinity with $n$ and is contained in the smooth region. The random variable $H_n$ itself is converging to a discrete Gaussian random variable, see \cite{BN25} and Remark \ref{R:discrete-gaussian}.
	
	Next, we define scaling parameters. Following \cite{BCJ22}, define
	\begin{equation} \label{eq:parameterc}
		c=\frac{a}{(1+a^2) },
	\end{equation}
and fix
	$\xi_c=-\frac{1}{2}\sqrt{1-2c}$. The point $(\xi_c, \xi_c)$ is the unique point in the third quadrant on the rough-smooth limit curve. Set
	\begin{equation} \label{eq:scalingparameters}
		c_0=\frac{(1-2c)^{\frac{2}{3}}}{(2c(1+c))^{\frac{1}{3}}}, \hspace{5mm} 
		\lambda_1 =  \frac{\sqrt{1-2c}}{2c_0} \hspace{5mm} \mbox{and} \hspace{5mm}
		\lambda_2=\frac{(1-2c)^{\frac{3}{2}}}{2c c_0^2}.\footnote{Note that there is a typo in both \cite{BCJ18} and \cite{CJ16} for $c_0$ which is fixed here.}
	\end{equation}
	For this theorem, some of the limiting objects come from the \textbf{full-plane smooth phase} $\P_a$. There are multiple ways of defining this object (see Section \ref{S:smooth-phase-def}), but for now, we simply note that the measure $\P_a$ on dimer configurations of $\Z^2$ is the limit of the measures $\P_{a, n}$ as we take $n \to \infty$, i.e. the local limit of the two-periodic Aztec diamond in the center of the smooth region. As there are no boundary conditions in the full-plane smooth phase, the height function is naturally defined only up to constant shift. We choose this shift in the most natural way, so that the height function is ergodic and mean $0$ on $a$- and $b$-faces.
	
	\begin{thm}
		\label{T:main-1}
		For $(t, x) \in \R^2$, define $[t, x]_n$ to be the nearest $a$-face in $\tt{\tilde F}$ to the point
		$$
		n (\xi_c, \xi_c) + t \cdot 2^{1/3} \lambda_2 n^{2/3} (1, -1) + (x - t^2) \cdot 2^{5/3} \lambda_1 n^{1/3} (1, 0).
		$$
		Then for any finite set $S \subset \R^2$ we have the convergence
		\begin{equation}
			\label{E:height-extra-intro}
			\begin{split}
				(H_n - \cH_n[u]_n : u \in S)
				\implies (4 \cA(u) + X_u : u \in S),
			\end{split}
		\end{equation}
		where $\cA$ is the Airy surface, and the random variables $X_u, u \in S$ are i.i.d. and independent of $\cA$. Each of these random variables individually is distributed as the height of (any) $a$-face in the full-plane smooth phase with parameter $a$.
		
		We can write $X_u \eqd 4 Y$, where $Y$ is the winding number of the loop-erasure of a biased nearest-neighbour random walk on $\Z^2$, which takes up and right steps with probability $a^2/(2 + 2 a^2)$ and left and down steps with probability $1/(2 + 2 a^2)$.
	\end{thm}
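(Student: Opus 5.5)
Our plan is to prove \cref{T:main-1} through the south Temperleyan forest, splitting the height at $[u]_n$ into a \emph{global} backbone part and a \emph{local} loop-erased-walk part. We then show that these two parts become independent and converge separately.

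For a face $f=[u]_n$, take the south forest $F\sim\mathbb{Q}_{\tt S,a,n}$ and follow the oriented path from a vertex adjacent to $f$. It first runs along a branch until it hits a backbone path, and then follows that backbone path to the sink boundary. By the height reconstruction from Temperleyan forests,
\[
\cH_n(f)=h(\text{sink})+4\,\mathrm{wind}(\text{full path})+O(1).
\]
We split the winding into the winding of the branch, call it $W_f$, and the winding of the backbone. Combining the backbone contribution with the sink height and the central height, we expect $H_n-\cH_n(f)=4N_f-4W_f$ up to a lattice correction which we control exactly. Here $N_f$ is the number of backbone paths $\cS_{I+1-i}$ passing ``above'' $f$, in the appropriate sense of separating $f$ from the smooth centre. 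The first step is to make this identity precise on a high-probability event, namely the event that the branch from $f$ hits a backbone within distance $O(\log n)$ and does not wind around it.

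The second step is convergence of the global part. We want $N_{[u]_n}$ to converge to $\cA(u)=\#\{i:\cA_i(t)\ge x\}$ jointly over $u\in S$. This requires relating the backbone paths near the rough-smooth boundary to the extended Airy kernel asymptotics \cite{BCJ18}. We do this through a counting statistic of particles (or of specific dimer types) along the horizontal segment through $[u]_n$. Each backbone crossing contributes exactly one such particle, up to local cancellations, whose net expectation we show vanishes via the integrable correlation estimates and the middle-height symmetry. So the plan is to express $N_f$ as a linear statistic of the determinantal process, with an error term made of local smooth-phase fluctuations. Those local fluctuations are precisely $W_f$, possibly up to a shift.

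The third step handles the local part and independence. By Wilson's algorithm, conditionally on the backbone paths, the branch from $f$ is the loop-erasure of the south-biased walk stopped on hitting the backbone. On the high-probability event that the backbone is at distance $\gg 1$ from $f$, this branch agrees with high probability, up to a large radius, with the loop-erasure of the infinite biased walk on $\Z^2$. The transient drift means the winding is determined within distance $o(n^{1/3})$. Hence $W_f\to Y$ in law, with the transition probabilities stated in the theorem, uniformly in the conditioning. Since the points $[u]_n$, $u\in S$, are at mutual distance $\gg n^{1/3}$, the branches are asymptotically independent: we can run Wilson's algorithm from each point in turn, and the walks do not meet with high probability. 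This gives the i.i.d.\ $X_u$, independent of $\cA$. Finally, the identification $X_u\eqd$ the height of an $a$-face in $\P_a$ follows by applying the same decomposition in the smooth centre.

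The main obstacle is the first step. We must show that with high probability the branch from $f$ hits a backbone quickly and that backbone paths near $[u]_n$ do not backtrack or wind at mesoscopic scales. Otherwise $N_f$ is not a well-defined count, and the decomposition picks up spurious multiples of $4$. We would attack this with exponential decay of smooth-phase correlations combined with global path-control estimates on the backbones.
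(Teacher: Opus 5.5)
There is a genuine gap in your Step 2, which is where the Airy law has to come from. The only integrable input at the rough-smooth boundary is \cref{T:BCJ18-weak}. It controls \emph{mollified} height differences (averages over $a_n\to\infty$ parallel lines) between \emph{fixed} scaled endpoints. A signed particle count along a single line from $[u]_n$ to a reference point equals $N_f$ plus smooth-phase height noise at both endpoints. That noise has a nondegenerate limit law, so showing its net expectation vanishes (via correlation estimates or the middle-height symmetry) says nothing about the law of $N_f$.

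To remove the noise you must average over a window on which $N$ is constant and the configuration is coupled to $\P_a$. This means using \cref{L:local-coupling} on boxes of radius $o(n^{1/24})$ together with decorrelation of smooth-phase heights, as in \cref{L:j-independent} and \cref{L:arbitrary-mollifiers-new}. Even then you only obtain limits of $N_f-N_{f'}$ for fixed scaled $x<x'$. To get $N_f$ itself you must exclude backbone paths at intermediate scales, between $O(n^{1/3})$ and $n^{1/3}\log^2 n$, where the mesoscopic smooth coupling begins. The available tools are too crude for this:
\begin{itemize}
\item Global path control only confines $\cS_I^-$ to within $2n^{1/3}\log^2 n$ of the limit curve.
\item The height expectation bounds have error $n^{1/6}$, and the concentration bounds have error $n^{1/4}\log^2 n$.
\end{itemize}
The paper closes exactly this hole in \cref{L:height-shift}. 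It uses the extended kernel asymptotics \cref{T:BCJ18-strong} (endpoint $y_n=\log^3 n$) together with \cref{L:global-coupling-1} and \cref{L:global-coupling-2}. Your proposal has no substitute for this step.

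Step 1 also presupposes most of what the paper proves for \cref{T:main-2}. The identity $H_n-\cH_n(f)=4N_f-4W_f$ needs two nontrivial inputs. First, the height matching $H_n=4I_n-n-1$ from \cref{T:main-3-restatement}, which rests on \cref{L:onion-control} and the global path-control lemmas. Second, the absence of backbone windings around $f$, which needs \cref{P:backtrack-estimate} and the ring-of-boxes argument of \cref{L:ring-bd}. ``Combining the backbone contribution with the sink height and the central height'' does not give this for free.

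Moreover, the branch from $f$ does not hit a backbone within $O(\log n)$. The nearest backbone is typically at distance of order $n^{1/3}$, the Airy line spacing, and your Step 3 in fact needs it to be far. Step 3 itself is sound and mirrors \cref{L:height-box-corridor}: Wilson's algorithm conditional on the backbone gives asymptotically i.i.d.\ windings, independent of the $N_f$.

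Note the logical order, though. Once Step 2 is repaired properly it already yields \cref{T:main-1} with no paths at all. The paper passes from mollified to unmollified heights via the local coupling and the mixing lemma \cref{L:smooth-phase-mixing}. The forest decomposition is how the paper goes from \cref{T:main-1} to \cref{T:main-2}, not the reverse.
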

	
	\begin{rem}
		The law of each of the random variables $X_u$ should be a discrete Gaussian, but we have not pursued this direction here. More remarkably, we believe that the law of these random variables is the same as the limit law of the central height. This latter fact seems to be a special symmetry of the two-periodic Aztec diamond, and we do not expect it to hold for smooth phases and rough-smooth boundaries in other tiling models.
	\end{rem}

	\begin{rem}
		\label{R:main-thm-variants}
		\cref{T:main-1} has the following two variants, which lend more credence to the idea that the limit of the height function is an Airy line ensemble sitting in a smooth background.
		
		First, with the same setup as in that theorem, we have that
		\begin{equation*}
			\begin{split}
				(H_n - \cH_n(v - (1, 1) + [u]_n) : u \in S, v \in \tt{\tilde F})
				\implies (4\cA(u) + \cH^u(v) : u \in S, v \in \tt{\tilde F}),
			\end{split}
		\end{equation*}
		where now each function $\cH^u:\tt{\tilde F} \to \Z$ is an independent copy of the height function for the full-plane smooth phase. The convergence above is in the product topology.
		
		Moreover, if we consider a sequence of sets $\phi_n = [-r_n, r_n]^2 \cap \Z^2$ for some $r_n = o(n^{1/24})$, then letting $D \sim \P_{a, n}$, we can couple $D|_{[u]_n + v}, u \in S, v \in \phi_n$ with the full-plane smooth phase so that the two objects are equal with probability $1 - o(1)$. Because of the presence of the Airy lines in an $n^{2/3} \times n^{1/3}$ scaling window, the optimal result here would allow for any sets $\phi_n$ that are lower order than this scale. 
		
		The above results, together with ergodicity of the full-plane smooth phase, says that if we take small local averages of the height function then we converge to the Airy surface. Setting up some notation, with $\phi_n$ as above, define the mollified height function
		$$
		\cH_n^{\phi_n}([u]_n) := \frac{1}{|\phi_n|} \sum_{v \in \phi_n} \cH_n(v + [u]_n).
		$$
		Then
		\begin{equation*}
			\begin{split}
				\tfrac{1}{4} (H_n - \cH_n^{\phi_n}([u]_n) : u \in S)
				\implies (\cA(u) : u \in S).
			\end{split}
		\end{equation*}
		Both \cref{T:main-1} and the various strengthenings above are shown in Section \ref{S:proof-t1}, where they all follow as corollaries of the more technical \cref{T:main-1-restatement}.
	\end{rem}

    Next, we state a precise version of \cref{T:custom2} on backbone path convergence. While this theorem is stated for south backbone paths only, the proof will show the result for the south and north paths together. Setting some notation, for a two-periodic Aztec diamond of size $n$, we let $\cS_1^-, \cN_1^-, \cS_2^-, \cN_2^-,  \dots, \cS_{n/2}^-, \cN_{n/2}^-$ denote the $n/2$ backbone paths in the associated south and north Temperleyan forest that start on the lower boundary, with the ordering from left to right.  Then there is a unique split point $I = I_n \in \{0, \dots, n/2\}$ where for $i \le I_n$, the path $\cS_i^-$ ends on the west boundary, and for $i > I_n$, the path $\cS_i^-$ ends on the east boundary. To help parse the definitions, it may be illustrative to consider the third panel in \Cref{fig:dimersandheights} where $n = 12$, and we have six south forest paths $\cS_i^-$ and six north forest paths $\cN_i^-$. In that example, $I_n = 3$. We similarly use notation $\cS_i^+, \cN_i^+$ for south and north backbone paths starting on the north boundary.
    
    By considering simulations (e.g.\ see \cref{fig:intro-paths}) we should expect that the path $\cS_I^- := \cS_{I_n}^-$ delineates the rough-smooth boundary. 
	Now, the Airy paths are one-dimensional in the limit whereas the paths $\cS_i^-$ are plane curves which may backtrack (e.g.\ see \cref{fig:800a07}). Therefore in order to consider the convergence of the paths $\cS_i^-$ to the Airy line ensemble, we should consider the smallest and largest points where these curves intersect each vertical line. First, let $R_\theta$ denote counterclockwise rotation by $\theta$, and define the set
    $$
    \tt{Meso}_n = n(\xi_c, \xi_c) + R_{-\pi/4} ([-n^{3/4}, n^{3/4}] \times [- n^{1/2}, n^{1/2}]),
    $$
    which is a mesoscopic enlargement of the $O(n^{2/3} \times n^{1/3})$ scaling window at the rough-smooth boundary.
    
    Define paths $\cA_i^{n, \pm}:\R \to \R$ by
	\begin{align*}
		\cA_i^{n, +}(t) &= \sup \{ x \in \R: [t, x]_n \in \cS_{I + 1 - i} \cap \tt{Meso}_n \}, \\
		\cA_i^{n, -}(t) &= \inf \{ x \in \R : [t, x]_n \in \cS_{I + 1 -i}  \cap \tt{Meso}_n\}.
	\end{align*}
	\begin{thm}
		\label{T:main-2}
		First, $$
		\P(H_n = 4I_n - n - 1) = 1 - o(1)
		$$
		as $n\to \infty$. In other words, with high probability the path split point is equal to the central height (after shifting the indexing appropriately). 
		
		Next, the two line ensembles $\{\cA_i^{n, \pm} : i \in \N\}$ converge to the parabolic Airy line ensemble $\{\cA_i : i \in \N\}$ in the sense of finite dimensional distributions (FDDs). That is, for any finite set $T \subset \R$ we have
		\begin{equation}
			\label{E:Ain-intro}
			(\cA_i^{n, \pm}|_T : i \in \N) \cvgd (\cA_i|_T : i \in \N).
		\end{equation}
		Moreover, with all assumptions as in \cref{T:main-1} and Remark \ref{R:main-thm-variants}, we have the joint FDD convergence
		\begin{equation}
			\label{E:Ain-joint-intro}
			\begin{split}
				(H_n - \cH_n^{\phi_n}[u]_n, \;  \cA_i^{n, \pm}(t))
				&\implies (-4 \cA(u), \; \cA_i(t)).
			\end{split}
		\end{equation}
        	Here the joint convergence is over $u \in S, t \in T$, and $i \in \N$, where $S \subset \R^2, T \subset \R$ are arbitrary finite or countable sets. 
		\end{thm}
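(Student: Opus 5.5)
The proof of \cref{T:main-2} builds on \cref{T:main-1} (in the strengthened, mollified form of Remark \ref{R:main-thm-variants}) together with the Temperley correspondence of \cref{S:temperley-intro}. The overall structure has three parts: (a) identify the split index $I_n$ with the central height; (b) show that the backbone paths near the boundary are essentially level lines of the mollified height function, with only $o(n^{1/3})$ transversal wiggle; (c) convert Airy surface convergence into FDD convergence of the $\cA_i^{n,\pm}$.

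\textbf{Step 1 (split point).} The height at a vertex in the south forest equals the height of its sink plus four times a winding number. The sinks on $\partial_W\tt S$ and $\partial_E\tt S$ have explicit boundary heights. Consider a face $f$ deep in the smooth region. With high probability it lies in a component hanging off a backbone path only through a biased loop-erased walk, whose winding is $O(1)$ and has exponential tails. So $\cH_n(f)$ equals the height determined by the sink of the backbone path between $\cS_{I}^-$ and $\cS_{I+1}^-$, corrected by a local term.

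Averaging over $\zeta_n$ and using exponential decay of correlations in the smooth phase, the local corrections average to zero. Here I use the normalization that the smooth-phase height is mean zero on $a$- and $b$-faces. This averaging gives $H_n = 4I_n-n-1$ with high probability.

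The point needing care is that the smooth region should be a single "corridor" between consecutive backbone paths. This requires global control: no backbone path crosses $\zeta_n$, and backbone paths do not wind around the smooth region. I would prove this from the integrable estimates in \cref{S:basic-tools}. The key input is that the probability an edge in the smooth region lies on a backbone path decays like $e^{-cn}$, computed via the Kasteleyn inverse asymptotics.

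\textbf{Step 2 (paths as level lines).} Inside $\tt{Meso}_n$, the region between $\cS_{I+1-i}^-$ and $\cS_{I-i}^-$ consists of faces whose height, up to a local $O(1)$ winding correction, is $H_n-4i$ plus a constant. Hence $\cA^{n,+}_i(t)$ is at most the sup of $x$ with mollified height $\le H_n-4(i-1)$, and $\cA^{n,-}_i(t)$ is at least the inf of $x$ with mollified height $\le H_n-4i$, up to errors. The main obstacle is controlling backtracking. I must show each path crosses the vertical line at $t$ only within an $o(n^{1/3})$ window, so that $\cA_i^{n,+}-\cA_i^{n,-}\to 0$ after scaling. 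I also need to rule out mesoscopic overhangs and exclude paths entering $\tt{Meso}_n$ from far away.

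For this I would use Wilson's algorithm: condition on neighbouring backbone paths and compare to loop-erased south-biased walks. An overhang of size $\delta n^{1/3}$ forces a biased walk to travel against the drift, or forces two backbone paths to be within distance $\delta n^{1/3}$ over a long stretch. I would bound the first by exponential estimates. For the second I would use the Airy kernel's non-collision, via one-point height convergence from \cref{T:main-1}, together with a union bound over a grid of $O(\delta^{-C})$ points.

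\textbf{Step 3 (conclusion).} The mollified height converges jointly to $-4\cA$ at any finite set of points. Level sets of $\cA(t,\cdot)$ are exactly the $\cA_i(t)$, which are a.s.\ distinct with continuous distribution. Sandwiching $\cA_i^{n,\pm}(t)$ between level crossings evaluated on a fine grid in $x$ then gives \eqref{E:Ain-intro}. It also gives the joint statement \eqref{E:Ain-joint-intro}, by monotonicity of $x\mapsto\cA(t,x)$ and letting the grid mesh go to zero. Countable $S,T$ follow by a diagonal argument, since FDD convergence is tested on finite subsets.
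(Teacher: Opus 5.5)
Your overall architecture (split index equals central height, backbone paths as level lines of the mollified height, then sandwiching on a grid) matches the paper's, and your Step 3 is essentially the paper's final argument (\cref{P:key-prop}). The genuine gap is the global control behind Step 1.

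The key input you propose for Step 1 is false. The two free backbone paths $\cS_I^+$ and $\cN_I^-$ typically cross the central smooth region before coalescing (see \cref{L:SN-path-control} and the discussion before it). So edges there lie on backbone paths with probability nowhere near $e^{-cn}$. More fundamentally, lying on a backbone path is a boundary-connectivity event, not a local dimer event. Neither $K_{a,1}^{-1}$ asymptotics nor the couplings of \cref{S:smooth-phase-couplings} can detect it: a successful smooth-phase coupling on a region is fully compatible with a backbone path passing through it, since locally such a path is just another south-drifting tree path.

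What the identification $H_n=4I_n-n-1$ actually requires is different. The forest path from a reference face in the smooth region must join $\cS_I^-$, and nothing must wind around that face. Paths of index far from $I$ must never reach the centre or the boundary window. And $\cS_I^-$ must follow the rough--smooth curve without spiralling around the smooth region. The paper proves this in \cref{T:main-3-restatement} from three ingredients:
\begin{itemize}
\item height concentration (\cref{C:expected-sm-height}, \cref{C:rough-phase-heights});
\item the fact that a backbone path changes height only by winding, so its first entry into a region attached to the boundary happens at its starting height (\cref{L:basic-path-ctrl});
\item the onion lemma \cref{L:onion-control}, which uses parabolic confinement of loop-erased walks in both dual forests to rule out nested backtracks.
\end{itemize}
Your proposal has nothing playing this role.

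Step 2 has the same problem at the mesoscopic scale. The claim that corridor heights are $H_n-4i$ up to an $O(1)$ winding correction presupposes that no backbone path comes near $[u]_n$ and none winds around it. That is the heart of the proof, not a detail. Your second alternative, ruling out close pairs of paths via Airy non-collision from one-point height convergence, is circular: mollified heights tell you nothing about where backbone paths are until the path/height dictionary is already established.

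The paper proceeds in three steps instead:
\begin{itemize}
\item It proves the backtrack bound $O(n^{1/4}\log^2 n)$ (\cref{P:backtrack-estimate}) by induction along $\cS_I^-,\cN_{I-1}^-,\cS_{I-1}^-,\dots$. The base case needs the global fact that no north backbone path lies above $\cS_I^-$ in $\tt{RS}_n^*$; otherwise trapped loop-erased walks could terminate there without exiting their parabola.
\item The local coupling \cref{L:local-coupling} only works on windows of size $o(n^{1/24})$, far below $n^{1/4}$. So the paper builds a ring of $n^{1/3-\eps}$ boxes of side $\log^3 n$, each coupled to the smooth phase, which lets backbone paths cross the ring only in the drift direction (\cref{L:ring-construction}).
\item With interlacing and \cref{C:close-ties}, this keeps the top $\log^{50} n$ paths out of an $n^{1/3-\eps}$-box (\cref{L:ring-bd}). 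The backtrack bound then forbids winding around the ring, which gives the corridor identification \cref{L:height-box-corridor}.
\end{itemize}
Your Step 2 needs these ingredients, or genuine substitutes, to go through.
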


\begin{rem}
It is not particularly important that we intersect with the enlarged boundary region $\tt{Meso}_n$ in the definitions $\cA_i^{n, \pm}$. This could be removed with some extra asymptotic formula analysis, but we have not pursued this here for brevity. The exact scaling of $n^{3/4} \times n^{1/2}$ in the definition of $\tt{Meso}_n$ is essentially arbitrary, and only important in that it is larger than the $n^{2/3} \times n^{1/3}$ Airy scale.
\end{rem}

	\begin{rem}
		\label{R:variants-main-thm-2}
		A version of \cref{T:main-2} also holds using the north Temperleyan backbone paths, see \cref{T:main-2-restatement}. Indeed, an important part of the proof is showing that the naturally paired north and south backbone paths stay close together as they move through the rough-smooth boundary region. Our exact estimate shows that the vertical distance between these paths is $O(n^{1/4} \log^2 n)$, whereas the fluctuation scale is $O(n^{1/3})$, see Corollary \ref{C:close-ties} for details. We expect that the true distance between these pairs of paths is of much lower order, i.e. logarithmic (or at least polylogarithmic) in $n$. 
		
		We can also give a slight strengthening \cref{T:main-2} by replacing the paths $\cA_i^{n, \pm}(t)$ with versions of the form
		$$
		t \mapsto \sup_{|u| \le n^{-1/3 - \eps}} \cA_i^{n, \pm}(t + u), \qquad t \mapsto \inf_{|u| \le n^{-1/3 - \eps}} \cA_i^{n, \pm}(t + u),
		$$
		where $\eps > 0$ is arbitrary, see \cref{T:main-2-restatement}. Note that if $n^{-1/3}$ is replaced by $n^{-2/3}$, then this result becomes essentially immediate from \cref{T:main-2}. Of course, the optimal result in this direction would be to show that the paths $\cA_i^{n, \pm}$ actually converge uniformly to the Airy line ensemble. We state this as a conjecture.
	\end{rem}
	
	\begin{conj}
		\label{conj:Gibbs}
		The two line ensembles $\{\cA_i^{n, \pm} : i \in \N\}$ converge in the uniform-on-compact topology to the  Airy line ensemble $\{\cA_i : i \in \N\}$. 
	\end{conj}
	
	Almost all approaches towards understanding uniform convergence to the Airy line ensemble go through studying a Gibbs property in the prelimit that converges to the Brownian Gibbs property in the limit. This approach was pioneered by Corwin and Hammond \cite{CH11} in the setting of non-intersecting Brownian motions, and extended and refined to prove uniform tightness for a range of one-dimensional non-intersecting or softly non-intersecting path models, e.g.\ see \cite{corwin2016kpz, dauvergne2023uniform, wu2023convergence, barraquand2023spatial, dimitrov2021tightness, corwin2018transversal, aggarwal2024scaling, serio2023tightness, das2025half, dimitrov2025uniform}. The Brownian Gibbs property for the Airy line ensemble also leads to a complete probabilistic characterization of the object, recently proven in \cite{aggarwal2023strong} (see also \cite{dimitrov2021characterization} for an earlier result), which presents the tantalizing possibility of proving convergence to the Airy line ensemble without an appeal to formulas.
    
    In all of these papers, an underlying Gibbs property for non-intersecting paths is crucial for propagating pointwise path control to control on entire sets. In our setting, a natural Gibbs property for the backbone paths should exist. Indeed, because the backbone paths are non-crossing paths coming out of a weighted spanning forest, we expect that there is a tractable Gibbs property related to the study of non-crossing loop erased random walks, coming out of Fomin's theory \cite{fomin2001loop}. However, even given this property, we expect that proving uniform convergence of the paths $\cA_i^{n, \pm}$ would require a new suite of ideas, since in the present setting our paths are two-dimensional, whereas all of the prior Gibbsian line ensemble literature deals with one-dimensional paths.
	
	\section{Proof sketch and some auxiliary results}
	\label{sec:proof-sketch}
	As the paper is quite long, we have opted to give a detailed proof sketch in order to help guide the reader. Along the way, we will present two auxiliary results about the behaviour of the backbone paths in the two-periodic Aztec diamond.

	\subsubsection*{Two challenges and three tools} \qquad
	As discussed previously, there are two central challenges in studying the rough-smooth boundary that are not present at the rough-frozen boundary in tiling models, or in other models where Airy line ensemble convergence has been previously established.
	\begin{enumerate}[label=\Roman*.]
		\item The presence of a noise (in our case, from the smooth phase) which blurs the exact boundary. 
		\item The natural collections of paths defining the boundary are undirected. 
	\end{enumerate}
   There are many models where we expect Airy line ensemble convergence in the presence of effects I and II, see \cref{S:beyond-tilings} for more details. Of these, the two-periodic Aztec diamond is particularly tractable, and we have many refined integrable and probabilistic tools available for understanding the rough-smooth boundary. In the present paper, the tools we use fit into three main categories: 
	\begin{enumerate}[label=\arabic*.]
		\item \textbf{Height function estimates.} \qquad While we cannot understand absolutely everything about the height function directly from formulas, we can essentially compute the expectation of the height function throughout the Aztec diamond with precise asymptotics. We can also gain some control over how the height function concentrates and how it decorrelates. Finally, a more refined analysis at the rough-smooth boundary reveals that certain averages of the height function converge the Airy surface; this was established in \cite{BCJ18}. 
		\item \textbf{Smooth phase couplings.} \quad The full-plane smooth phase measure $\P_a$ with parameter $a \in (0, 1)$ is equivalent (through Temperley's bijection) to a biased spanning forest on $\Z^2$, where edges pointing in the $(1, 1)$ and $(1, -1)$ directions have weight $a^2$ and edges pointing in the $(-1, -1)$ and $(-1, 1)$ have weight $1$. In \cite{BCJ22}, the authors used the determinantal structure of the two-periodic Aztec diamond measure $\P_{a, n}$ to prove the following smooth phase coupling result: for a box of with side length $L$ at the rough-smooth boundary, the total variation distance between $\P_a$ and $\P_{a, n}$ is $O(n^{-1/3} L^8)$. We use both this estimate, and a couple of variants which give smooth phase couplings on larger sets as we move further away from the rough-smooth boundary and into the central smooth region. The smooth phase itself can be understood through Wilson's algorithm for sampling spanning trees.
		\item \textbf{Spanning tree estimates and interlacing.} \qquad  If we condition on the backbone paths in one of the four Temperleyan forests, then we can fill in the remainder using Wilson's algorithm. In particular, individual paths in any of the Temperleyan forests simply follow a biased loop-erased random walk path until they hit the backbone, and so remain in a certain parabola with high probability. This observation starts to give a lot of structure to the spanning forests when we combine it with the fact that the south and north forests are dual to each other. For example, in the corridor between two adjacent south backbone paths, the trajectory of the dual north backbone path cannot position itself in a way that blocks a south loop-erased random walk path from reaching either of the south backbone paths, see \Cref{fig:pathblocking}.
	\end{enumerate}
    \begin{figure}
        \centering
        \includegraphics[width=0.5\linewidth]{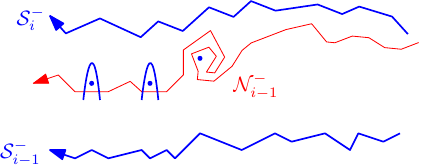}
        \caption{A low-likelihood configuration of backbone paths. The dual north backbone path $\mathcal N_{i-1}^-$ is positioned relative to the south backbone paths $\cS_{i-1}^-, \cS_i^-$ in such a way that makes it very unlikely that a south-drifted random walk from any of the three blue sites will hit $\cS_{i-1}^-$ or $\cS_i^-$ before hitting $\cN_{i-1}^-$. Parabolic random walk range constraints are included for two of the blue sites.}
        \label{fig:pathblocking}
    \end{figure}
	The first two categories above are highly reliant on the specific integrable structure of the two-periodic Aztec diamond, whereas the third is based on more a general probabilistic/combinatorial framework and holds for two-periodic tilings on more general domains. Beyond establishing the tools above, we do not use the exact formulas for the two-periodic Aztec diamond and all arguments are probabilistic or geometric in nature. 

    Most of the tools described above have standard proofs and will not surprise readers familiar with the model. The only result we highlight here is the following  symmetry, recorded again in the body of the text as \cref{L:expected-middle-height}.
	\begin{lem}
		\label{L:expected-middle-height-intro}
        Let $a \in (0, 1]$, and let $(i, j) \in \tt{F}$ be such that $i, j \in 2 \Z$. Then
		$$
		\E \mathcal{H}_n(i, j) = -\E \mathcal{H}_n(-i, j) = - \E \mathcal{H}_n(i, -j) = \E \mathcal{H}_n(-i, -j)
		$$
		In particular, $\E \mathcal{H}_n(2i,0) = \E \mathcal{H}_n(0,2i)= 0$ for all $-n/2 \le i \le n/2$.
	\end{lem}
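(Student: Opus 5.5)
The plan is to reduce the statement to a weight-swapping symmetry, and to prove that symmetry from the explicit Kasteleyn inverse.

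\textbf{Step 1: the reflection and the height function.} Let $R(i,j)=(-i,j)$ be the reflection in the vertical axis. $R$ maps white vertices $(i \text{ odd}, j \text{ even})$ to white vertices and black to black, and it preserves $[-n,n]^2$, so it is an automorphism of the underlying graph of $\tt{A}_n$. However, $R$ reverses orientation, so counterclockwise turns around white vertices become clockwise turns. Hence if $D$ is a dimer configuration, $h(R D) = -h(D)\circ R$. This is consistent with the boundary values: for example, on the south boundary $h(i,-n)=i$ and $-h(-i,-n) = i$. The same holds with $(i,j)\mapsto(i,-j)$.

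\textbf{Step 2: what $R$ does to the weights.} If $(i,j)$ is an $a$-face then $i$ is odd and $i+j\equiv 2 \pmod 4$. Since $2i\equiv 2 \pmod 4$, we get $-i+j\equiv 0\pmod 4$. So $R$ swaps $a$-faces and $b$-faces. The pushforward of $\P_{a,n}$ under $R$ is therefore the measure in which $b$-faces carry weight $a$ and $a$-faces carry weight $1$. Multiplying every edge weight by $1/a$ does not change the measure, so this is exactly $\P_{1/a,n}$. Combining with Step 1,
$$\E_{a,n}\mathcal H_n(i,j) = -\E_{1/a,n}\mathcal H_n(-i,j).$$
The lemma is therefore equivalent to the following claim: for every face $(i,j)$ with $i,j$ even, $\E_{a,n}\mathcal H_n(i,j)=\E_{1/a,n}\mathcal H_n(i,j)$. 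The vertical reflection is handled identically, and composing the two reflections gives the last equality.

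\textbf{Step 3 (main obstacle): $a\leftrightarrow 1/a$ invariance of mean heights at even--even faces.} I would write $\E\mathcal H_n(i,j)$ as a boundary value plus a sum of edge probabilities $K(b,w)K^{-1}(w,b)$ along a lattice path from the boundary to $(i,j)$. I would choose the path to run through even--even faces and cross edges in a fixed, reflection-compatible pattern. I would then use the explicit formula for $K_a^{-1}$ from \cite{CY14,CJ16}, recorded later in \cref{subsec:KasteleynTheory}. In that formula the parameter enters essentially only through $c=a/(1+a^2)$, which is invariant under $a\mapsto 1/a$, together with explicit gauge-type prefactors that are powers of $a$ depending on the vertex classes $\tt{N},\tt{S},\tt{E},\tt{W}$.

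The work is to check that, for a path through even--even faces, these prefactors combine with the edge weights $K(b,w)\in\{1,a\}$ so that the total contribution of each crossing of an $a$-face/$b$-face pair is invariant under $a\mapsto1/a$. It may be necessary to pair edges around each odd face, which is where the restriction to even coordinates enters. If this per-edge check is messy, the fallback is to verify directly from the double-contour-integral formula that the generating function of height increments between adjacent even--even faces is unchanged under $a\mapsto 1/a$ after the substitution $w\mapsto$ its reciprocal in the integrand.

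\textbf{Step 4: the ``in particular'' statement.} Applying the first equality with $i=0$ gives $\E\mathcal H_n(0,2i) = -\E\mathcal H_n(0,2i)$, so $\E\mathcal H_n(0,2i)=0$. The second equality with $j=0$ gives $\E\mathcal H_n(2i,0)=0$ in the same way.
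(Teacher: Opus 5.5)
Your Steps 1, 2 and 4 are correct and match the paper's reduction. The reflection $R(i,j)=(-i,j)$ reverses orientation and swaps $a$- and $b$-faces, so $\E_{a,n}\mathcal H_n(i,j)=-\E_{1/a,n}\mathcal H_n(-i,j)$. The lemma therefore reduces to the $a\mapsto 1/a$ invariance of mean heights at even--even faces, which the paper proves as \cref{L:expected-height-invariant}.

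The gap is Step 3. It is the entire content of the lemma (the paper calls it a delicate formula comparison with no combinatorial explanation), and the mechanism you propose cannot work. The kernel does not depend on $a$ only through $c=a/(1+a^2)$ and monomial gauge prefactors. Already $\mathbb{K}^{-1}_{1,1}$ has numerator $a^{\eps_y}u_2^{1-h(\eps_x,\eps_y)}+a^{1-\eps_y}u_1u_2^{h(\eps_x,\eps_y)}$. Likewise $\mathcal B_{\eps_1,\eps_2}$ contains the rational functions $\mathtt{y}^{\eps_1,\eps_2}_{\gamma_1,\gamma_2}(a,1,\cdot,\cdot)$, which are not of that form.

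In fact no edge-by-edge statement can hold. Since $\P_{1/a,n}=R_*\P_{a,n}$, invariance of each edge probability $K(b,w)K^{-1}(w,b)$ would mean $\P_{a,n}(e\in D)=\P_{a,n}(Re\in D)$. That says an edge of an $a$-face and its mirror image on a $b$-face are equally likely, which is false: for small $a$, in the smooth region the former is occupied with probability near $0$ and the latter with probability near $1/2$. So invariance can only appear after summing, with signs, the two edges crossed when passing through an odd face. Even there it is a genuine algebraic identity, not a gauge effect.

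That identity is what the paper actually checks.
\begin{itemize}
\item Along the diagonal faces $(-n+2k+2s,-n+2s)$, the height increment is $4a\mathrm{i}\sum_s\sum_{\eps\in\{0,1\}}(-1)^\eps K^{-1}_{a,1}$ evaluated on the two edges crossed between consecutive faces.
\item The $\mathbb{K}^{-1}_{1,1}$ parts cancel in the $\eps$-sum. For each $\mathcal B$-term, the $\eps=0$ and $\eps=1$ integrands are merged into a single double integral.
\item \cite[Lemma 2.2]{CJ16} rewrites $F_{s+1}(\nu(u))$ as $F_s(\nu(u))$ times a factor depending only on $c$, $u$ and $\mu(-\mathrm{i}u)$.
\item Since $c$, $\mu$, $s(\cdot)$ and $(1+a^2)F_s$ are all invariant under $a\mapsto 1/a$, everything reduces to the invariance of \eqref{lemproof:expheights:bigcoeffexp}. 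Equivalently, its four coefficients \eqref{lemproof:expheights:bigcoeffexp1}--\eqref{lemproof:expheights:bigcoeffexp4} in the monomials $s(-\mathrm{i}u_1)^{\gamma_1}s(-\mathrm{i}u_2)^{\gamma_2}$ must be invariant. Each is checked by explicit computer-algebra computation, pointwise in $(u_1,u_2)$.
\end{itemize}
You do identify the right target for each step: the signed contribution of the pair of edges through each odd face. But you neither state nor verify this identity among the $\mathtt{y}^{\eps_1,\eps_2}_{\gamma_1,\gamma_2}$, and your fallback only restates what has to be shown. As written, the key claim is unproved.
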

This lemma is an obvious symmetry when $a = 1$, but beyond this case we do not have a combinatorial explanation. We executed a by-hand verification of the equality $\E \mathcal{H}_n(0,0) = 0$ for small values of $n$, but could not discern a pattern. Note that the expected height at the origin is not an integer in the full-phase smooth phase, so even this symmetry is not coming from a simpler property of the infinite-volume model. 
	
	\subsubsection*{The overview of proof of \cref{T:main-1}} \qquad \cref{T:main-1} is significantly easier than \cref{T:main-2}, and in the context of the present paper can be viewed as a warm-up to the latter statement. Indeed, by analyzing convergence to the Airy surface rather than finding paths that converge to the Airy line ensemble, we circumvent effect II described above. Moreover, the framework of random surface convergence in \cref{T:main-1} can easily incorporate the idea of an Airy line ensemble sitting in a background noise, whereas convergence of line ensembles struggles to capture this directly.
	
	\cref{T:main-1} follows uses the main theorem of \cite{BCJ18}, together with a few of the auxiliary tools described in $1$ and $2$ above and some probabilistic understanding of the smooth phase. Translated into our language, the main theorem of \cite{BCJ18} essentially says the following; a stronger and more precise version is recorded as \cref{T:BCJ18-weak}.
    
    Consider a finite collection of times $T \subset \R$ and a finite collection of disjoint intervals $(x_i, y_i), i \in I$. Then for a particular choice of mollifying sequence $\phi_n$ (see Remark \ref{R:main-thm-variants}), we have:
	\begin{equation}
		\label{E:BCJ-intro}
		\begin{split}(\cH^{\phi_n}_n[t, x_i]_n &- \cH^{\phi_n}_n[t, y_i]_n \;;\; (i, t) \in I \times T) \\
			&\implies (4\cA(t, y_i) - 4\cA(t, x_i) \;;\;  (i, t) \in I \times T).
		\end{split}
	\end{equation}
	There are two key differences between \eqref{E:BCJ-intro} and \eqref{E:height-extra-intro}: the presence of the mollifying sequence in \eqref{E:BCJ-intro} versus the presence of the i.i.d.\ random variables $X_u$ in \eqref{E:height-extra-intro}, and the fact that \eqref{E:BCJ-intro} looks at differences of the height function, rather than the height function itself. Note that in \cref{T:BCJ18-weak}, the scaling of the plane with $n$ differs slightly, but this is a minor effect and easily resolved.
	
	In \cite{BCJ18}, statement \eqref{E:BCJ-intro} is proven by asymptotic analysis of exact formulas. In that context, the mollifying sequence $\phi_n$ is crucial since it renders the limiting object determinantal. The asymptotic analysis yields a limit result for height differences rather than for the heights themselves because the underlying exact formulas are for the dimer configuration, which is a discrete derivative of the height function. To remove the mollifiers $\phi_n$, we will use that locally around each point $[t, x]_n$, the dimer configuration is well-coupled to a smooth phase (tool $2$). Fast decay of correlations in the smooth phase guarantees that the mollifying sequence must have had the effect of simply averaging out this smooth phase. It also guarantees that any noise we see from this smooth phase at a fixed point is independent of the limiting heights. By making this argument rigorous, we can replace \eqref{E:BCJ-intro} with a statement of the following form:
	\begin{equation}
		\label{E:BCJ-intro-prime}
		\begin{split}
			(\cH^{\phi_n}_n[t, x_i]_n &- \cH^{\phi_n}_n[t, y_i]_n \;;\; (i, t) \in I \times T) \\
			&\implies (4\cA(t, y_i) + Y_{i, t} - 4\cA(t, x_i) - X_{i, t} \;;\;  (i, t) \in I \times T),
		\end{split}
	\end{equation}
	where the $X_{i, t}, Y_{i, t}$ are i.i.d.\ random variables, independent of $\cA$,  each equal in law to the height at any $a$-face in the full-plane smooth phase. Next we remove the height differences. First, it is not difficult to reduce this problem to the case when $|I| = 1$, and it will be easier to work in the mollified framework \eqref{E:BCJ-intro}. If we take the point $y_1$ in \eqref{E:BCJ-intro-prime} to be very large then with high probability the right-hand side \eqref{E:BCJ-intro} is simply equal to $\{\cA(x_1, t) : t \in T)$. This suggests that it will be useful to allow $y_1$ to tend to $\infty$ with $n$. In \cref{T:BCJ18-strong}, we extend the analysis of \cite{BCJ18} to prove a version of their main theorem that allow us to do this with $y_1 = \log^{3} n$. Given this, it is enough to show that for fixed $t$, we have
	$$
	\lim_{n \to \infty} \P(\cH_n^{\phi_n}[t, \log^3 n]_n = H_n) = 1.
	$$
	Heuristically, this follows from the fact that the two-periodic Aztec diamond should agree with the full-plane smooth phase on the entire smooth region if we are sufficiently far away from the boundary. In practice, we prove this by patching together a mesoscopic smooth phase coupling near the boundary of the smooth phase that deals with regions at least distance $n^{1/3} \log^2 n$ away from the rough-smooth boundary, and a macroscopic smooth phase coupling that kicks in the moment we are distance $n^{6/7}$ away from the boundary.
	
	\subsubsection*{The overview of the proof of \cref{T:main-2}} \cref{T:main-2} is significantly more involved, and the bulk the paper is devoted to its proof. Before discussing the proof, recall that to recover the height at a vertex $v$ along a path in one of the Temperleyan forests, we simply take the height at the source or sink vertex and add four times the winding number of the path connecting $v$ to its source or sink. The upshot of this is that if paths do not typically wind (as is suggested by the simulations) then the height function should stay relatively stable as we move along a backbone path. Moreover, the different components of the Temperleyan forests can be viewed as essentially flat, decorated with local height fluctuations which come from small windings of the tree.
	
	Focusing now on the south Temperleyan forest, recall also that off the backbone paths, Wilson's algorithm guarantees that the forest paths are simply south-biased loop-erased random walks. In summary, a simplistic picture of the components in the south forest near the rough-smooth boundary suggests that each component sits almost entirely \textit{above} its corresponding backbone path (up to a logarithmic error) and has height function equal to a deterministic shift of the path index (again, up to a logarithmic error). To reconcile this picture with \cref{T:main-1}, we expect that there is a sequence of south backbone paths indexed by $J_n, J_n - 1, J_n - 2, \dots$ with $J_n := \lceil H_n/4 + n/2 \rceil$ which follow the Airy lines. See \Cref{fig:airydoodle} for a cartoon, and \Cref{fig:800a07} for a simulation.

    \begin{figure}
        \centering
        \includegraphics[width=0.9\linewidth]{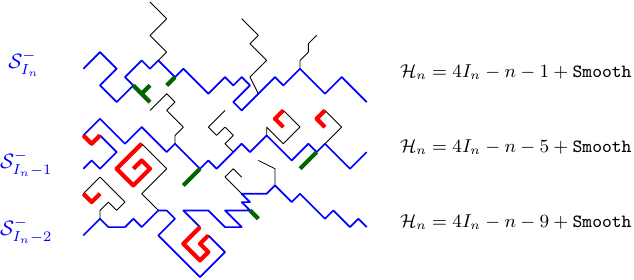}
        \caption{A cartoon of the rough-smooth boundary. The top three prelimiting Airy paths $\mathcal S_{I_n - i}^-$ are pictured, with associated typical corridor heights. Two types of forest paths that can change heights within a corridor are highlighted: red whorls, which change height by winding, and green paths, which connect up to upper backbone path despite the southward random walk drift.}
        \label{fig:airydoodle}
    \end{figure}
	
	\subsubsection*{Controlling global path behaviour} \qquad To establish this picture rigorously, we need to gain a strong level of control over the backbone paths. The principal difficulties arise from the fact that we are working with undirected, rather than directed, paths.

    Before we can address the rough-smooth boundary, we must first rule out pathological \textit{global} behaviour. 
	For purposes of illustration, consider the single path $\cS_J^- = \cS^n_{J_n}$. In order to later conduct an analysis of the rough-smooth boundary, we will need to rule out the following four global phenomena:
	\begin{enumerate}[label=(\roman*)]
		\item $\cS_J^-$ travels macroscopically into the rough or frozen regions.
		\item $\cS_J^-$ travels macroscopically into the smooth region $\tt{Smooth}$, rather than staying confined to a small window around the rough-smooth boundary $\tt{RS}$.
		\item $\cS_J^-$ stays in a small window around the rough-smooth boundary, but winds around the smooth region or backtracks, rather than simply following the rough-smooth boundary from south to west.
		\item $\cS_J^-$ traces the rough-smooth boundary from south to west, but it is not the last path to do so. In other words, $J_n \ne I_n$, where $I_n$ is the split point defined prior to \cref{T:main-2}.
	\end{enumerate} 
    See \Cref{fig:cartoon1} for a picture of the regions above.
	The tools $1$-$3$ introduced above are sufficient to rule out each of these scenarios.

     \begin{figure}
\centering
\begin{subfigure}{0.45\textwidth}
    \includegraphics[width=\textwidth]{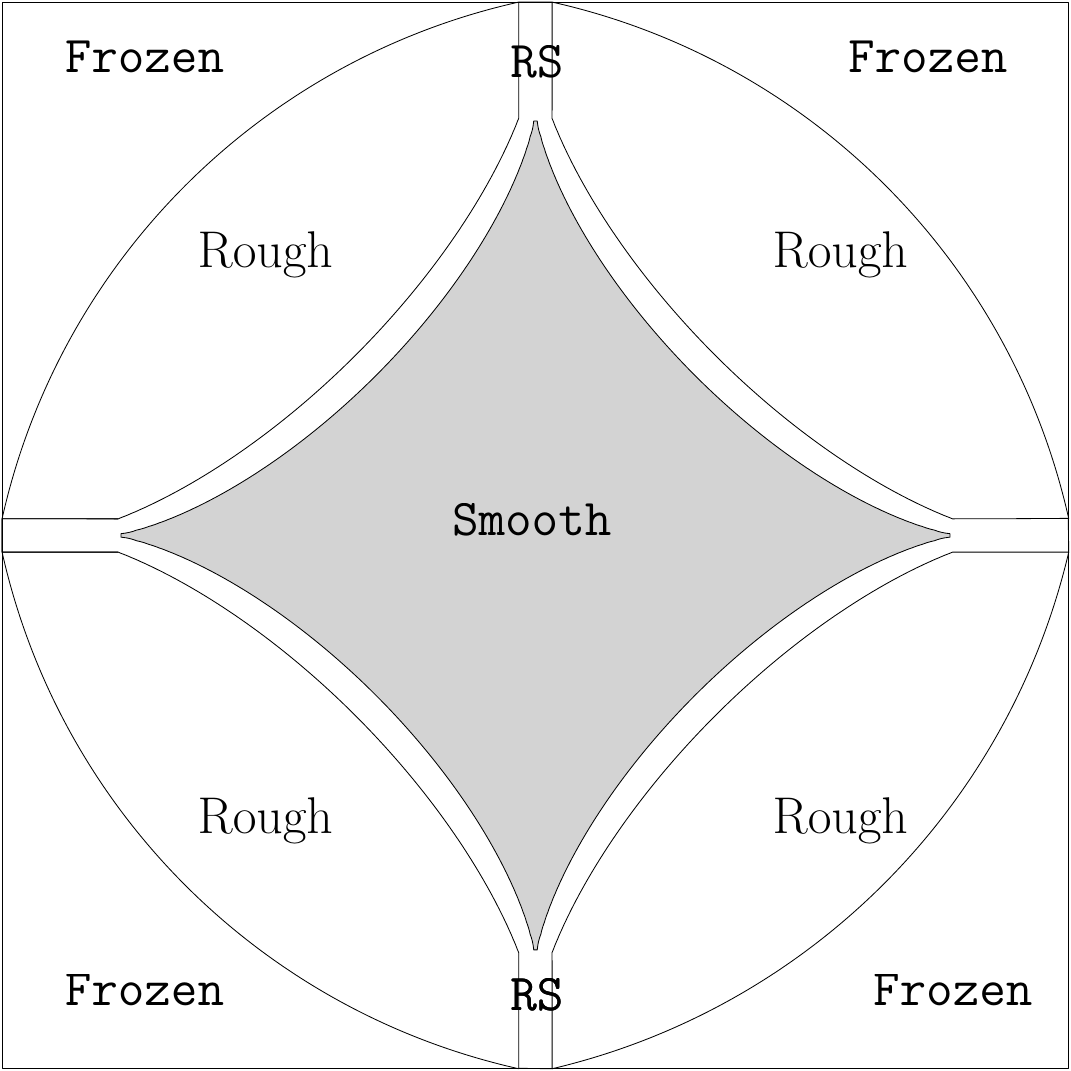}
    \caption{Regions in the two-periodic Aztec diamond.}
    \label{fig:cartoon1}
\end{subfigure}
\hfill
\begin{subfigure}{0.45\textwidth}
    \includegraphics[width=\textwidth]{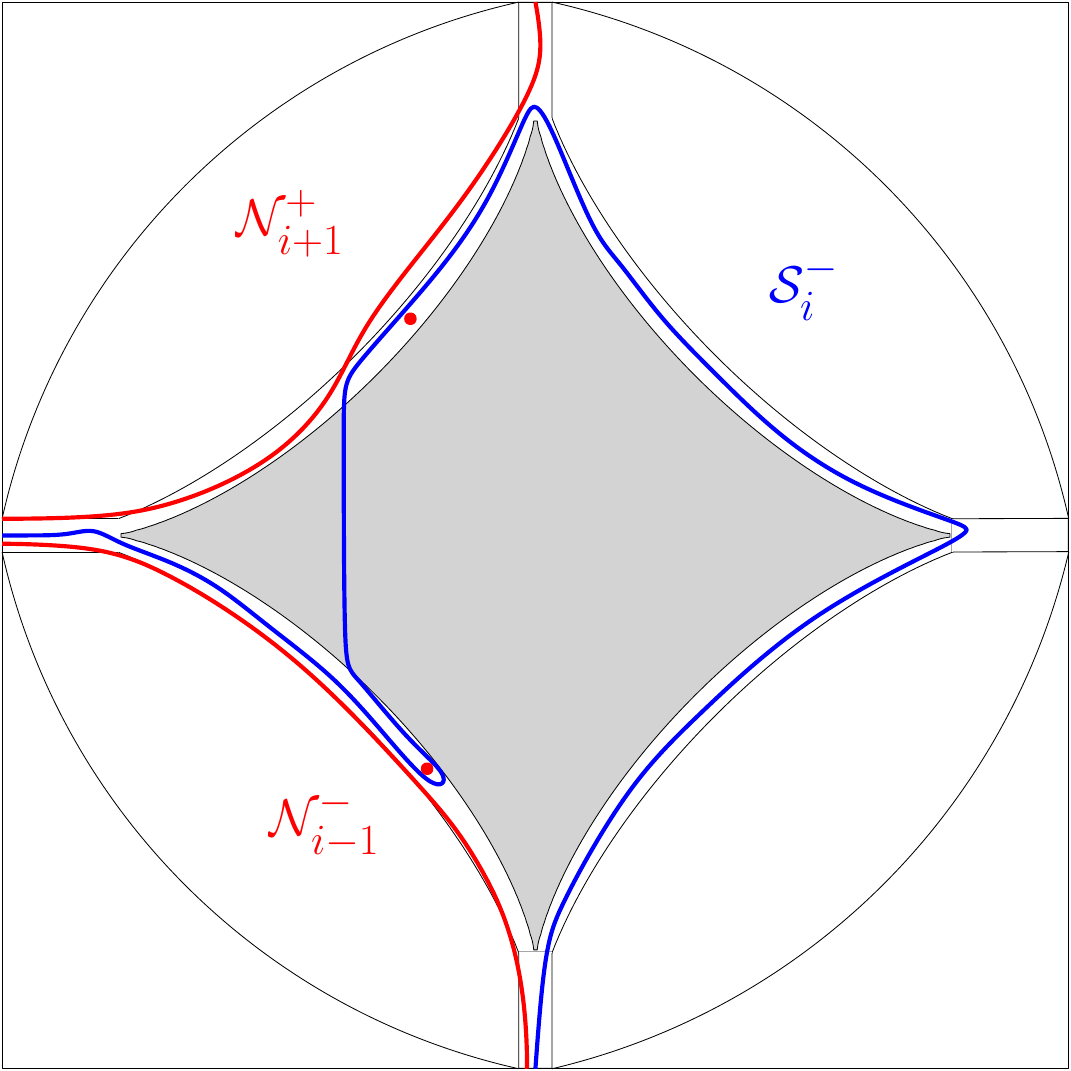}
    \caption{A potentially bad trajectory for a path $\mathcal S_i$ starting in the region $\tt{RS}$.}
    \label{fig:cartoon2}
\end{subfigure}
\end{figure}

	We can address scenario (i) with expectation and concentration estimates on the height function (tool $1$). Such estimates give that $H_n = O(1)$, and outside of a small, simply connected window around the central smooth region and the rough-smooth boundary $\tt{RS}$, the height deviates by more than $O(1)$ away from $0$. Since the path $\cS_{J}$ starts out at a height of $H_n + O(1)$ by definition, it cannot exit this window without winding. On the other hand, a path that starts on the 
	boundary of a simply connected domain must have winding number zero upon first exit. This prevents $\cS_J$ from exiting this window, ruling out (i).

    We address (ii) and (iii) together. Consider a single path $\cS_i^-$, which starts on the part of the south boundary in the set $\tt{RS}$, and ends on the west boundary. First, if we can couple the whole smooth region with the full-plane smooth phase, then this path behaves as a south-drifted random walk whenever it is in the region $\tt{Smooth}$. At a global scale, this means that it can only move directly south while in this region. This greatly restricts the kind of trajectories that the path can have, see \Cref{fig:cartoon2} for an example. From here, the rough idea for proving (ii) and (iii) is that if the path $\mathcal S_i^-$ does not simply follow the rough-smooth boundary curve from south to west, then we can find vertices in the north forest from which a loop-erased random walk would have to behave atypically, i.e.\ breaking the parabolic bounds in \Cref{fig:pathblocking}. 
    
    These points are easy to find if only a single path $\cS_i^-$ has a bad trajectory. Two such points are shown in red in \Cref{fig:cartoon2}. If multiple paths have bad trajectories, then we can create nested \textit{onions} of bad paths. 
	~\cref{L:onion-control} systematically extends the path-blocking idea in \Cref{fig:pathblocking} to give us quantitative control on onions in terms of their heights, widths, and number of layers. Together with the height estimates used in establishing (i), and the drift estimates in the smooth region, this  gives sufficient control to eliminate (ii) and (iii).

	Once (i)--(iii) are ruled out for $\cS_J^-$ and its neighboring south backbone 
	paths, we can conclude that there exists a topmost path $\cS_I^-$ tracing the 
	rough-smooth boundary from south to west within the region $\operatorname{RS}$. 
	By standard random walk estimates, all south forest paths that start above 
	$\cS_I^-$ within the smooth region $\operatorname{Smooth}$ must eventually coalesce with 
	$\cS_I^-$. By the smooth phase coupling, these south forest paths start at height $H_n$ on average. Therefore the starting height 
	of $\cS_I^-$ must coincide with the central height $H_n$, unless $\cS_I^-$ itself 
	winds around the smooth region—a possibility ruled out by (i)--(iii). 
	This forces $I_n = J_n$, resolving (iv).
	
	While a full implementation of the argument eliminating (i)--(iv) is possible using the techniques developed in this paper, doing so would require height function estimates on the whole Aztec diamond and smooth phase couplings that hold throughout the entire smooth region. To avoid some of these technicalities, we will instead prove a weaker statement which still suffices for our study of the rough-smooth boundary. The weak version replaces each region with a kind of topological skeleton and focuses on whether paths cross barriers imposed by this skeleton. A precise version is as follows.
	
	First, let $\alpha > 0$ be the unique point such that $(-\alpha, 0)$ and $(0, -\alpha)$ lie on the rough-smooth boundary curve. Let $\xi_0:[-\alpha, \alpha] \to \R^2$ be the rough-smooth boundary curve in the third quadrant, parametrized so that for all $a \in [-\alpha, \alpha]$, the point $\xi_0(a)$ is on the line $\{ {\bf v} \in \R^2 : \mathbf{v} \cdot e_2 = a \}$ (here recall that $e_1 = (1, 1), e_2 = (-1, 1)$). Define the coordinate change $\beta_n:[-\alpha n,  \alpha n] \times \R \to \R^2$ by
	$$
	\beta_n(t, x) = n\xi_0(t/n) + e_1 x.
	$$
	In words, $\beta_n(t, x)$ gives a point at location $t$ along the rough-smooth boundary curve, at distance $x$ away from the boundary. Note that the spatial scaling in $\beta_n$ does not change with $n$; the scaling $n \xi_0(\cdot/n)$ simply puts the limit curve back in unscaled coordinates. Next, define regions
		\begin{align*}
		\tt{RS}_n^* &:= \beta_n([-2n^{3/4}, 2n^{3/4}] \times [- n^{1/2} \log^{3/2} n, 2n^{1/3} \log^2 n]), \\
		\tt{PRS}_n^* &:= \beta_n([-2n^{3/4}, 2n^{3/4}] \times [- n^{1/2} \log^{2} n, n^{3/4}]),
	\end{align*}
	We view the set $\tt{RS}_n^*$ as an enlarged version of the region of the rough-smooth boundary where we will eventually see Airy fluctuations,
	and $\tt{PRS}_n^*$ as a padded version. The exact exponents in the definitions of $\tt{RS}_n^*, \tt{PRS}_n^*$ are not so important. What is important is that the $O(n^{2/3} \times n^{1/3})$-region where we are studying fluctuations is lower order compared to $\tt{RS}_n$, whose height is lower order compared to $\tt{PRS}_n$. Next, let $\tt{X} = \{{\bf v} \in \R^2 : v_1 = \pm v_2\}$.  See \Cref{fig:introtheorem} for a sketch of these regions, and an illustration of the upcoming theorem.
	\begin{thm}
		\label{T:main-3}
		Recall that $I_n$ is the label of the last south backbone path $\cS_i^-$ to move from the south boundary to the west boundary. Then with probability $1 - o(1)$ as $n \to \infty$, the following events hold:
		\begin{enumerate}[label=\arabic*.]
			\item (Boundary paths avoid the skeleton and are funnelled through the boundary) The paths $\cS_i^-$ with $I_n - n^{1/4} \le i \le I_n$ do not intersect the region $(\tt{X} \cup \tt{PRS}_n^*)\setminus \tt{RS}_n^*$.
			\item ($\cS_{I_n}^-$ is the top path) The path $\cN_{I_n}^-$ does not intersect the region $\tt{PRS}_n^*$. By interlacing, neither do any of the paths $\cS_i, i > I_n$.
			\item (Height matching) $H_n = 4I_n - n - 1$. 
		\end{enumerate}		
	\end{thm}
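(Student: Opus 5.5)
The plan follows the skeleton sketched above for ruling out (i)--(iv), organized around the three tools: height function estimates, smooth phase couplings, and spanning forest estimates with interlacing.

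\textbf{Step 1: height control on the skeleton.} Using the expectation asymptotics for $\cH_n$ together with concentration, I would show that with probability $1-o(1)$ the following hold.
\begin{itemize}
\item $H_n = O(1)$. Here the symmetry \cref{L:expected-middle-height-intro} pins the mean at $0$.
\item On $\tt{X}\setminus \tt{PRS}_n^*$ and on the outer part of $\tt{PRS}_n^*$ (the rough side, at distance $\gg n^{1/2}\log^{3/2} n$ into the rough region), the height differs from $H_n$ by at least a large multiple of $\log n$ in the appropriate direction.
\end{itemize}
Combining this with the Temperley height rule (height along a forest path equals the sink height plus four times the winding number), the following holds for any path $\cS_i^-$ with $|i - J_n| \le 2n^{1/4}$. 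It enters the rough side of $\tt{PRS}_n^*\setminus\tt{RS}_n^*$, or the part of $\tt{X}$ outside $\tt{RS}_n^*$, only if its height changes by more than $n^{1/4}$-many units. A first-exit winding argument rules this out: the path starts on the boundary of a simply connected region, so its winding number is zero at first exit.

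\textbf{Step 2: the smooth side.} For the part of $\tt{PRS}_n^*$ at distance more than $2n^{1/3}\log^2 n$ into the smooth region, I would use the mesoscopic smooth phase coupling to replace the forest there by the biased spanning forest on $\Z^2$. By Wilson's algorithm, a south path can only enter this region by travelling almost straight south from above. The onion argument (\cref{L:onion-control}) then turns any such excursion, whether a single bad path or nested layers of them, into a north-forest vertex whose loop-erased walk must violate the parabolic range bound of \Cref{fig:pathblocking}. This has probability $e^{-c\log^2 n}$ per vertex, which beats a polynomial union bound. This gives part 1, including the ``funnelled through the boundary'' statement, since the paths must then cross $\tt{X}$ inside $\tt{RS}_n^*$.

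\textbf{Step 3: identifying the top path and matching heights.} Deep in the smooth region, again via the coupling, every south forest path started there drifts south and coalesces, before leaving $\tt{PRS}_n^*$, with the first backbone path it meets. Its height is $H_n$ up to $o(1)$-probability errors, since in the smooth phase the winding is $O(1)$ and averaging over $\zeta_n$ defines $H_n$. The component heights of consecutive south backbone paths differ by exactly $4$, with the component of $\cS_i^-$ sitting at height $4i-n-1$ by the boundary values. By Step 1 these paths do not wind, so the backbone path that the smooth-region paths join must have index $J$ with $4J-n-1=H_n$. By Step 2 this path crosses from south to west, so $J\le I_n$. If $J<I_n$, then $\cS_{I_n}^-$ would lie above it and intersect smooth-region paths of height $H_n$, contradicting the height $4I_n-n-1\ne H_n$. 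Hence $J = I_n$, which is part 3.

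For part 2, the dual path $\cN_{I_n}^-$ separates the $\cS_{I_n}^-$ component from the east-going components. If it entered $\tt{PRS}_n^*$, it would either cut off a region of the smooth phase where heights differ from $H_n$, which the coupling and Step 1 rule out, or create a blocking configuration of the type in \Cref{fig:pathblocking}, which the Wilson estimate rules out. The claim for $\cS_i$, $i>I_n$, then follows by interlacing.

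\textbf{Main obstacle.} I expect Step 2 to be the hardest part, specifically the onion bookkeeping: controlling many simultaneous nested bad paths with heights, widths and layer counts. The coupling is only valid on mesoscopic boxes near the boundary, so it has to be patched with the macroscopic coupling further inside.
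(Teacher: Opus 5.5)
Your toolbox is the right one (heights with first exit, Wilson/parabolic bounds, \cref{L:onion-control}, smooth couplings), but the argument is missing the global step, and as written Steps 1--3 do not close.

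\textbf{The gap: nothing controls the paths globally.} Step 1 asserts that on $\tt{X}\setminus\tt{PRS}_n^*$ the height is far from $H_n$. This is false. $\tt{X}$ runs through the whole smooth region, where $\cH_n\approx H_n$. It also passes through the NE, NW and SE rough--smooth caps, where by \cref{L:expected-middle-height} the mean height is $\approx 0$; the paper only gets $|\cH_n|\le n^{1/4}\log^4 n$ on $\tt{A}_\times\cup\tt{B}_\times$.

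\begin{itemize}
\item Heights plus first exit can only exclude regions well inside the rough phase (the paper's $\tt{C}_\times$). They cannot stop a path of index near $I_n$ from travelling through the smooth bulk, or along the annulus hugging the rough--smooth curve, to another arm of $\tt{X}$.
\item Heights are controlled only up to $n^{1/4}\log^2 n$. A single winding of a backbone path around the smooth region (a shift of $4$) is therefore invisible. So ``by Step 1 these paths do not wind'' in Step 3 is unjustified. That claim is exactly what you need to equate the component height with $H_n$, and to argue that $\cS_{I_n}^-$ separates a deep smooth point from $\cS_J^-$ when $J<I_n$.
\item Step 2 does not repair this. It is confined to $\tt{PRS}_n^*$, and its premise that a path can only enter ``from above'' presupposes the path has already reached the smooth region beyond $\tt{PRS}_n^*$. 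That is precisely what must be ruled out.
\item Step 3 ignores the free paths $\cS_{I_n}^+$ and $\cN_{I_n}^-$, which genuinely cross the smooth region. A south path from a deep smooth point could hit $\cS_{I_n}^+$ first. Part 2 concerns $\cN_{I_n}^-$ itself, and your ``blocking'' argument has no input controlling the paths that bound its corridor.
\end{itemize}

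\textbf{How the paper closes it.} The key is the global barrier lemma \cref{L:QSE-pass}.

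\begin{itemize}
\item From heights alone it first gets $|I_n-n/4|\le n^{1/4}\log^2 n+1$, because the forbidden sets cut the south boundary off from the west or east boundary.
\item Both smooth couplings give the parabolic drift bound for every forest segment in $\tt{A}_\times$.
\item To reach the part of the smooth cross beyond $L_2=\{x+y=-n^{5/6}/2\}$, $\cS_{I_n}^-$ must cross the strip between $L_1=\{x+y=-n^{5/6}\}$ and $L_2$. It cannot move north by order $n^{5/6}$ inside $\tt{A}_\times$, so this crossing has horizontal extent $\ge n^{5/6}/100$. Since the path must return to the west boundary, this produces $\mathsf{Onion}(n,n^{5/6}/100,5n^{1/4}\log^2 n)$, which \cref{L:onion-control} forbids.
\item A direct drift argument in a boundary strip then excludes the smooth strip along the SW arm.
\end{itemize}

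The free paths are handled in \cref{L:SN-path-control}.
\begin{itemize}
\item A long excursion of $\cS_{I_n}^+$ toward the boundary and back would enclose a region no other backbone path can enter, by \cref{L:QSE-pass}. That is a one-layer onion of width $n^{5/6}/12$, which is forbidden.
\item $\cN_{I_n}^-$ is then kept away because every south vertex in the corridor bounded by $\cN_{I_n-1}^-,\cN_{I_n}^-,\cN_{I_n+1}^+$ must reach $\cS_{I_n}^\pm$ within its parabola.
\end{itemize}
Only with these in hand does the height matching of \cref{C:height-path-equality} go through.

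\textbf{Two smaller points.} $H_n=O(1)$ needs the tightness of \cref{L:variance-middle-height}, not just mean zero. The height gap in Step 1 must beat $n^{1/4}\log^2 n$, not be a ``large multiple of $\log n$'', to exclude all indices within $2n^{1/4}$ of $J_n$.
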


     \begin{figure}
        \centering
        \includegraphics[width=0.5\linewidth]{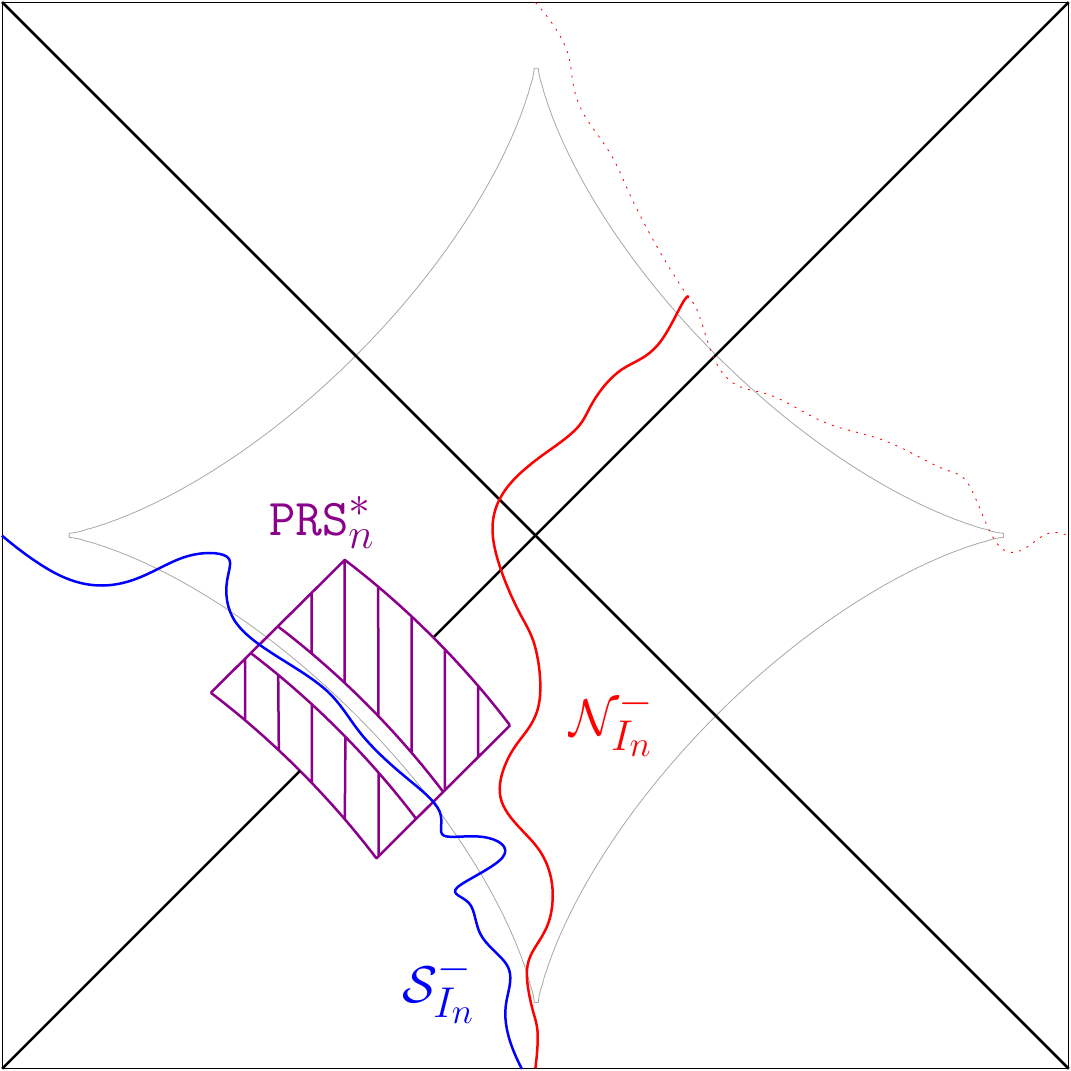}
        \caption{Theorem \ref{T:main-3} gives global control of backbone paths. One of the main upshots of the theorem is that the path $\cS_{I_n}^-$ avoids the region $(\tt{X} \cup \tt{PRS}_n^*)\setminus \tt{RS}_n^*$, whereas the path $\cN_{I_n}^-$ avoids the region $\tt{PRS}_n^*$ entirely before meeting a path started on the north side of the Aztec diamond. This establishes that $\cS_{I_n}^-$ is the top path moving through the rough-smooth boundary}
        \label{fig:introtheorem}
    \end{figure}
 
	Point $1$ in the above theorem should be viewed as a weak resolution to problems (i)-(iii) above, point $2$ a resolution to problem (iv), and point $3$ an upshot. Note that the precise powers of $n$ in the definition of $\tt{RS}_n$ are not particularly important, beyond the fact that $n^{5/6} \gg n^{2/3}$ and $n^{1/2} \gg n^{1/3}$. \cref{T:main-3} is restated with more detail in the sequel as \cref{T:main-3-restatement}.
	
	\subsubsection*{Path regularity at the rough-smooth boundary.} \qquad Given the weak global control in \cref{T:main-3}, we have a starting point for understanding the paths $\cS_i$ at the rough-smooth boundary. If we can additionally prove a kind of weak path regularity locally, then we can deduce \cref{T:main-2}. To this end, we will prove the following estimate as part of \cref{P:backtrack-estimate}.
	\begin{thm}
		\label{T:overhang-thm}
        The following estimates holds simultaneously with probability $1 - o(1)$ as $n \to \infty$. Let $k \in \{0, \dots, \lfloor n^{1/4} \rfloor\}$, and suppose that $v = (v_1, v_2), w = (w_1, w_2)$ are two vertices in $\tt{RS}_n^* \cap \cS_{I-k}^-$ where $v$ occurs \textbf{before} $w$ on the path $\cS_{I-k}^-$. Then
			\begin{equation}
            \label{E:backtrack-est}
				w_1 - v_1 < (2k + 1) n^{1/4} \log^{2} n.
			\end{equation}
	\end{thm}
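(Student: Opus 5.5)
We prove \eqref{E:backtrack-est} by induction on $k$, working on the high-probability event of \cref{T:main-3}. The basic mechanism is the path-blocking picture of \cref{fig:pathblocking}, applied to the north forest.

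Suppose the path $\cS_{I-k}^-$ visits $v$ and then later $w$, with $w_1 - v_1$ large. Since the path is directed from the south boundary toward the west sink, it must overhang: it travels east by at least $w_1 - v_1$ and then returns west. This creates a pocket, bounded by the portion of $\cS_{I-k}^-$ between $v$ and $w$ together with a short horizontal or vertical segment, that is open only toward the direction from which the path came. By duality, the north backbone path $\cN_{I-k}^-$ lies in the corridor immediately above $\cS_{I-k}^-$, and $\cN_{I-k-1}^-$ lies immediately below it.

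We then locate a vertex $u$ of the appropriate forest inside the pocket, at horizontal distance of order $w_1-v_1$ from its opening. The choice of forest is dictated by the drift: south forest paths drift south, north forest paths drift north.

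\textbf{Base case $k=0$.} By \cref{T:main-3}(2), $\cN_I^-$ does not enter $\tt{PRS}_n^*$. Hence no north backbone path lies above $\cS_I^-$ inside the window, and the region above $\cS_I^-$ there is south-forest territory belonging to the component of $\cS_I^-$ or a dual region. The forest path from $u$ is the loop-erasure of a biased random walk stopped on hitting the backbone (Wilson's algorithm). Whether or not it drifts toward the pocket wall, to exit the pocket it must travel a horizontal distance $L = w_1 - v_1$ against its own drift before reaching a backbone path different from the one enclosing it. Standard biased random walk estimates, namely the parabolic range constraint in which a walk with drift covers transverse distance $L$ only in time about $L^2$ while its longitudinal displacement is about $L^2$, bound the probability of this by $\exp(-cL/\text{polylog})$. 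Two inputs keep the geometry controlled:
\begin{itemize}
\item the thinness of the corridor, of order $n^{1/4}\log^2 n$, which follows from \cref{C:close-ties};
\item the smooth-phase coupling of tool 2, valid in $\tt{RS}_n^*$ at mesoscopic scale, which lets us treat the local law as the biased spanning forest.
\end{itemize}
Taking $L \ge n^{1/4}\log^2 n$ and a union bound over the polynomially many choices of $(v,w)$ gives the claim. More precisely, the relevant scale is the corridor width, so the argument shows that an overhang of size $L$ forces a dual path to wrap inside a region of width $L$, and the onion bound \cref{L:onion-control} makes this unlikely once $L$ exceeds $n^{1/4}\log^2 n$.

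\textbf{Inductive step.} For general $k$, an overhang of $\cS_{I-k}^-$ can be supported by the overhangs of the $k$ paths above it; interlacing forces these nested paths to backtrack together, forming an onion. By the inductive hypothesis, $\cS_{I-k+1}^-$ backtracks by less than $(2k-1)n^{1/4}\log^2 n$. This exhausts the pocket room supplied by the path above, leaving an excess of $2n^{1/4}\log^2 n$: one $n^{1/4}\log^2 n$ for the gap to the dual path $\cN_{I-k}^-$ and one for $\cS_{I-k}^-$ itself. The blocking estimate is applied inside that excess exactly as in the base case. This explains the linear growth $(2k+1)$ in \eqref{E:backtrack-est}.

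\textbf{Union bound.} We then take a union bound over $k \le n^{1/4}$ and over all pairs $(v,w)$ in $\tt{RS}_n^*$. Since each failure probability is at most $\exp(-c\log^2 n)$, this costs only a polynomial factor.

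\textbf{Main obstacle.} The hard step is the geometric one: showing that an overhang in excess of what the upper path permits really does produce a vertex whose loop-erased walk must behave atypically. Undirected paths can wind, so we must rule out the possibility that the pocket is open through some channel. For this we rely on \cref{T:main-3}(1), which keeps these paths inside $\tt{RS}_n^*$ and away from the skeleton $\tt{X}$, and on planarity to identify which of the pocket's walls is a backbone path of the wrong colour.
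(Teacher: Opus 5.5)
The overall picture is right: a forest path started deep in an overhang pocket can only leave through the opening, $\mathsf{Para}$ confines it to a parabola, and the bound is pushed inward path by path. But the quantitative input you use is circular, and the actual source of the $n^{1/4}$ scale is missing.

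\cref{C:close-ties} is derived \emph{from} this estimate: its proof invokes \cref{P:backtrack-estimate}. So corridor thinness cannot be an input here. There is also no smooth-phase coupling on $\tt{RS}_n^*$. That region straddles the rough-smooth boundary and reaches $n^{1/2}\log^{3/2}n$ into the rough phase, while \cref{L:local-coupling} only gives a useful bound on sets of size $o(n^{1/12})$.

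None of this is needed. Conditional on the backbone, \cref{L:simple-sample} and \cref{L:regular-paths} describe forest paths in the Aztec diamond itself. Hence the argument is deterministic on the event of \cref{T:main-3-restatement}, with no per-pair union bound.

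The scale comes from the \emph{depth} of the pocket, not from a corridor width. By Part 3 of \cref{T:main-3-restatement}, the relevant paths avoid $(\tt{X}\cup\tt{PRS}^*_n)\setminus\tt{RS}^*_n$. So the strip $R=([v_1,w_1]\times\R)\cap\tt{RS}^*_n$ has height $h\le n^{1/2}\log^{3/2}n+2n^{1/3}\log^2 n+2(w_1-v_1)$. A walk started at horizontal position $(v_1+w_1)/2$ must reach horizontal distance $(w_1-v_1)/2-1$ within that depth. This is transverse motion, not motion against the drift, and it is ruled out once $w_1-v_1$ exceeds the parabola's half-width at depth $h$. That half-width is $(\log^2 n+\log(h+1))\sqrt{h+1}\approx n^{1/4}\log^{11/4}n$. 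So this mechanism, as carried out in the paper too, naturally yields the estimate with a somewhat larger power of $\log n$ than $\log^2 n$. Only $n^{1/4}\,\mathrm{polylog}(n)\ll n^{1/3-\eps}$ is used downstream.

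In the base case you must take $u$ to be a \emph{north} vertex in a pocket of $U(\cS_I^-)$ whose upper and lower walls are pieces of $\cS_I^-$. Its north forest path cannot cross these south walls, by duality. It cannot terminate inside either, since no north backbone path meets $U(\cS_I^-)\cap\tt{RS}^*_n$ (Part 4). So it must exit through $\{v_1,w_1\}\times\R$. A south vertex would simply drop onto the lower wall.

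Your inductive step has a genuine hole. The bound on $\cS_{I-k+1}^-$ does not control a pocket of $\cS_{I-k}^-$, because the outermost dual path inside such a pocket is $\cN_{I-k}^-$. Your ``one $n^{1/4}\log^2 n$ for the gap to $\cN_{I-k}^-$'' is precisely an unproved backtrack bound for $\cN_{I-k}^-$.

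The paper inducts along $\cS_I^-,\cN_{I-1}^-,\cS_{I-1}^-,\cN_{I-2}^-,\dots$. In a pocket of $\cN_{I-k}^-$, the only south backbone path that can reach in is $\cS_{I-k+1}^-$; every other one is separated from the deep part by it. It enters and leaves through the same side, and by hypothesis penetrates less than $(2k-1)n^{1/4}\log^2 n$. The rest of the pocket contains no south backbone, and a south vertex there gives the same parabola contradiction. Each alternation costs one additive increment, which is where the $2k$ for $\cN_{I-k}^-$ and $2k+1$ for $\cS_{I-k}^-$ come from.

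Your onion remark is in the same spirit. \cref{L:onion-control}, applied in $U(\cS_{I-k}^-)$ with at most $2k+1$ layers, encodes this layer-by-layer argument. This is because, by Parts 4--5, only $\cN_{I-k}^-,\cS_{I-k+1}^-,\dots,\cS_I^-$ besides $\cS_{I-k}^-$ can enter $U(\cS_{I-k}^-)\cap\tt{PRS}^*_n$. But its $r_2$ is again the depth of $\tt{RS}^*_n$, not a corridor width.
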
   
\cref{T:overhang-thm} is a kind of \textit{backtrack estimate}, which gives quantitative control on how far the undirected paths $\cS_i$ are from being directed as they move through the rough-smooth boundary. The estimate shows that for paths of index $I -k$ with $k = O(1)$, the maximum size of a backtrack in the rough-smooth boundary region is of order $O(n^{1/4} \log^2 n)$, which is lower order than the Airy scaling. \cref{T:overhang-thm} is not expected to be sharp, and indeed, the true backtrack bound above should be polylogarithmic. 

In order to illustrate how \cref{T:overhang-thm} feeds into \cref{T:main-2}, let us temporarily assume we can show \eqref{E:backtrack-est} with a stronger bound of size $\alpha_n = o(n^{1/24})$. 

Fix $x, t \in \R$, and let $v = [x, t]_n$. Let $\pi$ be the south forest path starting incident to $v$, and ending at a sink vertex $s$. Let $w$ be the first vertex on a backbone path $\cS_i$ visited by the path $\pi$, and let $w'$ be the first vertex visited by $\pi$ on $\cS_i$ whose height $\cH_n(w')$ equals the sink height $\cH_n(s)$. The height $\cH_n(v)$ can be written as a sum of the sink height $\cH_n(s) = \cH_n(w')$, and the relative height $\cH_n(v) - \cH_n(w')$. Now, we can use a local smooth phase coupling around $v$ (tool $2$) to say that in a box $B$ of width $4 \alpha_n$ around $v$, the dimer configuration is close to the smooth phase (since $n^{-1/3} \cdot \alpha_n^8 = o(1)$). 
	Because any south forest path entering a smooth region experiences a strong downward drift, this makes it highly unlikely that $\cS_i$ will enter a slightly smaller box $B'$ of width $2 \alpha_n$ centered around $v$. In particular, we expect $w'$ to lie outside of $B'$, and the height difference $\cH_n(v) - \cH_n(w')$ should be given by the height of $v$ in the smooth phase coupling on $B'$, plus a contribution if the path $\pi$ ever winds around the box $B'$. By our backtrack estimate, winding cannot happen around boxes of width larger than $\alpha_n$, and so $\pi$ cannot wind around the box $B'$. This allows us to identify $\cH_n(v) - \cH_n(w')$ as the smooth term $X$ in \cref{T:main-1}. The sink height $\cH_n(w')$ (equivalently, the path index $i$ after a deterministic shift) then gives the Airy term. Applying this idea on a fine mesh of vertices $v$ and using some of the weak global path control from \cref{T:main-3} then translates \cref{T:main-1} into \cref{T:main-2}. 

    Now, the above argument fails if we directly apply the estimate \eqref{E:backtrack-est}, since $n^{1/4} \log^2 n$ is much larger than the $o(n^{1/24})$-scale on which we have a high-probability smooth phase coupling. In order to work around this issue, rather than executing the above argument with a single smooth phase coupling, we construct a ring of boxes of width $\log^3 n$ which will act as a path barrier. By tool $2$ above, the smooth phase coupling succeeds each box with probability at least $1 - O(n^{-1/3} \log^{24} n)$, and so by a union bound, we can create a ring containing $n^{1/3 - \eps}$ overlapping boxes, and couple each box to a different copy of the smooth phase so that all couplings simultaneously succeed with probability at least $1 - n^{-\eps/2}$. Now, the box width we have chosen is large enough so that drift estimates on smooth phase imply that the probability of a backbone path passing through any box in this ring is smaller than any polynomial in $n$. Therefore with high probability, we will only have a winding contribution to $\cH_n(v) - \cH_n(w')$ if one of the paths $\cS_i$ winds around the entire ring, which has width at least $n^{1/3 - \eps}$.  For $\eps < 1/12$,  we have $n^{1/3 - \eps} \gg n^{1/4} \log^2 n$, and so \cref{T:overhang-thm} can guarantee that this does not happen.
	
\subsubsection*{The overview of the proof of \cref{T:overhang-thm}.} \qquad We start with the bound on the top path $\cS_I^-$. From \cref{T:main-3}, $\cS_I^-$ moves through the region $\tt{RS}_n^*$ without entering the region $\tt{X} \cup \tt{PRS}_n^*$, whereas $\cN_I^-$ never enters the region $\tt{PRS}_n^*$. In other words, the picture will resemble \Cref{fig:introtheorem}. Now consider a north forest path $\pi$ starting just above $\cS_I^-$ in the region $\tt{RS}_n^*$. By Wilson's algorithm, the path $\pi$ is simply the loop-erasure of biased random walk, stopped when it hits $\cN_I^-$. In particular, it will move as a biased loop-erased walk until it exits $\tt{RS}_n^*$, and so with high probability, it will stay in a parabola opening up vertically while it sits in this region. Since the region $\tt{RS}_n^*$ has height $O(n^{1/2} \log^{3/2} n)$, the parabola has maximum width given by the square root of this: $O(n^{1/4} \log^{3/4} n)$. Now, if the path $\cS_I$ has backtracks which are much larger than this scale, it will trap some of these loop-erased walk paths, contradicting Wilson's algorithm. 
	
	Now, we can apply a similar idea to bound the backtrack size of the top north backbone path $\cN_{I-1}^-$. Unlike the path $\cS_I^-$, as this path moves through $\tt{RS}_n^*$ it in a corridor between $\cS_{I-1}^-$ and $\cS_I^-$, rather than in a region which is unbounded above. However, the backtrack estimate on $\cS_I^-$ guarantees that the upper boundary of this corridor is still regular, and so a version of the argument still applies. We can continue moving through the lines $\cS_{I - 1}^-, \cN_{I-2}^-, \dots$ inductively, sacrificing only an additive factor with each successive line index.

	\section{Trees, forests, and Temperley's bijection}
	\label{S:trees}
	In this section, we describe and prove Temperley's bijection for the Aztec diamond, and describe its relationships to the height function and dimer representations. Along the way, we will prove some basic combinatorial and probabilistic properties of these correspondences (e.g. symmetries, interlacing, Wilson's algorithm).
	
	Before discussing Temperley's bijection, we record three basic symmetries of the two-periodic Aztec diamond. Here we let $f_* \mu$ denote the pushforward of a measure $\mu$ under a map $f$. The following lemma is immediate. 
	
	\begin{lem}
		\label{L:sym}
		The measure $\P_{a, n}$ is invariant under the following three operations sending the south vertex set $\tt{S}$ to $\tt{E}, \tt{W}$, and $\tt{N}$, respectively.
		\begin{itemize}
			\item $\P_{a, n} = (\rho_\tt{E})_* \P_{a, n}$ where $\rho_\tt{E}(x, y) = (-y, -x)$. The map $\rho_\tt{E}$ sends $\tt{S} \mapsto \tt{E}, \tt{E} \mapsto \tt{S}$ and $\tt{N} \mapsto \tt{W}, \tt{W} \mapsto \tt{N}$.
			\item $\P_{a, n} = (\rho_\tt{W})_* \P_{a, n}$ where $\rho_\tt{W}(x, y) = (y, x)$. The map $\rho_\tt{W}$ sends $\tt{S} \mapsto \tt{W}, \tt{W} \mapsto \tt{S}$ and $\tt{N} \mapsto \tt{E}, \tt{E} \mapsto \tt{N}$.
			\item $\P_{a, n} =(\rho_\tt{N})_* \P_{a, n}$ where $\rho_\tt{W}(x, y) = (-x, -y)$. The map $\rho_\tt{N}$ sends $\tt{S} \mapsto \tt{N}, \tt{N} \mapsto \tt{S}$ and $\tt{W} \mapsto \tt{E}, \tt{E} \mapsto \tt{W}$.
		\end{itemize}
	\end{lem}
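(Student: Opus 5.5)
Each of the three claimed invariances follows from the same two checks. The map $\rho$ must be a graph automorphism of $\tt{A}_n$. It must also send $a$-faces to $a$-faces and $b$-faces to $b$-faces, so that edge weights are preserved. Given these, the bijection $D \mapsto \rho(D)$ on dimer configurations preserves the product of edge weights, hence preserves $\P_{a,n}$. So the plan is to verify these properties for $\rho_{\tt E}(x,y)=(-y,-x)$, $\rho_{\tt W}(x,y)=(y,x)$ and $\rho_{\tt N}(x,y)=(-x,-y)$. We then read off the action on the vertex classes $\tt{N},\tt{S},\tt{E},\tt{W}$.

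\textbf{Graph automorphism.} Each map is a linear isometry of $\Z^2$ preserving the box $[-n,n]^2$ and the parity of $i+j$, so it preserves $\tt{\tilde V}$ and $\tt{V}$. It permutes the step set $\{\pm e_1,\pm e_2\}$: for instance $\rho_{\tt W}$ fixes $e_1$ and sends $e_2\mapsto -e_2$. Hence adjacency is preserved.

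\textbf{Weights.} An $a$-face $(i,j)$ has $i,j$ odd and $i+j\equiv 2 \pmod 4$. The maps send $(i,j)$ to $(j,i)$, $(-j,-i)$ or $(-i,-j)$. Each of these keeps both coordinates odd, and the coordinate sum becomes $\pm(i+j)$. Since $-(i+j)\equiv i+j \pmod 4$ when $i+j \equiv 2 \pmod 4$, $a$-faces go to $a$-faces, and likewise $b$-faces go to $b$-faces. Because the maps are bijections on faces preserving incidence, edge weights are preserved.

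\textbf{Vertex classes.} The map $\rho$ carries the unique $a$-face $f(v)$ adjacent to $v$ to $f(\rho v)$. Therefore the displacement $v-f(v)$ is transformed by the linear part of $\rho$. Under $\rho_{\tt E}$ the displacement $(0,-1)$ becomes $(1,0)$, so $\tt S\mapsto\tt E$. Similarly $(0,1)\mapsto(-1,0)$ gives $\tt N \mapsto \tt W$, and since $\rho_{\tt E}$ is an involution this also gives $\tt E\mapsto \tt S$ and $\tt W\mapsto\tt N$. Under $\rho_{\tt W}$, $(0,-1)\mapsto(-1,0)$ gives $\tt S\mapsto\tt W$, and $(0,1)\mapsto(1,0)$ gives $\tt N\mapsto\tt E$. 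Under $\rho_{\tt N}$ every displacement is negated, which swaps $\tt S\leftrightarrow\tt N$ and $\tt E\leftrightarrow \tt W$.

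There is no genuine obstacle. The only point needing care is the mod $4$ check that the face types are preserved under negation of the coordinate sum, which is what makes reflections through the anti-diagonal and the point reflection weight-preserving.
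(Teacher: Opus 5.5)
Your proof is correct, and it is the routine check the paper has in mind when it calls this lemma immediate. Each map is an automorphism of $\tt{A}_n$ that preserves the $a$/$b$ face types (by your mod $4$ observation), hence preserves edge weights, and the vertex classes transform by the linear part acting on $v - f(v)$. One harmless point you could add is that $\rho_{\tt{E}}$ and $\rho_{\tt{W}}$ swap the white and black vertex sets; this does not matter because $\P_{a,n}$ is defined on perfect matchings with no reference to the coloring, and it is consistent with the claimed swaps such as $\tt{S} \mapsto \tt{E}$ and $\tt{N} \mapsto \tt{W}$.
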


	\subsection{A generalization of Temperley's bijection} \qquad
	
	In this subsection, we prove the generalized Temperley's bijection introduced in Section \ref{S:temperley-intro}, along with a handful of other combinatorial properties of this correspondence. First, recall from Section \ref{S:temperley-intro} the definition of the extended south-vertex set $\tt{\bar S}$, the south-vertex graph $\tt{G(S)}$, the source and sink boundaries  $\partial^{\tt{o}} \tt{S}, \partial^{\tt{x}} \tt{S}$, and the notion of a dimer-compatible forest $F$ on $\tt{G(S)}$. 
	
	We give similar definitions for the north vertices using the rotation $\rho_\tt{N}$ is \cref{L:sym}. Define $\tt{\bar N} = \rho_\tt{N} \tt{\bar S}, \tt{G}(\tt{N}) = \rho_\tt{N} (\tt{G}(\tt{S}))$ and say that a spanning forest $F$ of $\tt{G}(\tt{N})$ is dimer-compatible if $F = \rho_\tt{N} F'$ for a dimer-compatible spanning forest $F'$ on $\tt{G}(\tt{S})$. We define  $\partial_N \tt{N}, \partial_S \tt{N}, \partial_E \tt{N}, \partial_W \tt{N}$ exactly as in \eqref{E:cardinal-boundaries} but with $\tt{N}$ in place of $\tt{S}$, and set $\partial^\tt{x} \tt{N} = \partial_E \tt{N} \cup \partial_W \tt{N}, \partial^\tt{o} \tt{N} = \partial_N \tt{N} \cup \partial_S \tt{N}$, and $\partial N = \partial^\tt{x} \tt{N} \cup \partial^\tt{o} \tt{N}$.
	
	Note that we could give similar definitions for the east and west vertices using the other two symmetries in \cref{L:sym}, and develop Temperley's bijection in these settings as well. We will not need these bijections in the present paper.
	
	The wiring structure for paths starting on the source boundary in a DCF is quite rigid. We specify this structure in the next lemma. We state it for DCFs on $\tt{G}(\tt{S})$, but a symmetric version holds for $\tt{G}(\tt{N})$ by applying the map $\rho_\tt{N}$. See \cref{fig:temperley-labelled} for an example of \cref{L:tree-paths}. 

    \begin{figure}
		\centering
        \includegraphics[scale=1]{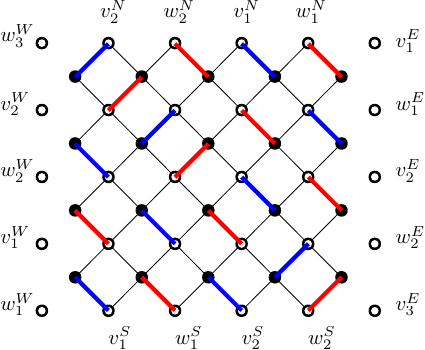} 
    \vspace{2em}

		\includegraphics[scale=1]{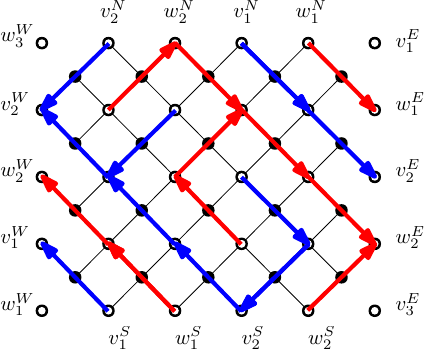}
        \caption{A dimer configuration on an Aztec diamond with $n = 4$, together with the associated pair of north (red) and south (blue) Temperleyan forests. Boundary vertex labelling is as in \cref{L:tree-paths}, \cref{L:dimer-compatible-duals}. Here $I = 2$.}
        \label{fig:temperley-labelled}
	\end{figure}
	
	\begin{lem}
		\label{L:tree-paths}
		Label the vertices of $\partial \tt{S}$ as follows, moving clockwise in a cycle around the boundary from the northeast corner $(n+1, n)$:
		$$
		v_1^E, \dots, v_{n/2+1}^E, v^S_{n/2}, \dots, v^S_1, v^W_1, \dots, v^W_{n/2}, v^N_{n/2}, \dots, v^N_1. 
		$$
		Now, consider a DCF $F$ on $\tt{G}(\tt{S})$, and recall the map $\theta_F:\partial^\tt{o} \tt{S} \to \partial^\tt{x} \tt{S}$ from Definition \ref{D:dc-forest}(iii). Then
		\begin{enumerate}[label=\arabic*.] 
			\item There is a split point $I = I(F) \in \{0, \dots, n/2\}$ such that for $j \le I$ we have $\theta_F(v_j^S) \in \partial_W\tt{S}$, and for $j > I$ we have $\theta_F(v_j^S) \in \partial_E\tt{S}$.
			\item Given $I = I(F)$, the map $\theta_F$ satisfies:
			$$
			\theta_F(v_j^S) = \begin{cases}
				v_j^E, \quad &j > I \\
				v_j^W, \quad &j \le I 
			\end{cases}, \qquad \theta_F(v_j^N) = \begin{cases}
				v_{j+1}^E, \quad &j < I \\
				v_j^W, \quad &j > I 
			\end{cases}
			$$
			If $I > 0$, then $\theta_F(v^N_I) \in \{v^W_I, v^E_{I+1}\}$.
		\end{enumerate}
	\end{lem}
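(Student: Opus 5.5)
The proof will be purely combinatorial: planarity turns the component structure of a DCF into a non-crossing matching of boundary vertices, and the cross-shaped constraint in Definition \ref{D:dc-forest}(iii) then leaves only one free parameter, the split point.

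\textbf{Step 1 (non-crossing).} Every vertex of $\partial^\tt{o}\tt{S}$ lies in a component containing exactly one sink. The graph $\tt{G}(\tt{S})$ is planar, and all of $\partial\tt{S}$ lies on its outer face. It follows that if $u,u'\in\partial^\tt{o}\tt{S}$ and $\theta_F(u)\neq\theta_F(u')$, then the tree paths $u\to\theta_F(u)$ and $u'\to\theta_F(u')$ are vertex-disjoint. If they shared a vertex, $u,u'$ and both sinks would lie in one component, which is impossible. Two vertex-disjoint paths joining outer-face vertices cannot cross. So, in the cyclic order of the boundary labelling, the pairs $\{u,\theta_F(u)\}$ are non-crossing, meaning the chords do not interleave (several sources may share a sink).

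\textbf{Step 2 (the cross constraint).} Definition \ref{D:dc-forest}(iii) allows each source only two possible sinks. Take $v=v_j^S$ on the south side. The diagonals $v+(m,\pm m)$ reach the west and east sink boundaries at definite points, and by direct coordinate bookkeeping these are $v_j^W$ and $v_j^E$. This is where the labelling convention in the statement, running clockwise from $(n+1,n)$, gets used, and it must be checked carefully. In the same way, the candidates for $v_j^N$ are $v_j^W$ and $v_{j+1}^E$, the index shift coming from the half-period offset between the north and south rows of $\tt{\bar S}$.

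\textbf{Step 3 (monotonicity and split point).} Suppose $j<k$ with $\theta_F(v_j^S)=v_j^E$ and $\theta_F(v_k^S)=v_k^W$. In the cyclic order, the chord from $v_j^S$ to $v_j^E$ separates $v_k^S$ from $v_k^W$: the vertices $v_k^S$ (with $k>j$) sit between $v_j^E$ and $v_j^S$ on the southeast arc, while $v_k^W$ lies on the other side. This contradicts Step 1. Hence the set of $j$ with $\theta_F(v_j^S)\in\partial_W\tt{S}$ is an initial segment $\{1,\dots,I\}$, which gives part 1 and the south half of part 2.

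\textbf{Step 4 (north sources).} For $v_j^N$ with $j<I$, suppose $\theta_F(v_j^N)=v_j^W$. I will check that this chord interleaves with the chord from $v_I^S$ to $v_I^W$. The index $I$ lies above $j$ on the west side, so $v_I^W$ sits on the arc between $v_j^W$ and $v_j^N$, while $v_I^S$ sits on the opposite arc. That is a crossing, so $\theta_F(v_j^N)=v_{j+1}^E$. Symmetrically, for $j>I$, the option $v_{j+1}^E$ crosses the chord from $v_{I+1}^S$ to $v_{I+1}^E$, which forces $v_j^W$. For $j=I$ neither option produces a crossing, which leaves the stated dichotomy. If $I=0$ or $I=n/2$, only one family of south chords exists, and the argument simply uses whichever one is present.

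The main obstacle is Step 2. One has to get the exact indices of the cross-region endpoints right, including parity and the range $[-n,n]$ versus $[-n-1,n+1]$. The crossing arguments in Steps 3 and 4 then depend only on the cyclic positions, which I will verify against \cref{fig:temperley-labelled}.
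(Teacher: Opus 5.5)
Your proposal is correct and follows the paper's proof, with the details filled in. The paper also rewrites Definition~\ref{D:dc-forest}(iii) as $\theta_F(v^S_j)\in\{v^E_j,v^W_j\}$ and $\theta_F(v^N_j)\in\{v^E_{j+1},v^W_j\}$, then appeals to planarity exactly as in your non-crossing Steps 1, 3 and 4. For Part 1, planarity alone suffices, since a chord from $v^S_j$ to any east sink already separates $v^S_k$, $k>j$, from all of $\partial_W\tt{S}$.

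One harmless correction to your Step 2 heuristic: the north and south source rows are not offset, as both sit at $x\equiv 1 \pmod 4$. The index shift just reflects that $\partial_E\tt{S}$ has $n/2+1$ sinks, with south diagonals landing on $v^E_1,\dots,v^E_{n/2}$ and north diagonals on $v^E_2,\dots,v^E_{n/2+1}$.
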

	
	\begin{proof}
		Point $1$ holds by planarity. For point $2$, observe that we can translate Definition \ref{D:dc-forest}(iii) into the condition that for $j \in \{1, \dots, n/2\}$ we have
		$$
		\theta_F(v^S_j) \in \{v_j^E, v^W_j\}, \qquad \theta_F(v^N_j) \in \{v_{j+1}^E, v^W_j\}.
		$$ 
		The choice of $\theta_F$ is determined as in point $2$ by this constraint, the definition of $I$, and planarity.
	\end{proof}
	
	\begin{rem}
		\label{R:labelling}
		The labelling on the boundary vertices match the height function, up to an affine shift. For example, for every south vertex $v = v^S_i \in \partial_S \tt{S}$, let $f_v = v - (1, 0)$. The face $f_v$ is always in $\partial \tt{F}$ so it has a deterministic height, which satisfies
		$$
		h(f_{v_i^{*}})/4 + n/4 = i - 1.
		$$
		Similar affine relationships hold for the vertices on the other three boundaries. Note the similarity between the relationship here and the relation between $I_n$ and $H_n$ in \cref{T:main-2}. The takeaway in all settings is that \textit{path index equals height-over-$4$} (up to shifting the labels). However, here the relationship is straightforward and deterministic, whereas in \cref{T:main-2} the relationship only holds with high probability, and is a consequence of our global control on path behaviour. 
	\end{rem}

	Now, the graphs $\tt{G}(\tt{S}), \tt{G}(\tt{N})$ are duals of each other, in the following sense. Let $E_\tt{S}$ and $E_\tt{N}$ denote the edge sets of $\tt{G}(\tt{S})$ and $\tt{G}(\tt{N})$, represented as line segments in the plane. Then the map taking every edge in $E_\tt{S}$ and rotating by $\pi/2$ about its midpoint is a bijection from $E_\tt{S}$ to $E_\tt{N}$.  
	In particular, every edge in $E_\tt{S}$ crosses exactly one edge in $E_\tt{N}$, and we refer to such edges as duals.
	This dual relationship extends to dimer-compatible forests. 
	
	\begin{lem}
		\label{L:dimer-compatible-duals}
		Consider a DCF $F$ on $\tt{G}(\tt{S})$. Define the dual $\check F$ of $F$ to be the subgraph of $\tt{G}(\tt{N})$ with vertex set $\tt{\bar N}$ and edge set consisting of all edges in $E_\tt{N}$ that do not cross an edge in $F$. Then $\check F$ is a DCF on $\tt{G}(\tt{N})$. 
		
		Moreover, if the split point $I$ is given as in \cref{L:tree-paths}, we have the following description for $\theta_{\check F}$. Extend the labelling of $\partial \tt{S}$ in \cref{L:tree-paths} to all vertices of $\partial \tt{N} \cup \partial \tt{S}$ as follows, so that moving clockwise in a cycle around the boundary from the northeast corner $(n+1, n)$ we have:
		\begin{align*}
			&v_1^E, w_1^E, v_2^E, \dots, v^E_{n/2}, w_{n/2}^E, v_{n/2+1}^E, w^S_{n/2}, v^S_{n/2}, \dots, w^S_1, v^S_1, \\ &w^W_1, v^W_1,  w^W_2, \dots, w^W_{n/2}, v^W_{n/2}, w^W_{n/2 + 1}, v^N_{n/2}, w^N_{n/2}, \dots, v^N_{1}, w^N_{1}. 
		\end{align*}
		Then 
		$$
		\theta_{\check F}(w_j^S) = \begin{cases}
			w_j^E, \quad &j > I \\
			w_{j+1}^W, \quad &j < I 
		\end{cases}, \qquad \theta_{\check F}(w_j^N) = \begin{cases}
			w_j^E, \quad &j \le I \\
			w_j^W, \quad &j > I 
		\end{cases}
		$$
		If $I > 0$, then $\theta_{\check F}(w^S_I) = w_I^E$ if $\theta_F(v^N_I) = v^W_I$ and $\theta_{\check F}(w^S_I) = w_{I+1}^W$ if $\theta_F(v^N_I) = v^E_{I+1}$.
	\end{lem}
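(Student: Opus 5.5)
The plan is to prove the lemma in two stages: first the purely combinatorial statement that $\check F$ is a DCF, and then the description of $\theta_{\check F}$, which will follow from planarity together with \cref{L:tree-paths} applied to $\check F$ via the rotation $\rho_\tt{N}$.

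\textbf{$\check F$ is a DCF.} I would argue as in the classical planar duality between spanning trees and dual spanning trees, adapted to wired boundaries. Form the graph $\hat G$ obtained from $\tt{G}(\tt{S})$ by identifying all of $\partial_W \tt{S}$ to a single vertex $x_W$ and all of $\partial_E\tt{S}$ to a single vertex $x_E$. A DCF $F$ becomes a spanning forest of $\hat G$ with exactly two components, rooted at $x_W$ and $x_E$. Dually, do the same on the north side: identify the boundary vertices on the appropriate north-side arcs. The dual of a two-component forest in a planar graph is a two-component forest in the dual graph. More concretely:
\begin{itemize}
\item \emph{Acyclicity.} A cycle in $\check F$ would enclose a vertex of $\tt{\bar S}$ whose component in $F$ cannot reach the sink boundary without crossing the cycle. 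This is impossible, since edges of $F$ and $\check F$ never cross.
\item \emph{Connectivity to sinks.} A component of $\check F$ missing the north sink boundary would be surrounded by a closed dual curve made of edges of $F$, giving a cycle or a sink-to-sink path in $F$ that separates. A sink-to-sink path contradicts condition (ii) of \cref{D:dc-forest}.
\end{itemize}
An edge count (Euler's formula: $|E_\tt{S}| = |E_\tt{N}|$, and the numbers of edges in the two forests sum appropriately) will make this rigorous. I also need each $\check F$-component to contain exactly one north sink vertex. This follows because two north sinks in one component would be joined by a dual path that cuts off part of the south boundary, separating some $F$-component from all sinks.

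\textbf{Condition (iii) and the formula for $\theta_{\check F}$.} This is where the labelling matters. Going clockwise around the boundary, the $v$'s and $w$'s interlace, and the backbone paths of $F$ and $\check F$ cannot cross. Hence each dual path starting at $w^S_j$ is trapped between the primal paths from $v^S_{j}$ and $v^S_{j+1}$ (and similarly for $w^N_j$ between $v^N_{j-1}$ and $v^N_j$). By \cref{L:tree-paths}:
\begin{itemize}
\item For $j > I$, both neighbouring primal paths go from the south boundary to the east, to $v_j^E$ and $v_{j+1}^E$. The region between them meets the east sink boundary only at $w_j^E$, forcing $\theta_{\check F}(w^S_j) = w^E_j$.
\item For $j < I$, both neighbouring primal paths go to the west, to $v^W_j$ and $v^W_{j+1}$, so the corridor ends at $w^W_{j+1}$.
\item The north cases are symmetric.
\item At $j = I$ the corridor is bounded by one path going west and one going east. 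The corridor's exit is then determined by where $v^N_I$ goes: either $v^W_I$, leaving $w^E_I$ available, or $v^E_{I+1}$, leaving $w^W_{I+1}$.
\end{itemize}
Checking these formulas against the cross-shaped region condition (iii) shows that $\check F$ is dimer-compatible. Indeed, $\rho_\tt{N}\check F$ then satisfies (i)–(iii) on $\tt{G}(\tt{S})$, which is the definition.

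\textbf{Main obstacle.} I expect the hardest part to be the careful planarity bookkeeping near the corners, together with verifying that every dual component reaches a sink, as opposed to just the backbone-bearing ones. Everything else is a direct translation of the labelled interlacing picture in \cref{fig:temperley-labelled}. To handle the corners, I would first try the Euler-characteristic count on the wired graph $\hat G$ to avoid a case analysis.
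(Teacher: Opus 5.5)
\emph{The gap: the sink condition for $\check F$.} Your acyclicity argument and your corridor bookkeeping for $\theta_{\check F}$ are fine and match the paper. The gap is in the step showing that every component of $\check F$ contains a vertex of $\partial^{\tt{x}}\tt{N}$.

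You assert that the $F$-edges around a sinkless component form a cycle or a sink-to-sink path, and you only use condition (ii) of \cref{D:dc-forest}. But a sinkless component can instead be cut off by an $F$-path joining two \emph{source} vertices, together with an arc of the north or south boundary. Conditions (i)--(ii) alone do not exclude this. Concretely, suppose $v^S_j$ and $v^S_{j+1}$ both point to their common neighbour $v^S_j+2e_1=v^S_{j+1}+2e_2$. Then both edges of $\tt{G}(\tt{N})$ at $w^S_j$ cross edges of $F$, so $w^S_j$ is an isolated vertex of $\check F$ with no sink. Such an $F$ can be completed to a spanning forest of $\tt{G}(\tt{S})$ with exactly one sink per component; it fails only condition (iii). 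So condition (iii), via \cref{L:tree-paths}, must be used at exactly this point, not just in your second stage. That stage in any case presupposes that $\theta_{\check F}$ is defined.

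\emph{How the paper closes it.} The count $|E(F)|+|E(\check F)|=|E_{\tt{N}}|=|\tt{S}|+|\tt{N}|$ shows that $\check F$ has exactly $|\partial^{\tt{x}}\tt{N}|$ components. A sinkless component would have to be separated from $\partial_E\tt{N}$ by an $F$-component meeting $\partial_E\tt{S},\partial_S\tt{S},\partial_N\tt{S}$. It would also have to be separated from $\partial_W\tt{N}$ by one meeting $\partial_W\tt{S},\partial_S\tt{S},\partial_N\tt{S}$. Part 2 of \cref{L:tree-paths} forbids having both: a south source shares its sink with a north source only through $v^N_I$, and only on one side. Pigeonhole then gives exactly one sink per component.

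Two further points about your proposal:
\begin{itemize}
\item Your separate argument for ``exactly one'' is unnecessary, and as stated it is incorrect. A dual path between two vertices of $\partial^{\tt{x}}\tt{N}$ always leaves primal sinks on both sides, so it does not isolate any $F$-component from all sinks.
\item The wired-graph Euler count will not rescue the first stage. The planar dual of $\tt{G}(\tt{S})$ with its east and west sides wired is $\tt{G}(\tt{N})$ with $\partial^{\tt{o}}\tt{N}$ and the two western corner vertices identified to a single vertex. Also, the complement-dual of a two-component spanning forest is connected with one cycle, not a two-component forest. That picture recovers the number of components of $\check F$, but not which of them reach $\partial^{\tt{x}}\tt{N}$.
\end{itemize}
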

	
	\cref{fig:temperley-labelled} may help with parsing the labelling in \cref{L:dimer-compatible-duals}.
	
	\begin{proof}
		First observe that $\check F$ cannot contain a cycle. Indeed, any cycle in $\check F$ would surround a component in $F$. On the other hand, every component in $F$ contains a vertex from $\partial^\tt{x} \tt{S}$, and there are no cycles in $\tt{G}(\tt{N})$ which surround a vertex in $\partial^\tt{x} \tt{S}$. Hence $\check F$ is a forest, giving Definition \ref{D:dc-forest}(i). Next, 
		$$
		|E_\tt{N}| = |\tt{E}| + |\tt{W}| =|\tt{S}| +  |\tt{N}|.
		$$
		This follows since the edge set $E_\tt{N}$ is in bijection with the east/west vertex set $\tt{E} \cup \tt{W}$ under the map taking an edge $e$ to its midpoint. Now, $|E(F)| = |\tt{S}|$, as $F$ is a forest with $|\partial^\tt{x} \tt{S}|$-many components and vertex set $\partial^\tt{x} \tt{S} \cup \tt{S}$. Therefore $|E(\check F)| = |\tt{N}|$ since $|E(F)| + |E(\check F)| = |E_\tt{N}|$ by duality. Since $\check F$ is a forest with vertex set $\tt{N} \cup \partial^\tt{x}\tt{N}$, it contains $|\partial^\tt{x}\tt{N}|$-many components. 
		
		Suppose a component $C$ of $\check F$ contains no vertices in $\partial^\tt{x}\tt{N}$. Then there must be two components $D, D'$ (possibly the same) of $F$ that separate $C$ from the east and west boundaries $\partial_E \tt{N}, \partial_W \tt{N}$, respectively. Component $D$ must contain vertices in $\partial_E \tt{S}, \partial_S \tt{S},\partial_N \tt{S}$, whereas component $D'$ must contain vertices in $\partial_W \tt{S}, \partial_S \tt{S},\partial_N \tt{S}$. Part 2 of \cref{L:tree-paths} shows that it is not possible for both these components to exist. Together with the component count for $\check F$, this yields Part (ii) of \cref{D:dc-forest}.
		
		Given Parts (i) and (ii) of \cref{D:dc-forest}, we can define the map $\theta_{\check F}$ mapping vertices in $\partial^\tt{o} \tt{N} \to \partial^\tt{x} \tt{N}$. The planar dual relationship between $F, \check F$ allows us to determine $\theta_{\check F}$ from $\theta_F$ (given by \cref{L:tree-paths}) as in the statement of the lemma. This also guarantees Part (iii) of \cref{D:dc-forest}.
	\end{proof}

	Next, we state Temperley's correspondence for the Aztec diamond. Given a dimer configuration $D$ on $\tt{A}_n$, for any choice of $\tt{C} \in \{\tt{N}, \tt{S}, \tt{E}, \tt{W}\}$ we can construct a directed subgraph $F_{\tt{C}}(D)$ of $\tt{G}(\tt{C})$ as follows. For every vertex $w \in \tt{C}$, there is a unique dimer in $D$ of the form $\{w, w + e\}$ where $e \in \{\pm e_1, \pm e_2\}$. Add the directed edge $(w, w + 2 e)$ to the graph $F_{\tt{C}}(D)$. 
	
	\begin{prop}
		\label{P:temperley-bij}
		For every even value of $n \in 4 \N$ and $\tt{C} \in \{\tt{N}, \tt{S}\} $, the map $D \mapsto F_{\tt{C}}(D)$ is a bijection between dimer configurations on $\tt{A}_n$ and dimer-compatible forests on $\tt{G}(\mathtt{C})$, directed towards the sink boundary $\partial^\tt{x} \mathtt{C}$. 
		
		The inverse map $D_\tt{C}$ of $F_{\tt{C}}$ is given as follows. Consider a DCF $F$ on $\tt{G}(\mathtt{C})$, and construct the dual forest $\check F$ on $\tt{G}(\mathtt{\check C})$, where $\tt{\check S} = \tt{N}, \tt{\check N} = \tt{S}$ by including an edge $e$ in $\check F$ if and only if $e$ does not cross an edge in $F$. Then $\check F$ is a DCF in $\tt{G}(\mathtt{\check C})$. Orient $F$ towards $\partial^\tt{x} \tt{C}$ and $\check F$ towards $\partial^\tt{x} \tt{\check C}$, and let
		$$
		D_\tt{C}(F) = \{(v, v + e) : (v, v + 2e) \text{ is a directed edge in } F \text{ or } \check F \}.
		$$
		Finally, for a dimer configuration $D$, the dual forest $\check F_{\tt{C}}(D)$ to $F_{\tt{C}}(D)$ is $F_{\tt{\check C}}(D)$.
	\end{prop}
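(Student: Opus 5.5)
We reduce to $\tt{C} = \tt{S}$. The case $\tt{C} = \tt{N}$ then follows by applying $\rho_\tt{N}$ from \cref{L:sym}, since $\rho_\tt{N}$ maps $\tt{A}_n$ to itself, sends $\tt{S}$ to $\tt{N}$, and by definition carries DCFs on $\tt{G}(\tt{S})$ to DCFs on $\tt{G}(\tt{N})$.

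\textbf{Step 1: $F_\tt{S}(D)$ is a DCF.} Fix $D$ and set $F = F_\tt{S}(D)$. Each $w \in \tt{S}$ has exactly one outgoing edge $(w, w+2e)$, and sink vertices have none. So $F$ is a functional graph on $\tt{\bar S}$, and it is a forest rooted on $\partial^\tt{x}\tt{S}$ as soon as it has no directed cycle.

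Suppose $\gamma$ is a directed cycle. Then $\gamma$ encloses a region $R$ of the plane. Along $\gamma$ the dimers $\{w, w+e\}$ use exactly the midpoints of $\gamma$'s edges, which are east/west vertices. Interior vertices of $R$ must be matched inside $R$. The standard Temperley counting argument (every simple cycle in $\tt{G}(\tt{S})$ encloses an odd imbalance of black and white vertices once its midpoints are removed) then gives a contradiction. An equivalent route uses the height function: the height change around $\gamma$ is computed from the winding, and it must vanish.

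Condition (iii) of \cref{D:dc-forest} is the height-matching condition. Orient paths and use the rule of \cref{S:face-heights}: heights along a forest path change by four times the winding, plus a boundary term. Since the boundary heights are deterministic, as in \cref{R:labelling}, the sink reached from $v \in \partial^\tt{o}\tt{S}$ must have height compatible with $v$'s height. This forces it into the cross-shaped region.

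\textbf{Step 2: $\check F$ is a DCF, and $D_\tt{S}(F)$ is a perfect matching.} Take any DCF $F$. By \cref{L:dimer-compatible-duals}, $\check F$ is a DCF on $\tt{G}(\tt{N})$. Now define $D_\tt{S}(F)$.
\begin{itemize}
\item Each $v \in \tt{S}$ receives exactly one dimer, from its outgoing $F$-edge.
\item Each $v \in \tt{N}$ receives exactly one dimer, from its outgoing $\check F$-edge.
\item Each east/west vertex is the midpoint of exactly one edge of $E_\tt{S}$ and its dual in $E_\tt{N}$, and exactly one of these two lies in $F \cup \check F$. The dimer for that vertex is the half of that edge at its tail.
\end{itemize}
The key check is that each east/west vertex $m$ is covered exactly once. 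Suppose the $F$-edge through $m$ is oriented $u \to u + 2e$. Then the dimer is $\{u, m\}$, and $m$ is not also used by the dual edge, since the dual is absent. So $D_\tt{S}(F)$ is a perfect matching, and sink vertices contribute nothing because they lie outside $\tt{V}$.

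Boundary edges need care: edges of $\tt{G}(\tt{S})$ incident to $\partial^\tt{x}\tt{S}$ have midpoints in $\tt{V}$, and I would check this case separately.

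\textbf{Step 3: the two maps are inverse.} It is immediate that $D_\tt{S}(F_\tt{S}(D))$ agrees with $D$ on all dimers incident to $\tt{S}$. For the remaining dimers, incident to $\tt{N}$, I would show that $F_\tt{N}(D) = \check F_\tt{S}(D)$, which is the last claim of the proposition.

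An edge of $\tt{G}(\tt{N})$ through midpoint $m$ is used by $F_\tt{N}(D)$ exactly when $m$ is matched to an $\tt{N}$-vertex. In that case $m$ is not matched to an $\tt{S}$-vertex, so the crossing $\tt{S}$-edge is absent from $F_\tt{S}(D)$. This gives $F_\tt{N}(D) \subseteq \check F$.

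Equality follows by counting edges: $|\tt{N}| = |E(\check F)|$, by the computation in the proof of \cref{L:dimer-compatible-duals}. Orientations also agree, because both forests are rooted at $\partial^\tt{x}\tt{N}$. Conversely, $F_\tt{S}(D_\tt{S}(F)) = F$ by construction.

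\textbf{Main obstacle.} The hardest step is acyclicity, together with condition (iii) in Step 1. I would prove both through the height function, by establishing the winding identity for the height change along a forest path. That identity is also what is needed later in the paper.
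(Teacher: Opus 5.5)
Your proposal is correct and follows essentially the same route as the paper: acyclicity and condition (iii) via the turn-counting height identity (Corollary~\ref{C:vertex-heights}) together with the deterministic boundary heights, the perfect-matching check for $D_{\mathtt{S}}(F)$ via Lemma~\ref{L:dimer-compatible-duals} and duality at each east/west vertex, and non-crossing of $F_{\mathtt{S}}(D)$ and $F_{\mathtt{N}}(D)$ for the final duality claim. Your Euler-count alternative for acyclicity is also valid, and your explicit edge count and ordering (proving $\check F_{\mathtt{S}}(D) = F_{\mathtt{N}}(D)$ before concluding $D_{\mathtt{S}} \circ F_{\mathtt{S}} = \mathrm{id}$) usefully fill in steps the paper calls straightforward; in (iii), state explicitly why the source-to-sink path has zero net winding, which the paper handles with its augmented-path argument.
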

	To prove \cref{P:temperley-bij}, we will need to track height changes along forest paths. This is taken care of by the next two results.
	
	\begin{lem}
		\label{L:tree-path-heights}
		Let $D$ be a dimer configuration on $\tt{A}_n$ and let $P = (s_0, s_1, \dots, s_k)$ be a directed path of vertices in the graph $F_{\tt{S}}(D)$. For $\eps \in \{(0, \pm 1), (\pm 1, 0)\}$, define the face-path 
		$$
		F^\eps(P) = (f_0^\eps, \dots, f_k^\eps) = (s_0 + \eps, \dots, s_k + \eps).
		$$
		Then the height function $h$ for $D$ evolves along $F^v(P)$ as follows. Here for two points $a, b \in \R^2$ we write $[a, b]$ for the line segment from $a$ to $b$, oriented in that way.
		\begin{itemize}
			\item For $i \in \{1, \dots, k-1\}$, if $[s_i, s_{i+1}]$ does not cross $[f_{i-1}^\eps, f^\eps_i]$, then $h(f_{i-1}^\eps) = h(f_i^\eps)$.
			\item For $i \in \{1, \dots, k-1\}$, if $[s_i, s_{i+1}]$ crosses $[f_{i-1}^\eps, f^\eps_i]$ from right-to-left, then $h(f_{i-1}^\eps) + 4 = h(f_i^\eps)$.
			\item For $i \in \{1, \dots, k-1\}$, if $[s_i, s_{i+1}]$ crosses $[f_{i-1}^\eps, f^\eps_i]$ from left-to-right, then $h(f_{i-1}^\eps) - 4 = h(f_i^\eps)$.
		\end{itemize}
		The same height evolution holds for $F_{\tt{N}}(D)$.
	\end{lem}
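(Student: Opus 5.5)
This is a local statement, so the plan is to reduce it to the height rules at a single white vertex. Fix $i\in\{1,\dots,k-1\}$. The directed edge $(s_i,s_{i+1})$ of $F_\tt{S}(D)$ means that the dimer at the white vertex $s_i\in\tt{S}$ is $\{s_i, s_i+e\}$ with $s_{i+1}=s_i+2e$ and $e\in\{\pm e_1,\pm e_2\}$. The faces $f_{i-1}^\eps=s_{i-1}+\eps$ and $f_i^\eps=s_i+\eps$ are translates by the same vector $\eps$. Since $s_{i-1}-s_i\in\{\pm 2e_1,\pm 2e_2\}$, the segment $[f_{i-1}^\eps,f_i^\eps]$ is a translate of an edge of $\tt{G}(\tt{S})$, of length $2\sqrt2$, and it passes through $s_i+\eps$. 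The four faces adjacent to $s_i$ are $s_i+\eps'$ for $\eps'\in\{(0,\pm1),(\pm1,0)\}$, and the four edges at $s_i$ join $s_i$ to $s_i\pm e_1, s_i\pm e_2$. I would therefore compute $h(f_i^\eps)-h(f_{i-1}^\eps)$ by moving from $f_{i-1}^\eps$ to $f_i^\eps$ through an explicit sequence of faces around a small number of vertices. The natural route goes around the black vertex $s_{i-1}+\eps+\ldots$ and the white vertex $s_i$; more simply, the two faces are separated by exactly one vertex of $\tt{\tilde V}$ along the segment, namely the midpoint $m=(f_{i-1}^\eps+f_i^\eps)/2$.

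The first step is to identify $m$. Since $m=s_i+\eps+(s_{i-1}-s_i)/2$ with $(s_{i-1}-s_i)/2=\pm e_1$ or $\pm e_2$, $m$ is a vertex. The two faces $f_{i-1}^\eps, f_i^\eps$ are opposite faces around $m$ (they differ by $2e_j$, i.e.\ they are diagonally opposite among the four faces $m\pm e_1$, $m \pm e_2$ — here I must be careful with the rotated coordinates: faces around $m$ are $m+(0,\pm1),m+(\pm1,0)$, and $f-m=\pm e_j$ is not of that form). So instead I would go around two vertices. I would use a path of faces $f_{i-1}^\eps\to g\to f_i^\eps$, where $g$ is a face adjacent to both, crossing two edges of the diamond graph. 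By the height rule, each crossing contributes $+1$ or $-3$ (signs depending on orientation and vertex colour), and the total over a full loop around a vertex is $0$. Going through two adjacent unit steps therefore gives a height change in $\{+2,-2,\ldots\}$ in general. The claim is that, in the specific geometry of the Temperleyan paths, the dimer at $s_i$ and the dimer at $s_{i-1}$ (which is $\{s_{i-1},s_{i-1}+(s_i-s_{i-1})/2\}$) determine exactly which of the crossed edges carry dimers. Which edges are crossed depends only on $\eps$ and the directions $s_i-s_{i-1}$ and $s_{i+1}-s_i$.

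Rather than grinding cases, I would exploit symmetry. By the rotations and reflections in \cref{L:sym} (which preserve the local height rules up to a global sign change under reflection, a change that also swaps left/right crossing), I would fix $s_i-s_{i-1}=2e_1$. This leaves four choices of $\eps$ times three choices of next step (backtracking to $s_{i-1}$ is impossible in a forest path), for twelve cases. In each case I would read off the height change from the two edges crossed by a face path between $f_{i-1}^\eps$ and $f_i^\eps$ chosen to pass around $s_i$ or its neighbour. The change is $0$ unless the chosen path crosses the dimer $\{s_i,s_i+e\}$, in which case it picks up $-3-1=-4$ or $+3+1=+4$ depending on the side. Whether the dimer edge $\{s_i, s_i+e\}$ lies between the faces is exactly the statement that $[s_i,s_{i+1}]$ crosses $[f_{i-1}^\eps,f_i^\eps]$, and the sign matches the crossing direction. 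The dimer at $s_{i-1}$ points toward $s_i$ and lies along the segment $[s_{i-1},s_i]$, which is parallel to $[f^\eps_{i-1},f^\eps_i]$ and hence never crossed, so it contributes nothing.

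The main obstacle is bookkeeping. I need the face path to avoid crossing any other edge whose dimer status is unknown. I expect this to hold because every vertex on the short face path is either $s_i$ or $s_{i-1}$, or a black vertex whose relevant edges are those touching $s_i$ or $s_{i-1}$. These edges are undimered except for the known ones, since $s_i$ and $s_{i-1}$ are each matched exactly once. If the constraint fails for some $\eps$, I would route the face path around the other side. Finally, the $F_\tt{N}(D)$ statement follows by applying $\rho_\tt{N}$, which maps $\tt{S}$ to $\tt{N}$, preserves colours and orientation, and so preserves both the height rules and the left/right crossing convention.
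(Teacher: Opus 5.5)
Your plan is the local check the paper has in mind: compute $h(f_i^\eps)-h(f_{i-1}^\eps)$ by crossing two edges through the unique face adjacent to both, using the $+1/-3$ rule. The paper gives no argument beyond saying the lemma is readily verified from the definition of the height function. However, two steps are wrong as written.

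First, the intermediate object is a face, not a vertex. Write $s_i=s_{i-1}+2e$ and let $b=s_{i-1}+e$ be the black partner of $s_{i-1}$. The common neighbour of $f_{i-1}^\eps$ and $f_i^\eps$ is the face $m=b+\eps$. Both crossed edges are perpendicular to $e$: one is at $b$, and the other is at $s_{i-1}$ (if $\eps\cdot e=-1$) or at $s_i$ (if $\eps\cdot e=1$). The edge at $b$ touches neither $s_{i-1}$ nor $s_i$, contrary to your bookkeeping paragraph; it is undimered because $b$ is already matched to $s_{i-1}$.

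Second, and more seriously, the reduction to $e=e_1$ via \cref{L:sym} fails. The maps $\rho_{\tt{E}},\rho_{\tt{W}},\rho_{\tt{N}}$ each preserve the lines $\R e_1$ and $\R e_2$, so none of them sends a step $2e_2$ to a step $2e_1$. Moreover, $\rho_{\tt{E}}$ and $\rho_{\tt{W}}$ swap colours and send south forests to east/west forests. Under them height increments are preserved (not negated) while left and right are exchanged, so the lemma is not invariant under them. A valid reduction uses the rotation by $\pi/2$ about $s_{i-1}$, which preserves colours, orientation and $\tt{\tilde S}$.

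In fact no case analysis is needed. The vertices $s_{i-1},b,s_i$ all lie on the line $s_{i-1}+\R e$, which is parallel to the face path $s_{i-1}+\eps+\R e$ and strictly on one side of it. Each crossed edge has one endpoint among these three vertices, and of those two endpoints one is white and one is black. So two undimered crossings contribute $+1$ and $-1$ and cancel; this is the actual reason the height is unchanged when no dimer is crossed, which your proposal asserts but does not explain.

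The only crossed edge whose dimer status is not forced is the one at $s_i$, which is present only when $\eps\cdot e=1$. It carries the dimer exactly when $s_{i+1}-s_i=2\sigma$, where $\sigma\in\{\pm e_1,\pm e_2\}$ is perpendicular to $e$ and points towards the face path. This is exactly the condition that $[s_i,s_{i+1}]$ crosses $[f_{i-1}^\eps,f_i^\eps]$. Replacing that edge's contribution $\pm1$ by $\mp3$ gives $-4$ if $s_i$ lies to the left of the oriented face path (a left-to-right crossing) and $+4$ if it lies to the right. This proves all three bullets at once.

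Since only the whiteness of the forest vertices was used, the $F_{\tt{N}}(D)$ case is identical; your $\rho_{\tt{N}}$ argument also works.
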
 
	
	\cref{L:tree-path-heights} is readily verified from the definition of the height function.
	
	\begin{cor}
		\label{C:vertex-heights}
		For a vertex $v$ in a dimer configuration $D$, let its height $h(v)$ be the average height of the four incident faces. Then in the south-forest setup of \cref{L:tree-path-heights}, we have that
		$$
		h(s_{k-1}) - h(s_0) = L - R,
		$$
		where $L$ is the number of left turns made by the path $P$, and $R$ is the number of right turns. The same holds for a north-forest path.
	\end{cor}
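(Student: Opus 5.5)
Since the height of a vertex is the average of its four incident faces, I will write
$$
h(s_{k-1}) - h(s_0) = \frac14 \sum_{\eps} \bigl(h(f^\eps_{k-1}) - h(f^\eps_0)\bigr),
$$
where the sum runs over $\eps \in \{(0,\pm1),(\pm1,0)\}$. Each summand telescopes along the face-path $F^\eps(P)$:
$$
h(f^\eps_{k-1}) - h(f^\eps_0) = \sum_{i=1}^{k-1} \bigl(h(f^\eps_i) - h(f^\eps_{i-1})\bigr).
$$
By \cref{L:tree-path-heights}, the $i$-th increment is $+4$, $-4$, or $0$, according to whether the forest edge $[s_i, s_{i+1}]$ crosses the translated segment $[f^\eps_{i-1}, f^\eps_i] = [s_{i-1}, s_i] + \eps$ from right to left, from left to right, or not at all. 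Swapping the order of summation, it therefore suffices to prove the following local identity for each $i \in \{1, \dots, k-1\}$. Among the four translates $[s_{i-1}, s_i] + \eps$, the net signed number crossed by $[s_i, s_{i+1}]$ (counting right-to-left as $+1$) equals $+1$ if the path turns left at $s_i$, $-1$ if it turns right, and $0$ if it goes straight. Given this, the sum collapses to $\frac14 \cdot 4 (L - R) = L - R$.

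To check the local identity, I will use translation and rotation invariance of the picture to normalise. The steps $d = s_i - s_{i-1}$ and $d' = s_{i+1} - s_i$ both lie in $\{\pm 2e_1, \pm 2e_2\}$, so I may rotate so that $d = 2e_1 = (2,2)$. The four translates of the previous step are then the diagonal segments from $s_{i-1} + \eps$ to $s_i + \eps$, at offsets $(\pm 1, 0)$ and $(0, \pm 1)$ from the line through $s_{i-1}$ and $s_i$. The next step $[s_i, s_i + d']$ is a segment of length $2\sqrt2$ starting at $s_i$, and there are three cases to compare against the four offset segments.

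\emph{Straight, $d' = d$.} The new segment lies on the continuation of the line through $s_{i-1}$ and $s_i$, so it meets none of the translates, which are parallel and strictly offset. The contribution is $0$.

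\emph{Turn, $d' = \pm 2e_2$.} The new segment is perpendicular to the translates. It meets exactly the translates on the side it turns toward, namely the two with offsets $(-1,0)$ and $(0,1)$ for a left turn, passing through their endpoints $s_i + \eps$. Here I have to be careful with the boundary convention, since the crossing happens exactly at an endpoint of each face segment. The resolution I intend is to recompute directly from the height rule around the vertex $s_i + d'/2$, or $s_i$. Equivalently, I will note that the dimer $\{s_i, s_i + d'/2\}$ lies on the boundary of exactly the face pairs involved. Doing so, I expect the net effect of the turn to be one unit of $\pm 4$ in total, rather than one per translate: for a left turn, one face-path gains $4$ and the other sees no change. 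This makes the sum over $\eps$ equal $4(L - R)$.

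The main obstacle is this bookkeeping, i.e.\ pinning down exactly which $\eps$ register a crossing at a turn, given that the crossings occur at segment endpoints. I would resolve it by drawing the $3\times3$ neighbourhood explicitly and applying the $+1/-3$ height rule around the white vertices there.

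The north-forest statement then follows by applying $\rho_\tt{N}$. This map preserves orientation, hence preserves left and right turns, and the height rule is symmetric under it.
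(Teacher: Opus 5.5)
Your overall reduction is the same as the paper's: average over the four incident faces, telescope along the face-paths $F^\eps(P)$, apply \cref{L:tree-path-heights}, and reduce to a one-turn identity. The paper phrases this as the $k=2$ case plus concatenation. The local identity you isolate is true. However, your verification of the turn case, which is the only step with real content, rests on a wrong picture.

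The new step $[s_i,s_i+d']$ runs along the graph edges $\{s_i,s_i+d'/2\}$ and $\{s_i+d'/2,s_i+d'\}$. Face centres never lie on edges, so the step cannot pass through any endpoint $s_i+\eps$. Nor does it meet both translates on the side it turns toward. Take $d=2e_1$ and a left turn $d'=2e_2$:
\begin{itemize}
\item The translates for $\eps=(-1,0)$ and $\eps=(0,1)$ are collinear, both lying on the line $s_i+(-\tfrac12,\tfrac12)+\R e_1$.
\item The new step meets this line exactly once, at $s_i+(-\tfrac12,\tfrac12)$, which is the midpoint of the new dimer $\{s_i,s_i+e_2\}$.
\item That point is interior to the $(0,1)$-translate, three quarters of the way along it.
\item It lies a quarter of a step beyond the endpoint $s_i+(-1,0)$ of the $(-1,0)$-translate, so that translate is not met.
\end{itemize}
So exactly one translate is crossed, transversally and from right to left. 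The lemma then gives $+4$ on the $\eps=(0,1)$ face-path and $0$ on the other three. Symmetrically, a right turn $d'=-2e_2$ crosses only the $\eps=(1,0)$-translate, from left to right, giving $-4$.

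As written, your picture would give two crossings per turn, hence $\pm 8$ and a height change of $\pm 2$. You then replace this with an \emph{expected} single $\pm 4$, justified by an unspecified endpoint convention or a deferred hand computation. So the key local identity is asserted rather than proved.

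Once the geometry is corrected, there is no endpoint ambiguity. The lemma applies directly, and the four $i$-th increments sum to $4$, $-4$ or $0$ for a left turn, right turn or straight step. Your argument is then complete and coincides with the paper's. The north-forest case is fine: the lemma already covers north paths, so the same geometric check applies verbatim (your reduction via $\rho_{\mathtt{N}}$ also works).
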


	\begin{rem}\label{rem:average-height}
		We remark that $h(v)$ as defined above uniquely specifies the dimer incident to $v$. For instance, if $v\in \mathtt{S}$ and fix the $a$-face $v+(0,1)$ to have height $1$. Then we have that $h(v)$ is $\frac{5}{2}$, $\frac{3}{2}$, $\frac{1}{2}$, and $-\frac{1}{2}$ for the dimers $(v,v+e_1)$, $(v,v-e_2)$, $(v,v-e_1)$, and $(v,v+e_2)$ respectively. Moreover, it is not hard to see that given the average height function for all vertices of the Aztec diamond, we can recover the tiling. 
	\end{rem}

	\begin{proof}[Proof of \Cref{C:vertex-heights}]
		It suffices to prove the result when $k = 2$, as the general case follows by concatenation. The $k=2$ case follows from \cref{L:tree-path-heights} and averaging out the height changes amongst all of the incident faces around two consecutive vertices on a path. 
	\end{proof}
	
	\begin{proof}[Proof of Proposition \ref{P:temperley-bij}]
		We consider the case $\tt{C} = \tt{S}$, as the $\tt{C} = \tt{N}$ case is symmetric.
		First we show that $F_{\tt{S}}(D)$ contains no (undirected) cycles.
		Suppose that $C = (s_1, \dots, s_k = s_1)$ is a cycle in $F_{\tt{S}}(D)$. First, we necessarily have that $C$ is also a directed cycle, since otherwise some vertex in $C$ would have two outgoing edges. As a directed cycle, $C$ either has four more left turns then right turns, or vice versa. By tracking height changes around $C$ using Corollary \ref{C:vertex-heights}, this gives two different values for the height of $s_1$, which is not possible.
		
		Therefore $F_{\tt{S}}(D)$ is a directed acyclic graph. By construction, the only sinks in $F_{\tt{S}}(D)$ are the elements of $\partial^\tt{x} \tt{S}$, so every component of $F_{\tt{S}}(D)$ must contain at least one element of $F_{\tt{S}}(D)$. On the other hand, the number of edges in $F_{\tt{S}}(D)$ in $|\tt{S}|$, whereas the number of vertices is $|\tt{S}| + |\partial \tt{S}|$ and so $F_{\tt{S}}(D)$ has exactly $n+1 = |\partial \tt{S}|$ components. Therefore each component has exactly one element of $\partial^\tt{x} \tt{S}$, as desired.
		
		Now consider a vertex $v = (i, -n) \in \partial_S \tt{S}$, and let $P = (v = v_1, \dots, v_k = u)$, where $u \in \partial^\tt{x} \tt{S}$. First assume $u \in \partial_W \tt{S}$. Define an augmented path $P' = (v_0, v_1, \dots, v_{k+1})$ by letting $v_0 = v_1 + 2e_2 = (i - 2, -n - 2)$ and $v_{k+1} = v_k - 2e_2$, adding in the dimers $\{v_0, v_0 + e_2\}, \{v_k, v_k + e_2\}$ to the dimer configuration $D$ and extending the height to include all incident faces to $v_0, v_{k+1}$. The new path $P'$ does not wind around its initial vertex, and its initial and final edges have the same direction. Therefore it has an equal number of left and right turns, and so $h(v_0) = h(v_{k+1})$ by Corollary \ref{C:vertex-heights}. On the other hand, we can compute $h(v_0)$ using the deterministic boundary values for the heights near $v = (i, n)$. This gives $h(v_0) = i + 1/2$. By similar reasoning, letting $u = (-n-1, j)$, we can compute that $h(u) = j - 1/2$. Therefore $j = i + 1$ and so $u = (-n- 1, i + 1)$. By a symmetric argument, we can show that if $u \in \partial_E \tt{S}$, then $u = (n+1, -i + 1)$. This gives Definition \ref{D:dc-forest}(iii) when $v \in \partial_S \tt{S}$. A similar argument gives the same condition when $v \in \partial_N \tt{S}$, and so $F_\tt{S}(D)$ is a dimer-compatible forest.
		
		Next, we show that given a DCF $F$ on $\tt{G}(\tt{S})$, the set $D_{\tt{S}}(F)$ is a dimer configuration. First, the dual forest $\check F$ is a DCF on $\tt{G}(\tt{N})$ by \cref{L:dimer-compatible-duals}, and so orienting both $F, \check F$ towards the sink boundaries is well-defined. Next, $D_\tt{S}(F)$ is a subgraph of $\tt{A}_n$ where every white (i.e. $\tt{N}/\tt{S}$)-vertex has degree one, since in the oriented forest $F, \check F$, all vertices have out-degree one. Moreover, every black vertex also has degree one, since a black (i.e. $\tt{E}/\tt{W}$)-vertex is incident to an edge in $D_\tt{S}(F)$ if that planar edge contained the black vertex at its midpoint in either $F$ or $\check F$. Exactly one of these possibilities holds for each black vertex by the dual relationship between $F$ and $\check F$. Hence $D_\tt{S}(F)$ is a dimer configuration.
		
		It is straightforward to check that the maps $D_\tt{S}, F_\tt{S}$ are inverses. Finally, for a dimer configuration $D$, for every edge $\{v, v + 2e\}$ in the edge set $E_\tt{S}$ for $\tt{G}(\tt{S})$, it is not possible that both $\{v, v + 2e\}$ and its dual are contained in both $F_\tt{S}(D)$ and $F_\tt{N}(D)$ since the black vertex $v + e$ has degree $1$ in $D$. Hence the planar embeddings of $F_\tt{S}(D)$ and $F_\tt{N}(D)$ do not cross, and so they must be duals.
	\end{proof}

	\subsection{Temperley's bijection and the two-periodic Aztec diamond} As discussed in the introduction, the bijection in \cref{P:temperley-bij} is important in our setting because the pushforward under $F_\tt{C}$ of the two-periodic Aztec diamond measure $\mathbb P_{a, n}$ is tractable. First, observe that $D_\tt{C}, F_\tt{C}$ interact well with path weights, as described in the next lemma. This is immediate from the definitions.
	
	\begin{lem}
		\label{L:path-weights}
		Fix $\tt{C} \in \{\tt{N}, \tt{S}\}$, and let $\tt{D}_\tt{C} \subset \tt{D}$ be the set of all edges incident to a $\tt{C}$-vertex in the Aztec diamond graph $\tt{A}_n$. Let $\operatorname{Dir}_\tt{C}$ be the set of directed edges in $\tt{G}(\tt{C})$ and let $\phi_{\tt{C}}:\tt{D}_\tt{C} \to \operatorname{Dir}_\tt{C}$ be the map sending the edge $\{v, v + e\}$ to the directed edge $(v, v + 2e)$, where $v \in \tt{C}$.
		
		Let $w:\tt{D} \to [0, \infty)$ be any weight function which equals $1$ on $\tt{D} \setminus \tt{D}_\tt{C}$, and let $\tilde w = w \circ \phi^{-1}$ be the corresponding weight function on $\operatorname{Dir}_\tt{C}$. Extend $w, \tilde w$ to subgraphs of $\tt{A}_n$ and directed subgraphs of $\tt{G}(\tt{C})$, by letting the weight of a subgraph be the product of its edge weights. Then with $F_\tt{C}, D_\tt{C}$ as in \cref{P:temperley-bij}, for any dimer configuration $D$ on $\tt{A}_n$ or any DCF $F$ on $\tt{G}(\tt{C})$ we have
		$$
		\tilde w \circ F_\tt{C}(D) = w(D) \qquad \text{ and } \qquad w \circ D_\tt{C}(F) = \tilde w(F).
		$$
	\end{lem}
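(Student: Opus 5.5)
The plan is to argue edge by edge, since both sides of each identity are products of edge weights. I will treat the two identities separately and reduce each to a bijection between the factors.

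For the first identity, fix a dimer configuration $D$. By construction (the definition preceding \cref{P:temperley-bij}), $F_\tt{C}(D)$ has exactly one directed edge for each vertex $v\in\tt{C}$. That edge is $(v,v+2e)$, where $\{v,v+e\}$ is the unique dimer of $D$ at $v$; in other words, it is $\phi_\tt{C}(\{v,v+e\})$. Since $\tt{C}$ is a colour class ($\tt{C}\subset\tt{White}$), each dimer in $D\cap\tt{D}_\tt{C}$ has exactly one endpoint in $\tt{C}$. Hence $\phi_\tt{C}$ restricts to a bijection from $D\cap \tt{D}_\tt{C}$ onto the edge set of $F_\tt{C}(D)$. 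The dimers in $D\setminus\tt{D}_\tt{C}$ have weight $1$ by assumption, so
$$w(D)=\prod_{d\in D\cap\tt{D}_\tt{C}} w(d)=\prod_{d\in D\cap\tt{D}_\tt{C}}\tilde w(\phi_\tt{C}(d))=\tilde w(F_\tt{C}(D)).$$
One small check is needed: $\phi_\tt{C}$ is well defined and injective. This holds because the direction $e$ is recovered from the directed edge $(v,v+2e)$. It also uses that edges from $v$ to a sink vertex $v+2e\in\partial^\tt{x}\tt{C}$ lie in $\operatorname{Dir}_\tt{C}$, which is true since $\tt{\bar C}$ contains the sink boundary.

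For the second identity, the natural route is to invoke \cref{P:temperley-bij}. Given a DCF $F$, set $D=D_\tt{C}(F)$. Then $F=F_\tt{C}(D)$, because $F_\tt{C}$ and $D_\tt{C}$ are inverse bijections. The first identity then gives $w(D_\tt{C}(F))=\tilde w(F_\tt{C}(D_\tt{C}(F)))=\tilde w(F)$.

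Alternatively, one can see this directly. The edges of $D_\tt{C}(F)$ coming from $F$ are exactly $\phi_\tt{C}^{-1}$ of the edges of $F$. The edges coming from $\check F$ are incident to $\tt{\check C}$-vertices, not to $\tt{C}$-vertices, so they lie in $\tt{D}\setminus\tt{D}_\tt{C}$ and have weight $1$.

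I expect no genuine obstacle here. The only point needing care is bookkeeping: making sure the dimers from the dual forest never contribute nontrivial weight. This holds because every edge of $\tt{A}_n$ joins a white vertex in $\tt{N}\cup\tt{S}$ to a black vertex. An edge attached to a $\tt{\check C}$-vertex is therefore not attached to a $\tt{C}$-vertex, since its other endpoint is black.
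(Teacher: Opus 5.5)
Your proof is correct and is exactly the unpacking of the paper's justification, which just says the lemma is immediate from the definitions. The edges of $F_\tt{C}(D)$ are the $\phi_\tt{C}$-images of the dimers at $\tt{C}$-vertices, and every other dimer sits at a $\tt{\check C}$-vertex and so has weight $1$. Both of your routes for the second identity work: through the inverse bijection of \cref{P:temperley-bij}, or by the direct edge count.
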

	
	\cref{L:path-weights} is relevant because we can produce the dimer measure $\mathbb P_{a, n}$ by starting with a collection of edge weights that are $1$ on $\tt{D} \setminus \tt{D_C}$, for any choice of $\tt{C}$, as per the next lemma.
	
	\begin{lem}[Proposition 8.2, \cite{BCJ22}]
		\label{L:gauge} 
		Fix $a \in (0, 1)$, and consider the following collections of edge weights on $\tt{A}_n$.
		\begin{itemize}[nosep]
			\item $\tt{N}$-weights: edges of the form $\{v, v - (\pm 1, 1)\}$, where $v \in \tt{N}$ have weight $a^{2}$. All other edges have weight $1$.
			\item $\tt{S}$-weights: edges of the form $\{v, v + (\pm 1, 1)\}$, where $v \in \tt{S}$ have weight $a^{2}$. All other edges have weight $1$.

		\end{itemize}
		Then for either of the two edge weight choices above, the measure on dimer configurations on $\tt{A}_n$ given by choosing configurations proportional to their edge weight is the two-periodic Aztec diamond measure $\mathbb P_{a, n}$. 
	\end{lem}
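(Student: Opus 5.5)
The natural approach is a gauge transformation: multiply all edges incident to a given vertex by a common factor. Every perfect matching uses exactly one edge at each vertex, so this multiplies each configuration's weight by the same constant and leaves the normalized measure $\mathbb P_{a,n}$ unchanged. The goal is therefore a choice of vertex factors $\lambda:\tt{V}\to(0,\infty)$ with
$$
\lambda(u)\lambda(v)\,w_{\mathrm{orig}}(\{u,v\}) = \kappa\, w_{\tt{S}}(\{u,v\})
$$
for every edge and some global constant $\kappa$. Here $w_{\mathrm{orig}}$ is the two-periodic weighting ($a$ on edges of $a$-faces, $1$ on edges of $b$-faces), and $w_{\tt{S}}$ is the $\tt{S}$-weighting. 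The $\tt{N}$-case will then follow by applying the symmetry $\rho_\tt{N}$ of \cref{L:sym}: it preserves $\mathbb P_{a,n}$, swaps $\tt{S}$ and $\tt{N}$, and sends $(\pm1,1)$ to $-(\pm1,1)$, so it maps the $\tt{S}$-weights to the $\tt{N}$-weights.

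First I classify the edges by their endpoint types. Each vertex $v$ lies on exactly one $a$-face $f(v)$, at offset $(0,-1),(0,1),(1,0),(-1,0)$ for $v \in \tt{S},\tt{N},\tt{E},\tt{W}$ respectively. An edge is a side of one $a$- or $b$-face, and around an $a$-face $f$ the four vertices are $f+(0,\pm1), f+(\pm1,0)$. So the edges of $f$ are exactly the $\tt{S}$–$\tt{E}$, $\tt{S}$–$\tt{W}$, $\tt{N}$–$\tt{E}$, $\tt{N}$–$\tt{W}$ pairs sharing the same $a$-face. For $v\in\tt{S}$, these are the edges $\{v, v+(\pm1,1)\}$, i.e. the north-pointing ones. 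All other edges at $v$ (towards $v-(\pm1,1)$) border $b$-faces and have weight $1$. Thus originally every white vertex has exactly two weight-$a$ edges, and in the $\tt{S}$-weighting exactly the two $a$-face edges at each $\tt{S}$ vertex get weight $a^2$. Every edge of an $a$-face contains exactly one white vertex, which is either an $\tt{S}$ or an $\tt{N}$ vertex.

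Now I choose the factors: $\lambda(v)=a$ for $v\in\tt{S}$, $\lambda(v)=a^{-1}$ for $v\in\tt{N}$, and $\lambda=1$ on black vertices. Checking each case:
\begin{itemize}
\item An $a$-face edge at an $\tt{S}$ vertex becomes $a\cdot a = a^2$.
\item An $a$-face edge at an $\tt{N}$ vertex becomes $a\cdot a^{-1}=1$.
\item A $b$-face edge at an $\tt{S}$ vertex becomes $a$.
\item A $b$-face edge at an $\tt{N}$ vertex becomes $a^{-1}$.
\end{itemize}
The last two are not yet the $\tt{S}$-weights. To fix this, I use that each vertex has degree $4$ in the infinite lattice (possibly fewer at the boundary). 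In the gauged weights, every $\tt{S}$ vertex carries weights $(a^2,a^2,a,a)$ and every $\tt{N}$ vertex carries $(1,1,a^{-1},a^{-1})$. The cleaner route is to also gauge black vertices according to the $b$-face geometry, and I expect this bookkeeping to be the main obstacle.

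I would try this concretely by determining, for a $b$-face $g$, its four vertices. A short parity computation using $f(v)$ should show that $g+(0,1)\in\tt{S}$, $g-(0,1)\in\tt{N}$, and $g\pm(1,0)$ are black. The $b$-face edges are then $\tt{S}$–black and $\tt{N}$–black pairs. Since each black vertex is adjacent to one $\tt{N}$ and one $\tt{S}$ vertex through $b$-faces, setting $\lambda(\text{black})=\mu$ gives $b$-face edges weights $a\mu$ at $\tt{S}$ and $a^{-1}\mu$ at $\tt{N}$. The $a$-face edges at $\tt{S}$ become $a^2\mu$, and those at $\tt{N}$ become $\mu$. This still differs from the target, since $b$-face edges at $\tt{S}$ carry $a\mu$ where we want $\mu$. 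So I expect the right gauge to be non-constant on black vertices, $\lambda(\tt{E})\neq\lambda(\tt{W})$, or to use a parity-dependent $\lambda$ along diagonals. I would solve the resulting linear system in $\log\lambda$ over one fundamental domain of the period-$4$ lattice, and verify consistency via the face criterion. Two weightings are gauge equivalent on a planar bipartite graph iff the alternating products of weights around every bounded face agree. For an $a$-face the alternating product is $a\cdot a/(a\cdot a)=1$ originally, versus $a^2\cdot a^2/(1\cdot 1)$ in the $\tt{S}$-weighting. Since the exact alternation depends on the face geometry, I would compute these face ratios carefully for both face types. If they match, as \cite{BCJ22} asserts, the gauge exists (boundary faces are handled by the same criterion, since $\tt{A}_n$ is simply connected) and the lemma follows.
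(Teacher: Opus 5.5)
Your edge classification and the reduction of the $\mathtt{N}$-case to the $\mathtt{S}$-case via $\rho_{\mathtt{N}}$ are correct. However, the proposal never establishes the lemma: you neither exhibit a gauge nor verify the face weights, and both routes, as you set them up, would fail.

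First, the periodic gauge you plan to find over a fundamental domain does not exist. Write $\lambda=a^{\ell}$ and require $\lambda(w)\lambda(b)\,w_{\mathrm{orig}}(e)=\kappa\, w_{\mathtt{S}}(e)$. At any white vertex $v$, whether in $\mathtt{S}$ or in $\mathtt{N}$, the up-edge and down-edge constraints subtract to $\ell(v+(1,1))-\ell(v-(1,1))=1$. So $\ell$ grows by $1$ along every diagonal step $(2,2)$ between black vertices. No choice with $\lambda(\mathtt{E})\neq\lambda(\mathtt{W})$, and no period-$4$ pattern, can work.

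Second, the face criterion is the right tool (it is what the paper invokes via \cite{BCJ22}), but your computation is wrong and incomplete. Around an $a$-face $f$, the two $a^2$ edges are $\{f-(0,1),f\pm(1,0)\}$. They share the $\mathtt{S}$ corner, so they are adjacent and sit in opposite factors of the alternating product. That product is $1\cdot a^2/(a^2\cdot 1)=1$, not $a^4$; with your value the criterion would disprove the lemma. You also plan to check only $a$- and $b$-faces. The bounded faces $(i,j)$ with $i,j$ even are exactly where the two-periodic weighting has nontrivial face weights $a^{\pm 2}$, so they are the substantive check (and they do agree with the $\mathtt{S}$-weights).

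The missing idea is that the gauge must depend on the vertical coordinate. Take $\lambda(v)=a^{(1-v_2)/2}$ for $v\in\mathtt{S}$, $\lambda(v)=a^{-(1+v_2)/2}$ for $v\in\mathtt{N}$, and $\lambda(u)=a^{u_2/2}$ for black $u$. Then $\lambda(w)\lambda(b)\,w_{\mathrm{orig}}(e)=w_{\mathtt{S}}(e)$ holds edge by edge.

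Equivalently, with no gauge bookkeeping at all: let $u_{\mathtt{S}}(D)$ count dimers $\{v,v+(\pm1,1)\}$ with $v\in\mathtt{S}$, and let $d_{\mathtt{N}}(D)$ count dimers $\{v,v-(\pm1,1)\}$ with $v\in\mathtt{N}$. The weight-$a$ edges are exactly these two kinds, and every dimer has exactly one white endpoint, so $w_{\mathtt{S}}(D)/w_{\mathrm{orig}}(D)=a^{u_{\mathtt{S}}(D)-d_{\mathtt{N}}(D)}$. Now sum $b_2-w_2=\pm1$ over all dimers $\{w,b\}$. Since each vertex lies in exactly one dimer, this gives $\sum_{b\ \mathrm{black}}b_2-\sum_{w\ \mathrm{white}}w_2=2(u_{\mathtt{S}}-d_{\mathtt{N}})+|\mathtt{N}|-|\mathtt{S}|$. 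The left side does not depend on $D$, so the ratio is constant and the two measures coincide. The $\mathtt{N}$-case has ratio $a^{d_{\mathtt{N}}-u_{\mathtt{S}}}$, or follows from $\rho_{\mathtt{N}}$ exactly as you say.
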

	
	\begin{figure}
		\centering
		\includegraphics[height=4cm]{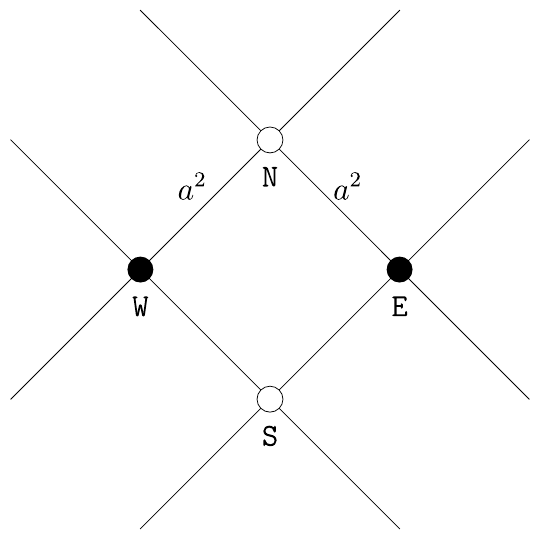}
		\qquad
		\includegraphics[height=4cm]{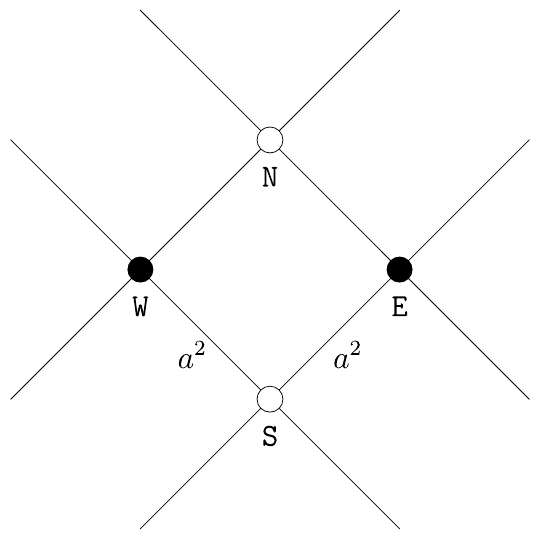}\\

		\includegraphics[height=4cm]{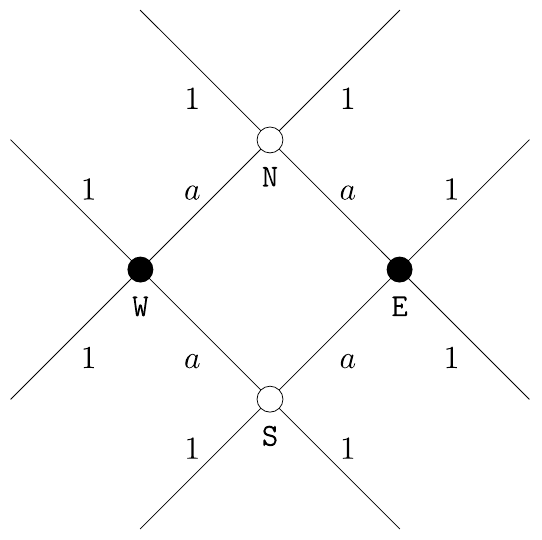}
		\qquad
		\includegraphics[height=4cm]{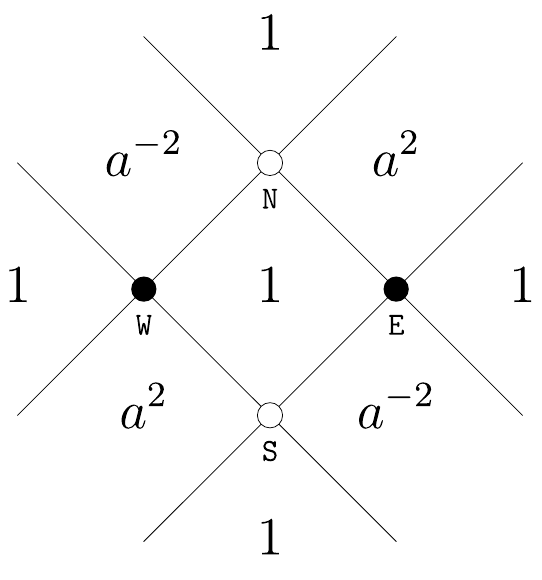}
		\caption{The north and south edge weights on $\mathtt{A}_n$ as considered in \cref{L:gauge}, along with the original two-periodic weights and the face weights.  }
		\label{fig:L:gauge} 
	\end{figure}
	
	The idea of \cref{L:gauge} is that a dimer model is determined by its face weights, i.e.\ the alternating product of edge weights around each face. Two dimer models with the same face weights are called \emph{gauge equivalent}, and produce the same measure, see Section 8 in \cite{BCJ22} for details. Combining Lemmas \ref{L:gauge} and \ref{L:path-weights}, we have the following result. For this corollary, we call a directed edge of the form $(v, v + 2 e)$ in $\tt{G}(\tt{C})$ a northeast (NE) edge if $e = (1, 1)$, a northwest (NW) edge if $e = (1, -1)$, a southeast (SE) edge if $e = (-1, 1)$ and a southwest (SW) edge if $e = (-1, -1)$. 
	
	\begin{cor}
		\label{C:pushforward}
		For $\tt{C} \in \{\tt{N}, \tt{S}\}$ and $a \in (0, 1)$, let $\Q_\tt{C, a, n}$ denote the pushforward measure of $\P_{a, n}$ under the map $F_\tt{C}$. Then $\Q_\tt{C, a, n}$ is a measure on dimer-compatible forests in $\tt{G(C)}$, where for a forest $F$ (oriented towards $\partial^\tt{x} \tt{C}$, $\Q_\tt{C, a, n}(F)$ is proportional to the weight of $F$, where edge weights on $\operatorname{Dir}_\tt{C}$ are given as follows:
		\begin{itemize}[nosep]
			\item $\tt{C} = \tt{N}$: NE/NW edges have weight $1$ and SE/SW edges weight $a^2$.
			\item $\tt{C} = \tt{S}$: SE/SW edges have weight $1$ and NE/NW edges weight $a^2$.

		\end{itemize}
	\end{cor}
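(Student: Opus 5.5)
The approach is to combine \cref{L:gauge} with \cref{L:path-weights} and push everything through the bijection of \cref{P:temperley-bij}. Fix $\tt{C} = \tt{S}$; the case $\tt{C} = \tt{N}$ is identical, or follows by applying $\rho_\tt{N}$ from \cref{L:sym}.

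First, by \cref{L:gauge}, $\P_{a,n}$ is the dimer measure with $\tt{S}$-weights $w$. Under these weights, an edge $\{v, v + (\pm 1, 1)\}$ with $v \in \tt{S}$ has weight $a^2$ and every other edge has weight $1$. In particular $w \equiv 1$ on $\tt{D} \setminus \tt{D}_\tt{S}$, which is exactly the hypothesis of \cref{L:path-weights}. So $\P_{a,n}(D) = w(D)/Z$ for the appropriate normalizing constant $Z$.

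Next, \cref{P:temperley-bij} says $F_\tt{S}$ is a bijection from dimer configurations onto DCFs on $\tt{G}(\tt{S})$ oriented towards $\partial^\tt{x}\tt{S}$. The pushforward therefore satisfies $\Q_{\tt{S},a,n}(F) = \P_{a,n}(D_\tt{S}(F)) = w(D_\tt{S}(F))/Z$. By \cref{L:path-weights}, this equals $\tilde w(F)/Z$, where $\tilde w = w \circ \phi_\tt{S}^{-1}$. Hence $\Q_{\tt{S},a,n}$ is supported on DCFs and is proportional to $\tilde w$.

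It remains to identify $\tilde w$ on directed edges. The map $\phi_\tt{S}$ sends $\{v, v+e\}$ to $(v, v+2e)$. So the directed edge $(v, v + 2e)$ has weight $a^2$ exactly when $e = (\pm 1, 1)$, and weight $1$ when $e = (\pm 1, -1)$. The only point requiring care is matching these directions to the NE/NW/SE/SW labels of the corollary. With $e = (1,1)$ labelled NE and $e = (1,-1)$ labelled NW as in the text, the $a^2$-edges should be the two "north" ones and the $1$-edges the two "south" ones. This agrees with the introduction's description that paths are biased towards south-pointing edges. I would double-check that the labelling convention for $(1,-1)$ versus $(-1,1)$ is consistent with \cref{fig:L:gauge}, since the text's labels appear to mix the coordinate conventions. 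Beyond this bookkeeping there is no real obstacle.
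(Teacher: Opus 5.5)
Your proposal is correct and is essentially the paper's proof: \cref{L:gauge} gives $\tt{C}$-weights equal to $1$ off $\tt{D}_\tt{C}$, and \cref{L:path-weights} together with the bijectivity in \cref{P:temperley-bij} transfers these weights to DCFs. Your concern about the direction labels is justified: the text's definitions of NW ($e=(1,-1)$) and SE ($e=(-1,1)$) are swapped relative to the coordinate convention, and under the geometric reading the $a^2$-edges $(v, v+2(\pm 1,1))$ are exactly the north-pointing ones, as the statement asserts.
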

	
	\begin{figure}
		\centering
		\includegraphics[height=3cm]{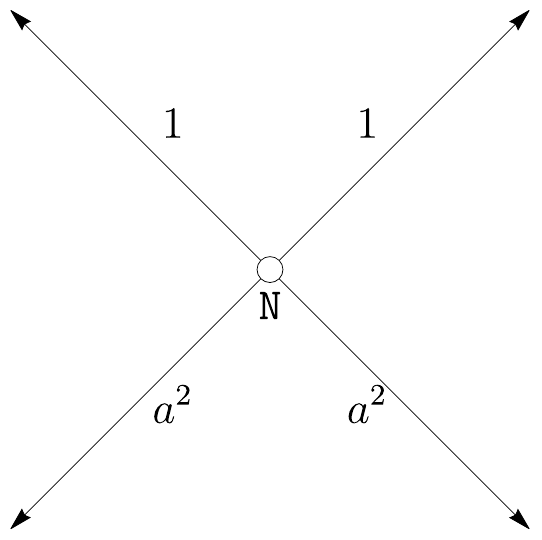}
		\qquad
		\includegraphics[height=3cm]{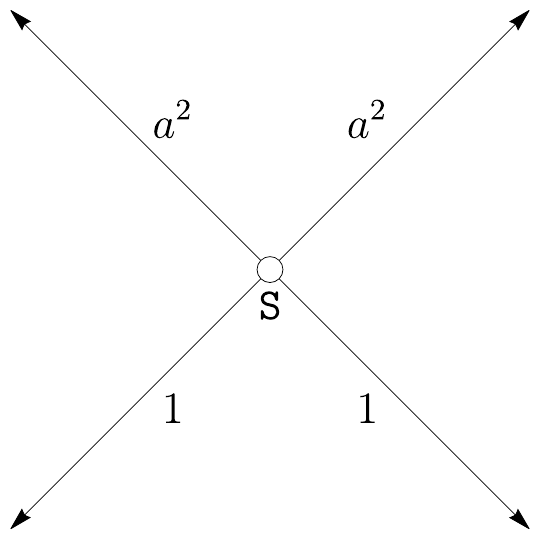}

		\caption{The random walk weights from \cref{C:pushforward}. }
		\label{fig:C:pushforward} 
	\end{figure}

	In other words, the pushforward of $\mathbb P_{a, n}$ under the map $F_\tt{C}$ is a measure on DCFs, where the weights bias towards edges that point in cardinal direction $\tt{C}$.

	\begin{proof}
		By \cref{L:gauge}, the measure $\mathbb P_{a, n}$ assigns weights to dimer configurations based on the $\tt{C}$-weights in that lemma, which equal $1$ off of $\tt{D_C}$. Therefore by \cref{L:path-weights} and the bijectivity of $F_\tt{C}$ (\cref{P:temperley-bij}), the pushforward $\Q_{\tt{C}, a, n}$ assigns measure to DCFs with weights described above.
	\end{proof}
	
	One of the main advantages of moving from the dimer measure $\P_{a, n}$ to the DCF measure $\Q_{\tt{C}, a, n}$ is the connection to loop-erased random walks via Wilson's algorithm \cite{Wil96}. 
	
	First, consider a finite weighted directed graph $G = (V, E)$, with a distinguished root vertex $v$, and suppose that for every $w \in V$ there is a directed path from $w$ to $v$. We can sample a spanning tree $T$ on $G$, oriented towards $v$, with probability proportional to the weight of $T$, as follows:
	\begin{enumerate}[label=\arabic*.]
		\item Choose any enumeration $v_1, v_2, v_3, \dots v_k$ of $V \setminus \{v\}$, and set $T_0 = \{v\}$.
		\item At step $i \in \{1, \dots, k\}$, run a random walk $X_i$ with step probabilities proportional to edge weights in $G$, until it hits a vertex in $T_{i-1}$. Set $T_i = T_{i-1} \cup \operatorname{LE}(P_i)$, where $\operatorname{LE}(X_i)$ is the loop-erasure of $X_i$ (i.e. we remove loops of $X_i$ in the order in which they appear to get a simple path). 
		\item Set $T = T_k$. 
	\end{enumerate}
	Wilson's algorithm has the following obvious extension. Suppose that $H$ is a subforest of $G$ with vertex set $W \subset V$. Then if we run Wilson's algorithm on $V \backslash W$ with the whole subgraph $H$ in place of $T_0$, the final graph $F = T_k$ has law $\mu$ given as follows:
	\begin{itemize}[nosep]
		\item $\mu$ assigns positive measure to spanning forests $F$ of $G$ such that $F|_W = H$, and every component of $F$ is oriented towards an element of $W$. Here we write $F|_W$ for the subgraph of $F$ induced by the vertex set $W$.
		\item Subject to this constraint, the probability of $F$ is proportional to its weight.
	\end{itemize}
	To see why this holds, observe that running Wilson's algorithm in this setting is equivalent to running Wilson's algorithm on a graph $\tilde G$, whose vertex set is given by $V$ together with an extra sink vertex $w$, where we remove all outgoing edges from $G|_W$ except those in $H$, and where we connect vertices in $H$ to the new sink vertex $w$. We refer to the measure described in the two bullets above as $\tt{Forest}(G \mid H)$.
	
	Now we examine the upshot of Wilson's algorithm for the south forest in the two-periodic Aztec diamond. First, for a DCF $F$ on $\tt{G}(\tt{S})$, define paths 
	$
	\cS_i^\pm = \cS_i^\pm(F), i = 1, \dots, n/2
	$
	where (with notation as in \cref{L:tree-paths}), the path $\cS_i^-$ is the path starting at $v_i^S$ and ending in $\partial^\tt{x} \tt{S}$ in $F$, and the path $\cS_i^+$ is the path starting at $v_i^N$ and ending at a sink vertex in $\partial^\tt{x} \tt{S}$. We let $\cS = \cS(F) = \bigcup_{i =1}^{n/2} (\cS_i^+ \cup \cS_i^-)$. We call $\cS$ the \textbf{backbone} of the DCF $F$, and the paths $\cS_i^\pm$ the \textbf{backbone paths}.

	\begin{lem}
		\label{L:simple-sample}
		In this lemma, $\tt{G}(\tt{S})$ should be understood as a weighted directed graph, with south-biased weights given as in Corollary \ref{C:pushforward}, and $F \sim \Q_{\tt{S}, a, n}$.
		\begin{enumerate}[label=\arabic*.]
			\item Conditional on $\cS = \cS(F)$, $F \sim \tt{Forest}(\tt{G}(\tt{S}) \mid \cS)$.
			\item Let $v \in \tt{S}$, and let $\Pi^\cS(v) = (v = v_0, v_1, v_2, \dots, v_T)$ denote the path starting at $v$ in $F$, stopped at the first vertex $v_T \in \cS$. Then conditional on $\cS$, 
			$$
			(v_0, \dots, v_T) \sim \operatorname{LE}(X_v),
			$$
			where $X_v$ is a random walk on $\tt{G}(\tt{S})$ started at $v$, stopped when it hits $\cS$.
		\end{enumerate} 
	\end{lem}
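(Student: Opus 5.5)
The plan is to deduce both parts from Corollary \ref{C:pushforward} together with the generalized Wilson algorithm $\tt{Forest}(G \mid H)$ described just above, applied with $G = \tt{G}(\tt{S})$ and $H = \cS$.

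For part 1, I would first show that the set of DCFs $F$ with $\cS(F) = \cS_0$, for a fixed admissible backbone $\cS_0$, coincides with the set of spanning forests $F$ of $\tt{G}(\tt{S})$ such that $F|_{V(\cS_0)} = \cS_0$ and every component of $F$ is oriented towards a vertex of $\cS_0$.

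The inclusion from DCFs to such forests is immediate. Every backbone path of $F$ is a union of directed edges of $F$, so $F$ restricted to $V(\cS_0)$ contains $\cS_0$. It contains no further edges there, since each vertex has out-degree one and the out-edges of backbone vertices already lie on the backbone paths. Finally, every non-backbone vertex follows its directed path to a sink in $\partial^{\tt{x}}\tt{S}$, and that sink lies on $\cS_0$. It does so because every sink is the endpoint of some backbone path: by \cref{L:tree-paths}, $\theta_F$ is surjective onto $\partial^\tt{x}\tt{S}$.

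For the reverse inclusion, take any forest $F'$ extending $\cS_0$ in this way. Every non-backbone vertex eventually flows into $\cS_0$ and hence into a sink, so each component contains exactly one element of $\partial^\tt{x}\tt{S}$. The paths from source vertices are exactly the paths of $\cS_0$, so $\theta_{F'} = \theta_F$, and condition (iii) of \cref{D:dc-forest} holds. Moreover $\cS(F') = \cS_0$.

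Since $\Q_{\tt{S},a,n}(F)$ is proportional to the product of directed edge weights by Corollary \ref{C:pushforward}, conditioning on $\cS = \cS_0$ gives the measure on this set proportional to weight. That is precisely $\tt{Forest}(\tt{G}(\tt{S}) \mid \cS_0)$.

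For part 2, I would run the extended Wilson algorithm with $H = \cS$, enumerating the vertices of $\tt{S} \setminus V(\cS)$ with $v$ first. The first random walk starts at $v$, moves according to the south-biased step probabilities (proportional to edge weights), and stops on hitting $T_0 = \cS$. The branch added at this step is $\operatorname{LE}(X_v)$, and in the final forest this branch is exactly the path from $v$ until its first visit to $\cS$. By part 1 and the correctness of Wilson's algorithm, the output has the conditional law of $F$, which gives the claim. If $v \in \cS$, both sides are the trivial path.

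Two technical points need checking. First, Wilson's algorithm requires that from every vertex there is a directed path to $\cS$, so that the walk hits $\cS$ almost surely. This holds because $\tt{G}(\tt{S})$ is connected with positive weights in every direction, and $\cS$ contains all sinks. Second, the reduction of $\tt{Forest}(G \mid H)$ to ordinary Wilson's algorithm on the augmented graph $\tilde G$ must preserve weights. It does, because the added edges to the new sink $w$ can be given weight $1$.

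The main obstacle is the set identification in part 1, specifically checking that any extension of $\cS_0$ automatically satisfies the sink-matching condition (iii) and leaves the backbone unchanged. This is handled by the observation that source-to-sink connectivity is determined entirely by $\cS_0$.
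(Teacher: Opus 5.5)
Your route is the same as the paper's: identify the conditional support of $F$ given the backbone, get proportionality to weight from \cref{C:pushforward}, and read off part 2 from the rooted Wilson algorithm. However, one step you supply is false, and it hides a real (if minor) defect.

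You assert that $\theta_F$ is surjective onto $\partial^{\tt{x}}\tt{S}$ by \cref{L:tree-paths}. In fact $|\partial_E\tt{S}|=n/2+1$ and $|\partial_W\tt{S}|=n/2$, so there are $n+1$ sinks but only $n$ sources. Concretely, \cref{L:tree-paths} shows that the corner sink $v_1^E=(n+1,n)$ is never hit when $I\ge 1$, and $v^E_{n/2+1}=(n+1,-n)$ is never hit when $I<n/2$. The components of these sinks need not be trivial. For $n=4$ there is a DCF with $I=2$ in which $v_1^N$ follows $(1,4)\to(-1,2)\to(1,0)\to(3,-2)\to(5,0)$, while the off-backbone vertex $(3,2)$ points to the corner sink $(5,4)$. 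This breaks your argument in both parts:
\begin{itemize}
\item In part 1, the set identification fails. Such a DCF has a component containing no vertex of $\cS_0$, so it is not a forest oriented towards $\cS_0$.
\item In part 2, the walk from $v$ can be absorbed at a corner sink before reaching $\cS$, since sinks have no out-edges in $\operatorname{Dir}_{\tt{S}}$. Your justification that ``$\cS$ contains all sinks'' is exactly what fails.
\end{itemize}

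The repair is easy. Take the root set to be $H=\cS\cup\partial^{\tt{x}}\tt{S}$, i.e.\ the backbone together with the unmatched sinks as isolated roots. Stop the walk, and the path $\Pi^\cS(v)$, on hitting $\cS\cup\partial^{\tt{x}}\tt{S}$. With this change your argument goes through verbatim:
\begin{itemize}
\item Every extension has exactly one sink per component, because each component of $\cS_0$ does.
\item The source paths are untouched, so condition (iii) of \cref{D:dc-forest} holds and $\cS(F')=\cS_0$.
\item Wilson's algorithm is well defined, since every vertex of $\tt{S}$ has a directed path into $H$.
\end{itemize}
The paper's one-line proof (``every component is oriented towards an element of $\cS$'') glosses over the same point. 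It does not affect later uses such as \cref{L:regular-paths}, since the path from $v$ is still the loop-erasure of a south-biased walk, merely stopped on a slightly larger set.
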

	We can similarly define north backbone paths $\cN_i^\pm$, and state a version of \cref{L:simple-sample} in this setting.
	
	\begin{proof}
		Conditional on $\cS$, the only constraint on $F$ required for Definition \ref{D:dc-forest} is that $F$ is a spanning forest of $\tt{G}(\tt{S})$ where every component is oriented towards an element of $\cS$. The first part of the lemma then follows from Corollary \ref{C:pushforward}. The second part follows from Wilson's algorithm for sampling measures of the form $\tt{Forest}(G \mid H)$.
	\end{proof}
	
	\cref{L:simple-sample} gives a precise framework for showing that paths in north/south Temperleyan forests tend to drift north/south, and gives us a probabilistic foothold for understanding the forest measures $\Q_{\tt{C}, a, n}$.
	
	\begin{rem}
		\label{R:fomin}
		\cref{L:simple-sample} says nothing about how we sample the backbone paths $\cS_i^\pm$ prior to filling in the remaining forest. This can again be understood through Wilson's algorithm. While we will not state a precise correspondence here, one can show that the paths $\cS_i^\pm$ are loop-erased random walk paths starting at $\partial^\tt{o} \tt{S}$, and conditioned to end at one of the prescribed points on $\partial^\tt{x} \tt{S}$, and on non-intersection. The notion of non-intersecting loop-erased random walks, and hence the law of $\cS$, can be understood through Fomin's determinant formula \cite{fomin2001loop}. Since the backbone paths $\cS_i^\pm $ are coming from \emph{south-drifted} random walks, by conditioning them to hit the east or west boundaries (which forces them to move against their drift), we should expect that they become essentially one-dimensional. Therefore we may loosely expect that the whole ensemble bears a resemblance to non-intersecting one-dimensional walks. This gives some intuition from the forest perspective for why we might expect to see Airy fluctuations. While this is an intriguing picture, it is not as tractable as models of one-dimensional non-intersecting random walks, and we were not to use this description to gain a foothold at the rough-smooth boundary.
	\end{rem}
	
	\subsection{The smooth phase.} \label{S:smooth-phase-def} \qquad 
	Near the center of the Aztec diamond, and in large regions within corridors, the Aztec diamond measure is close to a full-plane dimer measure called the \emph{smooth phase}. The smooth phase is the local limit away from the boundary of a $2$-periodic dimer model with flat boundary conditions. It is easiest to first describe this measure from the point of view of Temperleyan forests, where we can sample it using \emph{Wilson's algorithm rooted at $\infty$}, see \cite{BLPS01}. For this algorithm, let $\tt{C} \in \{\tt{N}, \tt{S}\}$.
	\begin{enumerate}[label=\arabic*.]
		\item Choose any enumeration $v_i,i \in \N$ of the full-plane vertex set $\tt{\tilde C}$.
		\item Run a random walk $X_1$ starting at $v_1$ with $\tt{C}$-biased edge weights as in Corollary \ref{C:pushforward}. Let $T_1 = \operatorname{LE}(X_1)$. Because of the drift, $X_1$ is transient and so $\operatorname{LE}(X_1)$ is well-defined.
		\item At steps $i = 2, 3, \dots$, run a random walk $X_i$ with $\tt{C}$-biased edge weights until it hits a vertex in $X_{i-1}$. Set $T_i = T_{i-1} \cup \operatorname{LE}(X_i)$. The random walk $X_i$ hits $T_{i-1}$ almost surely.
		\item Set $T = \bigcup_{i=1}^\infty T_i$.
	\end{enumerate}
	It is easy to check that the resulting graph $T$ is an infinite (spanning) tree on $\tt{\tilde C}$.  
	We let $\Q_{a, \tt{C}}$ be the law of $T$, and call $T$ a \emph{smooth phase $\tt{C}$-spanning tree} (with parameter $a$).
	As with the finite version of Wilson's algorithm, this law does not depend on the enumeration of $\tt{\tilde C}$. Now, the lattice $\tt{\tilde S}$ with edges of the form $\{v, v + 2 e\}$ is dual to the lattice $\tt{\tilde N}$ with edges of the same form, and given $T \sim \Q_{a, \tt{S}}$, we can construct a dual tree $\check T$ on $\tt{\tilde N}$ by letting a dual edge $\check e \in \check T$ if and only if the original edge $e \notin T$. The same duality relation holds between $\tt{\tilde E}, \tt{\tilde W}$.
	
	The results of \cite{BLPS01} (i.e. Theorem 12.2 therein; the versions of these lattices with oriented edge weights as in Corollary \ref{C:pushforward} are dual networks in the sense of that theorem) imply that if $T \sim \Q_{a, \tt{C}}$, then $\check T \sim \Q_{a, \tt{\check C}}$, where $\tt{\check C}$ denotes the dual direction. Together we can take the pair $(T, \check T)$ and produce a dimer configuration using an infinite analogue of Temperley's bijection. Orient both $T, \check T$ towards $\infty$ (i.e. so that there are no sink vertices), and define a perfect matching $D_{\tt{C}}(T)$ of $\tt{\tilde G} = (\tt{\tilde V}, \tt{\tilde D})$ by letting
	$$
	D_{\tt{C}}(T) = \{\{v, v + e\} : v \in \tt{C} \cup \tt{\check C} : (v, v + 2 e) \text{ is a directed edge in }  T \text{ or } \check T\}.
	$$
	Now, given a perfect matching $D$ of $\tt{\tilde G}$, we can define an (oriented) spanning forest $F_\tt{C}(D)$ on $\tt{\tilde C}$ by letting
	$$
	F_\tt{C}(D) = \{(v, v + 2 e) :  v \in \tt{C}, \{v, v + e\} \in D\}.
	$$
	The maps $D_{\tt{C}}$ and $F_\tt{C}$ are inverses. We let 
$$
\P_a = (D_\tt{S})_* \Q_{a, \tt{S}} = (D_\tt{N})_* \Q_{a, \tt{N}} 
$$
and call $\P_a$ the \textbf{smooth phase}.
	
	Just as we can define heights in the Aztec diamond, we can also define heights in the smooth phase. As with the Aztec diamond, this requires an anchor point. The natural anchor point is $\infty$, which is the unique choice that guarantees that the height function is ergodic. To define the height function anchored at $\infty$, let $D \sim \P_a$ and $T = F_\tt{S} (D)$. We first define the height function $h^f_D$ anchored at an $a$-face $f \in \tt{\tilde F}$ by using the rules in Section \ref{S:face-heights}, subject to the initial condition $h^f_D(f) = 0$. All these height functions are shifts of each other.
	
	Next, for a simple path $\Pi = (v_0, v_1, \dots)$ satisfying 
	\begin{equation}
		\label{E:nn}
		\lim_{n \to \infty} v_{n, 2} = -\infty,
	\end{equation} 
	we say that the (counterclockwise) winding number of $\Pi$ is the number times the path $\Pi$ crosses the line $v_0 + \{0\} \times (0, \infty)$ moving right-to-left minus the number of times it crosses moving left-to-right. The southward drift of the random walk and condition \eqref{E:nn} ensures that each of these counts is finite. We write
	$\operatorname{Wind}(\Pi)$ for the winding number of $\Pi$. For an $a$-face $f$, let $\Pi_f$ be the path in $T$ starting on the $\tt{S}$-vertex $f - (0, 1)$. All paths $\Pi_f$ satisfy \eqref{E:nn} almost surely since $T$ is built from south-biased random walks, and by this property, there are faces $f$ with $\operatorname{Wind}(\Pi_f) = 0$. Choose any such $a$-face $f$ and set $h_D = h_D^f$. This is the height function of $D$, anchored at $\infty$. 
	
	The next lemma shows that this procedure is well-defined and clarifies the connection between winding numbers and height functions.
	\begin{lem}
		\label{L:height-lem}
		Suppose that $D$ is a dimer configuration on $\tilde V$, and that $T = F_\tt{S} D$ is an infinite tree such that the infinite path $(v_0 = (0,0), v_1, \dots)$ in $T$ starting at the origin satisfies \eqref{E:nn}. 
		Then the function $h_D$ is well-defined, and for any $a$-face $f$ we have
		\begin{equation*}
			h_D(f) = -4 \operatorname{Wind}(\Pi_f).
		\end{equation*}
	\end{lem}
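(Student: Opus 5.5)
The plan is to prove the local identity $h^g_D(f) = -4\operatorname{Wind}(\Pi_f) + C_g$ for every pair of $a$-faces $f, f'$ and a fixed anchor $g$, where $C_g$ depends only on $g$. Well-definedness then follows at once. If $\operatorname{Wind}(\Pi_f)=0$, then $h^f_D = h^g_D - h^g_D(f) = h^g_D - C_g$, and this does not depend on which zero-winding face $f$ we choose. The formula $h_D = -4\operatorname{Wind}(\Pi_\cdot)$ is then immediate. So everything reduces to a relative statement,
$$
h^g_D(f') - h^g_D(f) = -4\big(\operatorname{Wind}(\Pi_{f'}) - \operatorname{Wind}(\Pi_f)\big),
$$
and it is enough to check this when $f, f'$ are adjacent $a$-faces, i.e.\ $f' - f \in \{\pm(2,2), \pm(2,-2)\}$, and then chain along paths of $a$-faces.

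First I check the hypothesis propagates. Since $T$ is a tree and every vertex has out-degree one, the path from any vertex $v$ merges with the path from the origin after finitely many steps, or else the origin's path passes through $v$. So every $\Pi_f$ satisfies \eqref{E:nn}, and every winding number is finite.

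For adjacent $f, f'$, let $s=f-(0,1)$ and $s'=f'-(0,1)$, and let $u$ be the first common vertex of $\Pi_f$ and $\Pi_{f'}$. Write $\Pi_f = P\cdot Q$ and $\Pi_{f'} = P'\cdot Q$, where $Q$ is the common tail from $u$. I would use the finite-path height rule \cref{C:vertex-heights} together with \cref{L:tree-path-heights}: heights are transported along tree paths, and change by $\pm 4$ exactly when the path crosses the face-path. The cleanest route is to express both sides through the closed curve $\gamma = P \cdot \overline{P'} \cdot [s', s]$, closed off by the short segment between $s'$ and $s$. Its total turning is $2\pi \cdot k$, where $k$ is the number of times it encircles. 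The turning count $L-R$ along $P$ minus that along $P'$ gives the height difference of the vertices $s, s'$ relative to $u$. Because $s$ and $s'$ are adjacent, relating vertex heights to the $a$-face heights introduces only a bounded local correction, fixed by \cref{rem:average-height}.

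On the winding side, both windings are computed with respect to vertical rays from different base points. I would rewrite $\operatorname{Wind}(\Pi_f)$ as the total turning $(L-R)/4$ of the path, counted with the convention that the path starts and ends pointing south. This is the same normalization used in the proof of \cref{P:temperley-bij}, where a path with equal initial and final directions and no winding has equal numbers of left and right turns. Since $\Pi_f$ eventually heads to $-\infty$ vertically, its crossings of the upward ray from $v_0$ equal its net turning divided by $2\pi$. The common tail $Q$ contributes equally to both windings and cancels. The finite parts $P, P'$ then give exactly the relation $(L-R)(P) - (L-R)(P') = \pm 4(\operatorname{Wind}\text{ difference})$.

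The main obstacle is the bookkeeping at the two ends. At the start, the initial edge direction from $s$ may differ from south, and the face $f$ must be compared with the vertex height $h(s)$. At the merge point $u$, the paths arrive from different directions. I expect to handle this by the same augmentation trick as in the proof of \cref{P:temperley-bij}. I would prepend a virtual southward-pointing edge and check the four possible dimers at $s$ via \cref{rem:average-height}; this shows the correction to be the same constant for every $a$-face. At $u$, the differing incoming directions produce turning corrections that are exactly absorbed by the turn made at $u$ itself into $Q$.
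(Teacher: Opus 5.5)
Your outline is correct and follows essentially the paper's argument. You transport heights along the tree paths with Corollary~\ref{C:vertex-heights}, convert turn counts of the south-heading paths into winding numbers, cancel the shared tail, and absorb the start-point offset with the four-dimer check of Remark~\ref{rem:average-height} (the paper's $h^f_D(v(f)) = (2/\pi)\theta_f$); the reduction to adjacent faces and the closed curve $\gamma$ are unnecessary detours. The one step to state carefully is where the turning--winding identity is evaluated: it must be at a common vertex far down the shared tail, below both starting points, after which both paths stay below it (the paper's $v'$), not at the merge point $u$, because the tail cancels in turn count but not in crossings of the two different vertical rays.
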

	
	\begin{proof}
		Both claims in the lemma follow if we can show that
		$$
		h_D^f - h_D^{f'} = 4 \operatorname{Wind}(\Pi_{f'}) - 4 \operatorname{Wind}(\Pi_{f}).
		$$
		For this, let $v(f) = f - (0, 1), v(f') = f' - (0, 1)$. Let $(v, v')$ be consecutive vertices contained in $\Pi_f \cap \Pi_{f'}$. By condition \eqref{E:nn}, we can choose $v'$ so that the paths $\Pi_f, \Pi_{f'}$ stay below the line $\R \times \{v_2'\}$ after hitting $v'$. By Corollary \ref{C:vertex-heights},
		$$
		h_D^f(v) = h_D^f(v(f)) + L(f) - R(f),
		$$
		where $L$ is the number of left turns of $\Pi_f$ prior to hitting $v'$ and $R$ is the number of right turns. A case-by-case analysis starting from the fact that $h_D^f(f) = 0$ shows that $h_D^f(v(f)) = (2/\pi) \theta_f$, where $\theta_f \in (-\pi, \pi)$ is the angle formed between the edge $e$ leaving $v(f)$ and the south-pointing vector $(0, -1)$.  Since $\Pi_f$ stays below the line $\R \times \{v_2'\}$ after hitting $v'$, 
		$$
		h_D^f(v) = (2/\pi) \theta_f + L(f)- R(f) = (2/\pi) \theta_f - 4\operatorname{Wind}(\Pi_{f}) - (2/\pi)\gamma_f,
		$$
		where $\gamma_f$ is the angle formed by the edges $e$ and $(v, v')$. A similar computation holds for $h_D^{f'}(v)$, and the lemma follows since $\theta_f - \gamma_f = \theta_{f'} - \gamma_{f'}$ is the angle formed by the edge $(v, v')$ and the vector $(0, -1)$.
	\end{proof}
	
	We end this section with a symmetry of the smooth phase that falls out of \cref{L:height-lem}.
	
	\begin{cor}
		\label{C:reflection-symmetry-smooth}
		If $D \sim \P_a$ and $f$ is any $a$-face or $b$-face, then 
		$$
		h_D(f+ (x, y)) \stackrel{d}{=} - h_D(f + (-x, y)),
		$$
		where the distributional equality is jointly over all $(x, y) \in \hat \Z^2$.
	\end{cor}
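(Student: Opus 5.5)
We will deduce the corollary from \cref{L:height-lem} together with a reflection symmetry of the smooth phase spanning tree $T \sim \Q_{a, \tt{S}}$. Let $\sigma(x, y) = (-x, y)$ be the reflection in the vertical axis. The first step is to check that $\sigma$ preserves the lattice structure. It maps $\tt{\tilde S}$ to itself: the $a$-faces are preserved, since $i + j \in 4\Z + 2$ iff $-i + j \in 4\Z + 2$ for odd $i, j$, and the relation ``$v$ is south of $f(v)$'' is preserved. The south-biased weights of Corollary \ref{C:pushforward} are also $\sigma$-invariant, because $\sigma$ swaps NE with NW edges and SE with SW edges, and weights only depend on the north/south component. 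Hence the biased random walk on $\tt{\tilde S}$ is $\sigma$-equivariant. Since Wilson's algorithm rooted at $\infty$ does not depend on the enumeration, it follows that $\sigma T \eqd T$. Up to a translation of the base face $f$, which is harmless by translation invariance of $\P_a$ along the periodicity lattice, we may assume $\sigma$ fixes $f$'s column.

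The second step transfers this to heights via \cref{L:height-lem}. For an $a$-face $g$, the path $\Pi_{\sigma g}$ in $\sigma T$ is exactly $\sigma \Pi_g$, since the starting vertex $g - (0, 1)$ is mapped to $\sigma g - (0,1)$. A reflection reverses orientation, so crossings of the vertical ray above the starting point switch from right-to-left to left-to-right. Therefore $\operatorname{Wind}(\sigma \Pi_g) = -\operatorname{Wind}(\Pi_g)$. Combining this with $h_D(g) = -4\operatorname{Wind}(\Pi_g)$ gives
\[
h_{D_\tt{S}(\sigma T)}(\sigma g) = - h_{D_\tt{S}(T)}(g)
\]
jointly for all $a$-faces $g$. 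Together with $\sigma T \eqd T$, this gives the claimed joint distributional identity on $a$-faces.

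The third step extends the identity to $b$-faces and other faces. Here we use that the height at any face is determined locally by the dimers around it together with the heights at neighbouring $a$-faces. The local rules of Section \ref{S:face-heights} are reversed in orientation under $\sigma$, because counterclockwise becomes clockwise. One checks, most easily via Corollary \ref{C:vertex-heights} and the left/right turn count, which flips sign under reflection, that the increments also flip sign. So the negated, reflected height function is again the height function of the reflected configuration, anchored consistently at $\infty$.

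The main obstacle is this last bookkeeping: verifying that the $\pm 1 / -3$ rules, whose orientation depends on colour, really transform to exact negation under $\sigma$. The issue is that $\sigma$ may swap or preserve colours and alter the offset conventions. If the direct check is messy, I would bypass it by using the turn-count formula of Corollary \ref{C:vertex-heights} for vertex heights, which is manifestly odd under reflection, and then recover face heights from vertex heights.
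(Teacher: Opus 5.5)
Your strategy is the paper's own: reflect the smooth-phase tree $T\sim\Q_{a,\tt{S}}$ in a vertical line, get invariance in law from the enumeration-independence of Wilson's algorithm, and use \cref{L:height-lem} to see that winding numbers, and hence heights, change sign. However, the first step fails for the reflection you chose.

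For $\sigma(x,y)=(-x,y)$ and odd $i,j$ one has $(-i)+j=(i+j)-2i\equiv (i+j)+2 \pmod 4$. So $\sigma$ \emph{exchanges} $a$-faces and $b$-faces: for example, $(1,1)$ is an $a$-face while $(-1,1)$ is a $b$-face. Consequently $\sigma$ maps $\tt{\tilde S}$ onto $\tt{\tilde N}$. The vertex $(1,0)$, south of the $a$-face $(1,1)$, goes to $(-1,0)$, which is north of the $a$-face $(-1,-1)$. So $\sigma T$ is not a tree on $\tt{\tilde S}$ at all, and reflection in $x=0$ is not a symmetry of $\P_a$. Indeed, if it were, your argument would give $h_D(0,0)\eqd -h_D(0,0)$, whereas the remark following the corollary says $\E h_D(0,0)$ is not even an integer.

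Your proposed reduction by translation cannot fix this. $\P_a$ is only invariant under translation by $c$-faces, which have even coordinates. An $a$- or $b$-face always sits in an odd column, so it can never be moved onto the axis $x=0$.

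The repair is to reflect in the column of $f$ itself, $\sigma_f(x,y)=(2f_1-x,y)$, or in any line $x=x_0$ with $x_0\in 2\Z+1$, as the paper does. Then $(2f_1-i)+j=(i+j)+2(f_1-i)\equiv i+j \pmod 4$, since $f_1-i$ is even. Hence $\sigma_f$ preserves $a$-faces, $\tt{\tilde S}$, and the south-biased weights. Your steps 2 and 3 then go through verbatim and yield $h_{\sigma_f D}(\sigma_f p)=-h_D(p)$ for every face $p$, with $\sigma_f D\eqd D$, which is the claim. The parity of the axis is exactly why the corollary is restricted to $a$- and $b$-faces.

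Your step 3 is a genuine addition. The paper asserts the identity at all faces directly ``by \cref{L:height-lem}'', which literally covers only $a$-faces. The bookkeeping you were worried about is harmless. A reflection in a vertical line with integer abscissa preserves the parity of the first coordinate, hence the white/black classes, while reversing orientation. So $-h_D\circ\sigma_f$ obeys the local rules for $\sigma_f D$ and differs from $h_{\sigma_f D}$ by a constant, which your step 2 forces to vanish.
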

	
	\begin{proof}
		Let $L$ be any vertical line with $x$-coordinate $x_0 \in 2 \Z + 1$. Reflect the spanning tree $T \sim \Q_{a, S}$ across $L$ to get a new spanning tree $T'$. The trees $T, T'$ are equal in law by the symmetry in Wilson's algorithm, and moreover, letting $D = D_\tt{S} T, D' = D_\tt{S} T'$, by \cref{L:height-lem} we can see that
		$h_D(x_0 + x, y) = - h_{D'}(x_0 - x, y)$ for all $(x, y) \in \hat \Z^2$. Since $a$ and $b$ faces have $x$-coordinates in $2 \Z + 1$, the lemma follows.
	\end{proof}
	
	\begin{rem}
		Corollary \ref{C:reflection-symmetry-smooth} does not hold if $f$ is not an $a$- or $b$-face, and this corollary is why we choose to anchor our height function to be $0$ on an $a$-face, rather than at the origin, which will have height in $4 \Z + 1$. This convention differs from our convention for the finite Aztec diamond, where the origin has height in $4 \Z$ and $a$-faces have heights in $4 \Z - 1$. We have chosen the latter convention for the Aztec diamond to accommodate the remarkable symmetry $\E h_D (0,0) = 0$, where $D \sim \P_{a, n}$, see \cref{L:expected-middle-height}. In the smooth phase, no reasonable convention on the heights yields an integer value for $\E h_D (0,0)$, whereas our choice gives $\E h_D(f) = 0$ for any $a$- or $b$-face $f$ through Corollary \ref{C:reflection-symmetry-smooth}.
	\end{rem}
	
	\section{Basic tools} 
	\label{S:basic-tools}
	
	In this section, we gather together the three sets of basic tools discussed at the beginning of Section \ref{sec:proof-sketch}. Most of the height function and smooth phase coupling estimates (tools 1 and 2) in the upcoming Sections \ref{S:height-fn-est} and \ref{S:smooth-phase-couplings} have integrable proofs, and these are postponed until the latter sections of the paper. On the other hand, the straightforward probabilistic estimates on random walks and spanning trees in Section \ref{S:rw-estimates} are proven therein. A final section (Section \ref{S:compound-estimates}) combines all three tools to produce a few new height function estimates.
	
	\subsection{Height function estimates} 
	\label{S:height-fn-est}
	Throughout this section we write $\mathcal{H}_n:\tt{\bar F} \to \Z$ for the height function of a dimer configuration $D \sim \P_{a, n}$. In order to state some asymptotics it will also be convenient to extend $\mathcal{H}_n$ to all of $[-n, n]^2$ by letting $\cH_n(v) = \cH_n([v])$, where $[v]$ is the nearest face to the point $v$, breaking ties arbitrarily.
	
	We start with a remarkable symmetry of the two-periodic Aztec diamond. 
    
	\begin{lem}
		\label{L:expected-middle-height}
		For any face $F = (i, j) \in \tt{F}$ with $i, j \in 2 \Z$, we have
		$$
		\E \mathcal{H}_n(i, j) = -\E \mathcal{H}_n(-i, j) = - \E \mathcal{H}_n(i, -j) = \E \mathcal{H}_n(-i, -j)
		$$
		In particular, $\E \mathcal{H}_n(2i,0) = \E \mathcal{H}_n(0,2i)= 0$ for all $-n/2 \le i \le n/2$.
	\end{lem}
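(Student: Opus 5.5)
The plan is to split the statement into a purely combinatorial part and one genuinely integrable identity. The combinatorial part uses the lattice symmetries of \cref{L:sym}. The integrable part is the vanishing of the expected height along the axes. The authors say they have no combinatorial explanation, so I expect the latter to need the exact formulas of \cref{subsec:KasteleynTheory}.

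\emph{Step 1 (reflections).} Under $\rho_\tt{N}(x,y)=(-x,-y)$, the measure $\P_{a,n}$ is invariant and the boundary height data is preserved. Rotation by $\pi$ preserves the cyclic orientation, and from the boundary formulas one checks $h(-f)=h(f)$ on $\partial\tt F$. Rotation by $\pi$ also preserves the colouring: $(i,j)\mapsto(-i,-j)$ keeps $i \bmod 2$ and $j \bmod 2$. So the rotated configuration has height function $f\mapsto \cH_n(-f)$, which gives $\E\cH_n(i,j)=\E\cH_n(-i,-j)$.

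For the reflection $(x,y)\mapsto(-x,y)$, orientation is reversed, and the colour classes swap only if the vertex parity $(i \bmod 2, j \bmod 2)$ is exchanged. I would therefore compose with the diagonal reflection $\rho_\tt W(x,y)=(y,x)$, which is a symmetry of $\P_{a,n}$ by \cref{L:sym} and swaps white and black. Reversing orientation together with swapping colours means the height rules are applied with the opposite sign, so the new height is $-\cH_n$ up to the anchoring constant. The anchoring is fixed by the boundary values: on $\partial\tt F$ we have $h(j,i)=-h(i,j)$, which holds for faces near the south and west boundaries from the formulas. This gives $\E\cH_n(i,j)=-\E\cH_n(j,i)$.

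Combining these, we get $\E\cH_n(i,j)=-\E\cH_n(j,i)=\E\cH_n(-j,-i)$. This alone does not yield the stated $(-i,j)$ antisymmetry, which needs a reflection across a vertical axis. The map $(x,y)\mapsto(-x,y)$ preserves $a$-faces only up to a shift and exchanges the $\tt E/\tt W$ roles, so it is not a symmetry of $\P_{a,n}$ when $a\neq 1$. This is exactly the nontrivial content.

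\emph{Step 2 (reduction to one identity).} Write $g(i,j)=\E\cH_n(i,j)$ on even faces. Height increments along a path of faces are expressed through dimer indicators, so
\[
g(i,j)=h(\text{boundary face})+\sum_{e}c_e\,\P(e\in D),
\]
summed along a lattice path from the boundary. Each $\P(e\in D)$ is $K(e)K^{-1}(e)$ with the explicit inverse Kasteleyn matrix from \cite{CY14,CJ16}. I would show $g(i,j)+g(-i,j)=0$ by summing along paths from $(i,-n-1)$ and $(-i,-n-1)$, whose boundary heights cancel since $h(i,-n-1)=i$. This reduces the claim to a term-by-term identity between one-point edge probabilities at mirror positions, rather than a distributional symmetry.

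\emph{Step 3 (main obstacle).} The key step is to prove that identity from the double contour integral formula for $K^{-1}$. Under $w\mapsto 1/w$ (or $w\mapsto -w$) in the contour variables, the two-periodic kernel's matrix factors transform by a conjugation. I expect the difference of mirrored edge probabilities to become a telescoping sum whose total along a vertical column is the integral of an exact derivative, or a residue at $0$ that vanishes. My first attempt would be the simplest case: prove $\E\cH_n(0,0)=0$ directly from the formula by summing the vertical column, then extend to general $j$.

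The final claim follows by setting $i=0$ or $j=0$ in the antisymmetries, which gives $g(0,2i)=-g(0,2i)=0$ and likewise $g(2i,0)=0$.
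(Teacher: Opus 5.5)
There is a genuine gap. Everything nontrivial sits in your Step~3, and there you only list hoped-for mechanisms (telescoping, an exact derivative, a residue at $0$). You also miss the observation the paper's proof is built on. The reflection $R(x,y)=(-x,y)$ is not a symmetry of $\P_{a,n}$, but it swaps $a$- and $b$-faces, so after dividing all edge weights by $a$ one gets $R_*\P_{a,n}=\P_{1/a,n}$. Since $R$ preserves colours and reverses orientation, it negates heights, and this is compatible with the boundary values $h(i,-n)=i$. Hence $\E_a\cH_n(i,j)=-\E_{1/a}\cH_n(-i,j)$, and the antisymmetry is equivalent to invariance of $\E\cH_n$ under $a\mapsto a^{-1}$. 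That invariance is \cref{L:expected-height-invariant}. The paper proves it by writing each diagonal step $f\mapsto f+(2,2)$ from the south boundary as $4a\mathrm i\sum_{\eps}(-1)^\eps K^{-1}_{a,1}$ over the two edges around one odd face. It then inserts \cref{thm:prev:Kinverse}, notes that the full-plane part cancels, and checks step by step that the integrand is invariant pointwise in $(u_1,u_2)$. That check uses invariance of $c,\mu,s$ and $(1+a^2)F_s$, plus an explicit verification on four coefficient combinations of the $\mathtt y^{\eps_1,\eps_2}_{\gamma_1,\gamma_2}$; nothing telescopes. Your mirrored-path comparison, done with the same formula, would reduce to exactly these comparisons, because replacing $x_1$ by $-x_1$ in \cref{thm:prev:Kinverse} interchanges the roles of the $\mathcal B(a,\cdot)$ and $\mathcal B(1/a,\cdot)$ terms.

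Several intermediate claims are also wrong.

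In Step~1, for $\rho_{\tt W}$ the colour swap and the orientation reversal cancel, and $h(i,-n)=i=h(-n,i)$. So $\E\cH_n(i,j)=+\E\cH_n(j,i)$; your sign would force $\E\cH_n(-n,-n)=0$, whereas it equals $-n$. A minor point: $(i,-n-1)$ is not a face for even $i$.

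More seriously, your planned term-by-term identity between mirrored edge probabilities would give the antisymmetry at every face. That is false already for even faces with $i+j\equiv 2 \pmod 4$. Going around the white vertex $(1,0)$, one has for every $n$
\[
\E\cH_n(0,0)-\E\cH_n(2,0)=2-4\,(\P(e_1)+\P(e_2)),
\]
where $e_1,e_2$ are the two edges from $(1,0)$ bordering the $a$-face $(1,1)$. By the smooth phase coupling at the centre (\cref{L:global-coupling-1}), this tends to $2-4q$. Here $q=O(a^2)$ is the smooth-phase probability that the first forest step from the $\tt S$-vertex $(1,0)$ is NE or NW; equivalently, the limit is $2\E h(0,0)$ with $\E h(0,0)=-\E h(2,0)$ as in \cref{C:reflection-symmetry-smooth}. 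So $\E\cH_n(0,0)$ and $\E\cH_n(2,0)$ cannot both vanish for small $a$ and large $n$.

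The statement therefore has to be restricted to $c$-faces ($i+j\in 4\Z$), giving $\E\cH_n(4i,0)=\E\cH_n(0,4i)=0$. Correspondingly, \cref{L:expected-height-invariant} cannot hold for both parities of $k$, despite the paper's remark that the $b$-face case is ``the same computation''.
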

	
	The anti-symmetry $\E \mathcal{H}_n(i, j) = - \E \mathcal{H}_n(-i, j)$ in \cref{L:expected-middle-height} is completely surprising and comes from a delicate formula comparison. We do not have a combinatorial explanation. It is equivalent to a surprising symmetry in height expectation after changing the $a$-face weights from $a$ to $1/a$. \cref{L:expected-middle-height} is proven in \cref{S:Proof:expected-middle-height}. 
	We also need the following result from \cite{BN25}.
	
	\begin{lem}{\cite[special case of Theorem 3.1]{BN25}}
		\label{L:variance-middle-height}
		The central height $\cH_n(0,0)$ converges in law to a limiting random variable $X$ as $n \to \infty$. Moreover, $\E \cH_n(0,0)^k \to \E X^k \in \R$ as $n \to \infty$, for all fixed $k \in \N$.
	\end{lem}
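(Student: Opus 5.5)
The plan is not to reprove this from scratch but to extract it from \cite{BN25}, checking carefully that our conventions line up with theirs. \cite[Theorem 3.1]{BN25} describes the fluctuations of the height function of the two-periodic Aztec diamond, and more generally of periodically weighted models with smooth phases. At a point in the interior of a smooth (gaseous) region, these fluctuations are given by a discrete Gaussian random variable. I would first translate their setup to ours. Their Aztec diamond of size $n$ with two-periodic weights $a,b$ corresponds to our $\P_{a,n}$ after the gauge equivalence remark in \cref{S:Two-periodicAztecdiamond}; the resulting map between the parameters should be checked. Their height function should agree with our $\cH_n$ up to an affine change: a multiplicative factor (heights in some papers change by $\pm 1$ or $\pm 1/4$ across an edge rather than by $1$ and $-3$) and an additive shift fixed by the boundary normalization $h(-n,-n) = -n$. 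Since the origin lies in the interior of the smooth region for every $a\in(0,1)$, the face $(0,0)$ is a legitimate evaluation point, and convergence in law of $\cH_n(0,0)$ to a limit $X$ should follow immediately after this affine translation.

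For the moment statement I would take one of two routes, depending on what \cite[Theorem 3.1]{BN25} actually delivers. If they prove convergence of the Laplace transform $\E e^{\lambda \cH_n(0,0)}$ for $\lambda$ in a real neighbourhood of $0$ (their discrete-Gaussian limits typically come from a Fredholm determinant or theta-function computation, which naturally yields such generating functions), then convergence of all moments is standard. Analyticity plus convergence on a real interval gives locally uniform convergence of derivatives at $0$. If instead they only state convergence of characteristic functions, I would supplement this with a uniform exponential tail bound $\P(|\cH_n(0,0)|>t)\le Ce^{-ct}$ uniformly in $n$, which gives uniform integrability of every power $\cH_n(0,0)^k$. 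Combined with convergence in law, this yields $\E \cH_n(0,0)^k\to \E X^k$, and finiteness of $\E X^k$ follows from the same tail bound through Fatou's lemma.

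The main obstacle is this uniform tail estimate, in the case where it is not already contained in \cite{BN25}. My first attempt would use the Kasteleyn/determinantal structure. Since $\cH_n(0,0)$ minus its deterministic boundary value is a sum of dimer indicators along a lattice path from the boundary to the origin, the formulas give its exponential moments as a determinant of the form $\det(I + (e^{\lambda}-1)K\mathbf 1_{\text{path}})$. The bound $\E e^{\lambda \cH_n(0,0)}\le C_\lambda$ uniformly in $n$ for small $|\lambda|$ should then be read off from the same asymptotic analysis that produces the limit. A fallback is to apply the concentration estimates of tool 1 on the height function and use the fact that the expected height is $O(1)$ at the origin (indeed it vanishes by \cref{L:expected-middle-height}). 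This gives the needed control provided the concentration is exponential on the $O(1)$ scale.
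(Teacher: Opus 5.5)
Your plan matches the paper, which gives no independent argument and imports both the convergence in law and the convergence of all moments directly as a special case of \cite[Theorem 3.1]{BN25}, so once the conventions are matched nothing further is needed. One caution about your fallback: \cref{L:concentration-lem} with $f'$ on the boundary only controls $\cH_n(0,0)$ on the scale $\sqrt{n}$, and the paper's $O(1)$ moment bounds in \cref{C:expected-sm-height} are themselves derived from this lemma, so the uniform tail bound cannot come from tool 1 and would have to come from the asymptotic analysis in \cite{BN25} itself.
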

	
	\begin{rem}
		\label{R:discrete-gaussian}
		The random variable $X$ can be written as a sum of independent random variables $H + Y$, where $H$ is the limit of the averaged central height $H_n$ defined prior to \cref{T:main-1}, and $Y$ is the height at the origin $(0,0)$ in the smooth phase. Theorem 1.1 in \cite{BN25} shows that $H$ is a discrete Gaussian random variable (as with \cref{L:variance-middle-height}, \cite{BN25} also prove this in much greater generality). The random variable $Y$, which is the winding number of a biased loop-erased random walk on $\Z^2$, should also be a discrete Gaussian random variable. We do not pursue this here. 
		
		More remarkably, we believe that $H \eqd - Y$, though we do not have a proof. One upshot of \cref{L:expected-middle-height} is that $\E X = 0$, and so $\E H = - \E Y$. We can also compute that $\E H^2 = \E Y^2$ and give explicit formulas for this second moment. This computation also gives an explicit evaluation for the right-hand side of \cite[Equation (61)]{BN25} for the special case of the two-periodic Aztec diamond.  Since these formulas are not relevant to our argument, we omit this computation as it adds considerable length. 
	\end{rem}

	In Section \ref{S:compound-estimates}, we will use global smooth phase couplings and estimates on the heights in the smooth phase to propagate the height estimates at the origin given by the previous two lemmas throughout the smooth region. 
	
	For now, we will move into the rough-smooth boundary and the rough phase. We record two expectation computations, to be proven in \cref{S:expectation-computations}.
	
	To state the next lemma, recall from the preamble to \cref{T:main-3} that $\xi_0:[-\alpha, \alpha] \to \R^2$ is the rough-smooth boundary curve in the third quadrant, parametrized so that for all $t \in [-\alpha, \alpha]$, the point $\xi_0(t)$ is on the line $\{ {\bf v} \in \R^2 : \mathbf{v} \cdot e_2 = t \}$. Recall the coordinate change $\beta_n:[-\alpha n,  \alpha n] \times \R \to \R^2$ by
	$$
	\beta_n(t, x) = n\xi_0(t/n) + e_1 x.
	$$
	In words, $\beta_n(t, x)$ gives a point at location $t$ along the rough-smooth boundary curve, at distance $x$ away from the boundary. Note that the transformation $\beta_n$ does not scale space; the scaling $n \xi_0(\cdot/n)$ simply puts the limit curve back in unscaled coordinates. 
	
	\begin{lem}
		\label{L:expectation-computation}
		There exists $\eta_a > 0$ such that the following holds for all large enough $n$. For all $t \in [-n^{5/6}, n^{5/6}]$ and $x \in [-n^{8/15}, 2n^{1/3} \log^2 n]$, we have
		\begin{align*}
			|\E \mathcal H_n(\beta_n(t, x)) + \eta_a |x|^{3/2} n^{-1/2}| &\le n^{1/6}.
		\end{align*}
	\end{lem}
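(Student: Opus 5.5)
The plan is to write $\E\cH_n$ at a face as a sum of exact one-point dimer probabilities. These are available from the Kasteleyn inverse formulas of \cite{CY14, CJ16}, which we recall in \cref{subsec:KasteleynTheory}. We then evaluate the sum by steepest descent near the rough-smooth boundary.

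\textbf{Reduction to a one-dimensional sum.} Fix $t$ and join the face $\beta_n(t,x)$ to a reference face deep in the smooth region, say $\beta_n(t, -n^{8/15})$ pushed further inward to a point where \cref{L:expected-middle-height} controls the expectation. The natural target is a face on an axis, where $\E\cH_n=0$, reached through faces whose expected height is $O(1)$. Join the two faces by a lattice path in the $e_1$ direction. Height increments along this path are affine functions of the indicators of dimers crossing it, so
\[
\E\cH_n(\beta_n(t,x)) - \E\cH_n(\text{ref}) = \sum_{e} c_e\big(\P(e\in D) - p_e\big),
\]
where $p_e$ is the smooth-phase probability. The $p_e$ give zero net drift, because the smooth phase is flat. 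Each term $\P(e\in D)$ is a single entry of $K^{-1}$.

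\textbf{Asymptotics.} Write $K^{-1}$ as a double contour integral. At the rough-smooth boundary there is a double critical point, and near it the phase function has a cubic expansion with the Airy scaling $n^{1/3}$ in the $e_1$ direction.
\begin{itemize}
\item In the smooth region ($x<0$), the density deviation $\P(e\in D)-p_e$ decays exponentially in $|x|^{3/2}n^{-1/2}$. It is summable, with total contribution $O(n^{1/6})$, or better.
\item In the rough side ($x>0$), the deviation is governed by the limit-shape slope. This slope behaves like $c\,x^{1/2}n^{-1/2}$, which is the square-root vanishing of the density at a generic edge. Summing it gives $-\eta_a x^{3/2}n^{-1/2}$.
\item In the window $|x|\lesssim n^{1/3}$, Airy kernel asymptotics (as in \cite{BCJ18}) give deviations of size $n^{-1/3}$ per site. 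Over $n^{1/3}$ sites this costs only $O(1)$.
\end{itemize}
Uniformity in $t\in[-n^{5/6},n^{5/6}]$ holds because $\xi_0$ is smooth, so the curvature corrections are $O(t^2/n)$ in location. Through the choice of the parametrization $\beta_n$ these are absorbed into the error, and the error $n^{1/6}$ leaves plenty of room.

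\textbf{Main obstacle.} The hard step is the deep-smooth end of the range, $x$ down to $-n^{8/15}$. Here $|x|^{3/2}n^{-1/2}\approx n^{3/10}$, so we claim a nonzero expected height $-\eta_a|x|^{3/2}n^{-1/2}$ even on the smooth side. The sign convention $|x|$ means the same formula is asserted on both sides of the curve. I would therefore treat the slope field as the gradient of the macroscopic limit shape $\bar h$, and expand $\bar h(\beta_n(t,x)/n)$ around the boundary curve. Then $n\bar h(\beta_n(t,x)/n) = -\eta_a|x|^{3/2}n^{-1/2}+O(x^2/n)$ with $x^2/n\le n^{1/15}$. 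The required local statement is $|\E\cH_n - n\bar h|\le n^{1/6}$ for points at distance at most $n^{8/15}$ from the curve. I would prove it by a uniform steepest-descent bound on $\E\cH_n$ along the path, with errors $O(n^{-1}\cdot\text{dist}^{1/2})$ per site. Summed, these errors stay within $n^{1/6}$.
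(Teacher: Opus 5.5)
You have the orientation of $\beta_n$ reversed, and this breaks the argument exactly where you locate the difficulty. Since $\xi_0$ lies in the third quadrant and $e_1=(1,1)$ points toward the origin, $x>0$ is the smooth side and $x<0$ is the rough side; the smooth couplings on $\Lambda_{\operatorname{sm},1},\Lambda_{\operatorname{sm},2}$ are at positive $x$. Hence your first bullet is applied to the wrong interval. For $x\in[-n^{8/15},-Cn^{1/3}\log^2 n]$ the corrections $\P(e\in D)-p_e$ are not exponentially small; they are of order $\sqrt{|x|/n}$ per edge, and they are exactly what produces the main term. Your ``main obstacle'' is an artifact of this. The lemma does not assert a height of order $n^{3/10}$ inside the smooth phase. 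Your proposed resolution, that $n\bar h(\beta_n(t,x)/n)=-\eta_a|x|^{3/2}n^{-1/2}+O(x^2/n)$ on both sides of the curve, is false, because the limit shape is flat on the smooth side; it also contradicts your own first bullet. On the actual smooth side $0<x\le 2n^{1/3}\log^2 n$ one has $\eta_a|x|^{3/2}n^{-1/2}=O(\log^3 n)$, so there you only need $|\E\cH_n|\le n^{1/6}$. That follows from three inputs: the $O(n^{-1/3})$ per-edge bound in the Airy window, the exponentially small corrections further in, and an anchor face controlled by \cref{C:expected-sm-height} (this is where \cref{L:expected-middle-height} enters).

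With the sides corrected, your telescoping, the cancellation of $\mathbb K^{-1}_{1,1}$ in alternating pairs, and the anchoring are the paper's scheme. The real work, however, is the rough side down to $|x|=n^{8/15}$, and you only assert it heuristically (``governed by the limit-shape slope''). What is needed is statement~4 of \cref{thm:prev:KinversearoundRS}. In the regime \eqref{eq:def:rgclosetoRS}, the double critical point $\omega_{c,0}$ on the imaginary axis splits into four simple critical points $\pm\omega_{c,X},\pm\overline{\omega_{c,X}}$, with $\mathrm{Re}\,\omega_{c,X}\asymp\sqrt{|X|}$. Deforming to the steepest descent and ascent contours forces them to cross. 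The pole at $\omega_1=\omega_2$ then produces the single integral $C_{\omega_{c,X}}$ over arcs of length $\asymp\sqrt{|X|}$; its alternating combination reduces, via \eqref{eq:prev:Vww}, to an explicit integral of definite sign. The leftover double integral is $O(m^{-1/2})$ per edge. Summing over the $\asymp|x|$ edges gives $-\eta_a|x|^{3/2}n^{-1/2}$, and the remainders contribute only $O(|x|n^{-1/2})=O(n^{1/30})$. The per-edge error $O(n^{-1}\mathrm{dist}^{1/2})$ you posit is unsupported and unnecessary. Similarly, uniformity in $|t|\le n^{5/6}$ has to come from doing this analysis at every base point $(\xi_1,\xi_2)$ on the curve with $|\xi_1-\xi_2|\lesssim n^{-1/6}$, as the paper does. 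Transporting the $t=0$ computation would involve location shifts of size $t^2/n$, up to $n^{2/3}$, which no $n^{1/6}$ height error can absorb.
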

	
	\begin{lem}
		\label{L:expectation-computation-2}
		There exists a constant $\mu_a > 0$ such that the following holds for all large enough $n$. For all $x \le 2n^{1/3} \log^2 n$ we have
		$$
		\E \mathcal H_n(\beta_n(0, x)) \le -\mu_a |x|^{3/2} n^{-1/2} + n^{1/6}
		$$
		whenever $\beta_n(0, x) \in [-n, n]^2$.
	\end{lem}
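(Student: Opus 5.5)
This is an integrable statement, and I would prove it by asymptotic analysis of an exact formula for $\E\mathcal H_n$. The plan is to follow the same route the paper indicates for \cref{L:expectation-computation}, deferring the details to \cref{S:expectation-computations}.

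\textbf{Step 1: express the expected height as a sum of one-point functions.} Starting from a boundary face with deterministic height, I would integrate the discrete derivative of $\mathcal H_n$ along the line $x\mapsto\beta_n(0,x)$, which is the antidiagonal direction $e_1$ through $n\xi_0(0)=n(\xi_c,\xi_c)$. By the local rules of \cref{S:face-heights}, $\E\mathcal H_n$ changes across each edge by $1-4\P(\text{edge is a dimer})$. Each of these dimer probabilities is $K(b,w)K^{-1}(w,b)$, computed from the inverse Kasteleyn matrix of \cite{CY14,CJ16}. The most convenient anchor is the origin, where \cref{L:expected-middle-height} gives $\E\mathcal H_n(0,0)=0$. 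I would then sum the increments from the origin down to $\beta_n(0,x)$.

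\textbf{Step 2: split the path into three ranges.}

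1. \emph{Inside the smooth region, at distance at least $n^{1/3}\log^2 n$ from the boundary.} Here the one-point functions equal their smooth-phase values up to exponentially small errors, so the increments cancel over each period. The expected height therefore stays $O(1)$.

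2. \emph{Near the rough-smooth boundary.} A saddle point analysis of the double contour integral formula for $K^{-1}$ shows that the density of increments at distance $y$ into the rough side behaves like $c\,|y|^{1/2}n^{-1/2}$, with Airy-type corrections of size $n^{-1/3}$. This is the same square-root vanishing that appears at a frozen-rough edge. Summing over $y$ up to $|x|$ gives $-\mu_a|x|^{3/2}n^{-1/2}+O(n^{1/6})$ uniformly, for $|x|\lesssim n^{1-\delta}$.

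3. \emph{Macroscopically far into the rough and frozen regions.} Here I do not need an exact asymptotic, only an upper bound. For this I would use that the limit shape height is strictly monotone along the antidiagonal in the rough region. That limit shape satisfies $h(u)\le-\mu|u-\xi_c|^{3/2}$ with $\mu>0$, because it is concave-like near the boundary and strictly decreasing afterwards. Combined with a law of large numbers error $o(n)$ for the expectation, this gives the inequality whenever $|x|\gtrsim n^{1-\delta}$. The reason is that $|x|^{3/2}n^{-1/2}\le n$ throughout, so an $O(n)$ bound suffices once $\mu_a$ is shrunk. A finer uniform estimate would use error $O(\log n)$ from the steepest descent analysis in the bulk.

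\textbf{Step 3: choose the constant.} I would take $\mu_a$ to be the minimum of the constant $\eta_a$ from the boundary regime (essentially the one in \cref{L:expectation-computation}) and the constant from the global limit shape comparison.

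\textbf{Main obstacle.} I expect the hard part to be uniformity of the saddle point analysis in the transition zone between the Airy scale and macroscopic distances, where $|x|$ ranges between $n^{8/15}$ and order $n$. There, the two saddle points that coalesce at the boundary separate at rate $(|x|/n)^{1/2}$. I would handle this with a single steepest descent contour valid for all $|x|\le\epsilon n$, whose error terms are bounded by $n^{1/6}$ after summation. Near the frozen boundary and the corners, a separate crude argument would be needed: there the height is deterministic, or bounded by its boundary values, and those values are already below $-\mu_a|x|^{3/2}n^{-1/2}$.
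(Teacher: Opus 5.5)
The near-boundary part of your plan matches \cref{lem:expectheightroughsmooth} and \cref{L:expectation-computation}: anchoring via $\E\mathcal H_n(0,0)=0$, summing one-point functions, a square-root density $\asymp |y|^{1/2}n^{-1/2}$ on the rough side, and $O(n^{-1/3})$ corrections in the Airy window. The paper anchors in the smooth region through \cref{C:expected-sm-height}, which rests on the same symmetry.

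\textbf{The gap is the handoff to the macroscopic regime.} Step 2.3 claims the law of large numbers gives the inequality for all $|x|\gtrsim n^{1-\delta}$, and this fails. At $|x|=n^{1-\delta}$ the target term is $\mu_a|x|^{3/2}n^{-1/2}=\mu_a n^{1-3\delta/2}=o(n)$. An unquantified $o(n)$ error from the limit-shape theorem can swamp this, and shrinking $\mu_a$ does nothing against an additive error. The bound $|x|^{3/2}n^{-1/2}\le n$ you cite points the wrong way. Absorbing $o(n)$ needs $|x|^{3/2}n^{-1/2}\gtrsim n$, i.e.\ $|x|\ge\epsilon n$ for a fixed $\epsilon$. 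So the whole range $n^{8/15}\lesssim|x|\le\epsilon n$ must come from exact asymptotics, uniformly in $x$. That is exactly the step you defer as the ``main obstacle'' without giving a mechanism.

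Your requirement that the summed errors there be $O(n^{1/6})$ is also unrealistic over $\asymp\epsilon n$ terms, and it is unnecessary. Per-term errors of size $o(|y|^{1/2}n^{-1/2})$ sum to $o(|x|^{3/2}n^{-1/2})$ and can be absorbed by shrinking $\mu_a$. The additive $n^{1/6}$ slack is only needed for $|x|\lesssim n^{8/15}$.

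\textbf{How the paper handles this range.} It avoids uniform one-point asymptotics entirely, via \cref{lem:expectheightrough}. There it writes $\E(\mathcal H_n(J_{\xi_c,\xi_c,X})-\mathcal H_n(J_{\xi_c,\xi_c,X'}))$, with $X'=-m^{-1/2+\delta}$, as a sum over $\ell$ of the double contour integrals \eqref{eq:prev:B}. The $\ell$-dependence of the integrand is a pure power $T_G(\omega_1,\omega_2)^{\ell}$, so the geometric series is summed exactly inside the integral. The resulting single double integral has a double pole at $\omega_1=\omega_2$. Deforming to steepest descent and ascent contours makes them cross, and the residue is an explicit single integral equal to $2m\,(p_1(\theta_c)-p_1(\theta_c'))$, i.e.\ the limit shape in closed form; the remaining double integrals are lower order.

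This closed form is used directly for $x\le-\delta n$. After a Taylor expansion of $p_1$ in $x/n$ it covers $-\delta n<x\le-n^{8/15}$, and \cref{L:expectation-computation} handles $x>-n^{8/15}$.

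\textbf{What your route needs.} Your soft limit-shape argument is a legitimate alternative to the paper's computation on $|x|\ge\epsilon n$. There it only needs strict negativity of the limit shape away from the rough--smooth curve. To finish your route you must either prove uniform saddle-point bounds for $X\in[-\epsilon,-m^{-1/2+\delta}]$, extending statement 4 of \cref{thm:prev:KinversearoundRS} and controlling the coalescing saddle points, or import the geometric-sum trick.
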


	We can also establish concentration around the expectations in \cref{L:expectation-computation} and \ref{L:expectation-computation-2}. However, we defer this computation to Section \ref{S:compound-estimates}, since it requires a connection to the smooth region via a smooth phase coupling.
	\subsection{Smooth phase couplings}
	\label{S:smooth-phase-couplings}
	
	In this section, we collect coupling results between the smooth phase and the Aztec diamond. We start with a local coupling result at the rough-smooth boundary from \cite{BCJ22}. 
	
	\begin{lem}(Local smooth coupling, Proposition 4.5, \cite{BCJ22})
		\label{L:local-coupling}
		Fix $L > 0$, and consider a subset 
		$
		\Lambda \subset \beta_n([-L n^{2/3}, Ln^{2/3}]\times [-L n^{1/3}, L n^{1/3}]).
		$
		Then there exists a constant $C = C(L) > 0$ such that
		$$
		d_{\operatorname{TV}}(\P_{a, n}|_\Lambda, \P_a|_\Lambda) \le C n^{-1/3} |\Lambda|^4.
		$$
		Here $d_{\operatorname{TV}}$ denotes the total variation distance between the measures.
		
	\end{lem}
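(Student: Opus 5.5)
This is a determinantal total variation estimate, and the plan is to bound it by comparing the correlation kernels of the two measures on $\Lambda$.

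\textbf{Setup.} Both $\P_{a,n}$ and $\P_a$ are determinantal point processes on edges. For $\P_{a,n}$ the correlation kernel is $L(e,e') = K(b,w)\, K^{-1}_n(w',b')$, with $K$ the Kasteleyn matrix and $K^{-1}_n$ the inverse Kasteleyn matrix of the two-periodic Aztec diamond. For $\P_a$ the kernel has the same form, with $K^{-1}_n$ replaced by the full-plane smooth phase inverse $K^{-1}_a$.

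\textbf{Reduction to a kernel comparison.} Write the restriction $\P_{a,n}|_\Lambda$ in terms of the probabilities of the finitely many dimer configurations on $\Lambda$. Each such probability is expressed by inclusion–exclusion via determinants of the kernel on edge subsets of $\Lambda$. Equivalently, it is a gap probability, which can be written as a Fredholm-type determinant of finite size at most $O(|\Lambda|)$. The total variation distance is then bounded by summing, over local configurations, the differences of these determinants. A cleaner route uses
\[
\det(A)-\det(B) \le m\,\|A-B\|_\infty \,\max(\|A\|,\|B\|)^{m-1} m^{m/2},
\]
or compares probabilities of cylinder events edge by edge. Either way, the result is a bound of the form
\[
d_{TV} \le C\,|\Lambda|^{c}\,\sup_{w,b\in\Lambda}\big|K^{-1}_n(w,b)-K^{-1}_a(w,b)\big|,
\]
where $c$ is a small fixed power. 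The power $|\Lambda|^4$ in the statement should come from this counting, roughly one factor of $|\Lambda|$ per summation and matrix dimension.

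\textbf{Asymptotic input.} The crux is the asymptotic claim
\[
K^{-1}_n(w,b) = K^{-1}_a(w,b) + O(n^{-1/3})
\]
uniformly for $w,b$ in $\beta_n([-Ln^{2/3},Ln^{2/3}]\times[-Ln^{1/3},Ln^{1/3}])$. To get it, write $K^{-1}_n$ in the double contour integral form of \cite{CY14,DK21}, which splits it as the smooth phase part plus an Airy-type correction. At the rough-smooth scaling the saddle points coalesce, and the correction term is of the order of the extended Airy kernel times the local lattice scale. That gives $n^{-1/3}$ times a bounded function of the rescaled coordinates, bounded uniformly on compacts, with the bound depending on $L$. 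Note that $K^{-1}_a$ does not depend on position, because the smooth phase is translation invariant with period $2$.

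\textbf{Main obstacle.} The hardest step will be making the contour deformation uniform over the whole window and controlling the error of size $n^{-1/3}$. In particular one must check that the extra terms collected while deforming through the smooth-phase branch points give exactly $K^{-1}_a$. The determinant combinatorics, by contrast, is routine.
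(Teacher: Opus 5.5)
Your overall plan is the one the paper uses; it imports \cite[Proposition 4.5]{BCJ22}. You write $\P_{a,n}|_\Lambda$ and $\P_a|_\Lambda$ as measures on $[4]^R$, with $R=|\mathtt W(\Lambda)|$, of the form $\bar s\mapsto\det(A+C)(\bar s)$ and $\bar s\mapsto\det A(\bar s)$. Here $A$ comes from $\mathbb K^{-1}_{1,1}$, $C$ comes from the boundary correction $\mathcal B$, and an $O(n^{-1/3})$ bound on $C$ is fed in.

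\textbf{The gap.} The determinant step you call routine is exactly where the factor $|\Lambda|^4$ comes from, and the tools you propose do not produce it.
\begin{itemize}
\item Summing $|\det(A+C)-\det A|$ over local configurations involves $4^R$ terms, or $2^{|E|}$ terms in the inclusion--exclusion form.
\item The perturbation inequality you quote contributes $\max(\|A\|,\|B\|)^{m-1}m^{m/2}$ per configuration.
\item Together these give a bound of order $C^{|\Lambda|}|\Lambda|^{|\Lambda|/2}n^{-1/3}$. So the sentence ``either way, the result is a bound of the form $C|\Lambda|^c\sup|K_n^{-1}-K_a^{-1}|$'' does not follow.
\item ``Comparing cylinder events edge by edge'' does not help. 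Cylinder probabilities do not control total variation without inclusion--exclusion. Sequential conditioning instead brings in Schur complements with uncontrolled inverses.
\end{itemize}
No configuration-by-configuration (Hadamard-type) estimate gives polynomial dependence on $|\Lambda|$. You have to control the sum over $\bar s$ collectively, using the structure of these particular determinants. For example, each row depends only on the local choice $s_i$ at $w_i$. Summing a row over $s_i$ returns a row of the identity, because $\sum_b K(b,w)K^{-1}(w',b)=\delta_{ww'}$, so the rows of $C$ sum to zero. This structure is used after expanding $\det(A+C)-\det A$ multilinearly in $C$.

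This is not cosmetic. The constant must not depend on $\Lambda$, and the lemma is applied with $|\Lambda|\to\infty$: sets of size $o(n^{1/12})$ in \cref{L:j-independent} and \cref{L:arbitrary-mollifiers-new}, and $\asymp n^{1/3-\varepsilon}$ boxes of side $\log^3 n$ in \cref{L:ring-construction}. Any exponential dependence on $|\Lambda|$ destroys these applications.

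\textbf{Two further corrections to the asymptotic input.}

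First, the claim $K^{-1}_n=K^{-1}_a+O(n^{-1/3})$ uniformly on the window is false as an entrywise statement. By \cref{thm:previousRSasymptotics}, $\mathcal B(x,y)$ carries the prefactor $|G(\mathrm i)|^{(-2-x_1+x_2+y_1-y_2)/2}e^{\alpha_y\hat\beta_y-\alpha_x\hat\beta_x-\frac23(\hat\beta_x^3-\hat\beta_y^3)}$. For two points at different positions along the boundary this is of size $|G(\mathrm i)|^{\pm c n^{2/3}}$, and even inside a box it is exponential in the diameter. The $O(n^{-1/3})$ bound holds only after conjugating the edge kernel by a diagonal factor indexed by white vertices. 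That conjugation leaves principal minors, hence probabilities, unchanged, but it has to be carried consistently through the comparison; compare the conjugation argument in the proof of \cref{L:global-coupling-2}. This matters because $\Lambda$ may be spread across the whole window, as in the applications.

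Second, the obstacle you single out is not where the difficulty lies. The exact formula already gives $K^{-1}_{a,1}=\mathbb K^{-1}_{1,1}-\mathcal B+O(e^{-\mathtt c n})$ (\cref{thm:prev:Kinverse}, \cref{lem:switchcontoursBtilde}). So no residue bookkeeping is needed to identify the smooth-phase term, and the $n^{-1/3}$ size of $\mathcal B$ is the Airy asymptotics of \cref{thm:prev:KinversearoundRS} and \cref{thm:previousRSasymptotics}.
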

	In \cite{BCJ22}, this lemma is stated only for boxes. However, the proof goes through verbatim regardless of the geometry of the set $\Lambda$, see Section 7 in \cite{BCJ22} or the similar proofs of \cref{L:global-coupling-1} and \cref{L:global-coupling-2} in \cref{S:smoothcouplingproofs}. The set $\beta_n([-L n^{2/3}, Ln^{2/3}]\times [-L n^{1/3}, L n^{1/3}])$ appearing in the lemma statement simply corresponds to a region of fixed size at the rough-smooth boundary after we pass to the limiting coordinates. We note that it is not difficult to use the Gibbs property for dimers and replace the factor $|\Lambda|^4$ by $|\partial \Lambda|^4$ in the above lemma, where $\partial \Lambda$ is the inner boundary of $\Lambda$. However, this improvement does not help us so we have not included this strengthening here.
	
	If we move away from the rough-smooth boundary and into the smooth region, then we can improve the total variation bound in \cref{L:local-coupling} and couple on larger sets. The next two lemmas give two couplings on specific larger sets. The choice of parameters in the sets are somewhat arbitrary, and chosen for ease of proof.

    \begin{figure}
        \centering
        \includegraphics[width=0.8\linewidth]{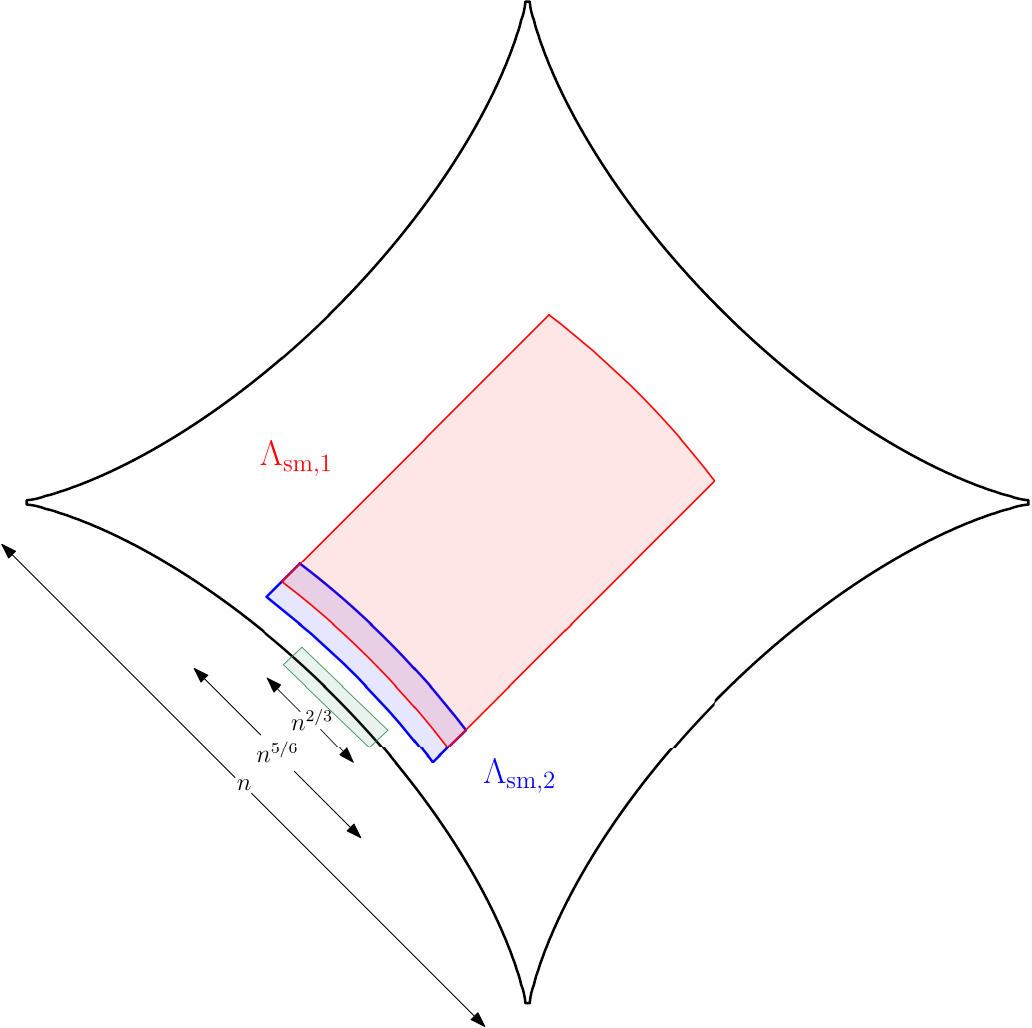}
        \caption{The coupling sets $\Lambda_{\operatorname{sm}, 1}, \Lambda_{\operatorname{sm}, 2}$ depicted within the smooth region. The green box is the rough-smooth boundary where we will study Airy fluctuations, and where \cref{L:local-coupling} holds. The figure is not to scale, e.g.\ the gap between $\Lambda_{\operatorname{sm}, 2}$ and the Airy region is of size $n^{1/3} \log^2 n$, and the overlap between $\Lambda_{\operatorname{sm}, 1}, \Lambda_{\operatorname{sm}, 2}$ is of order $n^{6/7}$. }
        \label{fig:smoothregions}
    \end{figure}
	
	\begin{lem} (Global smooth coupling)
		\label{L:global-coupling-1} 
	There exist $a$-dependent constants $\gamma_a, d_a > 0$ such that the following holds. First define
		$$
		\Lambda_{\operatorname{sm}, 1} = \beta_n([-n^{5/6} , n^{5/6}] \times [n^{6/7}, n(|\xi_0(0)| + \gamma_a)). 
		$$
		Then
		$$
		d_{\operatorname{TV}}(\P_{a, n}|_{\Lambda_{\operatorname{sm}, 1}}, \P_a|_{\Lambda_{\operatorname{sm}, 1}}) \le 3 e^{-d_a n^{6/7}}.
		$$
		The same bound holds with $\Lambda_{\operatorname{sm}, 1}$ replaced by any of its rotations $R_{\pi/2} \Lambda_{\operatorname{sm}, 1}$, $R_{\pi} \Lambda_{\operatorname{sm}, 1}$, or $R_{3\pi/2} \Lambda_{\operatorname{sm}, 1}$, where $R_\theta$ denotes counterclockwise rotation by $\theta$.
	\end{lem}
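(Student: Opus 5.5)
The proof will be integrable, following the template of the local coupling in \cite{BCJ22} (Section 7 there), now using that $\Lambda_{\operatorname{sm}, 1}$ sits at macroscopic-ish distance $n^{6/7}$ inside the smooth region. Both $\P_{a,n}$ and $\P_a$ are determinantal dimer processes. Their correlation kernels are $K_n^{-1}$ (the inverse Kasteleyn matrix of $\tt{A}_n$, from \cite{CY14,DK21,BD19}) and the full-plane smooth-phase inverse Kasteleyn matrix $K_a^{-1}$. For determinantal dimer measures one can bound total variation on a set $\Lambda$ in terms of the kernel difference. A convenient route is to write $\P_{a,n}|_\Lambda$ and $\P_a|_\Lambda$ through their gap/occupation probabilities for all dimer subsets of edges in $\Lambda$. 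Then
$$d_{\operatorname{TV}} \le \sum_{E \subset \operatorname{edges}(\Lambda)} |\det(K_n^{-1})_E - \det(K_a^{-1})_E|,$$
controlled by Hadamard-type estimates in terms of $\max_{b,w \in \Lambda}|K_n^{-1}(b,w) - K_a^{-1}(b,w)|$. Since $|\Lambda_{\operatorname{sm},1}|$ is polynomial in $n$, this naive subset sum is too large. Instead I would use the standard trick from \cite{BCJ22}: express the TV distance via a product coupling built from Fredholm-type expansions, so that the loss is at most $C|\Lambda|^{k}\, \|K_n^{-1}-K_a^{-1}\|_{\infty,\Lambda}$ for some fixed $k$.

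The heart of the matter is therefore a kernel estimate. I would show
$$\sup_{b, w \in \Lambda_{\operatorname{sm},1}} |K_n^{-1}(b,w) - K_a^{-1}(b,w)| \le e^{-2 d_a n^{6/7}}.$$
I would start from the double contour integral formula for $K_n^{-1}$, which decomposes as $K_a^{-1}$ plus a correction term given by a double contour integral with integrand $\exp(n(F(z;\xi)-F(w;\xi')))$, where $\xi$ is the rescaled position. In the smooth region the saddle points of $F$ are off the unit circle/real locus, and the correction has size $\exp(-n\,\Re[F(z_+)-F(w_-)])$, with exponent vanishing only at the rough-smooth curve. The first main step is to deform contours through steepest-descent paths. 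The second is to establish that the exponent is bounded below by $c\cdot \operatorname{dist}(\xi, \text{rough-smooth curve})^{3/2}$ near the curve (the standard square-root/cusp-free Airy behaviour, matching the $|x|^{3/2}$ in \cref{L:expectation-computation}) and by a constant deeper inside. For points at distance $\ge n^{6/7}$, i.e.\ rescaled distance $n^{-1/7}$, this gives decay $\exp(-c n \cdot n^{-3/14}) = \exp(-c n^{11/14})$, which dominates $e^{-d_a n^{6/7}}$ since $11/14 < 6/7 = 12/14$... which goes the wrong way. I would therefore check instead whether the relevant behaviour is linear in the distance. In the smooth phase the kernel correction decays like $e^{-c\cdot\text{distance}}$ in lattice units, because the smooth phase has exponential correlation decay at a fixed rate and the boundary influence propagates from the rough region. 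With distance $n^{6/7}$ this gives $e^{-c n^{6/7}}$, matching the statement. So the estimate to aim for is a bound of $C e^{-c\,\ell}$ on the correction, where $\ell$ is the lattice distance to the rough region. I would obtain it by choosing contours on which $|\text{integrand}|$ is uniformly $\le e^{-c\ell}$: one contour is shifted by a fixed modulus $r<1$ times a distance-dependent factor, exploiting that the $n$-dependent parts cancel to leading order in a neighborhood of the saddle.

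Given the kernel bound, the final step takes the bound $C|\Lambda|^k e^{-2d_a n^{6/7}} \le 3 e^{-d_a n^{6/7}}$ for large $n$. The outer limit $n(|\xi_0(0)|+\gamma_a)$ is there to keep $\Lambda$ on one side of the center, so that the same saddle analysis applies uniformly; choose $\gamma_a$ small. The rotated versions follow from the symmetries in \cref{L:sym}, possibly composed with the reflection symmetry of $\P_a$ from Wilson's algorithm. The main obstacle is the uniform steepest-descent estimate near the curve: proving the correction decays at rate linear in the distance, uniformly across the whole range from $n^{6/7}$ up to the center. I would handle this by splitting $\Lambda$ into a near band and a deep band, using different contour choices in each.
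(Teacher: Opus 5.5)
Your architecture is the paper's. The comparison of \cite[Proposition 4.5]{BCJ22} reduces everything to a uniform bound on the correction $\mathcal{B}_{\eps_1,\eps_2}$ of \cref{lem:switchcontoursBtilde} over $\Lambda_{\operatorname{sm},1}$. That bound comes from steepest descent through the saddles $\omega_{c,X},\tilde\omega_{c,X}\in\mathrm{i}\R$, and the polynomial loss in $|\Lambda|$ is absorbed.

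The gap is the kernel estimate you then aim for. Your first computation, giving $e^{-cn^{11/14}}$, was the right one; the ``fixed correlation length'' heuristic that replaces it is wrong. The correction is not a smooth-phase correlation propagating inward. It is the cost for the rough region (the Airy lines) to reach distance $\ell$, and in the contour integral it shows up as the two saddles merging at the double critical point. On the diagonal, the envelope theorem gives $\frac{d}{dX}\mathrm{Re}\,g_{\xi_c+X,\xi_c+X}(\omega_{c,X})=-\mathrm{Re}\,[\log G(\omega_{c,X})-\log G(\omega_{c,X}^{-1})]$. This vanishes at $\omega_{c,0}=\mathrm{i}$ and is $O(\sqrt X)$, because $\omega_{c,X}-\omega_{c,0}=O(\sqrt X)$ by \cref{lem:asym:secondderclosesm}. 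Hence $2m\,\mathrm{Re}\,g(\omega_{c,X})\asymp -mX^{3/2}$, i.e.\ decay $e^{-c\ell^{3/2}/n^{1/2}}$. This is exactly what \eqref{thm:eq:KinversearoundRS:smclosetoRS} records, and what the Tracy--Widom tail predicts.

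The steepest-descent expansion is a genuine asymptotic with non-vanishing leading coefficient (its $V$-factor tends to $V_{0,0}(\mathrm{i},\mathrm{i})\neq 0$ by \eqref{eq:prev:Vww}). So no choice of contours can beat it. Testing the total variation distance on a single edge $(w,b)$ at the inner boundary of $\Lambda_{\operatorname{sm},1}$ on the diagonal, where $X\asymp n^{-1/7}$, gives $d_{\operatorname{TV}}\ge |K(b,w)\,\mathcal{B}(w,b)|-O(e^{-cn})\ge e^{-Cn^{11/14}}$. A bound $e^{-c\ell}$ at $\ell=n^{6/7}$ is therefore unattainable, and so is $3e^{-d_a n^{6/7}}$.

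Be aware that the paper's proof has the same weak point. It quotes \eqref{thm:eq:KinversearoundRS:sm}, namely $|\mathcal{B}|\le \mathtt{C}_1e^{-\mathtt{C}_2 m^{5/6+\delta}}$ for $X\ge m^{-1/6+\delta}$ (take $\delta=1/42$ to get $n^{6/7}$). The justification given is ``$2m\,\mathrm{Re}\,g(\omega_{c,X})\le -m^{5/6+\delta}$''. At the inner end $X=m^{-1/6+\delta}$ this overstates the exponent: the sharp expansion behind \eqref{thm:eq:KinversearoundRS:smclosetoRS} gives only $m^{3/4+3\delta/2}=m^{11/14}$ there. What this template actually proves is the lemma with $n^{11/14}$ in place of $n^{6/7}$. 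That is all the later arguments use, since \cref{C:expected-sm-height} and the $1-o(1)$ couplings only need super-polynomial smallness. So the right repair of your plan is to keep your $X^{3/2}$ exponent, not to look for linear decay.

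Two smaller points. First, the rotations do not follow from \cref{L:sym}. That group contains $R_\pi$ but not $R_{\pm\pi/2}$, and the latter exchange $a$- and $b$-faces. So the anti-diagonal rays are not symmetric images of the diagonal ray under $\P_{a,n}$, which is why the anti-symmetry in \cref{L:expected-middle-height} is surprising. The paper instead redoes the bound using the anti-diagonal formulas of \cref{L:Kinv:rotationestimate}. Second, $\Lambda_{\operatorname{sm},1}$ is meant to run past the centre, since it must contain the origin for \cref{C:expected-sm-height}; it does not stop on one side of it. Near the centre the paper uses a crude bound with nested contours rather than a saddle analysis.
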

	
	\begin{lem} (Mesoscopic smooth coupling)
		\label{L:global-coupling-2}
		Define the region
		$$
		\Lambda_{\operatorname{sm}, 2} = \beta_n([-n^{5/6} , n^{5/6}] \times [n^{1/3} \log^2 n, 2 n^{6/7}]). 
		$$
		Then there exists an $a$-dependent constant $d_a > 0$ such that
		$$
		d_{\operatorname{TV}}(\P_{a, n}|_{\Lambda_{\operatorname{sm}, 2}}, \P_a|_{\Lambda_{\operatorname{sm}, 2}}) \le 3 e^{- d_a \log^2 n}.
		$$
		The same bound holds with $\Lambda_{\operatorname{sm}, 2}$ replaced by any of its rotations $R_{\pi/2} \Lambda_{\operatorname{sm}, 2}$, $R_{\pi} \Lambda_{\operatorname{sm}, 2}$, or $R_{3\pi/2} \Lambda_{\operatorname{sm}, 2}$.
	\end{lem}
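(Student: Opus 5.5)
The plan follows the template the paper indicates for \cref{L:global-coupling-1}, which is also the structure of Section~7 of \cite{BCJ22}. Both $\P_{a,n}$ and $\P_a$ are determinantal processes on dimers, with kernels $K_n(e,e') = K(e) K_n^{-1}(w',b)$ and $K(e)K_\infty^{-1}(w',b)$, where $K_\infty^{-1}$ is the full-plane smooth-phase inverse Kasteleyn matrix. For two determinantal measures restricted to a finite set $\Lambda$ of edges there is a standard bound of the form
\begin{equation*}
d_{\operatorname{TV}}(\P_{a,n}|_\Lambda,\P_a|_\Lambda) \le C|\Lambda|^{c}\,\max_{e,e'\in\Lambda}|K_n(e,e')-K_\infty(e,e')|,
\end{equation*}
which we would obtain by summing inclusion--exclusion and using Hadamard's inequality. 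An alternative route interpolates correlation functions over all subsets and uses the bound $\|K\|\le 1$ for the restricted kernels. Since $|\Lambda_{\operatorname{sm},2}|\le n^{2}$, it is enough to show that the kernel difference is $O(e^{-2d_a\log^2 n})$ uniformly on $\Lambda_{\operatorname{sm},2}$. The polynomial factor $n^{2c}$ is then absorbed, and the constant $3$ in the statement leaves room for this.

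For the kernel difference we would use the double contour integral formula for $K_n^{-1}$ from \cite{CY14, DK21}, as recalled in \cref{subsec:KasteleynTheory}. In that formula $K_n^{-1} = K_\infty^{-1} + \text{(correction)}$. The correction has integrand $\exp(n(F(\omega_1;\xi)-F(\omega_2;\xi)))$ times bounded rational factors, where $\xi=v/n$ is the macroscopic position. In the smooth region the saddle points of $F$ lie on the unit circle in conjugate positions (the "smooth" configuration), and one can deform the contours so that $\operatorname{Re}(F(\omega_1)-F(\omega_2))\le -\delta(\xi)<0$ along them. The quantitative input we need is that $\delta(\xi)\gtrsim \operatorname{dist}(\xi,\text{RS curve})^{3/2}$ near the boundary. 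This holds because the two saddles coalesce in a cube-root fashion at the rough--smooth boundary, which is the same mechanism that gives the Airy scaling in \cite{BCJ18}. At distance $x\ge n^{1/3}\log^2 n$ in unscaled coordinates we have $\operatorname{dist}=x/n$, so
\begin{equation*}
n\,\delta \gtrsim n\,(x/n)^{3/2} = x^{3/2}n^{-1/2} \ge \log^3 n.
\end{equation*}
This is even better than the needed $\log^2 n$. At the other end, $x\le 2n^{6/7}$ and $|t|\le n^{5/6}$ keep $\xi$ macroscopically inside the smooth region and away from the rough region elsewhere. Uniformity of the steepest-descent estimate in $\xi$ over this compact range then gives the bound.

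The main obstacle is the uniform contour deformation in the mesoscopic transition, where the saddles are at distance $\sim (x/n)^{1/2}$ apart. There we would do a local cubic expansion of $F$ around the double critical point at $\xi_0(t/n)$; \cite{BCJ18} already did this in the scaling window. We would choose contours that pass through the split saddles at angles adapted to the cubic term. Then we must control the tails, which are away from the saddles, by a global monotonicity argument for $\operatorname{Re}F$ on the deformed contours, uniformly for $x/n\in[n^{-2/3}\log^2 n, 2n^{-1/7}]$. Poles crossed during the deformation produce exactly the $K_\infty^{-1}$ term, which must be checked. The rotated regions follow from \cref{L:sym}.
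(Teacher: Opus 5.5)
Your architecture matches the paper's. It is a \cite{BCJ22}-style comparison of $\det(A_{ij}+C_{ij})$ with $\det(A_{ij})$, fed by a saddle-point bound on $\mathcal B_{\eps_1,\eps_2}$ whose exponent is of order $mX^{3/2}$ at distance $X$ from the rough--smooth curve. That bound is \eqref{thm:eq:KinversearoundRS:smclosetoRS}, and your heuristic of cubically splitting saddles is essentially \cref{lem:asym:secondderclosesm}.

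However, your central reduction is false for the raw kernels. You claim that $\max_{e,e'\in\Lambda_{\operatorname{sm},2}}|K_n(e,e')-K_\infty(e,e')|=O(e^{-2d_a\log^2 n})$. But you wrote a single exponent $F(\cdot\,;\xi)$ for both integration variables, whereas in \eqref{thmproof:aroundRSsm:B1} the $\omega_1$-exponent is evaluated at the position of $x$ and the $\omega_2$-exponent at the position of $y$. The saddle-point estimate therefore bounds $|\mathcal B|$ only by the prefactor $|H_{x_1+1,x_2}(\omega_{c,0})/H_{y_1,y_2+1}(\tilde\omega_{c,0})|$ times the decaying exponential. That prefactor varies at \emph{first} order along the curve: near the diagonal it is roughly $|G(\mathrm{i})|^{((x_2-x_1)-(y_2-y_1))/2}$, the same factor that appears in \cref{thm:previousRSasymptotics}.

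Since $|G(\mathrm{i})|<1$ and $x_2-x_1$ ranges over an interval of length $\asymp n^{5/6}$ in $\Lambda_{\operatorname{sm},2}$, for one ordering of the pair this factor is of size $e^{cn^{5/6}}$. That overwhelms your $e^{-c\log^3 n}$. Already in the Airy window, \cref{thm:previousRSasymptotics} shows the raw $\mathcal B$ has size $|G(\mathrm{i})|^{-2\lambda_2(2m)^{2/3}(\hat\beta_x-\hat\beta_y)}$, which is exponentially large when $\hat\beta_x>\hat\beta_y$. Such entries are harmless only because they cancel in determinants. The kernels are non-Hermitian, so your alternative route via $\|K\|\le 1$ is not available either.

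The missing step is exactly what the paper adds beyond \cref{L:global-coupling-1}. You must pull out $H_{x_1+1,x_2+1}(\omega_{c,0})/H_{y_1+1,y_2+1}(\tilde\omega_{c,0})$ as a conjugation factor, which leaves every determinant in \eqref{eq:inducedmeasureRS} and \eqref{smoothcouple:S} unchanged. Then run the comparison on the conjugated kernels, where the correction is uniformly $O(e^{-c\log^3 n})$. You also need to check that the conjugated smooth-phase kernel stays bounded; in the Airy window this is visible in \cref{thm:previousRSasymptotics}, where it becomes the heat-kernel term $\phi$. Only then does your absorption of polynomial factors go through.

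Separately, the rotated regions do not follow from \cref{L:sym}. Those three maps only exchange the SW and NE parts of the boundary, and $R_{\pi/2}$ interchanges $a$- and $b$-faces. For $R_{\pi/2}\Lambda_{\operatorname{sm},2}$ and $R_{3\pi/2}\Lambda_{\operatorname{sm},2}$ the relevant correction is an $a\mapsto 1/a$ term of \cref{thm:prev:Kinverse}. These cases need the separate expansions of \cref{L:Kinv:rotationestimate} with the same saddle-point bounds, as in the paper.
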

	{\begin{rem}
			Due asymptotics for $K^{-1}_{a,1}(x,y)$, the inverse Kasteleyn matrix entry evaluated at $x$ and $y$ (see \cref{subsec:KasteleynTheory} for definitions) given in  \cite[Lemma 5.3]{BN25} for two vertices $x$ and $y$ in the interior of the smooth region, it is not hard to show by mirroring the same proofs as the ones given in \cref{L:global-coupling-1} and \cref{L:global-coupling-2}, that a smooth phase coupling for the entire smooth region as long as we are an appropriate mesoscopic distance away from the boundary.  Indeed, such a result follows from the fact that the error between $K^{-1}_{a,1}(x,y)$ and its smooth phase counterpart is exponentially small, as established in \cite[Lemma 5.3]{BN25}.  An immediate consequence is that the mesoscopic average of the height function considered in our main theorems is equal with high probability to the macroscopic average height function considered in \cite{BN25}. 
		\end{rem}}	
	The proof of these results can be found in \cref{S:smoothcouplingproofs}. Note that a smooth phase coupling with the same order of error as in \cref{L:global-coupling-2} should hold on the union $\Lambda_{\operatorname{sm}, 1} \cup \Lambda_{\operatorname{sm}, 2}$. The reason we prove couplings separately on $\Lambda_{\operatorname{sm}, 1}$ and $\Lambda_{\operatorname{sm}, 2}$ instead has to do with technical considerations around a choice of conjugation for the kernel for the determinantal measure $\P_{a, n}$.
	See \cref{fig:smoothregions} for an illustration of Lemmas \ref{L:local-coupling}, \ref{L:global-coupling-1}, and \ref{L:global-coupling-2}.

	\subsection{Random walk and spanning tree estimates}
	\label{S:rw-estimates}
	
	Next, we record a few basic estimates coming from standard random walk bounds. Here and throughout the paper, we say that a random walk on $\tt{\tilde S}$ is \textbf{south-biased} (or north-biased) if it moves according to the edge weights in Corollary \ref{C:pushforward}.
	
	We start with a straightforward estimate on the path of a biased random walk. For this lemma and through the section, for a path $X = (X_0, X_1, X_2, \dots)$ we write $\operatorname{Range}(X)$ for the set of vertices visited by $X$.
	
	\begin{lem}
		\label{L:lerw-estimate-basic}
		Let $X = (X_0, X_1, X_2, \dots)$ be a south-biased random walk on $\tt{\tilde S}$ started at $X_0 = (1, 0)$. For $\alpha \ge 2$, define the (roughly) parabolic region
		$$
		P^\alpha_\tt{S} = \{(x, y) \in \R^2 : y \le \alpha, |x| \le (\alpha + \log(y^- + 1))\sqrt{y^- + 1}\},
		$$
		where $y^- = \max(-y, 0)$.
		Then for an $a$-dependent constant $d_a > 0$, we have
		$$
		\P(\operatorname{Range}(X) \not\subset P^n_\tt{S}) \le 2 \exp (-d_a \alpha).
		$$
	\end{lem}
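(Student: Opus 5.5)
The plan is to split the event $\{\operatorname{Range}(X) \not\subset P^\alpha_\tt{S}\}$ into two pieces: the walk goes too far north, or it exits the parabola sideways at some depth. Each piece is then controlled by an exponential-martingale (Chernoff) bound. (The statement writes $P^n_\tt{S}$; I read this as $P^\alpha_\tt{S}$.)

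Write $X_k = (U_k, V_k)$. Steps are $\pm 2e_1, \pm 2e_2$, that is $(\pm 2, \pm 2)$. The two north steps each have probability $p = a^2/(2+2a^2)$ and the two south steps each have probability $q = 1/(2+2a^2)$. Hence the vertical coordinate $V_k$ is a lazy-free walk with increments $\pm 2$ and drift $-2(q-p)/(p+q)\cdot\frac12<0$. The horizontal coordinate $U_k$ is a symmetric $\pm 2$ walk, since each vertical direction contributes one east and one west step with equal weight.

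\textbf{North excursions.} Take $\lambda>0$ with $\E e^{\lambda \Delta V} = 1$; explicitly $e^{2\lambda} = 1/a^2$, so $\lambda = \log(1/a)$. Then $e^{\lambda V_k}$ is a martingale, and optional stopping gives
\[
\P(\sup_k V_k > \alpha) \le e^{-\lambda \alpha}.
\]

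\textbf{Sideways exits.} I discretize in depth. For $j \ge 0$, let $\tau_j$ be the first time $V_k \le -j$. By the negative drift and a Chernoff bound on the hitting times, $\P(\tau_{j} > C(j+1)+m) \le e^{-cm}$ for $a$-dependent constants. Before time $\tau_{j+1}$ the walk has $y^- \le j+1$, so it suffices to control $\max_{k \le \tau_{j+1}} |U_k|$ against $(\alpha+\log(j+1))\sqrt{j+1}$ up to harmless constants. On the event $\tau_{j+1} \le T_j := C(j+2) + \alpha\sqrt{j+1} + \log^2(j+2)$, Azuma/Hoeffding for the symmetric walk gives
\[
\P\Big(\max_{k\le T_j}|U_k| > (\alpha+\log(j+1))\sqrt{j+1}\Big) \le 2\exp\!\Big(-\frac{(\alpha+\log(j+1))^2 (j+1)}{8T_j}\Big).
\]
When $\alpha \lesssim \sqrt{j}$, this is $\le \exp(-c(\alpha+\log(j+1))^2)$. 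When $\alpha \gtrsim \sqrt j$, the denominator is $\lesssim \alpha\sqrt{j+1}$, so it is $\le \exp(-c\,\alpha\sqrt{j+1}/\alpha\cdot\alpha) = \exp(-c\alpha\sqrt{j+1})$. In both cases the bound is $\le e^{-c\alpha}(j+1)^{-2}$ for $\alpha \ge 2$ after adjusting constants; the $\log$ term is exactly what makes the sum over $j$ converge uniformly. The complementary event $\tau_{j+1} > T_j$ has probability $\le e^{-c(\alpha\sqrt{j+1}+\log^2(j+2))}$, which is also summable with an $e^{-c\alpha}$ prefactor.

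\textbf{Assembling the bound.} A union bound over $j\ge0$ gives
\[
\P(\text{sideways exit}) \le C' e^{-c\alpha}.
\]
Adding the north term yields $\P(\operatorname{Range}(X)\not\subset P^\alpha_\tt{S}) \le C''e^{-c\alpha}$. Shrinking $d_a$ absorbs $C''$ into the prefactor $2$ for $\alpha\ge2$; if needed, use the trivial bound $\le 1$ for small $\alpha$ and a smaller $d_a$.

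The main obstacle is the bookkeeping in the sideways case, which must hold uniformly in both regimes $\alpha \ll \sqrt j$ and $\alpha \gg \sqrt j$. The fix there is to choose the time window $T_j$ so that both the Hoeffding tail and the delayed-descent tail give summable bounds carrying an $e^{-c\alpha}$ factor. The starting point $(1,0)$ and the lattice parity only shift constants.
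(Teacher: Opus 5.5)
Your individual estimates are fine: the martingale bound on north excursions, the tail bound for $\tau_j$, and the maximal Hoeffding bound on $[0,T_j]$. The gap is in the reduction that connects them to the sideways-exit event.

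Write $W(h)=(\alpha+\log(h+1))\sqrt{h+1}$ for the allowed half-width at depth $h$. A sideways exit at time $k$ means $|U_k|>W(V_k^-)$, where $V_k^-$ is the \emph{current} depth. Knowing $k<\tau_{j+1}$ only tells you $V_k^-<j+1$, which is an \emph{upper} bound on the depth. Because $W$ is increasing, this is the wrong direction: $|U_k|>W(V_k^-)$ does not give $|U_k|>W(j+1)$. What your union over $j$ actually controls is $|U_k|$ against $W(M_k)$, where $M_k=\max_{i\le k}V_i^-$ is the running maximal depth. So it only covers times at which the walk is at a new record depth. After $\tau_j$ the walk can climb back up, for instance to $y^-=0$ where the allowed half-width is only $\alpha$, while $|U_k|$ is of order $\sqrt j\gg\alpha$. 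None of your events excludes this. Since $W(M_k)/W(V_k^-)$ is unbounded, it is not absorbed by ``harmless constants''. Hence $\P(\text{sideways exit})\le C'e^{-c\alpha}$ does not follow from what you prove.

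The missing ingredient is a lower bound on the current depth. The paper obtains it with a union bound over fixed times rather than depth levels:
\begin{itemize}
\item For $\ell$ below a constant multiple of $\alpha$, the walk is deterministically inside $P^\alpha_{\mathtt S}$.
\item For larger $\ell$, Hoeffding gives $V_\ell\le \ell\mu/2$, with $\mu=\E\Delta V<0$, off an event of probability $e^{-c\ell}$. This makes the current depth linear in $\ell$ and also excludes $y>\alpha$.
\item Hoeffding for $U_\ell$ against the half-width at that depth fails with probability at most $2e^{-c(\alpha+\log(c'\ell+1))^2}$.
\end{itemize}
Both error terms sum over $\ell$ to $O(e^{-c\alpha})$. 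This route needs no hitting times, no maximal inequalities, and no separate north bound.

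If you prefer to keep your depth discretization, you can repair it with your own martingale $e^{\lambda V_k}$:
\begin{itemize}
\item By the strong Markov property at $\tau_j$, $\P(\sup_{k\ge\tau_j}V_k>-j/2)\le e^{-\lambda j/2}$. Use this only for $j\ge c\alpha$, at total cost $O(e^{-c'\alpha})$.
\item On the complementary event, $V_k^-\ge j/2$ on $[\tau_j,\tau_{j+1})$, so you may compare with $W(j/2)\ge W(j+1)/(2\sqrt2)$, valid for $\alpha\ge2$.
\item For times before $\tau_{\lceil c\alpha\rceil}$, compare with the minimal half-width $\alpha$. Use $\P(\tau_{\lceil c\alpha\rceil}>C\alpha)\le e^{-c''\alpha}$ together with Azuma on $[0,C\alpha]$.
\end{itemize}
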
 
	Moving forward, we will also use the notation $P^\alpha_\tt{N} = \rho_\tt{N} P^\alpha_\tt{S}$.
	
	\begin{proof}
		Write $X_{i, 1}, X_{i, 2}$ for the $x$- and $y$-coordinates of $X$. Then $X_{\cdot, 1}$ is a simple symmetric random walk on $\Z$, and $X_{\cdot, 2}$ is a biased random walk on $\Z$ with step probabilities
		$$
		\P(X_{i, 2} - X_{i-1, 2} = 1) = a^2/(1 + a^2), \qquad \P(X_{i, 2} - X_{i-1, 2} = - 1) = 1/(1 + a^2).
		$$
		Let $\mu_a = \E(X_{i, 2} - X_{i-1, 2}) = (a^2-1)/(1 + a^2)$. Then by Hoeffding's inequality,
		\begin{align*}
			\P&(X_\ell \notin P^\alpha_\tt{S}) \\
			&\le \P\Big(|X_{\ell,1}| > (\alpha + \log (\ell |\mu_a|/2 + 1)) \sqrt{\ell |\mu_a|/2 + 1}\Big) + \P(X_{\ell,2} > \ell \mu_a/2) \\
			&\le 2 \exp(- |\mu_a|^2 (\alpha + \log(\ell |\mu_a|/2 + 1))^2/16) + \exp(- \ell \mu_a^2/8).
		\end{align*}
		On the other hand, by a union bound
		$$
		\P(\exists i \in \N \text{ s.t. } X_i \notin P^\alpha_\tt{S}) \le \sum_{\ell = \lfloor \alpha \rfloor }^\infty \P(X_\ell \notin P^\alpha_\tt{S}),
		$$
		where the sum starts at $\lfloor \alpha \rfloor$, since prior to time $\lfloor \alpha \rfloor$, $X_\ell$ is deterministically contained in $P^\alpha_\tt{S}$. Combining this with the previous bound gives the result.
	\end{proof}
	
	The above random walk estimate yields a simple bound for the behaviour of south forest paths in the Aztec diamond off of the backbone. 
	
	\begin{lem}
		\label{L:regular-paths}
		Let $F \sim \Q_{\tt{S}, a, n}$, let $\cS = \cS(F)$ be the backbone of $F$, and let $\Pi^\cS(v)$ denote the path in $F$ from a vertex $v$ stopped when it first hits the backbone, as in \cref{L:simple-sample}. Define the event
		$$
		\mathsf{Para}_\tt{S}(\alpha) = \{\operatorname{Range}(\Pi^\cS(v)) \subset P^\alpha_\tt{S} + v \text{ for all } v \in \tt{S}\}.
		$$
		Then
		$$
		\P(\mathsf{Para}_\tt{S}(\alpha) \mid \cS) \ge 1 - 2 n^2 \exp(-d_a \alpha).
		$$
        Here we are conditioning on the entire backbone of $F$, i.e.\ all south backbone paths.
	\end{lem}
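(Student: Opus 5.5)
The plan is to combine part 2 of \cref{L:simple-sample}, the monotonicity of ranges under loop-erasure and stopping, and \cref{L:lerw-estimate-basic}, and then take a union bound over starting vertices. Throughout we condition on the whole backbone $\cS$.

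By \cref{L:simple-sample}(2), for each $v \in \tt{S}$ the path $\Pi^\cS(v)$ has the law of $\operatorname{LE}(X_v)$. Here $X_v$ is a south-biased random walk on $\tt{G}(\tt{S})$ started at $v$ and stopped on first hitting $\cS$.

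\textbf{Range comparison.} Loop-erasure only deletes vertices, so $\operatorname{Range}(\operatorname{LE}(X_v)) \subset \operatorname{Range}(X_v)$. It therefore suffices to bound the probability that the stopped walk $X_v$ leaves $P^\alpha_\tt{S} + v$.

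\textbf{Coupling with a free walk.} I will couple $X_v$ with a full-plane south-biased walk $\tilde X_v$ on $\tt{\tilde S}$, with $X_v$ agreeing with $\tilde X_v$ up to the stopping time. The following points need checking.
\begin{itemize}
\item The walk $X_v$ lives on $\tt{\bar S}$. Every vertex of $\tt{S}$ has all four neighbours $v \pm 2e_1, v \pm 2e_2$ in $\tt{\bar S}$, since the $\pm e$ black vertices are all in $[-n,n]^2$.
\item The sink boundary $\partial^\tt{x}\tt{S}$ and the source boundary $\partial^\tt{o}\tt{S}$ both lie in $\cS$. The sink vertices are the terminal points of backbone paths. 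Any sink vertex not in a backbone component has outgoing edges removed in $\tt{Forest}(\cdot \mid \cS)$, and I will treat such vertices as absorbing, which happens only after the walk has already stopped.
\item Hence up to its hitting time of $\cS \cup \partial\tt{S}$, the walk $X_v$ takes the same step distribution as the free walk, with transition probabilities proportional to the weights $1, 1, a^2, a^2$. It is therefore exactly an initial segment of $\tilde X_v$.
\end{itemize}
Consequently $\operatorname{Range}(X_v) \subset \operatorname{Range}(\tilde X_v)$.

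\textbf{Translation and union bound.} The lattice $\tt{\tilde S}$ with its biased weights is invariant under the translations $v - v'$ for $v, v' \in \tt{\tilde S}$. So after translating, \cref{L:lerw-estimate-basic} gives
$$
\P(\operatorname{Range}(\tilde X_v) \not\subset P^\alpha_\tt{S} + v) \le 2e^{-d_a\alpha}.
$$
This uses the lemma's parametrisation up to harmless relabelling, reading its $P^n_\tt{S}$ as $P^\alpha_\tt{S}$. There is also a shift between the vertex $(1,0)$ and the origin of $P^\alpha_\tt{S}$; this is absorbed either by recentring or by reading the lemma's conclusion relative to the starting point, as the proof there does. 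Finally, $|\tt{S}| \le n^2$, so a union bound over $v \in \tt{S}$ yields $\P(\mathsf{Para}_\tt{S}(\alpha)^c \mid \cS) \le 2n^2 e^{-d_a\alpha}$.

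\textbf{Main obstacle.} The only step needing care is the coupling. One must verify that the conditioned walk from \cref{L:simple-sample} never sees a boundary effect before hitting $\cS$, i.e.\ that every non-backbone vertex of $\tt{\bar S}$ has the full-plane transition law. This reduces to checking that all of $\partial\tt{S}$ lies in $\cS$ or is reached only at the stopping time. Everything else is immediate from the monotonicity of ranges under loop-erasure and stopping.
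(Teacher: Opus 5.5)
Your proposal is correct and follows the paper's proof. The paper's argument is exactly \cref{L:simple-sample}(2) combined with \cref{L:lerw-estimate-basic} and a union bound over $\tt{S}$; you additionally spell out the range monotonicity under loop-erasure and stopping, and the coupling with the full-plane walk, which the paper leaves implicit. One small slip: the vertices of $\partial^{\tt{o}}\tt{S}$ do not have all four neighbours in $\tt{\bar S}$, but they lie in $\cS$, so the walk never steps from them and your coupling is unaffected.
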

	
	\begin{proof}
		This follows from Part (2) of \cref{L:simple-sample}, \cref{L:lerw-estimate-basic}, and a union bound. 	
	\end{proof} 
	
	We end this section by establishing decorrelation and concentration estimates for heights in the smooth phase.
	
	\begin{lem}
		\label{L:smooth-phase-heights-decor}
		Let $D \sim \P_a$ and let $f \in \tt{\tilde F}$ be any face. Then
		\begin{equation}
			\label{E:height-subgauss}
			\P(|h_D(f)| > m) \le 2\exp(-d_a m^2)
		\end{equation}
		for all $m > 0$. 
		Moreover, for any two faces $f, f'$ we have the covariance estimate
		$$
		\operatorname{Cov}(h_D(f), h_D(f')) \le c_a \exp(-d_a\|f - f'\|_\infty).
		$$
	\end{lem}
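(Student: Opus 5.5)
We will prove both parts from the representation in \cref{L:height-lem}: $h_D(f) = -4\operatorname{Wind}(\Pi_f)$ for $a$-faces, where $\Pi_f$ is the path in $T \sim \Q_{a,\tt{S}}$ from $f-(0,1)$. For a general face $f$, $h_D(f)$ differs from the height of a neighbouring $a$-face by a bounded amount, so it suffices to treat $a$-faces. By Wilson's algorithm rooted at $\infty$, started at $v = f-(0,1)$, $\Pi_f$ is the loop-erasure $\operatorname{LE}(X)$ of a south-biased random walk $X$ from $v$.

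\textbf{Tail bound.} Each crossing of the vertical ray $v + \{0\}\times(0,\infty)$ by $\operatorname{LE}(X)$ is also a crossing by $X$ itself. Since the vertical coordinate of $X$ drifts down at linear speed, the total number of upward visits is controlled: $X$ reaches height $v_2 + k$ with probability at most $(a^2)^{k}$ (gambler's ruin for the biased vertical walk). Moreover, crossings of the ray at height $j$ above $v$ require $X$ to reach height $v_2+j$. Thus the winding number is at most the number of visits of $X$ to the ray, and we need a bound on $\P(\#\{\text{ray visits}\} > m)$. The crude gambler's-ruin bound gives only $e^{-cm}$ (the number of returns to the ray is geometric). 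To upgrade to $e^{-d m^2}$, we use that winding requires alternation. To make $m$ net windings, the loop-erased path must make $m$ full loops around $v$, nested and non-self-intersecting. The $k$-th such loop has diameter at least of order $k$, since the loops are disjoint simple curves around $v$ on a lattice. Hence the path must reach height at least $c\,m$ above $v$, or the range of $X$ must otherwise leave the parabola $P^\alpha_{\tt{S}}$ with $\alpha \asymp m^2$.

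More carefully, we would argue that the loop-erasure stays within $v + P^{\alpha}_{\tt{S}}$. Within this parabola, a simple curve winding $m$ times around $v$ without self-intersection needs area or height of order $m^2$ in the part above depth $0$, so $\alpha \gtrsim m^2$. \cref{L:lerw-estimate-basic} then gives the bound $2e^{-d_a m^2}$. The main obstacle is this geometric step: showing that $m$ disjoint nested windings around $v$ forced inside a parabola of parameter $\alpha$ require $\alpha \gtrsim m^2$ (or at least $\gtrsim m$ with a separate argument). If only $\alpha \gtrsim m$ is attainable, we would instead combine the ray-visit count with the fact that the winding loops must reach height $\gtrsim m$ above $v$, which costs $a^{2cm}$ each time and compounds over $m$ loops to give $e^{-cm^2}$.

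\textbf{Covariance.} For faces $f, f'$ at $\ell^\infty$-distance $L$, we construct the tree via Wilson's algorithm with independent walks $X$ from $v$ and $X'$ from $v'$, running $X$ first. Let $\tilde X'$ be $X'$ run to infinity independently, not stopped on $\operatorname{LE}(X)$. The windings $W = \operatorname{Wind}(\operatorname{LE}(X))$ and $\tilde W' = \operatorname{Wind}(\operatorname{LE}(\tilde X'))$ are independent. Moreover, $\operatorname{Wind}(\Pi_{f'}) = \tilde W'$ unless $\tilde X'$ comes within the relevant region of $X$, or the windings differ because of portions far from the starting points. Since both paths lie in parabolas around their starting points outside an event of probability $e^{-d L}$ (by \cref{L:lerw-estimate-basic} with $\alpha \asymp L$), and two parabolas based at points at distance $L$ can only intersect far below, the difference $\operatorname{Wind}(\Pi_{f'}) - \tilde W'$ is nonzero only on an event $B$ of probability $\le c e^{-dL}$. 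Then $\operatorname{Cov}(W, \operatorname{Wind}(\Pi_{f'})) = \operatorname{Cov}(W, \operatorname{Wind}(\Pi_{f'}) - \tilde W')$. By Cauchy--Schwarz this is bounded by $\|W\|_2 \, \|(\operatorname{Wind}(\Pi_{f'}) - \tilde W')\mathbf 1_B\|_2$, and the tail bound gives uniform moment control, yielding $c_a e^{-d_a L}$ after adjusting constants.

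One subtlety needs care: for horizontally separated points, the parabolas do intersect at depth of order $L^2$. Windings are determined near the start, though, and after the paths merge they coincide, contributing equally. So the relevant event is $\tilde X'$ hitting $X$ within depth $O(L^2)$ while winding nontrivially around $v'$ afterward. This event is still exponentially unlikely in $L$, because any portion of the walk winding around $v'$ must climb back to near $v'$'s height.
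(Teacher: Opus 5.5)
\textbf{The gap is in the sub-Gaussian tail bound.} This is exactly where the $m^2$ in the exponent has to come from.

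Your main route claims that $m$ net windings of $\operatorname{LE}(X)$ around $v$ force the range of $X$ out of $v+P^{\alpha}_{\tt{S}}$ with $\alpha\gtrsim m^2$. This is false. Take concentric lattice squares around $v$ of lattice-radius $1,2,\dots,m$ and join them into a simple spiral. It winds $m$ times and lies in a box $v+[-Cm,Cm]^2\subset v+P^{Cm}_{\tt{S}}$. The spiral does have area and length of order $m^2$, but the part of $P^{\alpha}_{\tt{S}}$ above depth $0$ is only a box of side $\asymp\alpha$. So you get $\alpha\gtrsim m$, and \cref{L:lerw-estimate-basic} returns only $e^{-cm}$, no better than your gambler's-ruin bound.

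Your fallback points in a workable direction but is not a proof as written. The loops do not each reach height $\gtrsim m$; the $k$-th one only needs height $\gtrsim k$. Also, the costs multiply only if the climbs are separated by stopping times at which the strong Markov property applies, and you do not identify these.

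\textbf{How to close it.} There are two ways.
\begin{enumerate}
\item \emph{The paper's route, via time rather than height.} Crossings of the vertical line through $v$ alternate in direction. So net winding $m$ forces $m$ chronologically ordered pairs: ``cross the upward ray at a vertex $u_j$, then cross the downward ray.'' The $u_j$ are distinct because the path is simple. Hence the loop-erased path has length $\gtrsim\sum_{k\le m}k\asymp m^2$ before its last upward crossing, which occurs at height $\ge v_2$. Since $X$ visits the vertices of $\operatorname{LE}(X)$ in chronological order, $X_{\ell,2}\ge v_2$ for some $\ell\ge m^2/4$. Hoeffding plus a union bound over $\ell$ then gives $\sum_{\ell\ge m^2/4}e^{-c_a\ell}\le 2e^{-d_am^2}$.
\item \emph{Repairing your height route.} Use the same pair decomposition. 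At least $m/2$ of the distinct vertices $u_j$ lie at height $\ge v_2+2m$. Each is reached from height $\le v_2$: from the start at $v$, or after a crossing of the downward ray. Apply the strong Markov property at the successive returns below $v_2$, and gambler's ruin to each climb. This gives probability at most $a^{cm^2}$.
\end{enumerate}

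\textbf{Covariance.} This half uses the same coupling as the paper: Wilson's algorithm, with the unstopped walk $X'$ giving an independent copy, then Cauchy--Schwarz against the tail bound. It is fine in outline, with two caveats.
\begin{itemize}
\item In bounding $\P(B)$ you must also account for $X'$ after the hitting time $T$ erasing loops of $\operatorname{LE}(X'|_{[0,T]})$, not only for it winding again around $v'$. Comparing the two paths up to their last visits to $v'+P^{\alpha/6}_{\tt{N}}$, which contains the upward ray at $v'$, handles both mechanisms.
\item Bounded differences reduce the tail bound to $a$-faces, but not the covariance bound. The bounded local corrections must themselves decorrelate. This needs the same coupling applied to the first steps of the paths, i.e.\ writing $h_D(f)$ as a winding term plus a function of the first step, as the paper does.
\end{itemize}
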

	
	\begin{proof}
		We first prove the lemma in the setting where all faces are $a$-faces.
		
		Let $f$ be any $a$-face. We have $h_D(f) = - 4 \operatorname{Wind}(\Pi_f)$, with notation as in \cref{L:height-lem}. From Wilson's algorithm, $\Pi_f = \operatorname{LE}(X)$, where $X$ is a south-biased random walks started at $v_f = f - (0, 1)$.
		
		For $m \in \N$, on the event $\operatorname{Wind}(\Pi_f) \ge m$, the random walk $X$ must satisfy $X_{\ell, 2} \ge (v_f)_2$ for some $\ell \ge m^2/4$. Using Hoeffding's inequality and a union bound as in proof of \cref{L:lerw-estimate-basic}
		$$
		\P(\exists \ell \ge m^2/4 \text{ s.t. } X_{\ell, 2} \ge (v_f)_2) \le \sum_{\ell = m^2/4}^\infty \exp(-\ell \mu_a^2/4) \le 2 \exp(- d_a m^2),
		$$ 
		giving \eqref{E:height-subgauss}.
		
		We move on to the covariance estimate, starting with the case when $f, f'$ are $a$-faces. We may assume $f_2 \le f'_2$, and set $\alpha = \|f - f'\|_\infty$ and $v := f - (0, 1), v' := f' - (0, 1)$. We have that
		\begin{equation}
			\label{E:parab-bd}
			[P^{\alpha/6}_\tt{N} + v'] \cap [P^{\alpha/6}_\tt{S} + w] = \emptyset, \qquad \text{ for all } w \in \Z^2, w \in P^{\alpha/6}_\tt{S} + v.
		\end{equation}
		Here recall that $P^{\alpha/6}_\tt{N}$ is a north-facing parabola given by rotating $P^{\alpha/6}_\tt{S}$ by $\pi$.
		Let $X, X'$ be independent south-biased walks starting at $v, v'$. Then by Wilson's algorithm, we have $$
		(\Pi_f, \Pi_{f'}) \stackrel{d}{=} (\operatorname{LE}(X), Y)
		$$
		where $Y$ is given by $\operatorname{LE}(X'|_{[0, T]})$, where $T$ is the first time $X'$ hits $\operatorname{LE}(X)$, concatenated with the remainder of the path to infinity in  $\operatorname{LE}(X)$ at the hitting location of $X'$. To prove the estimate in the lemma, we will show that
		\begin{equation}
			\label{E:winding-estimate}
			\P(\operatorname{Wind}(Y) \ne \operatorname{Wind}(\operatorname{LE}(X'))) \le 4 \exp(-d_a \alpha).
		\end{equation}
		To see how this proves the covariance bound, observe that gives a coupling between the independent pair $(\frac{1}{4} (\operatorname{Wind}(\operatorname{LE}(X)), \frac{1}{4} (\operatorname{Wind}(\operatorname{LE}(X')))$ and the height vector $(h_D(f), h_D(f'))$ 
		such that the two disagree with probability at most $c_a\exp(-d_a \alpha)$. The covariance estimate follows by using this coupling together with the tail bound in \eqref{E:height-subgauss}.
		
		To prove \eqref{E:winding-estimate}, observe that by \cref{L:lerw-estimate-basic},
		\begin{equation}
			\label{E:parabol}
			\operatorname{LE}(X) \subset P^{\alpha/6}_\tt{S} + v
		\end{equation}
		with probability at least $1 - 2 \exp(-d_a \alpha)$. Now, let $w$ be the final vertex of $Y$ that is contained in $P_{\tt{N}}^{\alpha/6} + v'$, and let $\bar Y$ be the segment of $Y$ ending at $w$. Similarly, let $w'$ be the final vertex on $\operatorname{LE}(X')$ that is in $P_{\tt{N}}^{\alpha/6}+ v'$, and let $\bar X'$ be the segment of $\operatorname{LE}(X')$ ending at $w'$. If $\bar X' = \bar Y$, then $Y$ and $\operatorname{LE}(X')$ have the same winding number, so it suffices to estimate the probability that $\bar X' \ne \bar Y$.
		
	In order for $\bar X'$ and $\bar Y$ to disagree on the event \eqref{E:parabol}, the path $X'$ would have to re-enter $P_{\tt{N}}^{\alpha/6}+ v'$ after entering the set $P^{\alpha/6}_\tt{S} + v$. By the relation \eqref{E:parab-bd}, if $T$ denotes the time when $X'$ hits $\operatorname{LE}(X)$, then this would require that
		$$
		X'|_{[T, \infty)} - X'(T) \not \subset P_{\tt{S}}^{\alpha/6},
		$$
		which has probability bounded above by $2 \exp(-d_a \alpha)$ by another application of \cref{L:lerw-estimate-basic} (and the fact that $T$ is a stopping time, so $X'|_{[T, \infty)} - X'(T)$ is another south-biased random walk). This completes the proof of the coupling.

		The general case of the lemma when some faces are not $a$-faces follows from a similar argument, this time decomposing the height of a face as a winding number plus a term coming from the initial direction of the loop-erased random walk.
	\end{proof}

	\subsection{Compound estimates}
	\label{S:compound-estimates}
	
	We end Section \ref{S:basic-tools} with two height estimates that combine results from the three sections above. The first estimate gives expectation and concentration estimates on the height function throughout the portion of the smooth region where we have smooth phase couplings. With notation as in \cref{L:global-coupling-1} and \ref{L:global-coupling-2}, define the cross-shaped set
	$$
	\tt{Cross} = \tt{Cross}_n := \bigcup_{\theta \in \{0, \pi/2, \pi, 3 \pi/2\}} R_{\theta} (\Lambda_{\operatorname{sm}, 1} \cup \Lambda_{\operatorname{sm}, 2}).
	$$
	For this lemma, a face $f = (i, j) \in \tt{F}$ is a $c$-face if $i, j \in 2 \Z$ and $i + j \in 4 \Z$.
	\begin{cor}
		\label{C:expected-sm-height}
		For all large enough $n$, for all $c$-faces $f \in \tt{Cross}$ we have that
		$$
		|\E \mathcal{H}_n(f)| \le 2 \exp(-d_a n^{6/7}), \qquad \E |\mathcal{H}_n(f)|^2 \le c_a
		$$ 
		for constants $c_a, d_a > 0$.
	\end{cor}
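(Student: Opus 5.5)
We first establish the estimates at the $c$-faces on the two axes and then transport them along the four arms of $\tt{Cross}$ using the smooth phase couplings. The starting input is \cref{L:expected-middle-height}. It gives $\E \cH_n(2i, 0) = \E \cH_n(0, 2i) = 0$ for every $i$, so the expectation vanishes exactly at every $c$-face on the coordinate axes. By \cref{L:variance-middle-height}, $\E \cH_n(0,0)^2$ stays bounded in $n$.

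\emph{Second moment.} Let $f \in \tt{Cross}$ be a $c$-face. We connect $f$ to the origin by a chain of $c$-faces lying inside a coupling region, for instance by moving along the arm containing $f$ and then toward the centre. On $\Lambda_{\operatorname{sm},1}$ and $\Lambda_{\operatorname{sm},2}$ (and their rotations), \cref{L:global-coupling-1} and \cref{L:global-coupling-2} couple $\P_{a,n}$ with $\P_a$ except on an event of probability $O(e^{-d_a\log^2 n})$. Since these sets overlap, we can use the coupling on $\Lambda_{\operatorname{sm},1}$ to link the origin to the mesoscopic region. On the coupling event, $\cH_n(f) - \cH_n(0,0)$ equals the smooth-phase height difference $h_D(f) - h_D(0,0)$, possibly after chaining through an overlap face. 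Each term of that difference has sub-Gaussian tails by \cref{L:smooth-phase-heights-decor}, so its second moment is bounded. Off the coupling event we use the deterministic bound $|\cH_n| \le 4n$, which contributes at most $O(n^2 e^{-d_a \log^2 n}) = o(1)$. Combining this with the bounded second moment of $\cH_n(0,0)$ gives $\E|\cH_n(f)|^2 \le c_a$.

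\emph{Expectation.} Here we use reflection antisymmetry. Write $f = (i,j)$. Reflecting across the vertical axis, \cref{L:expected-middle-height} gives $\E\cH_n(i,j) = -\E\cH_n(-i,j)$. Now suppose $f$ lies in a horizontal arm of $\tt{Cross}$, which is a rotation of $\Lambda_{\operatorname{sm},1} \cup \Lambda_{\operatorname{sm},2}$. Both $(i,j)$ and its axis projection $(0,j)$ lie in the same smooth coupling set, since the rotated $\Lambda_{\operatorname{sm},1}$ contains the central strip of width $n^{5/6}$ about the axis. Under the coupling, $\E\cH_n(i,j) - \E\cH_n(0,j)$ equals the smooth-phase expectation $\E[h_D(f) - h_D(0,j)]$ up to an error $O(n\, e^{-d_a n^{6/7}})$. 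That smooth-phase expectation is zero: Corollary \ref{C:reflection-symmetry-smooth}, applied after translating the anchor, shows the height difference between mirror-image $c$-faces is antisymmetric in law. Alternatively, $\E h_D$ takes the same value on all $c$-faces by translation invariance of $\P_a$ under the period lattice, since $c$-faces form a single class modulo the period. Because $\E\cH_n(0,j) = 0$, we conclude $|\E\cH_n(f)| \le C n e^{-d_a n^{6/7}} \le 2e^{-d_a' n^{6/7}}$ after shrinking $d_a$.

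\emph{Main obstacle.} Faces in $\Lambda_{\operatorname{sm},2}$ but not in $\Lambda_{\operatorname{sm},1}$ are only covered by a coupling with error $e^{-d_a\log^2 n}$, which is far weaker than the bound $e^{-d_a n^{6/7}}$ claimed for the expectation. For the expectation bound we must therefore avoid this weak coupling. The plan is to check that every $c$-face of $\tt{Cross}$ lies in some rotation of $\Lambda_{\operatorname{sm},1}$; otherwise we must argue that the relevant displacement is covered by the strong coupling, possibly by applying the reflection symmetry within a single arm. The care needed is with the region geometry: the $n^{5/6}$ width of the arms versus their extent toward the centre.
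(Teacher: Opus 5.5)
Your second-moment argument is fine and is the paper's: you link $f$ to the origin through $\Lambda_{\operatorname{sm},1}$, via an overlap face if $f$ lies only in $\Lambda_{\operatorname{sm},2}$. You then use the sub-Gaussian smooth-phase tails of \cref{L:smooth-phase-heights-decor}, the boundedness of $\E\cH_n(0,0)^2$, and $|\cH_n|\le 4n$ off the coupling event.

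The expectation argument has a genuine gap, because it rests on a wrong picture of $\tt{Cross}$. By the definition of $\xi_0$ we have $\xi_0(t/n)\cdot e_2=t/n$, and $e_1\cdot e_2=0$, so $\beta_n(t,x)\cdot e_2=t$. Hence every rotation of $\Lambda_{\operatorname{sm},1}$ or $\Lambda_{\operatorname{sm},2}$ lies in $\{|v_1-v_2|\le n^{5/6}\}$ or in $\{|v_1+v_2|\le n^{5/6}\}$. The arms of $\tt{Cross}$ therefore run along the diagonals, not along the coordinate axes. For a $c$-face $f=(i,j)$ in an arm with $|j|>n^{5/6}$, your comparison point $(0,j)$ lies in no coupling set. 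So the step $\E\cH_n(i,j)-\E\cH_n(0,j)\approx\E[h_D(f)-h_D(0,j)]$ has no justification.

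The smooth-phase step is also unsound on its own terms. The face $(0,j)$ is not a $c$-face when $j\equiv 2 \pmod 4$, so translation invariance does not apply to it. Reflection in $\{x=0\}$ swaps $a$- and $b$-faces, so it is not a symmetry of $\P_a$.

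The repair is the comparison point you already used for the second moment, and it is exactly the paper's proof:
\begin{itemize}
\item $(0,0)$ lies in every rotation of $\Lambda_{\operatorname{sm},1}$, and $\E\cH_n(0,0)=0$ by \cref{L:expected-middle-height}.
\item On the coupling event of \cref{L:global-coupling-1}, $\cH_n(f)=\cH_n(0,0)+h_D(f)-h_D(0,0)$.
\item $\E[h_D(f)-h_D(0,0)]=0$ because both are $c$-faces; this is your own ``alternatively'' remark.
\item Cauchy--Schwarz, with failure probability $3e^{-d_an^{6/7}}$ and $|\cH_n|\le 4n$, gives $|\E\cH_n(f)|\le c_a n e^{-d_an^{6/7}/2}$. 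This is the claim after shrinking $d_a$.
\end{itemize}

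Your ``main obstacle'' is real, but the proposed escape is false. $\Lambda_{\operatorname{sm},2}$ contains faces at distance between $n^{1/3}\log^2 n$ and $n^{6/7}$ from the boundary, and no rotation of $\Lambda_{\operatorname{sm},1}$ reaches them.

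The paper does not get around this either. It chains through a $c$-face $f'\in\Lambda_{\operatorname{sm},1}\cap\Lambda_{\operatorname{sm},2}$ using \cref{L:global-coupling-2} and ``proceeds similarly''. Carried out honestly, that yields only $|\E\cH_n(f)|\le c_a n e^{-d_a\log^2 n/2}$ on $\Lambda_{\operatorname{sm},2}$.

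One should not expect an $e^{-d_an^{6/7}}$ bound there. Near the inner edge of $\Lambda_{\operatorname{sm},2}$ the mean height should feel the Airy tail, which is heuristically of size $e^{-c\log^3 n}$. The way to finish is a two-tier bound on the mean: $e^{-d_an^{6/7}}$ on the rotations of $\Lambda_{\operatorname{sm},1}$, and a superpolynomially small bound on those of $\Lambda_{\operatorname{sm},2}$. That is enough for the later uses; for example, the proof of \cref{lem:expectheightroughsmooth} needs only an $o(1)$ bound on the mean.
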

	
	\begin{proof}
		We prove the lemma when $f \in \Lambda_{\operatorname{sm}, 1} \cup \Lambda_{\operatorname{sm}, 2}$, as the claim on the rest of $\tt{Cross}$ follows by a symmetric argument. First assume $f \in \Lambda_{\operatorname{sm}, 1}$. By \cref{L:global-coupling-1} we can couple the height function $\mathcal{H}_n$ to the smooth phase height function $\mathcal H$ so that with probability at least $1 - 3 e^{-d_a n^{6/7}}$ we have
		\begin{equation}\label{E:CH}
			\mathcal H_n(f) - \mathcal H_n(0,0) = \mathcal H(f) - \mathcal H(0,0).
		\end{equation}
		Importantly, the coupling in \cref{L:global-coupling-1} only couples the increments of the height function, since the anchor points for the two height functions are different. Therefore letting $A$ be the event where \eqref{E:CH} holds, and letting $X = \mathcal H_n(0,0) + \mathcal H(v) - \mathcal H(0,0)$,  we have
		\begin{equation*}
			\mathcal H_n(f) = X + \mathbf{1}(A^c)(\mathcal H_n(f) -X)
		\end{equation*}
		Now, $\E X = 0$ since $\mathcal H(f) \stackrel{d}{=} \mathcal H(0,0)$ by the translation invariance of $\P_a$, and $\E \mathcal H_n(0,0) = 0$ by \cref{L:expected-middle-height}. Moroever, by the Cauchy-Schwarz inequality, we have
        $$
        |\E \mathbf{1}(A^c)(\mathcal H_n(f) -X)| \le \sqrt{\P(A^c)} \sqrt{\E (\mathcal H_n(f) -X)^2} \le c_a n \exp(-d_a n^{-6/7}),
        $$
        where the final inequality uses that $\P(A^c) \le 3 e^{-d_a n^{6/7}}$, the tail bound on heights in the smooth phase (\cref{L:smooth-phase-heights-decor}), and the fact that $|\mathcal H_n(f)| \le 4n$ deterministically since $\mathcal H_n$ has increments bounded by $4$ on adjacent faces, and fixed boundary conditions. Combining these results gives the expectation bound, after changing $d_a$. The second moment bound follows similarly, this time using that $\E X^2 \le c_a$ by \cref{L:variance-middle-height} and \cref{L:smooth-phase-heights-decor}.
		
		Now, for a $c$-face $f \in \Lambda_{\operatorname{sm}, 2}$, consider any $c$-face $f' \in \Lambda_{\operatorname{sm}, 2} \cap  \Lambda_{\operatorname{sm}, 1}$. Then by \cref{L:global-coupling-2} we can couple $\mathcal H_n$ to the smooth phase height function $\mathcal H$ such that
		\begin{equation*}
			\mathcal H_n(f) - \mathcal H_n(f') = \mathcal H(f) - \mathcal H(f')
		\end{equation*}
		with probability at least $1 - 2 \exp(-d_a \log^2 n)$. We can then proceed similarly to bound $\E \mathcal H_n(f)$ and $\E \mathcal H_n(f)^2$, this time using as input the estimates on the moments of $\mathcal H_n(f')$ from this lemma, rather than those for $\mathcal H_n(0,0)$.
	\end{proof}
	
	Next, we aim to upgrade the expectation estimates in Lemmas \ref{L:expectation-computation} and \ref{L:expectation-computation-2} to concentration estimates. We will use the following standard concentration inequality.
	
	\begin{lem}
		\label{L:concentration-lem}
		For any faces $f, f' \in \tt{F}$ and any $r > 0$ we have
		$$
		\P(|\mathcal{H}_n(f) - \E (\cH_n(f) \mid H_n(f'))| \ge r \sqrt{\|f - f'\|_1}) \le 2\exp(- r^2/18).
		$$
		Moreover, the function 
		$$
		x \mapsto \E (\cH_n(f) \mid H_n(f') = x), \qquad x \in 4 \Z
		$$
		is a non-decreasing $1$-Lipschitz function of $x$.
	\end{lem}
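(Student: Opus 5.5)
The plan is to prove both claims with a Doob martingale and the Azuma--Hoeffding inequality, using the Gibbs (domain Markov) property of the dimer model together with monotonicity of height functions. Fix $f, f'$ and a nearest-neighbour path of faces $f' = f_0, f_1, \dots, f_k = f$ with $k = \|f - f'\|_1$ (adapted to the face lattice, so $k$ is comparable to $\|f-f'\|_1$ with the constant absorbed into the $18$). Let $\mathcal G_j = \sigma(\cH_n(f_0), \dots, \cH_n(f_j))$ and $M_j = \E(\cH_n(f) \mid \mathcal G_j)$. Then $M_k = \cH_n(f)$ and $M_0 = \E(\cH_n(f) \mid \cH_n(f'))$, so the first claim follows from Azuma once we show $|M_j - M_{j-1}| \le c$ for a fixed constant $c$, e.g.\ $c = 6$: Azuma gives the tail $2\exp(-r^2 k/(2c^2 k))$, which matches $2\exp(-r^2/18)$ when $c = 3$. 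I would not worry about the exact constant at first.

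The key input is a monotone coupling. Conditional on the heights at a set of faces $\{f_0, \dots, f_j\}$, the law of the dimer configuration is the weighted uniform measure on height functions with these pinned values. This holds because the dimer measure conditioned on heights along a set of faces is again a Gibbs measure on height functions with the boundary data. For such measures, the heights form a distributive lattice (pointwise max/min of admissible height functions are admissible, with the $\pm1/\mp3$ rules), and the weights are gauge-equivalent to face weights compatible with this lattice structure. Therefore the standard monotone coupling (coupling from the past / Holley) applies: raising a pinned value raises $\cH_n(f)$ stochastically. Moreover, if two admissible pinning data differ by at most $4$ at one face and agree elsewhere, we can couple so that the resulting heights differ by at most $4$ everywhere. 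This is because shifting one height function by $4$ gives another admissible height function, and the lattice structure sandwiches the two coupled configurations between $h$ and $h + 4$.

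Now $\cH_n(f_j)$ given $\mathcal G_{j-1}$ takes values within a window of bounded width around $\cH_n(f_{j-1})$, since adjacent faces differ by at most $4$. Hence $M_j$ ranges over conditional expectations under pinnings that differ only at $f_j$ and by at most a bounded amount. By the coupling above, these conditional expectations differ by at most a constant, so $|M_j - M_{j-1}|$ is bounded. Combining this with Azuma gives the tail bound.

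For the second statement, take $x < x'$ in $4\Z$ (the possible heights at $f'$ within a residue class). Monotonicity gives $\E(\cH_n(f) \mid \cH_n(f') = x) \le \E(\cH_n(f) \mid \cH_n(f') = x')$. For adjacent values $x' = x+4$, the sandwich coupling gives the difference is at most $4 = x' - x$, which is the $1$-Lipschitz property, and chaining handles general pairs.

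The main obstacle is justifying the monotone coupling with non-uniform (two-periodic) weights, and ensuring the pinned-height conditioning preserves the lattice structure. I would handle this by using the Glauber dynamics on height functions that raises or lowers a single face by $4$ (a square rotation), which is reversible for any face weights. I would check that the attractiveness condition holds face by face, and then take the stationary limit of the grand coupling.
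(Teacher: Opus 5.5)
Your proposal is correct and is the same argument the paper uses: the paper defers to Gorin's Theorem 5.8, which is exactly a Doob martingale along a face path plus Azuma, with the increments bounded by the monotone sandwich coupling, and the same height monotonicity giving the non-decreasing $1$-Lipschitz property. Your worry about the non-uniform weights is harmless, because each edge weight depends only on the height pair across that edge and the admissible pairs are totally ordered, so Holley's condition holds with equality. One bookkeeping remark on the constant: the natural increment bound is $4$ (given $\cH_n(f_{j-1})$, the two possible values of $\cH_n(f_j)$ differ by $4$), so plain Azuma only gives $2\exp(-r^2/32)$; to reach the stated $18$, use the range form of Azuma--Hoeffding (increments lie in a predictable interval of length $4$), which gives $2\exp(-r^2/8)$.
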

	
	\begin{proof}
		The concentration bound follows from a martingale argument and Azuma's inequality, see \cite[Theorem 5.8]{Gor21} for details. The fact that the function $\E (\cH_n(f) \mid H_n(f') = x)$ is $1$-Lipschitz and non-decreasing follows from height monotonicity for random tilings. The details of the argument are again contained in the proof of \cite[Theorem 5.8]{Gor21}.
	\end{proof}

	\begin{cor}
		\label{C:rough-phase-heights}
		Define the region
		$$
		\tt{Cap} = \beta_n([-n^{5/6}, n^{5/6}] \times [-n^{1/2} \log^2 n, 2n^{1/3} \log^2 n]). 
		$$
		Let $A_n$ be the event where for all $v = \beta_n(t, x) \in \tt{Cap}$ we have
		\begin{align*}
			|\mathcal H_n(R_\theta  v) + \eta_a |x|^{3/2} n^{-1/2}| &\le n^{1/4} \log^2 n, \quad \theta = 0, \pi, \\
			|\mathcal H_n(R_\theta  v) - \eta_a |x|^{3/2} n^{-1/2}| &\le n^{1/4} \log^2 n, \quad \theta = \pm \pi/2.
		\end{align*}
		Here $\eta_a$ is the constant from \cref{L:expectation-computation}.
		Then $\P(A_n) = 1 - o(n^{-1/6})$.
		
		Similarly, let $B_n$ be the event where for all $v = \beta_n(0, x) \in [-n, n]^2$ with $x \le 2 n^{1/3} \log^2 n$ we have
		\begin{align*}
			\cH_n(R_\theta v) &\le -\frac{\mu_a}{2} |x|^{3/2} n^{-1/2} + n^{1/4} \log^2 n, \quad \theta = 0, \pi.\\ \cH_n(R_\theta v) &\ge \frac{\mu_a}{2} |x|^{3/2} n^{-1/2} + n^{1/4} \log^2 n, \quad \theta = \pm \pi/2.
		\end{align*}
		Here $\mu_a$ is as in \cref{L:expectation-computation-2}. Then $\P(B_n) = 1 - o(n^{-1/6})$.
	\end{cor}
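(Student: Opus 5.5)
The plan is to combine the expectation asymptotics of \cref{L:expectation-computation} and \cref{L:expectation-computation-2} with the concentration inequality \cref{L:concentration-lem}, anchored at a point in the smooth region where the height is known to be $O(1)$ with good probability. By the rotational symmetries of \cref{L:sym} and the antisymmetry of \cref{L:expected-middle-height}, the cases $\theta = \pi, \pm\pi/2$ reduce to $\theta = 0$. Rotation by $\pi/2$ swaps the roles of $\tt{S},\tt{E}$ etc.\ and flips the sign of the height relative to the anchoring. So I only treat $\theta = 0$ and then take a union bound over the four rotations.

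\textbf{Step 1 (anchor).} For each $v = \beta_n(t,x) \in \tt{Cap}$, choose an anchor $c$-face $f' = \beta_n(t, n^{1/3}\log^2 n + O(1))$ lying in $\Lambda_{\operatorname{sm},2} \subset \tt{Cross}$. Then $\|v - f'\|_1 = O(n^{1/2}\log^2 n)$, since $|x| \le n^{1/2}\log^2 n$. By \cref{C:expected-sm-height}, $|\E \cH_n(f')| \le 1$ and $\E \cH_n(f')^2 \le c_a$. I also want a high-probability bound $|\cH_n(f')| \le \log n$ simultaneously over all anchors, with failure probability $o(n^{-1/6})$. For this I will not use Chebyshev; instead I use the smooth phase coupling of \cref{L:global-coupling-2} (and \cref{L:global-coupling-1}) along a chain to a central point. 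The height at $f'$ then equals $\cH_n(0,0)$ plus a smooth phase increment, off an event of probability $e^{-d_a\log^2 n}$. Smooth phase increments are sub-Gaussian by \cref{L:smooth-phase-heights-decor}. For $\cH_n(0,0)$ itself, \cref{L:variance-middle-height} gives all moments bounded, so Markov with a high moment yields $\P(|\cH_n(0,0)| > \log n) = o(n^{-1/6})$.

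\textbf{Step 2 (concentration).} Apply \cref{L:concentration-lem} with $r = \log^{3/2} n$. With probability $1 - e^{-c\log^3 n}$,
\[
|\cH_n(v) - \E(\cH_n(v) \mid \cH_n(f'))| \le C n^{1/4} \log^{5/2} n.
\]
This is slightly too large, so I instead choose $r = C\sqrt{\log n}$. That gives deviation $C n^{1/4}\log^{3/2} n \le \tfrac12 n^{1/4}\log^2 n$ with failure probability $n^{-C'}$, which is small enough for a union bound over the $O(n^2)$ faces of $\tt{Cap}$.

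\textbf{Step 3 (conditional expectation).} The map $y \mapsto \E(\cH_n(v)\mid \cH_n(f') = y)$ is monotone and $1$-Lipschitz. Averaging it against the law of $\cH_n(f')$, which is concentrated within $\log n$ of its mean $O(1)$, gives
\[
|\E(\cH_n(v)\mid\cH_n(f')) - \E\cH_n(v)| \le |\cH_n(f')| + \E|\cH_n(f')| + o(1) \le 2\log n
\]
on the Step 1 event. Combined with \cref{L:expectation-computation}, whose error is $n^{1/6} \ll n^{1/4}$, this yields $A_n$. For $B_n$ the same argument applies with \cref{L:expectation-computation-2} on the ray $t = 0$. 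Here $|x|$ can be of order $n$, so $\|v - f'\|_1$ is up to order $n$ and the fluctuation is $\sqrt n$, which is not $\le n^{1/4}$. I handle this with the weakened constant $\mu_a/2$: for $|x| \ge n^{1/2}\log^2 n$, the slack $\frac{\mu_a}{2}|x|^{3/2}n^{-1/2} \ge \sqrt{|x|}\log^{3}n$ absorbs the $\sqrt{|x|}\log n$ fluctuation. For smaller $|x|$ the $A_n$ bound already applies.

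\textbf{Main obstacle.} I expect the hardest step to be the high-probability control of the anchor heights in Step 1 at the $o(n^{-1/6})$ level uniformly. This relies on moments of $\cH_n(0,0)$ from \cref{L:variance-middle-height} and on chaining the couplings so that a single central height controls every anchor.
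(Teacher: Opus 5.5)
The one step that fails as written is the anchor bound in Step 1. \cref{L:variance-middle-height} bounds each fixed moment of $\cH_n(0,0)$, so Markov only gives $\P(|\cH_n(0,0)|>\log n)\le M_k\log^{-k}n$. That is not $o(n^{-1/6})$ for any fixed $k$, and nothing in the paper supplies a finite-$n$ tail bound uniform in $k$.

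The fix is to raise the threshold. Take $\epsilon<1/4$ and $k>1/(6\epsilon)$, so that $\P(|\cH_n(0,0)|>n^{\epsilon})\le M_k n^{-k\epsilon}=o(n^{-1/6})$. Step 3 then incurs an error $2n^{\epsilon}+O(\log n)$, which the slack $n^{1/4}\log^2 n$ absorbs.

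With this change your chain is sound. One coupling on $\Lambda_{\operatorname{sm},2}$ (containing every anchor) and one on $\Lambda_{\operatorname{sm},1}$ (containing the origin and overlapping $\Lambda_{\operatorname{sm},2}$) serve all anchors simultaneously. The sub-Gaussian tails of \cref{L:smooth-phase-heights-decor} survive a polynomial union bound.

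This is a genuinely different route from the paper's, which never needs high-probability control of $\cH_n(0,0)$. The paper uses only $O(n^{1/3})$ sparse anchors $f_i\approx\beta_n(n^{1/2}i,n^{1/2})$, each within $\ell^1$-distance $2n^{1/2}\log^2 n$ of the faces it serves. It then applies plain Chebyshev with the second moment from \cref{C:expected-sm-height} at threshold $\frac14 n^{1/4}\log^2 n$. This gives failure probability $c_a n^{-1/6}\log^{-4}n$, which is exactly where the rate in the statement comes from. Your choice of one anchor per value of $t$ ($\asymp n^{5/6}$ anchors) is what makes Chebyshev too weak and forces the chain to the origin. In exchange, the repaired chain gives failure probability $O(n^{-K})$ for any fixed $K$, at the cost of using all moments of the central height and the global couplings directly.

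Your reduction to $\theta=0$ is also misjustified. \cref{L:sym} contains only the diagonal reflections $\rho_{\tt{E}},\rho_{\tt{W}}$ and the rotation $\rho_{\tt{N}}$ by $\pi$. Rotation by $\pm\pi/2$ exchanges $a$- and $b$-faces, so it carries $\P_{a,n}$ to the model with parameter $1/a$. The link between the two models (\cref{L:expected-height-invariant}, \cref{L:expected-middle-height}) holds only for expectations. Hence the event at $\theta=\pm\pi/2$ cannot be transported from $\theta=0$ by a symmetry of the law, and the union bound over four rotations is not justified that way.

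Instead, as the paper does, rerun Steps 1--3 at $R_{\pm\pi/2}v$. \cref{L:concentration-lem} holds for all pairs of faces, and \cref{C:expected-sm-height} and the couplings are stated for the rotated regions. Only the mean must be imported, via $\rho_{\tt{W}}$ and \cref{L:expected-middle-height}: $\E\cH_n(-j,i)=\E\cH_n(i,-j)=-\E\cH_n(i,j)$ for $i,j\in 2\Z$.

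Two small points in $B_n$. For $|x|\ge n^{1/2}\log^2 n$ the slack is $\ge\frac{\mu_a}{2}\sqrt{|x|}\log^2 n$, not $\sqrt{|x|}\log^3 n$; this is still enough. For small $|x|$ it is cleaner to rerun the argument with \cref{L:expectation-computation-2} than to invoke $A_n$, which would additionally need $\eta_a\ge\mu_a/2$.
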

	
	\begin{proof}
		Let $A_n^0 \supset A_n, B_n^0 \supset B_n$ be the versions of these events where we only ask the inequalities to holds when $\theta = 0$. We first prove that $\P(A_n^0), \P(B^n_0) = 1 - o(n^{-1/6})$. Throughout the proof we focus on bounding heights on the face set $\tt{F}$. The extension to all general heights in $\R^2$ follows by approximation through the nearest face.
		
		\textbf{Bounding $\P(A_n^0)$.} \qquad For $i \in [-n^{1/3}, n^{1/3}] \cap \Z$, let $f_i$ be the $c$-face closest to the point $\beta_n (n^{1/2} i, n^{1/2})$. Then for every face $f \in \tt{Cap}$, we can find an index $I(f) \in [-n^{1/3}, n^{1/3}] \cap \Z$ with $\|F - F_{I(f)}\|_1 \le 2n^{1/2} \log^2 n$. Therefore by applying \cref{L:concentration-lem} with $r = \log n /2$, taking a union bound over the $o(n^2)$-many faces contained in $\tt{Cap}$, with probability at least $1 - o(n^2) \cdot \exp(-\log^2 n/72)$ we have
		\begin{equation}
			\label{E:union-1}
			|\mathcal{H}_n(f) - \E (\cH_n(f) \mid \cH_n(f_{I(f)}))| \le n^{1/4} \log^2 n/\sqrt{2}.
		\end{equation}
		Next, we claim that there exists a constant $c_a > 0$ such that for all faces $f, f_i$ and all $j_0 \in 4 \Z$ we have
		\begin{equation}
			\label{E:expected-1}
			|\E (\cH_n(f) \mid \cH_n(f_i) = j_0) - \E \cH_n(f)| \le c_a + j_0.
		\end{equation}
		Indeed, we can write
		\begin{align*}
			&|\E \cH_n(f) - \E (\cH_n(f) \mid \cH_n(f_i) = j_0)| \\
			&\le \sum_{j \in 4 \Z } \Big|\E (\cH_n(f) \mid \cH_n(f_i) = j) - \E (\cH_n(f) \mid \cH_n(f_i) = j_0)\Big| \P(\cH_n(f_i) = j) \\
			&\le \sum_{j \in 4 \Z } |j-j_0| \P(\cH_n(f_i) = j) \\
			&= \E|\cH_n(f_i) - j_0| \le c_a + j_0. 
		\end{align*}
		Here the second inequality uses that $\E (\cH_n(f) \mid \cH_n(f_i) = j)$ is a $1$-Lipschitz function of $j$ (\cref{L:concentration-lem}), and the final inequality uses Corollary \ref{C:expected-sm-height}. Finally, Corollary \ref{C:expected-sm-height}, Chebyshev's inequality, and a union bound give that
		\begin{equation}
			\label{E:Hnfi}
			\begin{split}
				&\P(\text{ There exists } i \in [-n^{1/3}, n^{1/3}] \cap \Z \text{ s.t. } |\mathcal H_n(F_i)| \ge \tfrac{1}{4} n^{1/4} \log^2 n) \\
				&\le c_a n^{-1/6} \log^{-4} n.  
			\end{split}
		\end{equation}
		Together with \eqref{E:union-1}, \eqref{E:expected-1}, and the estimate from \cref{L:expectation-computation}, this implies $\P(A_n^0) = 1 - o(n^{-1/6})$.
		
		\textbf{Bounding $\P(B_n^0)$.}\qquad This estimate is similar. Indeed, similarly to \eqref{E:union-1}, with probability at least $1 - 2n^2 \exp(-\log^2 n/72)$, for all faces of the form $f = \beta_n(0, x)$ for some $x \le 2 n^{1/3} \log^2 n$ we have
		\begin{equation}
			\label{E:union-2}
			|\mathcal{H}_n(f) - \E (\cH_n(f) \mid \cH_n(f_0))| \le \frac{1}{2}(n^{1/2} - x)^{1/2} \log n,
		\end{equation}
		where $f_0 = \beta_n(0, n^{1/2})$ as before. Moreover, from Corollary \ref{C:expected-sm-height} and Chebyshev's inequality we have
		\begin{equation*}
			\P(|\cH_n(f_0)| \ge n^{1/10}) \le c_a n^{-1/5}.  
		\end{equation*}
		Combining this bound, \eqref{E:expected-1}, \eqref{E:union-2}, and \cref{L:expectation-computation-2} gives that with probability $1 - o(n^{-1/6})$, for all faces $f = \beta_n(0, x)$ with $x \le 2 n^{1/3} \log^2 n$ we have
		\begin{align*}
			\mathcal{H}_n(f)
			\le &- \mu_a |x|^{3/2} n^{-1/2} + \frac{1}{2}(n^{1/2} - x)^{1/2} \log n + c_a + n^{1/10} + n^{1/6}.
		\end{align*}
		yields the desired estimate on $\P(B_n^0)$ after simplification.
		
		\textbf{Bounding $\P(A_n), \P(B_n)$.} \qquad To extend the height estimates on $\P(A_n^0), \P(B_n^0)$ to all of $A_n, B_n$, observe that all steps go through verbatim, except we cannot apply Lemmas \ref{L:expectation-computation} and \ref{L:expectation-computation-2} directly since these lemmas do not pertain to points of the form $R_\theta v$ where $v \in \tt{Cap}$ or $v = \beta_n(0, x) \in [-n, n]^2$  and $\theta \ne 0$. Rather, we need to apply these lemmas in conjunction with the symmetries in expectation from \cref{L:expected-middle-height} (in the case when $\theta = \pi$, we could also use the stronger symmetries from \cref{L:sym}, but this is not useful for $\theta = \pm \pi/2$).
	\end{proof}

	\section{An Airy surface in a smooth background}
	\label{S:proof-t1}
	
	In this section, we prove \cref{T:main-1}, following the proof sketch outlined in Section \ref{sec:proof-sketch}. We will also prove a few variants of this result which offer different perspectives on the idea that at the rough-smooth boundary we see an Airy surface sitting in a smooth background. As discussed there, our starting point is the main result of \cite{BCJ18}, which shows that Airy statistics arise after taking small averages of the height function around the rough-smooth boundary. 
	
	\subsection{The Airy line ensemble, the Airy surface, and the main result of \cite{BCJ18}}
	\qquad
	
	We start by giving a formal definition of the Airy line ensemble.
	\begin{defn}
		\label{D:ALE-formal}
		The (stationary) \textbf{Airy line ensemble} is a random sequence of continuous functions $\cA = (\cA_i:\R \to \R, i \in \N)$ whose law is uniquely characterized by the following two properties:
		\begin{itemize}
			\item For all $x \in \R$ and $i \in \N$, $\cA_i(x) > \cA_{i+1}(x)$.
			\item For any finite set of times $T = \{t_1, \dots, t_k\}$, the point process 
			$$
			\bigcup \{(t_j, \cA_i(t_j)) : i \in \N, j \in \II{1, k}\}
			$$
			is a determinantal point process on $T \times \R$ with kernel
			\begin{equation}\label{eq:extendedAiry}
				{\mathcal{A}}(\tau_1,\zeta_1;\tau_2,\zeta_2)=\tilde{\mathcal{A}}(\tau_1,\zeta_1;\tau_2,\zeta_2)-\phi_{\tau_1,\tau_2} (\zeta_1 ,\zeta_2),
			\end{equation}
			where
			\begin{equation} \label{eq:Airymod}
				\begin{split}
					\tilde{\mathcal{A}}(\tau_1,\zeta_1;\tau_2,\zeta_2)&= 
					\int_0^\infty e^{-\lambda (\tau_1-\tau_2) } \mathrm{Ai} (\zeta_1 +\lambda) \mathrm{Ai} (\zeta_2+\lambda) d\lambda\\
				\end{split}
			\end{equation}
			and
			\begin{equation}\label{eq:Airyphi}
				\phi_{\tau_1,\tau_2} (\zeta_1 ,\zeta_2) =
				\mathbbm{I}_{\tau_1<\tau_2} \frac{1}{\sqrt{4 \pi (\tau_2-\tau_1)}} e^{-\frac{(\zeta_1-\zeta_2)^2}{4(\tau_2-\tau_1)}-\frac{(\tau_2-\tau_1)(\zeta_1+\zeta_2)}{2}+\frac{(\tau_2-\tau_1)^3}{12}}.
			\end{equation}
			The function $\cA$ is known as the \textbf{extended Airy kernel}.
		\end{itemize}
	\end{defn}
    In the above definition and in the sequel we use the integer interval notation $\II{a, b} := \{a, \dots, b\}$.
	Next, as in the introduction we define the \textbf{Airy surface} $\cA:\R^2 \to \{0, 1, 2, \dots\}$ by letting 
	$$
	\cA(x, t) = \# \{i \in \N : \cA_i(t) \ge x\}.
	$$
	This function is well-defined since $\cA_k \to -\infty$ as $k \to \infty$, uniformly on compact sets. This follows, for example, from \cite[Theorem 1.6]{DauvergneVirag2021}. The Airy surface is the analogue of the height function for random tilings, and so it is a natural object to introduce when studying height function convergence in the absence of a tractable family of non-intersecting paths. 
	
	Next, we recast the main theorem of \cite{BCJ18} in terms of height function convergence to the Airy surface.  To set up some notation, for a finite set $\phi$ and a point $v \in \R^2$, define the mollified height function
	\begin{equation}
		\label{E:mollified-height-fun}
		\cH_n^\phi(v) = \frac{1}{|\phi|} \sum_{u \in \phi} \cH_n(v + u),
	\end{equation}
	with ties broken using the lexicographic order. Also, define the scaling transformation
	$$
	\tilde \beta_n(t, x) = [\beta_n(2^{1/3}\lambda_2 n^{2/3} t, 2^{2/3}\lambda_1 n^{1/3} x)]_a,
	$$
	where the constants $\lambda_1, \lambda_2$ are as in \eqref{eq:scalingparameters}. Here the outer bracket $[\cdot]_a$ maps a point in $\R^2$ to the nearest $a$-face, where ties are broken with the lexicographic order. The transformation $\tilde \beta_n$ maps limiting coordinates at the rough-smooth boundary to unscaled coordinates in the two-periodic Aztec diamond. This is the scaling transformation used in \cite{BCJ18}, and differs from the scaling transformations $\gamma_n, \hat \gamma_n$ we defined in the introduction by a shear that becomes negligible in the $n \to \infty$ limit. For most of this section, we will work with the transformation $\tilde \beta_n$, in order to more easily work with the results of \cite{BCJ18}. We translate to the scaling $\hat \gamma_n$ in a straightforward final step. Next, define
	\begin{equation}
	\label{E:tilde-Hn}
    	\tilde \cH_n := \cH_n \circ \tilde \beta_n, \qquad \tilde \cH_n^\phi := \cH_n^\phi \circ \tilde \beta_n 
	\end{equation}
	for the height function and mollified height function defined in the limiting coordinates. 
	\begin{thm}[Theorem 1.1, \cite{BCJ18}]
		\label{T:BCJ18-weak}
		Suppose we have the following data:
		\begin{itemize}
			\item A collection of times $t_1 \le t_2 \le \cdots \le t_k$ and spatial locations $x_1 \le y_1 \le x_2 \le y_2 \cdots \le x_k \le y_k$.
			\item Mollifiers $\phi_n, n \in \N$ of the form
			$$
			\phi_n = \{2\lfloor \ell \log^2 n \rfloor e_2 : \ell \in \{0, \dots, a_n\}\},
			$$
			where the sequence $a_n \to \infty$ as $n \to \infty$ and satisfies $a_n = o(n^{1/3}\log^{-2} n)$ as $n \to \infty$.
			\item Sequences $t_{i, n} \to t_i, x_{i, n} \to x_i, y_{i, n} \to y_i$ for all $i = 1, \dots, k$ such that for every $i \in \{1, \dots, k - 1\}$ we have that
			\begin{align*}
	\lambda_2 n^{2/3} t_{i, n} + 2 a_n \log^2 n + 4 &< \lambda_2 n^{2/3} t_{i + 1, n} + 2 a_n \log^2 n, \\
\lambda_1 n^{1/3} x_{i, n} + 8 <	\lambda_1 n^{1/3} y_{i, n} + 4 &< \lambda_1 n^{1/3} x_{i + 1, n}.
			\end{align*} 
		\end{itemize}
		Then we have the following Fourier transform convergence. There exists a constant $\eps_0 > 0$, such that for any $\lambda = (\lambda_{i, j})  \in \mathbb{C}^{k \times \ell}, \|\lambda\|_\infty < \eps_0$ we have
		\begin{equation}
			\label{E:height-with-a-shift}
			\begin{split}
				\lim_{n \to \infty} &\E \exp \left (\sum_{i, j=1}^k \lambda_{i, j} [\tilde \cH_n^{\phi_n}(t_{j, n}, y_{i, n}) - \tilde \cH_n^{\phi_n} (t_{j, n}, x_{i, n})]\right) \\
				=\;\; &\E \exp \left (-4 \sum_{i, j=1}^k \lambda_{i, j} [\cA(t_j, y_i) - \cA (t_j, x_i)]\right).
			\end{split}
		\end{equation}
	\end{thm}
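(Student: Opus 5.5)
This statement is a recasting of Theorem 1.1 of \cite{BCJ18} rather than a new result, so the plan is to translate that theorem into the present notation; no fresh asymptotic analysis should be needed. The key steps are: express height differences as sums of dimer indicators, identify these indicators with the particle process studied in \cite{BCJ18}, and then identify the limit with the Airy surface.

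\textbf{Step 1: height differences as particle counts.} Fix $t$ and $x<y$. Along the segment in the $e_1$ direction from $\tilde\beta_n(t,x)$ to $\tilde\beta_n(t,y)$, passing from one $a$-face to the next crosses a fixed number of edges. By the rules of \cref{S:face-heights}, each crossed edge changes the height by $+1$ or $-3$. The difference $\cH_n(\tilde\beta_n(t,y))-\cH_n(\tilde\beta_n(t,x))$ is therefore a deterministic linear term minus $4$ times the number of dimers on the crossed edges. After the standard recentering, it equals $-4$ times the number of "particles" of the \cite{BCJ18} point process in the interval; these particles are dimers of a specified type on a specified line.

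\textbf{Step 2: mollification turns the count into the BCJ18 statistic.} Averaging over $\phi_n$ shifts the segment along $e_2$ by multiples of $2\lfloor \ell\log^2 n\rfloor$. The spacing condition on $t_{i,n}$ and $x_{i,n},y_{i,n}$ ensures that the averaged sets for different $(i,j)$ are disjoint and correctly ordered. The exponential moment on the left of \eqref{E:height-with-a-shift} is then exactly the generating function treated in \cite{BCJ18}. That paper writes it as a Fredholm determinant of the inverse Kasteleyn kernel restricted to these sets, which is well defined for $\|\lambda\|_\infty<\eps_0$ by the bound on the kernel. Its Theorem 1.1 shows that this determinant converges to the corresponding Fredholm determinant with the extended Airy kernel \eqref{eq:extendedAiry}, using the scaling constants \eqref{eq:scalingparameters}.

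\textbf{Step 3: identify the limit with the Airy surface.} By \cref{D:ALE-formal}, the extended Airy kernel determinant equals
\[
\E\exp\Big(\sum \mu_{i,j}\,\#\{k:\cA_k(t_j)\in(x_i,y_i]\}\Big),
\]
and $\#\{k:\cA_k(t_j)\in(x_i,y_i]\}=\cA(t_j,x_i)-\cA(t_j,y_i)$. Combined with the factor $-4$ from Step 1, this gives the right-hand side of \eqref{E:height-with-a-shift}.

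\textbf{Main obstacle.} I expect the main difficulty to be bookkeeping rather than analysis. First, the sign and coordinate conventions here (rotated lattice, $a$-faces at height $4\Z-1$, the shear in $\tilde\beta_n$, and the corrected value of $c_0$) must be matched with those of \cite{BCJ18}. Second, one must check that the factor relating height jumps to particle counts is exactly $4$ and that the orientation is $y-x$ rather than $x-y$. I would settle both by checking a single dimer crossing against Remark \ref{rem:average-height} and \cref{L:tree-path-heights}, then cite \cite{BCJ18} for the analytic convergence.
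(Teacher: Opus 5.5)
Your route matches the paper's: the statement is Theorem~1.1 of \cite{BCJ18} rewritten in the present coordinates, and no new asymptotics are needed. The gap is in Step~2, where you cite that theorem as a black box. In \cite{BCJ18} the theorem is stated for \emph{fixed} points $t_i, x_i, y_i$ with strict separation $t_i < t_{i+1}$ and $y_i < x_{i+1}$. Here the points are $n$-dependent sequences whose limits may coincide ($t_i = t_{i+1}$ or $y_i = x_{i+1}$ is allowed), and separation is imposed only in the prelimit. This generality is not cosmetic. It is used in \cref{L:j-independent}, where intermediate points with $x_i = y_{i-1}$ are inserted, and in \cref{T:main-1-restatement}, where the $u_i$ may coincide. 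So you need the \emph{proof} of \cite{BCJ18} to go through, not just its statement. The reason it does is the one the paper gives. In that proof the strict inequalities serve only to make the mollified segments from $\tilde\beta_n(t_{i,n}, x_{j,n}) + 2\lfloor \ell \log^2 n\rfloor e_2$ to $\tilde\beta_n(t_{i,n}, y_{j,n}) + 2\lfloor \ell \log^2 n\rfloor e_2$ pairwise disjoint. The prelimit inequalities give this directly, with the $+4$ and $+8$ absorbing rounding to the nearest $a$-face. Since the kernel asymptotics are uniform over compact ranges of the scaled coordinates, sequences converging to the limit points cause no difficulty. You already note that the spacing hypothesis gives disjointness; make that the stated reason the argument of \cite{BCJ18} applies.

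Steps~1 and~3 also contain bookkeeping errors, so as written they do not reproduce \eqref{E:height-with-a-shift}. Crossing an edge in the $e_1$ direction changes the height by $+1$ or $-3$ when the white endpoint lies on one side, and by $-1$ or $+3$ when it lies on the other. These two cases alternate along the line. Hence the height difference between two $a$-faces is $4$ times the \emph{signed} count $\sum (-1)^{\eps(z)}$ of particles, with no deterministic term, as in \eqref{E:height-diff}. It is this signed measure (the $\mu_m$ of \cref{S:BCJ18}) that converges to the Airy counting measure; an unsigned dimer count has density of order one per site. The overall sign is also wrong. Your factor $-4$, combined with your identification of the interval count with $\cA(t,x) - \cA(t,y)$, gives the limit $+4[\cA(t,y) - \cA(t,x)]$, which is the opposite of the statement. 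The correct relation is $\tilde\cH_n^{\phi_n}(t,y) - \tilde\cH_n^{\phi_n}(t,x) \to 4[\cA(t,x) - \cA(t,y)]$. This is consistent with heights decreasing into the rough region (\cref{L:expectation-computation}).
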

	This theorem is a stronger version of the statement \eqref{E:BCJ-intro} described in the introduction. Note that in the original statement in \cite{BCJ18}, the sequences $t_{i, n}, x_{i, n}, y_{i, n}$ are fixed, equal to $t_i, x_i, y_i$, and the prelimiting inequalities in the third bullet point above are replaced with the stronger assumptions that $t_i < t_{i+1}, y_i < x_{i+1}$ for all $i \in \{1, \dots, k-1\}$. The proof goes through verbatim in the more general setting described above, since the inequalities $t_i < t_{i+1}, y_i < x_{i+1}$ are only used to ensure that all of the line segments from
	$$
	\tilde \beta_n(t_i, x_j) + 2\lfloor \ell \log^2 n \rfloor e_2 \qquad \text{to} \qquad \tilde \beta_n(t_i, y_j) + 2\lfloor \ell \log^2 n \rfloor e_2
	$$ 
	are disjoint for any choice of $i, j \in \{1, \dots, k\}, \ell \in \{0, \dots, a_n\}$. The $+4$ and $+8$ terms in the inequalities in the third bullet ensure that we do not run into issues from rounding to the nearest $a$-face. We translate \cref{T:BCJ18-weak} to a statement that begins to look more like \cref{T:main-1}. 
	\begin{cor}
		\label{C:BCJ-translation}
		Let $t_i, x_i, \phi_n$ be as in \cref{T:BCJ18-weak}. Then there exist random variables $H_{i, j, n}, i, j \in \II{1, k}, n \in \N$ such that we have the following joint convergence in law:
		\begin{equation}
			\label{E:joint-easy}
			\begin{split}
	&(H_{i, j, n} - \tilde \cH_n^{\phi_n} (t_{i, n}, x_{j, n}), H_{i, j, n} - \tilde \cH_n^{\phi_n} (t_{i, n}, y_{j, n}): i, j \in \II{1, k}) \\
	\implies &(4\cA(t_i, x_j), 4 \cA(t_i, y_j) : i, j \in \II{1, k}).
			\end{split}
		\end{equation}
	\end{cor}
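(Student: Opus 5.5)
We would build the random variables $H_{i,j,n}$ from the mollified height at a single far-away spatial point on each time slice. Concretely, fix a large constant $M > y_k$ and set
$$
H^{M}_{i,j,n} := \tilde \cH_n^{\phi_n}(t_{i,n}, M), \qquad i,j \in \II{1,k}.
$$
Each difference $H^M_{i,j,n} - \tilde\cH_n^{\phi_n}(t_{i,n}, x_{j,n})$ telescopes into a sum of increments of $\tilde \cH_n^{\phi_n}(t_{i,n},\cdot)$ over consecutive intervals of the partition $x_{j} \le y_{j} \le x_{j+1} \le \dots \le y_k \le M$. The same holds with $y_{j,n}$ in place of $x_{j,n}$. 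So every coordinate of the vector in \eqref{E:joint-easy}, with $H^M$ in place of $H$, is an integer linear combination of increments over disjoint consecutive intervals at the times $t_{i,n}$.

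\textbf{Step 1: the fixed-$M$ limit.} We apply \cref{T:BCJ18-weak} to the refined family of points $x_1 \le y_1 \le \dots \le x_k \le y_k \le M$, with $M$ added as an extra interval endpoint. The interval endpoints are the points of this list. Where two consecutive points coincide (or times coincide), we perturb the approximating sequences $x_{j,n}, y_{j,n}, t_{i,n}$ by $O(n^{-1/3})$, resp.\ $O(a_n \log^2 n \cdot n^{-2/3})$. This lets the strict prelimiting separation inequalities in the third bullet hold while keeping the same limits. For repeated times, we simply reuse the same time slice, since $H$ only needs to depend on $i$. The theorem then gives convergence of exponential moments for all complex $\lambda$ with $\|\lambda\|_\infty<\eps_0$. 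Taking real $\lambda$ in a neighbourhood of $0$ gives uniformly bounded exponential moments, hence tightness. Any subsequential limit then has a moment generating function analytic near $0$ that agrees with that of the Airy-surface increments, so the law is determined. This yields
$$
\big(H^M_{i,j,n} - \tilde\cH_n^{\phi_n}(t_{i,n}, x_{j,n}),\, H^M_{i,j,n} - \tilde\cH_n^{\phi_n}(t_{i,n}, y_{j,n})\big) \implies \big(4\cA(t_i,x_j) - 4\cA(t_i,M),\, 4\cA(t_i,y_j) - 4\cA(t_i,M)\big).
$$

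\textbf{Step 2: $M \to \infty$ and diagonalisation.} Since $\cA_1(t_i)$ is a.s.\ finite, we have $\P(\cA(t_i, M) \ne 0 \text{ for some } i) \to 0$ as $M\to\infty$. Hence the right-hand side above converges in law to the target vector in \eqref{E:joint-easy} as $M\to\infty$. Fix a metric $d$ for weak convergence, for instance Lévy--Prokhorov. For each $M\in\N$, choose $n_M$ increasing such that the distance to the fixed-$M$ limit is at most $1/M$ for $n\ge n_M$. Define $M_n := \max\{M : n_M \le n\}$, so that $M_n \to \infty$, and set $H_{i,j,n} := H^{M_n}_{i,j,n}$. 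The triangle inequality for $d$ then gives \eqref{E:joint-easy}. Since $M_n$ is chosen after the fact, we never need the point $M_n$ itself to satisfy the hypotheses of \cref{T:BCJ18-weak} uniformly in $n$. This avoids the need for the strengthened \cref{T:BCJ18-strong} at this stage.

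\textbf{Main obstacle.} We expect the main difficulty to be the bookkeeping in Step 1: checking that, after perturbing coinciding points and times, the hypotheses of \cref{T:BCJ18-weak} hold, in particular the time-separation inequality involving $2a_n\log^2 n$. We would first try to handle repeated times by merging them into a single time slice rather than perturbing, so that this inequality only has to be checked for genuinely distinct times. The passage from exponential-moment convergence in a complex neighbourhood to convergence in law is standard.
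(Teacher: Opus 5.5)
Your Step 1 does not go through as written. \cref{T:BCJ18-weak} only accepts intervals that are separated in the prelimit, $\lambda_1 n^{1/3}y_{i,n}+4<\lambda_1 n^{1/3}x_{i+1,n}$. The cells of your partition $x_1\le y_1\le x_2\le\dots\le y_k\le M$ share an endpoint at \emph{every} junction, not only where two limit points coincide. So all of them have to be pulled apart, and then the telescoping identity is lost. The quantity $\tilde\cH_n^{\phi_n}(t_{i,n},M)-\tilde\cH_n^{\phi_n}(t_{i,n},x_{j,n})$ equals the sum of the admissible increments \emph{plus} increments over leftover gaps of a few lattice spacings, such as $\tilde\cH_n^{\phi_n}(t_{i,n},y_{j,n}+g_n)-\tilde\cH_n^{\phi_n}(t_{i,n},y_{j,n})$ with $g_n\asymp n^{-1/3}$. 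These gap increments are random. The fact that the approximating sequences keep their limits says nothing about them, and your plan never shows they are $o_P(1)$. Perturbing the given $x_{j,n},y_{j,n},t_{i,n}$ does not help either. It is unnecessary, since they already satisfy the hypotheses, and it would change the quantities in the statement by exactly such uncontrolled amounts.

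The hole can be filled as follows. Take $g_n=5/(\lambda_1 n^{1/3})$, so that each gap is itself an admissible interval whose two endpoints share a limit; if $y_j=x_{j+1}$, use the whole cell $[y_{j,n},x_{j+1,n}]$ as the gap. Since \cref{T:BCJ18-weak} permits $x_i=y_i$ in the limit, a separate application shows each gap increment tends to $0$ in law, hence in probability. Slutsky then gives your fixed-$M$ limit, and Step 2 is fine.

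Once repaired, your argument is a genuinely different proof, and it yields more than the corollary claims: a shift depending only on $i$ and measurable with respect to the tiling (compare \cref{L:j-independent}). However, $M_n$ grows at an uncontrolled rate, so this cannot replace \cref{T:BCJ18-strong} later, where the reference point must reach the smooth-coupling region.

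The paper needs none of this. It applies \cref{T:BCJ18-weak} only to the given intervals and makes the convergence almost sure via Skorokhod. It then sets $H_{i,j,n}:=\tilde\cH_n^{\phi_n}(t_{i,n},y_{j,n})+4\cA(t_i,y_j)$ on the coupling space. This is legitimate because the corollary only asserts that such random variables exist.
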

	
	\begin{proof}
	The Laplace transform convergence in \eqref{E:height-with-a-shift} implies the following convergence in distribution.
		\begin{equation}
			\label{E:joint-1}
			\begin{split}
				(\tilde \cH_n^{\phi_n} (t_{i, n}, y_{j, n}) &- \tilde \cH_n^{\phi_n} (t_{i, n}, x_{j, n}): i \in \II{1, k}, j \in \N) \\
				&\implies (\cA(t_i, x_j) - \cA(t_i, y_j) : i \in \II{1, k}, j \in \N).
			\end{split}
		\end{equation}
		We can translate this statement to involve height shifts $H_{i, j, n}$. Indeed, by Skorokhod's representation theorem we can realize the above distributional convergence as almost sure convergence on some probability space, so that
		$$
		\tilde \cH_n^{\phi_n} (t_{i, n}, y_{j, n}) - \tilde \cH_n^{\phi_n} (t_{i, n}, x_{j, n}) \to \cA(t_i, x_j) - \cA(t_i, y_j)
		$$
		almost surely for all $i = 1, \dots, k, j \in \N$. In this coupling, define $H_{i, j, n} = \tilde \cH_n^{\phi_n} (t_i, y_j) + \cA(t_i, y_j)$. This realizes \eqref{E:joint-easy} as almost sure convergence, yielding the result.
	\end{proof}
	\subsection{The proof of \cref{T:main-1}}
	
	We can now clearly see the two obstacles to proving \cref{T:main-1}:
	\begin{itemize}[nosep]
		\item Corollary \ref{C:BCJ-translation} requires a specific mollification of increasing size with $n$, which has the effect of eliminating fluctuations from the smooth background.
		\item Corollary \ref{C:BCJ-translation} requires a different (and abstractly defined) random height shift for every statistic, whereas \cref{T:main-1} gives a comparison with the central height.
	\end{itemize} 
	We deal with both of these by appealing to the smooth phase couplings in Section \ref{S:smooth-phase-couplings}. We first show that the height shift can be taken independently of the coordinate $j$.
	
	\begin{lem}
\label{L:j-independent}
Consider the setting of Corollary \ref{C:BCJ-translation}, where the averaging parameter $a_n = o(n^{1/24} \log^{-2} n)$. In this setting we can replace $H_{i, j, n}$ with the height shift $H_{i, n}:=H_{i, 1, n}$ and the statement still holds. 
	\end{lem}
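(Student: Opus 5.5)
To replace $H_{i,j,n}$ by $H_{i,n} = H_{i,1,n}$, it suffices to show that for each fixed $i$ and each $j \in \II{1,k}$ the differences $H_{i,j,n} - H_{i,1,n}$ converge to $0$ in probability. Equivalently, in the Skorokhod coupling of the proof of Corollary \ref{C:BCJ-translation}, the gaps between the spatial groups $(x_j,y_j)$ and $(x_{j+1},y_{j+1})$ at a common time $t_i$ must be accounted for correctly. Concretely, I would enlarge the data of \cref{T:BCJ18-weak} at time $t_i$ so that it also contains the increments $\tilde\cH_n^{\phi_n}(t_{i,n},x_{j+1,n}) - \tilde\cH_n^{\phi_n}(t_{i,n},y_{j,n})$. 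The condition $\lambda_1 n^{1/3}y_{i,n}+4 < \lambda_1 n^{1/3}x_{i+1,n}$ does not allow $y_j = x_{j+1}$. I would therefore handle each gap by a small-scale interpolation: insert a point $z_{j,n}$ with $y_{j,n} < z_{j,n} < x_{j+1,n}$, chosen so that $\lambda_1 n^{1/3}(z_{j,n} - y_{j,n})$ and $\lambda_1 n^{1/3}(x_{j+1,n}-z_{j,n})$ are of order $\ge 8$ but $O(1)$, or at most $o(n^{1/3})$.

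With this choice, the limits $z_{j,n}\to y_j$ and $x_{j+1,n}\to x_{j+1}$ hold in limiting coordinates. The increments from $y_{j,n}$ to $x_{j+1,n}$ would then be covered if the limit points coincided, but the theorem requires separated intervals. The real content is therefore a microscopic statement: the mollified height difference across an $O(1)$ unscaled gap between $\tilde\beta_n(t_{i,n},y_{j,n})$ and $\tilde\beta_n(t_{i,n},z_{j,n})$ is $o(1)$ in probability, once we subtract its limit $0$. This is exactly where the restriction $a_n = o(n^{1/24}\log^{-2}n)$ enters.

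For that microscopic step I would use the local smooth coupling, \cref{L:local-coupling}. Take $\Lambda$ to be the union of the short segments joining $\tilde\beta_n(t,y)+2\lfloor \ell\log^2 n\rfloor e_2$ to $\tilde\beta_n(t,z)+2\lfloor \ell\log^2n\rfloor e_2$, padded to $O(1)$-boxes, for $\ell\le a_n$. This set has size $|\Lambda| = O(a_n)$, so the coupling bound is $C n^{-1/3} a_n^4$, which is $o(1)$ for any $a_n = o(n^{1/12})$; our hypothesis is stronger than needed. On the coupling event, the mollified increment equals $\frac{1}{a_n+1}\sum_\ell (h(u_\ell)-h(w_\ell))$ for a smooth phase height $h$. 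The summands are mean zero by translation invariance and the symmetry of Corollary \ref{C:reflection-symmetry-smooth}, they have uniformly bounded variance by \eqref{E:height-subgauss}, and they decorrelate exponentially in $\log^2 n$ separation by \cref{L:smooth-phase-heights-decor}. Chebyshev's inequality then sends the average to $0$ in probability.

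It remains to glue. I would apply \cref{T:BCJ18-weak} to the refined collection of intervals $(x_j,y_j)$, $(z_j, x_{j+1})$, possibly after relabeling so that the separation hypotheses hold among the enlarged family, with the $O(1)$ gaps handled by the previous paragraph. Summing telescopically, the convergence \eqref{E:joint-1} holds jointly for all increments $\tilde\cH_n^{\phi_n}(t_{i,n},\cdot)$ between consecutive marked points. This yields convergence of $\tilde\cH^{\phi_n}_n(t_{i,n},p)-\tilde\cH^{\phi_n}_n(t_{i,n},y_{1,n})$ to $\cA(t_i,y_1)-\cA(t_i,p)$ for every marked point $p$, with $\cA$ continuous in probability at fixed points. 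Defining $H_{i,n}$ via the Skorokhod coupling at $y_1$, exactly as in Corollary \ref{C:BCJ-translation}, then gives the claim.

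The main obstacle is bookkeeping the separation conditions of \cref{T:BCJ18-weak} so that the interpolating intervals are admissible while the leftover gaps stay microscopic. If the strict $+4/+8$ inequalities prevent this, I would instead make the gaps $n^{\delta}$ in unscaled units. Their mollified increments then still vanish by the same coupling, now on a set of size $O(a_n n^{\delta})$, giving $n^{-1/3}(a_n n^{\delta})^4 = o(1)$ for small $\delta$ under $a_n = o(n^{1/24})$.
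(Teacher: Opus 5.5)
Your proposal is essentially the paper's proof. You reduce to $H_{i,j,n}-H_{i,j+1,n}\to 0$ in probability, refine the family of intervals so that consecutive ones abut in the limit while staying only microscopically apart in the prelimit, and kill the mollified increments across those microscopic gaps using \cref{L:local-coupling} together with mean-zero, exponentially decorrelated smooth-phase heights. One correction to the write-up: as literally stated, a single inserted point $z_{j,n}$ cannot make both $\lambda_1 n^{1/3}(z_{j,n}-y_{j,n})$ and $\lambda_1 n^{1/3}(x_{j+1,n}-z_{j,n})$ bounded unless $y_j=x_{j+1}$. What you need, and what your gluing step effectively uses, is a new interval whose endpoints converge to $y_j$ and $x_{j+1}$ and lie a bounded distance from $y_{j,n}$ and $x_{j+1,n}$; this is the paper's ``filling in new intermediate pairs''. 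The only real difference is that you couple on $O(a_n)$ small boxes along the gaps rather than on a square of side $O(a_n\log^2 n)$, so your argument needs only $a_n=o(n^{1/12})$ instead of the stated $a_n=o(n^{1/24}\log^{-2}n)$.
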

	
	\begin{proof}
	First, by potentially filling in the sequence $x_1,y_1, \dots, x_k, y_k$ with new intermediate pairs and associated new approximation sequences, we may assume that
		\begin{align}
				x_i = y_{i-1}, \quad \text{ and } \quad \lambda_1 n^{1/3} x_{i, n} \ge \lambda_1 n^{1/3} y_{i-1, n} + 5.
			\end{align}
	We claim that with this assumption, any choice of random variables $H_{i, j, n}$ satisfying \eqref{E:joint-easy} will satisfy
	$$
	H_{i, j, n} - H_{i, j+1, n} \cvgp 0
	$$
	as $n \to \infty$ for all $i \in \II{1, k}, j \in \II{1, k-1}$. This is equivalent to the statement that 
	\begin{equation}
		\label{E:avg}
	\tilde \cH_n^{\phi_n}(t_{i, n}, y_{j, n}) - 	\tilde \cH_n^{\phi_n}(t_{i, n}, x_{j + 1, n}) \cvgp 0
	\end{equation}
	for all $i \in \II{1, k}, j \in \II{1, k-1}$. To prove \eqref{E:avg}, consider the set 
			$$
			\Lambda = \beta_n(t_{j, n}, y_{i, n}) + [-100 a_n \log^2 n, 100a_n \log^2 n]^2.
			$$
			Since $a_n \log^2 n = o(n^{1/24})$, by \cref{L:local-coupling} we can couple the Aztec diamond measure $D_n \sim \P_{a, n}$ with the smooth phase $D \sim \P_a$ such that $
			D_n|_{\Lambda} = D|_{\Lambda}$ with probability $1 - o(1)$. Let $\cH$ denote the height function in $D$, and define the functions $\tilde \cH, \tilde \cH^{\phi_n}$ as they were defined for $\cH_n$. Then on the event where $
			D_n|_{\Lambda} = D|_{\Lambda}$, we have that
			$$
				\tilde \cH_n^{\phi_n}(t_{i, n}, y_{j, n}) - 	\tilde \cH_n^{\phi_n}(t_{i, n}, x_{j + 1, n}) = 	\tilde \cH^{\phi_n}(t_{i, n}, y_{j, n}) - 	\tilde \cH^{\phi_n}(t_{i, n}, x_{j + 1, n}),
				$$
			so it suffices to show that the right-hand side above converges to $0$ in probability with $n$. This follows from the covariance bound \cref{L:smooth-phase-heights-decor}, the fact that $\phi_n$ only averages over $a$-faces, and the symmetry in Corollary \ref{C:reflection-symmetry-smooth}, which implies $\E \cH(f) = 0$ on any $a$--face $f$.
	\end{proof}
	
	We can push the smooth phase coupling idea from \cref{L:j-independent} a bit more in order to eliminate the need for the mollifying sequence $\phi_n$.
	
	\begin{lem}
		\label{L:arbitrary-mollifiers-new}
		In Corollary \ref{C:BCJ-translation}/\cref{L:j-independent}, we can allow $\phi_n \subset (2 \Z)^2$ to be any sequence of sets of satisfying $|\phi_n| \to \infty$ as $n \to \infty$, and $\max_{x \in \phi_n} \|x\|_\infty = o(n^{1/24})$. The random variables $H_{i, n}$ do not depend on the choice of $\phi_n$.
		
		Moreover, under the assumption that
		\begin{equation}
		\label{E:beta-n-assn}
		\inf_{(i, j) \ne (i', j')} \|\tilde \beta_n(t_{i, n}, x_{j, n}) - \tilde \beta_n(t_{i', n}, x_{j', n})\|_\infty \to \infty,
		\end{equation}
		we have the following joint convergence:
		\begin{equation}
			\label{E:height-with-a-shift2}
			\begin{split}
				(H_{i, n} - \cH_n(\tilde \beta_n(t_{i, n}, x_{j, n}) + v)&: i, j \in \II{1, k}, v \in \hat \Z^2) \\
				\implies (4\cA(t_i, x_j) - \cH_{i, j}((1, 1) + v) &: i, j \in \II{1, k}, v \in \hat \Z^2),
			\end{split}
		\end{equation}
		where $\cA$ is the Airy surface, and $(\cH_{i, j}, i, j \in \II{1, k})$ is a collection of height functions drawn from the smooth phase $\P_a$, independent of $\cA$ and of each other.
	\end{lem}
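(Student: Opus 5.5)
Both parts go through the local smooth phase coupling (\cref{L:local-coupling}). Let $\phi^0_n$ be the specific mollifier from \cref{T:BCJ18-weak}, with $a_n$ chosen so that $a_n \log^2 n = o(n^{1/24})$, and let $\phi_n$ be an arbitrary sequence as in the statement. Around each point $p_{i,j,n} := \tilde\beta_n(t_{i,n},x_{j,n})$ take the box
$$\Lambda_{i,j} = p_{i,j,n} + [-r_n,r_n]^2,$$
where $r_n = o(n^{1/24})$ is large enough to contain $p_{i,j,n}+\phi_n$ and $p_{i,j,n}+\phi^0_n$. Then $n^{-1/3}|\Lambda_{i,j}|^4 = o(1)$, so each $D_n|_{\Lambda_{i,j}}$ agrees with a smooth phase sample $D^{i,j}$ with probability $1-o(1)$.

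For the first claim, on the coupling event the difference $\tilde\cH_n^{\phi_n}(t_{i,n},x_{j,n}) - \tilde\cH_n^{\phi^0_n}(t_{i,n},x_{j,n})$ equals the corresponding difference for the smooth phase height $\cH^{i,j}$. This holds because only increments inside $\Lambda_{i,j}$ enter, and the anchor cancels. I will show this smooth phase difference tends to $0$ in probability by bounding its variance and mean. The variance of each average is $|\phi|^{-2}\sum_{u,u'}\operatorname{Cov}$, which tends to $0$ by the exponential covariance decay in \cref{L:smooth-phase-heights-decor} together with $|\phi|\to\infty$. The means vanish: since $\phi_n \subset (2\Z)^2$ and $p_{i,j,n}$ is an $a$-face, every averaged face is an $a$- or $b$-face, and for these $\E\cH=0$ by Corollary \ref{C:reflection-symmetry-smooth}. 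The one subtlety is that the heights must be anchored at infinity: I will work with $\cH^{i,j}(f)-\cH^{i,j}(p)$ and note that the anchor term cancels between the two averages. Hence the statement of \cref{L:j-independent} transfers from $\phi^0_n$ to $\phi_n$ with the same $H_{i,n}$.

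For \eqref{E:height-with-a-shift2}, write
$$H_{i,n}-\cH_n(p_{i,j,n}+v) = \bigl[H_{i,n}-\tilde\cH_n^{\phi_n}(t_{i,n},x_{j,n})\bigr] + \bigl[\tilde\cH_n^{\phi_n}(t_{i,n},x_{j,n})-\cH_n(p_{i,j,n}+v)\bigr].$$
Choose $\phi_n$ to avoid a slowly growing neighbourhood of the origin, say $\|u\|_\infty \ge s_n \to\infty$ for all $u \in \phi_n$. On the coupling event the second bracket equals $\cH^{i,j}$-average$(\phi_n) - \cH^{i,j}(p_{i,j,n}+v)$. By the same variance argument, the average is $o_P(1)$ relative to the anchor at infinity, so the second bracket is $-\cH^{i,j}(p_{i,j,n}+v)+o_P(1)$. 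After translating by $p_{i,j,n}-(1,1)$, which is legitimate by translation invariance of $\P_a$ under shifts preserving face types, this becomes $-\cH_{i,j}((1,1)+v)$.

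It remains to get joint independence of the family $(\cA, \cH_{i,j})$. First, the boxes $\Lambda_{i,j}$ can be shrunk so that they are pairwise far apart, using \eqref{E:beta-n-assn}. Then a single coupling of $D_n$ on $\bigcup_{i,j}\Lambda_{i,j}$ with independent smooth phases is needed. I would take $D_n$ restricted to the union coupled to one smooth phase (\cref{L:local-coupling} allows arbitrary $\Lambda$). Then I would use the covariance decay and the coupling argument in the proof of \cref{L:smooth-phase-heights-decor}, via Wilson's algorithm with parabolic separation, to show that the local height profiles on far-apart boxes are asymptotically independent.

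\textbf{Main obstacle.} The hardest step is showing independence of the smooth noise $(\cH_{i,j})$ from the Airy term $\cA$, which is the limit of the mollified heights. My plan is to condition on the macroscopic data. Split $\phi_n$ into an inner part, which determines the noise at $v$, and an outer annulus part, which the first claim shows determines the Airy limit up to $o_P(1)$. Then I would argue that, under the smooth phase coupling, the local configuration near $p_{i,j,n}$ is asymptotically independent of the average over the far annulus together with everything outside $\Lambda$. For this I would use spatial mixing of $\P_a$: Wilson's algorithm rooted at infinity, with its exponential decorrelation estimate \eqref{E:winding-estimate}, shows that the winding near the center decouples from the averages over the distant shell.
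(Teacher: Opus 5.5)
You handle the first claim the same way the paper does, and your reduction of \eqref{E:height-with-a-shift2} matches it too: you show that the annulus-mollified height minus $H_{i,n}$ gives the Airy term, and that the smooth-phase height $\cH^{i,j}(p_{i,j,n}+v)$ gives the noise. The gap is in the step you flag as the main obstacle. \cref{L:local-coupling} only couples the restrictions $D_n|_\Lambda$ and $D|_\Lambda$. It says nothing about the joint law of $D_n|_\Lambda$ with $D_n|_{\Lambda^c}$.

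The Airy term $H_{i,n}-\cH_n^{\phi_n}(p_{i,j,n})$ is a global functional of the Aztec diamond. On the coupling event, the smooth-phase part of the annulus average is $o_P(1)$. All the Airy information therefore sits in $H_{i,n}$ and in the integer offset $\cH_n-\cH$ on the box, which depends on both configurations outside $\Lambda$ through an uncontrolled coupling. So no mixing property of $\P_a$ alone can give independence of the noise from the Airy term. That includes the property that the centre of the smooth phase decouples from its own annulus average and its own exterior. Moreover, the tool you name, the two-point winding decorrelation \eqref{E:winding-estimate}, only gives approximate independence of finitely many local heights. What is needed is asymptotic independence of the centre from an entire exterior $\sigma$-algebra, i.e.\ a statement about conditional laws.

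The missing idea is the domain Markov (Gibbs) property, which is how the paper bridges $\P_{a,n}$ and $\P_a$.
\begin{itemize}
\item Take $\phi_n=(2\Z)^2\cap([-b_n,b_n]^2\setminus[-b_n/2,b_n/2]^2)$. Then $\cH_n^{\phi_n}(p_{i,j,n})$ is measurable with respect to $D_n$ outside the inner boxes $V_n$, because a boundary-to-annulus path need not enter $V_n$.
\item On the coupling event, $D$ and $D_n$ agree on the annulus. So, given their exteriors, the two inner configurations have the same Gibbs conditional law, and they can be resampled to coincide without changing the joint law.
\item Consequently, the conditional law of the inner configuration given $D_n|_{V_n^c}$, and hence given the Airy data, is the conditional law of the smooth-phase inner configuration given $D|_{V_n^c}$.
\item \cref{L:smooth-phase-mixing} shows this converges in $L^1$ to a product of translated smooth-phase marginals. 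The lemma is tail triviality of $\P_a$, obtained from Kolmogorov's $0$--$1$ law applied to the i.i.d.\ randomness driving Wilson's algorithm.
\end{itemize}
That single step gives both independence from $\cA$ and mutual independence of the $\cH_{i,j}$. Your separate Wilson-coupling argument for the latter is then unnecessary.
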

	
	\begin{rem}
		\label{R:mollifier-extent}
		The upper bound on the diameter of the mollifying sequence is not optimal, and comes from a technical constraint on the smooth coupling. The machinery we develop in later sections could improve this bound to $\max_{x \in \phi_n} \|x\|_\infty = o(n^{1/3})$, but we do not pursue this improvement here. Note that the condition that $\phi_n \subset (2 \Z)^2$ is purely so that we average over only $a$- and $b$-faces. This could easily be loosened, but we would need to account for the fact that the expected height in the smooth phase at other faces is non-zero.
		
		Note also that the assumption \eqref{E:beta-n-assn} simply ensures that the heights $\cH_{i, j}$ are independent in the limit. If there were a pair of points $(i, j), (i', j')$ (or multiple pairs) where $\tilde \beta_n(t_{i, n}, x_{j, n}) - \tilde \beta_n(t_{i', n}, x_{j', n})$ was converging to a point in $\hat \Z^2$, rather than $\infty$, then a version of \eqref{E:height-with-a-shift2} would hold where $\cH_{i, j}$ and $\cH_{i', j'}$ were spatial shifts of each other.
	\end{rem}
	
	To prove \cref{L:arbitrary-mollifiers-new}, we need a simple lemma related to tail-triviality of the smooth phase. For this lemma, for a dimer configuration $D \sim \P_a$ we let $T_x D$ be the translation of $D$ by $x \in \Z^2$, i.e. 
	$
	T_x(D) = \{ e + x : e \in D\}.
	$
	The smooth phase $\P_a$ is invariant under a translation $T_x$ if and only if $x$ is a $c$-face (i.e. if $x = (i, j)$ where $i, j \in 2 \Z$ and $i + j \in 4 \Z$).
	\begin{lem}
		\label{L:smooth-phase-mixing}
		Let $A_1, \dots, A_k$ be events in the space of all dimer configurations on $\tt{\tilde V}$ (i.e. the space of all perfect matchings of $\tt{\tilde V}$). Let $x_{n, 1}, \dots, x_{n, k} \in \tt{\tilde F}, n \in \N$ be $c$-faces, and let $((U_{n, 1}, \dots, U_{n, k}) : n \in \N)$ be a sequence of disjoint subsets of $\Z^2$ such that $x_{n, i} \in U_{n, i}$ for all $i$ and 
		$$
		\inf \{\|x_{n, i} - y\|_\infty : y \notin U_{n, i}\} \to \infty
		$$
		with $n$ for all $i = 1, \dots, k$. Let $\cF_n$ be the $\sigma$-algebra generated by $D \sim \P_a$ restricted to $V_n^c$ where $V_n := \bigcup_{i=1}^k U_{n, i}$. Then
		$$
		\lim_{n \to \infty} \P_a( D \in T_{x_{n,i}} A_i, i = 1, \dots, k \; \mid \; \cF_n) = \prod_{i=1}^k \P(D \in A_i),
		$$
		where the convergence holds in $L^1$.
	\end{lem}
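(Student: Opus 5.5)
We reduce to local events and then use the spatial decorrelation of the smooth phase. Since $\P_a$ is invariant under translation by the $c$-faces $x_{n,i}$, we have $\P_a(D \in T_{x_{n,i}} A_i) = \P(D\in A_i)$ for each $i$. First we reduce to the case where each $A_i$ depends only on $D$ restricted to a fixed finite box $B = [-m,m]^2$. Any event is approximable in $\P_a$-measure by cylinder events. If $|\P_a(A_i \triangle A_i')| \le \eps$, then translation invariance gives $\P_a(T_{x_{n,i}}A_i \triangle T_{x_{n,i}}A_i') \le \eps$. Conditional expectations are $L^1$-contractions, so replacing $A_i$ by $A_i'$ changes the conditional probability by at most $k\eps$ in $L^1$, uniformly in $n$. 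So assume each $A_i$ is $B$-measurable. Then for large $n$ the event $T_{x_{n,i}}A_i$ is measurable with respect to $D|_{x_{n,i}+B}$, and $x_{n,i}+B \subset U_{n,i}$ lies at distance $r_n \to \infty$ from $U_{n,i}^c$.

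Next we produce, via Wilson's algorithm, a coupling under which the configurations in the boxes $x_{n,i}+B$ are nearly independent of $\cF_n$ and of each other. Work with $T \sim \Q_{a,\tt{S}}$ and its dual $\check T \sim \Q_{a,\tt{N}}$, since $D = D_\tt{S}(T)$ and the restriction of $D$ to a box is determined by the outgoing edges of $T$ and $\check T$ at the vertices of the box. Use the enumeration freedom in Wilson's algorithm rooted at $\infty$:
\begin{enumerate}[label=\arabic*.]
\item First run walks from the vertices in $x_{n,i}+B$, for each $i$.
\item Then run walks from all remaining vertices.
\end{enumerate}
By \cref{L:lerw-estimate-basic}, with probability at least $1-c|B|e^{-d_a r_n/2}$ the walks from $x_{n,i}+B$ stay in $x_{n,i}+B+P^{r_n/2}_\tt{S}$ until they merge. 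This does not yet give independence from the outside configuration, because the south tree edges at vertices far below the box depend on these walks.

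The main obstacle is exactly this: the $\sigma$-algebra $\cF_n$ is two-sided, and $D|_{V_n^c}$ encodes both $T$ and $\check T$ outside $V_n$. So I would not argue directly with one tree. Instead I would use the determinantal structure of $\P_a$, whose inverse Kasteleyn kernel decays exponentially in the smooth phase (the same input behind \cref{L:smooth-phase-heights-decor} and the couplings of \cref{S:smooth-phase-couplings}). The plan is to show that the $\P_a$ measure has exponential ratio weak mixing: for a local event $E$ on a finite set $\Lambda$ and any event $F$ on a set at distance $r$ from $\Lambda$, $|\P_a(E\cap F)-\P_a(E)\P_a(F)| \le C|\Lambda|e^{-dr}\P_a(F)$. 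Apply this with $E = \bigcap_i T_{x_{n,i}}A_i$ and $F$ ranging over cylinder events in $\cF_n$ on growing finite windows. By martingale convergence, this gives $\P_a(E\mid\cF_n) = \P_a(E) + O(e^{-dr_n})$ in $L^1$. If uniform ratio mixing is unavailable, the fallback is tail triviality of $\P_a$, which is a consequence of exponential decay of correlations for determinantal measures with a decaying kernel. Tail triviality gives $\P_a(E\mid\cF_n)-\P_a(E)\to 0$ in $L^1$ as the excluded regions grow around each $x_{n,i}$, after translating each component.

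Finally, $\P_a(E) \to \prod_i \P(A_i)$. Write $E$ as an intersection of local events in the boxes $x_{n,i}+B$, which are separated by distances tending to infinity because the $U_{n,i}$ are disjoint and contain balls of radius $r_n$ around the $x_{n,i}$. Apply the same mixing estimate inductively, removing one box at a time; each step introduces a factor error of $O(e^{-dr_n})$. Combined with translation invariance, this gives the limit $\prod_{i=1}^k \P(D\in A_i)$ in $L^1$.
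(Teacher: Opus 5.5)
Your cylinder-event reduction is correct and useful. However, the estimate you plan to prove is false. A bound $|\P_a(E\cap F)-\P_a(E)\P_a(F)|\le C|\Lambda|e^{-dr}\P_a(F)$, uniform over all $F$ supported at distance $r$ from $\Lambda$, says that $\P_a(E\mid F)\approx\P_a(E)$ for \emph{every} exterior condition of positive probability. The dimer model does not have this property. The exterior configuration fixes which boundary vertices are matched outside, and by the Gibbs property the interior is then the finite two-periodic dimer measure with that boundary condition, which can be very far from smooth.

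For a concrete counterexample, let $F$ be the cylinder event that the dimers on a thick annulus at distance $r$ from $\Lambda$ form the frozen brick-wall pattern, with no dimer crossing the annulus' inner boundary. Then $\P_a(F)>0$, since the exterior has room to interpolate between the tilted brick-wall height and the flat smooth-phase height. The brick wall has no flippable face, so Thurston's flip-connectivity theorem shows that the enclosed simply connected region has a unique perfect matching. Hence $\P_a(E\mid F)=1$ for $E=\{D|_\Lambda \text{ is the brick pattern}\}$, while $\P_a(E)<1$ does not depend on $r$. So only an estimate averaged over the exterior configuration can hold, and that averaged estimate is the lemma itself. Relatedly, exponential decay of the inverse Kasteleyn kernel only decorrelates events depending on finitely many edges, with constants growing with their support. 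It therefore cannot control $\cF_n$, which is generated by infinitely many edges.

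Your fallback, tail triviality, is the right idea and is the paper's route. The paper gets it from Kolmogorov's $0$-$1$ law via the i.i.d.\ construction of $\P_a$ by Wilson's algorithm, and inducts on $k$ with the tower property. Two pieces are missing from your version.

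First, tail triviality is not a consequence of decay of correlations, which gives nothing uniform over the tail $\sigma$-algebra. The general tail-triviality theorems for determinantal measures concern Hermitian kernels, and the edge kernel $K(b,w)K^{-1}(w',b)$ is not Hermitian. You need to get tail triviality from the tree/Wilson representation.

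Second, tail triviality about a single point does not directly give the joint statement, because you cannot translate all the $x_{n,i}$ to the origin at once. The missing step is the induction. Write $E_i=\{D\in T_{x_{n,i}}A_i\}$ and $\mathcal G_n=\sigma(D|_{U_{n,1}^c})\supset\cF_n$. After your cylinder reduction, $E_i\in\mathcal G_n$ for $i\ge 2$ and large $n$, so
\[
\P_a\Big(\bigcap_{i=1}^k E_i \;\Big|\; \cF_n\Big)=\E\Big[\P_a(E_1\mid\mathcal G_n)\,\mathbf{1}\Big(\bigcap_{i\ge 2}E_i\Big)\;\Big|\;\cF_n\Big].
\]
The $L^1$ distance from this to $\P_a(A_1)\,\P_a(\bigcap_{i\ge2}E_i\mid\cF_n)$ is at most $\E|\P_a(E_1\mid\mathcal G_n)-\P_a(A_1)|$. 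Let $r_n=\inf\{\|x_{n,1}-y\|_\infty : y\notin U_{n,1}\}$ and $B_r=(-r,r)^2$. By translation invariance and the $L^1$-contraction of conditional expectation, this is at most $\E|\P_a(A_1\mid\sigma(D|_{\Z^2\setminus B_{r_n}}))-\P_a(A_1)|$, which tends to $0$ by reverse martingale convergence and tail triviality. Induction on $k$ then finishes the proof, using $\cF_n\subset\sigma(D|_{(U_{n,2}\cup\cdots\cup U_{n,k})^c})$ and one more contraction. With this argument, your separate step $\P_a(E)\to\prod_i\P_a(A_i)$ becomes unnecessary.
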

	
	\begin{proof}
		We proceed by induction, with the $n = 0$ base case being tautological. Now suppose that the limiting claim holds for some $n = 0, 1, \dots$. By translation invariance of $\P_a$ we have
		$$
		\P_a( D \in T_{x_{n,i}} A_i, i = 1, \dots, n \mid \cF_n) \eqd \P_a( D \in T_{x_{n,i} - x_{n, 1}} A_i, i = 1, \dots, n \mid \cF_n^*),
		$$
		where $\cF_n^*$ is the $\sigma$-algebra generated by $D$ restricted to $T_{-x_{n, 1}} V_n$. Therefore it suffices to prove the claim when $x_{n, 1} = (0,0)$. Now, let $\mathcal G_n \supset \mathcal F_n$ be the $\sigma$-algebra generated by $D|_{U_{n, 1}^c}$ together with the events $T_{x_{n,i}} A_i, i = 2, \dots, n$. Then
		\begin{align*}
			&\P_a( D \in T_{x_{n,i}} A_i, i = 1, \dots, n \mid \cF_n) \\
			=\; &\E (\P_a(D \in A_1 \mid \mathcal G_n) \mathbf{1}(D \in T_{x_{n,i}} A_i, i = 2, \dots, n) \mid \cF_n)
		\end{align*}
		where the equality uses the tower property of conditional expectation. Now, 
		$$
		\lim_{n \to \infty} \P_a(D \in A_1 \mid \mathcal G_n) = \P_a(D \in A_1)
		$$
		in probability by Kolmogorov's $0$-$1$ law. (To apply Kolmogorov's $0$-$1$ law we can use Wilson's algorithm to construct the smooth phase $D$ as a function of an IID sequence of random variables. In this construction, any limit points of the left-hand side above are measurable with respect to the tail $\sigma$-algebra). Moreover, by the inductive hypothesis,
		$$
		\P (T_{x_{n,i}} D \in A_i, i = 2, \dots, n) \mid \cF_n) = \prod_{i=2}^n \P(D \in A_i)
		$$
		in probability. Combining the previous three displays yields the result.
	\end{proof}
	
	\begin{proof}[Proof of \cref{L:arbitrary-mollifiers-new}]
		Define
		$$
		\psi_n = \{2\lfloor \ell \log^2 n \rfloor e_2 : \ell \in \{0, \dots, \lfloor n^{1/100} \rfloor \} \},
		$$
		and let $\phi_n$ be an arbitrary mollifying sequence satisfying the conditions of the corollary.
		Let $b_n \in \N$ be any sequence with $\phi_n \cup \psi_n \subset [-b_n, b_n]^2$ and $b_n = o(n^{1/24})$. Let 
		$$
		U_n = \tt{\tilde V} \cap \bigcup_{i, j = 1}^k ([-b_n, b_n]^2 + u_{i, j}),
		$$
		where $u_{i, j} := \tilde \beta_n(t_{i, n}, x_{j, n})$.
		Then $|U_n| \le 4 k^2 b_n^2$, and there exists $L > 0$ such that for all $n$, we have $U_n \subset \beta_n([-L n^{2/3}, L n^{2/3} \times [-L n^{1/3}, L n^{1/3}])$. Therefore by \cref{L:local-coupling}, we can couple the two-periodic Aztec diamond $D_n \sim \P_{a, n}$ and the smooth phase $D \sim \P_a$ so that 
		\begin{equation}
			\label{E:coupling-equal}
			\P(D|_{U_n} = D_n|_{U_n}) \ge 1 - C n^{-1/3} k^8 b_n^8 \ge 1 - o(1).
		\end{equation}
		Let $\cH$ denote the height function in $D$.  On the event where $D|_{U_n} = D_n|_{U_n}$,
		\begin{equation}
			\label{E:split-hnti}
			\begin{split}
				\cH^{\psi_n}(u_{i, j}) - \cH^{\phi_n}(u_{i, j}) &= \cH_n^{\psi_n}(u_{i, j}) - \cH^{\phi_n}_n(u_{i, j}).
			\end{split}
		\end{equation}
		Now, arguing as in the proof of \cref{L:j-independent}, as $n \to \infty$ we have that
		\begin{equation}
			\label{E:hnpsin}
			\cH^{\psi_n}(u_{i, j}) \cvgp 0, \qquad \cH^{\phi_n}(u_{i, j}) \cvgp 0.
		\end{equation}
		Putting this together with \eqref{E:coupling-equal}, \eqref{E:split-hnti}, and \cref{L:j-independent} for the mollifiers $\psi_n$ then gives
		\begin{equation*}
			(H_{i, n} - \cH^{\phi_n}(u_{i, j}): i, j \in \II{1, k}) \implies (\cA(t_i, x_j) : i, j \in \II{1, k}).
		\end{equation*}
		where the random variables $H_{i, n}$ are the same as in \eqref{E:joint-easy}. This is the first claim in the lemma.
		
		We move on to the display \eqref{E:height-with-a-shift2}. We will work in a version of the coupling above, but with 
		$$
		b_n = \min\Big(n^{1/100} ,\inf_{(i, j) \ne (i', j')} \|\tilde \beta_n(t_{i, n}, x_{j, n}) - \tilde \beta_n(t_{i', n}, x_{j', n})\|_\infty/3\Big).
		$$
		This ensures that the sets $u_{i, j} + [-b_n, b_n]^2, i , j \in \II{1, k}$ are disjoint. We also set
		$$
		\phi_n := (2 \Z)^2 \cap ([-b_n, b_n]^2 \setminus [-b_n/2, b_n/2]^2).
		$$
		The bound \eqref{E:coupling-equal} still holds here, as does the second convergence in \eqref{E:hnpsin} and on the event where $D|_{U_n} = D_n|_{U_n}$, for any $i, j$ and any point $v \in u_{i, j} + [-b_n, b_n]^2$ we have
		$$
			\cH(v) -\cH^{\phi_n}(u_{i, j}) = \cH_n(v) - \cH^{\phi_n}_n(u_{i, j}).
			$$
			By this equality and the first part of the lemma, to prove \eqref{E:height-with-a-shift2} it suffices to show that
		\begin{equation}
			\label{E:AXX}
			(H_{i, n} - \cH_n^{\phi_n}(u_{i, j}); \cH(u_{i, j} + v))\implies (4\cA(t_i, x_j); \cH_{i, j}((1, 1) + v)),
		\end{equation}
		where the convergence in law is joint over all $i, j \in \II{1, k}$ and all $v \in \hat \Z^2$.
		To prove \eqref{E:AXX}, define the set 
		$$
		V_n = \tt{V}  \cap \bigcup_{i, j =1}^k ([-b_n/2, b_n/2]^2 + u_{i, j}).
		$$
		On the event in \eqref{E:coupling-equal}, given $D|_{V_n^c}$ and $D_n|_{V_n^c}$, resample the dimer configuration $D|_{V_n} = D_n|_{V_n}$ according to the $2$-periodic weights to give a new configuration $D'|_{V_n} = D_n'|_{V_n}$. Extend $D', D_n'$ to equal $D, D_n$ on $V_n^c$, and let $\cJ$, $\cJ_n$ denote the height functions in these configurations on the full plane and the Aztec diamond, respectively. By the Gibbs property for the dimer laws $\P_a, \P_{a, n}$, we have that $(\cH, \cH_n) \eqd (\cJ, \cJ_n)$ so it suffices to show that \eqref{E:AXX} holds for the pair $(\cJ, \cJ_n)$.
	Observe that by the definition of $V_n, \phi_n$, we have $\cJ_n^{\phi_n}(t_{i, n}, x_{j, n}) = \cH_n^{\phi_n}(t_{i, n}, x_{j, n})$ for all $i, j$. Therefore to prove \eqref{E:AXX} it suffices to show that conditional on $\cH_n^{\phi_n}$ (and the $H_{i, n}$, which can be taken as measurable functions of $H_n^{\phi_n}$)  we have
		\begin{equation}
			\label{E:conditional-limit}
			(\cJ(u_{i, j} + v))_{i, j \in \II{1, k}, v \in\hat \Z^2} \implies (\cH_{i, j}((1, 1) + v))_{i, j \in \II{1, k}, v \in\hat \Z^2}.
		\end{equation}
		For this, observe that if we let $\cF_n$ be the $\sigma$-algebra generated by $D|_{V_n^c}$ and let $\mathcal G_n$ be $\sigma$-algebra generated by both $\cF_n$ and $\cH_n^{\phi_n}$ we have
		$$
		\P(D' \in \cdot \mid \cF_n) = \P(D' \in \cdot \mid \mathcal G_n).
		$$
		Now, the sets $V_n$ fit the assumptions of \cref{L:smooth-phase-mixing} since $b_n \to \infty$ with $n$.
		Therefore by that lemma, for any sets $A_{i, j}, i, j = 1, \dots, k$ we have
		$$
		\P(\cJ(u_{i, j} + \cdot) \in A_{i, j} : i, j \in \II{1, k} \mid \mathcal F_n) \cvgp \prod_{i, j =1}^k \P(\cJ((1, 1) + \cdot) \in A_{i, j}).
		$$
		Combining the previous two displays gives \eqref{E:conditional-limit}.
	\end{proof}
	Next, we recall the definition of the central height. Using the mollifier notation introduced above and letting $\zeta_n$ be the set of \textit{all} faces with $\|f\|_\infty \le n^{3/4}$, the central height $H_n$ is the unique integer in the interval 
	$$
	(\cH_n^{\zeta_n}(0,0) - 1/2, \cH_n^{\zeta_n}(0,0) + 1/2].
	$$
	To complete the proof of \cref{T:main-1}, we need to show that all of the height shifts $H_{i, n}$ can be replaced by $H_n$. Up to a coordinate change, this will immediately imply the theorem.
	
	\begin{lem}
		\label{L:height-shift}
		In the setting of \cref{L:arbitrary-mollifiers-new}, we can take $H_{i, n} = H_n$ for all $i$, and the lemma still holds.
	\end{lem}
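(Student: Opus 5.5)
The plan is to follow the outline from Section~\ref{sec:proof-sketch}. We add an auxiliary far-away spatial point $y_n := \log^3 n$ (in the limiting coordinates of $\tilde \beta_n$) at each time $t_i$. We then show that the mollified height there equals $H_n$ with high probability, and that the Airy surface vanishes there.

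\textbf{Step 1.} We need a version of Corollary~\ref{C:BCJ-translation}/\cref{L:j-independent} that allows one of the spatial points to tend to infinity. For this we invoke the strengthened Airy asymptotics (\cref{T:BCJ18-strong}, referenced in the sketch), which allow $y_{k,n} = \log^3 n$. Running the proof of \cref{L:j-independent} with this extra point, the shift $H_{i,n}$ can be taken to be
\[
H_{i,n} = \tilde \cH_n^{\phi_n}(t_{i,n}, \log^3 n) + 4\cA(t_i, \log^3 n).
\]
Since $\cA_1(t_i)$ is a.s.\ finite, the Airy term is $0$ with probability $1 - o(1)$. The differences
\[
\tilde \cH_n^{\phi_n}(t_{i,n},\log^3 n) - \tilde \cH_n^{\phi_n}(t_{i,n},x_{j,n})
\]
then converge jointly to $4\cA(t_i,x_j)$. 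It therefore suffices to prove, for each fixed $i$,
\[
\P\big(\tilde \cH_n^{\phi_n}(t_{i,n},\log^3 n) = H_n\big) \to 1,
\]
or at least that the difference tends to $0$ in probability. Here $\phi_n$ is the mollifier from \cref{T:BCJ18-weak}. Since $H_n$ is an integer, it is enough to get convergence in probability of the difference.

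\textbf{Step 2: chaining.} The point $\tilde\beta_n(t_i,\log^3 n)$ lies at distance about $n^{1/3}\log^3 n \gg n^{1/3}\log^2 n$ from the boundary. Hence it lies in $\Lambda_{\operatorname{sm},2}$, and its $\tt{X}$-coordinate $t$ is $O(n^{2/3}) \ll n^{5/6}$. We use \cref{L:global-coupling-2} to couple $\P_{a,n}|_{\Lambda_{\operatorname{sm},2}}$ with $\P_a$, with failure probability $e^{-d_a\log^2 n}$. On this event, height increments between our point and a point $f'$ in $\Lambda_{\operatorname{sm},1}\cap\Lambda_{\operatorname{sm},2}$ agree with smooth-phase increments. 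We next use \cref{L:global-coupling-1}, with its three rotations, and overlaps inside $\tt{Cross}$, to connect $f'$ to the central box $\zeta_n$. This relies on $\Lambda_{\operatorname{sm},1}$ extending a distance $\gamma_a n$ past the centre, so that it covers $\zeta_n = [-n^{3/4},n^{3/4}]^2$. On the intersection of these events, which has probability $1 - o(1)$, we get
\[
\tilde\cH_n^{\phi_n}(t_{i,n},\log^3 n) - \cH_n^{\zeta_n}(0,0) = \cH^{\phi_n}(u) - \cH^{\zeta_n}(0,0)
\]
for smooth-phase height functions $\cH$. The patching between the two couplings only matches increments, so we glue at a single common face, exactly as in the proof of Corollary~\ref{C:expected-sm-height}.

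\textbf{Step 3.} Each mollified smooth-phase average tends to $0$ in probability. This follows from $\E\cH(f)=0$ on $a$- and $b$-faces (Corollary~\ref{C:reflection-symmetry-smooth}), together with the exponential covariance decay in \cref{L:smooth-phase-heights-decor}, and the fact that $|\phi_n|, |\zeta_n| \to \infty$ with well-separated points. For $\zeta_n$, which includes all faces, we must handle the nonzero expectations on non-$a/b$ faces. These average out because the non-$a/b$ faces come in reflection pairs with opposite mean: the mean of $\cH$ on $c$-faces cancels against its neighbours. If this cancellation fails, the fallback is to note that $H_n$ is defined through the actual Aztec heights, for which \cref{L:expected-middle-height} and Corollary~\ref{C:expected-sm-height} give mean $0$ on $c$-faces. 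Hence the differences tend to $0$ in probability, and $H_n$, the rounding of $\cH_n^{\zeta_n}(0,0)$, equals the rounded mollified height at the far point with probability $1 - o(1)$.

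\textbf{Main obstacle.} The main obstacle is Step 1: justifying the extension of the \cite{BCJ18} asymptotics to $y_n = \log^3 n$ uniformly in $n$. This is deferred to the integrable section as \cref{T:BCJ18-strong}. A secondary obstacle is the bookkeeping of anchors when chaining the two coupling regions.
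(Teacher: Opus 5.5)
Your Steps 2 and 3 follow the paper's route: couple through $\Lambda_{\operatorname{sm},2}$ and $\Lambda_{\operatorname{sm},1}$, kill the mollified smooth-phase averages using $\E h=0$ on $a$/$b$-faces, and use reflection pairing for the $\zeta_n$-average. For the final rounding you should say explicitly that, on the coupling event, $\cH_n^{\zeta_n}(0,0)$ is an integer plus a smooth-phase average tending to $0$. That is what makes it close to $H_n$.

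\textbf{The gap is in Step 1.} \cref{T:BCJ18-strong} concerns a single difference $\tilde\cH_n^{\phi_n}(t_n,\log^3 n)-\tilde\cH_n^{\phi_n}(t_n,x_n)$ at a single time. Even the version proved in Section~\ref{S:BCJ18} is at one fixed time. To rerun Corollary~\ref{C:BCJ-translation}/\cref{L:j-independent} with the endpoint $\log^3 n$ and set $H_{i,n}=\tilde\cH_n^{\phi_n}(t_{i,n},\log^3 n)+4\cA(t_i,\log^3 n)$, you need the whole array $(\tilde\cH_n^{\phi_n}(t_{i,n},\log^3 n)-\tilde\cH_n^{\phi_n}(t_{i,n},x_{j,n}))_{i,j}$ to converge jointly over all times and points. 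That multi-time far-endpoint version of \cite{BCJ18} is neither stated nor proved. One-point information at a fixed finite $x_j$ does not rescue this. Knowing that $H_{i,n}-\tilde\cH_n^{\phi_n}(t_{i,n},x_{j,n})$ and $\tilde\cH_n^{\phi_n}(t_{i,n},\log^3 n)-\tilde\cH_n^{\phi_n}(t_{i,n},x_{j,n})$ share the limit law of $4\cA(t_i,x_j)$ says nothing about their difference.

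\textbf{The missing idea.} Keep the abstract shift $H_{i,n}$ from \cref{L:arbitrary-mollifiers-new}, which already carries the joint-in-time structure. Then show $H_{i,n}-H_n\cvgp 0$ separately for each $i$; Slutsky then gives the lemma.
\begin{enumerate}
\item Append points $x_j\to\infty$ to the collection; the lemma persists by Kolmogorov extension.
\item The Airy point process has a top particle, so $\cA(t_i,x_j)\cvgp 0$ as $j\to\infty$. Hence $\lim_j\lim_n[H_{i,n}-\tilde\cH_n^{\phi_n}(t_{i,n},x_{j,n})]=0$ in probability.
\item The one-point \cref{T:BCJ18-strong} with $x_n=x_{j,n}$ gives $\lim_j\lim_n[\tilde\cH_n^{\phi_n}(t_{i,n},\log^3 n)-\tilde\cH_n^{\phi_n}(t_{i,n},x_{j,n})]=0$ in probability.
\item Subtracting pins $H_{i,n}$ to the far-point height, and your Steps 2--3 finish.
\end{enumerate}

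\textbf{A smaller point in Step 2.} Gluing the two couplings at a single face $f'$, as in Corollary~\ref{C:expected-sm-height}, leaves a term $h_1(f')-h_2(f')$. This compares two different smooth phases anchored at infinity, so it is $O(1)$ and not automatically zero. Glue instead through a mollified average $f_1+\phi_n$ inside $\Lambda_{\operatorname{sm},1}\cap\Lambda_{\operatorname{sm},2}$. There both $h_1^{\phi_n}(f_1)$ and $h_2^{\phi_n}(f_1)$ tend to $0$ in probability, which is what the paper does.
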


	To prove \cref{L:height-shift}, we require a strengthening of \cref{T:BCJ18-weak} in the $k = \ell = 1$ case that allows the endpoint $y_1$ to tend to $\infty$ with $n$. The proof in this setting is a similar to the proof of \cref{T:BCJ18-weak} in \cite{BCJ18} with a few minor tweaks, and is included in \cref{S:BCJ18}. 
	
	\begin{thm}
		\label{T:BCJ18-strong}
		Suppose we have:
		\begin{itemize}
			\item Sequences $t_n \to t$ and $x_n \le y_n = \log^3 n$ with $x_n \to x \in \R \cup \{\infty\}$.
			\item Mollifiers $\phi_n, n \in \N$ of the form
			$$
			\phi_n = \{2\lfloor k \log^2 n \rfloor e_2 : k \in \{0, \dots, a_n\}\},
			$$
			where the sequence $a_n \to \infty$ as $n \to \infty$ and satisfies $a_n = o(n^{1/3}\log^{-2} n)$ as $n \to \infty$.
		\end{itemize}
		Then there exists a constant $\eps > 0$ such that for any $\lambda \in[-\eps,\eps]$ we have
		\begin{equation}
			\label{E:height-with-a-shift'}
			\begin{split}
				\lim_{n \to \infty} \E \exp \left (\lambda [\tilde \cH_n^{\phi_n}(t_n, y_n) - \tilde \cH_n^{\phi_n} (t_n, x_n)]\right) =\E \exp \left (4\lambda \cA(x, t)\right),
			\end{split}
		\end{equation}
		where if $x = \infty$, we set $\cA(x, t) = 0$.
	\end{thm}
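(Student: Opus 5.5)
The plan is to redo the asymptotic analysis of \cite{BCJ18} for a single time $t_n$ and a single interval $[x_n,y_n]$, checking that each estimate is uniform when the right endpoint moves out to $y_n=\log^3 n$ in Airy coordinates. In \cite{BCJ18}, the quantity $\tilde \cH_n^{\phi_n}(t_n,y_n)-\tilde \cH_n^{\phi_n}(t_n,x_n)$ is written (up to the factor $-4$ and a deterministic centering) as a linear statistic of the determinantal point process of dimers. This statistic counts the dimers of a fixed type crossing the union of the $a_n+1$ parallel segments from $\tilde\beta_n(t_n,x_n)+2\lfloor \ell\log^2 n\rfloor e_2$ to $\tilde\beta_n(t_n,y_n)+2\lfloor \ell\log^2 n\rfloor e_2$, each weighted by $1/|\phi_n|$. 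The Laplace transform of such a statistic is a Fredholm determinant $\det(I+(e^{\lambda g}-1)K_n)$ on the segment set, where $K_n$ is built from the inverse Kasteleyn matrix $K^{-1}_{a,1}$.

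\textbf{Step 1: kernel asymptotics.} I would reuse the saddle-point analysis of $K^{-1}_{a,1}$ from \cite{BCJ18} (the material of \cref{subsec:KasteleynTheory} onward). After conjugation, this splits the kernel into a smooth-phase part and an Airy part. In rescaled coordinates $\zeta_1,\zeta_2\in[x_n,\log^3 n]$, the Airy part converges to the extended Airy kernel. I need this convergence together with a bound of the form $|K^{\text{Airy}}_n(\zeta_1,\zeta_2)|\le C e^{-c(\zeta_1+\zeta_2)}$ holding uniformly for $\zeta_i$ up to $\log^3 n$; this is the standard super-exponential decay of $\mathrm{Ai}$ on the right. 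The saddle-point error terms must also remain $o(1)$ after integration over a region of length $\log^3 n$ times $n^{1/3}$ lattice units. I would get this by keeping the contour deformation of \cite{BCJ18}, using $\zeta\le\log^3 n\ll n^{\delta}$ so that the saddle point moves by only $O(n^{-1/3}\log^3 n)$, and noting that the Taylor remainder bounds $O(n^{-1/3}\zeta^2)$ stay small.

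\textbf{Step 2: the smooth part and the mollifier.} The smooth-phase contribution decays exponentially in the lattice distance. As in \cite{BCJ18}, averaging over the $a_n\to\infty$ shifts spaced $\log^2 n$ apart kills its fluctuations. Concretely, the cross terms between different shifts are $O(e^{-d\log^2 n})$, and the diagonal terms are $O(1/a_n)$ per unit length. The length is now $\log^3 n\cdot n^{1/3}$ rather than $O(n^{1/3})$, so I must check the total contribution $O(n^{1/3}\log^3 n/(a_n n^{1/3}))$-type terms in the trace expansion. I expect these still vanish when combined with the small-$\lambda$ restriction, because the relevant Hilbert--Schmidt norms are controlled per unit length after the mollifier normalization. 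Failing that, I would argue via the second-moment bound from \cref{L:smooth-phase-heights-decor} together with the couplings in \cref{L:global-coupling-2}.

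\textbf{Step 3: the Fredholm determinant limit.} Given the uniform decay bounds, dominated convergence on the Fredholm series, with Hadamard's inequality and $|\lambda|\le\eps$, yields
$$\det(I+(e^{4\lambda}-1)\mathcal A\,\mathbf 1_{[x,\infty)})=\E e^{4\lambda\cA(x,t)}.$$
For $x=\infty$ the limit is $1$, matching $\cA=0$. The tail $[\log^3 n,\infty)$ contributes negligibly because $\cA_1(t)\le \log^3 n$ with probability $1-o(1)$.

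I expect the main obstacle to be Step 1: making the saddle-point error estimates in \cite{BCJ18} uniform over the growing window $\zeta\in[x_n,\log^3 n]$, and in particular showing that the smooth/Airy decomposition error does not accumulate over the longer segment.
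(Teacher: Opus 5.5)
Your Step 1 is the right new input (it is what the paper supplies in \cref{P:airylimit_asymptotics}(1) via \cref{thm:previousRSasymptotics}). However, Steps 2 and 3 misdescribe how the smooth part of the kernel enters, and as written they do not produce the limit.

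Write the kernel on the union of the $M=|\phi_n|$ lines as $\mathtt K_m=\mathtt K_{m,0}-\mathtt K_{m,1}$, with $\mathtt K_{m,0}$ coming from the full-plane kernel $\mathbb K^{-1}_{1,1}$ and $\mathtt K_{m,1}$ from $\mathbb K_A$. Work in the conjugation in which $\mathtt K_{m,1}$ is $(\lambda_1(2m)^{1/3})^{-1}$ times the Airy kernel. There the smooth part from a point on line $k$ to a point on line $k'<k$ is not $O(e^{-d\log^2 n})$. By \cref{P:airylimit_asymptotics}(2a) it is a discrete heat kernel of width of order $\sqrt{k-k'}\,\log n$ and total mass of order one: the prelimit of the $\phi_{\tau_1,\tau_2}$ part of the extended Airy kernel at infinitesimal time separation. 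It is exponentially small only in the other direction.

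Your bound does hold for the unconjugated kernel. But then the conjugation factor in \eqref{Kmdelta} makes the unconjugated Airy part from line $k'$ back to line $k$ exponentially large. What matters are the conjugation-invariant products $\mathcal K_{m,0}(z,z')\mathcal K_{m,1}(z',z)$, which are of order $n^{-1/3}/\log n$, so each pair of lines contributes $O(1)$ after summing over $z,z'$. These mixed chains are what make the $M$ lines carry the same Airy particles, and they cannot be dropped. Without them the diagonal term $\int_{[x,\infty)}K_{\mathrm{Ai}}(\zeta,\zeta)\,d\zeta$ of the second cumulant of the Airy count is lost, because the same-line Airy diagonal carries an extra factor $1/M$.

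For the same reason Step 3 cannot work as stated. The prelimit quantity is $\det[I+(e^{\psi/M}-1)\mathtt K_m]$ with per-particle weight $e^{\pm 4\lambda/M}-1$, and its Fredholm series is not even bounded term by term. The second-order part $8\lambda^2/M^2$ of the weight, summed against a particle density of order one over $M$ lines of order $n^{1/3}\log^3 n$ sites, already makes the first term of order $\lambda^2 n^{1/3}\log^3 n/M\to\infty$. So Hadamard plus dominated convergence is not the mechanism.

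What is missing is the cumulant expansion of \cite{BCJ18}, which the paper reruns once the kernel estimates are extended to $t,t'\le C(2m)^{1/3}\log^3 m$. One writes $\log\det=\sum_{j=0}^3U_j(m)$ as in \eqref{Ujm}, sorting cycles by whether some step uses $\mathtt K_{m,1}$.
\begin{itemize}
\item Purely smooth cycles that change line contain a step to a higher line index and are negligible by \cref{P:airylimit_asymptotics}(3).
\item Purely smooth same-line cycles are negligible by cancellation, not by size. They sum to $\sum_k\log\E_{\P_a}e^{4\lambda N_k/M}$, where $4N_k$ is a smooth-phase height difference between $a$-faces. This has mean zero and cumulants bounded uniformly in the window length (\cref{L:smooth-phase-heights-decor}), giving $O(1/M)$.
\item Your accounting, $O(1/a_n)$ per unit length over a length $n^{1/3}\log^3 n$, would only give $O(\log^3 n/a_n)$. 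That need not vanish, since $a_n\to\infty$ arbitrarily slowly, and small $\lambda$ does not help.
\item For cycles with Airy steps, one bounds $|S_r|\le C^r$ on the enlarged range (\cref{L:Srbound}) and reduces to distinct line indices. Each run of smooth steps with decreasing line index is then integrated as the Gaussian \eqref{E:cumulant:gaussian}.
\item The limit $M^{-s}\sum_{\bar k}\prod_{i\in J'}\mathbf 1_{k_i>k_{i+1}}\to 1/(\ell_1!\cdots\ell_r!)$ turns the $1/M$ weights back into $e^{4\lambda}-1$. The identity $\sum_{\bar\eps}Q(\bar\eps)=(-1)^s$ turns the signed parity count into the Airy counting measure.
\end{itemize}
The decay from your Step 1 is what makes the bound on $S_r$ uniform and lets the window $[x_n,\log^3 n]$ be replaced by $[x,\infty)$. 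The paper obtains it in \cref{thm:previousRSasymptotics} by moving the $\omega_1$ contour through the split saddle point $\omega_{c,X}$, which yields the factor $e^{-\frac23\alpha_x^{3/2}}$ with a multiplicative $(1+o(1))$ error.
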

	
	\begin{proof}[Proof of \cref{L:height-shift}]
		First, in the setting of \cref{L:arbitrary-mollifiers-new}, we can extend the collection of points $x_i, i =1, \dots, k$ and sequences $x_{i, n}$ to infinite collections $x_i, x_{i, n}, i, n \in \N$ with $x_i \to \infty$ with $n$. The convergences in law in the lemma continue to hold by Kolmogorov's extension theorem. We claim that in this setting, any sequence of heights $H_{i, n}$ realizing the convergence
	\begin{equation}
		\label{E:joint-harder}
			(H_{i, n} - \tilde \cH_n^{\psi_n} (t_{i, n}, x_{j, n}) : j \in \N)
			\implies (4\cA(t_i, x_j) : j \in \N)
	\end{equation}
	with the explicit choice of
	$$
	\psi_n = \{2\lfloor k \log^2 n \rfloor e_2 : k \in \{0, \dots, \lfloor n^{1/100} \rfloor \} \},
	$$
		must satisfy
		\begin{equation}
			\label{E:45}
		|H_{i, n} - H_n| \cvgp 0
		\end{equation}
		as $n \to \infty$, which yields the desired result. To prove \eqref{E:45}, first note that for any fixed $i$, we have $\cA(t_i, x) \cvgp 0$ as $x \to \infty$, since the Airy point process has a largest point almost surely. Therefore
		\begin{equation*}
			\lim_{j \to \infty} \lim_{n \to \infty} H_{i, n} - \tilde \cH_n^{\psi_n} (t_{i, n}, x_{j, n}) = 0,
		\end{equation*}
		where the convergence is in probability.
		Similarly, by \cref{T:BCJ18-strong}, letting $y_n = \log^3 n$ as in that theorem, we have that in probability,
		\begin{equation*}
\lim_{j \to \infty} \lim_{n \to \infty} \cH_n^{\psi_n}(t_{j, n}, y_n) - \tilde \cH_n^{\psi_n} (t_{i, n}, x_{j, n}) = 0.
		\end{equation*}
		Subtracting together these two equations gives that in probability,
		\begin{equation*}
	\lim_{j \to \infty} \lim_{n \to \infty} \cH_n^{\psi_n}(t_{j, n}, y_n) - H_{i, n} = 0.
		\end{equation*}
	Given this, to prove the lemma we just need to show that
		\begin{equation}
			\label{E:cvg0-in-prob}
			\tilde \cH_n^{\psi_n}(t_{j, n}, y_n) - H_n \cvgp 0.
		\end{equation}
		Recall the sets $\Lambda_{\operatorname{sm}, 1}, \Lambda_{\operatorname{sm}, 2}$ defined in the global smooth phase couplings, \cref{L:global-coupling-1} and \cref{L:global-coupling-2}. Then by applying these two lemmas, with probability 
		$1 - o(1)$ we can couple our dimer configuration $D \sim \P_{a, n}$ with two other dimer configurations $D_1, D_2 \sim \P_a$ such that
		\begin{equation}
			\label{E:successful-coupling}
			D|_{\Lambda_{\operatorname{sm}, i}} = D_i|_{\Lambda_{\operatorname{sm}, }i}, \qquad \text{ for } i = 1, 2.
		\end{equation}
		Now, let $f_1$ be an $a$-face chosen so that $f_1 + \psi_n \subset \Lambda_{\operatorname{sm}, 1} \cap \Lambda_{\operatorname{sm}, 2}$, and let $f_2 = \tilde \beta_n(t_{j, n}, y_n)$. Since $y_n \to \infty$ much faster than $\log^2 n$, we have that $f_2 + \psi_n \subset \Lambda_{\operatorname{sm}, 1}$. Then letting $h_i$ be the height function for $D_i$, in the above coupling we have
		\begin{equation}
			\label{E:diff-c}
			\begin{split}
				\cH^{\psi_n}_n(f_2) - \cH^{\psi_n}_n (f_1) &=  h_{2}^{\psi_n}(f_2) - h_{2}^{\psi_n}(f_1), \\
				\cH^{\psi_n}_n(f_1) - \cH^{\zeta_n}_n (0,0) &=  h_{1}^{\psi_n}(f_2) - h_{1}^{\zeta_n}(0,0).
			\end{split}
		\end{equation}
		Here $\zeta_n$ is the set of all faces with $\|f\|_\infty \le n^{3/4}$.
		Then arguing as in \eqref{E:hnpsin}, all four terms on right-hand side above tend to $0$ with $n$ in probability. To handle the averaging in $h_{1}^{\zeta_n}(0,0)$, which also includes faces that are not $a$- or $b$-faces, we use the reflection symmetry $\E h_1(x, y) = -\E h_1(1-x, 1 + y)$ from Corollary \ref{C:reflection-symmetry-smooth} to get that $\E h_{1}^{\zeta_n}(0,0) = O(n^{-3/4})$. Therefore putting together the above discussion and the displays \eqref{E:successful-coupling}, \eqref{E:diff-c}, we have that
		$$
			\tilde \cH_n^{\psi_n}(f_2) - \cH^{\zeta_n}_n (0,0) = \tilde \cH_n^{\psi_n}(t_{j, n}, y_n) - \cH^{\zeta_n}_n (0,0) \cvgp 0.
		$$
		To complete the proof, observe that the height difference $\cH^{\zeta_n}_n (0,0) - h^{\zeta_n}_n (0,0)$ is always an integer. Since $h^{\zeta_n}_n (0,0)$ converges to $0$ in probability, this implies that $\cH^{\zeta_n}_n (0,0) - H_n$ also converges to $0$ in probability, since $H_n$ is the nearest integer to $\cH^{\zeta_n}_n (0,0)$. Putting this together with the previous display gives \eqref{E:cvg0-in-prob}.
	\end{proof}
	
	\cref{T:main-1} is almost immediate from the previous few lemmas, and a minor change of coordinates. We restate a stronger version of that theorem here, together with the variants discussed in Remark \ref{R:main-thm-variants}. Note that the upcoming \cref{T:main-1-restatement} (along with all statements in the body of the paper) is stated in terms of coordinates centered around the rough-smooth limit curve whereas \cref{T:main-1} is set up using the second order Taylor approximation to the limit curve. The two versions are equivalent since the limit curve is smooth in a neighbourhood $\xi_0(0)$ and the lower order Taylor terms are negligible in our scaling. 
    
    First, define the scaling transformation $\gamma_n:\R^2 \to \R^2$ given by
	\begin{equation}
		\label{E:unscaled-gamma}
\gamma_n(t, x) = n \xi_0(t/n) + (x, 0),
	\end{equation}
	and let
	$$
	\hat \gamma_n(t, x) = [\gamma_n(2^{1/3}\lambda_2 n^{2/3} t, 2^{5/3}\lambda_1 n^{1/3} x)]_a - (1, 1).
	$$
	\begin{thm}
		\label{T:main-1-restatement} Consider $k$ (potentially equal) points $u_1, \dots, u_k \in \R^2$, and sequences $u_{i, n} \to u_i, n \to \infty$. Then for any mollifying sequence $\phi_n \subset (2 \Z)^2$ satisfying $|\phi_n| \to \infty$ as $n \to \infty$ and $\max_{x \in \phi_n} \|x\|_\infty = o(n^{1/24})$, we have that
		\begin{equation}
			\label{E:height-mollify-main}
				(\cH_n^{\phi_n} (\hat \gamma(u_{i, n})) - H_n : i \in \II{1, k})
				\implies (- 4 \cA(u_i): i \in \II{1, k}).
		\end{equation}
		Moreover, under the additional assumption that
		$$
		\lim_{n \to \infty} \inf_{1 \le i < j \le k} \|\hat \gamma_n(u_{i, n}) - \hat \gamma_n(u_{j, n})\|_\infty = \infty,
		$$
		we have the convergence
		\begin{equation}
			\label{E:height-bonus-main}
			\begin{split}
				&(\cH_n (\hat \gamma_n(u_{n, i}) + v) - H_n : i \in \II{1, k}, v \in \hat \Z^2) \\
				\implies &(\cH^i(v) - 4 \cA(u_i): i \in \II{1, k}, v \in \Z^2),
			\end{split}
		\end{equation}
		where $\cA$ is the Airy surface, and the random fields $\cH^i, i \in \II{1, k}$ are i.i.d. copies of the height function for $\P_a$, independent of $\cA$. 
	\end{thm}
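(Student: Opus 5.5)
The plan is to deduce \cref{T:main-1-restatement} from \cref{L:arbitrary-mollifiers-new} combined with \cref{L:height-shift}. Those results already give both displays, but in the coordinates $\tilde \beta_n$ rather than $\hat \gamma_n$. So the only real work is a change of variables together with bookkeeping about distinct versus coincident points.

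\textbf{Coordinate change.} First I compare $\hat \gamma_n(t,x)$ with $\tilde \beta_n(t', x')$. By definition $\beta_n(s, y) = n\xi_0(s/n) + e_1 y$ with $e_1 = (1,1)$, whereas $\gamma_n(s,y) = n\xi_0(s/n) + (y, 0)$. Set $s = 2^{1/3}\lambda_2 n^{2/3} t$. I would write the point $n\xi_0(s/n) + (2^{5/3}\lambda_1 n^{1/3}x, 0)$ as $\beta_n(s', y')$ by solving along the line through the boundary point in direction $e_1$.

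Smoothness of $\xi_0$ near $0$ then gives two facts. The displacement $(y,0)$ equals $\tfrac{y}{2}e_1 + \tfrac{y}{2}(1,-1)$, and this accounts for the factor $2^{5/3}$ versus $2^{2/3}$. The tangential shift $\tfrac{y}{2}(1,-1)$ changes $s$ by $O(n^{1/3})$, which is $o(n^{2/3})$, so it moves the Airy time coordinate by $o(1)$. It moves the normal coordinate only by $O(n^{1/3}\cdot n^{1/3}/n) = o(n^{1/3})$, because the curvature of $n\xi_0(\cdot/n)$ is $O(1/n)$.

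Hence there are sequences $(t'_{i,n}, x'_{i,n}) \to u_i$ with $\hat\gamma_n(u_{i,n}) = \tilde\beta_n(t'_{i,n}, x'_{i,n}) + w_{i,n}$, where $w_{i,n}$ is a bounded lattice correction coming from the rounding $[\cdot]_a$ and the shift $-(1,1)$. I will choose the rounding so that $w_{i,n} \in (2\Z)^2$ lies among the $c$-face translations, or else absorb it into $v$.

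\textbf{First display.} For \eqref{E:height-mollify-main} I apply \cref{L:arbitrary-mollifiers-new} with $H_{i,n}$ replaced by $H_n$ via \cref{L:height-shift}, using the mollifier $\phi_n + w_{i,n}$. This is still admissible: it lies in $(2\Z)^2$ (after the parity choice above), has diameter $o(n^{1/24})$, and has $|\phi_n| \to \infty$. The sign flips because the statement is written as $\cH_n^{\phi_n} - H_n$, which converges to $-4\cA$.

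The lemmas require the ordering and separation conditions of \cref{T:BCJ18-weak}, namely distinct times and interlaced $x\le y$. I handle this by listing the distinct times and the distinct spatial points. Coincident $u_i$ are allowed in the first display: equal limits with different sequences $u_{i,n}$ have mollified heights differing by $o_P(1)$. The argument is the one in \cref{L:j-independent}: couple to the smooth phase on a box of size $o(n^{1/24})$ via \cref{L:local-coupling}, then use \cref{L:smooth-phase-heights-decor}, provided $\|\hat\gamma_n(u_{i,n})-\hat\gamma_n(u_{j,n})\| = o(n^{1/24})$. If instead the separation is large but $o(n^{1/3})$, I fall back on continuity in probability of $\cA$ at fixed $(t,x)$ together with monotonicity of heights along a column.

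Non-ordered times are handled by relabeling. Points sharing a time but having nonseparated spatial locations are handled by inserting auxiliary $y$'s, exactly as in the proof of \cref{L:j-independent}.

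\textbf{Second display.} For \eqref{E:height-bonus-main} the separation hypothesis translates directly into \eqref{E:beta-n-assn}, since $w_{i,n}$ is bounded. \cref{L:arbitrary-mollifiers-new} with $H_{i,n} = H_n$ then gives the result with $\cH_{i,j}((1,1)+v)$. The $-(1,1)$ in $\hat\gamma_n$ cancels the $(1,1)$, and the translate by $w_{i,n}$ is absorbed using translation invariance of $\P_a$ under $c$-face shifts.

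\textbf{Main obstacle.} I expect the hardest step to be the geometric approximation, specifically verifying that the second-order terms are negligible and that lattice rounding does not break the parity assumptions of the lemmas. The other delicate point is the degenerate case of coincident limits, which requires the interlacing conditions of \cref{T:BCJ18-weak}.
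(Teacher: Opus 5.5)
Your route is the same as the paper's: deduce the statement from \cref{L:arbitrary-mollifiers-new} and \cref{L:height-shift}, which are stated in the $\tilde\beta_n$ coordinates, then change variables by a map that moves points by $O(n^{-1/3})$ on compacts. The step that fails is your handling of the lattice offset.

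\textbf{The offset cannot be put in $(2\Z)^2$.} $\tilde\beta_n$ takes values in $a$-faces, whose coordinates are both odd. By contrast, $\hat\gamma_n=[\cdot]_a-(1,1)$ takes values in $c$-faces, whose coordinates are both even. So $w_{i,n}=\hat\gamma_n(u_{i,n})-\tilde\beta_n(t'_{i,n},x'_{i,n})$ always lies in $(1,1)+(2\Z)^2$, whatever rounding you choose. Consequently $\phi_n+w_{i,n}\not\subset(2\Z)^2$, and \cref{L:arbitrary-mollifiers-new} does not apply to it. The paper's one-line reduction has the same slip: it asserts $\hat\gamma_n=\tilde\beta_n\circ\zeta_n$, whereas the correct relation is $\hat\gamma_n=\tilde\beta_n\circ\zeta_n-(1,1)$, with $\zeta_n$ chosen so that $\tilde\beta_n(\zeta_n(u))=\hat\gamma_n(u)+(1,1)$ exactly.

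\textbf{Why this matters for the first display.} The hypothesis $\phi_n\subset(2\Z)^2$ in \cref{L:arbitrary-mollifiers-new} exists so that, based at an $a$-face, the mollifier averages only over $a$- and $b$-faces, where the smooth-phase height has mean $0$. Based at the $c$-face $\hat\gamma_n(u)$, it instead averages over even--even faces. There the mean is $\mu_a:=\E h_D(0,0)$ on $c$-faces and $-\mu_a$ on faces with $i+j\in 4\Z+2$, by \cref{C:reflection-symmetry-smooth}. In general $\mu_a\neq 0$ (see Remark~\ref{R:mollifier-extent}). Concretely, the height rule around the white vertex $(1,0)$ gives $\mu_a=1-4\P_a(\{(1,0),(0,1)\}\in D)$, and $\mu_a\to 1$ as $a\to 0$. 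Your own second display already shows this: its limit field $\cH^i(v)$ has mean $\mu_a$ at every $c$-vector $v$. So with, say, $\phi_n=\{(4j,0):0\le j\le K_n\}$, the coupling argument of \cref{L:arbitrary-mollifiers-new} gives
\[
\cH_n^{\phi_n}(\hat\gamma_n(u))-H_n\implies -4\cA(u)+\mu_a .
\]
What your argument, and the paper's lemmas, actually prove is therefore the first display in one of two forms. Either the base point is $\hat\gamma_n(u_{i,n})+(1,1)$, or the mollifiers contain asymptotically equal numbers of faces from the two even--even classes.

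\textbf{Second display.} Your treatment here is correct and matches the paper. The $-(1,1)$ in $\hat\gamma_n$ is exactly what turns $\cH_{i,j}((1,1)+v)$ into $\cH^i(v)$, and with the exact choice of $(t',x')$ above no residual offset remains.

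\textbf{A minor point on coincident limits.} You cannot fall back on monotonicity of heights along a column, because prelimit heights are not monotone. You do not need it: \cref{T:BCJ18-weak} already allows $x_i=y_i$ in the limit. That directly shows mollified heights along two sequences with the same limit differ by $o_P(1)$.
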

	
	\begin{proof}
	If we replace $\hat \gamma_n$ with $\tilde \beta_n$, then \cref{T:main-1-restatement} is \cref{L:arbitrary-mollifiers-new} together with \cref{L:height-shift}. There are minor notational differences between the statements. To move from the coordinates $\tilde \beta_n$ to $\hat \gamma_n$, we can simply observe that
	$
	\hat \gamma_n(t, x) = \tilde \beta_n(\zeta_n(t, x)),
	$
	where the change-of-coordinate function $\zeta_n:\R^2 \to \R^2$ satisfies 
	$$
	\max_{u \in K} \|\zeta_n(u) - u \|_\infty = O (n^{-1/3})
	$$
	for every compact set $K$. Since the theorem statement for $\tilde \beta_n$ allows for arbitrary sequences converging to the points $u_i$, pre-composing $\tilde \beta_n$ with $\zeta_n$ does not affect convergence statements.
	\end{proof}

	\section{Global path control}
	\label{S:global-control}
	
	In this section, we establish global control of backbone path locations, culminating in a stronger version of \cref{T:main-3}. In particular, we will show that there is a \textit{last path} passing through the rough-smooth boundary, which will be our candidate path for convergence to the Airy process. We start by recalling some definitions from earlier sections and stating a slightly more detailed version of \cref{T:main-3}.
	
	Let $D \sim \P_{a, n}$ be a sample from the two-periodic Aztec diamond with parameter $a$, let $\cH_n$ be its height function, let $F_\tt{S}(D), F_\tt{N}(D)$ be the south and north Temperleyan forests associated to $D$ and let $\cN_i^\pm, \cS_i^\pm. i = 1, \dots, n/2$ be the north and south backbone paths in these forests. Here recall that the paths $\cS_i^-, \cS_i^+$ start at the source vertices $v^S_i, v^N_i$ respectively, and the north backbone paths $\cN_i^-, \cN_i^+$ start at the boundary vertices $w^S_i, w^N_i$, respectively. We think of these paths as plane curves, formed by connecting adjacent vertices with line segments, and parametrizing the resulting paths by arc-length, starting at the source vertex. For example, we will write $\cS_i^-:[0, t_i^-] \to \R^2$ where $t_i^-$ is the length of $\cS_i^-$, $\cS_i^-(0) = v^S_i$, and $\cS_i^-(t_i^-) \in \partial_{E} \tt{S} \cup \partial_{W} \tt{S}$.
	
	Since the south and north paths cannot cross each other, these backbone paths interlace, with the order of the interlacing determined by the start point order, as in \cref{L:dimer-compatible-duals}. For example, moving left-to-right, we see the following backbone paths starting on the south boundary:
	$$
	\cS_1^- \prec \cN_1^- \prec \cS_2^- \prec \cdots \prec \cS_{n/2}^- \prec \cN_{n/2}^-.
	$$
	We let $I = I_n \in \{0, \dots, n/2\}$ be the split point described in \cref{L:tree-paths}, so that for $j \le I$, the path $\cS_j^-$ ends on the west boundary $\partial_W \tt{S}$, and for $j > I$, the path $\cS_j^-$ ends on the east boundary $\partial_E \tt{S}$. By the interlacing above (alternately, by \cref{L:tree-paths}), the paths $\cN_i^-, i < I$ end on the west boundary and the paths $\cN_i^-, i > I$ end on the east boundary. The path $\cN_I^-$ may end on the east or west boundary.
	
	Next, recall that the transformation $\beta_n$ maps the point $n \xi_0(0)$ where the rough-smooth boundary intersects the line $\{x = y\}$ to $(0,0)$. This transformation also flatten outs and rotates the rough-smooth boundary (without scaling) so that it becomes a horizontal line. Define the regions
	\begin{align*}
		\tt{RS}_n^* &:= \beta_n([-2n^{3/4}, 2n^{3/4}] \times [- n^{1/2} \log^{3/2} n, 2n^{1/3} \log^2 n]), \\
		\tt{PRS}_n^* &:= \beta_n([-2n^{3/4}, 2n^{3/4}] \times [- n^{1/2} \log^{2} n, n^{3/4}]),
	\end{align*}
	which surround the rough-smooth boundary, together with the set
	and the set $\tt{X} = \{{\bf v} \in \R^2 : v_1 = \pm v_2\}$.
	
	Finally, recall the event $\mathsf{Para}_\tt{S}(\alpha)$ from \cref{L:regular-paths}, which constrains south forest paths to parabolas away from the south backbone. We define $\mathsf{Para}_\tt{N}(\alpha)$ as the analogous parabolic constraint event for north forest paths (with north-opening parabolas). More precisely, we let $\mathsf{Para}_\tt{N}(\alpha)$ be the set of DCFs $F$ on $\tt{G(N)}$ such that $\rho_N F \in \mathsf{Para}_\tt{S}(\alpha)$, where $\rho_N(x, y) = (-x, -y)$, as in \cref{L:sym}.
	\begin{thm}
		\label{T:main-3-restatement}
		With probability $1 - o(1)$ as $n \to \infty$, the following events hold for $D_n \sim \P_{a, n}$ and the associated Temperleyan forests:
		\begin{enumerate}[label=\arabic*.]
			\item $|H_n| \le \log n$.
			\item The events $\mathsf{Para}_\tt{S}(\log^2 n), \mathsf{Para}_\tt{N}(\log^2 n)$ hold.
			\item The paths $\cS_i^-$ with $I - n^{1/4} \le i \le I$ do not intersect the region $(\tt{X} \cup \tt{PRS}^*_n)\setminus \tt{RS}^*_n$.
			\item The paths $\cN_i^-, i \ge I$ and $\cN_i^+, i \in \II{1, n/2}$ do not enter the region $\tt{PRS}^*_n$.
			\item The paths $\cS_i^-, i \le I - n^{1/4} \log^4 n$ do not intersect the region $\tt{PRS}^*_n$, and the path $\cS_{I}^+$ does not enter the region $\tt{PRS}^*_n$ prior to coalescing with another path.
			\item $H_n = 4I - n - 1$.
		\end{enumerate}		
	\end{thm}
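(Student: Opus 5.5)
We will work throughout on the intersection of a few high-probability events and then deduce items 3--6 deterministically from them, plus one more probabilistic input.

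\textbf{The good events.} Item 1 follows from Chebyshev: $H_n$ is the rounded average of $\cH_n$ over $\zeta_n$. By the global coupling (\cref{L:global-coupling-1}), this average agrees with $\cH_n(0,0)$ plus a vanishing smooth-phase average. Moreover \cref{L:variance-middle-height} bounds the second moment of $\cH_n(0,0)$ uniformly. Item 2 follows from \cref{L:regular-paths}: take $\alpha=\log^2 n$, average over the backbone, and apply it to both forests, so the failure probability is $O(n^2 e^{-d_a\log^2 n})$. We also intersect with the events $A_n,B_n$ of Corollary \ref{C:rough-phase-heights} and with the successful smooth couplings on all four rotations of $\Lambda_{\operatorname{sm},1},\Lambda_{\operatorname{sm},2}$. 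On the coupled smooth region we additionally impose a south-drift event: every south (resp.\ north) forest path entering the coupled region moves at macroscopic scale only south (resp.\ north), within parabolas of width $\log^2 n$. By \cref{L:simple-sample} and \cref{L:lerw-estimate-basic}, a union bound makes all of these hold with probability $1-o(1)$.

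\textbf{Height barriers and the onion argument (items 3--5).} Along a forest path, the height changes only through winding (Corollary \ref{C:vertex-heights}). A path that starts on the boundary of a simply connected window has zero winding number on its first exit. On $A_n\cap B_n$, the heights along $\tt{X}$ outside $\tt{RS}_n^*$, and in the rough part of $\tt{PRS}_n^*$ below $\tt{RS}_n^*$, differ from $0$ by at least $\mu_a|x|^{3/2}n^{-1/2}/2\gg \log n$. Hence paths with index within $n^{1/4}$ of $I$ (whose source heights are $H_n+O(n^{1/4})$ by Remark \ref{R:labelling}, once item 6 is known, or of the candidate $J_n$ before that) cannot reach those barriers without a macroscopic backtrack. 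The key step is to rule out backtracks and windings around the smooth region. Suppose a south backbone path enters the coupled smooth region and then re-emerges going north or west against its drift, or nested ``onions'' of such paths occur. Then some north forest vertex lies in a corridor where every north-biased walk from it must exit its parabola $P^{\log^2 n}_\tt{N}$ before hitting the north backbone; this contradicts item 2. Concretely, I would prove a lemma bounding the number of layers of an onion by its height over $\log^2 n$ times width constraints, and combine it with the height estimates. This yields item 3 for $i\ge I-n^{1/4}$. It yields item 5 for $i\le I-n^{1/4}\log^4 n$, since those source heights force the path to stay in the region where the height is at most $-n^{1/4}\log^{2}n$, i.e.\ below $\tt{PRS}_n^*$. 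It also yields item 4: north paths $\cN_i^-$, $i\ge I$, and all $\cN_i^+$ must drift north through the coupled smooth region, so they never descend into $\tt{PRS}_n^*$. Item 4 is essentially the assertion that no north path is trapped below the top south path.

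\textbf{Height matching (item 6) and the main obstacle.} Given items 3--4, $\cS_I^-$ runs from south to west through $\tt{RS}_n^*$, and every south forest path started in the coupled smooth region above it drifts south and coalesces with $\cS_I^-$. Such a path has zero winding relative to its coupled smooth-phase height, so $\cH_n$ at its start equals the sink height of $\cS_I^-$ plus a smooth-phase fluctuation of mean zero. Averaging over $\zeta_n$ (Corollary \ref{C:expected-sm-height}) then gives $H_n=$ the height attached to index $I$, which by \cref{L:tree-paths} and Remark \ref{R:labelling} equals $4I-n-1$. The main obstacle is the onion control in the previous paragraph. Quantitatively converting ``a path blocks a corridor'' into a parabola violation when many nested bad paths are present requires careful bookkeeping of heights against layer counts. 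The parabola widths $\sqrt{n^{1/2}\log^2 n}$ must stay below the barrier thicknesses, and this is why the exponents in $\tt{RS}_n^*,\tt{PRS}_n^*$ are chosen as they are. I would first handle a single bad path, then induct on layers using the interlacing $\cS_i^-\prec\cN_i^-\prec\cS_{i+1}^-$.
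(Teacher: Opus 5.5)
Your treatment of items 1--3, and of the paths $\cS_i^-$, $i\le I-n^{1/4}\log^4 n$, in item 5, is broadly along the paper's lines. The shared ingredients are height barriers, an onion lemma proved by induction on layers, and a drift-plus-onion exclusion of the smooth arms, as in \cref{L:basic-path-ctrl}, \cref{L:onion-control} and \cref{L:QSE-pass}. However, the two \emph{free} paths $\cS_I^+$ and $\cN_I^-$ are not handled. You never address the second half of item 5 ($\cS_I^+$ avoiding $\tt{PRS}^*_n$ before coalescing), and for $\cN_I^-$ in item 4 you only invoke northward drift in the coupled region. That cannot work on its own.

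\begin{itemize}
\item Both paths start at height $4I-n+O(1)$, so no height barrier constrains them.
\item Your drift event holds only on the coupled cross $\tt{Cross}_n$ along the diagonals. By contrast, $\cN_I^-$ rises from the middle of the south boundary through the uncoupled part of the smooth region.
\item Nothing in your argument stops $\cN_I^-$ from travelling along the (largely uncoupled) strip $\tt{B}_{SW}$ just above $\cS_I^-$ and entering $\tt{PRS}^*_n$ from the side.
\end{itemize}

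The paper needs a separate argument here, \cref{L:SN-path-control}, in two steps.
\begin{itemize}
\item If $\cS_I^+$ crossed the segment $L_{4,E}$, it would have to recross $L_{3,E}$ before coalescing. Since \cref{L:QSE-pass} keeps $\cS_I^-$ and $\cN_{I+1}^+$ out of the enclosed region, this is a one-layer onion of width of order $n^{5/6}$, which \cref{L:onion-control} excludes.
\item In the region $M$ bounded by $\cN_I^-,\cN_{I-1}^-,\cN_{I+1}^+$, the only south backbone paths are $\cS_I^\pm$. So on $\mathsf{Para}_\tt{S}(\log^2 n)$, every south vertex near $\partial M$ must reach $\cS_I^+\cup\cS_I^-$ inside its parabola. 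This duality pins $\cN_I^-$ near $\cS_I^\pm$ and keeps it in $\Theta$, which is disjoint from $\tt{PRS}^*_n$.
\end{itemize}

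Item 6 inherits this gap. You claim that south paths started in the coupled smooth region above $\cS_I^-$ coalesce with $\cS_I^-$ and pick up no winding from the backbone; that is exactly what needs the free-path control. Such a path may first hit $\cS_I^+$, which by \cref{L:tree-paths} may end at the east sink $v^E_{I+1}$ via $\cS_{I+1}^-$. Averaging over the central box $\zeta_n$ makes matters worse, because those paths leave the coupled cross southward and their first backbone hit is uncontrolled. The paper (\cref{C:height-path-equality}) instead does the following.
\begin{itemize}
\item It fixes one $a$-face near $\beta_n(0,n^{3/4})$ in the SW arm.
\item It shows, via \cref{L:QSE-pass}, path ordering and the region $\Theta$, that this face's path joins $\cS_I^-$.
\item It transports $H_n$ to that face using the coupling chain from the proof of \cref{L:height-shift}.
\end{itemize}

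Separately, your ordering is circular. You use item 6 (or the candidate $J_n$) to assert that paths with index near $I$ start near height $H_n$, while deducing item 6 from items 3--4. You must first localize $|I-n/4|\le n^{1/4}\log^2 n+1$ as in \cref{L:basic-path-ctrl}. That is done by indexing paths relative to $n/4$ and using that the connected barrier sets $\tt{D}^{\pm}$ separate the south boundary from the west (respectively east) boundary.

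Note also that heights on the smooth part of $\tt{X}\setminus\tt{RS}^*_n$ are $O(\log n)$. There the exclusion comes only from drift and onions, not from heights.
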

	
	Note that the proof of \cref{T:main-3-restatement} will actually yield bounds on slightly larger regions, which we will define as we move through this section. We start with a simple lemma that converts height estimates from Section \ref{S:compound-estimates} to bounds on path locations. For this lemma, define three overlapping regions:
	\begin{align*}
		\tt{A}_{SW} &:= \beta_n([-n^{5/6}, n^{5/6}] \times [n^{1/3} \log^2 n, n(|\xi_0(0)| + \gamma_a)) \\
		\tt{B}_{SW} &:= \beta_n([-n^{5/6}, n^{5/6}] \times [- n^{1/2} \log^2 n, 2n^{1/3} \log^2 n]) \\
		\tt{C}_{SW} &:= \tt{C1}_{SW} \cup \tt{C2}_{SW}, \quad \text{ where } \\
		\tt{C1}_{SW} &:=
		\beta_n([-n^{5/6}, n^{5/6}] \times [- n^{1/2} \log^{2} n/2, - n^{1/2} \log^{3/2} n]) \\
		\tt{C2}_{SW} &:= \beta_n(\{0\} \times (-\infty, - n^{1/2} \log^2 n]).
	\end{align*}
    Here $\gamma_a$ is as in \cref{L:global-coupling-1}.
	For $S \in \{\tt{A}, \tt{B}, \tt{C}, \tt{C1}, \tt{C2} \}$, we also define
	\begin{align*}
		S_{NW} &= R_{-\pi/2} S_{SW}, \qquad S_{SE} = R_{\pi/2} S_{SW}, \qquad S_{NE} = R_{\pi} S_{SW},\\
		\qquad S_{\times} &= S_{NW} \cup S_{SW} \cup S_{SE} \cup S_{NE}. 
	\end{align*}
	where here recall that $R_\theta$ denotes a counterclockwise rotation by $\theta$. See \cref{fig:lemma52} for an illustration of these regions.

    \begin{figure}
        \centering
        \includegraphics[width=0.7\linewidth]{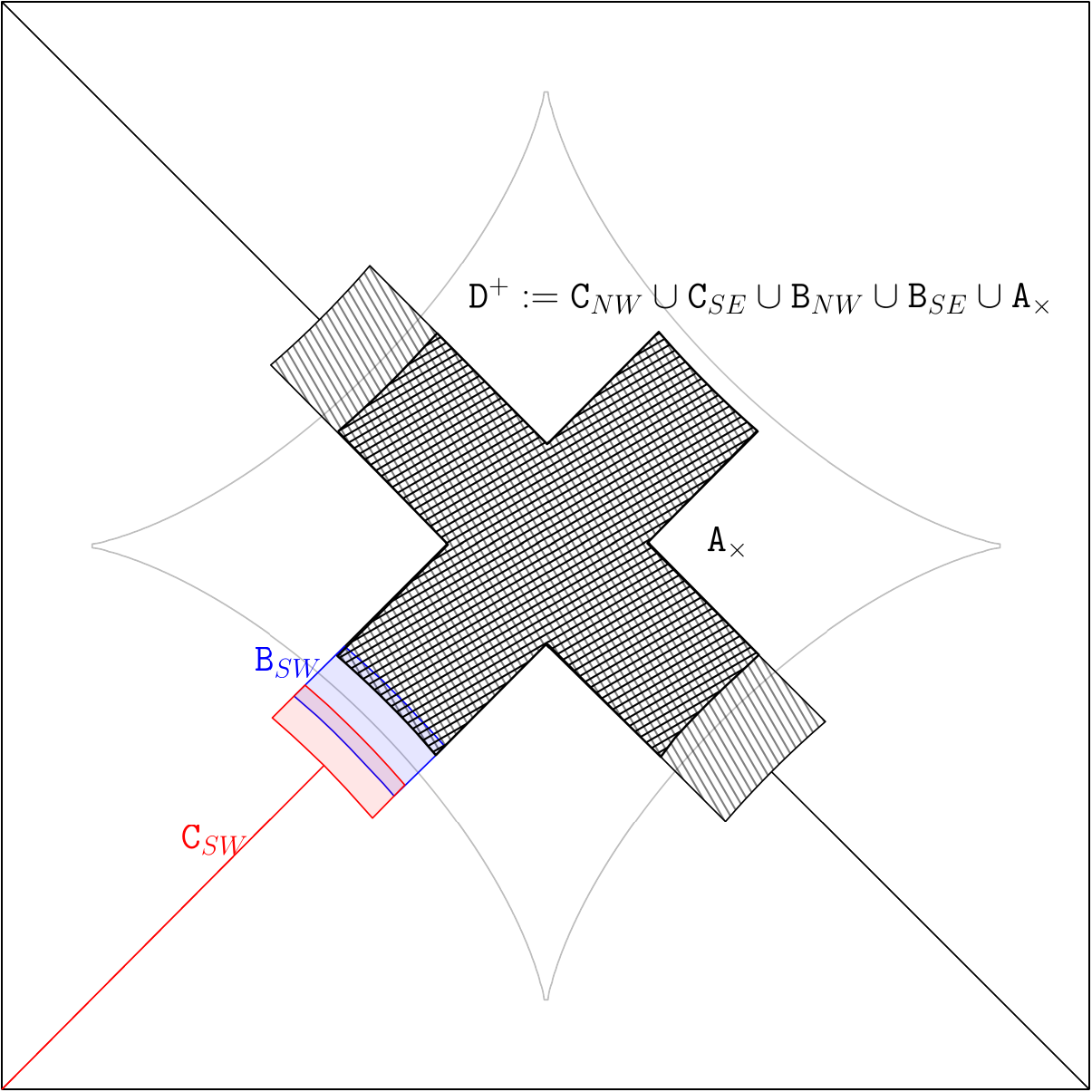}
        \caption{Some of the important forbidden sets in \cref{L:basic-path-ctrl}.}
        \label{fig:lemma52}
    \end{figure}

	\begin{lem}
		\label{L:basic-path-ctrl}
		The following events hold with probability $1 - o(1)$ as $n \to \infty$.
		\begin{enumerate}[label=\arabic*.]
			\item The paths $\cS_i^\pm, \cN_i^\pm, i \ge n/4 + n^{1/4} \log^2 n$ do not enter the region 
			$$
			\tt{D^-} :=\tt{C}_{NE} \cup \tt{C}_{SW} \cup \tt{B}_{NE} \cup \tt{B}_{SW} \cup \tt{A}_\times.
			$$ 
			Similarly, the paths $\cS_i^\pm, \cN_i^\pm, i \le n/4 - n^{1/4} \log^2 n$ do not enter $\tt{D}^+ := \tt{C}_{NW} \cup \tt{C}_{SE} \cup \tt{B}_{NW} \cup \tt{B}_{SE} \cup \tt{A}_\times$. 
			\item The paths $\cS_i^\pm, \cN_i^\pm, |i - n/4| \le 2 n^{1/4} \log^2 n$ do not enter the set $\tt{C}_\times$.
			\item The paths $\cS_i^\pm, \cN_i^\pm, |i - n/4| \ge  n^{1/4} \log^4 n/2$ do not enter the set $\tt{A}_\times \cup \tt{B}_\times$.
			\item The split point $I_n$ satisfies $I_n - n/4 \in [-n^{1/4} \log^2 n-1, n^{1/4} \log^2 n]$. 
			
		\end{enumerate} 
	\end{lem}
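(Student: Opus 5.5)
The basic tool is the translation between heights and path labels. A backbone path carries a fixed ``sink height'' plus winding corrections. By \cref{L:tree-paths} and \cref{R:labelling}, the path $\cS_i^\pm$ (or $\cN_i^\pm$) has its sink at a boundary point whose deterministic height is $4i - n + O(1)$. Faces adjacent to the path near a vertex $v$ therefore have height $4i-n+O(1) + 4\operatorname{Wind}$ of the segment from $v$ to the sink, by \cref{C:vertex-heights}. I will work on the event $A_n\cap B_n$ of \cref{C:rough-phase-heights}, which has probability $1-o(n^{-1/6})$. On $\tt{A}_\times$ I add the global couplings (\cref{L:global-coupling-1}, \cref{L:global-coupling-2}) together with \cref{C:expected-sm-height} and \cref{L:smooth-phase-heights-decor}. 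Together these give $|\cH_n|\le \log n$ on $\tt{A}_\times$ with high probability; this follows from a union bound using subgaussian smooth-phase tails on the coupled increments.

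The plan is to show that on these events a path with index far from $n/4$ meeting the forbidden region would see a height incompatible with its label. This requires controlling winding. The paths are simple curves in the simply connected Aztec diamond, ending on the boundary, so the winding of the tail from a point $v$ is bounded by a geometric quantity (total turning is $O(1)$ up to the first exit from a region), provided I measure the height along the first visit to the region. Concretely, for part 1, suppose $\cS_i^-$ with $i\ge n/4+n^{1/4}\log^2 n$ first enters $\tt{D}^-$ at a face $f$. The height at $f$ is then at least $4i-n-O(1)\ge 4n^{1/4}\log^2 n - O(1)$, up to the winding of the subpath from the source to $f$. That subpath lies outside $\tt{D}^-$ until $f$, and since it starts on the outer boundary I can compare with the boundary height at the source instead of the sink, so its winding is $O(1)$. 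On the other hand, $\tt{B}_{SW},\tt{C}_{SW},\tt{B}_{NE},\tt{C}_{NE}$ have heights at most $n^{1/4}\log^2 n$ by $A_n,B_n$, the bound being $\le -\tfrac{\mu_a}{2}|x|^{3/2}n^{-1/2}+n^{1/4}\log^2n$, and $\tt{A}_\times$ has heights $O(\log n)$. This gives a contradiction after adjusting constants; the factor $4$ gives the needed room. The other paths and the $\tt{D}^+$ case follow by the symmetries in \cref{L:sym}, using the rotated estimates with $\theta=\pm\pi/2$ in \cref{C:rough-phase-heights}, which produce heights of the opposite sign.

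Part 2 is the same argument reversed. On $\tt{C}_\times$ the heights satisfy $|\cH_n|\ge \eta_a (n^{1/2}\log^{3/2}n)^{3/2}n^{-1/2}-n^{1/4}\log^2 n\gg n^{1/4}\log^2 n$, where for $\tt{C1}$ I use $A_n$ and for $\tt{C2}$ I use $B_n$ with $|x|\ge n^{1/2}\log^2 n$. A path whose label height is within $8n^{1/4}\log^2n$ of $0$ cannot reach there at its first entry. Part 3 is analogous: on $\tt{A}_\times\cup\tt{B}_\times$ we have $|\cH_n|\le 2n^{1/4}\log^2 n$, whereas $|4i-n|\ge 2n^{1/4}\log^4 n$ for such paths. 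For part 4, the path $\cS_{I}^-$ ends at the west sink and $\cS_{I+1}^-$ at the east sink, so they separate the diamond. By planarity, together with the fact that paths with $i\le n/4-n^{1/4}\log^2 n$ avoid $\tt{D}^+$ (which contains the blocking arms $\tt{C}_{SE}$, $\tt{A}$), any $\cS_i^-$ with $i$ far below $n/4$ must exit west, and symmetrically far above it must exit east. This pins down $I_n$; the $-1$ in the range accounts for the paired north path.

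The main obstacle is making the winding control rigorous: ensuring the height read off at the first entry point really equals the label height plus $O(1)$. This needs a careful topological argument. The subpath from the source to its first entry into the forbidden region lives in a simply connected complement, so its turning number relative to the boundary is bounded. A secondary difficulty is that $\tt{C2}$ is only a ray, so a path could slip past it. Here I would use that $\tt{C2}$ together with the $\tt{B},\tt{C1}$ pieces forms a connected barrier reaching the boundary, so crossing is unavoidable for the paths whose endpoints lie on opposite sides.
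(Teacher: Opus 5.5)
Your treatment of parts 1, 2 and 4 matches the paper's. In parts 1 and 2, a path's first entry into a set connected to the boundary of the Aztec diamond occurs at height $4i-n+O(1)$, which contradicts the height bounds there. In part 4, $\tt{D}^-$ and $\tt{D}^+$ separate the south boundary from the west and east boundaries respectively. Your coupling-plus-subgaussian-union-bound justification of $|\cH_n|\le\log n$ on $\tt{A}_\times$ is, if anything, more careful than the paper's bare citation of \cref{C:expected-sm-height}. The $-1$ in part 4 is only integer rounding of $n/4-n^{1/4}\log^2 n$; it has nothing to do with a paired north path.

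The gap is in part 3, where you run the first-entry argument directly on $\tt{A}_\times\cup\tt{B}_\times$ with a two-sided bound.

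\emph{Connectivity.} This set is an island around the smooth region. The $\tt{B}$ pieces reach only distance $n^{1/2}\log^2 n$ into the rough region, so the set never touches the boundary of the Aztec diamond, and its complement is an annulus. Your own justification (``the subpath lives in a simply connected complement'') therefore fails. A path can wind around the island through the rough region before its first entry, and each winding shifts its height by $4$, so the entry height is not pinned to $4i-n+O(1)$.

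\emph{The bound.} The claimed $|\cH_n|\le 2n^{1/4}\log^2 n$ on $\tt{B}_\times$ is false. On $\tt{B}_{SW}$ the coordinate $|x|$ reaches $n^{1/2}\log^2 n$, so $A_n$ only gives $|\cH_n|\le \eta_a n^{1/4}\log^3 n+n^{1/4}\log^2 n$. Indeed $\tt{C1}_{SW}\subset\tt{B}_{SW}$, and in part 2 you yourself showed $|\cH_n|\gg n^{1/4}\log^2 n$ there. This slip is harmless, since $n^{1/4}\log^3 n\ll n^{1/4}\log^4 n$.

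\emph{The fix.} The connectivity problem needs an extra idea. Attach the rays so the barrier reaches the boundary: $\tt{A}_\times\cup\tt{B}_\times\cup\tt{D}^-$ and $\tt{A}_\times\cup\tt{B}_\times\cup\tt{D}^+$ are both connected to the boundary. The price is that only one-sided bounds survive, because heights along $\tt{C}_{SW},\tt{C}_{NE}$ decrease to $-n$ at the corners and along $\tt{C}_{SE},\tt{C}_{NW}$ increase to $+n$. Concretely, $\cH_n\le n^{1/4}\log^4 n/100$ on the first set and $\cH_n\ge -n^{1/4}\log^4 n/100$ on the second. A path with label height at least $n^{1/4}\log^4 n/99$ cannot enter the first set, and one with label height at most $-n^{1/4}\log^4 n/99$ cannot enter the second. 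Either way it avoids $\tt{A}_\times\cup\tt{B}_\times$. This one-sided, two-barrier argument is the paper's proof of part 3, and your proposal is missing it.
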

	
	\begin{proof}
		We first translate the height bounds from Corollaries \ref{C:expected-sm-height} and \ref{C:rough-phase-heights}. These corollaries imply that with probability $1- o(1)$, we have that
		\begin{itemize}[nosep]
			\item $|\cH_n(v)| \le \log n$ for all $v \in \tt{A}_\times$ by Corollary \ref{C:expected-sm-height}.
			\item $\cH_n(v) \le n^{1/4} \log^2 n$ for all $v \in \tt{B}_{SW} \cup \tt{B}_{NE}$ and $\cH_n(v) \ge -n^{1/4} \log^2 n$ for all $v \in \tt{B}_{SE} \cup \tt{B}_{NW}$ by the first estimate in Corollary \ref{C:rough-phase-heights}.
			\item $|\cH_n(v)| \le n^{1/4} \log^4 n$ for all $v \in \tt{B}_\times$ by the first estimate in Corollary \ref{C:rough-phase-heights}.
			\item $\cH_n(v) \le - 100 n^{1/4} \log^2 n$ for all $v \in \tt{C}_{SW} \cup \tt{C}_{NE}$ and $\cH_n(v) \ge 100 n^{1/4} \log^2 n$ for all $v \in \tt{C}_{SE} \cup \tt{C}_{NW}$.  This uses the first estimate in Corollary \ref{C:rough-phase-heights} on the $\tt{C1}$-pieces and the second estimate on the $\tt{C2}$-pieces.
		\end{itemize}
		On the event where these four bounds hold, we will show that the four points in the lemma hold deterministically.
		
		For part $1$, by Remark \ref{R:labelling}, the paths $\cS_i^\pm, \cN_i^\pm, i \ge n/4 + n^{1/4} \log^2 n$ all start at vertices of height at least $4 n^{1/4} \log^2 n + O(1)$. They can only change their height by winding (\cref{L:tree-path-heights}). On the other hand, the set $\tt{D}^-$ is connected to the boundary of the Aztec diamond, so the first time any of these paths enters this set, its height would have to be within $4$ of its initial height. Therefore by the second and third bullets above, none of these paths can enter $\tt{D}^-$. The second piece of part $1$ follows similarly, as does part $2$.
		
		For part $3$, observe that the sets $\tt{A}_\times \cup \tt{B}_\times \cup \tt{D}^-$ and $\tt{A}_\times \cup \tt{B}_\times \cup \tt{D}^+$ are connected to the boundary of the Aztec diamond, and by the above bullet points, with probability $1- o(1)$ the heights on these sets are bounded above by $n^{1/4} \log^4 n/100$ (for the first set) and below by $-n^{1/4} \log^4 n/100$ (for the second set). Therefore by a similar argument as for parts $1$ and $2$, any path whose initial height has absolute value at least $n^{1/4} \log^4 n/99$ cannot enter at least one of these sets, and so it cannot enter $\tt{A}_\times \cup \tt{B}_\times$. Part $3$ then follows by translating path indices to heights via Remark \ref{R:labelling}.
		
		For part $4$, observe that $\tt{D}^-$ cuts off the south and west boundaries of the Aztec diamond. Therefore by second part $1$, we have $I_n - n/4 \le n^{1/4} \log^2 n$. Similarly, the set $\tt{D}^+$ cuts off the south and east boundaries of the Aztec diamond, and so $I_n - n/4 \ge -n^{1/4} \log^2 n - 1$.
	\end{proof}
	
	To move beyond the height restrictions in \cref{L:basic-path-ctrl} to get the full suite of restrictions in \cref{T:main-3-restatement}, we will need to control the possibility that paths backtrack or create spirals, which will change their height via \cref{L:tree-path-heights}. This is a particular challenge when paths do this in a nested fashion, creating a kind of \emph{onion}. We can control the presence of onions by using the parabolic path estimates from the $\mathsf{Para}$ events defined in \cref{L:regular-paths}. The next lemma gives a precise statement.
	
	For this lemma, for $r_1, r_2 > 0$ and $\mathbf{v} \in \R^2$ define
	\begin{align*}
		B_{r_1, r_2}(\mathbf{v}) &= \mathbf{v} + [-r_1, r_1] \times [-r_2, r_2], \qquad \\
		\partial^\pm B_{r_1, r_2}(\mathbf{v}) &= \mathbf{v} + \{ \pm r_1\} \times [-r_2, r_2].
	\end{align*}
	\begin{lem}
		\label{L:onion-control}
		For $r_1, r_2 \ge 1$ and $\ell \in \N$ let 
		$$
		\mathsf{Onion}(r_1, r_2, \ell)
		$$
		be the event where there exists a path $\cC_{i_0}^*:[0, t] \to \R$ for some $i_0 \in \{1, \dots, n/2\}$, $* \in \{+, -\}, \mathcal C \in \{\cN, \cS\}$ and path segments $\pi_1 = \cC_{i_0}^*|_{[s_1, t_1]}, \pi_2 = \cC_{i_0}^*|_{[s_2, t_2]}$ for some $t_1 \le s_2$ satisfying the following two conditions:
		\begin{itemize}
			\item There exists $\mathbf{v} \in \R^2$ such that $\pi_1, \pi_2 \subset B_{r_1, r_2}(\mathbf{v})$.
			\item The points $\pi_1(s_1), \pi_2(t_2)$ sit on the same boundary $\partial^\pm B_{r_1, r_2}(\mathbf{v})$ and the points $\pi_1(s_2), \pi_2(t_1)$ sit on the opposite boundary $\partial^\mp B_{r, R}(\mathbf{v})$.
			\item Let 
			$
			M
			\subset B_{r_1, r_2}(\mathbf{v})$ be the union of $\pi_1, \pi_2$ and the region in $B_{r_1, r_2}(\mathbf{v})$ bounded between these two paths. Then there are at most $\ell$ paths of the form $\cS_j^\pm$ or $\cN_j^\pm$ that enter $M$.
		\end{itemize}
		Informally, the event $\mathsf{Onion}(r_1, r_2, \ell)$ says that there exists an \textbf{onion} of width $r_1$, height $r_2$, and $\ell$ layers.
		Then assuming that $r_2 \ge 1$ and $r_1 \ge 10 \ell \sqrt{r_2} \log^2 n$, we have 
		\begin{equation}
			\label{E:Onion-r1-empty}
			\mathsf{Onion}(r_1, r_2, \ell) \cap \mathsf{Para}_\tt{S}(\log^2 n) \cap \mathsf{Para}_\tt{N}(\log^2 n) = \emptyset.
		\end{equation}
	\end{lem}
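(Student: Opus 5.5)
The plan is to prove \eqref{E:Onion-r1-empty} by induction on the number of layers $\ell$, peeling one backbone path off the onion at a time. At each step we lose only a horizontal margin of order $\sqrt{r_2}\log^2 n$, and the parabolic confinement events $\mathsf{Para}_\tt{S}(\log^2 n)$, $\mathsf{Para}_\tt{N}(\log^2 n)$ are what rule out the innermost layer.

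\emph{Setup.} Work on $\mathsf{Para}_\tt{S}(\log^2 n)\cap\mathsf{Para}_\tt{N}(\log^2 n)$ and suppose an onion $(\pi_1,\pi_2,M)$ exists. The key numerical input is the following. For a vertex $u\in M$ with forest path $\Pi(u)$ (stopped at its own backbone), the parabola $P^{\log^2 n}_\tt{S}+u$ or $P^{\log^2 n}_\tt{N}+u$, restricted to vertical displacement at most $2r_2$ (the height of $B_{r_1,r_2}(\mathbf v)$), has half-width at most
\[
(\log^2 n+\log(2r_2+1))\sqrt{2r_2+1}\le 5\sqrt{r_2}\log^2 n .
\]
So as long as $\Pi(u)$ stays inside the box it moves horizontally by less than $5\sqrt{r_2}\log^2 n$. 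Hence if $u$ lies within horizontal distance $r_1-5\sqrt{r_2}\log^2 n$ of the centre column $\mathbf v_1$, the path $\Pi(u)$ cannot leave $M$ through the sides $\partial^\pm B_{r_1,r_2}(\mathbf v)$ before hitting a backbone path. Since M lies inside the box, it can also never leave through the top or bottom of the box.

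\emph{Base step.} Suppose first that only $\cC_{i_0}^*$ itself enters $M$. Then the region between $\pi_1$ and $\pi_2$ contains no other backbone path. Take a vertex $u$ of the dual colour (north if $\cC=\cS$, and conversely) lying in $M$ near the centre column; such a vertex exists because $M$ spans the full width $2r_1$. By duality, the dual forest path $\Pi(u)$ cannot cross $\pi_1$ or $\pi_2$. The boundary of $M$ consists of $\pi_1$, $\pi_2$, and pieces of $\partial^\pm B$, so the only way for $\Pi(u)$ to reach its own backbone, and eventually its sink on $\partial^\tt{x}$, is to exit $M$ through $\partial^\pm B_{r_1,r_2}(\mathbf v)$. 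This contradicts the parabola bound.

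\emph{Inductive step.} For general $\ell$, order the backbone paths entering $M$ by planarity. They interlace and cannot cross $\pi_1$ or $\pi_2$. Let $\gamma$ be the first backbone path met when moving inward from $\pi_1$ along the centre column. Either $\gamma$ crosses the whole strip $\{|x-\mathbf v_1|\le r_1-10\sqrt{r_2}\log^2 n\}$ in both directions, or it does not:
\begin{itemize}
\item If $\gamma$ crosses in both directions, then $\gamma$ (together with $\pi_2$, or its own return segment) bounds a smaller onion. This onion has width at least $r_1-10\sqrt{r_2}\log^2 n$, height at most $r_2$, and at most $\ell-1$ layers, so the induction hypothesis applies; this is where the constant $10\ell$ in $r_1\ge 10\ell\sqrt{r_2}\log^2 n$ comes from.
\item If $\gamma$ does not cross in both directions, then the corridor between $\pi_1$ and $\gamma$ contains a long stretch of width at least $5\sqrt{r_2}\log^2 n$ near the centre column. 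In this stretch, a vertex of the appropriate colour sits in a corridor from which its forest path can neither hit a backbone of its own colour nor escape sideways, as in the base step and \Cref{fig:pathblocking}.
\end{itemize}

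\emph{Main obstacle.} I expect the hard part to be the planar topology rather than the estimates: making precise which colour of vertex to pick in the corridor between two consecutive layers so that its forest path is genuinely blocked. This requires tracking the interlacing order of $\cS$ and $\cN$ paths inside $M$ (\cref{L:dimer-compatible-duals}), and it must handle the fact that a layer may enter $M$ and leave again, which produces the "does not cross in both directions" case. I would first set up a clean dichotomy for each layer (it either separates the left and right sides of $M$ or it does not) and argue entirely with this dichotomy plus duality. The parabola width computation is then routine.
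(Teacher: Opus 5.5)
Your base step is correct and is the paper's argument. The gap is in the inductive step, exactly where you anticipated.

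\textbf{The corridor in your second case is not blocked.} You look for a blocked vertex in the corridor between $\pi_1$ and the first layer $\gamma$ met along the centre column. Such a corridor is in general bounded by backbone paths of \emph{both} colours, and then nothing is blocked. For instance, suppose the onion path is an $\cS$-path, $\pi_1$ is its lower crossing, and $\gamma$ is an $\cN$-path; interlacing makes this the typical situation. Then every south vertex in the corridor can fall onto $\pi_1$ and every north vertex can rise onto $\gamma$. Each joins a backbone of its own colour inside its parabola, however wide the corridor is. The parabola argument only works in a region whose walls all come from paths of one colour, which is the base-case geometry.

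\textbf{Further problems with the same step.} The corridor need not be free of other layers near the centre column. Layers entering between $\pi_1$ and $\gamma$ may turn back just short of the column, and layers above $\gamma$ may wrap around its turning point. In any case, a stretch of width $5\sqrt{r_2}\log^2 n$ is too narrow to beat a parabola of half-width $5\sqrt{r_2}\log^2 n$. Finally, you never re-choose $\pi_1,\pi_2$. So layers, and further excursions of $\cC_{i_0}^*$ itself (e.g.\ a path snaking across the box four times), can enter $M$ from both sides. Then $\gamma$ may be a piece of $\cC_{i_0}^*$, in which case the layer count does not drop. Also, a region bounded by ``$\gamma$ together with $\pi_2$'' is not an onion in the sense of the lemma, which needs two crossings of a single path.

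\textbf{The missing step is the paper's reduction.} List all crossings of $B_{r_1,r_2}(\mathbf v)$ by $\cC_{i_0}^*$, and take two of opposite type that are consecutive in time. Then $\cC_{i_0}^*|_{[t_1,s_2]}$ seals off $M$ on one side, say $\partial^-B_{r_1,r_2}(\mathbf v)$. Every other layer therefore enters $M$ through $\partial^+B_{r_1,r_2}(\mathbf v)$ and must leave through it again. Now split according to the strip $\Sigma$ of width $20\sqrt{r_2}\log^2 n$ along the sealed side, not according to the centre column.
\begin{itemize}
\item If some other layer enters $M\cap\Sigma$, it crosses the remaining box, of half-width $r_1-10\sqrt{r_2}\log^2 n$, twice. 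These two crossings of a single path bound an onion that $\cC_{i_0}^*$ does not enter. It therefore has at most $\ell-1$ layers, and the induction hypothesis applies.
\item If no other layer enters $M\cap\Sigma$, then $M\cap\Sigma$ is met only by $\cC_{i_0}^*$. Run your base-step argument inside $\Sigma$ with a dual-colour vertex at its centre. This gives a contradiction, since the half-width $10\sqrt{r_2}\log^2 n$ of $\Sigma$ exceeds the parabola half-width.
\end{itemize}
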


    See \cref{fig:onion} for an illustration of \cref{L:onion-control}, and a sketch of the proof.

    \begin{figure}
        \centering
        \includegraphics[width=1\linewidth]{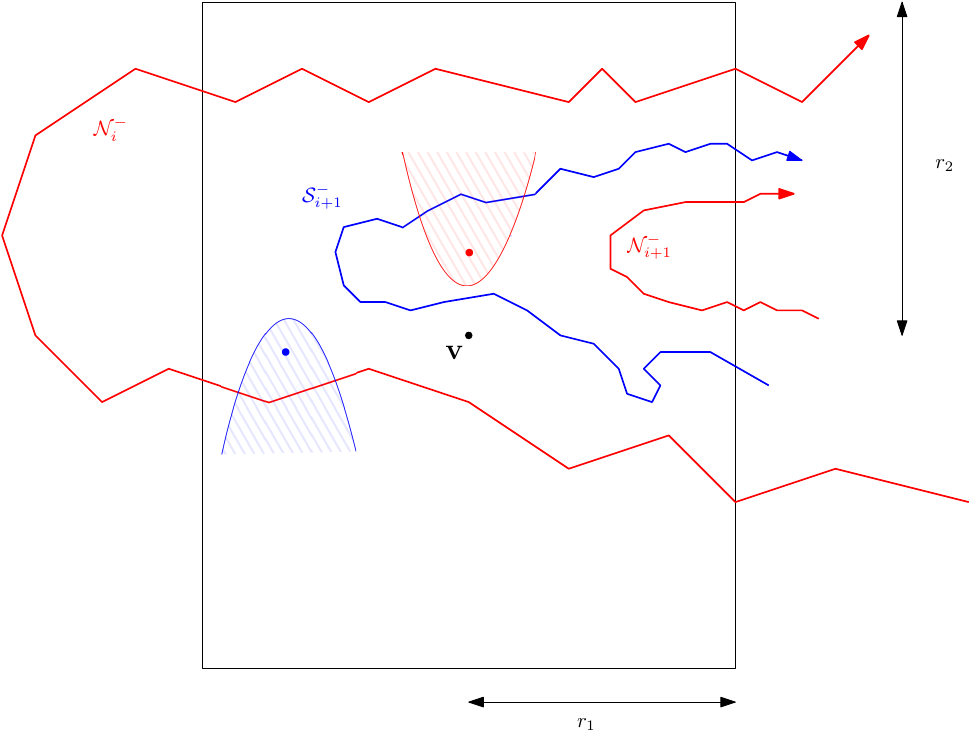}
        \caption{An example of the event $\mathsf{Onion}(r_1, r_2, 3)$. If $\ell, r_2$ are too small relative to the width $r_1$, then in any onion configuration, we can find a north or south vertex $w$ from which loop-erased random walk paths can only join the backbone by exiting the parabolic set $w + P^{\log^2 n}_\cdot$. Two such vertices (one north vertex, one south vertex), are shown in the above figure.}
        \label{fig:onion}
    \end{figure}

	\begin{proof}
		Throughout the proof, we assume we are on the event defined by the left-hand side of \eqref{E:Onion-r1-empty}. Working from this starting point, we will derive the inequality between $r_1, r_2$, and $\ell$.
		
		Without loss of generality, we assume that the path $\cC_{i_0}^*$ realizing the event $\mathsf{Onion}(r_1, r_2, \ell)$ is of the form $\cN_{i_0}^-:[0, t] \to \R$, and that $\pi_1(s_1), \pi_2(t_2) \in \partial^+ B_{r_1, r_2}(\mathbf{v})$. The other cases have symmetric arguments. We may also assume $r_1, r_2 \le n$ since the Aztec diamond has height and width $2n$. This is the setting of \cref{fig:onion}.
		
		We proceed by induction on $\ell$, starting with the base case when $\ell = 1$. In this case, the only $\cS$- or $\cN$-path that enters the sets $M$ is the path $\cN_{i_0}^-$. In particular, no $\cN$-path enters the set $M$. Now consider a vertex $\mathbf{w} \in \tt{S}$ that lies in the set $M$, and satisfies $|w_1 - v_1| \le 2$. The south forest path starting from $\mathbf{w}$ must exit $M$ at a vertex $\mathbf{u}$ on the boundary set $\mathbf{v} + \{-r_1, r_1\} \times [-r_2, r_2]$. On the other hand, since we are on the event $\mathsf{Para}_\tt{S}(\log^2 n)$,  the exit point must satisfy
		$$
		\mathbf{u} \in \mathbf{w} + P^{\log^2 n}_\tt{S},
		$$
		which implies that
		$$
		r_1 \le |u_1-w_1| + 2 \le (\log^2 n + \log(|u_2 - w_2| + 1))\sqrt{|u_2 - w_2| + 1} + 2.
		$$
		Using the bounds $|u_2 - w_2| \le 2 r_2$, $1 \le r_2 \le n$, and simplifying gives
		\begin{equation}
			\label{E:strong-ell-1}
			r_1 < 10 \log^2 n \sqrt{r_2},
		\end{equation}
		which implies the lemma for $\ell = 1$.

		Now, let $r_2 \ge \ell \ge 2$ and assume that the lemma holds for $\ell - 1$. Consider the path $\cN_{i_0}^-:[0, t] \to \R$. Let $[a_1, b_1], [a_2, b_2], \dots, [a_k, b_k] \subset [0, t]$ be an enumeration of all the intervals for which 
		$$
		\cN_{i_0}^-|_{[a_i, b_i]} \subset B_{r_1, r_2}(\mathbf{v}),
		$$
		and the points $\cN_{i_0}^-(a_i), \cN_{i_0}^-(b_i)$ lie on opposite boundaries $\partial^\pm B_{r_1, r_2}(\mathbf{v})$ and $\partial^\mp B_{r_1, r_2}(\mathbf{v})$. In other words, we have enumerated all left-to-right and right-to-left crossings of $[-r_1, r_1] \times [-r_2, r_2]$. Since we know $\cS_{i_0}^-$ has at least one left-to-right crossing and one right-to-left crossing of this box, we can find an index $i \in \{1, \dots, k-1\}$ such that the intervals $[a_i, b_i]$ and $[a_{i+1}, b_{i+1}]$ represent opposite types of crossings. We may take these two intervals as the intervals $[s_1, t_1]$ and $[s_2, t_2]$ realizing the event $\tt{Onion}(r_1, r_2, \ell)$.
		
		Now, since the crossings are consecutive and we have assumed that $
		\pi_1(s_1), \pi_2(t_2) \in \partial^+ B_{r_1, r_2}(\mathbf{v})$, each of the $\ell - 1$ other $(\cS \cup \cN)$-paths that enter the region $M$ must enter through the boundary $\partial^+ B_{r_1, r_2}(\mathbf{v})$, as in \cref{fig:onion}. At this point, there are two cases:
		\begin{itemize}[nosep]
			\item At least one of these paths entering $M$ also enters the smaller box 
			$$
			B := \mathbf{v} + (r_1 - 20 \log^2 n \sqrt{r_2}, r_1] \times [-r_2, r_2].
			$$ In this case, this path crosses from left-to-right and then back from right-to-left across the set $M \setminus B$. Let $M' \subset M \setminus B$ be the region between these crossings. By the ordering/disjointness of the north and south backbone paths, $\cS_{i_0}^-$ does not enter $M'$. Therefore the set $M'$ realizes the event $\tt{Onion}(r_1 - 10 \log^2 n \sqrt{r_2}, r_2, \ell - 1)$, which implies the inequality in the lemma by the inductive hypothesis.
			\item No paths enter the set $M \cap B$. In this case, the set $M \cap B$ and the path $\cS_{i_0}^-$ realizes the event $\tt{Onion}(10 \log^2 n \sqrt{r_2} -\eps, r_2, 1)$ for every $\eps > 0$, contradicting \eqref{E:strong-ell-1}. \qedhere
		\end{itemize}
	\end{proof}
	
	We can combine the previous two lemmas to upgrade the path control in Part (1) of \cref{L:basic-path-ctrl} to include all paths above (or below) the split point $I = I_n$. This will imply Part (3) of \cref{T:main-3-restatement}. The caveat is that we need to slightly shrink the sets $\tt{D}^\pm$. For this next lemma, define
	\begin{align*}
		\tt{D}^+_0 &= \{v  \in \tt{D}^+ : |v_1 + v_2| \wedge |v_1 - v_2| \le n^{5/6}/2\} \setminus (\tt{B}_{SW} \cup \tt{B}_{NE}), \\
		\tt{D}^-_0 &= \{v  \in \tt{D}^- : |v_1 + v_2| \wedge |v_1 - v_2| \le n^{5/6}/2\} \setminus (\tt{B}_{SE} \cup \tt{B}_{NW}).
	\end{align*}
	\begin{lem}
		\label{L:QSE-pass}
		With probability $1 - o(1)$, the paths $\cS_i^-, i \le I$ do not enter $\tt{D}^+_0$. Similarly, the paths $\cS_i^-, i \ge I + 1, \cN_i^+, i \ge I + 1$ do not enter $\tt{D}^-_0$, and the paths $\cN_i^+, i \le I$ do not enter $\tt{D}^+_0$.
	\end{lem}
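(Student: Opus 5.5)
We work on the intersection of the events of \cref{L:basic-path-ctrl} (all four parts) with $\mathsf{Para}_\tt{S}(\log^2 n)\cap\mathsf{Para}_\tt{N}(\log^2 n)$. By \cref{L:regular-paths}, the $\mathsf{Para}$ events hold with probability $1-O(n^2e^{-d_a\log^2 n})$, so this intersection has probability $1-o(1)$. On it, \cref{L:onion-control} rules out every onion with $r_1\ge 10\ell\sqrt{r_2}\log^2 n$. We argue deterministically from here. By the symmetries in \cref{L:sym} it suffices to treat the first claim: the paths $\cS_i^-$, $i\le I$, do not enter $\tt{D}^+_0$. Since the backbone paths interlace, it is enough to control $\cS_I^-$, because any $\cS_i^-$ with $i<I$ is confined to the region cut off by $\cS_I^-$ together with the south and west boundaries. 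The extra claims for $\cN_i^+$ come from the same argument after rotating by $\rho_\tt{N}$.

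\textbf{Reduction to a barrier crossing.} Suppose $\cS_I^-$ enters $\tt{D}^+_0$. By \cref{L:basic-path-ctrl}(4), $|I-n/4|\le n^{1/4}\log^2 n+1$, so by Part 1 the paths with index in $[n/4-n^{1/4}\log^2n,\,n/4+n^{1/4}\log^2n]$ are the only ones that can enter both $\tt{D}^+$ and $\tt{D}^-$. The path $\cS_I^-$ starts on the south boundary and ends on the west boundary. Part 2 says it cannot enter $\tt{C}_\times$, so it cannot cross the arms $\tt{C2}$ along the diagonals $\tt X$ far from the rough--smooth curve, and it cannot cross the strips $\tt{C1}$ either. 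Because of these barriers, the only way $\cS_I^-$ can reach $\tt{D}^+_0$ is to travel from the SW cap $\tt{B}_{SW}$ through $\tt{A}_\times$ (the smooth region) into one of the SE/NW arms near the diagonals, where $\tt{D}^+_0$ lives. It then has to come back to end on the west boundary.

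\textbf{Building an onion.} This excursion, out and back, makes two crossings in opposite directions of a thin box. We take the box to be a rotated strip inside $\tt{A}_{SW}$ or $\tt{A}_{SE}$ adjacent to $\tt{D}^+_0$. Its length is $r_1\asymp n^{5/6}$, measured along the $t$-direction of $\beta_n$ (the rotated frame of \cref{L:onion-control} is handled by rotational symmetry of the argument). Its height $r_2$ is at most $n^{6/7}$, or at most $n$ in the worst case. The region between the two crossings contains only backbone paths with index within $O(n^{1/4}\log^4 n)$ of $n/4$: any other path is excluded from $\tt A_\times\cup\tt B_\times$ by \cref{L:basic-path-ctrl}(3), and we can choose the box to lie inside that set. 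So the number of layers satisfies $\ell\le Cn^{1/4}\log^4 n$. The onion lemma then needs $r_1\ge 10\ell\sqrt{r_2}\log^2 n$, that is, $n^{5/6}\gtrsim n^{1/4}\cdot n^{1/2}\log^6 n$. This holds because $5/6>3/4$, and it gives a contradiction. This exponent check is why $\tt{D}^\pm_0$ is restricted to within $n^{5/6}/2$ of the diagonals, and why the caps $\tt B$ are removed: the box must sit at distance $\asymp n^{5/6}$ on both sides of the turning point and stay inside the region where the layer count is controlled.

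\textbf{Main obstacle.} The step I expect to be delicate is the topological one. We must show that any path from $\partial_S$ to $\partial_W$ that touches $\tt{D}^+_0$ really does produce two opposite crossings of a single box of width $\gtrsim n^{5/6}$, rather than some winding configuration that crosses no such box. To handle this, I would first cut the Aztec diamond with the barriers $\tt{C}_\times$ (which the path avoids) and the boundary. This leaves a planar domain where the route to $\tt{D}^+_0$ passes through a corridor of length $\asymp n^{5/6}$. Then, as in the induction of \cref{L:onion-control}, I would pick consecutive crossings of opposite orientation. If the path instead winds around the central smooth region, its height changes by $\pm4$ (\cref{L:tree-path-heights}). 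This contradicts Parts 1--3 of \cref{L:basic-path-ctrl} together with the starting height of $\cS_I^-$ from Remark~\ref{R:labelling}, which keep its height within $O(n^{1/4}\log^4n)$ of $0$ while the height in $\tt{D}^+$ exceeds $n^{1/4}\log^2 n$ in the relevant direction.
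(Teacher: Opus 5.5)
\textbf{Gap.} You never control the backbone path $\cS_I^-$ \emph{itself} inside the smooth region, and the lemma cannot be proved without this. Your working event is \cref{L:basic-path-ctrl} together with $\mathsf{Para}_\tt{S}(\log^2 n)\cap\mathsf{Para}_\tt{N}(\log^2 n)$. The $\mathsf{Para}$ events only constrain forest paths until they hit the backbone, and \cref{L:onion-control} turns them into a ban on \emph{wide, horizontal}, nested back-and-forth crossings only.

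Your reduction also misplaces $\tt{D}^+_0$: it does not live only in the SE/NW arms. It contains the middle of the strip $\tt{A}_{SW}$ directly above the cap $\tt{B}_{SW}$, namely the points $\beta_n(t,x)$ with $|t|\le n^{5/6}/2$ and $2n^{1/3}\log^2 n<x$, up to the line $v_1+v_2=-n^{5/6}/2$. This piece is what later gives that $\cS_I^-$ avoids the smooth-side part of $\tt{PRS}^*_n\setminus\tt{RS}^*_n$ in \cref{T:main-3-restatement}.

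Now suppose $\cS_I^-$ simply crosses the SW diagonal at distance $3n^{1/3}\log^2 n$ inside the smooth region, or makes a short poke there. This creates no onion. Nothing in your toolkit excludes it: those height bounds only locate $I$ within $n^{1/4}\log^2 n$ of $n/4$, and the smooth heights within $\log n$ of $0$. So the box of length $\asymp n^{5/6}$ on both sides of a turning point that you want to use does not exist for this piece.

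Even for the far part of $\tt{D}^+_0$, your crossing of the region toward it could be nearly vertical. Vertical excursions (with an $\cN$-path following inside) are not excluded by the $\mathsf{Para}$ events. Moreover, \cref{L:onion-control} is not rotation invariant, because its proof uses vertically oriented parabolas, so a $\beta_n$-aligned onion does not follow ``by symmetry''. Your final height/winding argument only rules out winding around the whole smooth region, not these local incursions.

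\textbf{Missing idea.} The missing ingredient is a parabolic constraint for \emph{backbone} segments in $\tt{A}_\times$. Add the event that every south (resp.\ north) forest path segment contained in $\tt{A}_\times$, including backbone segments, stays in $v+P^{\log^2 n}_\tt{S}$ (resp.\ $v+P^{\log^2 n}_\tt{N}$), where $v$ is its starting vertex. This holds with probability $1-o(1)$: in the smooth phase every tree path is a biased loop-erased walk (\cref{L:lerw-estimate-basic}), and the event transfers to the Aztec diamond through \cref{L:global-coupling-1} and \cref{L:global-coupling-2}. Then split $\tt{D}^+_0$ along the line $v_1+v_2=-n^{5/6}/2$.

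\begin{enumerate}
\item \emph{Far part.} Consider the first crossing by $\cS_I^-$ of the band between $v_1+v_2=-n^{5/6}$ and $v_1+v_2=-n^{5/6}/2$. It must have horizontal extent at least $n^{5/6}/100$; otherwise some piece of it inside $\tt{A}_\times$ would climb by $n^{5/6}/100$, which the new event forbids. Since the path must end on the west boundary, it has to return. This yields an axis-parallel onion of width $\asymp n^{5/6}$, height at most $n$, and $O(n^{1/4}\log^2 n)$ layers, which \cref{L:onion-control} rules out.
\item \emph{Piece above $\tt{B}_{SW}$.} Take an $n^{1/4}$-wide boundary strip contained in $\tt{A}_\times$ around this piece. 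A south-parabola path can enter the piece only through the upper-left side of the strip. Because $\cS_I^-$ must continue west, it must also leave through that same side, which means moving up and to the left by $n^{1/4}$. The new event forbids this.
\end{enumerate}
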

	
	\begin{proof}
		For the proof, we first define an event $E$ of probability $1 - o(1)$, and then show that the lemma holds deterministically on $E$. We will only verify the lemma for the paths $\cS_i^-, i \le I$, as the claim for the other three sets of paths follows by a symmetric argument.

		First, let $E_1$ be the probability $(1 - o(1))$-event where the four points in \cref{L:basic-path-ctrl} hold. Next, let $E_2$ be the event where for any path segment $\pi$ in the south forest $F_\tt{S}(D)$ started at a vertex $v$ and with $\pi \subset \tt{A}_\times$, we have
		\begin{equation}
			\label{E:pi-P}
			\pi \subset P_\tt{S}^{\log^2 n},
		\end{equation}
		and similarly for any path segment $\gamma$ in the north forest $F_\tt{N}(D)$ started at a vertex $v$ and with $\gamma \subset \tt{A}_\times$, we have	$\gamma \subset P_\tt{N}^{\log^2 n}$. The event $E_2$ has probability $1 - o(1)$ in the smooth phase by the standard estimate in \cref{L:lerw-estimate-basic} and a union bound. Therefore it also has probability $1 - o(1)$ in the Aztec diamond by the smooth phase couplings, \cref{L:global-coupling-1}, \ref{L:global-coupling-2}.
		We now define
		$$
		E = E_1 \cap E_2 \cap \mathsf{Para}_\tt{S}(\log^2 n) \cap \mathsf{Para}_\tt{N}(\log^2 n),
		$$
		which has probability $1 - o(1)$ by \cref{L:regular-paths},  \cref{L:basic-path-ctrl}, and the above discussion regarding $E_2$. In the remainder of the proof, we work on $E$. We will only prove the claim for $\tt{D}^+_0$, as the proof for $\tt{D}^-_0$ is symmetric. 	For the sake of contradiction, suppose that one of the paths $\cS_i^-, i \le I$ enters the set $\tt{D}^+_0$. By path ordering, the rightmost path $\cS_I^-$ must then also enter $\tt{D}^+_0$. 

          \begin{figure}
            \centering
            \includegraphics[width=0.7\linewidth]{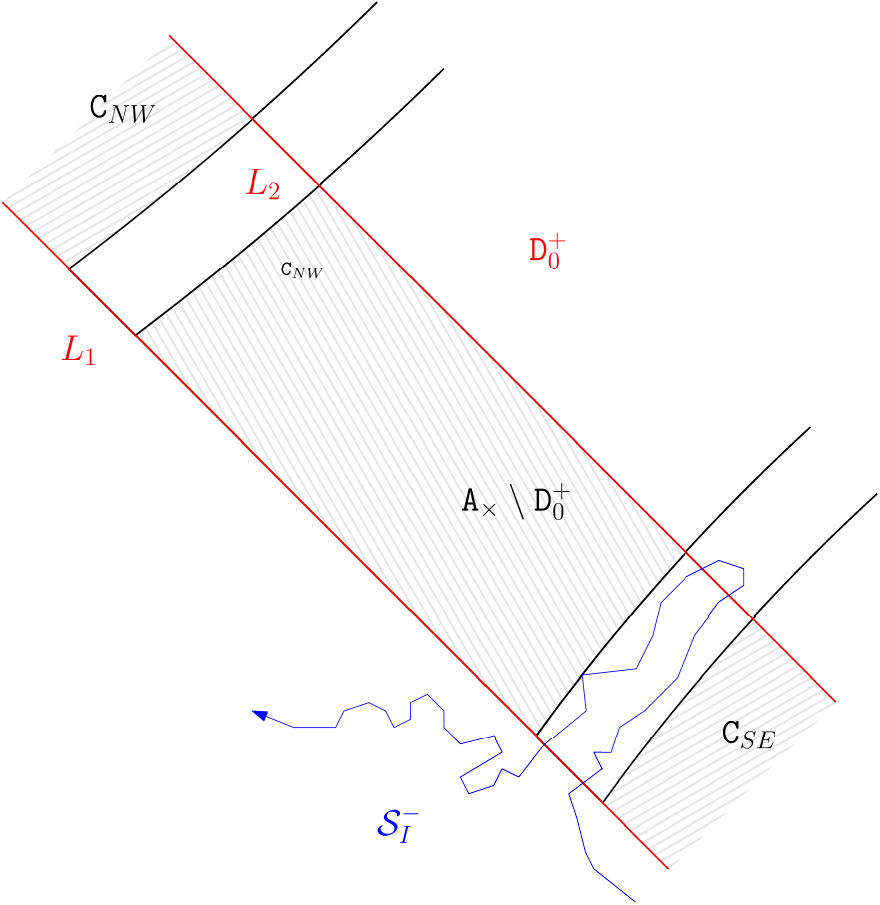}
            \caption{An illustration of the first part of the proof of \cref{L:QSE-pass}. For the the path $\tt{S}_I^-$ to cross over the line $L_2$, it must either cross through the sets $\tt{A}_\times$, or else use one of the corridors between $\tt{A}_\times$ and $\tt{C}_{SE}$ or $\tt{C}_{NW}$. The former is essentially rules out since south paths $\tt{A}_\times$  have a strong southward drift. The latter event is illustrated above, and results in an unfavourable $\tt{Onion}$ event.}
            \label{fig:lemma541}
        \end{figure}
        We first split $\tt{D}^+_0$ into two pieces,
		$$
		\tt{D}_1 = \{v \in \tt{D}^+_0 : v_1 + v_2 \ge - n^{5/6}/2\}, \qquad \tt{D}_2 = \tt{D}^+_0 \setminus \tt{D}_1.
		$$ 
		We first rule out that the paths $\cS_I^-$ enter $\tt{D}_1$. The argument is sketched in \cref{fig:lemma541}. Setting up some notation, define the lines
		$$
		L_i = \{(x, y) : x + y = - n^{5/6}/i \}, \quad i = 1, 2.
		$$
		Let $R$ denote the region bounded between the lines $L_1$ and $L_2$, and the sets $\tt{C}_{SE}, \tt{C}_{NW}$.
		
		For the sake of contradiction, suppose that one of the paths the paths $\cS_i^-, i \le I$ enters the set $\tt{E}_1$. By path ordering, the rightmost path $\cS_I^-$ must then also enter $\tt{E}_1$.	By Part (4) of \cref{L:basic-path-ctrl}, we have that $|I - n/4| \le n^{1/4} \log^2 n + 1$. Therefore by Part (2) of \cref{L:basic-path-ctrl}, the path $\cS_I^-$ cannot enter the sets $\tt{C}_{SE} \cup \tt{C}_{NW}$,	so the only way $\cS_I^-$ can enter $\tt{D}_1$ is if it crosses through the region $R$, first crossing the line $L_1$ and then later crossing the line $L_2$. Let $\pi_1 = \cS_I^-|_{[s, t]} \subset R$ denote the segment of $\cS_I^-$ given by taking the \textit{first} crossing of $R$ from $L_1$ to $L_2$. Let $v = \cS_I^-(s), w = \cS_I^-(t)$ denote the first and last points on $\pi_1$. We claim that
		\begin{equation}
			\label{E:n5610}
			w_1 - v_1 \ge n^{5/6}/100.
		\end{equation}
		Suppose that \eqref{E:n5610} fails. Then $w_2 - v_2 \ge 0.4 n^{5/6}$, since $L_1$ and $L_2$ are line segments parallel to the line $y = - x$, vertically shifted by $n^{5/6}/2$. On the other hand, the set $\tt{B}_{NW} \cap R$ is a thin strip contained in a set of the form
		$$
		\{(x, y) + v_{NW} : |x - y| \le n^{3/4}, |x+y| \le n^{5/6}/2\}
		$$
		for some $v_{NW} \in \R^2$. The same holds true of $\tt{B}_{SE} \cap R$ (with the vertex $v_{NW}$ replaced by a vertex $v_{SE}$). In particular, for $n$ large enough we have the estimate
		\begin{equation}
			\label{E:Cap-spread}
			\max \{|a_2 - a_2'| : a, a' \in \tt{B}_*\cap R \} < 0.39 n^{5/6}\}, \quad  * \in \{\text{NE}, \text{SW}\}.
		\end{equation}
		The negation of \eqref{E:n5610} together with \eqref{E:Cap-spread} implies that there exists a path $\pi' \subset \pi_1 \cap \tt{A}_\times$ starting and ending at vertices $a, a'$ and with $a'_2 - a_2 \ge n^{5/6}/100$. This contradicts the parabolic path estimate \eqref{E:pi-P}, and so \eqref{E:n5610} must hold. 

        \begin{figure}
            \centering
    \includegraphics[width=0.6\linewidth]{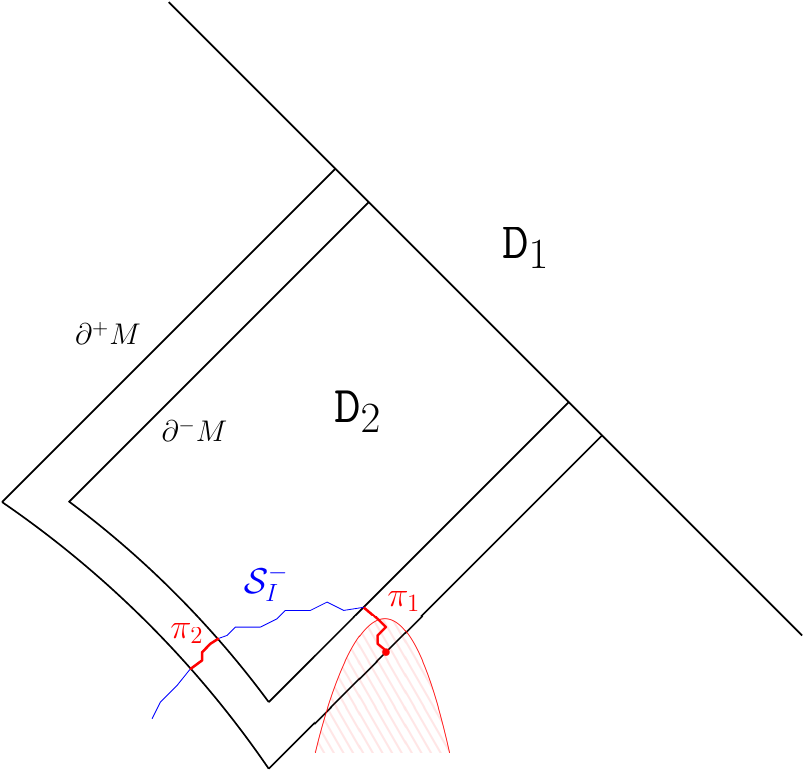}
            \caption{The setup of the second part of the proof of \cref{L:QSE-pass}. Given that the path $\mathcal S_I^-$ does not enter $\tt{D}_1$, for it to enter $\tt{D}_2$ it must cross through the boundary strip $M$ in a way that contradicts a parabolic drift estimate. One example of this is given above.}
            \label{fig:lemma542}
        \end{figure}
		
		Now, since the path $\cS_I^-$ started on the south side of the Aztec diamond and finishes on the west side, there must exist a path $\pi_2 \subset \cS_I^-$ contained in the subset $S'$ of the strip $[v_1, w_1] \times \R$ which is bounded below by $\pi_1$:
		$$
		S' = \{(x, y) \in [v_1, w_1] \times \R : \text{ There exists } x \in \pi_1, v_2 \le y \},
		$$
		and which crosses $S'$ from right to left. Let $S''$ be the region in $S'$ bounded between $\pi_1$ and $\pi_2$. 
		
		By the ordering on paths, the only paths of the form $\cS^\pm_i$ that can enter the region $S''$ are the paths $\cS^-_i, i \le I$. Additionally, only those paths with index $i \ge n/4 - n^{1/4} \log^2 n$ can enter $S''$.	Indeed, the region $S''$ is contained in the union of the set $\tt{D}^+$ together with the component of the Aztec diamond that is connected to the north boundary. Therefore any path $\cS^-_i, i \le I$ that enters $S'$ must enter the region $\tt{D}^+$, and therefore must have index $i \ge n/4 - n^{1/4} \log^2 n$ by Part (1) of \cref{L:basic-path-ctrl}. 
		
		In summary, we have constructed a region in a strip $[v_1, w_1] \times \R$ of width at least $n^{5/6}/100$, bounded between two $\cS_i^-$ paths crossing from left to right and right to left, which contains at most $2 n^{1/4} \log^2 n \le 2 n^{1/4} \log^n$ $\cS_i$-paths. By $\cS$- and $\cN$-path interlacing, this region also contains at most $3n^{1/4} \log^2 n$ $\cN$-paths. That is, we are on the onion event
		$
		\mathsf{Onion}(n, n^{5/6}/100, 5n^{1/4}\log^2 n).
		$
		By \cref{L:onion-control}, this event cannot hold simultaneously with $E$. Having arrived at a contradiction, we can conclude that $\cS_I^-$ does not enter the set $\tt{D}_1$.
		
		To finish the proof, we show that $\cS_I^-$ does not enter $
        \tt{D}_2 = \tt{D}^+_0 \setminus \tt{D}_1$. The argument is illustrated in \cref{fig:lemma542}. Let $M$ be the closure of the set
		$$
		\{v \in \R^2 \setminus \tt{D}^+ : d(v, \tt{D}_2) \le n^{1/4}\}.
		$$
		Note that $M \subset \tt{A}_\times$, since when defining $\tt{D}_0^+$ we removed a boundary strip around the perimeter of $\tt{A}_\times$ in the region containing $\tt{D}_2$. Let $\partial^- M$ be the portion of the boundary of $M$ contained in $\tt{D}_2$, and let $\partial^+ M$ be the portion of the boundary not contained in $\tt{D}_0^-$.
		
		Since $\cS_I^-$ does not hit any vertex in $\tt{D}_1$, for $\cS_I^-$ to enter $\tt{D}_2$, there must be two disjoint path segments $\pi_1 = \cS_I^-|_{[s_1, t_1]}, \pi_2 = \cS_I^-|_{[s_2, t_2]}$ with $t_1 \le s_2$ contained in $M$, such that:
		\begin{itemize}
			\item $\cS_I^-(s_1), \cS_I^-(t_2) \in \partial^+ M$ and $\cS_I^-(s_2), \cS_I^-(t_1) \in \partial^- M$.
			\item The vertex $\cS_I^-(t_2)$ has a smaller polar coordinate in $(0, 2 \pi)$ than $\cS_I^-(s_1)$. We can guarantee this path ordering by choosing $[s_1, t_1]$ minimally, $[s_2, t_2]$ maximally, and using that $\cS_I^-$ moves from the south boundary to the west boundary of the Aztec diamond.
		\end{itemize}
		We will use these two points to derive a contradiction,  repeatedly using that $M \subset \tt{A}_\times$, so the estimate \eqref{E:pi-P} is in effect.
		
		First, by \eqref{E:pi-P} we must have $\cS_I^-(t_1) \in P_\tt{S}^{\log^2 n + 4} + \cS_I^-(s_1)$ (the $+ 4$ here is to accommodate the fact that the points $\cS_I^-(s_i), \cS_I^-(t_i)$ may not be vertices in $\tt{S}$). This forces the vertex $\cS_I^-(s_1)$ to lie on the portion of $\partial^{+} M$ where $y = x + n^{5/6}/2 + n^{1/4}$; call this part of the boundary $\partial^{++} M$. By the ordering in the second bullet, the point $\cS_I^-(t_2)$ must also lie on $\partial^{++} M$. However, again appealing to condition \eqref{E:pi-P}, we have that $\cS_I^-(t_2) \in P_\tt{S}^{\log^2 n + 4} + \cS_I^-(s_2)$. Since $\cS_I^-(s_2) \in \partial^- M$, this is a contradiction.
	\end{proof}
	
	By \cref{L:QSE-pass}, every path $\cS_i^\pm, \cN_i^\pm$ is forbidden from entering either $\tt{D}_0^-$ or $\tt{D}_0^+$, with the exception of the two paths $\cS_I^+, \cN_I^-$. These are the free paths which appear in \cref{fig:800a07}. While they will typically cross through the smooth region, we can still control the possibility that they approach the rough-smooth boundary. This is the goal of the next lemma, which will imply Part (4) of \cref{T:main-3-restatement}. For this next lemma, we define one last region, $\Theta$, given as follows. First, let $L$ denote the vertical line containing the point $\beta_n(n^{5/6}/5, -n^{1/2}\log^2 n)$, and let $L_W \subset L$ be the largest segment of $L$ whose endpoints are contained in $\tt{C}_{SW}, \tt{C}_{NW}$. Let $L_E = R_\pi L_W$, and let $\Theta$ be the connected component of 
	$$
	[-n - 1, n + 1]^2 \setminus (\tt{C}_\times \cup L_W \cup L_E)
	$$
	containing the origin, see \cref{fig:theta}.

    \begin{figure}
        \centering
\includegraphics[width=0.5\linewidth]{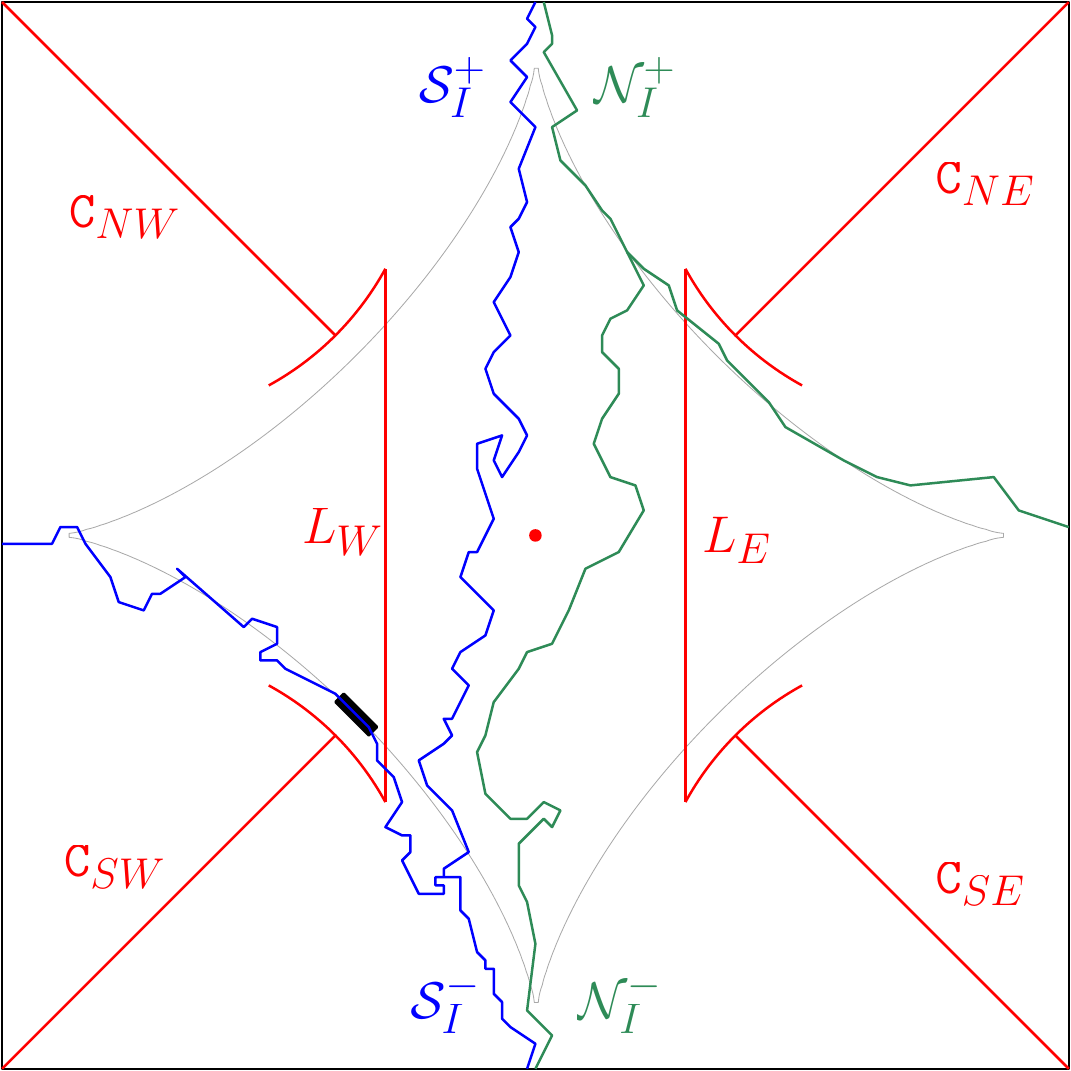}
        \caption{A sketch of \cref{L:SN-path-control}. The region containing the origin is $\Theta$. It is well-separated by the red curves from the black region, where we will eventually study Airy fluctuation.}
        \label{fig:theta}
    \end{figure}
	\begin{lem}
		\label{L:SN-path-control}
		The following holds with probability $1 - o(1)$ as $n \to \infty$.
		Either $\cS_I^+$ coalesces with $\cS_I^-$ and $\cN_I^-$ coalesces with $\cN_I^+$, or else $\cS_I^+$ coalesces with $\cS_{I+1}^-$ and $\cN_I^-$ coalesces with $\cN_{I+1}^+$. Prior to the coalescence points, the paths $\cS_I^+,\cN_I^-$ are contained in the set $\Theta$.
	\end{lem}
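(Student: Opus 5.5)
We work on the high-probability event $E$ from the proof of \cref{L:QSE-pass}, i.e.\ the intersection of the event of \cref{L:basic-path-ctrl}, the parabolic events $\mathsf{Para}_\tt{S}(\log^2 n)$, $\mathsf{Para}_\tt{N}(\log^2 n)$, the smooth-region drift event $E_2$, and the conclusion of \cref{L:QSE-pass}. The plan has two parts: a combinatorial identification of the coalescence partners, and then a confinement argument showing that $\cS_I^+$ and $\cN_I^-$ stay in $\Theta$ until they coalesce.

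\textbf{Coalescence partners.} By \cref{L:tree-paths}, $\theta_F(v_I^N)\in\{v_I^W,v^E_{I+1}\}$. The sink $v_I^W$ is also the endpoint of $\cS_I^-$, and $v^E_{I+1}$ is the endpoint of $\cS_{I+1}^-$. Since each component of a DCF contains exactly one sink, $\cS_I^+$ must coalesce with $\cS_I^-$ or with $\cS_{I+1}^-$, respectively. \cref{L:dimer-compatible-duals} then gives the matching statement for $\cN_I^-$: we have $\theta_{\check F}(w_I^S)=w_I^E$ (the sink of $\cN_I^+$) exactly when $\theta_F(v^N_I)=v^W_I$, and $\theta_{\check F}(w_I^S)=w_{I+1}^W$ (the sink of $\cN^+_{I+1}$) otherwise. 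This part is deterministic.

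\textbf{Confinement.} The regions $\tt{C}_\times$ are forbidden for indices within $2n^{1/4}\log^2 n$ of $n/4$ by Part 2 of \cref{L:basic-path-ctrl}, and $I$ lies in this window by Part 4. So before coalescing, $\cS_I^+$ cannot cross $\tt{C}_\times$. The only way it can leave $\Theta$ is therefore to cross one of the segments $L_W$ or $L_E$. Suppose, say, that $\cS_I^+$ crosses $L_W$ before coalescing. Its neighbours $\cS_I^-$ (to one side) and $\cS_{I+1}^-$ or the $\cS^+$ paths (to the other side) are confined by \cref{L:QSE-pass}: $\cS_I^-$ stays out of $\tt{D}_0^+$ and $\cS_{I+1}^-$ stays out of $\tt{D}_0^-$. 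Hence, to reach $L_W$, the path $\cS_I^+$ must travel into the corridor near the SW/NW rough-smooth boundary region while squeezed between $\cN$-paths.

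\textbf{Main obstacle.} Rigorously producing the required onion configuration is the step I expect to be hardest. There are two ways $\cS_I^+$ could reach $L_W$. If it passes through the smooth region $\tt{A}_\times$ moving northward over distance $\gg \log^2 n$, it contradicts the drift estimate \eqref{E:pi-P}. Otherwise, since $\cS_I^+$ starts on the north boundary and must coalesce with a path ending on the west or east boundary, reaching $L_W$ forces it to enter and then exit a horizontal strip of width of order $n^{5/6}$, which produces a left-to-right and right-to-left crossing pair. All paths inside this pocket have indices within $O(n^{1/4}\log^2 n)$ of $n/4$ by Part 1 of \cref{L:basic-path-ctrl}, together with $\cS/\cN$ interlacing. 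This realizes an event of the form
$$\mathsf{Onion}(n^{5/6}/100,\, n,\, 5n^{1/4}\log^2 n).$$
Since $10\cdot 5n^{1/4}\log^2 n\cdot\sqrt{n}\,\log^2 n$ is not below $n^{5/6}$, a direct application of \cref{L:onion-control} fails here. I would therefore first localize: use the heights bounds to restrict the vertical extent of the pocket to a strip of height $n^{1/2}\log^2 n$ near $L_W$, where the onion lemma does apply since $n^{1/4}\cdot n^{1/4}\log^3 n \ll n^{5/6}$. I would mirror the two-step argument of \cref{L:QSE-pass} (first the lines, then the boundary strip $M$). The argument for $\cN_I^-$, and for crossings of $L_E$, is symmetric by \cref{L:sym}.
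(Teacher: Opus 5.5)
Your identification of the coalescence partners is correct and is what the paper does. The confinement step, however, has a genuine gap. Your onion argument needs a forced return: you assert that reaching $L_W$ forces $\cS_I^+$ to cross a strip and then come back. That holds only when $L_W$ lies on the side \emph{opposite} to the sink of $\cS_I^+$'s coalescence partner. Suppose $\cS_I^+$ coalesces with $\cS_I^-$, whose sink $v_I^W$ is on the west boundary, beyond $L_W$. Then $\cS_I^+$ could cross $L_W$ and simply drift onto $\cS_I^-$ without returning, and no onion is formed. Appealing to \cref{L:sym} does not close this, since no symmetry there turns a crossing toward the partner's side into a crossing away from it for the same path. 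So in each case, half of the required confinement, for $\cS_I^+$ and for $\cN_I^-$ alike, is unproved.

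The paper uses a second mechanism for the partner's side. Take the case where $\cS_I^+$ coalesces with $\cS_I^-$ and $\cN_I^-$ with $\cN_I^+$.
\begin{itemize}
\item The onion argument is applied only on the side opposite each partner's sink: $\cS_I^+$ does not cross $L_{4,E}$, and symmetrically $\cN_I^-$ does not cross $L_{4,W}$.
\item The partner's side is then handled through the dual forest. In the region $M$ bounded by $\cN_I^-,\cN_{I-1}^-,\cN_{I+1}^+$, the only south backbone paths are $\cS_I^\pm$. Every point of $\partial M$ is within distance $4$ of an $\tt{S}$-vertex $v$ of $M$. On $\mathsf{Para}_\tt{S}(\log^2 n)$, the forest path from $v$ must reach $\cS_I^+\cup\cS_I^-$ inside $(v+P_\tt{S}^{\log^2 n+4})\cap M$.
\item Since $\cS_I^\pm$ avoid $L_{4,E}$ and the $\cN$-paths avoid $\tt{C}_\times$, neither $\partial M$ nor $\cN_I^-$ can meet $L_E=L_{5,E}$. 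The mirror argument, with $\tt{S}$ and $\tt{N}$ swapped, keeps $\cS_I^+$ off $L_W$.
\end{itemize}

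You also miss what makes the onion step immediate. By \cref{L:QSE-pass}, neither $\cS_I^-$ (it would enter $\tt{D}_0^+$) nor $\cN_{I+1}^+$ (it would enter $\tt{D}_0^-$) can enter the pocket. By interlacing, the only backbone path in it is the free path itself, so $\ell=1$. Your layer count via Part 1 of \cref{L:basic-path-ctrl} is not justified for a pocket away from $\tt{D}^\pm$.

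Your arithmetic is also off: $50n^{3/4}\log^4 n=o(n^{5/6})$. So the hypothesis of \cref{L:onion-control} would hold for large $n$ even with $5n^{1/4}\log^2 n$ layers; this is exactly the regime used in \cref{L:QSE-pass}. The proposed localization by height bounds is therefore unnecessary. It is also unsupported, since there are no height estimates along $L_W$ outside $\tt{A}_\times\cup\tt{B}_\times$.
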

	
	\begin{proof}
		The coalescence claims are deterministic given the definition of the split point $I$ by \cref{L:dimer-compatible-duals}, and from this point forward in the proof we write $\cS_I^+, \cN_I^-$ for just the portion of these paths prior to the coalescence point with another $\cS$- or $\cN$-path. We must check that the paths $\cS_I^+, \cN_I^-$ are contained in the set $\Theta$. We may assume that $\cS_I^+$ eventually coalesces with $\cS_I^-$ and $\cN_I^-$ coalesces with $\cN_I^+$, as the other case follows a symmetric argument. Throughout, we work on the high probability event $E$ defined in the proof of \cref{L:QSE-pass}. The lemma in this case is deterministic on this event.
		
		For $i = 3, 4, 5$, let $L_i$ be the vertical line that crosses the boundary $\partial^-\tt{Cap}_{SW}$ at the point $\beta_n(n^{5/6}/i, -n^{1/2}\log^2 n)$. Let $L_{i, W} \subset L_i$ be the largest line segment whose endpoints lie on $\tt{C}_{SW}, \tt{C}_{NW}$, and let $L_{i, E} = R_\pi L_{i, W}$. 
		
		We first rule out the possibility that $\cS_I^+$ crosses $L_{4, E}$. First, by Part (4) of \cref{L:basic-path-ctrl}, we have that $|I - n/4| \le n^{1/4} \log^2 n + 1$, and so by Part (2) of \cref{L:basic-path-ctrl}, the path $\cS_I^+$ does not enter $\tt{C}_\times$. Therefore the only way for $\cS_I^+$ to cross $L_{4, E}$ is if it first crosses $L_{3, E}$. Moreover,  $\cS_I^+$ later coalesces with $\cS_I^-$, and $\cS_I^-$ never crosses $\tt{C}_\times \cup L_{3, E}$, since this would require it to enter $\tt{D}_0^+$, contradicting \cref{L:QSE-pass}. Therefore $\cS_I^+$ must cross $L_{3, E}$ again after crossing $L_{4, E}$ prior to joining $\cS_I^-$.

		Now, we can find subpaths $\pi_1 = \cS_I^+|_{[s_1, t_1]}, \pi_2 = \cS_I^+|_{[s_2, t_2]}, t_1 \le s_2$ that stay in the region $R$ bounded by the four sets
		$$
		L_{3, E}, \quad L_{4, E}, \quad \tt{C}_{SE}, \quad \tt{C}_{NE}
		$$
		such that $\pi_1$ starts on $L_{3, E}$ and ends on $L_{4, E}$ and $\pi_2$ starts on $L_{4, E}$ and ends on $L_{3, E}$. Let $R' \subset R$ denote the subregion bounded between the paths $\pi_1, \pi_2$. Again, the path $\cS_I^-$ cannot enter $R'$ by \cref{L:QSE-pass}, since it cannot cross $\tt{D}_0^+$. Similarly, the path $\cN_{I+1}^+$ cannot enter $R'$ by \cref{L:QSE-pass} since it cannot cross $\tt{D}_0^-$. Therefore by the ordering on the $\cS$- and $\cN$-paths, no path can enter the set $R'$ other than $\cS_I^+$ itself, and so since the width of $R'$ is $n^{5/6}/12$ we can conclude that we are on the event $\mathsf{Onion}(n, n^{5/6}/12, 1)$. This event has an empty intersection with $E$ by \cref{L:onion-control}.
		
		Now, we will use the fact that $\cS_I^+$ does not cross $L_{4, E}$ to rule out the possibility that $\cN_I^-$ crosses $L_E = L_{5, E}$. Consider the region $M$ of the Aztec diamond bounded between the three $\cN$-paths $\cN_I^-, \cN_{I-1}^-, \cN_{I+1}^+$ (including the piece of $\cN_I^-$ after it coalesces with $\cN_I^+$). There are two $\cS$-paths that lie within this region: $\cS_I^+$ and $\cS_I^-$, and for every point $v \in \partial M$, there is an $\tt{S}$-vertex in $M$ that is within distance $4$ of $v$. Since we are on the event $\tt{Para_S}(\log^2 n)$, there must be a path of south vertices ending on $\cS_I^+ \cup \cS_I^-$ which is contained in the connected component of $C$ of 
		$
		(v + P_\tt{S}^{\log^2 n + 4}) \cap M
		$
		containing $v$. In other words, $C \cap (\cS_I^+ \cup \cS_I^-) \ne \emptyset$. Now, if $v \in L_{5, E}$, then the only two ways that this can happen are if:
		\begin{itemize}[nosep]
			\item Either $\cS_I^+$ or $\cS_I^-$ hits $L_{4, E}$, or
			\item One of the three paths $\cN_I^-, \cN_{I-1}^-, \cN_{I+1}^+$ hits $\tt{C}_\times$.
		\end{itemize}
		The first of these is ruled out by the previous argument, and the second by \cref{L:basic-path-ctrl}. Therefore $\partial M \cap L_{5, E} = \emptyset$, and so $\cN_I^- \cap L_{5, E} = \emptyset$.
		
		By a symmetric argument we can ensure that $\cN_I^+$ does not cross $L_{4, W}$ and that $\cS_I^-$ does not cross $L_{5, W} = L_W$. Putting these estimates together with the fact that $\cS_I^+, \cN_I^-$ do not enter $\tt{C}_\times$ (\cref{L:basic-path-ctrl}) gives the lemma.
	\end{proof}
	
	Given the path control above, we can confirm that the split point and the central height are related deterministically. This is the final piece of \cref{T:main-3-restatement}. 
	
	\begin{cor}
		\label{C:height-path-equality}
		With probability $1 - o(1)$, we have $H_n = 4I_n - n - 1$.
	\end{cor}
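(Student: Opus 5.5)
To prove $H_n = 4I_n - n - 1$ with probability $1-o(1)$, I would work on the high-probability event $E$ from the proof of \cref{L:QSE-pass}, intersected with the event of \cref{L:SN-path-control} and a successful smooth phase coupling on $\Lambda_{\operatorname{sm},1}$ (\cref{L:global-coupling-1}). The goal is to compute the heights of faces deep in the smooth region in two ways. The first way uses the forest structure: the heights are those of a sink, shifted by a winding term. The second way uses the coupling: the heights are close to $H_n$.

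\textbf{Step 1: locate a good face.} Fix a face $f$ near the origin, say a $c$-face inside $\Theta$ well inside $\Lambda_{\operatorname{sm},1}$. Consider the south forest path $\pi$ started at the $\tt{S}$-vertex adjacent to $f$. By \cref{L:simple-sample} and \cref{L:regular-paths}, $\pi$ is a biased loop-erased walk until it hits the backbone, and it stays in the parabola $f + P_\tt{S}^{\log^2 n}$. This parabola lies inside $\tt{A}_\times$ until it reaches $\tt{C}_{SW}\cup \tt{C}_{SE}$ or the south boundary region.

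Inside $\Theta$, the only backbone paths present are $\cS_I^+$ and pieces of $\cS_I^-$ (\cref{L:QSE-pass}, \cref{L:SN-path-control}). All other south paths are excluded from $\tt{D}_0^\pm$, and $\tt{C}_\times$ is off limits to indices near $n/4$. So $\pi$ first hits the backbone on $\cS_I^+$ or on $\cS_I^-$, and in either case it eventually runs into $\cS_I^-$ (possibly after coalescing). Hence the sink of $\pi$ is $\theta_F(v_I^S) = v_I^W$, whose deterministic height is given by \cref{R:labelling}: the height of $v_I^W$ is $4I - n + O(1)$, with the exact constant to be tracked so that it yields $4I-n-1$.

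\textbf{Step 2: control the winding.} By \cref{C:vertex-heights} / \cref{L:tree-path-heights}, $\cH_n(f)$ equals the sink height plus $4$ times the winding (the number of left minus right turns) of the full path from $f$ to the sink. Split this path into three pieces: the loop-erased segment up to the backbone, the segment along $\cS_I^+$ until coalescence, and the segment of $\cS_I^-$ to $v_I^W$.

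For the last segment, $\cS_I^-$ avoids $\tt{X}\cup \tt{D}_0^+$, runs from the south boundary to the west boundary through the SW quadrant, and cannot wind around the smooth region. I would make the no-winding statement precise by using the cut $L_W$ and the forbidden sets, as in the boundary-height argument of \cref{P:temperley-bij}.

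The segment along $\cS_I^+$ stays in the simply connected set $\Theta$, and it attaches to $\cS_I^-$ across a boundary. So its winding relative to the initial face is determined topologically, by the same mechanism that gives heights $h(v_0)=h(v_{k+1})$ in the proof of \cref{P:temperley-bij}.

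This reduces everything to showing that $\cH_n(f) = 4I - n - 1 + X_f$, where $X_f = -4\operatorname{Wind}$ of the local loop-erased segment. By \cref{L:height-lem}, $X_f$ is exactly the smooth-phase height at $f$ under the coupling.

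\textbf{Step 3: average.} Doing this for all faces $f$ in $\zeta_n$ (all of which lie in $\Theta\cap\Lambda_{\operatorname{sm},1}$) and averaging gives
\[
\cH_n^{\zeta_n}(0,0) = 4I - n - 1 + h^{\zeta_n}(0,0),
\]
up to the bounded non-$c$-face offsets already handled in the proof of \cref{L:height-shift}. Since $h^{\zeta_n}(0,0) \to 0$ in probability, by \cref{L:smooth-phase-heights-decor} and \cref{C:reflection-symmetry-smooth}, rounding to the nearest integer gives $H_n = 4I - n - 1$. An alternative would be to use the identity $\cH_n^{\zeta_n}(0,0) - h^{\zeta_n}(0,0) \in \Z$ from that proof directly.

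\textbf{Main obstacle.} I expect Step 2 to be the main obstacle: rigorously showing that the concatenated path from $f$ to $v_I^W$ has zero net winding beyond the local term. The plan there is a topological argument. A path confined to the simply connected region $\Theta \cup (\text{SW corridor})$ between the forbidden sets can only pick up winding of order $\pm 1$ relative to a reference direction. Any extra full turn would force it to enter $\tt{C}_\times$ or $\tt{D}_0^+$, or to create an onion ruled out by \cref{L:onion-control}.
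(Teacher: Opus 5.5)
Your overall plan is the paper's: read off the sink level $4I-n-1$, match the remaining winding to a smooth-phase height through a coupling, then round as in \cref{L:height-shift}. But placing $f$ near the origin, inside $\Theta$, breaks Steps 1 and 2.

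\cref{L:QSE-pass} only keeps backbone paths out of $\tt{D}_0^\pm$, which are $O(n^{5/6})$-neighbourhoods of the diagonals. \cref{L:SN-path-control} only says that the free paths $\cS_I^+$ and $\cN_I^-$ stay in $\Theta$. Nothing says $\Theta$ contains no other backbone paths, and your conclusion that the sink of $\pi$ is $v_I^W$ fails in general:
\begin{itemize}
\item \cref{L:SN-path-control} explicitly allows $\cS_I^+$ to coalesce with $\cS_{I+1}^-$, whose sink is $v_{I+1}^E$.
\item If $f$ is separated from $\cS_I^-$ by the dual path $\cN_I^-$, which also crosses $\Theta$, then $\pi$ never meets $\cS_I^-$ and its sink is on the east boundary.
\item The parabola from $f$ does not stay in $\tt{A}_\times$. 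That set is only a cross of $O(n^{5/6})$-wide bands, so $\pi$ leaves it after descending $O(n^{5/6})$.
\end{itemize}
More seriously, the winding argument in Step 2 is invalid. Since $f\in\Theta$, simple connectivity of $\Theta$ says nothing about winding around $f$. The mechanism you borrow from \cref{P:temperley-bij} works there only because the base point lies on the outer boundary. The forbidden sets do prevent $\cS_I^-$ (and $\cS_{I+1}^-$) from circling $f$. But your argument gives no reason why $\cS_I^+$, which is only known to lie in $\Theta$, cannot.

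The missing idea is to put the reference face where the forbidden sets surround it. The paper uses a single $a$-face $f$ near $\beta_n(0,n^{3/4})$, which lies in $\tt{D}_0^+$ and outside $\Theta$.
\begin{itemize}
\item Path ordering, \cref{L:QSE-pass}, the cutting-off argument with $\tt{C}_{SW}$, and disjointness from $\Theta$ force $\pi$ to join $\cS_I^-$. This gives $\cH_n(f)=4\operatorname{Wind}(\pi)+4I-n-1$.
\item $\cS_I^-$ cannot wind around $v_0$, since that would require entering $\tt{D}_0^+$.
\item $\pi$ cannot wind after leaving $\Lambda_{\operatorname{sm},2}$, on $\mathsf{Para}_\tt{S}(\log^2 n)$. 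So its winding equals the smooth-phase winding under the mesoscopic coupling of \cref{L:global-coupling-2}, whose boundary is at distance of order at least $n^{3/4}$ from $v_0$.
\item Finally, $\cH_n(f)=\cH(f)+H_n$ comes from the argument of \cref{L:height-shift}.
\end{itemize}
One face suffices, because on a coupled region $\cH_n-\cH$ is constant, so your averaging over $\zeta_n$ in Step 3 is unnecessary. To keep your central choice of $f$, you would need to treat the east-sink case. You would also need a real argument that $\cS_I^+$ cannot loop around $f$, for example via parabolic control of forest segments inside $\tt{A}_\times$. Neither is in the proposal.
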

	
	\begin{proof}
		Consider any $a$-face $f$, and let
		$$
		\pi = (v_0, \dots, v_k), \qquad v_k \in \partial^\tt{x} \tt{S}
		$$
		be the path in $F_\tt{S}(D)$ starting from the $\tt{S}$-vertex $v_0 := f - (0, 1)$. By Corollary \ref{C:vertex-heights}, the height $\cH_n(v_0)$ is given by $R - L + \cH_n(v_{k-1})$, where $R$ and $L$ denote the number of right and left turns along the path $\pi$. We first translate this to a statement about face heights on the event where $v_k$ joins the path $\cS_I^-$.
		First, using that the boundary heights in the Aztec diamond are deterministic, we can compute that
		\begin{equation*}
\cH_n(v_{k-1}) = 4I - n - 5/2 + \mathbf{1}(v_k - v_{k-1} = (2, -2)).
		\end{equation*}
Also,
	$$
		\cH_n(v_0) = \cH_n(f) + 1 + 2\theta /\pi,
		$$
		where $\theta \in (-3\pi/2, \pi/2)$ is the polar coordinate of the vector $v_1 - v_0$. Finally, 
		$$
		R - L = 4 \operatorname{Wind}(\pi) - \mathbf{1}(v_k - v_{k-1} = (2, -2)) + 2\theta/\pi + 5/2. 
		$$
		where $\operatorname{Wind}(\pi)$ is the number of times that the path $\pi$ crosses the vertical ray $f + \{0\} \times [0, \infty)$ moving right-to-left minus the number of times it crosses moving left-to-right. Putting all this together gives that
		\begin{equation}
		\label{E:face-ht}
		\cH_n(f) = 4 \operatorname{Wind}(\pi) + 4 I - n - 1.
		\end{equation}
		Now, let $f$ be the $a$-face closest to the point $\beta_n(0, n^{3/4})$. To complete the proof, given \eqref{E:face-ht} it is enough to show that with probability $1 - o(1)$, we have that
		\begin{itemize}[nosep]
			\item The south forest path $\pi$ started at $v_0 :=m \beta_n(0, n^{3/4}) - (0, 1)$ joins the backbone path $\cS_I^-$, and
			\item $\cH_n(f) = 4 \operatorname{Wind}(\pi) + H_n$.
		\end{itemize}
		We show that the first bullet holds deterministically on the event $E$ from \cref{L:QSE-pass}. 
		
		Indeed, the path $\pi$ starts in the set $\tt{D}_0^+$, so by \cref{L:QSE-pass}, it starts in a region which is above the path $\cS_I^-$, and so by backbone path ordering, the path $\pi$ cannot join any path $\cS_i^-, i < I$. Next, suppose that the path $\pi$ enters the region $\tt{C}_{SW}$ before coalescing with any other path, and let $\pi_0$ be the segment of $\pi$ up until the point when $\pi$ hits $\tt{C}_{SW}$.  In this case, Part (2) of \cref{L:basic-path-ctrl} and \cref{L:QSE-pass} imply that the path $\cS_I^-$ does not enter the region $\tt{D}_0^+ \cup \pi_0 \cup \tt{C}_{SW}$. On the other hand, this region cuts off the vertex $v^S_{I_n}$ from the sink boundary of the Aztec diamond, which is a contradiction. Therefore $\pi$ coalesces with another path before entering the region $\tt{C}_{SW}$. 
		
		Next, prior to coalescing with another path, the path $\pi$ stays in the region $P = v_0 + P^{\log^2 n}_\tt{S}$. Now, the connected component $Q$ of $R \setminus\tt{C}_{SW}$ containing $v_0$ is itself contained in $\tt{D}_0^-$, and also in the union of $\tt{D}_0^+$ and the component of $[-n, n]^2 \setminus \tt{D}_0^+$ which is cut off from the north boundary.
		Therefore by \cref{L:QSE-pass} and path ordering again, the path $\pi$ cannot join any path of the form $\cS_i^-, i \ge I +1$ or $\cS_i^+, i \ne I$. Finally, $C$ is disjoint from the set $\Theta$ in \cref{L:SN-path-control}, and so $\pi$ cannot join $\cS_I^+$. Therefore $\pi$ must join $\cS_I^-$, as desired.
		
		For the second bullet point, we use the smooth phase coupling in \cref{L:global-coupling-2}. We also continue to work on the high-probability event $E$. By \cref{L:global-coupling-2}, letting $D \sim \P_a$ be a dimer configuration with height functions $\cH, \cH_n$, we can couple this configuration with our Aztec diamond configuration $D_n$ so that with probability $1 - o(1)$ we have $D|_{\Lambda_{\mathrm{sm},2}} = D_n|_{\Lambda_{\mathrm{sm},2}}$. Also, let $\pi'$ be the south forest path started at $v_0$ in $D$, and let $\pi_0$ be the portion of $\pi$ stopped when the path first leaves the set $\Lambda_{\mathrm{sm},2}$. Note that $\pi, \pi'$ agree on the segment $\pi_0$. Now, we claim that with probability $1 - o(1)$, we have
		\begin{equation}
			\label{E:careful-wind}
\cH(f) = 4\operatorname{Wind}(\pi') = 4\operatorname{Wind}(\pi_0) = 4\operatorname{Wind}(\pi).
		\end{equation}
		The first equality is by \cref{L:height-lem} and the second equality uses the parabolic path estimate in \cref{L:lerw-estimate-basic}, together with the fact that the distance between $v_0$ to the boundary of $\Lambda_{\mathrm{sm},2}$ is at least $n^{3/4}/100$ for large $n$. The third equality is deterministic on $E$. Indeed, $\pi$ will not wind around $v_0$ after exiting $\Lambda_{\mathrm{sm},2}$ and before joining $\cS_I^-$, since we are on the event $\mathsf{Para}_\tt{S}(\log^2 n)$, and the path $\cS_I^-$ will not wind around $v_0$ since this would entail it entering $\tt{D}_0^+$. Finally, with high probability we have that
		$$
		\cH_n(f) = \cH(f) + H_n.
		$$
		This follows by the same reasoning in the proof of \cref{L:height-shift}, specifically the argument for the convergence \eqref{E:cvg0-in-prob}. Putting this display together with \eqref{E:careful-wind} yields the second bullet point.
	\end{proof}
	
	\begin{proof}[Proof of \cref{T:main-3-restatement}]
	Part $1$ follows from \cref{L:variance-middle-height}.	Part $2$ follows from \cref{L:regular-paths}. For Part $3$, Part (4) of \cref{L:basic-path-ctrl} guarantees that with probability $1 - o(1)$,
	 $$
	 [I_n - n^{1/4}, I_n] \subset [n/4 - 2 n^{1/4} \log^2 n, n/4 + n^{1/4} \log^2 n].
	 $$ 
	 Therefore by Part (2) of \cref{L:basic-path-ctrl} and \cref{L:QSE-pass}, no paths $\cS_i^-, i \in \II{I_n - n^{1/4}, I_n}$ enter the set $\tt{C}_\times \cup \tt{D}_0^+$ with probability $1 - o(1)$. Finally,
	 $$
	 (\tt{X} \cup \tt{PRS}^*_n) \setminus \tt{RS}^*_n \subset \tt{C}_\times \cup \tt{D}_0^+,
	 $$ 
	yielding the result. For Part $4$, \cref{L:QSE-pass} guarantees that with probability $1 - o(1)$, none of the paths $\cN_i^+$ enter the region $\tt{PRS}^*_n$, since in order to do so, these paths would need to first enter both $\tt{D}_0^-$ and $\tt{D}_0^+$. The path $\cN_I^-$ does not enter the region $\tt{PRS}^*_n$ by \cref{L:SN-path-control}, since $\Theta \cap \tt{PRS}^*_n = \emptyset$ and none of the paths $\cN_i^+$ enter $\tt{PRS}^*_n$. By path ordering, none of the paths $\cN_i^-, i \ge I$ enter $\tt{PRS}^*_n$ either.
	
	For part $5$, the path $\cS_I^+$ does not enter the region $\tt{PRS}^*_n$ prior to coalescing with another path by \cref{L:SN-path-control}, again since $\Theta \cap \tt{PRS}^*_n = \emptyset$.  The paths $\cS_i^-, i \le I_n - n^{1/4} \log^4 n$ do not enter $\tt{PRS}^*_n$ by Part (3) of \cref{L:basic-path-ctrl} and the lower bound on $I_n$ in Part (4) of \cref{L:basic-path-ctrl}, since $\tt{PRS}^*_n \subset \tt{B}_{SW} \cup \tt{A}_{SW}$. Part $6$ follows from Corollary \ref{C:height-path-equality}.
	\end{proof}
	
	\section{Local path regularity at the rough-smooth boundary}
	\label{S:regularity}
	In this section, we prove a more refined path regularity estimate in the rough-smooth boundary region $\tt{RS}_n^*$,  using the global path control in \cref{T:main-3-restatement} as the main input. 
	
	Throughout this section, we will work on the high probability event where the six points of \cref{T:main-3-restatement} hold. Call this event $\tt{Reg}_n$. The main content of this section is the following proposition. We also write $I = I_n$ for the split point, and throughout we think of the backbone paths $\cS_i^\pm, \cN_i^\pm$ as piecewise linear plane curves, parametrized by arc length, and directed from source to sink. Finally, in this section and the next section, for a backbone path $\gamma \in \{\cS_i^-, \cN_i^- : i = 1, \dots, n/2\}$ ending on the west boundary we write $U(\gamma), L(\gamma)$ for the two components of $[-n, n]^2 \setminus \gamma$, where $U(\gamma)$ contains $(n, n)$ and $L(\gamma)$ contains $(-n, -n)$.
	
	\begin{prop}
		\label{P:backtrack-estimate}
		For all large enough $n$, the following estimates hold on the event $\tt{Reg}_n$.
		\begin{enumerate}[label=\arabic*.]
			\item Let $k \in \{0, \dots, \lfloor n^{1/4} \rfloor\}$, and suppose that $v = (v_1, v_2) = \cS_{I-k}^-(s), w = (w_1, w_2) = \cS_{I-k}^-(t)$ are two points in $\tt{RS}_n^*$ for some $s < t$. Then
			\begin{equation}
				\label{E:backtrack-ineq}
				w_1 - v_1 < (2k + 1) n^{1/4} \log^{2} n.
			\end{equation}
			\item Similarly, suppose $k \in \{1, \dots, \lfloor n^{1/4} \rfloor\}$, and that $v = \cN_{I-k}^-(s), w = \cN_{I-k}^-(t)$ are two points in $\tt{RS}_n^*$ for some $s < t$. Then
			\begin{equation}
				\label{E:backtrack-ineq-induct-N}
				w_1 - v_1 < 2 k n^{1/4} \log^{2} n.
			\end{equation}
		\end{enumerate}
	\end{prop}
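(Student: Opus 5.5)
We argue by induction along the interlaced sequence of paths
$$
\cS_I^-,\ \cN_{I-1}^-,\ \cS_{I-1}^-,\ \cN_{I-2}^-,\ \dots,
$$
with each step adding at most $n^{1/4}\log^2 n$ to the backtrack bound. Every step has the same shape. Suppose a path $\gamma$ has points $v=\gamma(s)$, $w=\gamma(t)$ in $\tt{RS}_n^*$ with $s<t$ and $w_1-v_1\ge \Delta$. Because $\gamma$ travels overall from the south boundary to the west boundary, its later portion after $w$ must come back to the left across the vertical strip $[v_1,w_1]\times\R$. Using the arc from $v$ to $w$ together with this later crossing, we will carve out a pocket region $M$, bounded by $\gamma$, that sits inside the corridor between $\gamma$ and the next path of the \emph{opposite} type lying above it. We then choose a vertex $\mathbf{u}$ of the opposite type deep inside $M$, at horizontal distance at least about $\Delta/2$ from the sides of the strip. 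On $\tt{Reg}_n$ the event $\mathsf{Para}$ holds, so the forest path from $\mathbf{u}$ stays in $\mathbf{u}+P^{\log^2 n}$ until it reaches its backbone. The aim is to show this is impossible, exactly as in the base case of \cref{L:onion-control}.

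\textbf{Base case $\gamma=\cS_I^-$, $k=0$.} By Part 4 of \cref{T:main-3-restatement}, $\cN_I^-$ avoids $\tt{PRS}_n^*$. By Part 3, $\cS_I^-$ avoids $\tt{PRS}_n^*\setminus\tt{RS}_n^*$. So near $\tt{RS}_n^*$ the corridor above $\cS_I^-$ contains no $\cN$-backbone path inside $\tt{PRS}_n^*$.

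Take the pocket $M$ lying above the backtracking arc of $\cS_I^-$. A north vertex $\mathbf{u}\in M$ must reach $\cN_I^-$ or some other $\cN$-backbone path, and to do so it has to leave $M$. There are two ways out. It can exit vertically through the top of $\tt{PRS}_n^*$, which is far away at distance $\gtrsim n^{3/4}$ in $\beta_n$-coordinates. Or it can exit sideways through $\cS_I^-$ beyond the strip.

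We use the north-opening parabola, truncated at the height of $\tt{RS}_n^*$, which is $O(n^{1/2}\log^{3/2}n)$. This parabola has half-width at most $(\log^2 n+\log n)\sqrt{n^{1/2}\log^{3/2}n}\ll n^{1/4}\log^2 n/2$. We must check that $M$ is capped within $\tt{RS}_n^*$-height; this is where we use that $v,w\in\tt{RS}_n^*$ and that $\cS_I^-$ cannot leave through $\tt{PRS}_n^*\setminus\tt{RS}_n^*$. Given that, the parabolic constraint forces the exit through the side walls of $M$, which lie at horizontal distance $\ge n^{1/4}\log^2 n/2$. This is a contradiction. Note that the strip is defined in original coordinates while $\tt{RS}_n^*$ is tilted by $\beta_n$. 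The tilt is by a fixed angle, so heights change only by constant factors, and these are absorbed by the room in $\log^2 n$ versus $\log^{3/4}n$.

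\textbf{Inductive step.} Assume the bound \eqref{E:backtrack-ineq} for $\cS_{I-k}^-$. For $\cN_{I-k-1}^-$ (and similarly for $\cS_{I-k-1}^-$ given the bound for $\cN_{I-k-1}^-$), the corridor above it is bounded by $\cS_{I-k}^-$. Suppose $\cN_{I-k-1}^-$ backtracks by $\Delta\ge 2(k+1)n^{1/4}\log^2n$. We place a south vertex $\mathbf{u}$ in the resulting pocket, at horizontal distance $\ge \Delta/2$ from the endpoints. Its south-biased parabolic path must hit $\cS_{I-k}^-$ or $\cS_{I-k-1}^-$. Since $\cS_{I-k-1}^-$ lies below and outside the pocket, the path has to reach $\cS_{I-k}^-$ through the opening of the pocket within a window of width $\ll n^{1/4}\log^2 n$.

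The role of the induction hypothesis is this: $\cS_{I-k}^-$ can only dip into the pocket if it also backtracks. Its own backtracks are at most $(2k+1)n^{1/4}\log^2 n$, so it cannot cover the central column of width $\Delta-(2k+1)n^{1/4}\log^2 n\ge n^{1/4}\log^2n$ from above. We will make this precise by planarity: any excursion of $\cS_{I-k}^-$ reaching below $\mathbf{u}$'s parabola within the column would produce a left-moving segment of $\cS_{I-k}^-$ of length exceeding its allowed backtrack.

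\textbf{Main obstacle.} We expect the main obstacle to be this topological bookkeeping: defining the pocket so that the only admissible exits are ones the parabola forbids, while handling paths that wander out of $\tt{RS}_n^*$ into $\tt{PRS}_n^*$ and back. We would first try to reduce to the onion framework of \cref{L:onion-control} with $\ell=1$, taking $r_2$ to be the height of $\tt{RS}_n^*$ and using the upper path as an external wall in place of the requirement that no other path enters.
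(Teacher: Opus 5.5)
Your overall plan matches the paper's. You induct along $\cS_I^-,\cN_{I-1}^-,\cS_{I-1}^-,\dots$, trap a dual-forest vertex in a pocket of the corridor above the current path $\gamma$, contradict the parabolic confinement, and use the inductive hypothesis only to limit how far the upper neighbour intrudes. However, two steps fail as you have set them up.

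\emph{The pocket is on the wrong side.} Since $\gamma$ moves overall leftward, $U(\gamma)$ lies to the right of its direction of travel, i.e.\ \emph{below} any rightward (backtracking) arc. Consider the basic S-shaped overhang: a leftward crossing of the strip, the rightward arc from $v$ to $w$ above it, and the leftward return above that. The region you build from the $v\to w$ arc and the later return crossing (``above the backtracking arc'') then lies in $L(\gamma)$. In the base case $\cN_{I-1}^-$ can follow $\cS_I^-$ into it, so nothing is trapped.

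The paper instead lists all crossings of $([v_1,w_1]\times\R)\cap\tt{RS}_n^*$ by $\gamma$. Since $\gamma$ starts on one side of the strip, ends on the other, and by Part 3 of \cref{T:main-3-restatement} (plus interlacing for $\cN$-paths) can only pass it inside $\tt{RS}_n^*$, their number is odd. It is at least $3$ because $v$ precedes $w$ with $v_1<w_1$. The complementary components $C_1,C_2,\dots$ (bottom to top) alternate between $L(\gamma)$ and $U(\gamma)$, starting with $C_1\subset L(\gamma)$. One works in $C_2\subset U(\gamma)$, which lies above a leftward crossing and below a rightward one. A dual forest path cannot cross $\gamma$ at all, so its only exits from $C_2$ are through $\{v_1\}\times\R$ and $\{w_1\}\times\R$.

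\emph{The inductive step needs one-sidedness.} Write $\sigma$ for the upper neighbour and $B=(2k+1)n^{1/4}\log^2 n$ for its bound. The claim that $\sigma$ ``can only dip into the pocket if it backtracks'' does not follow from the hypothesis alone, since a full leftward crossing costs no backtrack. Full crossings are instead excluded by sidedness: the top and bottom of $C_2$ lie on $\gamma\subset L(\sigma)$, so crossings come in pairs, and one of each pair is rightward.

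More seriously, bounding each excursion of $\sigma$ separately only confines it to bands of width $B$ at \emph{both} ends of the pocket. That leaves a free column of width $\Delta-2B$, which is $\le 0$ at $\Delta=2(k+1)n^{1/4}\log^2 n$, so the recursion would double instead of add. The paper takes the first and last points $v',w'$ of $\sigma$ in $C_2$ and notes that they lie on the same vertical line. It then applies the hypothesis to both $(v',u')$ and $(u',w')$ for every intermediate $u'$, giving $|u'_1-v'_1|<B$ (see \eqref{E:IH}). So the whole visit sits in a single band.

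The trapped vertex must then be centred in the remaining free part, not at the midpoint of the pocket, since at the threshold $B\ge\Delta/2$. Also, the hypothesis bounds rightward displacement, so it is the right-moving leg of an excursion that is controlled, not a left-moving one.

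Finally, $(\log^2 n+\log n)\sqrt{n^{1/2}\log^{3/2}n}$ is of order $n^{1/4}\log^{11/4}n$, not $\ll n^{1/4}\log^2 n$. The paper's \eqref{E:paraben} makes the same slip: it reads the width of $P^{\log^2 n}$ as $\log^2 n+\log(h+1)\sqrt{h+1}$. This is harmless. You can replace $\log^2 n$ by $\log^3 n$, or additionally work on $\mathsf{Para}_{\tt{S}}(C\log n)\cap\mathsf{Para}_{\tt{N}}(C\log n)$ for a large constant $C$, which still has probability $1-o(1)$ by \cref{L:regular-paths}.
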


    \begin{figure}
        \centering
        \includegraphics[width=0.8\linewidth]{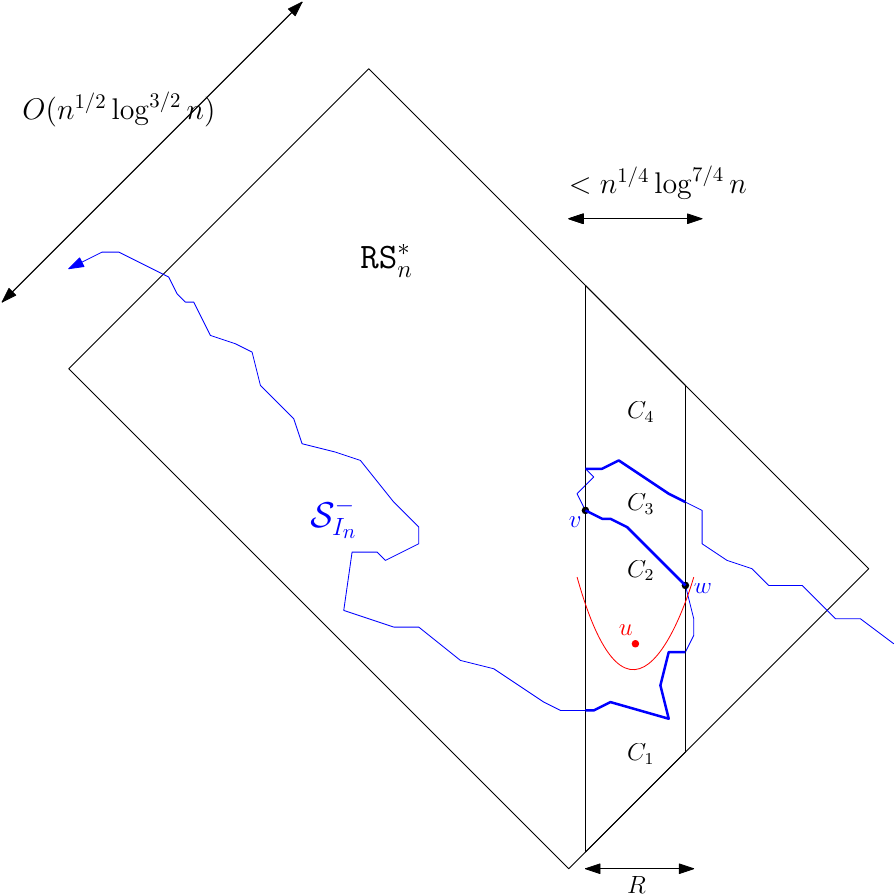}
        \caption{The base case of \cref{P:backtrack-estimate}. If the path $\cS_{I_n}^-$ has an overhang which is much larger than $\sqrt{n^{1/2} \log^{3/2} n}$, then this will trap a north path by virtue of the parabolic path estimates, since the region above $\cS_{I_n}^-$ in $\tt{LG}_n^*$ does not contain any backbone paths. The inductive step is similar, except instead of having a free region above a path $\cS_i^-$ or $\cN_i^-$, we have a path whose overhang depth is controlled by the inductive hypothesis.}
        \label{fig:proposition61}
    \end{figure}
	
	\begin{proof}
		We prove both parts of the proposition simultaneously by induction on the paths, using the ordering
		$$
	\cS_I^- , \cN_{I-1}^-, \cS_{I-1}^-, \cN_{I-2}^-, \dots.
	$$
	The base case is the estimate for the path $\cS_I^-$. See \cref{fig:proposition61} for an illustration of this case. Consider points $v = \cS_{I}^-(s), w = \cS_{I}^-(t)  \in \tt{RS}_n^*, s < t$. We assume that $v_1 < w_1$, since otherwise \eqref{E:backtrack-ineq} is immediate. Consider the strip $R = ([v_1, w_1] \times \R) \cap \tt{RS}_n^*$. We can recursively define times $s_1 < t_1 \le s_2 < t_2 \le \cdots \le s_k < t_k$ as follows. For this recursion, set $s_0 = t_0 = 0$.
    \begin{enumerate}[label=\arabic*.]
        \item For odd $i \in \N$, let $t_i = \inf \{r \ge t_{i-1} : \cS_I^-(r) \in \tt{RS}_n^* \cap  (\{v_1\} \times \R)$ and then let $s_i = \sup \{r \le t_i : \cS_I^-(r) \in \tt{RS}_n^* \cap  (\{w_1\} \times \R)$.
        \item For even $i \in \N$, let $t_i = \inf \{r \ge t_{i-1} : \cS_I^-(r) \in \tt{RS}_n^* \cap  (\{w_1\} \times \R)$ and then let $s_i = \sup \{r \le t_i : \cS_I^-(r) \in \tt{RS}_n^* \cap (\{v_1\} \times \R)$.
    \end{enumerate}
    We can only perform this recursion finitely many times, stopping at some $k \in \N$. Moreover, since the path $\cS_I^-$  starts to the left of $[v_1, w_1] \times \R$, ends to the right of this strip and never intersects the set $(\tt{X} \cup \tt{PRS}^*_n)\setminus \tt{RS}^*_n$ (Part (3) of \cref{T:main-3-restatement}), we have that $k$ is odd. Since $v = \cS_{I}^-(s), w = \cS_{I}^-(t)  \in \tt{RS}_n^*$ for some $s < t$, we have that $k \ge 3$.
    
    Now, the set
	$$
	R \setminus (\cS_I^-|_{[s_1, t_1]} \cup \cdots  \cup \cS_I^-|_{[s_k, t_k]})
	$$
	has $k+1$ connected components, $C_1, \dots, C_{k+1}$. We can index these components so that $C_1$ sits below $C_2$, which sits below $C_3$, etc. With this indexing, $C_2 \cap U(S_I^-)$ contains two path segments $\cS_I^-|_{[s_i, t_i]}, \cS_I^-|_{[s_{j}, t_{j}]}$ on its boundary.
In particular, for every $r \in [v_1, w_1]$ we can find a north vertex $u \in C_2 \cap U(S_I^-)$ with $|u_1 - r| \le 1$. We will consider such a vertex when $r = (v_1 + w_1)/2$, and let $\gamma$ be the north forest path starting at $u$. By Part (4) of \cref{T:main-3-restatement} and path ordering, no $\cN$-backbone paths enter the region $U(\cS_I^-) \cap \tt{RS}_n^*$, and so the path $\gamma$ does not join a backbone path before exiting $C_2$. Therefore by Part (2) of  \cref{T:main-3-restatement}, the path $\gamma$ must be contained in the region $u + P^{\log^2 n}_\tt{N}$ prior to exiting $C_2$. Since $C_2$ is bounded above and below by $\cS$-paths, the path $\gamma$ must exit through the set $\{v_1, w_1\} \times \R$, and so
	\begin{equation}
		\label{E:important-disjoint}
(u + P^{\log^2 n}_\tt{N}) \cap (\{v_1, w_1\} \times \R) \cap \tt{RS}_n^* \ne \emptyset.
	\end{equation}
Now, $|u - (v_1 + w_1)/2| \le 1$, so for this to hold, $(u + P^{\log^2 n}_\tt{N})$ must intersect $R$ in a location where the half-width of the parabola is at least $(w_1 -v_1)/2 - 1$. We have the inequality
$$
\operatorname{ht}(R) := \max \{|x_2 - y_2| : x, y \in R\} \le (2n^{1/3} \log^2 n + n^{1/2} \log^{3/2} n) + 2 (w_1 - v_1),
$$
on the height of $R$ (which is almost a parallelogram). Therefore by the definition of $P^{\log^2 n}_\tt{N}$, we have
\begin{align}
	\nonumber
(w_1 -v_1)/2 - 1 &\le \log^2 n + \log(\operatorname{ht}(R) + 1)\sqrt{\operatorname{ht}(R) + 1} \\
\label{E:paraben}
&\le 2n^{1/4} \log^{7/4} n + (w_1 -v_1)/4,
\end{align}
where the second inequality holds for large enough $n$. This yields \eqref{E:backtrack-ineq} in the base case.

We move on to the inductive step. The key difference in the base case and the inductive step is that in the base case, we used that $U(\cS_I^-) \cap\tt{RS}_n^*$ contains no north backbone paths. For any $k \ne 0$, the corridors $U(\cS_{I-k}^-), U(\cN_{I-k}^-)$ will contain dual backbone paths within the set $\tt{RS}_n^*$. However, if the paths $\cS_{I-k}^-$ (or $\cN_{I-k}^-$) backtrack too much, then we can use the inductive hypothesis to rule out the presence of such paths deep into a backtrack. 

Fix $k \ge 1$. We will only present the case when we prove the bound \eqref{E:backtrack-ineq-induct-N} on $\cN_{I-k}^-$ given the bound \eqref{E:backtrack-ineq} on $\cS_{I-k-1}^-$. The proof of the bound  \eqref{E:backtrack-ineq} on $\cS_{I-k}^-$ given the bound \eqref{E:backtrack-ineq-induct-N} on $\cN_{I-k}^-$ follows a symmetric argument (and is more similar to the base case).

Consider points $v = \cN_{I-k}^-(s), w = \cN_{I-k}^-(t)  \in \tt{RS}_n^*, s < t$. We assume that 
\begin{equation}
	\label{E:weak-assn}
w_1 - v_1 > (2k - 1) n^{1/4} \log^{2} n,
\end{equation} 
We define points $s_1 < t_1 \le s_2 < \cdots \le s_k < t_k$ and regions $C_1, \dots, C_{k+1}$ exactly as before, with $\cN_{I-k}^-$ in place of $\cS_I^-$. Again, $k$ must be odd and at least equal to $3$, and we work with the region $C_2$. 

Now, suppose $\cS_{I-k+1}^-$ enters $C_2$, and let $v' = \cS_{I-k+1}^-(s')$, $w' = \cS_{I-k+1}^-(t')$, $s' < t'$, denote the first and last times when $\cS_{I-k+1}^-$ is contained in $C_2$. The region $C_2$ is bounded above and below by portions of the north paths $\cN_{I-k}^-$, and so either both $v', w'$ are contained in $\{v_1\} \times \R$, or else both are contained in $\{w_1\} \times \R$. Now, for every point $u' \in C_2$ on the curve $\cS_{I-k+1}^-|_{[s', t']}$ we have
\begin{equation}
	\label{E:IH}
 |u_1' - v_1'|\le \max(u'_1 - v_1', w_1' - u_1') < (2k - 1) n^{1/4} \log^{2} n.
\end{equation}
Here the first inequality uses that $v_1' = w_1'$, and the second inequality uses the inductive hypothesis. Here we can apply the inductive hypothesis since $u', v', w' \in C_2 \subset \tt{RS}_n^*$. At this point, we assume that $v_1' = v_1$, as the case when $v_1' = w_1$ is symmetric. Define the set
$$
C_3' = C_3 \cap ([v_1, w_1^*] \times \R), \quad w_1^* := w_1 - (2k - 1) n^{1/4} \log^{2} n.
$$
which contains no points on any south backbone paths by the above discussion, and is non-empty by \eqref{E:weak-assn}. Now, similarly to the base case we can find a south vertex $u \in C_2'$ with $|u_1 - (v_1 + w_1^*)/2| \le 1$. Letting $\gamma$ be the south forest path starting from $u$, as in \eqref{E:important-disjoint} we have that
\begin{equation*}
	(u + P^{\log^2 n}_\tt{S}) \cap (\{v_1, w_1^*\} \times \R) \cap \tt{RS}_n^* \ne \emptyset.
\end{equation*}
At this point, we can proceed as in the base case following \eqref{E:weak-assn} to prove that
$$
(w_1^* - v_1)/4 \le 2 n^{1/4} \log^{7/4} n + 1,
$$
which gives \eqref{E:backtrack-ineq-induct-N} after substituting in $w_1$ for $w_1^*$ and simplifying.
\end{proof}

We can use \cref{P:backtrack-estimate} to show that north and south paths are close in the region $\tt{RS}_n^*$, see \cref{fig:corollary62}. For this corollary, we introduce the smaller set 
$$
\tt{RS}_n := \beta_n([-n^{3/4}, n^{3/4}] \times [- n^{1/2} \log^{3/2} n, 2n^{1/3} \log^2 n]).
$$
\begin{cor}
	\label{C:close-ties}
	On the event $\tt{Reg}_n$ we have the following estimates for $k \in \{0, \dots, \lfloor n^{1/4} \rfloor - 1\}$.
	\begin{enumerate}[label=\arabic*.]
		\item For any vertex $w \in \cN_{I-k-1}^- \cap \tt{RS}_n$ we can find a vertex $w' \in \cS_{I-k}^- \cap \tt{RS}_n^*$ with $\|w - w'\|_\infty \le (2k + 2) n^{1/4} \log^{2} n$.
		\item For any vertex $v \in \cS_{I-k}^- \cap \tt{RS}_n$ we can find a vertex $w \in \cN_{I-k-1}^- \cap \tt{RS}_n^*$ with $\|v - v'\|_2 \le (2k + 1) n^{1/4} \log^{2} n$.		
	\end{enumerate}
\end{cor}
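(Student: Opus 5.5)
We argue deterministically on $\tt{Reg}_n$, using the backtrack bounds of \cref{P:backtrack-estimate} together with the Wilson/parabola argument from its proof. The key geometric fact is that $\cN_{I-k-1}^-$ and $\cS_{I-k}^-$ are adjacent in the interlacing order $\cS_{I-k-1}^- \prec \cN_{I-k-1}^- \prec \cS_{I-k}^-$. So the region between them contains no other $\cS$- or $\cN$-backbone path. The plan is to show that a large vertical gap between these two adjacent paths would trap a forest path, just as in the base case of \cref{P:backtrack-estimate}.

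For Part 1, fix $w \in \cN_{I-k-1}^- \cap \tt{RS}_n$ and consider the vertical segment starting at $w$ and moving upward, i.e.\ into $U(\cN_{I-k-1}^-)$. First, this segment must meet $\cS_{I-k}^-$ inside $\tt{RS}_n^*$. Indeed, by Part 4 of \cref{T:main-3-restatement}, $\cN_{I}^-$ avoids $\tt{PRS}_n^*$. By Part 3, $\cS_{I-k}^-$ crosses $\tt{RS}_n^*$ from its south side to its west side without touching $(\tt{X} \cup \tt{PRS}_n^*)\setminus \tt{RS}_n^*$. Since $w$ has horizontal coordinate $|t| \le n^{3/4}$ in the $\beta_n$-frame, the path $\cS_{I-k}^-$ must separate $w$ from the top of $\tt{RS}_n^*$ along the strip $[w_1 - r, w_1 + r]\times\R$.

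Second, suppose every point of $\cS_{I-k}^-$ in the box $w + [-r,r]^2$ is missing, with $r = (2k+2)n^{1/4}\log^2 n$. Then we locate a north vertex $u$ just above $w$ (or a south vertex, depending on which dual forest is convenient) inside the corridor between $\cN_{I-k-1}^-$ and $\cS_{I-k}^-$. We choose $u$ at the midpoint horizontally, over a horizontal window of width $2r$ that is bounded below by $\cN_{I-k-1}^-$. Within that window, the backtrack bound \eqref{E:backtrack-ineq-induct-N} on $\cN_{I-k-1}^-$ (and \eqref{E:backtrack-ineq} on $\cS_{I-k}^-$) controls how the corridor's boundaries fold.

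Third, the south forest path from a south vertex in this corridor must join $\cS_{I-k}^-$ or $\cS_{I-k-1}^-$. To do so it must cross $\cN$-free space. By $\mathsf{Para}_\tt{S}(\log^2 n)$ it stays inside a parabola of width $O(n^{1/4}\log^{7/4}n)$ over the height of $\tt{RS}_n^*$. Because of the backtrack bound, the corridor cannot bend back by more than $(2k+1)n^{1/4}\log^2 n$. Hence the walk would be unable to exit, mirroring \eqref{E:paraben}. This contradiction produces a point of $\cS_{I-k}^-$ within $\ell^\infty$-distance $r$ of $w$. That point automatically lies in $\tt{RS}_n^*$, since $\tt{RS}_n$ sits at horizontal distance $n^{3/4} \gg r$ inside $\tt{RS}_n^*$.

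Part 2 is symmetric. We swap the roles of $\cS_{I-k}^-$ and $\cN_{I-k-1}^-$, look downward from $v \in \cS_{I-k}^-$, and use north forest paths with $\mathsf{Para}_\tt{N}$. The bound improves by one $n^{1/4}\log^2 n$ because the relevant backtrack estimate is now the one for $\cN_{I-k-1}^-$, namely $2(k+1)$ versus $(2k+1)$. Accordingly, the constant bookkeeping should follow the indices of the inductive ordering in \cref{P:backtrack-estimate}.

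The main obstacle is the second step: turning "no point of the adjacent path nearby" into a genuinely enclosed corridor pocket of controlled height. The corridor boundary can wiggle, so I would use the first-crossing/last-crossing decomposition into components $C_1, C_2, \dots$ exactly as in the proof of \cref{P:backtrack-estimate}. Applied to the vertical strip of width $2r$ around $w$, it lets us identify a component free of backbone paths, whose height is bounded by that of $\tt{RS}_n^*$ plus $O(r)$.
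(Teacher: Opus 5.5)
Your ingredients match the paper's: the adjacency $\cN_{I-k-1}^- \prec \cS_{I-k}^-$, the vertical ray from $w$ meeting $\cS_{I-k}^-$ inside $\tt{RS}_n^*$, parabolic control of a south forest path started next to $w$, and the backtrack bound \eqref{E:backtrack-ineq} for $\cS_{I-k}^-$. The gap is that the step combining them is missing, and what you put in its place is not the right mechanism.

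There is no enclosed pocket to build. A south vertex $u$ of the corridor $U(\cN_{I-k-1}^-) \cap L(\cS_{I-k}^-)$ adjacent to $w$ is obstructed only by $\cN_{I-k-1}^-$, which can bend away. So the claim that the walk would be unable to exit is unsupported. The walk is not trapped; it must \emph{succeed} in reaching $\cS_{I-k}^-$, and the contradiction comes from where it does so. The missing step runs as follows.

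\begin{itemize}
\item Since south forest paths cannot cross the dual backbone path $\cN_{I-k-1}^-$, the forest path from $u$ joins $\cS_{I-k}^-$ itself. It is not ``$\cS_{I-k}^-$ or $\cS_{I-k-1}^-$''; with that alternative you learn nothing about $\cS_{I-k}^-$.
\item The join point $p$ lies in $u + P^{\log^2 n}_{\tt{S}}$ and, by Part 3 of \cref{T:main-3-restatement}, in $\tt{RS}_n^*$. Hence $|p_1 - u_1| \le 2n^{1/4}\log^{7/4} n$, and if $p_2 \ge u_2$ you already have $\|p-u\|_\infty \le \log^2 n$.
\item Otherwise $p$ lies below the horizontal line through $u$. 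The point $q$ where the upward vertical ray from $u$ meets $\cS_{I-k}^-$ lies above it. So the portion of $\cS_{I-k}^-$ between $p$ and $q$ meets this line at some $p'$, and $d_\infty(u,\cS_{I-k}^-) \le |p'_1 - u_1|$.
\item Whichever of $p,q$ comes first along the path, either the move into $p'$ or the move out of $p'$ is rightward by at least $|p'_1-u_1| - 2n^{1/4}\log^{7/4} n$.
\item So \eqref{E:backtrack-ineq} gives $|p'_1 - u_1| < (2k+1)n^{1/4}\log^2 n + 2n^{1/4}\log^{7/4}n \le (2k+3/2)n^{1/4}\log^2 n$. Adding $\|u-w\|_\infty \le 1$ gives Part 1.
\end{itemize}
This is the paper's argument. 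It needs neither the $C_1, C_2, \dots$ strip decomposition nor the bound \eqref{E:backtrack-ineq-induct-N} on $\cN_{I-k-1}^-$.

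For Part 2 your bookkeeping is backwards. The symmetric argument uses a north vertex of the corridor adjacent to $v$, whose north forest path must join $\cN_{I-k-1}^-$ inside its north-opening parabola, together with the downward vertical ray. It has to use the backtrack bound \eqref{E:backtrack-ineq-induct-N} for $\cN_{I-k-1}^-$, which is $2(k+1)n^{1/4}\log^2 n$. That is larger, not smaller, than $(2k+1)n^{1/4}\log^2 n$, so the argument yields roughly $(2k+5/2)n^{1/4}\log^2 n$. This is also all the paper's ``symmetric proof'' delivers: the constant $(2k+1)$ in Part 2 does not come out of either argument. Only the order $O(k n^{1/4}\log^2 n)$ is used later, in the proof of \cref{T:main-2-restatement} and in \cref{L:ring-bd}.

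Finally, the point you find lies in $\tt{RS}_n^*$ by Part 3 of \cref{T:main-3-restatement}, since $\cS_{I-k}^-$ avoids $\tt{PRS}_n^* \setminus \tt{RS}_n^*$. It does not follow from the horizontal margin: $\tt{RS}_n$ and $\tt{RS}_n^*$ have the same extent in the $x$-coordinate of $\beta_n$.
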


\begin{figure}
    \centering
    \includegraphics[width=0.8\linewidth]{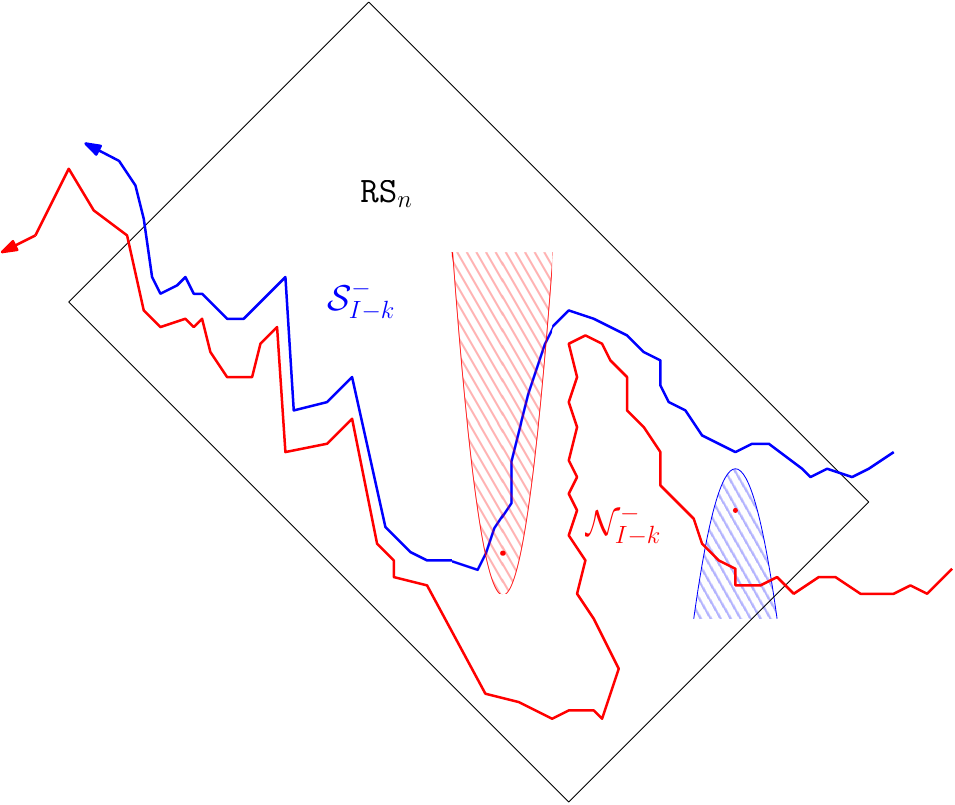}
    \caption{The idea behind Corollary \ref{C:close-ties} is that in the absence of large path overhangs (\cref{P:backtrack-estimate}), if a pair of paths $\cS_{I-k}^-, \cN_{I-k-1}^-$ deviate too far from each other, then some vertex $\mathbf{v}$ cannot be connected to a backbone path without exiting its bounding parabola $\mathbf{v} + P^{\log^2 n}_\cdot$.}
    \label{fig:corollary62}
\end{figure}
\begin{proof}
For the proof, it is useful to define an intermediate set $\tt{RS}_n'$ with $\tt{RS}_n \subset \tt{RS}_n' \subset \tt{RS}_n^*$, whose definition is the same as that of $\tt{RS}_n$ but with the interval $[-3 n^{3/4}/2, 3 n^{3/4}/2]$ in place of $[-n^{3/4}, n^{3/4}]$.
	
Fix $k$, and consider a south vertex $v\in \tt{RS}_n'$ contained in the corridor $L(\cS_{I-k}^-) \cap U(\cS_{I-k - 1}^-)$ between $\cS_{I-k}^-$ and $\cS_{I-k-1}^-$ (here notation is as in the proof of \cref{P:backtrack-estimate}). Let $r = d_\infty(v, \cS_{I-k}^- \cap \tt{RS}_n^*)$, and let $\gamma$ be the south forest path starting at $v$. We first show that
\begin{equation}
	\label{E:r-implication}
\text{$\gamma$ \text{ joins } $\cS_{I-k}^-$} \qquad \implies \qquad r \le (2k + 3/2) n^{1/4} \log^{2} n.
\end{equation}
Let $\gamma_0$ be the segment of $\gamma$ stopped at the first vertex along a south backbone path. By Part (2) of \cref{T:main-3-restatement}, we have that
$\gamma_0 \subset v + P_\tt{S}^{\log^2 n}$, and so
$$
\gamma_0 \subset A := (v + P_\tt{S}^{\log^2 n}) \cap L(\cS_{I-k}^-) \cap U(\cS_{I-k - 1}^-).
$$ 
The set $A$ is necessarily contained in $\tt{RS}_n^*$ by Part (3) of \cref{T:main-3-restatement}. Here we have used that any vertex $v$ in $\tt{RS}_n'$ is far away from the left and right boundaries of the larger set $\tt{RS}_n^*$, so the set $v + P_\tt{S}^{\log^2 n}$ only crosses $\tt{RS}_n^*$ on the part of the boundary where it would immediately enter $\tt{PRS}_n^*$. 

Now, suppose that $\gamma$ joins $\cS_{I-k}^-$. Then $\cS_{I-k}^-$ has non-empty intersection with the set $(v + P_\tt{S}^{\log^2 n}) \cap \tt{RS}_n^*$; let $w$ denote the intersection point. Now, since $v \in L(\cS_{I-k}^-)$, the path $\cS_{I-k}^-$ also intersects the vertical ray $v + \{0\} \times [0, \infty)$ within the set $\tt{RS}_n^*$. Call $w'$ the intersection point. Finally, let $w''$ be a point on the horizontal line $v + \R \times \{0\}$, on the segment of $\cS_{I-k}^-$connecting the vertices $w, w'$. We have that
\begin{align*}
w'_1 = v_1', \qquad |w_1 - v_1| \le 2 n^{1/4} \log^{7/4} n, \qquad |w''_1 - v_1| \ge r.
\end{align*}
Here the equality is by definition, the first inequality follows as in \eqref{E:paraben}, and the third inequality is from the definition of $r$. 
On the other hand, the backtrack estimate in \cref{P:backtrack-estimate} implies that
$$
\min(|w_1'' - w_1|, |w_1'' - w_1'|) \le (2k + 1) n^{1/4} \log^{2} n.
$$
Putting these results together and simplifying gives the implication \eqref{E:r-implication}.

Now, consider any vertex $w \in \cN_{I-k-1}^- \cap \tt{RS}_n$. We can find a south vertex $v \in  L(\cS_{I-k}^-) \cap U(\cN_{I-k - 1}^-)$ with $\|v - w\|_\infty \le 1$. This vertex is necessarily in $\tt{RS}_n'$ (again, this uses that the paths $\cS_{I-k}^-, \cN_{I-k - 1}^-$ do not intersect the upper and lower boundaries of $\tt{RS}_n^*$). Since $v \in U(\cN_{I-k - 1}^-)$, the south forest path from $v$ must join $\cS_{I-k}^-$, so we can apply \eqref{E:r-implication} to bound $d_\infty(v, \cS_{I-k}^- \cap \tt{RS}_n^*)$, and then in turn bound $d_\infty(w, \cS_{I-k}^- \cap \tt{RS}_n^*)$ by adding $1$. This gives part $1$ of the corollary. Part $2$ follows a symmetric proof.
\end{proof}

	\section{Convergence of the backbone to the Airy line ensemble}
    \label{S:cvg-airy}
	
	In the section, we prove the main theorem of the paper, \cref{T:main-2}, which shows that the backbone paths $\cS_I^-, \cS_{I-1}^-, \cS_{I-2}^-$ in the south Temperleyan forest converge to the Airy line ensemble near the rough-smooth boundary. Corollary \ref{C:close-ties} will show that the same is also true of the north backbone paths. We begin by stating a more detailed version of the main theorem, which also encompasses Remark \ref{R:variants-main-thm-2}. First, recall the scaling transformation $\gamma_n:\R^2 \to \R^2$ defined prior to \cref{T:main-1-restatement}:
	\begin{equation*}
		\gamma_n(t, x) = n \xi_0(t/n) + (x, 0).
	\end{equation*}
	Define functions $\cA_i^{n, \pm}:\R \to \R, i \in \{1, \dots, I_n\}$ by
	\begin{align*}
		\cA_i^{n, +}(t) &= \sup \{x \in \R : \gamma_n(2^{1/3}\lambda_2 n^{2/3} t, 2^{5/3}\lambda_1 n^{1/3} x) \in \cS_{I_n + 1- i}^- \cap \tt{RS}_n\} \\
		\cA_i^{n, -}(t) &= \inf \{x \in \R : \gamma_n(2^{1/3}\lambda_2 n^{2/3} t, 2^{5/3}\lambda_1 n^{1/3} x) \in \cS_{I_n + 1- i}^- \cap \tt{RS}_n\},
	\end{align*}
	where we let the above supremum/infimum be $-\infty/+\infty$ if the corresponding set is empty. We similarly define $\cA_i^{n, \tt{N}+}, \cA_i^{n, \tt{N}-}$ with the north path $\cN_{I_n - i}^-$ in place of $\cS_{I_n + 1- i}^-$.

	\begin{thm}
		\label{T:main-2-restatement}
		Fix $\ast \in \{+, -, \tt{N}+, \tt{N}-\}$. Then the line ensemble $\{\cA_i^{n, *} : i \in \N\}$ converges to the parabolic Airy line ensemble $\{\cA_i : i \in \N\}$ in the sense of finite dimensional distributions. That is, for any finite set $T \subset \R$ we have
		\begin{equation}
			\label{E:Ain}
			(\cA_i^{n, *}|_T : i \in \N) \cvgd (\cA_i|_T : i \in \N).
		\end{equation}
		Moreover, the limiting Airy line ensemble is the same as the line ensemble in the height function limit from
		\cref{T:main-1-restatement}. More precisely, with $\phi_n, \cH_n, \hat \gamma_n$ as in \cref{T:main-1-restatement} we have the joint FDD convergence
		\begin{equation}
			\label{E:Ain-joint}
			\begin{split}
			(\cH_n^{\phi_n}(\hat \gamma_n(u)) - H_n, \cA_i^{n, *}(t)) \implies (-4 \cA(u), \cA_i(t)).
			\end{split}
		\end{equation}
	Here the joint convergence is over $u \in S, t \in T$, and $i \in \N$, where $S \subset \R^2, T \subset \R$ are arbitrary finite or countable sets. 
		Finally, the convergence in \eqref{E:Ain} and \eqref{E:Ain-joint} is uniform on certain \textit{mesoscopic} sets. That is, in both \eqref{E:Ain} and \eqref{E:Ain-joint} we may replace $\cA_i^{n, *}(\cdot)$ with 
			\begin{equation}
				\label{E:suppinf}
			\sup_{|u| \le n^{-1/3 - \eps}} \cA_i^{n, *}(\cdot + u) \qquad \text{ or } \qquad \inf_{|u| \le  n^{-1/3 - \eps}} \cA_i^{n, *}(\cdot + u)
			\end{equation}
			for any $\eps > 0$, without affecting the result.
	\end{thm}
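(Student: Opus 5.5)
The plan is to read off the path positions from the height function, using \cref{T:main-1-restatement} together with the structural control from \cref{T:main-3-restatement} and \cref{P:backtrack-estimate}. The central identity is \eqref{E:face-ht}, generalised to the corridor between consecutive backbone paths. For an $a$-face $f$ whose south forest path $\pi$ first joins the backbone at $\cS_{I-k}^-$, we have $\cH_n(f) = H_n - 4k + 4\operatorname{Wind}(\pi) + (\text{winding of the backbone segment after joining})$. This holds on $\tt{Reg}_n$, because $\cS_{I-k}^-$ leaves $\tt{RS}_n$ through the skeleton without winding, by Part 3 of \cref{T:main-3-restatement}. So the main task is to show that, for $f$ at Airy-scale locations, the winding term is a local smooth-phase quantity, and that the index $k$ of the path that $\pi$ joins encodes $\cA(u)$.

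The key steps, in order, are as follows.

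Step 1 (no mesoscopic winding). Fix $v=\hat\gamma_n(u)$. Around $v$ we take a ring of $n^{1/3-\eps}$ overlapping boxes of side $\log^3 n$ at distance $\asymp n^{1/3-\eps}$ from $v$. By \cref{L:local-coupling} and a union bound, all boxes are simultaneously coupled to independent smooth phases with probability $1-n^{-\eps/2}$. Inside each coupled box, \cref{L:lerw-estimate-basic} shows that no backbone path can traverse a box transversally against the drift, except with superpolynomially small probability; more precisely, we use \cref{L:onion-control} with $r_2=\log^3 n$. Combined with \cref{P:backtrack-estimate}, where $(2k+1)n^{1/4}\log^2 n\ll n^{1/3-\eps}$ for $\eps<1/12$ and $k=O(1)$, no $\cS_{I-k}^-$ can wind around the ring. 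Hence $\cH_n(v)-(H_n-4k)$ equals the local winding inside the ring, which is exactly $X_u$.

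Step 2 (identify the index). By the previous step, $\cH_n(v)=H_n-4k(v)+X_u$, where $k(v)$ is the number of south backbone paths $\cS_{I}^-,\dots$ lying above $v$ in the vertical ordering. The ordering is well defined up to $O(n^{1/4}\log^2n)$ by \cref{P:backtrack-estimate}, and north paths are matched to south paths by \cref{C:close-ties}. Comparing with \eqref{E:height-bonus-main} and the mollified version \eqref{E:height-mollify-main}, we get $k(\hat\gamma_n(t,x))\to \cA(t,x)=\#\{i:\cA_i(t)\ge x\}$ jointly in $(t,x)$ over finite sets. Then $\cA_i^{n,\pm}(t)\ge x$ iff (up to the backtrack error, which is $o(n^{1/3})$ and therefore invisible after rescaling) $k\ge i$. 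Taking a countable dense grid of $x$ and using that $\cA_i(t)$ has continuous marginals gives FDD convergence, jointly with the heights, so \eqref{E:Ain-joint} follows. For sup and inf, the backtrack bound shows $\cA_i^{n,+}-\cA_i^{n,-}=O(n^{-1/12}\log^2 n)$. The north versions follow from \cref{C:close-ties}.

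Step 3 (mesoscopic uniformity). For \eqref{E:suppinf}, a time window of size $n^{-1/3-\eps}$ corresponds to $n^{1/3-\eps}$ lattice steps. Over such a window, the height at a fixed level of $x$ changes only if a path moves vertically by order $n^{1/3}$ within horizontal distance $n^{1/3-\eps}$. I would rule this out by applying Steps 1 and 2 on a mesh of $n^{\eps/2}$ points in the window and using monotonicity of path positions between mesh points, which follows from the backtrack estimate again.

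The main obstacle is Step 1. We must show rigorously that, with high probability, the path $\pi$ from $v$, after leaving the ring, acquires no net winding before reaching the sink, and also that it does not join a path other than the one determined by vertical ordering. This needs careful use of the Wilson/parabola events inside the ring, together with the topological control of \cref{T:main-3-restatement}.
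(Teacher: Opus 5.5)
Your overall architecture matches the paper's. Both arguments use the face-height identity \eqref{E:face-ht}, a ring of $\log^3 n$-boxes at distance of order $n^{1/3-\eps}$ coupled to independent smooth phases via \cref{L:local-coupling} and a union bound, the fact that the backtrack scale $n^{1/4}\log^2 n$ is below $n^{1/3-\eps}$, mollified heights to read off a corridor index, and \cref{C:close-ties} for the north paths. However, the step that carries the argument is missing.

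What the paper extracts from the ring is the much stronger \cref{L:ring-bd}: none of $\cS_{I-k}^-,\cN_{I-k-1}^-$, $k\le \log^{50}n$, enters the interior box $O_\eps$ at all. Your conclusion that no path winds around the ring does not yield $\cH_n(v)-(H_n-4k)=X_u$. The drift in the coupled boxes only forces a south backbone path to cross the ring from a top box to a bottom box (\cref{L:ring-construction}(iii)). A path doing exactly that, passing close to $v$, is compatible with its drift, with non-winding, and with \cref{P:backtrack-estimate}, which only forbids eastward reversals. In that scenario three things go wrong:
\begin{itemize}
\item Nothing controls windings of the backbone around $v$ at scales between the $o(n^{1/24})$ coupling box and $n^{1/4}\log^2 n$, so the winding ``inside the ring'' is not the smooth-phase term.
\item The mollifier window need not lie in a single corridor.
\item Your vertical-ordering count need not equal the index of the path that the forest path from $v$ joins.
\end{itemize}
This is exactly the ``main obstacle'' you flag, but it sits inside the ring, not after $\pi$ leaves it.

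The missing idea is the dual-path argument. If $\cS_{I-k}^-$ reached $O_{2\eps}$, then by \cref{C:close-ties} $\cN_{I-k-1}^-$ would also enter the interior of the ring. In the coupled boxes north paths can only cross the ring upward while south paths only cross downward. Planarity then cuts $\cN_{I-k-1}^-$ off from the west boundary, a contradiction. Once no-entry is known, \cref{L:height-box-corridor} goes through as you intend. The forest path from $v$ leaves $O_\eps$ downward inside its parabola and joins the path below, and any winding of that path around $v$ would need a backtrack of size at least $n^{1/3-\eps}/2$.

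There are four further issues.
\begin{itemize}
\item You invoke the backtrack bound with $k=O(1)$, but the index of the path reached from $v$ is random. Its tightness cannot be taken from the convergence you are proving. The paper shows a priori (\cref{L:large-paths}, via \cref{T:BCJ18-strong}) that only the paths $\cS_{I-i}^-$ with $i\le \log^4 n$ reach the relevant region.
\item \cref{L:onion-control} is not usable at the box scale, since it needs $r_1\ge 10\ell\sqrt{r_2}\log^2 n$. The paper instead applies \cref{L:lerw-estimate-basic} inside each coupled smooth phase.
\item The claim $\cA_i^{n,+}-\cA_i^{n,-}=O(n^{-1/12}\log^2 n)$ does not follow from \cref{P:backtrack-estimate}: a path moving monotonically west can meet a horizontal line at far-apart points. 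In the paper, closeness of $\sup$ and $\inf$ is an output of \cref{P:key-prop}. It comes from sandwiching both between $\max\{x\in Z_\delta: J^+\ge i\}$ and $\min\{x\in Z_\delta: J^-\le i-1\}$.
\item Step 3 is unnecessary and, as written, unsupported. Step 2 rests on a distributional statement at fixed finite sets, and the backtrack bound gives no monotonicity of path positions between mesh times. All horizontal lines indexed by $t+u$ with $|u|\le n^{-1/3-2\eps}$ pass through the single box $O_\eps(x,t)$. So the no-entry lemma at the finitely many $x\in Z_\delta$ gives the mesoscopic $\sup$ and $\inf$ directly.
\end{itemize}
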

	
	\begin{rem}
		In the final mesoscopic strengthening of the theorem, the trivial estimate allows us to take suprema and infima over ranges of size $\Delta n^{-2/3}$, which is the scale on which we see the discrete lattice. Of course, the optimal result in this direction would be uniform convergence to the Airy line ensemble, as discussed in the introduction.
		
		Moreover, as with \cref{T:main-1-restatement}, we can strengthen the above theorem to allow the finite sets $S, T$ to vary with $n$. We could also give a version of the above theorem with $\cH_n^{\phi_n}$ replaced by $\cH_n$, where in the limit we also see i.i.d.\ copies of the full-plane smooth phase (as in \eqref{E:height-bonus-main}). For brevity, we have omitted these versions.
		\end{rem}

	The main step in proving \cref{T:main-2-restatement} is the following proposition. For this proposition and for the remainder of the section, fix a mollifying sequence $\phi_n$ satisfying the conditions of \cref{T:main-1-restatement}. For a fixed mesh size $\delta > 0$, a line index $i \in \N$ and a time $t \in \R$, define random variables 
	$$
	A_{n, \delta}(i, t) = \max \{x \in [-\delta^{-1}, \delta^{-1}] \cap \delta \Z : [H_n - \cH_n^{\phi_n}(\hat \gamma_n(x, t))]/4 \ge i - \delta \}, 
	$$
	where we set this $\max$ to $-\infty$ if $[H_n - \cH_n^{\phi_n}(\hat \gamma_n(x, t))]/4 < i$ for all $x$ in the given range.
	
	The random variables $A_{n, \delta}$ are an approximation of the location of the $i$-th Airy line at a mesh size $\delta$. Indeed, by \cref{T:main-1-restatement}, 
	$$
	\lim_{\delta \to 0} \lim_{n \to \infty} A_{n, \delta}(i, t) \quad \implies \quad \cA_i(t),
	$$
	jointly over finitely many lines $i$ and times $t$.
	\begin{prop}
		\label{P:key-prop}
		Fix $\eps, \delta > 0$, a time $t \in \R$, and $i \in \N$. Then
		\begin{align*}
			\lim_{\delta \to 0} \lim_{n \to \infty} \P(|A_{n, \delta}(i, t) - \sup_{|u| \le n^{-1/3-\eps}} \cA_i^{n, +}(\cdot + u)| > \sqrt{\delta}) &= 0, \\
			\lim_{\delta \to 0} \lim_{n \to \infty} \P(|A_{n, \delta}(i, t) - \inf_{|u| \le n^{-1/3 -\eps}} \cA_i^{n, -}(\cdot + u)| > \sqrt{\delta}) &= 0.
		\end{align*}
	\end{prop}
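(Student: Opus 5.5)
We will compare, column by column, the mollified height profile along the vertical-ish line $x\mapsto \hat\gamma_n(x,t)$ with the locations where the backbone paths $\cS^-_{I+1-j}$ cross that line. The bridge is the face-height identity \eqref{E:face-ht} from the proof of Corollary \ref{C:height-path-equality}, applied to corridors: a face $f$ whose south forest path $\pi$ joins $\cS_{I-k}^-$ has $\cH_n(f) = 4\operatorname{Wind}(\pi) + 4(I-k) - n - 1$. Together with $H_n = 4I-n-1$ on $\tt{Reg}_n$, this gives $[H_n - \cH_n(f)]/4 = k - \operatorname{Wind}(\pi)$. So we must show two things: that faces lying between $\cS^-_{I+1-i}$ and $\cS^-_{I-i}$ in the relevant column have their forest paths joining one of these two neighbouring backbone paths, and that these paths have zero winding, up to smooth-phase noise that $\phi_n$ averages out.

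\textbf{Step 1 (deterministic geometry on $\tt{Reg}_n$).} By \cref{P:backtrack-estimate} and Corollary \ref{C:close-ties}, within $\tt{RS}_n$ each of $\cS^-_{I+1-i}$, $i\le i_0$, is a curve with backtracks of size at most $O(n^{1/4}\log^2 n)$. The paired north path $\cN^-_{I-i}$ stays within $O(n^{1/4}\log^2 n)$ of it. In Airy coordinates, on a time window $|u|\le n^{-1/3-\eps}$ the horizontal span is $O(n^{1/3-\eps})$ lattice units. Since $n^{1/4}\log^2 n \ll n^{1/3-\eps}$ once $\eps<1/12$, the backtrack bound implies that the sup and the inf over that window of $\cA_i^{n,\pm}$ differ from the values at a single column only by at most $o(1)$ in spatial Airy units, together with the vertical extent of the path inside a column. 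That vertical extent is controlled by the same corridor arguments.

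\textbf{Step 2 (winding control via a ring of couplings).} Following the sketch after \cref{T:overhang-thm}, around each mesh point $\hat\gamma_n(x,t)$, $x\in\delta\Z\cap[-\delta^{-1},\delta^{-1}]$, we place a ring of $n^{1/3-\eps'}$ overlapping boxes of side $\log^3 n$. Each box is coupled to an independent smooth phase using \cref{L:local-coupling}, and a union bound shows all couplings succeed with probability $1-n^{-\eps'/2}$. Smooth-phase drift estimates, via \cref{L:lerw-estimate-basic}, make it superpolynomially unlikely that a backbone path crosses a box while the box is coupled. Hence a backbone path can contribute winding around a mesh point only by encircling the whole ring, and \cref{P:backtrack-estimate} forbids this. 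Inside the ring, $\operatorname{Wind}(\pi)$ is the smooth-phase height, and after averaging over $\phi_n$ it tends to $0$ in probability, exactly as in \eqref{E:hnpsin}. Consequently, with high probability $[H_n-\cH_n^{\phi_n}(\hat\gamma_n(x,t))]/4$ is within $\delta$ of the number of paths $\cS^-_{I+1-j}$ lying above the point. This holds unless the point lies within $o(n^{1/3})$ of some path, and those are precisely the mesh points affected by Step 1.

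\textbf{Step 3 (conclusion).} Step 2 gives that $A_{n,\delta}(i,t)$ lies, up to one mesh step $\delta$, between $\cA_i^{n,-}$ and $\cA_i^{n,+}$ at nearby times. Fix $\eta>0$. Using the Airy limit from \cref{T:main-1-restatement}, distinct lines $\cA_i(t)$ and $\cA_{i+1}(t)$ are separated by more than $\eta$ with high probability, and $\cA_i(t)$ lies inside $[-\delta^{-1},\delta^{-1}]$. Then $\sqrt\delta$-closeness follows after sending $n\to\infty$ and then $\delta\to0$.

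\textbf{Main obstacle.} The hard step is Step 2's claim that the forest path from a face in the corridor joins an adjacent backbone path without winding around the ring. This needs the parabola event $\mathsf{Para}_\tt{S}$ combined with the corridor confinement and backtrack bounds, as in the proof of Corollary \ref{C:close-ties}, uniformly over the mesh.
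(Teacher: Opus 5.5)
There is a genuine gap, and it is in Step 2, which carries the whole argument. You claim that smooth-phase drift makes it superpolynomially unlikely for a backbone path to cross a coupled box of side $\log^3 n$. This does not follow from \cref{L:local-coupling} and \cref{L:lerw-estimate-basic}. On a coupled box the south forest agrees with a smooth-phase forest, in which every vertex lies on a south-drifted path. Whether such a path is a backbone path is a global property that the local coupling cannot see.

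What the coupling and the parabola bounds actually give is only a directional constraint, \cref{L:ring-construction}(iii): south backbone paths can cross the ring inward only through its upper half and outward only through its lower half, and north paths the other way round. So the ring does not stop a backbone path from dipping vertically through it, right past the mesh point. Such vertical excursions are not excluded by \cref{P:backtrack-estimate} either, since that only bounds backtracking in the first coordinate. The Airy limit of the height function cannot locate the paths either; that is what is being proved.

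These excursions are exactly the ``vertical extent'' you defer in Step 1 and the exceptional mesh points (``within $o(n^{1/3})$ of some path'') you set aside in Step 2. Step 3 does not recover from this. Knowing that $A_{n,\delta}(i,t)$ lies between $\cA_i^{n,-}$ and $\cA_i^{n,+}$ is not the statement; it requires both of them to be within $\sqrt\delta$ of $A_{n,\delta}(i,t)$.

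The missing idea is the paper's \cref{L:ring-bd}: with high probability no $\cS^-_{I-k}$ or $\cN^-_{I-k-1}$, $k \le \log^{50} n$, enters the box $O_\eps(x,t)$ of side $n^{1/3-\eps}$. It combines the directional constraint with \cref{C:close-ties}. If $\cS^-_{I-k}$ entered the interior of the ring, then its partner $\cN^-_{I-k-1}$ would have to enter as well, since it lies in $L(\cS^-_{I-k})$ within distance $O(k n^{1/4}\log^2 n) \ll n^{1/3-\eps}$ of it. The opposite crossing orientations of the two paths, read off along the outer boundary of the ring, then force $\cN^-_{I-k-1}$ (which must still reach the west boundary) to re-enter the interior after its last excursion there. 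This is a planarity contradiction.

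This argument also needs \cref{L:large-paths}, that only polylogarithmically many backbone paths reach the Airy window, because the close-ties bound grows linearly in $k$. Your proposal has no counterpart to this. With these in hand, each mesh box lies in a single corridor whose index matches the mollified height (\cref{L:height-box-corridor}). The backtrack estimate then turns $O_\eps(x,t)\subset U(\cS^-_{I+1-i})$ into $\sup_{|u|\le n^{-1/3-2\eps}}\cA_i^{n,+}(t+u)<x$, and symmetrically with $L$ and the infimum, which gives the two-sided sandwich.

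A smaller point: the interior of the ring is not coupled to a smooth phase, so ``inside the ring $\operatorname{Wind}(\pi)$ is the smooth-phase height'' is unjustified. The paper couples only a box of side $o(n^{1/24})$ around the face. It controls the remaining winding using the parabola event, together with the fact that the joined backbone path avoids $O_\eps(x,t)$ and so cannot cross the vertical ray above the face without a forbidden backtrack.
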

	
	Before proving \cref{P:key-prop}, we use it to establish \cref{T:main-2-restatement}.
	
	\begin{proof}[Proof of \cref{T:main-2-restatement}]
		We first note that by Corollary \ref{C:close-ties}, for any fixed $i \in \N$ and $t \in \R$, we have that
		$$
		\P(|\cA_i^{n, +}(t) - \cA_i^{n, \tt{N}+}(t)| \ge n^{-1/12} \log^3 n) \to 0
		$$
		as $n \to \infty$, and similarly with $+ \mapsto -$,  $\tt{N}+\mapsto\tt{N}-$. Therefore it suffices to prove \cref{T:main-2-restatement} only when $* \in \{+, -\}$. Next, write
		$$
		\mathcal B_i^{n, -}(t) = \inf_{|u| \le n^{-1/3-\eps}} \cA_i^{n, -}(\cdot + u), \qquad \mathcal B_i^{n, +} = \sup_{|u| \le n^{-1/3-\eps}} \cA_i^{n, +}(\cdot + u).
		$$
		It furthermore suffices to prove \eqref{E:Ain-joint} with $\mathcal B_i^{n, \pm}$ in place of $\mathcal A_i^{n, \pm}$. 
		
		Now, fix a finite set $T \subset \R$ and a countable dense set $Q \subset \R$. Using \cref{T:main-1-restatement} and the Skorokhod representation theorem, we can consider a coupling of all of the Aztec diamond measures at each $n \in 4 \N$ with the Airy line ensemble so that for any $t, q \in Q$ we have
		\begin{equation}
			\label{E:coupling-quality}
			\cH_n^{\phi_n}(\hat \gamma_n(q, t)) - H_n \to -4\cA(q, t)
		\end{equation}
		almost surely. It suffices to show that in this coupling, for any fixed $i \in \N, t \in Q$, we have
		$\mathcal B_i^{n, \pm}(t) \cvgp \cA_i(t)$. The statement \eqref{E:Ain-joint} then follows by taking $Q$ so that $S \cup  T^2 \subset Q^2$.

		Observe that by \eqref{E:coupling-quality}, we have
		$$
		\limsup_{n \to \infty} \mathbf{1}(|\cA_i(t)| \le \delta^{-1}-1) |A_{n, \delta}(i, t) - \cA_i(t)| \le \delta,
		$$
		and so by \cref{P:key-prop}, 
		$$
		\limsup_{n \to \infty} \P(|\mathcal B_i^{n, \pm}(t) - \cA_i(t)| > \delta + \sqrt{\delta}) \le \P(|\cA_i(t)| > \delta^{-1}).
		$$
		Since $\delta > 0$ was arbitrary and the random variables $\cA_i(t)$ are all finite, this gives the desired result.
	\end{proof}
	
	In the remainder of the section we prove \cref{P:key-prop}. 
	
	The main part of the proof involves showing that around any fixed point $\hat \gamma_n(x, t)$ we can define a box $O_\eps(x, t)$ size $n^{1/3 - \eps} \times n^{1/3-\eps}$ such that with high probability:
	\begin{itemize}[nosep]
		\item No south backbone paths enter the box $O_\eps(x, t)$
		\item We have
		\begin{equation}
		\label{E:O-containment}
		O_\eps(x, t) \subset U(\cS_{I_n-J}) \cap L(\cS_{I_n - J+1}),
		\end{equation}
		where 
		$
		J \approx (H_n - \cH_n^{\phi_n}(\hat \gamma_n(x, t)))/4.
		$
	\end{itemize}
	The dimensions of the box $O_\eps(x, t)$ are important here. The key point will be that the width of the box is larger than the size of the maximum backtrack of any south backbone path entering the region. The maximum backtrack size is $O(n^{1/4}\log^2 n)$ by \cref{P:backtrack-estimate}. The idea for constructing the box $O_\eps(x, t)$ is to use the local smooth phase coupling, \cref{L:local-coupling}, to rule out the potential presence of backbone paths. The difficulty with that lemma is that it only provides a coupling on a rectangle of width $o(n^{1/12})$, which is too small to make \cref{P:backtrack-estimate} effective. The workaround is to first build a ring of overlapping boxes of small size, each of which is individually coupled to the full-plane smooth phase. We build the rings in the next lemma, and then use that lemma together with the path regularity from Section \ref{S:regularity} to verify that a box inside this ring satisfies the two bullet points above.
	
	\begin{lem}
	\label{L:ring-construction}
	Let $D \sim \P_{a, n}$, and fix $\eps > 0, (x, t) \in \R^2$. For ease of notation, set $c_n = \log^3 n$. Define the set 
	$$
	S_\eps = \{v \in \Z^2 : \|v\|_1 = \lfloor n^{1/3 - \eps} \rfloor\},
	$$
	and for $v \in S_\eps$ let 
	$$
	B_v = \hat \gamma_n(x, t) + c_n(v + [-3/5, 3/5]^2).
	$$
	Then with probability $1 - o(1)$, the following claims hold:
	\begin{enumerate}[label=(\roman*)]
		\item We can couple $D$ with $|S_\eps|$-many copies of the smooth phase $D_v \sim \P_a, v \in S_\eps$ such 
		$$
		D_v|_{B_v} = D|_{B_v}, \qquad \text{ for all } v \in S_\eps.
		$$
		\item In each of the smooth phases $D_v, v \in S_\eps$, any path $\pi$ in the associated south forest, started at a point in $D_v$, satisfies
		$$
		\pi \subset P^{\log^2 n}_\tt{S}.
		$$
		Similarly, any path $\pi$ in the associated north forest, started at a point in $D_v$, satisfies
		$$
		\pi \subset P^{\log^2 n}_\tt{N}.
		$$
		\item Let $T = \bigcup_{v \in S_\eps} B_v$, let $\partial^- T$ be the part of the boundary of $T$ connected to the bounded component $U^-$ of $T^c$, and let $\partial^+ T$ be the part of the boundary of $T$ connected to the unbounded component $U^+$ of $T^c$. 
		
		 Consider any north or south backbone path $\pi$, and let $\pi_0:[s, t] \to \R$ be a segment of $\pi$ with $\pi_0|_{(s, t)}$ contained in the interior of $T$, and $\pi_0(s), \pi_0(t)$ on the boundary of $T$. Then:
		 \begin{itemize}
		 	\item If $\pi$ is a south backbone path, $\pi_0(s) \in \partial T^+$ and $\pi_0(t) \in \partial T^-$, then $\pi_0(s)$ is contained in a box $B_v$ with $v_2 \ge 1$.
		 	\item If $\pi$ is a south backbone path, $\pi_0(s) \in \partial T^-$ and $\pi_0(t) \in \partial T^+$, then $\pi_0(s)$ is contained in a box $B_v$ with $v_2 \le - 1$.
		 	\item If $\pi$ is a north backbone path, $\pi_0(s) \in \partial T^+$ and $\pi_0(t) \in \partial T^-$, then $\pi_0(s)$ is contained in a box $B_v$ with $v_2 \le -1$.
		 	\item If $\pi$ is a north backbone path, $\pi_0(s) \in \partial T^-$ and $\pi_0(t) \in \partial T^+$, then $\pi_0(s)$ is contained in a box $B_v$ with $v_2 \ge 1$.
		 \end{itemize} 
	\end{enumerate}
	\end{lem}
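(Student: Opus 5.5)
We prove the three claims in order, each on an event of probability $1-o(1)$, and intersect at the end.

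For (i), every box $B_v$ has side length $\tfrac{6}{5}c_n$, so $|B_v \cap \tt{V}| = O(\log^6 n)$. The centres satisfy $\|c_n v\|_\infty \le c_n n^{1/3-\eps} = o(n^{1/3})$, so all boxes lie in a region $\beta_n([-Ln^{2/3},Ln^{2/3}]\times[-Ln^{1/3},Ln^{1/3}])$ for a fixed $L$; the shift between $\hat\gamma_n$ and $\beta_n$ coordinates is harmless. \cref{L:local-coupling} then gives, for each $v$, a coupling with $d_{\operatorname{TV}} \le Cn^{-1/3}\log^{24} n$. We use these couplings conditionally independently given $D$, via the maximal coupling of $D|_{B_v}$ with $\P_a|_{B_v}$, extended to the full plane by sampling the remainder of $D_v$ from $\P_a$ conditionally on $D_v|_{B_v}$. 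Since $|S_\eps| = O(n^{1/3-\eps})$, a union bound shows that all couplings succeed with probability at least $1 - Cn^{-\eps}\log^{24}n$.

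For (ii), note that $D_v \sim \P_a$ is exactly the smooth phase. By Wilson's algorithm rooted at infinity, each south forest path in $D_v$ started at a vertex $u$ is $\operatorname{LE}(X)$ for a south-biased walk $X$ from $u$, so by \cref{L:lerw-estimate-basic} it lies in $u + P^{\log^2 n}_\tt{S}$ with probability at least $1-2e^{-d_a\log^2 n}$. We read (ii) as this statement for paths started in $B_v$, followed up to their exit from a slightly enlarged box. A union bound over the $O(n^{1/3}\log^6 n)$ starting vertices, over all $v$, and over north forests via $\rho_\tt{N}$, costs only a superpolynomially small amount.

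For (iii), work on the intersection of the events in (i) and (ii). The key point is that each box $B_v$ has width $\tfrac65 c_n = \tfrac65\log^3 n$, whereas the parabola $P^{\log^2 n}$ has horizontal half-width of order $\log^2 n\sqrt{h}$ at depth $h$. Take a south backbone path $\pi$ and a crossing $\pi_0$ of the annulus $T$ from $\partial^+T$ to $\partial^-T$. By the ring geometry, $\pi_0$ must pass through a chain of consecutive overlapping boxes $B_{v}$, and the overlaps have width $c_n/5$. Inside the coupled region, the backbone path coincides with a forest path of $D_v$, since in the south forest the outgoing edge of each vertex is determined by $D|_{B_v}$. By (ii), any segment of $\pi$ travelling within coupled boxes is confined to a south-opening parabola from its starting point, so it cannot gain height by more than $\log^2 n$. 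Moving from the outer boundary to the inner boundary through a box with $v_2 \le 0$ means moving upward by order $c_n$ (with $v_2 = 0$ the motion is horizontal, over a distance $\gg \sqrt{c_n}\log^2 n$). Therefore the entry point must lie in a box with $v_2\ge1$, where the crossing moves downward. The remaining three bullets follow symmetrically, using north-opening parabolas for north paths and swapping the roles of $\partial^\pm T$.

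The main obstacle is (iii). We must make sure the confinement in (ii) applies to backbone pieces that move between distinct boxes $B_v, B_{v'}$, which are coupled to different smooth phases. We plan to handle this by choosing, for each short subsegment, the single box containing it, which is possible because consecutive boxes overlap on a strip of width $c_n/5 \gg \log^2 n$. We then apply (ii) to that box's smooth phase, restricted to the forest path started at the subsegment's first vertex, stopped on exiting the box. We also need to check that the parabola bound $\log^2 n\sqrt{c_n}$ against the lateral thickness $\tfrac65 c_n$ does leave room: $\log^{2}n\cdot\log^{3/2}n$ is not smaller than $\log^3 n$. If this fails, we would first try enlarging $c_n$ to $\log^{6}n$, which does not affect (i) since the failure probability becomes $n^{-\eps}\log^{48}n$.
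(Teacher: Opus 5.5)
\textbf{Verdict.} Your (i) and (ii) follow the paper's argument exactly: \cref{L:local-coupling} with a union bound over the $O(n^{1/3-\eps})$ boxes, and \cref{L:lerw-estimate-basic} with a union bound over starting vertices. For (iii) you also have the paper's idea of confining each stretch of a backbone path inside a single $B_v$ via the coupled smooth phase. However, (iii) as you argue it has a genuine gap, for two reasons.

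\textbf{The geometric claim is wrong.} A crossing from $\partial^+T$ to $\partial^-T$ through a box with $v_2\le -1$ need not move upward by order $c_n$. The ring is a diamond. Take $v=(-k,-(N-k))$ with $1\le k\le N-1$ and $N=\lfloor n^{1/3-\eps}\rfloor$. At heights within $\frac25 c_n$ of the centre of $B_v$, the left side of $B_v$ lies on $\partial^+T$ and its right side lies on $\partial^-T$. So a purely horizontal traversal of length $\frac65 c_n$ crosses the ring. You therefore need horizontal confinement at scale $o(c_n)$ in every box with $v_2\le 0$, not only when $v_2=0$.

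\textbf{The parabola from (ii) does not give that confinement.} As you noticed, at depth $h\le \frac65 c_n$ the half-width of $P^{\log^2 n}_{\mathtt S}$ is about $\log^2 n\sqrt h$, which reaches order $\log^{7/2} n\gg c_n$. So this bound allows a path to move $\frac65\log^3 n$ sideways while descending only $O(\log^2 n)$. The paper's own proof glosses over the same point: it asserts, from (ii) alone, that a stretch inside $B_v$ moves horizontally by at most $c_n/100$. Your fallback $c_n=\log^6 n$ would prove a different lemma, because the statement fixes $c_n=\log^3 n$.

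\textbf{A repair that keeps the statement.} Claim (iii) only has to hold with probability $1-o(1)$, so it need not follow deterministically from (ii) itself.
\begin{enumerate}
\item Run your union bound for (ii) with $\alpha_n=\log^{5/4}n$ in place of $\log^2 n$. By \cref{L:lerw-estimate-basic}, every forest path (south or north) of every $D_v$ started at $u\in B_v$ stays in $u+P^{\alpha_n}_{\mathtt S}$ (resp.\ $u+P^{\alpha_n}_{\mathtt N}$), outside an event of probability $O(n^{1/3}\log^6 n\, e^{-d_a\log^{5/4} n})=o(1)$.
\item Since $P^{\alpha}$ increases with $\alpha$, this event implies (ii).
\item Within one box it bounds the rise by $\log^{5/4}n$ and the horizontal displacement by $O(\log^{11/4}n)$, both $o(c_n)$.
\item You also need the paper's step that a stretch obeying these per-box bounds can visit at most three boxes. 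Without it, the bounds can accumulate along your ``chain of overlapping boxes'', which you do not control.
\item Start from a point of $\partial^+T$ that lies only in boxes with $v_2\le 0$. Reaching any point of $\partial^-T$ from there requires a rise of at least $c_n/5$ or a horizontal displacement of at least $c_n/5$. So no such crossing exists.
\item The other three bullets follow symmetrically.
\end{enumerate}
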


    \begin{figure}
        \centering
        \includegraphics[width=0.8\linewidth]{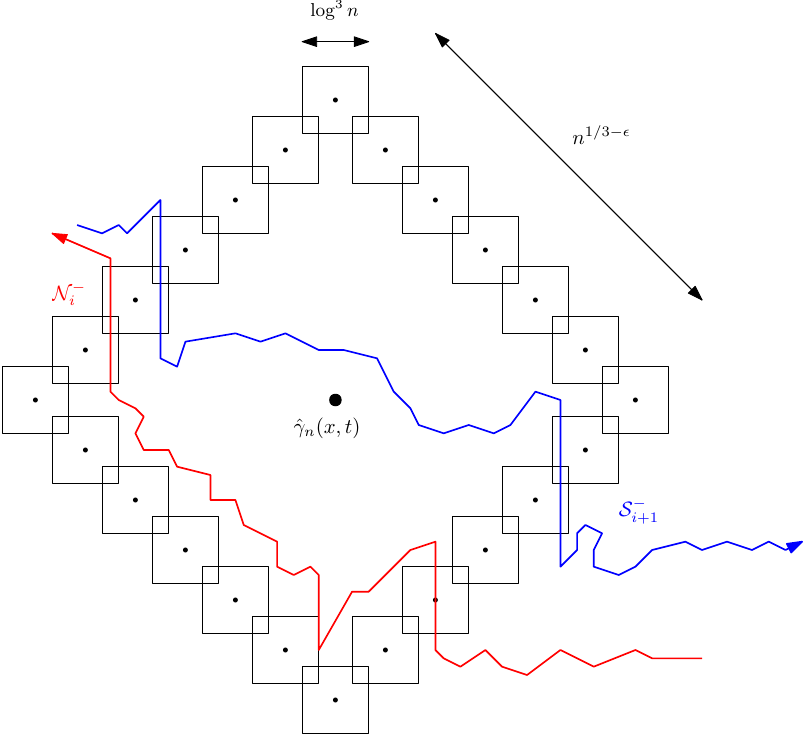}
        \caption{The setup of \cref{L:ring-construction}, together with the proof of (iii) given (ii). Essentially, the smooth phase couplings in part (ii) force strong drifts through the ring of boxes $\{B_v, v \in S_\eps\}$. This means $\cS$-paths can only enter through a box in the top half of the ring, and exit through a box in the bottom, whereas the opposite is true for $\cN$-paths.}
        \label{fig:lemma73}
    \end{figure}
	
	\begin{proof}
By \cref{L:local-coupling} and a union bound bound over $|S_\eps|$-many boxes, we can achieve all of the couplings in part (i) simultaneously with probability at least
$$
1 - c |S_\eps| n^{-1/3} \log^{24} n \ge 1 - c n^{-\eps/2},
$$
where the constant $c$ depends on $x, t$, and the inequality holds for large enough $n$.
The parabolic path control in part (ii) follows from \cref{L:lerw-estimate-basic} and a union bound, using that paths in the smooth phase forests are loop-erased random walks.

Part (iii) is deterministic given part (ii). See \cref{fig:lemma73} for an illustration. We only prove the claim about south backbone paths, as the claim about north backbone paths follows symmetrically. Consider a given path segment $\pi_0$, let $B_v$ be a box that $\pi_0$ intersects, and let $\tau:[s', t'] \to \R$ be a segment of $\pi_0$ contained in $B_v$. If we define $\tilde \tau:[s', t'] \to [-1, 1]^2$ by letting
\begin{align*}
\tau(r) = \hat \gamma_n(x, t) + c_n(v + \tilde \tau(r)),
\end{align*}
then by part (ii), for large enough $n$ we have that for any $r < r' \in [s', t']$:
\begin{align}
	\label{E:tau-bds}
|\tilde \tau(r')_1 - \tilde \tau(r)_1| \le 1/100, \qquad 
\tilde \tau(r')_2 - \tilde \tau(r)_2 \le 1/100.
\end{align}
Now, for any south backbone path segment $\pi_0$ contained in $T$, the inequalities above imply that $\pi_0$ can enter at most three distinct boxes, and that 
$$
|(\pi_0(s))_1 - (\pi_0(t))_1| \le 3c_n/100.
$$
Using this estimate, and the second bound in \eqref{E:tau-bds}, we get that if $\pi_0$ first enters $T$ through a box $B_v$ with $v_2 \le 0$ on the boundary $\partial^+ T$, then it cannot exit $T$ on the boundary $\partial^- T$. This gives the first bullet. Similarly, $\pi_0$ exits into $\partial^+ T$ through a box $B_v$ with $v_2 \ge 0$, then it cannot have started in the set $\partial^- T$. This gives the second bullet.
	\end{proof}
	
	\begin{lem}
		\label{L:ring-bd}
Fix $\eps > 0$ and define a set
$$
O_\eps = O_\eps(x, t) = \hat \gamma_n(x, t) + [-n^{1/3 - \eps}, n^{1/3 - \eps}]^2.
$$
Then with probability $1 - o(1)$, no backbone paths of the form $\cS_{I - k}^-, \cN_{I - k - 1}^-, k \in \{0, \dots, \lfloor \log^{50} n \rfloor\}$ enter the set $O_\eps$.
	\end{lem}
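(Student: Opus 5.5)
We will work on the intersection of $\tt{Reg}_n$, the conclusions of \cref{P:backtrack-estimate} and \cref{C:close-ties}, and the probability $1-o(1)$ event of \cref{L:ring-construction}, built with a slightly smaller radius $\eps' < \eps$. Then $O_\eps \subset U^-$, the bounded component inside the ring $T$, and the ring has width of order $n^{1/3-\eps'}$. We take $\eps' < \min(\eps,1/12)/2$. This ensures $n^{1/3-\eps'} \gg (2\log^{50} n + 2) n^{1/4}\log^2 n$, which is the backtrack bound of \cref{P:backtrack-estimate} for every index $k \le \log^{50} n$. Since $\hat\gamma_n(x,t)$ lies well inside $\tt{RS}_n$, the whole ring and its interior lie in $\tt{RS}_n^*$, so the backtrack estimates apply to every path segment we meet there.

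Suppose, for contradiction, that a path $\pi \in \{\cS_{I-k}^-, \cN_{I-k-1}^-\}$ with $k \le \log^{50} n$ enters $O_\eps$. It starts outside $T$ (on the south boundary) and ends outside $T$ (on the west boundary), so it must cross the ring inward and later cross it outward. We take $\pi$ to be a south path; the north case is symmetric with the roles of top and bottom exchanged. By \cref{L:ring-construction}(iii), the inward crossing begins in a box $B_v$ with $v_2 \ge 1$, so it enters on the top half of the ring. The outward crossing begins at a point of $\partial^- T$ in a box with $v_2 \le -1$, so it exits through the bottom half.

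We now turn this into a large backtrack. Look at the sub-segment of $\pi$ that runs from its inward entry, through $O_\eps$, to its outward exit. While inside $U^-$ this segment travels from the top of the ring to the bottom. Following the \emph{argument} of \cref{L:ring-construction}(iii), the drift bounds \eqref{E:tau-bds} and the diamond shape of $S_\eps$ together force it to move horizontally in a particular direction. More precisely, the order of the entry and exit points on the diamond, combined with the orientation of $\pi$ (it goes from the south boundary to the west boundary and stays in $\tt{RS}_n^*$, below $\tt{X}$), should give two points $v$ before $w$ on $\pi$ with $w_1 - v_1 \gtrsim n^{1/3-\eps'}$. I would try to obtain this by intersecting $\pi$ with the horizontal line through $\hat\gamma_n(x,t)$, which meets both the left and right corners of the ring.

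This contradicts \eqref{E:backtrack-ineq} or \eqref{E:backtrack-ineq-induct-N}. If the direct geometric argument fails, I would instead use the corridor structure: the region above $\pi$ inside $U^-$ would be a pocket of width about $n^{1/3-\eps'}$ containing no south backbone paths. A north forest path started at its centre, which must stay in a parabola $u + P^{\log^2 n}_\tt{N}$, would then be trapped, exactly as in the base case of \cref{P:backtrack-estimate}.

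I expect the main obstacle to be this topological step: rigorously showing that "enter through the top, exit through the bottom" forces a left-to-right displacement of size $n^{1/3-\eps'}$ along the path's orientation, rather than a harmless displacement in the opposite direction. This needs care with the possible winding of $\pi$ around $O_\eps$ and with repeated entries into the ring. If that fails, the fallback is the trapped-forest-path argument applied to the pocket between consecutive crossings, as described above.
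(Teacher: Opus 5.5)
There is a genuine gap, and it sits exactly in the step you flag. The claim that entering $U^-$ through the top half of the ring and leaving through the bottom half forces a pair $v$ before $w$ on $\pi$ with $w_1-v_1\gtrsim n^{1/3-\eps'}$ is false. Consider the following path $\cS_{I-k}^-$:
\begin{enumerate}
\item it passes east of the ring while moving north;
\item it enters through a top-half box and descends through $U^-$;
\item it leaves through a bottom-half box;
\item it then passes west of the ring while moving north.
\end{enumerate}
Its first coordinate is non-increasing throughout; the horizontal line through $\hat\gamma_n(x,t)$ is crossed first at the east, then in the middle, then at the west. So \eqref{E:backtrack-ineq} is never violated. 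The horizontal backtrack bound only excludes the mirror excursion that passes west of the ring first, since that one must later get east of it.

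Your fallback fails on the same configuration. The region of $U(\pi)$ containing the western part of $U^-$ is bounded by $\pi$ only on its east, south and west, and is open to the north, so north forest paths simply escape upward. The complementary pocket in $L(\pi)$ is closed above by $\pi$. But it contains the paired path $\cN_{I-k-1}^-$, so nothing is trapped there either.

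The north case is not symmetric in the way you assert. For a north backbone path, entering through the bottom half and leaving through the top half is its natural direction of travel. It can cross $U^-$ along an essentially straight north-west line, with no backtrack and no trapped forest paths. Since this south configuration satisfies every constraint you impose on $\pi$ alone, no argument using only $\pi$'s own constraints can exclude it.

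The missing idea is to use each south path together with its dual partner. Suppose $\cS_{I-k}^-$ enters $O_\eps$. Part 2 of \cref{C:close-ties} then gives a vertex of $\cN_{I-k-1}^-$ within $(2k+1)n^{1/4}\log^2 n\ll n^{1/3-\eps}$. Hence $\cN_{I-k-1}^-$ also enters $U^-$, inside some component $R$ of $L(\cS_{I-k}^-)\cap(T\cup U^-)$.

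This component is bounded by two pieces: a segment of $\cS_{I-k}^-$ that enters $T$ in a top-half box and leaves in a bottom-half box, and an arc of $\partial^+T$. By the third and fourth bullets of \cref{L:ring-construction}(iii), any segment of $\cN_{I-k-1}^-$ crossing into $R\cap U^-$ enters in a bottom-half box and leaves in a top-half box. This fixes how the four endpoints interleave along the arc. Combined with the facts that $\cN_{I-k-1}^-$ cannot cross itself or $\cS_{I-k}^-$ and must reach the west boundary, the interleaving gives a contradiction. In the S-shaped picture above the contradiction is immediate: $\cN_{I-k-1}^-$ hugs $\cS_{I-k}^-$ on its $L$-side, so it would itself have to enter through the top and leave through the bottom.

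The case of $\cN_{I-k-1}^-$ entering $O_\eps$ is handled the same way with the roles swapped, using part 1 of \cref{C:close-ties} to bring $\cS_{I-k}^-$ into $U^-$. You include \cref{C:close-ties} in your good event but never use it. Treating the south and north paths separately discards precisely the interaction that makes the lemma true.
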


    \begin{figure}
        \centering
        \includegraphics[width=0.75\linewidth]{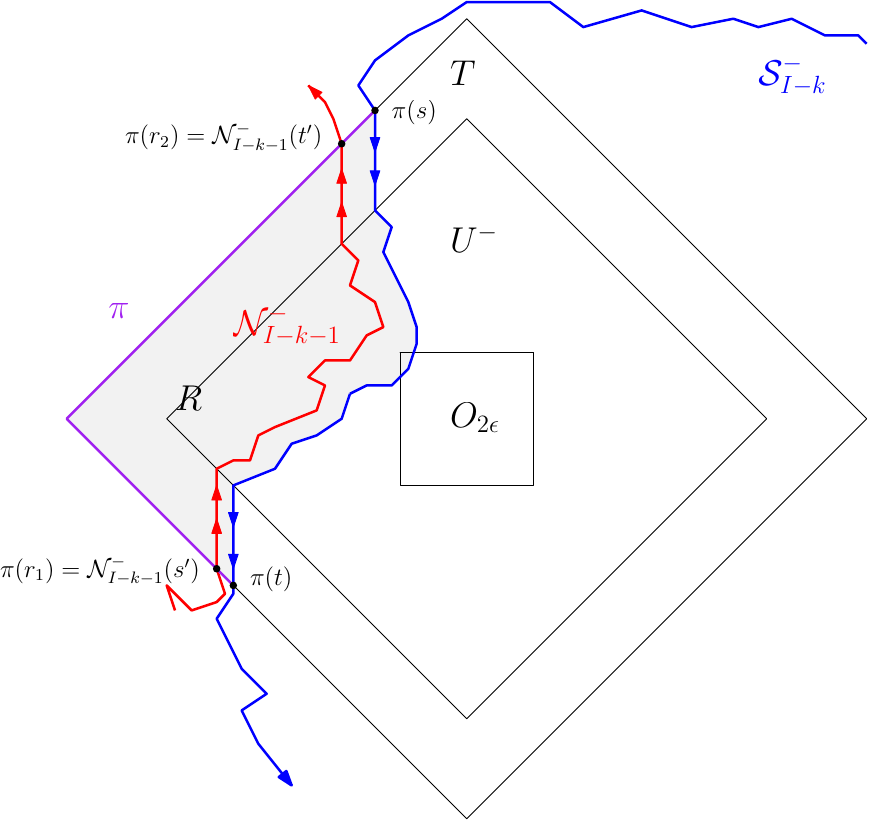}
        \caption{The proof setup for \cref{L:ring-bd}. }
        \label{fig:lemma7.4}
    \end{figure}
	\begin{proof}
	Fix $\eps \in (0, 1/12)$, and consider the probability $(1 - o(1))$-event where points (i)-(iii) of \cref{L:ring-construction} hold with $\eps$ and points $1, 2$ or Corollary \ref{C:close-ties} hold. On this event, we will show that deterministically, no backbone paths of the form $\cS_{I - k}^-, \cN_{I - k - 1}^-, k \in \{0, \dots, \lfloor \log^{50} n \rfloor\}$ enter the set $O_{2\eps}$ as long as $n$ is sufficiently large. 
	
	Fix $k \in \{0, \dots, \lfloor \log^{50} n \rfloor - 1\}$ and suppose that $\cS_{I - k}^-$ enters the set $O_{2 \eps}$. Let $T, \partial^\pm T, U^\pm$ be as in \cref{L:ring-construction}. The set $O_{2\eps}$ is contained in $U^-$, and moreover, every point in $T$ is $L^\infty$-distance at least $n^{1/3 - \eps} > n^{1/4} \log^{100} n$ away from $O_{2\eps}$. In particular, by Corollary \ref{C:close-ties}, the path $\cN_{I - k - 1}^-$ must also enter the set $U^-$. This path also stays in the set $L(\cS_{I-k}^-)$.
	
	Now, let $R$ be a connected component of $L(\cS_{I-k}^-) \cap (T \cup U^-)$ such that $R \cap U^- \ne \emptyset$. We aim to show that $\cN_{I - k - 1}^-$ cannot enter $R \cap U^-$, which will complete the proof since $R$ is arbitrary. The proof setup is illustrated in \cref{fig:lemma7.4}.
    
    The boundary of $R$ can be decomposed into two pieces: 
	\begin{itemize}[nosep]
		\item $\partial_1 R$, which is given by a path segment $\cS_{I - k}^-|_{[s, t]}$, where $\cS_{I - k}^-|_{(s, t)}$ is contained in the interior of $T \cup U^-$ and $\cS_{I - k}^-(s), \cS_{I - k}^-(t) \in \partial^+ T$.
		\item  $\partial_2 R$, which is a closed connected subset of $\partial^+ T$ between the points $\cS_{I - k}^-(s)$ and $\cS_{I - k}^-(t)$.
	\end{itemize}
	We view $\partial_2 R$ as a parametrized simple curve $\pi:[s, t] \to \R$ and choose the parametrization so that 
	$$
	\cS_{I - k}^-(s) = \pi(s), \qquad \cS_{I - k}^-(t) = \pi(t),
	$$
	and so that $\pi$ is one-to-one. 
Now at this point, suppose $\cN_{I - k - 1}^-$ intersects the set $R \cap U^-$, and let $\cN_{I - k - 1}^-|_{[s', t']}$ be a segment of $\cN_{I - k - 1}^-$ such that:
	\begin{enumerate}[nosep, label=\arabic*.]
		\item $\cN_{I - k - 1}^-|_{[s', t']}$ is contained in $R$, and intersects the set $U^-$.
		\item $\cN_{I - k - 1}^-(s'), \cN_{I - k - 1}^-(t')$ are on the boundary $\partial^+ T$.
		\item $t'$ is chosen maximally among all choices of $s', t'$ satisfying $1, 2$.
	\end{enumerate}
    Now, by the first two bullet points in \cref{L:ring-construction}(iii), there exists $v, w \in S_\eps$ such that
	$$
	\cS_{I - k}^-(s) \in B_{v}, \qquad \cS_{I - k}^-(t) \in B_{w}, \;\;\; \text{ and } v_{1} \ge 1, \;\; w_{1} \le -1,
	$$
	and by the third and fourth bullet points in \cref{L:ring-construction}(iii), there exists $v', w' \in S_\eps$ such that
	$$
	\cN_{I - k-1}^-(s') \in B_{v'}, \qquad \cN_{I - k-1}^-(t') \in B_{w'}, \;\;\; \text{ and } v_{1}' \le -1, \;\; w_{1}' \ge 1.
	$$
Now let $r_1, r_2 \in (s, t)$ be such that $\mathcal N_{I-k-1}^-(s') = \pi(r_1)$ and $\mathcal N_{I-k-1}^-(s') = \pi(r_2)$. From the previous two displays, we have that $s < r_2 < r_1 < t$ (see also \cref{fig:lemma7.4}).

On the other hand, consider the curve
$$
\tau = \cS_{I-k}^-|_{[0, t]} \cup \cN_{I-k-1}^-|_{[0, s']} \cup \pi_{[r_1, t]}.
$$
The set $\tau$ cuts off the curve $\cN_{I-k-1}^-|_{(s', t')}$ from the west boundary of the Aztec diamond, in the sense that $\cN_{I-k-1}^-|_{(s', t')}$ and $\{-n\} \times [-n, n]$ are in different components of the set $[-n, n]^2 \setminus \tau$. Therefore since the path $\cN_{I-k-1}^-$ finishes on the west boundary, there must be a point $r' \ge t'$ such that $\cN_{I-k-1}^-(r') \in \tau$. Since $r_2 < r_1$ and $\cN_{I-k-1}^-$ cannot intersect itself or $\cS_{I-k}^-$, the only way this can happen is if the path $\cN_{I-k-1}^-$ cuts back through the set $R \cap U^-$, (again, \cref{fig:lemma7.4} may help here). This contradicts the maximality of $t'$.
    
    Therefore $\cS_{I-k}^-$ cannot enter the set $O_{2\eps}$ for $\eps < 1/12$. A symmetric proof with the roles of the north and south paths reversed implies that $\cN_{I-k-1}^-$ cannot enter $O_{2\eps}$. Since $\eps > 0$ was arbitrary, the lemma follows.
	\end{proof}
	
	Our next goal is to prove \eqref{E:O-containment}. To do so, we first need an estimate on the minimum index of a path that enters a region near $O_\eps(x, t)$.	For this lemma, we will need to work with the scaling map $\tilde \beta_n$ used in \cref{T:BCJ18-strong}, introduced at the beginning of Section \ref{S:proof-t1}.
	
	\begin{lem}
	\label{L:large-paths}
	Fix $L > 0$, and consider the region
	$$
	\tt{A}_L = \tilde \beta_n([-L, L] \times [-L, \log^3 n]).
	$$
	Define
	$$
	K(L) = \max \{ i \in \N : \cS_{I-i}^- \text{ or } \cN_{I-i}^- \text{ enters the region } \tt{A}_L\}.
	$$
	Then
	$
	\P(K(L) \le \log^4 n) = 1 - o(1).
	$
	\end{lem}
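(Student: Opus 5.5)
We will bound $K(L)$ by relating path indices to heights, using the rule that path index equals height over $4$. The idea is that if a path $\cS_{I-i}^-$ or $\cN_{I-i}^-$ with $i$ large enters $\tt{A}_L$, then some face near the entry point sits below roughly $i$ south backbone paths and above none of the others. Such a face should carry height about $H_n + 4i$ up to logarithmic winding corrections. Its height is then much larger than the heights allowed in $\tt{A}_L$, which are controlled by \cref{T:BCJ18-strong} and \cref{T:main-1-restatement}.

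\textbf{Height bound on $\tt{A}_L$.} We cover $\tt{A}_L$ by a mesh of points $\tilde\beta_n(t_j, x_j)$ with spacing much smaller than $n^{1/3}$ in the $x$-direction, say $x$-spacing $\log^{-1} n$ in scaled units, and similarly fine in $t$. At each mesh point we compare $H_n - \tilde\cH_n(t_j,x_j)$ with $\tilde\cH_n(t_j,\log^3 n) - \tilde\cH_n(t_j,x_j)$. The first step uses the argument for \eqref{E:cvg0-in-prob}: with high probability $\tilde\cH_n^{\psi_n}(t,\log^3 n)$ equals $H_n$ up to $o(1)$, uniformly over the finitely many $t$-mesh values. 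For the increment, \cref{T:BCJ18-strong} gives exponential moments with $x_n = -L$. Combined with monotonicity in $x$ (\cref{L:concentration-lem}, applied along columns), this gives $H_n - \tilde\cH_n^{\psi_n} \le \log^2 n$ on all of $\tt{A}_L$ with probability $1-o(1)$. More precisely, Markov's inequality on $e^{\eps(\cdot)}$ controls each point with probability $1 - O(n^{-c})$, and the union bound is taken over polylogarithmically many $t$-values.

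\textbf{Passing from mollified heights to pointwise heights.} The mollifier adds at most $O(\log^3 n)$ error, using local smooth phase couplings and \cref{L:smooth-phase-heights-decor} on boxes of size $\log^3 n$, in the manner of \cref{L:ring-construction}. After this step, $H_n - \cH_n(f) \le 2\log^3 n$ for all faces $f \in \tt{A}_L$.

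\textbf{Translating path index to height.} Suppose $\cS_{I-i}^-$ enters $\tt{A}_L$ at a vertex $w$ with $i > \log^4 n$. Take the $a$-face $f$ adjacent to $w$ lying in $U(\cS_{I-i}^-)$. Follow the south forest path $\pi$ from $f-(0,1)$, and use \eqref{E:face-ht} in its general form: $\cH_n(f) = 4\operatorname{Wind}(\pi) + 4I' - n - 1$, where $I'$ is the index of the backbone path $\pi$ joins. Path ordering forces $I' \le I - i + 1$, and on $\tt{Reg}_n$ we have $H_n = 4I - n - 1$. Together these give $H_n - \cH_n(f) \ge 4(i-1) - 4|\operatorname{Wind}(\pi)|$.

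\textbf{Main obstacle: bounding the winding.} The hard step is showing $|\operatorname{Wind}(\pi)| \le \log^3 n$. The path $\pi$ consists of a loop-erased walk segment, which stays in a parabola on $\mathsf{Para}_\tt{S}(\log^2 n)$ and so contributes $O(1)$ winding. It then follows the backbone $\cS_{I'}^-$ to the west sink. Any winding of that backbone around $f$ would require a backtrack in $\tt{RS}_n^*$ or an excursion forbidden by \cref{T:main-3-restatement}(3). \cref{P:backtrack-estimate} rules out winding around points of $\tt{RS}_n$ except within distance $O(k n^{1/4}\log^2 n)$, but $k$ here may be as large as $n^{1/4}$. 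For this reason we first reduce to $i \le n^{1/4}$ indices: larger indices avoid $\tt{PRS}_n^*$ by \cref{T:main-3-restatement}(5) and \cref{L:basic-path-ctrl}. The remaining worry is winding within the backtrack scale, which we would handle by choosing $f$ on the side where the local picture is controlled by the onion bound \cref{L:onion-control}.

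The north paths are handled the same way via \cref{L:dimer-compatible-duals}. Comparing $4(i-1) - O(\log^3 n) \le 2\log^3 n$ then forces $i \le \log^4 n$.
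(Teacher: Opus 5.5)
The gap is the winding step, and it is not a technicality. You route the height of $f$ through the whole south forest path $\pi$ to the west sink, so you must bound the net number of crossings of the ray above $f$ by the entire tail of $\cS^-_{I'}$. Nothing in the paper gives this. \cref{P:backtrack-estimate} only caps backtracks at $(2i+1)n^{1/4}\log^2 n$. Since $f$ is adjacent to the backbone, there is no backbone-free box around it (the role $O_\eps$ plays in \cref{L:height-box-corridor}), so loops around $f$ at every scale below that cap are not excluded. \cref{L:onion-control} only forces onion widths below $10\ell\sqrt{r_2}\log^2 n$, which by itself still allows nested loops around $f$ of width up to order $n^{1/4}$ inside $\tt{RS}^*_n$. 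A polylogarithmic bound on $\operatorname{Wind}(\pi)$ at such faces is essentially what the lemma asserts, so ``choosing $f$ on the controlled side'' does not close the step. Separately, your reduction to $i\le n^{1/4}$ is incomplete: Part (5) of \cref{T:main-3-restatement} only covers offsets $i\ge n^{1/4}\log^4 n$.

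The missing idea is to read the height off the backbone path itself at its \emph{first entry} into a set connected to the boundary; this needs no winding control. Let $K$ be the component of $\tt{A}_L\cup[(\tt{X}\cup\tt{PRS}^*_n)\setminus\tt{RS}^*_n]$ containing $\tt{A}_L$; it reaches the boundary near $(n+1,-n-1)$. By path ordering, if some path of index $I-i$ with $i>\log^4 n$ enters $\tt{A}_L$, then so does $\cN^-_{I-\lfloor\log^4 n\rfloor}$. By Part (3) of \cref{T:main-3-restatement} and interlacing, this path meets $K$ only inside $\tt{A}_L$. Since $K$ is boundary-connected, the path cannot have wound around it before first entry. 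By \cref{L:tree-path-heights}, its height there is then within $4$ of its deterministic starting height, which is at most $-\log^4 n$ once $|I-n/4|\le\log n$. This treats all large indices at once, so no reduction to $i\le n^{1/4}$ is needed.

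It then suffices to show $\min_{\tt{A}_L}\cH_n\ge-\log^4 n/2$. The paper gets this from three pieces: tightness of $\cH_n(0,0)$; the smooth phase couplings, to compare with the line $x=\log^3 n$; and a uniform exponential moment bound on $\tilde\cH_n(t,\log^3 n)-\tilde\cH_n(t,x)$ via \cref{T:BCJ18-strong}, followed by a union bound over all $O(n\log^3 n)$ lattice points of $\tt{A}_L$.

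Your version of this height step also needs repair. \cref{L:concentration-lem} gives monotonicity of $x\mapsto\E(\cH_n(f)\mid\cH_n(f')=x)$ in the conditioned value, not spatial monotonicity of $\cH_n$. So it cannot interpolate between mesh points about $n^{1/3}/\log n$ apart. Covering $\tt{A}_L$ by boxes of side $\log^3 n$ needs about $n/\log^3 n$ boxes, each failing to couple with probability up to order $n^{-1/3}\log^{24}n$ under \cref{L:local-coupling}, so the union bound diverges. \cref{L:ring-construction} works only because it uses $n^{1/3-\eps}$ boxes.
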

	
	\begin{proof}
	First, with notation as in \cref{T:main-3-restatement}, let $K$ denote the connected component of
	$$
	\tt{A}_L \cup [(\tt{X} \cup \tt{PRS}^*_n)\setminus \tt{RS}^*_n]
	$$
	containing $\tt{A}_L$. The set $K$ is connected to the boundary of the Aztec diamond at the point $(n + 1, -n -1)$. Therefore by path ordering, if a north or south backbone path with index $I -i, i  > \log^4 n$ enters the region $\tt{A}_L$, then the same must be true of the path $\cN_{I - \lfloor \log^4 n \rfloor}^-$. By Part (3) of \cref{T:main-3-restatement}, this path can only enter the region $K$ through $\tt{A}_L$. Moreover, since $K$ is connected to the Aztec diamond boundary, if it enters the region $\tt{A}_L$, it first enters without winding around that region, and so its entry height is within $4$ of its initial height (\cref{L:tree-path-heights}). 
	
	Now, by parts $1$ and $6$ of \cref{T:main-3-restatement}, we have that $|I_n - n/4| \le \log n$ with probability $1 - o(1)$, and so $I_n - \lfloor \log^4 n \rfloor \le n/4 - \log^4 n/2$. By the relationship between path index and starting height, the starting height of the path $\cN_{I - \lfloor \log^4 n \rfloor}^-$ is bounded above by $-\log^4 n$. Therefore to complete the proof, it suffices to show that
	$$
	X = \min \{\cH_n(x) : x \in \tt{A}_L\}
	$$
	is bounded below by $-\log^4 n/2$ with probability $1 - o(1)$. We have that $X \ge X_0 - X_1 - X_2$, where
\begin{align*}
X_0 &= \cH_n(0,0) \\
X_1 &= \max \{|\cH_n(0,0) -\tilde \cH_n(t, \log^3 n)| : t \in [-L, L]\}, \\
X_2 &= \max \{|\tilde \cH_n(t, \log^3 n)- \tilde \cH_n(t, x)| : t \in [-L, L], x \in [-L, \log^3 n]\},
\end{align*}
and $\tilde \cH_n = \cH_n \circ \tilde \beta_n$.
First, $X_0$ converges in law as $n \to \infty$ by \cref{L:variance-middle-height} and so $X_0 \ge - \log^4 n/6$ with probability $1 - o(1)$. Next, as in the proof of \cref{L:height-shift} we can couple a dimer configuration $D \sim \P_{a, n}$ with two copies of the smooth phase measure $D_1, D_2 \sim \P_a$ using Lemmas \ref{L:global-coupling-1}, \ref{L:global-coupling-2} so that with probability $1 - o(1)$ we have $D_i|_{\Lambda_{\mathrm{sm},i}} = D|_{\Lambda_{\mathrm{sm},i}}$ for $i = 1, 2$. On the event where this coupling succeeds, using bounds on heights in the smooth phase (\cref{L:smooth-phase-heights-decor}), we have that
$$
\max \{|\cH_n(v) - \cH_n(0,0)| : v \in \Lambda_{\mathrm{sm},1} \cup \Lambda_{\mathrm{sm},2}\} \le  \log n
$$
with probability $1 - o(1)$, and so $X_1 \le \log^4 n/6$ with probability $1 - o(1)$. Finally, we show that $X_2 \le \log^4 n/6$ with probability $1 - o(1)$. To prove this, by Markov's inequality and a union bound over the $O(n \log^3 n)$-many lattice points in $\tt{A}_L$, it is enough to show that we can find $\lambda > 0$ such that
	\begin{equation}
		\label{E:lsnn}
\sup_{(t, x) \in [-L, L] \times [-L, \log^3 n]} \E \exp (\lambda |\tilde \cH_n(t, \log^3 n)- \tilde \cH_n(t, x)|) \le \exp(\log^3 n)
	\end{equation}
	for all large enough $n$.
	Suppose that \eqref{E:lsnn} fails. Then for all $\lambda > 0$, we can find sequences $t_{n,\lambda} \in [-L, L], x_{n,\lambda} \in [-L, \log^3 n]$ such that
	$$
\E \exp (\lambda |\tilde \cH_n(t_{n, \lambda}, \log^3 n) - \tilde \cH_n(t_{n, \lambda}, x_{n, \lambda})|) \ge \exp(\log^3 n)
	$$
for all $n$ in some infinite set $Y_\lambda$.
	We can apply diagonalization argument here to find $t_n, x_n, n \in Y$ that works simultaneously for all $\lambda > 0$, and then by passing to a further subsequence $Y' \subset Y$ we may assume that $t_n \to t \in [-L, L]$ and that $x_n \to x \in [-L, \infty]$. Now, letting 
	$$
	\phi_n = \{2\lfloor \ell \log^2 n \rfloor e_2 : \ell \in \{0, \dots, \lfloor \log^{1/2} n \rfloor \}\},
	$$
	then
	$$
\lim_{n \in Y'} \E \exp (\lambda |\tilde \cH_n^{\phi_n}(t_n, \log^3 n) - \tilde \cH_n^{\phi_n}(t_n, x_n)|) = \infty.
	$$
	for all $\lambda > 0$. This contradicts \cref{T:BCJ18-strong}.
	\end{proof}
	
	We now precisely state and prove \eqref{E:O-containment}.
	
	\begin{lem}
		\label{L:height-box-corridor}
		Fix $(x, t) \in \R^2, \eps > 0$, and let $O_\eps(x, t)$ be as in \cref{L:ring-bd}. Let $\phi_n$ be any mollifying sequence satisfying the conditions of \cref{T:main-1-restatement}. Then there exists a sequence $\sigma_n$ which tends to $0$ with $n$ (and depends on $\phi_n, x, t$) such that with probability $1 - o(1)$, we have that
		\begin{equation}
		\label{E:J-quation}
O_\eps(x, t) \subset U(\cS_{I_n-J}) \cap L(\cS_{I_n - J+1}),
		\end{equation}
		where $|J - (H_n - \cH_n^{\phi_n}(\hat \gamma_n(x, t)))/4| < \sigma_n.
		$
		In the special case when this forces $J = 0$, we remove the intersection with $L(\cS_{I_n - J+1})$.
	\end{lem}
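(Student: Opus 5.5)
The plan is to combine \cref{L:ring-bd}, \cref{L:large-paths} and the winding/height correspondence (\cref{L:tree-path-heights}, \eqref{E:face-ht}). We work on the intersection of $\tt{Reg}_n$, the events of \cref{L:ring-construction} and \cref{L:ring-bd} (with $\eps/2$ in place of $\eps$, so that the ring $T$ built at scale $n^{1/3-\eps/2}$ surrounds $O_\eps$), and the event $K(L)\le \log^4 n$ from \cref{L:large-paths} with $L$ large enough that $\hat\gamma_n(x,t)+[-2n^{1/3},2n^{1/3}]^2\subset \tt{A}_L$. On this event no path $\cS_{I-k}^-$ or $\cN_{I-k-1}^-$ with $k\le \log^{50} n$ enters $O_\eps$. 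No path of index $I-k$ with $k>\log^4 n$ reaches $\tt{A}_L\supset O_\eps$. The paths $\cS_i^-$ with $i>I$, the paths $\cN_i^+$, and the paths $\cN_I^-,\cS_I^+$ avoid $\tt{PRS}_n^*\supset O_\eps$ by \cref{T:main-3-restatement}. Hence $O_\eps$ is a connected set disjoint from all south backbone paths, so it lies in a single corridor $U(\cS_{I-J}^-)\cap L(\cS_{I-J+1}^-)$ for some random $J\in\{0,\dots,\lceil\log^4 n\rceil\}$. When $J=0$, part 4 of \cref{T:main-3-restatement} shows $O_\eps\subset U(\cS_I^-)$. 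This defines $J$; what remains is to identify it with the mollified height.

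For the identification, take an $a$-face $f\in\hat\gamma_n(x,t)+\phi_n$ and let $\pi$ be the south forest path from $f-(0,1)$. By the corridor property, $\pi$ first joins the backbone at $\cS_{I-J}^-$. Arguing as in \eqref{E:face-ht}, the sink of $\cS_{I-J}^-$ gives
\[
\cH_n(f)=4\operatorname{Wind}(\pi)+4(I-J)-n-1 .
\]
By \cref{T:main-3-restatement}(6), $4I-n-1=H_n$, so $H_n-\cH_n(f)=4J-4\operatorname{Wind}(\pi)$.

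Next split $\pi$ into three parts: the segment $\pi_0$ until it leaves $O_\eps$, the part until it hits $\cS_{I-J}^-$, and the remainder along $\cS_{I-J}^-$ to the sink.
\begin{itemize}
\item \emph{Remainder along the backbone.} This part cannot wind around $f$. Doing so would require $\cS_{I-J}^-$ to surround $O_\eps$, i.e.\ to produce a backtrack of width $\gtrsim n^{1/3-\eps}\gg n^{1/4}\log^{52}n$, which is forbidden by \cref{P:backtrack-estimate} since $J\le\log^4 n$. The global part of the path is handled by part 3 of \cref{T:main-3-restatement}.
\item \emph{Middle part.} By $\mathsf{Para}_\tt{S}(\log^2 n)$ it stays in a parabola below its exit point from $O_\eps$, so it cannot wind around $f$ either.
\item \emph{Initial segment.} $\operatorname{Wind}(\pi)=\operatorname{Wind}(\pi_0)$, and $\pi_0$ depends only on the configuration near $f$.
\end{itemize}
Using the local coupling \cref{L:local-coupling} on $\hat\gamma_n(x,t)+[-b_n,b_n]^2$ with $b_n=o(n^{1/24})$, together with \cref{L:lerw-estimate-basic}, we get $4\operatorname{Wind}(\pi_0)=-\cH(f)$ for the coupled smooth phase height $\cH$ with high probability, simultaneously for all $f$ in $\phi_n$ after a union bound. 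The faces of $\phi_n$ outside the coupling region are handled by the diameter condition on $\phi_n$ in \cref{T:main-1-restatement}.

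Averaging over $\phi_n$ gives
\[
\bigl(H_n-\cH_n^{\phi_n}(\hat\gamma_n(x,t))\bigr)/4 = J + \cH^{\phi_n}(\hat\gamma_n(x,t))/4 ,
\]
and $\cH^{\phi_n}\cvgp 0$ by the covariance decay in \cref{L:smooth-phase-heights-decor} and the mean-zero property of \cref{C:reflection-symmetry-smooth}, exactly as in \eqref{E:hnpsin}. Choosing $\sigma_n\to 0$ slowly enough yields the lemma.

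The main obstacle is the winding bookkeeping in the second step. One must show carefully that neither the stretch of $\pi$ between leaving $O_\eps$ and reaching the backbone, nor the backbone path itself, loops around $f$. The first case needs the parabolic estimate combined with the facts that the backbone lies outside the ring and that the ring forces north–south crossings (\cref{L:ring-construction}(iii)). The second needs the backtrack bound to rule out $\cS_{I-J}^-$ encircling a box of size $n^{1/3-\eps}$. The sink-height formula must also be checked to remain valid in the $J=0$ case and for non-$a$-faces in $\phi_n$; the latter add only a bounded deterministic angular term that averages out.
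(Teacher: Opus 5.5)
Your route is the paper's. \cref{L:large-paths}, \cref{L:ring-bd} and \cref{T:main-3-restatement} place $O_\eps$ in a single corridor. Then the sink-height formula \eqref{E:face-ht} with $H_n=4I-n-1$, the local coupling, and the non-winding arguments (parabolic bound off the backbone, backtrack bound plus barrier on it) identify the index.

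The step that does not hold as written is ``by the corridor property, $\pi$ first joins the backbone at $\cS^-_{I-J}$''. The corridor $U(\cS^-_{I-J})\cap L(\cS^-_{I-J+1})$ contains the north path $\cN^-_{I-J}$. South forest paths cannot cross north forest edges, so every south path started between $\cN^-_{I-J}$ and $\cS^-_{I-J+1}$ joins the \emph{upper} path $\cS^-_{I-J+1}$. If $O_\eps$ sat in that strip, your formula would read $H_n-\cH_n(f)=4(J-1)-4\operatorname{Wind}(\pi)$. That is an off-by-one error in exactly the quantity the lemma controls. \cref{L:ring-bd} only tells you that $\cN^-_{I-J}$ misses $O_\eps$, not on which side of it $O_\eps$ lies.

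What is missing is the localization of the first backbone vertex of $\pi$, which the paper proves. Set $u=\hat\gamma_n(x,t)$.
\begin{itemize}
\item A separation argument shows that $\pi$ meets the backbone before leaving $\tt{RS}^*_n$. It uses that $\cS^-_{I-\lfloor\log^4 n\rfloor-1}$ avoids $\tt{A}_{|t|+1}$ and $(\tt{X}\cup\tt{PRS}^*_n)\setminus\tt{RS}^*_n$, so it could not otherwise travel from the south to the west boundary.
\item $\mathsf{Para}_{\tt{S}}(\log^2 n)$ then puts this vertex in $u+[-n^{1/3-\eps}/100,n^{1/3-\eps}/100]\times(-\infty,-n^{1/3-\eps}]$, directly below $O_\eps$.
\end{itemize}
Your own backtrack-plus-barrier reasoning now excludes the upper path:
\begin{itemize}
\item $O_\eps\subset L(\cS^-_{I-J+1})$ forces $\cS^-_{I-J+1}$ to cross the ray $u+\{0\}\times[0,\infty)$ an odd number of times.
\item The barrier from Part (3) of \cref{T:main-3-restatement} makes the crossings outside $\tt{RS}_n$ cancel, so one crossing lies inside $\tt{RS}_n$.
\item That crossing, together with a visit directly below $O_\eps$, gives a rightward displacement of order $n^{1/3-\eps}$. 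This contradicts \cref{P:backtrack-estimate} when $\eps<1/12$.
\end{itemize}

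Two smaller repairs are needed.
\begin{itemize}
\item The segment of $\pi$ up to leaving $O_\eps$ is not determined by an $o(n^{1/24})$-box. Cut $\pi$ instead at its exit from the coupling box, taken with a margin $\gg\log^3 n$ beyond $\phi_n$, and apply the parabolic bound from there.
\item For $b$-faces the extra term depends on the first step of $\pi$, so it is local but not deterministic. The simplest fix, as in the paper, is to run the argument only for the $a$-face $u$. Then transfer $\cH_n(u)-\cH_n^{\phi_n}(u)$ to the smooth phase through the coupling.
\end{itemize}
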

	
	\begin{proof}
		All statements in the proof will hold with probability $1 - o(1)$, and we do not mention this further. We may also assume $\eps < 1/12$, and for ease of notation we set $u = \hat \gamma_n(x, t)$. 
		
	First, with notation as in \cref{L:large-paths}, the set $\tt{A}_{|t| + 1}$ has at most $\log^4 n$ south backbone paths that enter it: $\cS_{I-i}^-$, $i = 0, \dots, \lfloor \log^4 n \rfloor$. Here the upper bound on $i$ is from \cref{L:large-paths}, and the fact that no $\cS^+$-paths and no paths of the form $\cS_{I + i}, i > 0$ enter this region uses \cref{T:main-3-restatement}, parts 4 and 5, along with north and south path interlacing.
	
Next, by \cref{L:ring-bd}, none of the paths $\cS|_{I-i}^-$, $i = 0, \dots, \lfloor \log^4 n \rfloor$ enter the sets $O_\eps = O_\eps(x, t)$. Therefore there exists some index $J$ such that \eqref{E:J-quation} holds (where again, we omit the intersection with $L(\cS_{I_n - J+1})$ if $J = 0$). It remains to show that $J = (H_n - \cH_n^{\phi_n}(u)/4$. For this, consider the south forest path $\pi:[0, t] \to \R^2$ started at the south vertex adjacent to the $a$-face $u$, and let $\pi(s) \in \tt{S}$ denote the location when $\pi$ either first coalesces with a backbone path or exits the set $\tt{RS}_n^*$. Note that $\pi(0) \in U(\cS_{I- \lfloor \log^4 n \rfloor - 1}^-)$ since $\cS_{I- \lfloor \log^4 n \rfloor - 1}^-$ does not enter the set $\tt{A}_{|t| + 1}$ or the set $(\tt{X} \cup \tt{PRS}^*_n)\setminus \tt{RS}^*_n$ (the latter uses Part (3) of \cref{T:main-3-restatement}).

We first show that $\pi(s)$ is a coalescence point. Indeed, by the parabolic path estimate in Part (2) of \cref{T:main-3-restatement}, if $\pi(s)$ were in $\tt{RS}_n^*$, then it would be located on the lower boundary of $\tt{RS}_n^*$, at a point in the interior of $\tt{PRS}^*_n$. In particular, in this case the set
$$
T = \pi|_{[0, s]} \cup [(\tt{X} \cup \tt{PRS}^*_n)\setminus \tt{RS}^*_n] \cup \tt{A}_{|t| + 1}
$$
is connected, and $[-n - 1, n + 1]^2 \setminus T$ puts the south and west boundaries of the Aztec diamond in different components. As discussed above, the path  $\cS_{I- \lfloor \log^4 n \rfloor - 1}^-$ does not enter $T$, which contradicts that this path moves from the south to the west boundary.

Therefore $\pi(s)$ is a coalescence point. Moreover, since south forest paths enter $O_\eps$, $\pi(s)$ is not located in $O_\eps$, and again using the parabolic path estimate in Part (2) of \cref{T:main-3-restatement} together with the fact that the height of $\tt{RS}_n^*$ is less than $2n^{1/2} \log^{3/2} n$, we have that
$$
\pi(s) \in \tt{RS}_n^* \cap (u + [-n^{1/3 - \eps}/100, n^{1/3 - \eps}/100] \times (-\infty, -n^{1/3 - \eps}]).
$$
Here we have used that $\eps < 1/12$ to absorb the width of the parabola $P^{\log^2 n}_\tt{S}$ as it intersects $\tt{RS}_n^*$ (this width is $O(n^{1/4} \log^{3/4} n)$). Now, let $\cS_{I-K}, K \in \{0, \dots, \lfloor \log^4 n \rfloor + 1\}$ be the path containing $\pi(s)$. Arguing deterministically as in the proof of \eqref{E:face-ht} in Corollary \ref{C:height-path-equality}, we have that
$$
\cH_n(u) = 4 \operatorname{Wind}(\pi) + 4(I-K) - n - 1,
$$ 
and so by Part (6) of \cref{T:main-3-restatement} we have that
$$
H_n - \cH_n(u) = - 4 \operatorname{Wind}(\pi) + 4 K.
$$
Now, by \cref{L:local-coupling} we can couple our dimer configuration $D_n \sim \P_{a, n}$ with a dimer configuration $D \sim \P_a$ such that the two agree on the set $u + [-b_n, b_n]^2$, where $n^{1/100} \le b_n = o(n^{1/24})$ and $\phi_n \subset [-b_n, b_n]^2$. Let $\cH$ be the height function for $D$, and let $\pi_0$ be the south forest path in $D$ started at $\pi(0)$. On the event where this coupling holds, we have that 
$$
\cH_n(u) - \cH_n^{\phi_n} (u)= \cH(u) - \cH^{\phi_n} (u) = 4 \operatorname{Wind}(\pi_0) + o(1).
$$
Here the final equality uses \cref{L:height-lem}, and the $o(1)$ term goes to $0$ in probability as $n \to \infty$ (by Corollary \ref{C:reflection-symmetry-smooth}, \cref{L:smooth-phase-heights-decor}). Therefore to complete the proof, we must show that $\operatorname{Wind}(\pi_0) = \operatorname{Wind}(\pi)$ with high probability. First, $\pi_0$ will not cross vertical line 
$$
V = u + \{0\} \times [0, \infty)
$$
after it first exits the box $u + [-b_n, b_n]^2$ with high probability by the parabolic path estimate in \cref{L:lerw-estimate-basic}. Up until this point, $\pi_0$ and $\pi$ agree. Similarly, $\pi$ will not cross $V$ after it exits this box and before it joins the backbone path $\cS_{I-K}^-$. 

Next, if $\cS_{I-K}^-$ were to cross $V$ within the set $\tt{RS}_n$, then it would have a backtrack of size at least $n^{1/3 - \eps}/2$, since it cannot enter the set $O_\eps$. Since $\eps < 1/12$, this would contradict \cref{P:backtrack-estimate}. Therefore the barrier estimate in Part (3) of \cref{T:main-3-restatement} guarantees that the number of left-to-right crossings of the line $V$ by $\cS_{I-K}^-$ equals the number of right-to-left crossings, which finally implies that $\operatorname{Wind}(\pi_0) = \operatorname{Wind}(\pi)$, as desired.
	\end{proof}
	
At last, we are ready to prove \cref{P:key-prop}.

\begin{proof}[Proof of \cref{P:key-prop}]
Fix $\eps \in (0, 1/12), \delta \in (0, 1/2)$, and let 
$$
Z_\delta = [-\delta^{-1}, \delta^{-1}] \cap \delta \Z.
$$
Define $M:Z_\delta \to \N$ so that $M(x)$ is the nearest integer to the point $[H_n - \cH_n^{\phi_n}(\hat \gamma_n(x, t))]/4$, and define $J^\pm:Z_\delta \to \N$ so that 
\begin{align*}
J^-(x) = \min \{i = 0, 1, \dots: O_\eps(x, t) \subset U(\cS_{I_n - i}) \ne \emptyset\}, \\
J^+(x) = \max \{i = 1, \dots: O_\eps(x, t) \subset L(\cS_{I_n - i + 1}) \ne \emptyset\},
\end{align*}
where in the definition of $J^+$, we set the maximum to $0$ if the set above is empty.
By \cref{L:height-box-corridor}, $J^- = J^+$ and $|J^- - M| < \sigma_n$ with probability $1 - o(1)$ for a sequence $\sigma_n \to 0$ with $n$. 

Now, by the backtrack estimate in \cref{P:backtrack-estimate} and the fact that the width of $O_\eps(x, t)$ is of order $n^{1/3 - \eps}, \eps < 1/12$, the following implications hold simultaneously with probability $1 - o(1)$ for all $i \in \{0, \dots, \log n\}, x \in Z_\delta$:
\begin{align*}
O_\eps(x, t) &\subset U(\cS_{I_n + 1 - i}) \quad \implies \quad \sup_{|u| \le n^{-1/3-2\eps}} \cA_i^{n, +}(t + u) < x.  \\
O_\eps(x, t) &\subset L(\cS_{I_n + 1 - i}) \quad \implies \qquad \inf_{|u| \le n^{-1/3-2\eps}} \cA_i^{n, -}(t + u) > x.
\end{align*}
At this point, we fix $i$ and work deterministically on the event where the above implications hold, $J^- = J^+$, and $|J^- - M| < \sigma_n$ for $\sigma_n < \delta$. On this event, the two implications above give that
\begin{align}
	\label{E:suP}
\sup_{|u| \le n^{-1/3-2\eps}} \cA_i^{n, +}(t + u) &< \min \{x \in Z_\delta : J^- \le i -1 \}, \\
\label{E:inF}
\inf_{|u| \le n^{-1/3-2\eps}} \cA_i^{n, -}(t + u) &> \max \{x \in Z_\delta : J^+ \ge i \},
\end{align}
where the right-hand side of \eqref{E:suP} equals $-\infty$ if the set is empty, and the right-hand side of \eqref{E:inF} equals $-\infty$ if that set is empty. Using that $J^- = J^+$ and $|J^- - M| < \sigma_n < \delta$ then implies
	\begin{equation}
		\label{E:andede}
A_{n, \delta}(i, t) = \max \{x \in Z_\delta : J^+ \ge i \} = \min \{x \in Z_\delta : J^- \le i -1 \} - \delta
	\end{equation}
on the event $E = \{|A_{n, \delta}(i, t)| \le \delta^{-1} - 1\}$. The final event $E$ holds with probability $1 - o(1)$ as we first take $n \to \infty$ and then take $\delta \to 0$, since in this double limit, $A_{n, \delta}(i, t)$ converges in law to $\cA_i(t)$, which is a finite almost surely. Combining \eqref{E:suP}, \eqref{E:inF}, and \eqref{E:andede} gives the proposition.
\end{proof}

	\section{Correlation Kernel and Estimates}
	\label{subsec:KasteleynTheory}

	In this section, we introduce the Kasteleyn matrix as well as the relevance of its inverse.  We give a formula for the entries of the inverse of the Kasteleyn matrix of the two-periodic Aztec diamond, as well as formulas  which serve as a starting point for taking asymptotics of these entries. These results are refinements of the ones in \cite{CJ16}, and are proved in \Cref{subsec:Kinvasympproofs}. 
	
	\subsection{Inverse Kasteleyn matrix for the two-periodic Aztec diamond} 
	Suppose that $G$ is a planar weighted bipartite graph.  Recall that the Kasteleyn matrix, $K$, of the graph $G$ is a type of signed weighted adjacency matrix whose rows are indexed by the black vertices and whose columns are indexed by the white vertices of the graph, see \cite[Section 3]{Joh17} for more background. The importance of the Kasteleyn matrix is due to the following theorem. Here the underlying dimer measure assigns a dimer configurations a probability proportional to the product of edge weights.
	
	\begin{thm}[\cite{Ken97,Joh17}]\label{localstatisticsthm}
		Suppose that $E=\{\mathtt{e}_i\}_{i=1}^r$ is a collection of distinct edges on $G$ with $\mathtt{e}_i=(\mathtt{b}_i,\mathtt{w}_i)$, where $\mathtt{b}_i$ and $\mathtt{w}_i$ denote black and white vertices. The dimer configuration forms a determinantal point process on the edges of $G$ with correlation kernel $L$ meaning that the probability of observing edges $e_1,\dots, e_r$ is given by 
		$\det L(\mathtt{e}_i,\mathtt{e}_j)_{1 \leq i,j \leq r}$
		where
		$L(\mathtt{e}_i,\mathtt{e}_j) = K({\mathtt{b}_i,\mathtt{w}_i}) K^{-1}({\mathtt{w}_j,\mathtt{b}_i}).$
	\end{thm}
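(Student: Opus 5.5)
This is the classical Kenyon theorem, so the plan is the standard Kasteleyn argument. For a finite planar bipartite graph with Kasteleyn signs, we have $|\det K| = Z$, where $Z$ is the weighted number of perfect matchings. In fact every term in the expansion of $\det K$ carries the same sign, and this is exactly what the Kasteleyn condition guarantees: face sign-products $(-1)^{(\deg f/2)+1}$, so that cycle superpositions of two matchings contribute consistent signs. I take this as given, citing \cite{Ken97,Joh17} for the sign lemma.

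First, compute the probability that a fixed set of edges $\mathtt{e}_1,\dots,\mathtt{e}_r$ all appear. This equals $Z_{G'}\prod_i w(\mathtt{e}_i)/Z_G$, where $G'$ is obtained from $G$ by deleting the vertices $\mathtt{b}_i,\mathtt{w}_i$. The matrix $K$ restricted to $G'$ is the minor of $K$ with rows $\mathtt{b}_i$ and columns $\mathtt{w}_i$ removed. It is still a Kasteleyn matrix for $G'$, up to an overall sign, because removing a matched pair of vertices preserves the face-sign condition on the relevant cycles. I expect this step to be the main obstacle: checking that the sign of the minor expansion is uniform and equals $\operatorname{sgn}\det K$ times the sign of the minor's position. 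The standard route is to observe that every term of the minor, multiplied by $\prod K(\mathtt{b}_i,\mathtt{w}_i)$, is itself a term of $\det K$ (it is a matching of $G$ containing the $\mathtt{e}_i$), so all such terms share the sign of $\det K$.

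Given this, the probability is
\[
\frac{\prod_i K(\mathtt{b}_i,\mathtt{w}_i)\,\cdot\,(\pm\det K_{\hat B,\hat W})}{\det K}.
\]
Jacobi's complementary-minor identity gives $\det K_{\hat B,\hat W}/\det K = \pm\det\big(K^{-1}(\mathtt{w}_j,\mathtt{b}_i)\big)_{i,j}$, with the sign matching the one above. Pulling the factors $K(\mathtt{b}_i,\mathtt{w}_i)$ into the rows then yields $\det\big(K(\mathtt{b}_i,\mathtt{w}_i)K^{-1}(\mathtt{w}_j,\mathtt{b}_i)\big)$, which is the claimed kernel $L$. The remaining sign bookkeeping is the same reasoning as in the obstacle step.
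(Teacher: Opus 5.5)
Your proposal is correct and is exactly the standard Kenyon argument: compare each term of the minor with a term of $\det K$, all of whose terms share a common phase, then apply Jacobi's complementary-minor identity. The paper does not reprove this but simply cites it from \cite{Ken97,Joh17}. Your claim that $K$ restricted to $G'$ still satisfies the face condition is unnecessary, because your term-matching observation alone gives the uniform phase (for the complex $K_{a,1}$ used here, read ``sign'' as ``phase''). The one case your vertex-deletion argument does not literally cover is distinct edges that share a vertex. There $\det L=0$ immediately, since two rows of $L$ (common $\mathtt{b}$) or two columns (common $\mathtt{w}$) are proportional, which matches the zero probability.
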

	More general versions of this theorem hold, but we do not need these here. The Kasteleyn matrix $K_{a,b}$ for the two-periodic Aztec diamond where $a$-faces and $b$-faces have weight $a, b > 0$ takes entries
	\begin{equation} \label{pf:K}\begin{split}
			&K_{a,b}(x,y)\\&=\left\{\begin{array}{ll}
				(b (1-j)+a j)(1-k)\mathrm{i}+(b j+a(1-j))k & \mbox{if } y=x+(-1)^j e_{k+1}, x \in \mathtt{E}, \\
				(b (1-j)+a j)k+(b j+a(1-j))(1-k)\mathrm{i} & \mbox{if } y=x+(-1)^j e_{k+1}, x \in \mathtt{W}, \\
				0 & \mbox{if $(x,y)$ is not an edge.}
			\end{array} \right.
		\end{split}
	\end{equation}
	where $\mathrm{i}^2=-1$ and $j,k\in \{0,1\}$. Here we only need to consider $K_{a, 1}$.  For historical reasons, e.g. see \cite{CJ16}, we set $n=4m$ for $m \in \mathbb{N}$, where $n$ is the size of the Aztec diamond, and will use this convention throughout and interchange between $n$ and $m$ for convenience. 
	
	We now proceed in stating formulas needed to give a suitable form for $K^{-1}_{a,1}$ as well as its asymptotics. 
	Let
	\begin{equation*} \label{ctilde}
		\tilde{c} (u_1,u_2)=2(1+a^2) + a(u_1+u_1^{-1})(u_2+u_2^{-1}),
	\end{equation*} 
	which is related to the so-called \emph{characteristic polynomial} for the dimer model~\cite{KOS03}; see~\cite[(4.11)]{CJ16} for an explanation. Write
	\begin{equation}
		h(\eps_1,\eps_2)=\eps_1(1-\eps_2)+\eps_2(1-\eps_1),
	\end{equation}
	and let  $\Gamma_R$ denote a positively oriented circle of radius $R$ around the origin, where we adopt this notation for the rest of the paper.  In what follows below, we set $\mathtt{W}_0=\mathtt{S}$,$\mathtt{W}_1=\mathtt{N}$, $\mathtt{B}_0=\mathtt{W}$, and $\mathtt{B}_1=\mathtt{E}$, along with the full-plane analogs using $\tilde{\mathtt{W}}_0$, etc.
	The full-plane smooth phase inverse Kasteleyn matrix is given by
	\begin{equation*}\label{smoothphaseeqn}
		\mathbb{K}^{-1}_{1,1}(x,y)=-\frac{\mathrm{i}^{1+h(\eps_x,\eps_y)}}{(2 \pi \mathrm{i})^2} 
		\int_{\Gamma_1} \frac{du_1}{u_1}  \int_{\Gamma_1} \frac{du_2}{u_2} \frac{a^{\eps_y} u_2^{1-h(\eps_x,\eps_y)} +a^{1-\eps_y} u_1 u_2^{h(\eps_x,\eps_y)}}{\tilde{c}(u_1,u_2) u_1^{\frac{x_1-y_1+1}{2}} u_2^{\frac{x_2-y_2+1}{2}}},
	\end{equation*}
	where 
	$x=(x_1,x_2)\in \tilde{\mathtt{W}}_{\eps_x}$ and $y=(y_1,y_2)\in \tilde{\mathtt{B}}_{\eps_y}$ with $\eps_x,\eps_y \in \{0,1\}$; see~\cite[Section 4]{CJ16} for details and connections with~\cite{KOS03}. Define
	\begin{equation*}
		\sqrt{\omega^2+2c} =e^{\frac{1}{2}\log (\omega + \mathrm{i} \sqrt{2c}) + \frac{1}{2}\log (\omega - \mathrm{i} \sqrt{2c})}
	\end{equation*}
	for $\omega \in \mathbb{C}\backslash \mathrm{i}[-\sqrt{2c},\sqrt{2c}]$, where the logarithm in the exponent has arguments in $(-\pi/2,3\pi/2)$. This is the same choice of branch cut given in \cite{CJ16}.  For $s\in \mathbb{Z}$, define
	\begin{equation}\label{eq:prev:Fs}
		F_s(w)=\frac{1}{2\pi \mathrm{i}} \int_{\Gamma_1} \frac{u^s}{1+a^2+a w(u+u^{-1})} \frac{\mathrm{d}u}{u}.
	\end{equation}
	Introduce 
	\begin{equation}
		G(\omega)=\frac{1}{\sqrt{2c}}(\omega-\sqrt{\omega^2+2c}) \label{eq:def:G}
	\end{equation}
	where $c=a/(1+a^2)$
	and let 
	\begin{equation}\label{eq:prev:mu_and_s}
		\mu(u)=1-\sqrt{1+c^2(u-u^{-1})^2} \hspace{5mm}\mbox{ and }s(u)=1-\mu(u)
	\end{equation}
	with the sign chosen so that $G(1/\omega)=-\mu(-u\mathrm{i})/(c(u+u^{-1}))\mathrm{i}$ if we set $u \mathrm{i}=G(\omega)$.  With the choice of branch cut, we have $\sqrt{(-w)^2+2c}=-\sqrt{w^2+2c}$ and so $G(-\omega)=-G(\omega)$ and for $t \in(\sqrt{2c},1/\sqrt{2c})$,
	\begin{equation*}\label{eq:def:branchcutati}
    \begin{split}
        \sqrt{(\mathrm{i} t)^2+2c}&=\mathrm{i}\sqrt{t^2-2c} \hspace{5mm}  \mbox{and} \\
        \hspace{5mm} \sqrt{\frac{1}{(\mathrm{i}t)^2}+2c}&=\sqrt{(-\mathrm{i}\frac{1}{t})^2+2c}=-\mathrm{i}\sqrt{t^{-2}-2c}.
    \end{split}
	\end{equation*}
	With this in mind, we use the notation
	\begin{equation}\label{eq:prev:theta}
		\theta(t)=\frac{t}{\sqrt{t^2-2c}}\hspace{5mm} \mbox{for} \hspace{5mm} t \in (\sqrt{2c},1/\sqrt{2c}).
	\end{equation}
	We now bring forward the following theorem from~\cite{CJ16}, which is the formula for $K^{-1}_{a,1}$ for the two-periodic Aztec diamond. 
	\begin{thm}[Theorem 2.3 in ~\cite{CJ16}] \label{thm:prev:Kinverse}
		For $n=4m$, $x=(x_1,x_2) \in \mathtt{W}_{\eps_1}$ and $y=(y_1,y_2) \in \mathtt{B}_{\eps_2}$ with $\eps_1,\eps_2 \in \{0,1\}$,
		the entries of $K^{-1}_{a,1}$ are given by 
		\begin{equation*} \label{thm:eq:prev:Kinverse}
			\begin{split}
				&K_{a,1}^{-1}(x,y)= \mathbb{K}^{-1}_{1,1} (x,y)
				-\Big( \mathcal{B}_{\eps_1,\eps_2}(a,n+x_1,n+x_2,n+y_1,n+y_2) 
				\\& -\frac{\mathrm{i}}{a} (-1)^{\eps_1+\eps_2} \big( \mathcal{B}_{1-\eps_1,\eps_2}(1/a,n-x_1,n+x_2,n-y_1,n+y_2) \\&   +\mathcal{B}_{\eps_1,1-\eps_2}(1/a,n+x_1,n-x_2,n+y_1,n-y_2) \big) \\
				&  +\mathcal{B}_{1-\eps_1,1-\eps_2}(a,n-x_1,n-x_2,n-y_1,n-y_2) \Big),
			\end{split}
		\end{equation*}
		where for $a<r<1/a$, we have
		\begin{equation}\label{eq:prev:B:first}
			\begin{split}
				&\mathcal{B}_{\eps_1,\eps_2}(a,x_1,x_2,y_1,y_2) =
				-\frac{\mathrm{i}^{\eps_1+\eps_2+1}}{(2 \pi \mathrm{i})^2} \int _{\Gamma_r} \frac{\mathrm{d}u_1}{u_1}\int_{\Gamma_r} \frac{\mathrm{d}u_2}{u_2} \frac{ F_{\frac{x_2}{2}}(\nu (u_1)) F_{\frac{y_1}{2}} (\nu(u_2))}{u_1^{\frac{x_1-1}{2}} u_2^{\frac{y_2-1}{2}}}
				\\
				&\times \left(\frac{1}{4 c^2} u_1^2 u_2^2 (2-\mu(-\mathrm{i}u_1))^2( 2-\mu(-\mathrm{i}u_2))^2 \right)^{m}
				\sum_{\gamma_1,\gamma_2=0}^1Y^{\eps_1,\eps_2}_{\gamma_1,\gamma_2}(u_1,u_2).
			\end{split}
		\end{equation}
		The function $Y_{\gamma_1,\gamma_2}^{\eps_1,\eps_2}$ is defined in~\cref{sec:appendix:Y}.
	\end{thm}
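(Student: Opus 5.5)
This statement is Theorem 2.3 of \cite{CJ16}, quoted as a black box. If we were to reprove it, we would follow the generating-function route of \cite{CJ16} rather than anything probabilistic.

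The plan is to produce an explicit candidate for $K_{a,1}^{-1}$ and verify that it is a right inverse on the finite graph $\tt{A}_n$. Since $K_{a,1}$ is square and nonsingular (the partition function is positive), a right inverse is the inverse. Concretely, we must check two things for the candidate $\mathcal{M}(x,y)$. First, $\sum_{x} K_{a,1}(y',x)\mathcal{M}(x,y) = \delta_{y,y'}$ for every black vertex $y'$ whose four neighbours all lie in $\tt{A}_n$. Second, the same identity holds at the boundary black vertices, where some neighbours are missing.

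The candidate has a natural decomposition.

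\textbf{Full-plane term.} The term $\mathbb{K}^{-1}_{1,1}$ satisfies the interior equations exactly. Applying $K$ to its integrand multiplies by the symbol, which cancels $\tilde c(u_1,u_2)$. What remains is a Laurent monomial integrated over $\Gamma_1\times\Gamma_1$, which gives $\delta_{y,y'}$.

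\textbf{Boundary corrections.} Each term $\mathcal{B}_{\eps_1,\eps_2}$ is built so that it satisfies the homogeneous interior equations. Its integrand, as a function of the $x$-coordinates, is a superposition of exponentials $u_1^{-x_1/2}$ and $F_{x_2/2}(\nu(u_1))$. These lie on the spectral curve $\tilde c=0$ after the substitution $u_2\mapsto\nu(u_1)$. The function $F_s$ is exactly the Fourier coefficient that parametrises this kernel of the operator.

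\textbf{Reflected copies.} The four terms $\mathcal{B}$, evaluated at $(n\pm x_1, n\pm x_2)$ with $a$ or $1/a$, are obtained from one another through the symmetries of \cref{L:sym}. Under these reflections, $a$-faces and $b$-faces swap, which accounts for the $1/a$ appearing in two of them. The boundary equations on each of the four sides of the diamond are then handled by the method of images. The image term cancels the full-plane contribution from the missing vertices, and the weight $\bigl(\tfrac{1}{4c^2}u_1^2u_2^2(2-\mu)^2(2-\mu)^2\bigr)^m$ encodes the $m$-fold transfer across the diamond.

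\textbf{Main obstacle.} The hard part is the boundary verification. One must show that after summing the four images, the contributions at each side cancel exactly. This reduces to an algebraic identity among the functions $Y^{\eps_1,\eps_2}_{\gamma_1,\gamma_2}$, $\mu$ and $G$, which requires careful tracking of the branch of $\sqrt{\omega^2+2c}$. We would first verify this at the corner vertices, and then use the recursive structure in $m$ (the urban-renewal/shuffling recursion of \cite{CY14}) to reduce the general $n$ to an induction. This is the approach of \cite{CY14,CJ16}, whose computations we would follow.
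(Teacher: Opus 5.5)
The paper gives no proof of this statement. It imports it verbatim as Theorem~2.3 of \cite{CJ16}, so your opening sentence is exactly the paper's treatment. The reproof you then sketch, however, is not the route of \cite{CJ16}, and as written it has a genuine gap.

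In \cite{CJ16} the formula is \emph{derived}: it is obtained by extracting coefficients from, and simplifying, the generating function for $K^{-1}_{a,1}$ computed by Chhita and Young \cite{CY14}. Exactness at the boundary is therefore inherited from that derivation, not checked afterwards. What you describe is a guess-and-verify argument instead. In such an argument the interior equations are not where the difficulty lies. The whole content is the equations at the black vertices with $y_1=\pm n$, whose white neighbours at $x_1=\pm(n+1)$ are missing. You only assert that these reduce to an algebraic identity among $Y^{\eps_1,\eps_2}_{\gamma_1,\gamma_2}$, $\mu$ and $G$, and you never state the identity. Your suggested way to prove it (check the corners, then induct on $m$ via shuffling) is not set up. A shuffling step changes $n$ by one, while the formula is only asserted for $n\in 4\mathbb{N}$. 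You also give no transformation rule for $K^{-1}$ under urban renewal and contraction.

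The structural heuristics you offer in place of that identity are also inaccurate.
The four $\mathcal{B}$-terms are not images under the symmetries of \cref{L:sym}. Those maps preserve $\mathbb{P}_{a,n}$ and send $a$-faces to $a$-faces, so they cannot produce the parameter $1/a$. Only $\rho_{\tt{N}}$ actually occurs; it gives the term at $(n-x_1,n-x_2)$, which keeps the parameter $a$. The two $1/a$ terms come from the axis reflections $x_1\mapsto -x_1$ and $x_2\mapsto -x_2$. These are not symmetries of $\mathbb{P}_{a,n}$: they exchange $a$- and $b$-faces, i.e.\ they carry $\mathbb{P}_{a,n}$ to $\mathbb{P}_{1/a,n}$. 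This is the observation used in the proof of \cref{L:expected-middle-height}.

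Nor is this a method of images. The four terms are one and the same corner function $\mathcal{B}$, with $a\mapsto 1/a$ and $\eps_i\mapsto 1-\eps_i$ where appropriate. Each carries the factor $\bigl(\tfrac{1}{4c^2}u_1^2u_2^2(2-\mu(-\mathrm{i}u_1))^2(2-\mu(-\mathrm{i}u_2))^2\bigr)^m$ and is centred at one of the four corners $(\pm n,\pm n)$. They are not reflections of $\mathbb{K}^{-1}_{1,1}$ across the four sides. In fact, a genuine image construction across the sides of a square does not close up after finitely many reflections, since correcting one side spoils the two adjacent ones. That four such corner terms cancel all boundary defects at once is precisely the nontrivial statement, and your proposal supplies no mechanism for it.
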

	Note that we will sometimes write $\mathcal{B}_{\eps_1,\eps_2}(a,x_1,x_2,y_1,y_2)=\mathcal{B}_{\eps_1,\eps_2}(a,(x_1,x_2),(y_1,y_2))$ for convenience.

	\subsection{Formulas for $K_{a,1}^{-1}$ for asymptotic analysis}
	
	In this subsection, we give suitable formulas for $K^{-1}_{a,1}$ that are amenable to asymptotic analysis.  Much of the notation is the same as the one used in \cite{CJ16}. Define
	\begin{equation} \label{eq:prev:H}
		{ H_{x_1,x_2}(\omega)}=\frac{\omega^{2m}  G \left(\omega \right)^{-\frac{x_1}{2}}}{G \left(\omega^{-1} \right)^{-\frac{x_2}{2}}},
	\end{equation}
	where $-n < x_k,x_2 < n$. 
	Let
	\begin{equation} \label{eq:prev:V}
		\begin{split}
			& V_{\eps_1,\eps_2}(\omega_1, \omega_2)=\frac{1}{2}\sum_{\gamma_1,\gamma_2=0}^1  (-1)^{\eps_1 +\eps_2 +\eps_1 \eps_2+ \gamma_1 (1+\eps_2) +\gamma_2(1+\eps_1) }
			G\left( \omega_1 \right) ^{\eps_1}G\left( \omega_2^{-1}  \right)^{\eps_2}  \\
			& \times s\left( G\left( \omega_1 \right)\right)^{\gamma_1}   s \left( G\left( \omega_2^{-1} \right)\right)^{\gamma_2}
			\left(
			\mathtt{x}^{\eps_1, \eps_2}_{\gamma_1,\gamma_2} \left(  \omega_1 , \omega_2^{-1}   \right)
			- \mathtt{x}^{ \eps_1, \eps_2}_{\gamma_1,\gamma_2} \left(  \omega_1 ,- \omega_2^{-1}   \right) \right),
		\end{split}
	\end{equation}
	\begin{equation} \label{eq:prev:s}
		s \left( G\left( \omega \right)\right)=\omega \sqrt{\omega^{-2}+2c} \hspace{2mm} \mbox{and} \hspace{2mm}
		s \left( G\left( \omega^{-1} \right)\right)=\frac{1}{\omega} \sqrt{\omega^{2}+2c},
	\end{equation}
	with 
	\begin{equation}\label{eq:prev:x}
		\begin{split}
			&\mathtt{x}^{\eps_1,\eps_2}_{\gamma_1,\gamma_2} \left(  \omega_1 , \omega_2   \right) =   \frac{ G\left( \omega_1 \right)G\left( \omega_2 \right)}{\prod_{j=1}^2 \sqrt{ \omega_j^2+2c} \sqrt{ \omega_j^{-2}+2c} }\\
			&\times \mathtt{y}_{\gamma_1,\gamma_2}^{\eps_1,\eps_2} \left( a,1, G\left( \omega_1 \right),G\left( \omega_2 \right)  \right) \left( 1- \omega_1^2 \omega_2^2 \right),
		\end{split}
	\end{equation}
	and $\mathtt{y}_{\gamma_1,\gamma_2}^{\eps_1,\eps_2}$ is given in~\cref{sec:appendix:Y}.  
	We will also define
	\begin{equation}\label{eq:T_G}
	T_G(\omega_1,\omega_2)=\frac{G(\omega_1)G(\omega_2^{-1})}{G(\omega_1^{-1})G(\omega_2)}.
	\end{equation}
	and notice that $T_G(\omega_1,\omega_2)=T_G(\omega_2,\omega_1)^{-1}$.

	As given in \cite[Lemma 3.2]{CJ16}, there is a remarkably simple formula for $V_{\eps_1,\eps_2}(\omega, \omega)$ which is given by 
	\begin{equation}\label{eq:prev:Vww}
		V_{\eps_1,\eps_2}(\omega, \omega) =\frac{ (-1)^{1+h(\eps_1,\eps_2)} a^{\eps_2}  G\left( \omega^{-1} \right)^{h(\eps_1,\eps_2)} + a^{1-\eps_2} G\left( \omega \right)  G\left( \omega^{-1} \right)^{1-h(\eps_1,\eps_2)} }{2(1+a^2) \sqrt{\omega^2+2c} \sqrt{\omega^{-2}+2c} }.
	\end{equation}
	Introduce $\omega_c =r_c e^{\mathrm{i} \theta_c}$, $\theta_c \in [0,\pi/2]$ for some $r_c>0$ an let $\Gamma_{\omega_c}$ be the contour 
	\begin{equation*}\label{eq:prev:Gammawc}
		\Gamma_{\omega_c}\,:\,[\theta_c,\pi-\theta_c]\cup[\pi+\theta_c,2\pi-\theta_c]\ni\theta\to r_c e^{\mathrm{i}\theta}.
	\end{equation*}
	For $x=(x_1,x_2) \in \mathtt{W}_{\eps_1}$ and $y=(y_1,y_2) \in \mathtt{B}_{\eps_2}$ we
	define
	\begin{equation}\label{eq:prev:Comega}
		C_{\omega_c}(x,y)=\frac{\mathrm{i}^{(x_2-x_1+y_1-y_2)/2}}{2\pi\mathrm{i}}\int_{\Gamma_{\omega_c}}
		V_{\eps_1,\eps_2}(\omega,\omega) G(\omega)^{\frac{y_1-x_1-1}{2}}G(\omega^{-1})^{\frac{x_2-y_2-1}{2}}\frac{\mathrm{d}\omega}{\omega}.
	\end{equation}
	
	The following lemma collects asymptotics for $K^{-1}_{a,1}(x, y)$ when the points $x, y$ are close to the diagonal in the third quadrant. It gives a starting point for the asymptotic analysis of $K^{-1}_{a,1}$ given in this paper.  It is pieced together from various statements in \cite{CJ16}, which we gather in a short proof in \cref{subsec:Kinvasympproofs}.  Note that we do not need an explicit form for $S(x,y)$ given below, only what is listed below.  We have kept the notation, up to a shift, similar to that used in \cite{CJ16}.

	\begin{lem}\label{lem:switchcontoursBtilde}
		For $x=(x_1,x_2)=(\lfloor 4m\eta_1 \rfloor +\overline{x}_1,\lfloor 4m\eta_1 \rfloor+\overline{x}_2)\in \mathtt{W}_{\eps_1}$ and $y=(y_1,y_2)=(\lfloor 4m\eta_2 \rfloor+\overline{y}_1,\lfloor 4m\eta_2 \rfloor +\overline{y}_2)\in \mathtt{B}_{\eps_2}$ with $\eps_1,\eps_2 \in\{0,1\}$ where $-1<\eta_1,\eta_2<0$ and $|\overline{x}_1|,|\overline{x}_2|,|\overline{y}_1|,|\overline{y}_2|  \le C m^{5/6}$ for some $C>0$ constant, we have for a constant $\mathtt{c}>0$
		\begin{equation}
        \label{E:first-eq-10.3}
			K^{-1}_{a,1}(x,y)=\mathbb{K}^{-1}_{1,1}(x,y)-\mathcal{B}_{\eps_1,\eps_2}(a,x+(n,n),y+(n,n))+O(e^{-\mathtt{c}n})
		\end{equation}
		where
		\begin{equation}\label{eq:prev:B}
			\begin{split}
				&\mathcal{B}_{\eps_1,\eps_2}(a,n+x_1,n+x_2,n+y_1,n+y_2)=\frac{\mathrm{i}^{\frac{x_2-x_1+y_1-y_2}{2}}}{(2\pi \mathrm{i})^2} \int_{\Gamma_p} \frac{\mathrm{d}\omega_1}{\omega_1} \int_{\Gamma_{1/p}} \mathrm{d}\omega_2 \\
				&\times \frac{V_{\eps_1,\eps_2}(\omega_1,\omega_2)}{\omega_2 - \omega_1} \frac{ H_{x_1+1,x_2}(\omega_1)}{H_{y_1,y_2+1}(\omega_2)}
			\end{split}
		\end{equation}
		where $V_{\eps_1,\eps_2}(\omega_1,\omega_2)$ is given in \eqref{eq:prev:V}. Moreover, we have
		\begin{equation} \label{eq:lem:switchcontours}
			\mathcal{B}_{\eps_1,\eps_2}(a,x+(n,n),y+(n,n))=C_1(x,y)+\tilde{\mathcal{B}}_{\eps_1,\eps_2}(a,x+(n,n),y+(n,n))
		\end{equation}
		where for $\sqrt{2c}<p<1$,
		\begin{equation*}
			\begin{split}\label{eq:prev:Btilde}
				&\tilde{\mathcal{B}}_{\eps_1,\eps_2}(a,x+(n,n),y+(n,n))=\frac{\mathrm{i}^{\frac{x_2-x_1+y_1-y_2}{2}}}{(2\pi \mathrm{i})^2} \int_{\Gamma_{1/p}} \frac{\mathrm{d}\omega_1}{\omega_1} \int_{\Gamma_{p}} \mathrm{d}\omega_2 \\
				&\times \frac{V_{\eps_1,\eps_2}(\omega_1,\omega_2)}{\omega_2 - \omega_1} \frac{ H_{x_1+1,x_2}(\omega_1)}{H_{y_1,y_2+1}(\omega_2)}.
			\end{split}
		\end{equation*}
		We also have that 
		\begin{equation}
        \label{E:KCS}
			\mathbb{K}^{-1}_{1,1}(x,y)=C_1(x,y)+S(x,y)
		\end{equation}
		where $S(x,y)$ is defined in \cite[Equation (2.18)]{CJ16}.  We have that $S(x,y)=0$ if $y_1-x_1>-1$ or $x_2-y_2>-1$. Also, if $f \in \{\pm e_1,\pm e_2\}$ and $x \in \mathtt{W}_{\eps_1}$, then $S(x,x+f)\not=0$ only if $f=e_2$ and then $S(x,x+e_2)=-\mathrm{i}a^{\eps_1-1}$.  
		
	\end{lem}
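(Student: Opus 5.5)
We will piece the lemma together from \cref{thm:prev:Kinverse} and the contour manipulations of \cite{CJ16}. We start from the exact formula in \cref{thm:prev:Kinverse}. In it, $K^{-1}_{a,1}(x,y)$ is written as $\mathbb{K}^{-1}_{1,1}(x,y)$ minus four boundary terms $\mathcal{B}$, which are evaluated at the reflected coordinates $(n\pm x_1, n\pm x_2, n\pm y_1, n\pm y_2)$.

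\textbf{Step 1: the three reflected terms are exponentially small.} Since $-1<\eta_1,\eta_2<0$ and the perturbations are $O(m^{5/6})$, the points sit macroscopically inside the third quadrant, near the diagonal. Following \cite{CJ16}, we first change variables $u_j\mathrm{i}=G(\omega_j)$ in \eqref{eq:prev:B:first}, which turns each $\mathcal{B}$ into a double contour integral with integrand $V H/H$. We then estimate the three terms involving $n-x_1$, $n-x_2$, or both. For these, the saddle point analysis of \cite[Section 4]{CJ16} shows that the contours can be deformed so that $|H_{\cdot}(\omega_1)/H_{\cdot}(\omega_2)|\le e^{-\mathtt{c}n}$ uniformly. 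The point is that the exponent $m\,(\log \omega^2 \pm \eta \log G(\omega)\pm\cdots)$ has a strict sign on the chosen circles whenever a coordinate is reflected to the opposite side. The $O(m^{5/6})$ shifts only perturb the exponent by $o(n)$, so the strict inequality survives. This gives \eqref{E:first-eq-10.3}, with the representation \eqref{eq:prev:B} on $\Gamma_p\times\Gamma_{1/p}$ quoted from \cite[Lemma 3.1/Proposition 3.3]{CJ16} after the shift by $(n,n)$.

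\textbf{Step 2: swapping the contours.} Next we move $\omega_1$ from $\Gamma_p$ to $\Gamma_{1/p}$ and $\omega_2$ from $\Gamma_{1/p}$ to $\Gamma_p$. We will do this by first deforming $\omega_1$ outward across $\Gamma_{1/p}$. In doing so we cross the pole at $\omega_2=\omega_1$ and the branch cut of $G$ on $\mathrm{i}[-\sqrt{2c},\sqrt{2c}]$ and its inverse image. The residue at $\omega_1=\omega_2$ gives a single integral of $V(\omega,\omega)H_{x_1+1,x_2}(\omega)/H_{y_1,y_2+1}(\omega)\,d\omega/\omega$, with $V(\omega,\omega)$ given explicitly by \eqref{eq:prev:Vww}. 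Simplifying $H/H$ via \eqref{eq:prev:H}, this single integral is exactly the integral defining $C_{\omega_c}$ in \eqref{eq:prev:Comega}, over the arc where the crossing occurs. We choose $\omega_c$ with $r_c=1$, matching the label $C_1$. The remaining double integral is $\tilde{\mathcal{B}}$, which gives \eqref{eq:lem:switchcontours}. The main obstacle is the bookkeeping at the branch cut: the arcs $\theta\in[\theta_c,\pi-\theta_c]$ versus the full circle must match the sign conventions of $\sqrt{\omega^2+2c}$, which is why $\Gamma_{\omega_c}$ omits arcs. I would import this verbatim from \cite[proof of Lemma 3.2 and (3.27)]{CJ16}, checking only that the shift by $(n,n)$ and the integrality of the exponents $(y_1-x_1-1)/2$ and $(x_2-y_2-1)/2$ are consistent.

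\textbf{Step 3: the smooth-phase kernel.} Finally, \eqref{E:KCS} comes from performing the same substitution $u_j\mathrm{i}=G(\omega_j)$ in the double integral for $\mathbb{K}^{-1}_{1,1}$. One integral is computed by residues, leaving $C_1(x,y)$ plus a residual term $S(x,y)$, as in \cite[(2.18)]{CJ16}. The vanishing of $S$ when $y_1-x_1>-1$ or $x_2-y_2>-1$ follows because the integrand defining $S$ then has no pole inside the contour: both exponents of $u$ are nonnegative. The remaining claim is the one nonzero neighbour value, $S(x,x+e_2)=-\mathrm{i}a^{\eps_1-1}$. This is a direct residue computation at the single remaining pole, using $\tilde c$ and the prefactor $a^{\eps_y}$ with $\eps_y$ determined by $x+e_2$.
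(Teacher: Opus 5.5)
Your route is the paper's. Its proof is likewise a compilation: \cite[Theorem 2.3]{CJ16} together with the extension of \cite[Lemma 3.6]{CJ16} recorded as \cref{lem:prev:termsdecay} gives \eqref{E:first-eq-10.3}; \cite[(3.11)]{CJ16} gives \eqref{eq:prev:B}; a contour interchange gives \eqref{eq:lem:switchcontours}; and \cite[Lemmas 3.3, 2.5]{CJ16} give the statements about $S$.

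The contour interchange is the main step the paper does itself, and your Step 2 misdescribes it in a way that matters. No branch cut is crossed. Since $c<1/2$ and $\sqrt{2c}<p<1$, the closed annulus $p\le|\omega|\le 1/p$ lies inside $\sqrt{2c}<|\omega|<1/\sqrt{2c}$. That region avoids the cut $\mathrm{i}[-\sqrt{2c},\sqrt{2c}]$ of $\sqrt{\omega^2+2c}$ and the cut $\mathrm{i}(-\infty,-1/\sqrt{2c}]\cup\mathrm{i}[1/\sqrt{2c},\infty)$ of $\sqrt{\omega^{-2}+2c}$. On the annulus, $G(\omega)^{\pm1}$, $G(\omega^{-1})^{\pm1}$, the integer powers in $H$, and $V_{\eps_1,\eps_2}$ are all analytic. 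For $V$, note that $G(\omega)-G(\omega)^{-1}=2\omega/\sqrt{2c}$, so the denominators $f_{a,1}$ in $\mathtt{y}$ vanish only where $\omega_1^2=\omega_2^2$, and these zeros are cancelled by the factor $1-\omega_1^2\omega_2^{-2}$ in $\mathtt{x}$.

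Hence the only singularity encountered is the simple pole at $\omega_2=\omega_1$. Push $\omega_2$ from $\Gamma_{1/p}$ to $\Gamma_{p'}$ with $\sqrt{2c}<p'<p$, collecting $2\pi\mathrm{i}\,V_{\eps_1,\eps_2}(\omega_1,\omega_1)H_{x_1+1,x_2}(\omega_1)/H_{y_1,y_2+1}(\omega_1)$. Then move $\omega_1$ out to $\Gamma_{1/p}$ and $\omega_2$ up to $\Gamma_p$; neither move crosses anything. Since $H_{x_1+1,x_2}(\omega)/H_{y_1,y_2+1}(\omega)=G(\omega)^{(y_1-x_1-1)/2}G(\omega^{-1})^{(x_2-y_2-1)/2}$, the residue term, moved to the unit circle, is $C_{\omega_c}$ with $\omega_c=1$. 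That means $r_c=1$ \emph{and} $\theta_c=0$, so $\Gamma_{\omega_c}$ is the full circle and no arcs are omitted.

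If a cut really were crossed, jump integrals would appear alongside $C_1+\tilde{\mathcal{B}}$, and \eqref{eq:lem:switchcontours} would be false as stated. So the mechanism you describe contradicts the conclusion you draw. Arcs with $\theta_c>0$ arise only later, in the rough-region analysis behind \eqref{eq:thm:KinversearoundRS:rough}. There the two contours are deformed to steepest-descent/ascent paths crossing at $\pm\omega_{c,X},\pm\overline{\omega_{c,X}}$; the omitted arcs are not an artifact of branch conventions.

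Two smaller points. First, the substitution $u_j\mathrm{i}=G(\omega_j)$ by itself only yields \eqref{cor:eq:prev:KtoB1}, with kernel $\omega_2/(\omega_2^2-\omega_1^2)$ and $\sum Q^{\eps_1,\eps_2}_{\gamma_1,\gamma_2}$. Reaching \eqref{eq:prev:B} also requires the partial fraction $\frac{\omega_2}{\omega_2^2-\omega_1^2}=\frac12\bigl(\frac{1}{\omega_2-\omega_1}+\frac{1}{\omega_2+\omega_1}\bigr)$ and the substitution $\omega_2\mapsto-\omega_2$ in the second piece. That is what produces the difference $\mathtt{x}(\omega_1,\omega_2^{-1})-\mathtt{x}(\omega_1,-\omega_2^{-1})$ and the factor $\frac12$ in $V$. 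Second, in Step 3 your reason for $S=0$ (``both exponents of $u$ are nonnegative'') does not match the hypothesis, which is a disjunction. The paper does not re-derive this; it imports it, together with the value of $S(x,x+e_2)$, from \cite[Lemma 2.5]{CJ16}.
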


	Finally, we need the following lemma, which contains versions of the asymptotic equation \eqref{E:first-eq-10.3} when the points $x, y$ are close to the anti-diagonal or close to the diagonal in quadrants $1$. It is again essentially contained in \cite{CJ16}, and we give a brief proof in \cref{subsec:Kinvasympproofs}.
	
	\begin{lem}\label{L:Kinv:rotationestimate}
		For $x=(x_1,x_2)=(-\lfloor 4m\eta_1 \rfloor +\overline{x}_1, \lfloor 4m\eta_1 \rfloor +\overline{x}_2)\in \mathtt{W}_{\eps_1}$ and $y=(y_1,y_2)=(- \lfloor 4m\eta_2 \rfloor +\overline{y}_1, \lfloor 4m\eta_2 \rfloor +\overline{y}_2)\in \mathtt{B}_{\eps_2}$ with $\eps_1,\eps_2 \in\{0,1\}$ where $-1<\eta_1,\eta_2<0$ and $|\overline{x}_1|,|\overline{x}_2|,|\overline{y}_1|,|\overline{y}_2| \le C m^{5/6}$ for some constant $C>0$, we have for a constant $\mathtt{c}>0$
		\begin{equation*}
			K^{-1}_{a,1}(x,y)=\mathbb{K}^{-1}_{1,1}(x,y)+\frac{\mathrm{i}}{a}(-1)^{\eps_1+\eps_2}\mathcal{B}_{1-\eps_1,\eps_2}(a,n-x_1,n+x_2,n-y_1,n+y_2)+O(e^{-\mathtt{c}n}).
		\end{equation*}
		For $x=(x_1,x_2)=(\lfloor 4m\eta_1 \rfloor +\overline{x}_1,- \lfloor 4m\eta_1 \rfloor +\overline{x}_2)\in \mathtt{W}_{\eps_1}$ and $y=(y_1,y_2)=( \lfloor 4m\eta_2 \rfloor +\overline{y}_1,- \lfloor 4m\eta_2 \rfloor +\overline{y}_2)\in \mathtt{B}_{\eps_2}$ with $\eps_1,\eps_2 \in\{0,1\}$ where $-1<\eta_1,\eta_2<0$ and $|\overline{x}_1|,|\overline{x}_2|,|\overline{y}_1|,|\overline{y}_2|\leq Cm^{5/6}$ for some constant $C>0$, we have for a constant $\mathtt{c}>0$
		\begin{equation*}
			K^{-1}_{a,1}(x,y)=\mathbb{K}^{-1}_{1,1}(x,y)+\frac{\mathrm{i}}{a}(-1)^{\eps_1+\eps_2}\mathcal{B}_{\eps_1,1-\eps_2}(a,n+x_1,n-x_2,n+y_1,n-y_2)+O(e^{-\mathtt{c}n}).
		\end{equation*}
		For $x=(x_1,x_2)=(-\lfloor 4m\eta_1 \rfloor +\overline{x}_1,- \lfloor 4m\eta_1 \rfloor +\overline{x}_2)\in \mathtt{W}_{\eps_1}$ and $y=(y_1,y_2)=(-\lfloor 4m\eta_2 \rfloor +\overline{y}_1,- \lfloor 4m\eta_2 \rfloor +\overline{y}_2)\in \mathtt{B}_{\eps_2}$ with $\eps_1,\eps_2 \in\{0,1\}$ where $-1<\eta_1,\eta_2<0$ and $|\overline{x}_1|,|\overline{x}_2|,|\overline{y}_1|,|\overline{y}_2|\leq Cm^{5/6}$ for some constant $C>0$, we have for a constant $\mathtt{c}>0$
		\begin{equation*}
			K^{-1}_{a,1}(x,y)=\mathbb{K}^{-1}_{1,1}(x,y)-\mathcal{B}_{1-\eps_1,1-\eps_2}(a,n-x_1,n-x_2,n-y_1,n-y_2)+O(e^{-\mathtt{c}n}).
		\end{equation*}
	\end{lem}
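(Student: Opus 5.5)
The plan is to start from the exact formula in \cref{thm:prev:Kinverse}, which expresses $K^{-1}_{a,1}(x,y)$ as $\mathbb{K}^{-1}_{1,1}(x,y)$ minus four boundary terms $\mathcal{B}$, one attached to each corner of the Aztec diamond. The argument is then a bookkeeping exercise. In each of the three regimes (near the anti-diagonal in the fourth quadrant, near the anti-diagonal in the second quadrant, near the diagonal in the first quadrant), exactly one of these four terms carries the leading contribution. The other three are $O(e^{-\mathtt{c}n})$.

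The cleanest route is to reduce each regime to the third-quadrant case, \cref{lem:switchcontoursBtilde}. Apply the reflections $(x_1,x_2)\mapsto(-x_1,x_2)$ and $(x_1,x_2)\mapsto(x_1,-x_2)$, and their composition, to the arguments of the four $\mathcal{B}$'s.

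\textbf{First regime.} Take $x,y$ near the ray $(-|\eta|,|\eta|)\cdot 4m$. The term $\mathcal{B}_{1-\eps_1,\eps_2}(1/a,n-x_1,n+x_2,n-y_1,n+y_2)$ is then evaluated at arguments of exactly the third-quadrant form $(n+\tilde x_1,n+\tilde x_2)$ with $\tilde x=(-x_1,x_2)$ near the diagonal. Three steps handle this case.
\begin{enumerate}
\item Convert the parameter $1/a$ back to $a$. The weights for $1/a$ are gauge/scaling equivalent to those for $a$ up to an overall factor, and $c=a/(1+a^2)$ is invariant under $a\mapsto 1/a$. Tracking the prefactors in \eqref{eq:prev:B:first} should turn the $\mathcal B(1/a,\cdot)$ term into $\mathcal B(a,\cdot)$ with the stated sign and the factor $\mathrm{i}/a$. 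I would verify this directly from the integrand: $F_s$, $\mu$ and $Y$ depend on $a$ only through $c$, apart from explicit powers of $a$.
\item Show the other three $\mathcal{B}$ terms are exponentially small. This uses the same saddle-point/contour argument as in \cite{CJ16} that gives the $O(e^{-\mathtt{c}n})$ error in \eqref{E:first-eq-10.3}. For a corner far from $(x,y)$, the factor $(\cdots)^m$ combined with $F_{x_2/2}F_{y_1/2}u_1^{-(x_1-1)/2}$ can be bounded on a circle $\Gamma_r$ with $a<r<1/a$ where the integrand has modulus $e^{-\mathtt{c}n}$. The perturbation $|\overline{x}_i|\le Cm^{5/6}=o(n)$ does not change this.
\item The remaining term is then identified via \cref{lem:switchcontoursBtilde}.
\end{enumerate}

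\textbf{Other regimes.} The second regime is symmetric. The first-quadrant diagonal case uses $\mathcal{B}_{1-\eps_1,1-\eps_2}(a,n-x_1,\dots)$, which is already in third-quadrant form with parameter $a$, so no change of parameter is needed there.

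\textbf{Main obstacle.} The hardest step is the uniform exponential bound on the non-dominant $\mathcal{B}$ terms. It requires choosing, for each corner, a radius $r$ in the admissible window on which the modulus of $\frac{1}{4c^2}u^2(2-\mu(-\mathrm{i}u))^2$, raised to the $m$, beats the growth of $F_s$ and the monomials. I would first try reusing the estimates in \cite[Section 4]{CJ16} verbatim. I expect they already cover the full strip $-1<\eta<0$ and a sub-linear perturbation. If they do not, I would redo the steepest-descent bound using the explicit form of $G$ and $H_{x_1,x_2}$ from \eqref{eq:prev:H}.
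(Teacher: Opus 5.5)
Your skeleton matches the paper's proof, which does nothing more than this. You start from the exact formula of \cref{thm:prev:Kinverse}, show that the three corner terms not attached to the relevant ray are $O(e^{-\mathtt{c}n})$ by the contour bounds behind \cref{lem:prev:termsdecay} (i.e.\ \cite[Lemma 3.6]{CJ16}, which tolerate perturbations of order $m^{5/6}$), and keep the fourth. Your Step~3 is unnecessary, since the surviving term is simply copied from the exact formula. For the first-quadrant case your argument is complete. (Minor slip: in the first regime $x\approx 4m(|\eta_1|,-|\eta_1|)$, not $4m(-|\eta_1|,|\eta_1|)$.)

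The genuine problem is Step~1, which would fail. The parameter-$a^{-1}$ weights are not gauge or scaling equivalent to the parameter-$a$ weights on the same graph. Gauge transformations and global rescaling preserve face weights, but under $a\mapsto a^{-1}$ the face weights $a^{2}$ and $a^{-2}$ of the $c$-faces and of the other even faces are interchanged. Rescaling by $a$ turns the parameter-$a^{-1}$ model into the parameter-$a$ model with $a$- and $b$-faces swapped, i.e.\ its image under $x_1\mapsto -x_1$. That reflection is exactly why the anti-diagonal corners carry $a^{-1}$, so it cannot be used a second time to remove it.

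Concretely, in \eqref{eq:prev:B:first} the quantities $(1+a^2)F_s$, $\mu$ and $c$ are invariant under $a\mapsto a^{-1}$. The rational functions $\mathtt{y}^{\eps_1,\eps_2}_{\gamma_1,\gamma_2}$ inside $Y$ are not, even up to a $(u,v)$-independent factor. Let $N(a)=2a^7u^2v^2-a^5A-a^3B-aD$ be the numerator of $\mathtt{y}^{0,0}_{0,0}(a,1,u,v)$, with $A,B,D$ as in \cref{sec:appendix:Y}. Then $\mathtt{y}^{0,0}_{0,0}(a^{-1},1,u,v)/\mathtt{y}^{0,0}_{0,0}(a,1,u,v)=a^8N(a^{-1})/N(a)$. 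This ratio equals $a^2$ at $u=v=0$ but is not identically $a^2$: the coefficient of $a$ in $a^8N(a^{-1})-a^2N(a)$ is $2u^2v^2$. The four $\gamma$-terms are multiplied by $1$, $s(-\mathrm{i}u_1)$, $s(-\mathrm{i}u_2)$ and $s(-\mathrm{i}u_1)s(-\mathrm{i}u_2)$. These are linearly independent over rational functions, so the terms cannot compensate for one another. Hence no bookkeeping of prefactors yields $\mathcal{B}(a^{-1},\cdot)=\mathcal{B}(a,\cdot)$. Compare the remark in the proof of \cref{L:expected-height-invariant} that individual $\mathcal{B}$-terms are not invariant under $a\mapsto a^{-1}$.

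The fix is to drop Step~1. Your argument, like the paper's, yields the first two identities with $\mathcal{B}_{1-\eps_1,\eps_2}(a^{-1},\dots)$ and $\mathcal{B}_{\eps_1,1-\eps_2}(a^{-1},\dots)$, exactly as these terms appear in \cref{thm:prev:Kinverse}. So the $a$ printed in those two displays should be read as $a^{-1}$. This costs nothing downstream. The lemma is only used for upper bounds in \cref{L:global-coupling-1} and \cref{L:global-coupling-2}, and $G$, $H_{x_1,x_2}$ and $g_{\xi_1,\xi_2}$ depend on $a$ only through $c$.
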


	\subsection{Scalings}
	
	In this subsection, we give the scalings needed for asymptotic analysis. 
	Recall that  $\xi_c=-\frac{1}{2}\sqrt{1-2c}$.  Let $\xi = (\xi_1, \xi_2)$ and $(\tilde{\xi}_1, \tilde{\xi}_2)$ be on the rough-smooth limit shape curve (given in \eqref{eq:asymp:limitshape}) with $|\xi_1 -\xi_2|<m^{-\frac{1}{6}}$ and $|\tilde{\xi}_1 -\tilde{\xi}_2|<m^{-\frac{1}{6}}$ and $\xi_1,\xi_2,\tilde{\xi}_1,\tilde{\xi}_2<0$. We consider the following scalings for 
	\begin{equation}
		\begin{array}{l} \label{eq:def:scalingsxy}
			x=(\lfloor 4m\xi_1+4mX\rfloor+1,\lfloor 4m\xi_2+4mX \rfloor), \\
			y=(\lfloor 4m\tilde{\xi}_1+4mY \rfloor ,\lfloor 4m\tilde{\xi}_2+4mY \rfloor+1).
		\end{array}
	\end{equation}
	For this, fix $L>0$, $0<\delta <\frac{1}{12}$ and $\gamma >0$. 
	\begin{align}
		&\mbox{\emph{(Smooth region)}}  \nonumber\\
		&\hspace{30mm}m^{-\frac{1}{6}+\delta} \leq X,Y \leq -\xi_c + \gamma \label{eq:def:sm}\\
		&\mbox{\emph{(Smooth region close to rough-smooth boundary)}} \nonumber\\  
		&\hspace{30mm}m^{-\frac{2}{3}}L\leq X,Y < m^{-1/6+\delta}  \label{eq:def:smclosetoRS}\\
		&\mbox{\emph{(Rough-smooth boundary)}}  \nonumber \\
		&\hspace{30mm}-m^{-\frac{2}{3}}L\leq X,Y < m^{-\frac{2}{3}}L \label{eq:def:rs} \\
		&\mbox{\emph{(Rough region close to rough-smooth boundary)}}  \nonumber\\
		&\hspace{30mm}-m^{-\frac{1}{2}+\delta}\leq X,Y < -m^{-\frac{2}{3}}L \label{eq:def:rgclosetoRS}.
	\end{align}
	We will also consider a refined rough-smooth boundary scaling
	\begin{equation}
		\begin{split}
			\label{eq:def:rsrefined}
			&x=\big(\lfloor 4m\xi_c \rfloor +2 \lfloor \alpha_x \lambda_1 (2m)^{\frac{1}{3}}\rfloor)e_1\\
			&-\big(2\lfloor \hat{\beta}_x \lambda_2 (2m)^{\frac{2}{3}} +k_x \lambda_2 \log^2 m\rfloor \big)e_2 +f_x
		\end{split}
	\end{equation}
	where $|\hat{\beta}_x|,|f_x| <C$ and $-C<\alpha_x<\log^3 m$ for a constant $C>0$, $1\leq k_x \leq M$ where  $M=(\log m)^2 $. We have a similar scaling for $y$ with $\hat{\beta}_y,f_y, k_y, \alpha_y$.  We always choose $f_x, f_y$ so that $x$ and $y$ indicate will always be white and black vertices respectively. 
	\subsection{Asymptotics of the Correlation Kernel}\label{subsubsec:Asymptotics}
	
     We now give the refined asymptotics for $K^{-1}_{a,1}$ for the scalings given in the previous subsection.  To state our results, we need some details on the saddle point functions, giving a brief description of their origins.  In what follows below, we ignore integer parts since these are negligible. 
    
    First, if we set $x$ and $y$ as given in \eqref{eq:def:scalingsxy} we have that 
	\begin{equation}\label{eq:def:logH}
		\log H_{x_1+1,x_2}(\omega) =2m g_{\xi_1+X,\xi_2+X}(\omega) -\log G(\omega) 
	\end{equation}
	and 
	\begin{equation}\label{eq:def:logHy}
		\log H_{y_1,y_2+1}(\omega) =2m g_{\tilde{\xi}_1+Y,\tilde{\xi}_2+Y}(\omega) +\log G(\omega^{-1})
	\end{equation}
	where 
	\begin{equation} \label{eq:prev:gxi12}
		g_{\xi_1,\xi_2}(\omega) = \log \omega - \xi_1 \log G(\omega) +\xi_2 \log G(\omega^{-1})
	\end{equation}
	and the function $G$ is defined in \eqref{eq:def:G}. Recalling \eqref{eq:prev:B} and using \eqref{eq:def:logH} and \eqref{eq:def:logHy}, we have
	\begin{equation}
		\begin{split} \label{thmproof:aroundRSsm:B1}
			&\mathcal{B}_{\eps_1,\eps_2}(a,n+x_1,n+x_2,n+y_1,n+y_2)=\frac{\mathrm{i}^{\frac{x_2-x_1+y_1-y_2}{2}}}{(2\pi \mathrm{i})^2} \int_{\Gamma_p} \frac{\mathrm{d}\omega_1}{\omega_1} \int_{\Gamma_{1/p}} \mathrm{d}\omega_2 \\
			&\times \frac{V_{\eps_1,\eps_2}(\omega_1,\omega_2)}{\omega_2 - \omega_1} e^{2m g_{\xi_1+X,\xi_2+X}(\omega_1) -\log G(\omega_1) }e^{-2m g_{\tilde{\xi}_1+Y,\tilde{\xi}_2+Y}(\omega_2) -\log G(\omega_2^{-1})}.
		\end{split}
	\end{equation}
	To understand the asymptotic behvavior of this integral, we need to understand the saddle points of $g_{\xi_1,\xi_2}$. By differentiating~\eqref{eq:prev:gxi12} we arrive at 
	\begin{equation} \label{eq:def:gprime}
		\omega g'_{\xi_1,\xi_2}(\omega)=1+ \frac{\xi_1 \omega}{\sqrt{\omega^2+2c}} +\frac{\xi_2}{\omega \sqrt{\omega^{-2}+2c}}
	\end{equation}
	and the equation $\omega g'_{\xi_1,\xi_2}(\omega)=0$ has at most four roots in $\mathbb{C}\backslash(\mathrm{i}(-\infty,-1/\sqrt{2c}]\cup \mathrm{i} [-\sqrt{2c},\sqrt{2c}] \cup \mathrm{i} [1/\sqrt{2c},\infty))$; see \cite[Lemma 3.7]{CJ16}. 
    Note that the equation is symmetric under $\omega \mapsto -\omega$ and $\omega \mapsto \mathrm{i}\omega$ by the choice of square root.   We denote $\mathbb{H}$ to be the upper half-plane and set $\mathbb{H}_+=\{x+ y \mathrm{i}:x,y>0\}$ to be the first quadrant.

	The critical points of $g_{\xi_1,\xi_2}$ determine the macroscopic region which $(\xi_1,\xi_2)$ belongs to. The analysis in \cite{CJ16} was performed for $\xi_1=\xi_2$ to make the analysis explicit, showing that  the four roots of \eqref{eq:def:gprime} are given by $\omega_c, \omega^{-1}_c, -\omega_c$ and $-\omega_c^{-1}$ where $\omega_c \in \mathbb{H}_+\cup \mathbb{R}_{\geq 1} \cup [-\sqrt{2c},1]\mathrm{i}$.  In this case, they found that $(\xi_1,\xi_1)$ depends continously on $\omega_c$ with
   \begin{itemize}
\item  if $\omega_c \in (\sqrt{2c},1)\mathrm{i} $, then $(\xi_1,\xi_1)$ is in the smooth region (single critical point),
      \item if $\omega_c =\mathrm{i} $, then $(\xi_1,\xi_1)$ is at the smooth-rough boundary,  
 \item  if $\omega_c \in \mathbb{H}_+ $ with $|\omega_c|=1$, then $(\xi_1,\xi_1)$ is in the rough region (single critical point),
 \item if $\omega_c =1 $, then $(\xi_1,\xi_1)$ is at the frozen-rough boundary,
 \item if $\omega_c >1$, the $(\xi_1,\xi_1)$ is in the frozen region. 
   \end{itemize}

   Since we work slightly away from the main diagonal in this paper,  we adapt some of our notation accordingly.  Moreover, we use 
the notation that
	\begin{equation}\label{eq:def:omegacX}
		\begin{array}{lll}
			\omega_{c,X}\in \mathbb{H}_+\cup \partial \mathbb{H}_+ &\mbox{such that}& g'_{\xi_1+X,\xi_2+X}(\omega_{c,X})=0\\
			\tilde{\omega}_{c,Y}\in \mathbb{H}_+\cup \partial \mathbb{H}_+&\mbox{such that}& g'_{\tilde{\xi}_1+Y,\tilde{\xi}_2+Y}(\tilde{\omega}_{c,Y})=0,
		\end{array}
	\end{equation} 
   where $X$ and $Y$ are marking the distance from $(\xi_1,\xi_2)$ on the limit shape curve at the rough-smooth boundary with $|\xi_1-  \xi_2|< Cm^{-1/6}$. The point $\omega_{c, X}$ will be a perturbation of  $\omega_c$ at $(\xi_1 + X, \xi_1 + X)$, whereas $\tilde \omega_{c, Y}$ will be a perturbation of  $-\omega_c^{-1}$ at $(\xi_1+Y, \xi_1+Y)$.

	Our choice in notation means that $\omega_{c,0}$, that is where $X=0$, is such that  $g_{\xi_1,\xi_2}'(\omega_{c,0})=g_{\xi_1,\xi_2}''(\omega_{c,0})=0$ meaning that $(\xi_1,\xi_2)$ is on the limit shape curve. Similarly $\tilde{\omega}_{c,0}$, that is where $Y=0$, is such that  $g_{\tilde{\xi}_1,\tilde{\xi}_2}'(\tilde{\omega}_{c,0})=g_{\tilde{\xi}_1,\tilde{\xi}_2}''(\tilde{\omega}_{c,0})=0$ meaning that $(\tilde{\xi}_1,\tilde{\xi}_2)$ is on the limit shape curve.  
	
	Note that the analysis in \cite{DK21} holds for the whole Aztec diamond. We could have computed refined asymptotic
	formulas found in the latter here, due to the relation found in \cite{CD23} between the inverse Kasteleyn matrix entries and the particle system used in \cite{DK21}. For simplicity, we refine the asymptotics of the formulas used in \cite{CJ16}.

    We now state asymptotics for $\mathcal{B}_{\eps_1,\eps_2}$ when $x, y$ are scaled as in \eqref{eq:def:scalingsxy}. The proof of the upcoming theorem is contained in \cref{subsec:thm:KinversearoundRS}.

	\begin{thm} \label{thm:prev:KinversearoundRS}
		Assume that $x=(x_1,x_2)\in \mathtt{W}_{\eps_1}$ and $y=(y_1,y_2) \in \mathtt{B}_{\eps_2}$ with $\eps_1,\eps_2 \in \{0,1\}$. There exist positive constants $\mathtt{C}_1,\mathtt{C}_2,\mathtt{C}_3>0$ depending only on $a$, such that
		\begin{enumerate}
			\item If $x$ and $y$ have scaling given by ~\eqref{eq:def:sm} and $0 < \gamma < \mathtt{C}_2$, then 
			\begin{equation} \label{thm:eq:KinversearoundRS:sm}
				|\mathcal{B}_{\eps_1,\eps_2} (a,n+x_1,n+x_2,n+y_1,n+y_2) | \leq \mathtt{C}_1e^{-\mathtt{C}_2 m^{\frac{5}{6}+\delta}}. 
			\end{equation}
			
			\item If $x$ and $y$ have scaling given by ~\eqref{eq:def:smclosetoRS}, then 
			\begin{equation}\label{thm:eq:KinversearoundRS:smclosetoRS}
				\begin{split}
					&|\mathcal{B}_{\eps_1,\eps_2} (a,n+x_1,n+x_2,n+y_1,n+y_2) |\\& \leq \mathtt{C}_1 \bigg|\frac{H_{x_1+1,x_2}(\omega_{c,0})}{H_{y_1,y_2+1}(\tilde{\omega}_{c,0})}\bigg| e^{-m(\mathtt{C}_2X^{3/2}-\mathtt{C}_3Y^{3/2})} .
				\end{split}
			\end{equation}
			\item If $x$ and $y$ both have scaling given by~\eqref{eq:def:rs} and $|\xi_1-\tilde{\xi}_1|<m^{-1/3}$, then 
			\begin{equation}
				\begin{split}\label{thm:eq:KinversearoundRS:RS}
					&|\mathcal{B}_{\eps_1,\eps_2} (a,n+x_1,n+x_2,n+y_1,n+y_2)| \leq \mathtt{C}_1 (2m)^{-\frac{1}{3}}.
				\end{split}
			\end{equation}
			
			\item If $x$ and $y$ both have scaling given by ~\eqref{eq:def:rgclosetoRS}, $|\xi_1-\tilde{\xi}_1|<Cm^{-1}$, and $|X-Y|<Cm^{-1}$ for some $C>1$ constant, then
			\begin{equation}\label{eq:thm:KinversearoundRS:rough}
				\mathcal{B}_{\eps_1,\eps_2}(a,n+x_1,n+x_2,n+y_1,n+y_2)= C_{\omega_{c,X}}(x,y)+O(m^{-\frac{1}{2}})
			\end{equation}
			where $C_{\omega_{c,X}}$ is defined in \eqref{eq:prev:Comega} and $\mathrm{Re}\,\omega_{c,X}$ is order $\sqrt{X}$. 
		\end{enumerate}
		
	\end{thm}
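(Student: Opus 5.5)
We will prove each of the four regimes by a steepest descent analysis of the double contour integral \eqref{thmproof:aroundRSsm:B1}, whose integrand factorises as $e^{2m g_{\xi_1+X,\xi_2+X}(\omega_1)}e^{-2m g_{\tilde\xi_1+Y,\tilde\xi_2+Y}(\omega_2)}$ times the slowly varying factor $V_{\eps_1,\eps_2}(\omega_1,\omega_2)/(\omega_2-\omega_1)$ and the $G$-corrections. The first step is to record the local geometry of the saddle points. For $(\xi_1,\xi_2)$ on the limit curve with $|\xi_1-\xi_2|<m^{-1/6}$, the point $\omega_{c,0}$ is a double critical point, a perturbation of $\mathrm{i}$. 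Shifting by $X$ splits it into two simple critical points. For $X>0$ (smooth side) they lie on the imaginary axis at distance of order $\sqrt X$ from $\omega_{c,0}$. For $X<0$ (rough side) they move off the axis, with $\mathrm{Re}\,\omega_{c,X}\asymp\sqrt{|X|}$. This follows from a Taylor expansion of \eqref{eq:def:gprime} using $g'''(\omega_{c,0})\neq 0$, exactly as in \cite{CJ16} but carried out uniformly in the small off-diagonal perturbation. A cubic expansion then gives
\[
\mathrm{Re}\,[g_{\xi+X}(\omega_{c,X})-g_{\xi}(\omega_{c,0})]=\mp \kappa |X|^{3/2}+O(X^2),
\]
which is the source of the $X^{3/2}$ exponents.

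Regimes (2) and (3) use deformed contours. For (2), we deform $\Gamma_p$ to a contour through the smooth-side critical point of $g_{\xi+X}$ and $\Gamma_{1/p}$ to one through the corresponding point for $-g_{\tilde\xi+Y}$. We choose both contours as steepest descent paths, which are level curves of $|G(\omega)|$-type functions as in \cite{CJ16}, and we keep them separated so that $1/(\omega_2-\omega_1)$ stays bounded. Bounding the integrand by its value at the saddles, and comparing with $H(\omega_{c,0})$ through the cubic expansion, gives \eqref{thm:eq:KinversearoundRS:smclosetoRS}. For (3), we use the standard Airy-scale deformation: contours pass within $m^{-1/3}$ of $\omega_{c,0}$ and $\tilde\omega_{c,0}$ with wedge angles $\pm\pi/3$. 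After the change of variables $\omega=\omega_{c,0}+ (2m)^{-1/3}\zeta$, the integrand becomes $e^{\zeta^3/3+\dots}$ and the measure contributes $(2m)^{-1/3}$. Only an upper bound is needed, so no limit has to be identified; it suffices to show that the tails contribute $O(e^{-cm^{\cdot}})$. Because $|\xi_1-\tilde\xi_1|<m^{-1/3}$, the two local pictures coincide up to $O(1)$ in the rescaled variables. Where the contours must cross near the saddle, we treat the crossing separately.

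Regime (1), the deep smooth region, follows the same route with simpler input. The critical points are now at macroscopic distance of order $\sqrt{X}\ge m^{-1/12+\delta/2}$ on the imaginary axis. Since $\mathrm{Re}\, g$ gains $\asymp X^{3/2}m\ge m^{3/4+3\delta/2}$, we need the comparison to be made relative to the boundary anchor point $n\xi$, where the anchoring values cancel. That yields an exponent of order $m\cdot X^{3/2}$, and its minimum over the range must be checked against the claimed $m^{5/6+\delta}$. For $X$ near $-\xi_c+\gamma$, the expansion is replaced by a global monotonicity argument along the imaginary axis, which is valid for $\gamma<\mathtt C_2$.

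Regime (4), on the rough side, is where a residue appears. Deforming the contours through the complex-conjugate critical points $\pm\omega_{c,X}$ and $\pm\tilde\omega_{c,Y}$ forces the $\omega_1$ and $\omega_2$ contours to cross. Picking up the residue at $\omega_1=\omega_2$ along the arc between the critical points produces exactly $C_{\omega_{c,X}}(x,y)$, using \eqref{eq:prev:Vww} and the identity $H_{x_1+1,x_2}(\omega)/H_{y_1,y_2+1}(\omega)=G(\omega)^{(y_1-x_1-1)/2}G(\omega^{-1})^{(x_2-y_2-1)/2}$. The remaining double integral is bounded by a Gaussian saddle estimate. Here the width of the saddle is $(m|X|^{1/2})^{-1/2}$, and this gives $O(m^{-1/2})$ after accounting for the $1/(\omega_2-\omega_1)$ singularity, since $|X-Y|<Cm^{-1}$.

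The main obstacle is uniformity near the coalescing saddles. When $X$ is as small as $m^{-2/3}L$ in (2) or (4), the quadratic approximation degenerates into the Airy one, so the estimates must interpolate between the two. We will handle this by always expanding about $\omega_{c,0}$ to cubic order with explicit error control on a disc of radius $m^{-1/6+\delta}$, and by choosing the contour radius there as $\max(\sqrt{|X|},m^{-1/3})$.
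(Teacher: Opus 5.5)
Your route is the paper's route. You locate the critical points of $g_{\xi_1+X,\xi_2+X}$ from \eqref{eq:def:gprime}, use Gaussian saddle integrals on the smooth side and the $(2m)^{-1/3}$ Airy rescaling at the double critical point, and on the rough side a contour crossing whose residue at $\omega_1=\omega_2$ is $C_{\omega_{c,X}}(x,y)$. The problems are quantitative, and they occur exactly where the saddles coalesce.

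\textbf{Regime (1).} The check you defer fails. Your own expansion gives $2m\,\mathrm{Re}\,g_{\xi_1+X,\xi_2+X}(\omega_{c,X})\approx-\kappa mX^{3/2}$. At the bottom of \eqref{eq:def:sm}, $X=m^{-1/6+\delta}$, this is only $-\kappa m^{3/4+3\delta/2}$, and $3/4+3\delta/2<5/6+\delta$ for every $\delta<1/6$.

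Re-anchoring cannot fix this, because the rate is sharp. On the diagonal the Gaussian saddle evaluation is an asymptotic equality with nonvanishing prefactor: $V(\mathrm{i},\mathrm{i})\neq 0$ for $a<1$ by \eqref{eq:prev:Vww}. The same $X^{3/2}$ law appears in the paper's proof of \cref{thm:previousRSasymptotics}, where $H_{x_1+1,x_2}(\omega_{c,X})=H_{x_1+1,x_2}(\mathrm{i})e^{-\frac23\alpha_x^{3/2}+\hat\beta_x\alpha_x}$ with $\alpha_x\asymp m^{2/3}X$.

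The paper reaches $m^{5/6+\delta}$ only by asserting that $2m\,\mathrm{Re}\,g_{\xi_1+X,\xi_2+X}(\omega_{c,X})\le-m^{5/6+\delta}$ ``from the choice of scaling''. That amounts to a decay rate linear in the lattice distance $4mX$, which your cubic expansion contradicts. What your argument actually proves is $|\mathcal{B}_{\eps_1,\eps_2}|\le\mathtt{C}_1e^{-\mathtt{C}_2m(X^{3/2}+Y^{3/2})}\le\mathtt{C}_1e^{-\mathtt{C}_2m^{3/4+3\delta/2}}$. This is still superpolynomially small, which is all that \cref{L:global-coupling-1} and \cref{C:expected-sm-height} use, so state (1) in that form rather than leaving the comparison open.

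\textbf{Regime (4).} Your error count is off. With saddle width $w=(m|X|^{1/2})^{-1/2}$, the leftover double integral is naively $O(w)$. On $Lm^{-2/3}\le|X|\le m^{-1/2+\delta}$ this lies between $m^{-3/8-\delta/4}$ and $m^{-1/3}$, not $O(m^{-1/2})$.

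Improving it requires two further ingredients. First, the leading Gaussian term against $1/(\omega_2-\omega_1)$ vanishes by oddness at coincident saddles. Second, the cross terms between $\omega_{c,X}$ and $-\overline{\omega_{c,X}}$, which are only $\asymp\sqrt{|X|}$ apart, need separate treatment. Even then the remainder is $O(1/(m|X|))$. That is $O(m^{-1/2})$ only for $|X|\gtrsim m^{-1/2}$. At $|X|\sim Lm^{-2/3}$ it is of order $m^{-1/3}$, being the oscillating $1/|s|$ correction of the Airy kernel at $s\asymp-m^{2/3}|X|$.

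The paper's appeal to ``standard saddle point arguments in a rough region'' is likewise uniform only for $|X|$ bounded below. This is harmless downstream: in \cref{lem:expectheightroughsmooth} the summed error $\sum_k 1/(m|X(k)|)$ is $O(\log m)$.

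\textbf{Regime (2).} A milder version of the same issue appears here. The saddles $\omega_{c,X},\tilde\omega_{c,Y}$ are only $\asymp\sqrt{X}+\sqrt{Y}$ apart, so $1/(\omega_2-\omega_1)$ cannot be kept bounded. A sup-times-length bound leaves a factor up to $m^{1/3}$, which cannot be absorbed into $\mathtt{C}_1$ when $mX^{3/2}=O(1)$. You need the actual Gaussian integrals of \eqref{thmproof:aroundRSsm:B2}, whose widths reduce the prefactor to $O((m\min(X,Y))^{-1})$.

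Finally, compare with $g_{\xi+X}(\omega_{c,0})$, which is what $H_{x_1+1,x_2}(\omega_{c,0})$ encodes, rather than with $g_\xi(\omega_{c,0})$. Off the diagonal their difference has real part of order $X|\xi_1-\xi_2|$, not $O(X^2)$.
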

	
	\begin{rem}\label{rem:mixedterm}
		If one of $x$ and $y$ have scaling ~\eqref{eq:def:sm}, then we also have the bound given in \eqref{thm:eq:KinversearoundRS:sm}. The proof of this follows by the same saddle point analysis for one of the variables in the proof of \eqref{thm:eq:KinversearoundRS:sm} (and another saddle point analysis for the other).  We omit this computation. 
	\end{rem}
	
	By relying on the analysis in \cite{CJ16}, we give a shortened proof of the above result in  \cref{subsec:Kinvasympproofs}.

Using the notation from \cite{CJ16}, let
	\begin{equation*}
		\mathtt{g}_{\eps_1,\eps_2} =
		\left\{
		\begin{array}{ll}
			\frac{\mathrm{i} \left(\sqrt{a^2+1}+a\right)}{1-a}
			&  \mbox{if } (\eps_1,\eps_2)=(0,0) \\
			\frac{\sqrt{a^2+1}+a-1}{\sqrt{2a} (1-a) }
			& \mbox{if } (\eps_1,\eps_2)=(0,1) \\
			-\frac{\sqrt{a^2+1}+a-1}{\sqrt{2a} (1-a) }
			&\mbox{if } (\eps_1,\eps_2)=(1,0)\\
			\frac{\mathrm{i}\left(\sqrt{a^2+1}-1\right)}{(1-a) a}
			&\mbox{if } (\eps_1,\eps_2)=(1,1).
		\end{array}
		\right.
	\end{equation*}
    
	The following theorem is an extension of \cite[Theorem 2.7]{CJ16}. The extension requires justification, which is done in \cref{S:thmpreviousRSasymptotics}.
	\begin{thm}\label{thm:previousRSasymptotics}
		If $x$ and $y$ have scaling defined by \eqref{eq:def:rsrefined}, then we have
		\begin{equation*}
			K_{a,1}^{-1}(x,y)=\mathbb{K}^{-1}_{1,1}(x,y)-\mathbb{K}_{A}(x,y)
		\end{equation*}
		where $\mathbb{K}_{A}(x,y)=\mathcal{B}_{\eps_1,\eps_2} (a,n+x_1,n+x_2,n+y_1,n+y_2) (1+o(1))$ as $n\to \infty$. Moreover, we have that as $n\to \infty$
		\begin{equation*}
			\begin{split}
				&\mathbb{K}^{-1}_{1,1}(x,y)=\mathrm{i}^{y_1-x_1+1} |G(\mathrm{i})|^{\frac{-2-x_1+x_2+y_1-y_2}{2}} c_0 \mathtt{g}_{\eps_x,\eps_y} 
				\\ &\times e^{\alpha_y \hat{\beta}_y -  \alpha_x \hat{\beta}_x -\frac{2}{3} (\hat{\beta}_x^3-\hat{\beta}_y^3)}
				(2m)^{-\frac{1}{3}} (\phi_{\hat{\beta}_x,\hat{\beta}_y}( \alpha_x+\hat{\beta}_x^2, \alpha_y+\hat{\beta}_y^2)+o(1))
			\end{split}
		\end{equation*}
		and 
		\begin{equation*}
			\begin{split}
				&\mathcal{B}_{\eps_1,\eps_2} (a,n+x_1,n+x_2,n+y_1,n+y_2) =\mathrm{i}^{y_1-x_1+1} |G(\mathrm{i})|^{\frac{-2-x_1+x_2+y_1-y_2}{2}} c_0 \mathtt{g}_{\eps_x,\eps_y} 
				\\ &\times e^{\alpha_y \hat{\beta}_y -  \alpha_x \hat{\beta}_x -\frac{2}{3} (\hat{\beta}_x^3-\hat{\beta}_y^3)}
				(2m)^{-\frac{1}{3}} (\tilde{\mathcal{A} }(\hat{\beta}_x , \alpha_x+\hat{\beta}_x^2; \hat{\beta}_y,\alpha_y+\hat{\beta}_y^2)+o(1)).
			\end{split}
		\end{equation*}
	\end{thm}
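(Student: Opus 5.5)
This theorem is an extension of \cite[Theorem 2.7]{CJ16}: the only new feature is the refined scaling \eqref{eq:def:rsrefined}, in which $\alpha_x,\alpha_y$ may grow up to $\log^3 m$ and the extra shifts $k_x\lambda_2\log^2 m$ appear along $e_2$. The proof therefore follows the saddle point analysis of \cite{CJ16}, and the task is to check that its error terms remain $o(1)$ relative to the main terms uniformly in this enlarged window.

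We treat the two pieces separately.

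\textbf{The piece $\mathbb K^{-1}_{1,1}$.} Using \eqref{E:KCS} and \cref{lem:switchcontoursBtilde}, we write
\[
\mathbb K^{-1}_{1,1}=C_1+S .
\]
We then deform the contour $\Gamma_{\omega_1}$ in $C_1$, given by \eqref{eq:prev:Comega}, onto a small arc through the double critical point $\omega=\mathrm{i}$. We Taylor expand $\log G(\omega)$ and $\log G(\omega^{-1})$ to third order at $\mathrm{i}$ and substitute $\omega=\mathrm{i}(1+\kappa (2m)^{-1/3}\zeta)$. The exponent becomes quadratic in $\zeta$, with linear and cubic corrections coming from the $e_1$ and $e_2$ displacements. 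Because $|x-y|$ along $e_2$ is of order $m^{2/3}$, the integral is Gaussian and reproduces the heat kernel $\phi_{\hat\beta_x,\hat\beta_y}$. The prefactors $|G(\mathrm i)|^{\cdots}\,c_0\,\mathtt g_{\eps_x,\eps_y}$ come from evaluating $V_{\eps_1,\eps_2}(\mathrm i,\mathrm i)$ via \eqref{eq:prev:Vww}. The $k\log^2 m$ shifts only perturb $\hat\beta$ by $O(m^{-2/3}\log^4 m)$, which is negligible. The term $S$ vanishes in the relevant orientation, or contributes exactly the indicator $\mathbb I_{\tau_1<\tau_2}$.

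\textbf{The piece $\mathcal B$.} We use \eqref{eq:prev:B} and the representation \eqref{thmproof:aroundRSsm:B1}. We deform $\Gamma_p$ and $\Gamma_{1/p}$ to steepest descent contours through the double critical points $\omega_{c,0}=\mathrm i$ and $\tilde\omega_{c,0}$ respectively, as in the proof of \cite[Theorem 2.7]{CJ16}. Rescaling locally by $(2m)^{-1/3}$ around each critical point gives cubic exponents
\[
\tfrac13 z^3 - (\alpha+\hat\beta^2)z
\]
together with the factor $1/(\omega_2-\omega_1)$. Writing this factor as $\int_0^\infty e^{-\lambda(\cdot)}\,d\lambda$ turns the product of the two Airy contour integrals into $\tilde{\mathcal A}$. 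Along the way we pick up the conjugation factor $e^{\alpha_y\hat\beta_y-\alpha_x\hat\beta_x-\frac23(\hat\beta_x^3-\hat\beta_y^3)}$.

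\textbf{Main obstacle.} The hardest step is uniformity when $\alpha$ grows to $\log^3 m$. Once $|\zeta|$ reaches size $\alpha^{1/2}$, the local Taylor window $|\omega-\mathrm i|\le m^{-1/3+\delta'}$ must still contain the rescaled saddle, which it does since $\log^{3/2}m\ll m^{\delta'}$. The quartic Taylor remainder is then $O(m^{-1/3}\log^6 m)=o(1)$ relative to the main term.

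For $\alpha\to\infty$ the main terms themselves decay like $e^{-\frac23\alpha^{3/2}}$, so the $o(1)$ error has to be understood relative to the Airy kernel in its decaying form. We would verify that the off-saddle portions of the contours are bounded by $e^{-cm^{\delta'}}$ times the saddle value, using the descent estimates of \cite[Lemma 3.7 and Section 4]{CJ16}. This is exactly where the refined control \eqref{thm:eq:KinversearoundRS:smclosetoRS} of \cref{thm:prev:KinversearoundRS} is used.

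Finally, we absorb the $O(e^{-\mathtt cn})$ remainder from \cref{lem:switchcontoursBtilde}. This identifies $\mathbb K_A=\mathcal B\,(1+o(1))$ and completes the proof.
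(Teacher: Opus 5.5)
Your proposal is correct in outline, but it handles the one new point — uniformity in $-C<\alpha_x,\alpha_y<\log^3 m$ — by a different route from the paper.

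\textbf{The paper's route.} The paper never extends the Airy-scale expansion at $\mathrm{i}$. The bounded range is quoted from \cite[Theorem 2.7]{CJ16}, and the $\mathbb{K}^{-1}_{1,1}$ part from \cite[Proposition 3.4]{CJ16}. For $C<\alpha_x<\log^3 m$ it switches to a \emph{simple}-saddle analysis:
\begin{itemize}
\item the $\omega_1$-contour is moved onto the global steepest descent contour through the split critical point $\omega_{c,X}=\mathrm{i}+c_2\mathrm{i}\sqrt{X}$ of \cref{lem:asym:contourssm} and \cref{lem:asym:secondderclosesm}, so the $\omega_1$-integral is Gaussian while the $\omega_2$-integral stays Airy;
\item $V/(\omega_2-\omega_1)$ is frozen at $(\omega_{c,X},\mathrm{i})$;
\item it computes $H_{x_1+1,x_2}(\omega_{c,X})=H_{x_1+1,x_2}(\mathrm{i})e^{-\frac{2}{3}\alpha_x^{3/2}+\hat\beta_x\alpha_x}$ together with an $\alpha_x^{-3/4}$ prefactor, and checks that this reproduces the large-argument decay of $\tilde{\mathcal{A}}$.
\end{itemize}

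\textbf{Your route.} You keep the cubic Taylor polynomial at $\mathrm{i}$ throughout. You note that the relevant saddle lies at rescaled distance $\sqrt{\alpha}\le\log^{3/2}m$, inside the window, with quartic remainder $O(m^{-1/3}\log^6 m)$.

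\textbf{What each buys.} Your argument gives the Airy kernel itself, with a relative error uniform over the whole range, in one pass and without splitting into regimes. The paper's argument matches $\tilde{\mathcal A}$ only to leading order as $\alpha_x\to\infty$: freezing $1/(\omega_2-\omega_1)$ costs a relative $O(\alpha_x^{-1/2})$, so the result must be patched onto the bounded range by letting $C\to\infty$. In exchange it is shorter, reuses the simple-saddle machinery already built for \eqref{thm:eq:KinversearoundRS:smclosetoRS}, and delivers exactly the decay that the cumulant bounds (\cref{L:Srbound}) need.

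\textbf{What your route must make explicit to give relative error.}
\begin{itemize}
\item The local contour must pass through the shifted saddle $\mathrm{i}+O(\sqrt{\alpha}\,m^{-1/3})$ (to leading order $\omega_{c,X}$), not through $\mathrm{i}$. On a contour through $\mathrm{i}$ the integrand has size one while the answer has size $e^{-\frac{2}{3}\alpha^{3/2}}$, so any Taylor error would swamp it.
\item If you use $\frac{1}{z-w}=\int_0^\infty e^{-\lambda(z-w)}\,d\lambda$, the range of large $\lambda$ — where $\alpha+\lambda$ leaves the window — needs its own tail bound.
\item The off-window control cannot come from \eqref{thm:eq:KinversearoundRS:smclosetoRS}. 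That estimate bounds the whole integral with unspecified constants $\mathtt{C}_2,\mathtt{C}_3$, so it gives no relative estimate. The control comes instead from the unperturbed descent of $\mathrm{Re}\,g_{\xi_c,\xi_c}$ along the contours of \cite{CJ16}. For $|\omega-\mathrm{i}|\ge m^{-1/3+\delta'}$ this descent, of order $m|\omega-\mathrm{i}|^3$, dominates the change $O(\alpha m^{1/3}|\omega-\mathrm{i}|)$ caused by the shift $X$. Alternatively, use the global descent contour of \cref{lem:asym:contourssm}, as the paper does.
\item Your Gaussian derivation for $\mathbb{K}^{-1}_{1,1}$ presupposes $\hat\beta_x\neq\hat\beta_y$, as does the displayed formula itself; so does your claim that the $k\lambda_2\log^2 m$ shifts are negligible. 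When $\hat\beta_x=\hat\beta_y$ the heat kernel degenerates and the $k$-shift is the entire time separation. This is why the smooth-part estimates in \cref{P:airylimit_asymptotics} are taken from \cite{BCJ18} rather than from this theorem.
\end{itemize}
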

    Here the notation $\tilde \cA, \phi_{s, t}$ for the extended Airy kernel is as in \eqref{eq:Airymod}, \eqref{eq:Airyphi}.
	Note that the above theorem is stated in a stronger form than needed since it considers different values of $\hat{\beta}_x$ and $\hat{\beta}_y$ whereas in the sequel, we only need the case $\hat{\beta}_x=\hat{\beta}_y$.

	\section{Proofs of Lemmas \ref{L:global-coupling-1} and \ref{L:global-coupling-2}}\label{S:smoothcouplingproofs}

	In this section, we give the smooth phase coupling resuls, that is the proofs of \cref{L:global-coupling-1} and \cref{L:global-coupling-2}.  To prove these results, we introduce notation similar to the one used in \cite{BCJ22}.

	Consider $\Lambda_{\mathrm{sm},k}$ defined in \cref{L:global-coupling-1} for $k=1$ and \cref{L:global-coupling-2} for $k=2$.  We denote by $\partial\Lambda_{\mathrm{sm},k}$ to be vertices which share edges that cross the boundary of the box $\Lambda_{\mathrm{sm},k}$.  Letting $\mathtt{W}(\Lambda_{\mathrm{sm},k})$ denote all white vertices in $\Lambda_{\mathrm{sm},k}$, we write $\mathtt{W}(\Lambda_{\mathrm{sm},k})=\{w_1,\dots, w_R\}$.  Set $f_1=e_1, f_2=e_2, f_3=-e_1$, and $f_4=-e_2$ and $[N]=\{1,\dots, N\}$ for a positive integer $N$. A \emph{configuration} in  $\Lambda_{\mathrm{sm},k}$ is a  set of edges:
	\begin{equation} \label{E:smoothcouple:edges}
		({w}_1,{w}_1+f_{s_1}),\dots, ({w}_{R},{w}_{R}+f_{s_R})
	\end{equation}
	where $s_j \in[4], 1 \leq j \leq R$.  We can think of \eqref{E:smoothcouple:edges} as the event that all these edges are covered by dimers.   For $\overline{s} \in [4]^{R}$, we let $\overline{s}$ denote the configuration~\eqref{E:smoothcouple:edges}. Let
	\begin{equation*}
		A_{ij}(\overline{s})=K_{a,1}({w}_i,{w}_i)\mathbb{K}^{-1}_{1,1}({w}_i,{w}_j+f_{s_j})
	\end{equation*}
	and 
	\begin{equation*}\label{eq:defCij2}
		C_{ij}(\overline{s})=-K_{a,1}({w}_i,{w}_i)\mathcal{B}_{\eps_1,\eps_2}(a,n+x_1,n+x_2,n+y_1,n+y_2)(1+o(1));
	\end{equation*}
	where $w_i=(x_1,x_2)\in \mathtt{W}_{\eps_1}$ and ${w}_j+f_{s_j}=(y_1,y_2)\in \mathtt{B}_{\eps_2}$ and $\eps_1,\eps_2 \in \{0,1\}$. 
	
	Then, $\mathbb{P}_{a,n}$ induces the following measure on $[4]^R$:
	\begin{equation}\label{eq:inducedmeasureRS}
		p^{a,n}_{\mathrm{Az}}\big(\overline{s}|\Lambda_{\mathrm{sm},k}\big)=\det\big( A_{ij}(\overline{s})+ C_{ij}(\overline{s} )\big)_{1 \leq i,j \leq R } 
	\end{equation}
	while $\mathbb{P}_{a}$ induces the following measure on $[4]^R$ 
	\begin{equation}	
		p_{a}\big(\overline{s}|\Lambda_{\mathrm{sm},k}\big)=\det( A_{ij}(\overline{s}))_{1\leq i,j\leq R}.\label{smoothcouple:S}
	\end{equation}
	Note that if $w_j+f_{s_j}=w_k+f_{s_k}$ for $j\not = k$, then~\eqref{eq:inducedmeasureRS} and~\eqref{smoothcouple:S}  both give zero, so configurations with overlaps have probability zero.

	\begin{proof}[Proof of \cref{L:global-coupling-1}]
		The proof is the same as \cref{L:local-coupling} given in \cite[Proposition 4.5]{BCJ22} but replacing the bound for $|C_{ij}(\overline{s})|$ by the bound given in \eqref{thm:eq:KinversearoundRS:sm} for all $i,j$ and for all $\overline{s}$ such that all faces $x$ in $\Lambda_{\mathrm{sm},1}$ have scaling  \eqref{eq:def:sm}.    The result can be equivalently seen as replacing $n^{-1/3}$ in \cref{L:local-coupling} by the bound given in \eqref{thm:eq:KinversearoundRS:sm} and accounting for the size of the box (which is $o(n^{-1/2})$) in the statement of \cref{L:global-coupling-1}.
		
		The statement on the rotations follows from using \cref{L:Kinv:rotationestimate}, the same asymptotic bounds given in \eqref{thm:eq:KinversearoundRS:sm}, and the above argument. Indeed, these asymptotic bounds holds for any of the rotations considered here because the saddle point function from \cref{L:Kinv:rotationestimate} has the same form as the one considered in \cref{subsubsec:Asymptotics}.
	\end{proof}
	\begin{proof}[Proof of \cref{L:global-coupling-2}]
		Like in the proof of \cref{L:global-coupling-1} we give a shortened proof by relying on the proof of \cref{L:local-coupling} given in \cite[Proposition 4.5]{BCJ22}. The main difference to the proof this lemma compared with \cref{L:global-coupling-1} is that  we need to adjust for the additional factor seen in the estimate \eqref{thm:eq:KinversearoundRS:smclosetoRS}. We can remove this by showing this additional factor becomes a conjugation factor multiplied by another factor.
		
		Indeed, for $x=(x_1,x_2) \in \mathtt{W}_{\eps_1}$ and $y=(y_1,y_2) \in \mathtt{B}_{\eps_2}$ and $\eps_1,\eps_2 \in \{0,1\}$, we can write
		\begin{equation*}
			\begin{split}
				&K_{a,1}(x,x)(\mathbb{K}_{1,1}^{-1}(x,y)-\mathcal{B}_{\eps_1,\eps_2}(a,n+x_1,n+x_2,n+y_1,n+y_2)(1+o(1)))\\
				&= \frac{H_{x_1+1,x_2+1}(\omega_{c,0})}{H_{y_1+1,y_2+1}(\tilde{\omega}_{c,0})} \bigg(\frac{H_{y_1+1,y_2+1}(\tilde{\omega}_{c,0})}{H_{x_1+1,x_2+1}(\omega_{c,0})}K_{a,1}(x,x)\mathbb{K}_{1,1}^{-1}(x,y)\\ 
				&- \frac{H_{y_1+1,y_2+1}(\tilde{\omega}_{c,0})}{H_{x_1+1,x_2+1}(\omega_{c,0})}K_{a,1}(x,x)\mathcal{B}_{\eps_1,\eps_2}(a,n+x_1,n+x_2,n+y_1,n+y_2)(1+o(1))\bigg).
			\end{split}
		\end{equation*}
		We now proceed as before in the proof of \cref{L:local-coupling},  with $A_{ij}(\cdot)$ given by the first term on the right side of the above equation inside the parenthesis and with $C_{ij}(\cdot)$ given by the second term on the right side of the above equation inside the parenthesis.  The result follows by noting the additional factor $\frac{H_{x_1+1,x_2+1}(\omega_{c,0})}{H_{y_1+1,y_2+1}(\tilde{\omega}_{c,0})}$ outside the parenthesis becomes a conjugation factor, the bound from \eqref{thm:eq:KinversearoundRS:smclosetoRS} which gives an exponential bound for $C_{ij}(\cdot)$, the fact that  $\frac{H_{x_1+1,x_2+1}(\omega_{c,0})}{H_{y_1+1,y_2+1}(\tilde{\omega}_{c,0})}=G(\omega_{c,0})^{\frac{1}{2}}G(\tilde{\omega}_{c,0})^{-\frac{1}{2}}\frac{H_{x_1+1,x_2}(\omega_{c,0})}{H_{y_1,y_2+1}(\tilde{\omega}_{c,0})}$ from \eqref{eq:prev:H},and that $A_{ij}(\cdot)$ as chosen above, is the smooth phase correlation kernel with a conjugation factor of the form $\frac{H_{y_1+1,y_2+1}(\tilde{\omega}_{c,0})}{H_{x_1+1,x_2+1}(\omega_{c,0})}$.  Analogous to the proof of \cref{L:global-coupling-1}, the exponential bound is given by \eqref{thm:eq:KinversearoundRS:smclosetoRS} (after multiplying by $\frac{H_{y_1+1,y_2+1}(\tilde{\omega}_{c,0})}{H_{x_1+1,x_2+1}(\omega_{c,0})}$).
	\end{proof}

	\section{Proof of Expected Height Results}
    \label{S:sec10}
	In this section, we give the proofs  of the expected height function results, namely, the proofs of \cref{L:expected-middle-height}, \cref{L:expectation-computation} and \cref{L:expectation-computation-2}. 
	
	\subsection{Proof of \cref{L:expected-middle-height}} \label{S:Proof:expected-middle-height}
	
	To prove \cref{L:expected-middle-height}, we need the following result that is a surprising consequence of the explicit formulas for $K^{-1}_{a,1}$.  Once this result is established, \cref{L:expected-middle-height} follows readily. 
	
	\begin{lem} \label{L:expected-height-invariant}
		Let $k,\ell$ be {non-negative} integers with $-n+2k+2\ell<n$. Then $\mathbb{E}\mathcal{H}_n(-n+2k+2\ell,-n+2\ell)-\mathbb{E}\mathcal{H}_n(-n+2k,-n)$ is invariant under the map $a \mapsto a^{-1}$. 
	\end{lem}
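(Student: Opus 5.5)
We plan to write the height difference as a telescoping sum of dimer probabilities and then use the explicit kernel of \cref{thm:prev:Kinverse}. The face $(-n+2k,-n)$ sits on the bottom boundary, where heights are deterministic. To reach $(-n+2k+2\ell,-n+2\ell)$ we walk along the diagonal direction $e_1=(1,1)$ in $\ell$ double steps. Each step of $(2,2)$ crosses a single black or white vertex $v_j$. By the height rules of \cref{S:face-heights}, the increment across such a step is an affine function of the indicators that particular edges at $v_j$ are dimers: it is $+2$ minus $4$ times the indicator of a covered crossed edge, summed over the two edges crossed. This gives
$$
\E\cH_n(-n+2k+2\ell,-n+2\ell)-\E\cH_n(-n+2k,-n)=\sum_{j}\big(c_j-4\,\P_{a,n}(e_j\in D)\big),
$$
with deterministic $c_j$. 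By \cref{localstatisticsthm}, each edge probability equals $K_{a,1}(b_j,w_j)K_{a,1}^{-1}(w_j,b_j)$. So it suffices to show that the sum $\Sigma(a):=\sum_j K_{a,1}(b_j,w_j)K^{-1}_{a,1}(w_j,b_j)$ over these diagonal edges satisfies $\Sigma(a)=\Sigma(1/a)$, up to the deterministic terms.

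Next we substitute the formula of \cref{thm:prev:Kinverse}. It has five pieces: $\mathbb K^{-1}_{1,1}$, two copies of $\mathcal B(a,\cdot)$, and two copies of $\mathcal B(1/a,\cdot)$ with prefactor $\mathrm{i}/a$. The key structural observation is that the formula is nearly covariant under $a\mapsto 1/a$. Combined with a reflection $x_1\mapsto -x_1$, this swaps the $\mathcal B(a,\cdot)$ terms with the $\mathcal B(1/a,\cdot)$ terms. The Kasteleyn weights also transform: $K_{1/a,1}=a^{-1}K_{1,a}$, and the roles of $a$- and $b$-faces swap, which is the same as shifting the lattice by one face. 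This swap produces precisely the factor $\mathrm{i}/a$ and the $\eps\mapsto 1-\eps$ changes.

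The plan is to pair each edge $e_j$ on the diagonal path with its image under this reflection-and-shift. We then check that the four $\mathcal B$-contributions to $\Sigma(a)$ are permuted into the four $\mathcal B$-contributions to $\Sigma(1/a)$. Concretely, we aim to use the double contour representation \eqref{eq:prev:B:first} and change variables $u_i\mapsto \mathrm{i}u_i$ or $u_i\mapsto u_i^{-1}$. Under this change, $c=a/(1+a^2)$, $\tilde c$, $\mu$ and the $F_s$ should be invariant, since $c$ is invariant under $a\mapsto 1/a$. This reduces everything to an identity among the finitely many functions $Y^{\eps_1,\eps_2}_{\gamma_1,\gamma_2}$ after summation over $j$.

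The full-plane term $\mathbb K^{-1}_{1,1}$ is handled separately. Its diagonal contributions $K\mathbb K^{-1}$ are the smooth-phase edge probabilities, which are periodic along the path. Their sum therefore transforms by the local symmetry of \cref{C:reflection-symmetry-smooth} together with $c$-invariance.

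The main obstacle is that invariance probably holds only after the sum over $j$, not edge by edge. We expect the individual terms to differ by a telescoping expression, for example $\mathcal B_{\eps_1,\eps_2}$ evaluated at consecutive points differing by the factor $u_1u_2$ in the integrand. We would try first to write the summand as a difference $\Phi(j+1)-\Phi(j)$ in the contour integral, sum the geometric series in $u_1u_2$ inside the integrand, and verify that the boundary term at $j=0$ vanishes, since the boundary face there has deterministic height. The remaining verification is an explicit algebraic identity in the $\mathtt y$-functions under $a\mapsto 1/a$, $G\mapsto$ its reflected counterpart. We would check this identity by direct computation, perhaps symbolically, case by case in $(\eps_1,\eps_2)$.
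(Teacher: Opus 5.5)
Your opening reduction matches the paper: you write the height difference as a signed sum of edge probabilities, convert it via \cref{localstatisticsthm}, and expand with \cref{thm:prev:Kinverse}. The mechanism you then propose for the invariance cannot work.

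The covariance you single out is that $x_1\mapsto -x_1$ together with $a\mapsto 1/a$ exchanges the $\mathcal B(a,\cdot)$ and $\mathcal B(1/a,\cdot)$ terms. This is just the geometric symmetry of the model: the horizontally reflected diamond is gauge equivalent to the $1/a$ model, with heights negated. It exchanges corner terms \emph{and} moves the evaluation points from the diagonal path starting at $(-n+2k,-n)$ to its mirror image starting at $(n-2k,-n)$.

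Pairing edges this way therefore relates $\Sigma(a)$ on one path to $\Sigma(1/a)$ on the mirrored path. That is exactly the trivial input the paper uses separately when deducing \cref{L:expected-middle-height}. It never gives $\Sigma(a)=\Sigma(1/a)$ on the same path.

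No shift repairs this. The only symmetries of $[-n,n]^2$ that swap $a$- and $b$-faces are the reflections in the coordinate axes and the rotations by $\pm\pi/2$. All of them move the south-west diagonal to a different corner. The statement is genuinely non-geometric (the paper stresses it has no combinatorial explanation), so it has to come from an identity between integrands.

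That identity is what your plan is missing. In the paper, invariance holds for each step $s$ separately, once the two edges $\eps=0,1$ crossed in that step are combined. It also holds for each of the four corner terms separately. There is no permutation of corner terms, no change of variables, and no telescoping or geometric series in $s$. The $\mathbb K^{-1}_{1,1}$ part simply gives $\sum_\eps(-1)^\eps\mathbb K^{-1}_{1,1}=0$ per step for every $a$, since the smooth phase is flat in expectation.

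The key tool is the formula for $F_s(\mathrm{i}\omega/\sqrt{2c})$ in terms of $G(\omega^{-1})^{|s|}$ from \cite[Lemma 2.2]{CJ16}. It writes $F_{s+1}(\nu(u))$ as $F_s(\nu(u))$ times a factor proportional to $\mu(-\mathrm{i}u)/(c(u+u^{-1}))$. This merges the $\eps=0$ and $\eps=1$ integrals into one, with common factor $F_s(\nu(u_1))F_{s+k}(\nu(u_2))$ times the $m$-th power.

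Note that $F_s$ (and $\tilde c$) are not themselves invariant under $a\mapsto 1/a$. What is invariant is $(1+a^2)F_s$, which depends only on $c$; the factor $(1+a^2)^2$ in $Y^{\eps_1,\eps_2}_{\gamma_1,\gamma_2}$ absorbs that normalization.

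What remains is $a$ times a rational expression in the $\mathtt y^{\eps_1,\eps_2}_{\gamma_1,\gamma_2}(-\mathrm{i}u_1,-\mathrm{i}u_2)$ and $s(-\mathrm{i}u_j)$. After reducing with $s(-\mathrm{i}u)^2=1-c^2(u+u^{-1})^2$, each coefficient of $s(-\mathrm{i}u_1)^{\gamma_1}s(-\mathrm{i}u_2)^{\gamma_2}$ is checked to be invariant by computer algebra. This gives pointwise invariance of the integrand.

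Your final step, a symbolic identity in the $\mathtt y$-functions, is the right endpoint. But the route you describe, and the telescoping structure you anticipate, would not get you there.
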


	\begin{proof}
		Without loss of generality, we assume that $(-n+2k+1,-n+1)$ is an $a$-face. The same computation holds if $(-n+2k+1,-n+1)$ is a $b$-face.  
		Using the relationship between the height function and dimer coverings, we have
		\begin{equation*}
			\begin{split}
				&\mathbb{E}\mathcal{H}_n(-n+2k+2\ell,-n+2\ell)-\mathbb{E}\mathcal{H}_n(-n+2k,-n)=4\sum_{s=0}^{\ell-1}\sum_{\eps=0}^1 (-1)^\eps\\& \big( \mathbb{P}[\mbox{Dimer covers the edge}\\&((-n+2k+2s+1,-n+2s+2\eps),(-n+2k+2s+2\eps,-n+2s+1))]-1/4\big).
			\end{split}
		\end{equation*}
        
		We can write the above formula using the inverse Kasteleyn matrix using \cref{localstatisticsthm} and \eqref{pf:K} which gives
		\begin{equation}
        \label{E:Kasteleyn-sum}
			\begin{split}
				&\mathbb{E}\mathcal{H}_n(-n+2k+2\ell,-n+2\ell)-\mathbb{E}\mathcal{H}_n(-n+2k,-n)= 4 a \mathrm{i} \sum_{s=0}^{\ell-1}\sum_{\eps=0}^1(-1)^\eps \\& K^{-1}_{a,1}((-n+2k+2s+1,-n+2s+2\eps),(-n+2k+2s+2\eps,-n+2s+1)).
			\end{split}
		\end{equation}
		We substitute in the formula for $K_{a,1}^{-1}$ given in \cref{thm:prev:Kinverse} to get that the above equation equals
		\begin{equation} \label{E:expectationexpansion}
			\begin{split}
				&-4 a \mathrm{i} \sum_{s=0}^{\ell-1}\sum_{\eps=0}^1(-1)^\eps  \big(
				\mathcal{B}_{\eps,\eps}(a,2k+2s+1,2s+2\eps,2k+2s+2\eps,2s+1)\\
				&-\frac{\mathrm{i}}{{a}}\mathcal{B}_{1-\eps,\eps}(a^{-1},2n-2k-2s-1,2s+2\eps,2n-2k-2s-2\eps,2s+1)\\
				&-\frac{\mathrm{i}}{{a}}\mathcal{B}_{\eps,1-\eps}(a^{-1},2k+2s+1,2n-2s-2\eps,2k+2s+2\eps,2n-2s-1)\\
				&+\mathcal{B}_{1-\eps,1-\eps}(a,2n-2k-2s-1,2n-2s-2\eps,2n-2k-2s-2\eps,2n-2s-1)\big).
			\end{split}
		\end{equation}
		In the above equation, we have used 
		\begin{equation*}
			\sum_{\eps=0}^1(-1)^\eps \mathbb{K}^{-1}_{1,1}((-n+2k+2s+1,-n+2s+2\eps),(-n+2k+2s+2\eps,-n+2s+1))=0.
		\end{equation*}
		which follows by a computation (or since the full-plane smooth phase is flat in expectation). 
		
		We want to show that \eqref{E:expectationexpansion} is invariant under $a \mapsto a^{-1}$.  We show that 
		\begin{equation}\label{E:expectationexpansion1}
			-a \mathrm{i} \sum_{\eps=0}^1(-1)^\eps  
			\mathcal{B}_{\eps,\eps}(a,2k+2s+1,2s+2\eps,2k+2s+2\eps,2s+1)
		\end{equation}
		is invariant under $a \mapsto a^{-1}$ since the other three cases are equivalent and this will show that \eqref{E:expectationexpansion} is invariant under $a \mapsto a^{-1}$.    Note that each term in the  above summand is not invariant under the map $a \mapsto a^{-1}$. We use the formula for $\mathcal{B}_{\eps_1,\eps_2}$ given in~\eqref{eq:prev:B:first} to get that \eqref{E:expectationexpansion1} equals
		\begin{equation*}
			\begin{split}
				&-\frac{a}{(2\pi \mathrm{i})^2} \int_{\Gamma_r} \frac{\mathrm{d}u_1}{u_1} \int_{\Gamma_r}\frac{\mathrm{d}u_2}{u_2} \frac{F_{s} (\nu(u_1)) F_{s+k} (\nu(u_2))}{u_1^{k+s} u_2^{s}} \\ 
				&\times \Bigg( \frac{1}{4c^2} u_1^2 u_2^2 (2-\mu(-u_1 \mathrm{i}))^2 (2-\mu(-u_2 \mathrm{i}))^2\Bigg)^m \sum_{\gamma_1,\gamma_2=0}^1  Y_{\gamma_1,\gamma_2}^{0,0}(u_1,u_2)\\
				&-\frac{a}{(2\pi \mathrm{i})^2} \int_{\Gamma_r} \frac{\mathrm{d}u_1}{u_1} \int_{\Gamma_r}\frac{\mathrm{d}u_2}{u_2} \frac{F_{s+1} (\nu(u_1)) F_{k+s+1} (\nu(u_2))}{u_1^{k+s} u_2^s} \\ 
				&\times \Bigg( \frac{1}{4c^2} u_1^2 u_2^2 (2-\mu(-u_1 \mathrm{i}))^2 (2-\mu(-u_2 \mathrm{i}))^2\Bigg)^m \sum_{\gamma_1,\gamma_2=0}^1 Y_{\gamma_1,\gamma_2}^{1,1}(u_1,u_2).
			\end{split}
		\end{equation*}
		We combine both of the above integrals and factor. Since $\mu$ and $c$ are invariant under the map $a \mapsto a^{-1}$, to show that the above expression is invariant under $a \mapsto a^{-1}$, we only need to show that the following expression
		\begin{equation}\label{eq:expofFterms}
			\begin{split}
				&-\mathrm{i} a F_s(\nu (u_1)) F_{s+k} (\nu (u_2)) \sum_{\gamma_1,\gamma_2=0}^1 Y_{\gamma_1,\gamma_2}^{0,0}(u_1,u_2)  \\ & -\mathrm{i} a F_{s+1} (\nu (u_1)) F_{s+k+1} (\nu (u_2)) \sum_{\gamma_1,\gamma_2=0}^1 Y_{\gamma_1,\gamma_2}^{1,1}(u_1,u_2) 
			\end{split}
		\end{equation}
		is invariant under the map $a \mapsto a^{-1}$.  From \cite[Lemma 2.2]{CJ16}, we have that
        $$F_s\bigg(\frac{\mathrm{i} \omega}{\sqrt{2c}}\bigg)=\frac{\mathrm{i}^{|s|}}{(1+a^2)\omega \sqrt{\omega^{-2}+2c}} G(\omega^{-1})^{|s|}$$ 
       for $\omega \in  \mathbb{C}\backslash (\mathrm{i}(-\infty,1/\sqrt{2c}] \cup [1/\sqrt{2c},\infty))$ and $\mathrm{i} \omega=\sqrt{2c} \nu(u)$, we see that \eqref{eq:expofFterms} becomes 
		\begin{equation*}
			\begin{split}
				&-\mathrm{i} a F_{s} (\nu (u_1)) F_{s+k} (\nu (u_2)) \Bigg( \sum_{\gamma_1, \gamma_2=0}^1 Y_{\gamma_1,\gamma_2}^{0,0}(u_1,u_2) \\& -\mathrm{i}^2 \Bigg( - \frac{\mu(-\mathrm{i} u_1)}{c(u_1+u_1^{-1})} \Bigg)\Bigg( - \frac{\mu(-\mathrm{i} u_2)}{c(u_2+u_2^{-1})} \Bigg) \sum_{\gamma_1, \gamma_2=0}^1 Y_{\gamma_1,\gamma_2}^{1,1}(u_1,u_2) \Bigg)
			\end{split}
		\end{equation*}
		which we want to show is invariant under the map $a \mapsto a^{-1}$.  We simplify the above equation and use the definition of $\mu$ and $s$ found in \eqref{eq:prev:mu_and_s}, we arrive at
		\begin{equation*}
			\begin{split}
				&-\mathrm{i} \frac{F_s (\nu (u_1)) F_{s+k} (\nu (u_2))}{c^2(u_1+u_1^{-1})(u_2+u_2^{-1}) } a \Bigg( c^2(u_1+u_1^{-1})(u_2+u_2^{-1}) \sum_{\gamma_1,\gamma_2=0}^1 Y_{\gamma_1,\gamma_2}^{0,0}( u_1,u_2) \\&+(1-s(-\mathrm{i}u_1))(1-s(-\mathrm{i}u_1))\sum_{\gamma_1,\gamma_2=0}^1 Y_{\gamma_1,\gamma_2}^{1,1}( u_1,u_2) \Bigg),
			\end{split}
		\end{equation*}
		which we want to show is invariant under the map $a\mapsto a^{-1}$.  We use the  formula for $Y_{\gamma_1,\gamma_2}^{\eps_1,\eps_2}$ given in~\eqref{asfo:eq:Y} and note that $(1+a^2)F_s(\cdot)$, from~\eqref{eq:prev:Fs}, is invariant under the map $a \mapsto a^{-1}$. By these observations, to show that the above equation is invariant under the map $a \mapsto a^{-1}$, we need to show that 
		\begin{equation}
			\begin{split}\label{lemproof:expheights:bigcoeffexp}
				&ac^2(u_1+u_1^{-1})(u_2+u_2^{-1}) \sum_{\gamma_1,\gamma_2=0}^1(-1)^{\gamma_1+\gamma_2} s(-\mathrm{i} u_1)^{\gamma_1}s(\mathrm{i}u_2)^{\gamma_2} \mathtt{y}_{\gamma_1,\gamma_2}^{\eps_1,\eps_2}(-\mathrm{i}u_1 ,- \mathrm{i} u_2)\\
				&-a\sum_{\gamma_1,\gamma_2=0}^1s(-\mathrm{i} u_1)^{\gamma_1}s(-\mathrm{i} u_2)^{\gamma_2}(1-s(-\mathrm{i} u_1))(1-s(-\mathrm{i} u_2)^{1})\mathtt{y}_{\gamma_1,\gamma_2}^{\eps_1,\eps_2}(-\mathrm{i}u_1 ,- \mathrm{i} u_2)
			\end{split}
		\end{equation}
		is invariant under the map $a \mapsto a^{-1}$.  We expand out the above equation using the definition of $s(\cdot)$ given in~\eqref{eq:prev:mu_and_s} and noting that $s(-\mathrm{i} u_1)^2 = 1-c^2(u_1+u_1^{-1})^2$.  We can then expand out in terms of coefficients of $s(-\mathrm{i} u_1)$ and $s(-\mathrm{i}u_2)$.  The coefficient of $s(-\mathrm{i}u_1)^0s(-\mathrm{i}u_2)^0$ in~\eqref{lemproof:expheights:bigcoeffexp} is given by 
		\begin{equation}\label{lemproof:expheights:bigcoeffexp1}
			\begin{split}
				&a \mathtt{y}_{0,0}^{0,0}(-\mathrm{i}u_1,-\mathrm{i}u_2) c^2(u_1+u_1^{-1})(u_2+u_2^{-1}) - au_1u_2 \mathtt{y}_{0,0}^{1,1}(-\mathrm{i}u_1,-\mathrm{i}u_2)\\
				&+a (1-c^2(u_1+u_1^{-1})^2) (1-c^2(u_2+u_2^{-1})^2) u_1 u_2\mathtt{y}_{1,1}^{1,1}(-\mathrm{i}u_1,-\mathrm{i}u_2)\\
				&-a(1-c^2(u_1+u_1^{-1})^2)u_1u_2\mathtt{y}_{1,0}^{1,1}(-\mathrm{i}u_1,-\mathrm{i}u_2)\\
				&-a(1-c^2(u_2+u_2^{-1})^2)u_1u_2\mathtt{y}_{0,1}^{1,1}(-\mathrm{i}u_1,-\mathrm{i}u_2),
			\end{split}
		\end{equation}
		the coefficient of $s(-\mathrm{i}u_1)^1s(-\mathrm{i}u_2)^1$ in~\eqref{lemproof:expheights:bigcoeffexp} is given by 
		\begin{equation}
			\begin{split}\label{lemproof:expheights:bigcoeffexp2}
				&a \mathtt{y}_{1,1}^{0,0}(-\mathrm{i}u_1,-\mathrm{i}u_2) c^2(u_1+u_1^{-1})(u_2+u_2^{-1})-au_1 u_2\mathtt{y}_{0,0}^{1,1}(-\mathrm{i}u_1,-\mathrm{i}u_2)\\
				&-a u_1 u_2\mathtt{y}_{0,1}^{1,1}(-\mathrm{i}u_1,-\mathrm{i}u_2)-au_1 u_2\mathtt{y}_{1,0}^{1,1}(-\mathrm{i}u_1,-\mathrm{i}u_2)+au_1 u_2\mathtt{y}_{1,1}^{1,1}(-\mathrm{i}u_1,-\mathrm{i}u_2)
			\end{split}
		\end{equation}
		the coefficient of $s(-\mathrm{i}u_1)^1s(-\mathrm{i}u_2)^0$ in~\eqref{lemproof:expheights:bigcoeffexp} is given by 
		\begin{equation}
			\begin{split}\label{lemproof:expheights:bigcoeffexp3}
				&a \mathtt{y}_{1,0}^{0,0}(-\mathrm{i}u_1,-\mathrm{i}u_2) c^2(u_1+u_1^{-1})(u_2+u_2^{-1})-au_1 u_2\mathtt{y}_{0,0}^{1,1}(-\mathrm{i}u_1,-\mathrm{i}u_2)\\
				&-a u_1 u_2\mathtt{y}_{1,0}^{1,1}(-\mathrm{i}u_1,-\mathrm{i}u_2)+a(1-c^2(u_1+u_1^{-1})^2)u_1u_2\mathtt{y}_{0,1}^{1,1}(-\mathrm{i}u_1,-\mathrm{i}u_2)\\
				&-a(1-c^2(u_1+u_1^{-1})^2)u_1u_2\mathtt{y}_{1,1}^{1,1}(-\mathrm{i}u_1,-\mathrm{i}u_2),
			\end{split}
		\end{equation}
		and the coefficient of $s(-\mathrm{i}u_1)^0s(-\mathrm{i}u_2)^1$ in~\eqref{lemproof:expheights:bigcoeffexp} is given by 
		\begin{equation}
			\begin{split}\label{lemproof:expheights:bigcoeffexp4}
				&a \mathtt{y}_{0,1}^{0,0}(-\mathrm{i}u_1,-\mathrm{i}u_2) c^2(u_1+u_1^{-1})(u_2+u_2^{-1})-au_1 u_2\mathtt{y}_{0,0}^{1,1}(-\mathrm{i}u_1,-\mathrm{i}u_2)\\
				&-a u_1 u_2\mathtt{y}_{0,1}^{1,1}(-\mathrm{i}u_1,-\mathrm{i}u_2)+a(1-c^2(u_1+u_1^{-1})^2)u_1u_2\mathtt{y}_{1,0}^{1,1}(-\mathrm{i}u_1,-\mathrm{i}u_2)\\
				&-a(1-c^2(u_1+u_1^{-1})^2)u_1u_2\mathtt{y}_{1,1}^{1,1}(-\mathrm{i}u_1,-\mathrm{i}u_2).
			\end{split}
		\end{equation}
		Using computer algebra (or an explicit computation by hand), each of~\eqref{lemproof:expheights:bigcoeffexp1},~\eqref{lemproof:expheights:bigcoeffexp2},~\eqref{lemproof:expheights:bigcoeffexp3}, and~\eqref{lemproof:expheights:bigcoeffexp4} are invariant under the map $a \mapsto a^{-1}$ and we can conclude that \eqref{E:expectationexpansion1} and \eqref{E:expectationexpansion} are invariant under the map $a \mapsto a^{-1}$ as required. \end{proof}
	
	We are now in the position to prove \cref{L:expected-middle-height}.

	\begin{proof}[Proof of \cref{L:expected-middle-height}]

		Let $k,\ell$ be as in \cref{L:expected-height-invariant}.  In the proof of the lemma, we let $\mathcal{H}_n^a=\mathcal{H}_n$ to make explicit the height function dependence on $a$. 

        First, using that the boundary faces have deterministic heights, see \cref{S:face-heights}, we have 
        \begin{equation*}
			\begin{split}
				\mathbb{E}\mathcal{H}_n^a(-n+2k+2\ell,-n+2\ell)&=-n+2k+\mathbb{E}\mathcal{H}_n^a(-n+2k+2\ell,-n+2\ell)\\&-\mathbb{E}\mathcal{H}_n^a(-n+2k,-n).
			\end{split}
		\end{equation*}
		Similarly, 
		\begin{equation*}
			\begin{split}
				\mathbb{E}\mathcal{H}_n^a(n-2k-2\ell,-n+2\ell)&=n-2k+\mathbb{E}\mathcal{H}_n^a(n-2k-2\ell,-n+2\ell)\\&-\mathbb{E}\mathcal{H}_n^a(n-2k,-n).
			\end{split}
		\end{equation*}
		Next, by symmetry and a gauge transformation (multiplying all the white vertices by $a$), $\mathcal{H}_n^a(n-2k-2\ell,-n+2\ell)-\mathcal{H}_n^a(n-2k,-n)$ has the same distribution as $-\mathcal{H}_n^{1/a}(-n+2k+2\ell,-n+2\ell)+\mathcal{H}_n^{1/a}(-n+2k,-n)$.  Since 
		\begin{equation*}
			\begin{split}
				&\mathbb{E}\mathcal{H}_n^{1/a}(-n+2k+2\ell,-n+2\ell)-\mathbb{E}\mathcal{H}_n^{1/a}(-n+2k,-n)\\&=\mathbb{E}\mathcal{H}_n^{a}(-n+2k+2\ell,-n+2\ell)-\mathbb{E}\mathcal{H}_n^{a}(-n+2k,-n) 
			\end{split}
		\end{equation*}
		by \cref{L:expected-height-invariant}, it follows that 
		\begin{equation*}
			\begin{split}
				&\mathbb{E}\mathcal{H}^a_n(n-2k-2\ell,-n+2\ell)=n-2k-\mathbb{E}\mathcal{H}^a_n(-n+2k+2\ell,-n+2\ell)\\&-\mathbb{E}\mathcal{H}^a_n(-n+2k,-n)=-\mathbb{E}\mathcal{H}^a_n(-n+2k+2\ell,-n+2\ell).
			\end{split}
		\end{equation*}
		The other three equations follow similarly (or from applying reflection symmetries and gauge transformations). The final statement follows from setting $i = 0$ or $j = 0$ in the chain of equalities.
		        \end{proof}

	\subsection{Proofs of \cref{L:expectation-computation} and \cref{L:expectation-computation-2}}  \label{S:expectation-computations}
	Here, we give the proof of \cref{L:expectation-computation} and \cref{L:expectation-computation-2}. Both of these results involve the refined asymptotic analysis of $K^{-1}_{a,1}$.

	Suppose that  $(\xi_1,\xi_2)$ is on the rough-smooth limit curve \eqref{eq:asymp:limitshape} and that $X$ is defined by \eqref{eq:def:sm}, \eqref{eq:def:smclosetoRS}, \eqref{eq:def:rs} or \eqref{eq:def:rgclosetoRS}. We denote
	\begin{equation*}\label{eq:expectheights:J}
		J_{\xi_1,\xi_2,X}=(\lfloor 4m\xi_1+4mX \rfloor,\lfloor 4m\xi_2+4mX \rfloor)
	\end{equation*}
	which is a face in the Aztec diamond graph.  We prove the following result in which the proof of \cref{L:expectation-computation} will follow.

	\begin{lem} \label{lem:expectheightroughsmooth}
		Let $(\xi_1,\xi_2)$ be on the limit shape curve~\eqref{eq:asymp:limitshape} with $|\xi_1-\xi_2|<m^{-\frac{1}{6}}$ and let $ m^{-\frac{1}{3}} \log^2 m\leq R < m^{-\frac{1}{2}+\delta}$ for $0< \delta <\frac{1}{12}$ fixed. Then, for a constant $\mathtt{C}_1>0$ we have as $m\to \infty$
		\begin{equation*}
			\mathbb{E}\mathcal{H}_n(J_{\xi_1,\xi_2,-R})=-\mathtt{C}_1 R^{\frac{3}{2}}m +o(m^{2\delta}).
		\end{equation*}
	\end{lem}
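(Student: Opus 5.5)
We will write the expected height at $J_{\xi_1,\xi_2,-R}$ as a telescoping sum of dimer probabilities along the diagonal, starting from a face deep in the smooth region. The increment along a diagonal step is expressed through \cref{localstatisticsthm} and \eqref{pf:K} as a linear combination of entries $K^{-1}_{a,1}(x,x+f)$, exactly as in \eqref{E:Kasteleyn-sum}.

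\textbf{Anchoring and decomposition.} Concretely, let $J_0 = J_{\xi_1,\xi_2,X_0}$ with $X_0 = m^{-1/6+\delta}$, so that $J_0$ lies in the region \eqref{eq:def:sm}. By \cref{C:expected-sm-height} (after rounding to a $c$-face), $\mathbb{E}\mathcal H_n(J_0) = O(1)$. Then
\[
\mathbb{E}\mathcal H_n(J_{\xi_1,\xi_2,-R}) - \mathbb{E}\mathcal H_n(J_0)
= 4a\mathrm{i}\sum_{j}\sum_{\eps=0}^1 (-1)^\eps K^{-1}_{a,1}(x_j^\eps, x_j^\eps+f_\eps),
\]
where the sum runs over the $O(m X_0)$ consecutive diagonal steps between the two faces. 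By \cref{lem:switchcontoursBtilde}, each entry splits as $\mathbb{K}^{-1}_{1,1} - \mathcal{B}_{\eps_1,\eps_2} + O(e^{-\mathtt{c}n})$. The full-plane part cancels in the $\eps$-alternating sum, as in the proof of \cref{L:expected-height-invariant}. So only the $\mathcal B$ terms remain, and we split the steps according to the regimes \eqref{eq:def:sm}--\eqref{eq:def:rgclosetoRS}.

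\textbf{Contributions by regime.}
\begin{itemize}
\item Smooth region \eqref{eq:def:sm}: by \cref{thm:prev:KinversearoundRS}(1), the contribution is exponentially small.
\item Region \eqref{eq:def:smclosetoRS}: apply \cref{thm:prev:KinversearoundRS}(2) with $x$ and $y$ adjacent. The prefactor $H_{x_1+1,x_2}(\omega_{c,0})/H_{y_1,y_2+1}(\tilde\omega_{c,0})$ is $O(1)$, since $\tilde\omega_{c,0}$ and $\omega_{c,0}$ are matched for adjacent points. The bound is then $e^{-\mathtt{C}mX^{3/2}}$, which sums to $O(1)$ for $X\ge Lm^{-2/3}$.
\item Window \eqref{eq:def:rs}: by \cref{thm:prev:KinversearoundRS}(3), each of the $O(L)$ steps contributes $O(m^{-1/3})$ times $4a$, so the total is $O(1)$.
\item Rough region \eqref{eq:def:rgclosetoRS}, i.e.\ $X\in[-R,-Lm^{-2/3}]$: by \cref{thm:prev:KinversearoundRS}(4), $\mathcal B = C_{\omega_{c,X}}(x,x+f) + O(m^{-1/2})$. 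The error terms sum to $O(mR\cdot m^{-1/2}) = O(m^{1/2}R) = o(m^{2\delta})$, since $R<m^{-1/2+\delta}$.
\end{itemize}

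\textbf{Main term.} The heart of the argument is evaluating $\sum_\eps (-1)^\eps C_{\omega_{c,X}}(x,x+f_\eps)$ explicitly. Using \eqref{eq:prev:Comega} and the closed form \eqref{eq:prev:Vww}, the alternating sum becomes a contour integral over $\Gamma_{\omega_{c,X}}$ of an explicit function, with no $m$-dependence beyond the endpoints. Expanding around $\omega=\mathrm{i}$ (where the two arcs close up at $X=0$), the integrand is regular. The result is a slope density, i.e.\ the expected height increment per step, equal to $\kappa\,\mathrm{Re}\,\omega_{c,X} + O(X)$ for an explicit constant $\kappa\neq 0$.

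\cref{thm:prev:KinversearoundRS}(4) gives $\mathrm{Re}\,\omega_{c,X}\asymp \sqrt{|X|}$. I would refine this via a local expansion of $g'_{\xi_1+X,\xi_2+X}$ at its double critical point $\mathrm{i}$ to get $\mathrm{Re}\,\omega_{c,X} = \kappa'\sqrt{|X|}(1+O(\sqrt{|X|}))$. Summing over the $\approx 2m\,dX$ steps per $dX$ then gives
\[
-\mathtt{C}_1 m\int_0^R \sqrt{X}\,dX\cdot\tfrac{3}{2} \;=\; -\mathtt{C}_1R^{3/2}m,
\]
with errors of order $mR^2 = o(1)$ and a Riemann-sum error of $O(\sqrt R)$. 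The sign is fixed by comparing with the known negative sign in \cref{L:expectation-computation}, or by directly tracking $\mathrm{i}$-powers.

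\textbf{Main obstacle.} I expect the explicit evaluation of the alternating sum of $C_{\omega_{c,X}}$ terms, including the sign and the nonvanishing of $\kappa$, to be the main difficulty; the other regimes are bookkeeping. A further check is that $\xi_1\ne\xi_2$ (off by at most $m^{-1/6}$) perturbs $\omega_{c,X}$ only by $O(m^{-1/6})$ relative terms. These are absorbed because the contribution of the whole rough stretch is $O(mR^{3/2}) = O(m^{1/4+3\delta/2})$.
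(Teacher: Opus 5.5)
Your overall route is the paper's: you anchor in the smooth region via \cref{C:expected-sm-height}, telescope with Kasteleyn entries as in \eqref{E:heightdiff-smooth-RS}, cancel the full-plane part, split by regime using \cref{thm:prev:KinversearoundRS}, and take the main term from the explicit alternating sum of $C_{\omega_{c,X}}$ via \eqref{eq:prev:Comega}--\eqref{eq:prev:Vww}, which is of size $\asymp \mathrm{Re}\,\omega_{c,X}\asymp\sqrt{|X|}$ per edge. However, your bookkeeping in the regime \eqref{eq:def:smclosetoRS} fails.

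Consecutive diagonal edges differ by $1/(2m)$ in $X$. So $X\in[Lm^{-2/3},2Lm^{-2/3}]$ already contains $\asymp Lm^{1/3}$ edges, and on each of them the bound $\mathtt{C}e^{-\mathtt{C}mX^{3/2}}$ from \cref{thm:prev:KinversearoundRS}(2) is bounded below by a positive constant. Summed, this bound gives $\asymp m^{1/3}$, not $O(1)$. That exceeds the allowed error $o(m^{2\delta})$ (note $2\delta<1/6$). It even exceeds the main term, which is only of order $\log^3 m$ when $R=m^{-1/3}\log^2 m$. You make the same lattice-versus-Airy-scale miscount in the window \eqref{eq:def:rs}: it contains $\asymp Lm^{1/3}$ edges, not $O(L)$, and that is why its total is $O(L)$.

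The fix is the paper's choice of cut. Apply (2) only to edges at least $\asymp m^{1/3}\log^2 m$ lattice steps into the smooth side (the sum over $k\ge\mathtt{m}_2$). There $mX^{3/2}\gtrsim\log^3 m$, so each term is $O(e^{-c\log^3 m})$ and the polynomially many terms are negligible. Then bound the $O(m^{1/3}\log^2 m)$ edges of the transition window by the Airy-scale $O(m^{-1/3})$ estimate of (3), which the paper uses on this slightly enlarged window. The total is $O(\log^2 m)=o(m^{\delta})$.

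There are also three slips in your refined main-term computation.
\begin{itemize}
\item $mR^2$ is not $o(1)$. For $R$ near $m^{-1/2+\delta}$ it is of order $m^{2\delta}$, so the $O(\sqrt{|X|})$ relative correction to $\mathrm{Re}\,\omega_{c,X}$ costs $O(m^{2\delta})$, not $o(m^{2\delta})$.
\item An $O(m^{-1/6})$ relative perturbation of a quantity of size $m^{1/4+3\delta/2}$ is $O(m^{1/12+3\delta/2})$, which is not $o(m^{2\delta})$ for $\delta<1/12$. So it is not absorbed. You must either let $\mathtt{C}_1$ depend on $(\xi_1,\xi_2)$, as the paper's argument tacitly does since it only identifies the order $\sqrt{X}$ of the contour length, or show the dependence is second order, e.g.\ using the reflection $\rho_{\mathtt W}$ of \cref{L:sym}.
\item Fixing the sign by comparison with \cref{L:expectation-computation} is circular, since that lemma is deduced from this one. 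The sign must come from evaluating the integral directly, as the paper does.
\end{itemize}
The first two slips are harmless for the application in \cref{L:expectation-computation}, which only needs accuracy $n^{1/6}$.
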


	\begin{proof}[Proof of \cref{lem:expectheightroughsmooth}]
		First, without loss of generality we may assume that $J_{\xi_1,\xi_2,-R}$ is a $c$-face, since the estimate for other faces can be built from the $c$-face estimate and the fact that height differences on adjacent faces are at most $4$. Next, let $u \in [m^{5/6}, m^{5/6} + 2]$ be such that $\mathtt{J} = J_{\xi_1,\xi_2,u}$ is also a $c$-face. Then, we have that 
		\begin{equation*}
			\mathbb{E}\mathcal{H}_n(J_{\xi_1,\xi_2,-R})=\mathbb{E}(\mathcal{H}_n(J_{\xi_1,\xi_2,-R})-\mathcal{H}_n(\mathtt{J})) -\mathbb{E}\mathcal{H}_n(\mathtt{J}),
		\end{equation*}
		where we have an exponential bound on the last term by \cref{C:expected-sm-height}. Note here that \cref{C:expected-sm-height} does not rely on either \cref{L:expectation-computation} and \cref{L:expectation-computation-2} as an input, so this is not a circular argument. 
		
		Using the relationship between dimer coverings and height functions as well as \cref{localstatisticsthm}, similarly to \eqref{E:Kasteleyn-sum} we have that
		\begin{equation} \label{E:heightdiff-smooth-RS}
			\begin{split}
				&\mathbb{E}(\mathcal{H}_n(J_{\xi_1,\xi_2,-R})-\mathcal{H}_n(\mathtt{J}))\\
                &=4 \sum_{k=0}^\mathtt{m} \sum_{\eps=0}^1 (-1)^{\eps}\mathbb{P}\big[\mbox{Dimer covers the edge } (x_{k, \eps}, y_{k, \eps}) 
            \big] \\ 
				&=-4a\mathrm{i} \sum_{k=0}^{\mathtt{m}} \sum_{\eps=0}^1 (-1)^{\eps}  K^{-1}_{a,1}(x_{k, \eps}, y_{k, \eps}).
			\end{split}
		\end{equation}
		where
        \begin{align*}
           x_{k, \eps} &=(\lfloor 4m \xi_1- 4mR \rfloor +2k+1, \lfloor 4m \xi_1- 4mR \rfloor +2k+2\eps), \\
		y_{k, \eps}&=(\lfloor 4m \xi_1- 4mR \rfloor +2k+2\eps, \lfloor 4m \xi_1- 4mR \rfloor +2k+1). 
        \end{align*}
		and $\mathtt{m} = 2 m^{5/6} + 2R m + O(1)$.
		We use the formula $K_{a,1}^{-1}$ given in \cref{thm:prev:Kinverse}, noting that the smooth phase is flat in expectation, and the bounds given in the first equation of \cref{lem:switchcontoursBtilde}, which gives that \eqref{E:heightdiff-smooth-RS} equals
		\begin{equation*}
			\begin{split}
            -4a\mathrm{i} \sum_{k=0}^{\mathtt{m}} \sum_{\eps=0}^1 (-1)^{\eps} \mathcal{B}_{\eps,\eps}\big(a,
				&(n,n)+x_{k, \eps},(n,n)+y_{k, \eps} \big) +O(e^{-\mathtt{C}_1n})
			\end{split}
		\end{equation*}
		for some positive constant $\mathtt{C}_1$.  
		We split the above sum into three sums, namely
		\begin{equation}
			\begin{split}\label{L:Proof:expectedheight-splitequation}
			-4a\mathrm{i} \sum_{j=0}^2\sum_{k=\mathtt{m}_{j}}^{\mathtt{m_{j+1}}} \sum_{\eps=0}^1 (-1)^{\eps} \mathcal{B}_{\eps,\eps}\big(a,
				&(n,n)+x_{k, \eps},(n,n)+y_{k, \eps} \big) +O(e^{-\mathtt{C}_1n})
			\end{split}
		\end{equation}
		where $\mathtt{m}_3=\mathtt{m}$, $\mathtt{m}_2=\lfloor2( m^{1/3}\log^2 m+Rm)\rfloor$, $\mathtt{m}_1=\lfloor2( -m^{1/3}\log^2 m+Rm)\rfloor$ and $\mathtt{m}_0=0$.  We consider the sum with respect to $j$ separately.  
		
		By statements 1 and 2 in \cref{thm:prev:KinversearoundRS}, we have that the summands when $j=2$  in the above equation are at most $O(\exp(-c\log^3 m))$ for some constant $c > 0$, while the sum is over a polynomial in $m$, and so the $j=2$ sum above is $O(e^{-\mathtt{C}_2 \log^3 n})$ as $m\to \infty$. 
		By statement 3 in \cref{thm:prev:KinversearoundRS}, we have that the summands when $j=1$ are of order $m^{-\frac{1}{3}}$ and so the second sum is $o(m^{\delta})$ as $m\to \infty$. 
		It remains to consider the sum $j=0$ in the above equation. Notice that we can apply Statement 4 in \cref{thm:prev:KinversearoundRS} and we get that the error term, when summed is $o(m^{2\delta})$ as $m\to \infty$. Putting everything together, we get that
		\begin{equation*}
			\begin{split}
				&\mathbb{E}(\mathcal{H}_n(J_{\xi_1,\xi_2,-R})-\mathcal{H}_n(\mathtt{J}))\\
                &=-4a \mathrm{i}\sum_{k=0}^{\mathtt{m_1}} \sum_{\eps=0}^1 (-1)^{\eps}  \mathcal{B}_{\eps,\eps}\big(a, (n,n)+x_{k, \eps},(n,n)+y_{k, \eps} \big)+o(m^\delta)\\
				&=-4a \mathrm{i}\sum_{k=0}^{\mathtt{m_1}}C_{\omega_{c,X(k)}}((1,0),(0,1)) -C_{\omega_{c,X(k)}}((1,2),(2,1))  +o(m^{2\delta})
			\end{split}
		\end{equation*}
		where the error term is out of the sum, and $\omega_{c,X(k)}$ is defined through~\eqref{eq:thm:KinversearoundRS:rough} since $X(k)=2k-4R$. 
		We have that 
		\begin{equation*}
			\begin{split}
				&C_{\omega_{c,X(\ell)}}((1,2),(2,1))-C_{\omega_{c,X(\ell)}}((1,0),(0,1))\\
				&=\frac{1}{2 \pi \mathrm{i}} \int_{\Gamma_{\omega_{c,X(k)}}} \frac{\mathrm{d}\omega}{\omega} (\mathrm{i} V_{1,1}(\omega,\omega)-\mathrm{i}^{-1} V_{0,0}(\omega,\omega) (G(\omega) G(\omega^{-1}))^{-1}) \\
				&=\frac{\mathrm{i}}{2 \pi \mathrm{i}} \int_{\Gamma_{\omega_{c,X(k)}}} \frac{\mathrm{d}\omega}{\omega } \frac{1}{2(1+a^2)\sqrt{\omega^2+2c}\sqrt{\omega^{-2}+2c}} (G(\omega)G(\omega^{-1}) -(G(\omega)G(\omega^{-1}))^{-1})\\
				&=-\frac{\mathrm{i}a}{2 \pi \mathrm{i}} \int_{\Gamma_{\omega_{c,X(k)}}} {\mathrm{d}\omega}\frac{ 1}{\omega^2 \sqrt{\omega^{-2}+2c}} +\frac{1}{\sqrt{\omega^2+2c}}
			\end{split}
		\end{equation*}
		where we have used the formula for $C_{\omega_{c,X(k)}}$ given in \eqref{eq:prev:Comega} and the formula for $V_{\eps_1,\eps_2}(\omega,\omega)$ given in~\eqref{eq:prev:Vww}. The above integral can be computed explicitly, is negative and is bounded below by  $-\mathtt{C}_1|\Gamma_{\omega_{c,X(k)}}|$, the length of the contour which is approximately equal to $4\mathrm{Re}\,\omega_{c,X(k)}$.  The exact value is not important here. We have that $|\Gamma_{\omega_{c,X(k)}}|$ is order $\sqrt{X(k)}$ and since the sum contains approximately $Rm$ terms, we get the desired expansion. 
	\end{proof}

	\begin{proof}[Proof of \cref{L:expectation-computation}]
		Set $\delta=1/30$ and $x=Rm$ in \cref{lem:expectheightroughsmooth} and the result follows for $x\in [-n^{8/15},- \mathtt{C}n^{1/3} (\log n)^2]$ for some constant $\mathtt{C}>0$. For $x \in [-\mathtt{C}n^{1/3}(\log n)^2,2n^{1/3}(\log n)^2]$, we can follow the proof of \cref{lem:expectheightroughsmooth} and get an expected height bound of $O(\log^2 n)$; see the proof of \cref{lem:expectheightroughsmooth} up to \eqref{L:Proof:expectedheight-splitequation} along with the discussion in the following paragraph for the summands $j=1$ and $j=2$.  
	\end{proof}
	
	The next lemma is used in the proof of \cref{L:expectation-computation-2}.  
	\begin{lem}\label{lem:expectheightrough}
		Recall that $\xi_c=-\frac{1}{2}\sqrt{1-2c}$ and let $-\frac{1}{2}\sqrt{1+2c}<X<X'$ where $X'=- m^{-\frac{1}{2}+\delta}$ for $0<\delta<\frac{1}{12}$ fixed. 
		Then, we have
		\begin{equation*}
			\mathbb{E}[\mathcal{H}_n(J_{\xi_c,\xi_c,X})]= \big(p_1(\theta_c)-p_1(\theta_c')\big)2m+o(m^{\frac{1}{2}})
		\end{equation*}
		where 
		\begin{equation*}
			p_1(\theta_c)=\frac{2}{2\pi \mathrm{i}} \big(g_{X,X}(e^{\mathrm{i}\theta_c})-g_{X,X}(e^{-\mathrm{i}\theta_c})+g_{X,X}(-e^{\mathrm{i}\theta_c})-g_{X,X}(-e^{-\mathrm{i}\theta_c}) \big).
		\end{equation*}
		where $ \theta_c$ is defined through $g'_{X,X}(e^{\mathrm{i}\theta_c})=0$ and $ \theta_c'$ is defined through $g'_{X',X'}(e^{\mathrm{i}\theta_c'})=0$; see \eqref{eq:def:gprime}. 
		
	\end{lem}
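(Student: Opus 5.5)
We prove \cref{lem:expectheightrough} the same way as \cref{lem:expectheightroughsmooth}: write the expected height as a telescoping sum of edge probabilities along the diagonal and evaluate each term asymptotically through its saddle point. Take $J_{\xi_c,\xi_c,X}$ to be a $c$-face; other faces change the height by at most $4$. Set $\mathtt{J}' = J_{\xi_c,\xi_c,X'}$. Then
\[
\E\cH_n(J_{\xi_c,\xi_c,X}) = \E\big(\cH_n(J_{\xi_c,\xi_c,X})-\cH_n(\mathtt{J}')\big) + \E\cH_n(\mathtt{J}').
\]
Since $X' = -m^{-1/2+\delta}$, \cref{lem:expectheightroughsmooth} with $R = m^{-1/2+\delta}$ gives $\E\cH_n(\mathtt{J}') = -\mathtt{C}_1 m^{1/4+3\delta/2} + o(m^{2\delta})$. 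This is $o(m^{1/2})$ because $\delta<1/12$, so only the increment needs to be computed.

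Following \eqref{E:Kasteleyn-sum} and \eqref{E:heightdiff-smooth-RS}, we write the increment as $-4a\mathrm{i}\sum_{k}\sum_{\eps}(-1)^\eps K^{-1}_{a,1}(x_{k,\eps},y_{k,\eps})$ over the $O(m)$ diagonal edge pairs between the two faces. The first equation of \cref{lem:switchcontoursBtilde} applies only within $O(m^{5/6})$ of a fixed diagonal point, so we cover the diagonal segment by $O(m^{1/6})$ windows. On each window we choose base point $\eta_1=\eta_2$ and use \eqref{E:first-eq-10.3} to replace $K^{-1}_{a,1}$ by $\mathbb{K}^{-1}_{1,1}-\mathcal{B}_{\eps,\eps}$ up to $O(e^{-\mathtt{c}n})$. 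The $\mathbb{K}^{-1}_{1,1}$ contributions cancel in the alternating $\eps$-sum, because the smooth phase is flat in expectation.

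Each $\mathcal{B}_{\eps,\eps}$ term is evaluated by statement $4$ of \cref{thm:prev:KinversearoundRS}, using the contour deformation \eqref{eq:lem:switchcontours} to extract the residue term $C_{\omega_{c,X(k)}}$. For fixed macroscopic $X(k)$ in the rough region this has error $O(m^{-1/2})$, with $\omega_{c,X(k)} = e^{\mathrm{i}\theta_c(X(k))}$ on the unit circle. Summing the errors over $O(m)$ terms gives $O(m^{1/2})$. To reach $o(m^{1/2})$, we would get the sharper $O(m^{-1})$ error away from the endpoints, where the critical points are nondegenerate, and handle the $O(m^{1/2+\delta})$ terms near $X'$ with the $O(m^{-1/2})$ bound together with the factor $\sqrt{X}$ from the contour length. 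This error bookkeeping near $X'$, where $\omega_{c,X}$ approaches $\mathrm{i}$ and the saddle points coalesce, is the main obstacle. If the crude count falls short, we would push the refined estimate of statement $4$ down to scale $|X|\ge m^{-1/2+\delta}$ with error $O(m^{-1/2}|X|^{-1/2})$, whose sum is $O(m^{1/2}\cdot m^{-\delta/2})$.

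The main term, as computed at the end of the proof of \cref{lem:expectheightroughsmooth}, is
\[
-\frac{\mathrm{i}a}{2\pi\mathrm{i}}\int_{\Gamma_{\omega_{c,X(k)}}}\Big(\frac{1}{\omega^2\sqrt{\omega^{-2}+2c}}+\frac{1}{\sqrt{\omega^2+2c}}\Big)\mathrm{d}\omega .
\]
Summing over $k$ gives a Riemann sum with mesh $1/(2m)$ in the variable $X$, which converts to $2m\int_{X}^{X'}\Phi(\theta_c(s))\,\mathrm{d}s$ with error $O(1)$. Finally, we identify this integral with $p_1(\theta_c)-p_1(\theta_c')$. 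Differentiating $p_1(\theta_c(X))$ in $X$, the terms carrying $\partial_X\theta_c$ vanish because $g'_{X,X}(\pm e^{\pm\mathrm{i}\theta_c})=0$. What remains is $\partial_X g_{X,X}$ evaluated at the four endpoints, namely $-\log G(\omega)+\log G(\omega^{-1})$ there. This should match the arc integrand: $\frac{\mathrm{d}}{\mathrm{d}\omega}[\log G(\omega^{-1})-\log G(\omega)]$ is exactly the integrand above up to constants, since $\frac{\mathrm{d}}{\mathrm{d}\omega}\log G(\omega) = -1/\sqrt{\omega^2+2c}$. Integrating in $X$ then yields the claimed formula.
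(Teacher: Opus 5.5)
\textbf{Where the proposal falls short.} Your reduction to the increment $\E[\cH_n(J_{\xi_c,\xi_c,X})-\cH_n(\mathtt{J}')]$ matches the paper. So does your use of $\E\cH_n(\mathtt{J}')=O(m^{1/4+3\delta/2})=o(m^{1/2})$ from \cref{lem:expectheightroughsmooth}. Your envelope computation is also correct and explains why the answer is $p_1$: $\partial_X g_{X,X}=\log G(\omega^{-1})-\log G(\omega)$, and its $\omega$-derivative is the arc integrand. The gap is the error control in the term-by-term evaluation. You flag this step yourself, but you do not resolve it.

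First, statement 4 of \cref{thm:prev:KinversearoundRS} is only proved for $-m^{-1/2+\delta}\le X<-m^{-2/3}L$. Your sum runs over $X\le X'=-m^{-1/2+\delta}$, so the statement covers none of your terms except the endpoint. For macroscopic $X$ you would have to import the rough-region asymptotics of \cite{CJ16}, which only give $C_{\omega_{c,X}}+O(m^{-1/2})$ per term. Summing $\Theta(m)$ such errors gives $O(m^{1/2})$, which is exactly the order you need to beat. The mesoscopic range $m^{-1/2+\delta}\le|X|\ll 1$, where the saddle points merge, is covered by neither result.

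Second, your proposed remedies are unproved, and the bookkeeping does not close.
\begin{itemize}
\item An $O(m^{-1})$ per-term error requires a second-order steepest-descent expansion that is uniform in $X$. In it you must show that the leading $O(m^{-1/2})$ contribution, from both variables being near the same saddle point, vanishes.
\item The bound $O(m^{-1/2}|X|^{-1/2})$, summed with mesh $1/(2m)$ over $|X|\ge m^{-1/2+\delta}$, is of order $m^{1/2}$, not $m^{1/2-\delta/2}$. The sum is dominated by macroscopic $|X|$.
\item The factor $\sqrt{|X|}$ from the length of $\Gamma_{\omega_{c,X}}$ controls the size of the main term $C_{\omega_{c,X}}$, not the size of the error.
\end{itemize}

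\textbf{The idea the paper uses: sum before taking asymptotics.} In \eqref{eq:prev:B} the summation index $\ell$ enters only through $T_G(\omega_1,\omega_2)^{\ell}$. So the whole increment is one double contour integral containing a geometric series. Summing it produces a factor $(1-T_G)^{-1}$. Since $T_G(\omega,\omega)=1$, this combines with $(\omega_2-\omega_1)^{-1}$ into a double pole at $\omega_1=\omega_2$. The result splits into two integrals, each depending only on one endpoint $Y\in\{X,X'\}$.

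For each of these you do a single steepest-descent deformation. The contours cross, and you pick up the residue at the double pole along $\Gamma_1\setminus\Gamma_{\omega_c}$. Up to $O(1)$, this residue equals $-\frac{2a}{2\pi\mathrm{i}}\int 2m\,g'_{Y,Y}(\omega)V_F(\omega,\omega)\frac{\mathrm{d}\omega}{\omega}$, where $V_F(\omega,\omega)=\omega/a$. That integrand is an exact derivative, so it evaluates directly to $2m\,p_1$. The remaining double integrals and the simple-pole term are $O(1)$.

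You are then left with two saddle-point analyses instead of $\Theta(m)$. No per-term errors accumulate, and neither the Riemann sum nor the envelope argument is needed.
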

	
	{

		\begin{proof}
			We compute the expected height difference between $J_{\xi_c,\xi_c,X}$ and $J_{\xi_c,\xi_c,X'}$. This will give the expected height at $J_{\xi_c,\xi_c,X}$ since we have from \cref{lem:expectheightroughsmooth}, the expected height at $J_{\xi_c,\xi_c,X'}$. By following a similar reasoning as in \eqref{E:heightdiff-smooth-RS} and its aftermath, we have
			\begin{equation*}
				\begin{split}
					&\mathbb{E}(\mathcal{H}_n(J_{\xi_c,\xi_c,X})-\mathcal{H}_n(J_{\xi_c,\xi_c,X'}))=-4a\mathrm{i} \sum_{\ell=\lceil mX \rceil}^{\lfloor mX' \rfloor} \sum_{\eps \in \{0,1\}}(-1)^{\eps}\\
					& K_{a,1}^{-1}((\lfloor 4m\rfloor \xi_c+2\ell+1 ,\lfloor 4m \xi_c \rfloor +2 \ell+2 \eps),(\lfloor 4m\rfloor \xi_c+2\ell+2 \eps,\lfloor 4m \xi_c \rfloor +2 \ell+1 ))\\
					&=-4a\mathrm{i} \sum_{\ell=\lceil mX \rceil}^{\lfloor mX' \rfloor} \sum_{\eps \in \{0,1\}}(-1)^{\eps}\mathcal{B}_{\eps,\eps}\bigg(a, (n,n)+(\lfloor 4m\rfloor \xi_c+2\ell+1 ,\lfloor 4m \xi_c \rfloor +2 \ell+2 \eps),\\
					& (n,n) +(\lfloor 4m\rfloor \xi_c+2\ell+2 \eps,\lfloor 4m \xi_c \rfloor +2 \ell+1 )\bigg)
					+O(e^{-\mathtt{C}_1n})
				\end{split}
			\end{equation*}
			where $\mathtt{C}_1$ is a positive constant.  We next use  \eqref{eq:prev:B} to write
			\begin{equation}
\begin{split}\label{eq:lemproof:expectheightintorough}
					&\mathbb{E}(\mathcal{H}_n(J_{\xi_c,\xi_c,X})-\mathcal{H}_n(J_{\xi_c,\xi_c,X'}))=\sum_{\ell=\lfloor m\xi_c+mX \rfloor}^{\lfloor m\xi_c+ mX'\rfloor}\frac{4a}{(2\pi \mathrm{i})^2}  \int_{\Gamma_p}\frac{\mathrm{d} \omega_1}{\omega_1} \int_{\Gamma_{1/p}} \mathrm{d}\omega_2 \frac{1}{\omega_2-\omega_1} \\
					&\times \bigg(\frac{H_{2\ell+2,2\ell}(\omega_1)}{H_{2\ell,2\ell+2}(\omega_2)} \mathrm{i}^{-1}V_{0,0}(\omega_1,\omega_2)-\frac{H_{2\ell+2,2\ell+2}(\omega_1)}{H_{2\ell+2,2\ell+2}(\omega_2)} \mathrm{i}V_{1,1}(\omega_1,\omega_2)\bigg) +O(e^{-\mathtt{C}_1n})\\
					&=\frac{4a}{(2\pi \mathrm{i})^2}  \int_{\Gamma_p}\frac{\mathrm{d} \omega_1}{\omega_1} \int_{\Gamma_{1/p}} \mathrm{d}\omega_2 \frac{1}{\omega_2-\omega_1} \sum_{\ell=-\lfloor m\xi_c+mX' \rfloor}^{-\lfloor m+\xi_c mX \rfloor} \frac{\omega_1^{2m}}{\omega_2^{2m}}T_G(\omega_1,\omega_2)^{\ell} \\
					\times &\frac{1}{G(\omega_1)G(\omega_2^{-1})} \bigg( V_{0,0}(\omega_1,\omega_2) +G(\omega_1^{-1})G(\omega_2) V_{1,1}(\omega_1,\omega_2) \bigg)+O(e^{-\mathtt{C}_1n})
				\end{split}
			\end{equation}
			where we have used ~\eqref{eq:prev:H} and $T_G$ defined in \eqref{eq:T_G}. 
			Let 
			\begin{equation*}
				V(\omega_1,\omega_2)=\frac{\omega_2^{-1} (\omega_2^2-\omega_1^2)}{1-T_G(\omega_1,\omega_2)} \frac{ V_{0,0}(\omega_1,\omega_2) +G(\omega_1^{-1})G(\omega_2) V_{1,1}(\omega_1,\omega_2)}{G(\omega_1)G(\omega_2^{-1})}.
			\end{equation*}
			Then, evaluating the geometric sum in \eqref{eq:lemproof:expectheightintorough} and  using the above definitions gives 
			\begin{equation*}
				\begin{split}
					&\mathbb{E}(\mathcal{H}_n(J_{\xi_c,\xi_c,X})-\mathcal{H}_n(J_{\xi_c,\xi_c,X'}))=\frac{4a}{(2\pi \mathrm{i})^2}  \int_{\Gamma_p}\frac{\mathrm{d} \omega_1}{\omega_1} \int_{\Gamma_{p^{-1}}} \mathrm{d}\omega_2 \frac{\omega_2V_F(\omega_1,\omega_2)}{(\omega_2-\omega_1)^2(\omega_1+\omega_2)} \\
					&\times  \frac{\omega_1^{2m}}{\omega_2^{2m}}\big(1-T_G(\omega_1,\omega_2)^{-\lfloor mX \rfloor +\lfloor mX' \rfloor}\big)T_G(\omega_1,\omega_2)^{- \lfloor mX' \rfloor}+O(e^{-\mathtt{C}_1n})
				\end{split}
			\end{equation*}
			In the above expression, there is a double pole at $\omega_1=\omega_2$  since $V_F(\omega_1,\omega_2)$ is analytic for $\omega_1=\omega_2$. 
			We split the above integral into two and notice that we need to evaluate
			\begin{equation}\label{eq:lemproof:expectheightintorough2}
				\frac{4a}{(2\pi \mathrm{i})^2}  \int_{\Gamma_p}\frac{\mathrm{d} \omega_1}{\omega_1} \int_{\Gamma_{p^{-1}}} \mathrm{d}\omega_2 \frac{\omega_2V_F(\omega_1,\omega_2)T_G(\omega_1,\omega_2)^{-\lfloor mY \rfloor}}{(\omega_2-\omega_1)^2(\omega_1+\omega_2)} \frac{\omega_1^{2m}}{\omega_2^{2m}}
			\end{equation}
			with $Y=X'$ and $Y=X$.  We proceed to evaluate \eqref{eq:lemproof:expectheightintorough2}. 
			By using 
			\begin{equation*}
				\frac{\omega_2}{\omega_2^2-\omega_1^2} \frac{1}{\omega_2-\omega_1}=\frac{1}{2}\bigg( \frac{1}{(\omega_2-\omega_1)^2}+\frac{1}{2\omega_1}\bigg( \frac{1}{\omega_2-\omega_1}-\frac{1}{\omega_1+\omega_2} \bigg)\bigg)
			\end{equation*}
			we split up the integral in \eqref{eq:lemproof:expectheightintorough2} which gives
			\begin{equation*}
				\begin{split}
					&\frac{2a}{(2\pi \mathrm{i})^2}  \int_{\Gamma_p}\frac{\mathrm{d} \omega_1}{\omega_1} \int_{\Gamma_{p^{-1}}} \mathrm{d}\omega_2 \frac{V_F(\omega_1,\omega_2)T_G(\omega_1,\omega_2)^{-\lfloor mY \rfloor}}{(\omega_2-\omega_1)^2} \frac{\omega_1^{2m}}{\omega_2^{2m}} \\
					&+\frac{a}{(2\pi \mathrm{i})^2}  \int_{\Gamma_p}\frac{\mathrm{d} \omega_1}{\omega_1} \int_{\Gamma_{p^{-1}}} \mathrm{d}\omega_2 \bigg(\frac{1}{\omega_2-\omega_1}-\frac{1}{\omega_2+\omega_1} \bigg)V_F(\omega_1,\omega_2) \\
					&\times T_G(\omega_1,\omega_2)^{-\lfloor mY \rfloor} \frac{\omega_1^{2m}}{\omega_2^{2m}} .
					\\
				\end{split}
			\end{equation*}
			For the second integral above, we use a change of variables in the second term of $\omega_2\mapsto -\omega_2$ and so we get that \eqref{eq:lemproof:expectheightintorough2} equals
			\begin{equation*}
				\begin{split}
					&\frac{2a}{(2\pi \mathrm{i})^2}  \int_{\Gamma_p}\frac{\mathrm{d} \omega_1}{\omega_1} \int_{\Gamma_{p^{-1}}} \mathrm{d}\omega_2 \frac{V_F(\omega_1,\omega_2)T_G(\omega_1,\omega_2)^{-\lfloor mY \rfloor}}{(\omega_2-\omega_1)^2} \frac{\omega_1^{2m}}{\omega_2^{2m}} \\
					&+\frac{a}{(2\pi \mathrm{i})^2}  \int_{\Gamma_p}\frac{\mathrm{d} \omega_1}{\omega_1} \int_{\Gamma_{p^{-1}}} \mathrm{d}\omega_2 \frac{V_F(\omega_1,\omega_2)-V_F(\omega_1,-\omega_2)}{\omega_2-\omega_1}  T_G(\omega_1,\omega_2)^{-\lfloor mY \rfloor} \frac{\omega_1^{2m}}{\omega_2^{2m}} .
					\\
				\end{split}
			\end{equation*}
            Here to simplify the change of variables we have used that $G(\omega) = -G(-\omega)$, which implies that $T(\omega_1, \omega_2) = T(\omega_1, -\omega_2)$.
			To compute the asymptotics of the above integrals, we use a saddle point analysis, which we omit the full details as these are given fully in  \cite[Section 3.6]{CJ16}.  We let $\omega_c$ be such that $g'_{Y,Y}(\omega_c)=0$ with $Y=X$ or $Y=X'$. We have that $|\omega_c|=1$ and $0<\arg \omega_c <\pi/2$; see \eqref{eq:def:gprime} and \cite[Lemma 3.18]{CJ16}.  In each of the integrals, we deform the $\Gamma_p$ (the $\omega_1$-contour) to the contour of steepest descent and $\Gamma_{1/p}$ (the $\omega_2$-contour) to the contour of steepest ascent, which results in the contours crossing; see \cite[Lemma 3.20]{CJ16} for a precise description of these contours. For each integral, the contour crossing picks up a single integral term from the residue at $\omega_1=\omega_2$ which has a double pole in the first integral and a simple pole in the second integral.  The contribution from both double integrals (with the contours deformed to that of the steepest ascent and descent) is $O(1)$ which follows from standard saddle point analysis.  
			We obtain that \eqref{eq:lemproof:expectheightintorough2} equals a sum of single integrals, plus an $O(1)$ term:
			\begin{equation}
            \label{E:single-integral-145}
				\begin{split}
					&\frac{2a}{2\pi \mathrm{i}} \int_{\Gamma_1 \backslash \Gamma_{\omega_c}} \mathrm{d} \omega_1\bigg[ \frac{ \mathrm{d}  }{ \mathrm{d} \omega_2}\frac{1}{\omega_1} \frac{\omega_1^{2m}}{\omega_2^{2m}}T_G(\omega_1,\omega_2)^{-\lfloor mY \rfloor} V_F(\omega_1,\omega_2) \bigg]_{\omega_2=\omega_1} \\
					&+\frac{a}{2\pi \mathrm{i}} \int_{\Gamma_1 \backslash \Gamma_{\omega_c}}\frac{ \mathrm{d} \omega}{\omega} V_F(\omega,-\omega)-V_F(\omega,\omega)+O(1).
				\end{split}
			\end{equation}
			We have that the second term above is also $O(1)$.  It remains to compute the first term.  We have that \eqref{E:single-integral-145} equals
			\begin{equation*}
				\begin{split}
					&\frac{2a}{2\pi \mathrm{i}} \int_{\Gamma_1 \backslash \Gamma_{\omega_c}} \mathrm{d} \omega_1\bigg[ \frac{ \mathrm{d}  }{ \mathrm{d} \omega_2}\frac{1}{\omega_1} \frac{\omega_1^{2m}}{\omega_2^{2m}}T_G(\omega_1,\omega_2)^{-\lfloor mY \rfloor} V_F(\omega_1,\omega_2) \bigg]_{\omega_2=\omega_1} +O(1)\\
					&=\frac{2a}{2\pi \mathrm{i}} \int_{\Gamma_1 \backslash \Gamma_{\omega_c}}\mathrm{d} \omega \bigg(-\frac{2m}{\omega^2}  + \lfloor mY \rfloor \frac{G'(\omega)}{G(\omega) \omega}+[mY] \frac{G'(\omega^{-1})}{G(\omega^{-1}) \omega^2}\bigg)V_F(\omega,\omega)+O(1)\\
					&=-\frac{2a}{2\pi \mathrm{i}} \int_{\Gamma_1 \backslash \Gamma_{\omega_c}}\frac{\mathrm{d}  \omega}{\omega} 2m g_{ \lfloor Ym \rfloor, \lfloor Ym \rfloor}'(\omega) V_F(\omega,\omega)+O(1).
				\end{split}
			\end{equation*}
			We can compute that $V_F(\omega,\omega)=\omega/a$, which allow us to compute the above integral  explicitly. Set $\omega_c=e^{\mathrm{i}\theta_c}$ which means that \eqref{eq:lemproof:expectheightintorough2} when $Y=X$ is equal to $p_1(\theta_c)$. We can also set $\omega_c=e^{\mathrm{i}\theta_c'}$ which means that \eqref{eq:lemproof:expectheightintorough2} when $Y=X'$ is equal to $p_1(\theta_c)$. The result follows from taking the difference.	\end{proof}

	}

	We now prove \cref{L:expectation-computation-2}.  
	
	\begin{proof}[Proof of \cref{L:expectation-computation-2}]
		For $x \in (-n^{8/15},2n^{1/3}\log^2n]$, the bound follows from \cref{L:expectation-computation}. For $-\delta n < x \leq -n^{8/15}$ and for fixed $\delta>0$ small, the bound follows from \cref{lem:expectheightrough} along with a Taylor expansion (in $x/n$), while for $x \leq -\delta n$, the bound follows immediately from \cref{lem:expectheightrough}. 
	\end{proof}

	\section{Proof of strengthened result of \cite{BCJ18}}
\label{S:BCJ18}
Here, we prove \cref{T:BCJ18-strong}.   We adopt a similar notation and conventions used in \cite{BCJ18} to make later comparisons to that paper simpler. This helps to keep our exposition as brief as possible. We translate back to the coordinate system in the remainder of the paper only as necessary. 

\subsection{Particle System from \cite{BCJ18}}

We give a similar albeit slightly altered particle system to the one used in \cite{BCJ18}. Let $\hat \beta_m \to \hat{\beta} \in \R$, and introduce 
\begin{equation*}
	\hat{\beta}_m(k)=2 \lfloor \hat{\beta}_m \lambda_2 (2m)^{2/3} \rfloor +2 \lfloor k \lambda_2 \log^2m \rfloor.
\end{equation*}
where $1 \leq k \leq M$ with $M \to \infty$ with $n$ and $M = o(m^{1/3} \log^{-2} m)$.  For $\ell \in 2\mathbb{Z}$, let
\begin{equation*}
	\mathcal{L}_{m}^{\varepsilon}(\ell)=\{ v = (2i-\varepsilon+\frac{1}{2}) e_1 -\ell e_2: i \in \Z, v \in [-n, n]^2 \}, 
\end{equation*}
let $\mathcal{L}_{m}(\hat{\beta}_m(k)) = \mathcal{L}_{m}^0 (\hat{\beta}_m(k))\cup \mathcal{L}_{m}^1(\hat{\beta}_m(k))$ and let
$$
\mathcal{L}_m= \bigcup_{k=1}^{M} \mathcal{L}_{m}(\hat{\beta}_m(k)).
$$
For $z \in \mathcal{L}_{m}^\eps(\hat{\beta}_m(k))$, write $\eps(z)=\eps$.  If $\eps(z) = 0$, then $z$ has \emph{even} parity and if $\eps(z)=1$, then $z$ has \emph{odd} parity.  This relates to the sign of the height function when crossing the edges in the north-east direction along the diagonal of the Aztec diamond. For a particle $z \in \mathcal{L}_{m}(\hat{\beta}_m(k))$, associate the vertex $x(z) \in \mathtt{W}_0 \cup \mathtt{W}_1$ and the vertex $y(z) \in \mathtt{B}_0 \cup \mathtt{B}_1$ and the edge $(y(z),x(z))$ between them, that is the dimer covering the two vertices $y(z)$ and $x(z)$.  Our convention means that 
\begin{equation}\label{eq:dimertoparticle}
	\begin{array}{ll} 
		& x(z)=z-\frac{1}{2} (-1)^{\eps(z)} e_2, \\ 
		& y(z)=z+\frac{1}{2} (-1)^{\eps(z)} e_2,
	\end{array}
\end{equation}
see \cref{fig:aface}.
\begin{figure}
	\begin{center}
		\includegraphics[height=4cm]{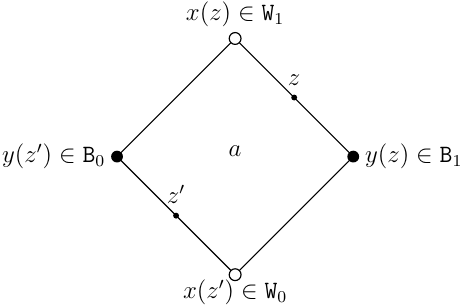}
		\caption{The particles to edge mapping. }
		\label{fig:aface}
	\end{center}
\end{figure}
This gives the particle to edge mapping: 
\begin{align*}
	&\mbox{There is a particle at }\quad &&z \in \mathcal{L}_{m} \qquad
	\\
	\longleftrightarrow \qquad &\mbox{There is a dimer at } \quad &&(x(z),y(z)) \in \mathtt{White} \times \mathtt{Black}.
\end{align*}
We have the following proposition. 
\begin{prop}\label{prop:determinantalpp}
	The particle process on $\mathcal{L}_{m}$ defined above is a determinantal point process with correlation kernel $\mathcal{K}_m$ given by 
	\begin{equation}
		\tilde{\mathcal{K}}_m (z,z') = a\mathrm{i} K_{a,1}^{-1} ( x(z'),y(z))
	\end{equation}
	for $z,z' \in \mathcal{L}_m$. 
\end{prop}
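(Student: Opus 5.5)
The plan is to deduce the proposition directly from Kenyon's local statistics theorem, \cref{localstatisticsthm}, once we check that the particle-to-edge map is injective and that the Kasteleyn entry on each associated edge equals the constant $a\mathrm{i}$ (up to a conjugation that does not change determinants).

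\emph{Step 1: the particles encode distinct edges.} Each $z\in\mathcal L_m$ corresponds via \eqref{eq:dimertoparticle} to the edge $(x(z),y(z))$ with $y(z)-x(z)=(-1)^{\eps(z)}e_2$. Here $x(z)$ is white and $y(z)$ is black, since $z$ is the midpoint of an edge crossing the line through $\tilde\beta$-coordinates at height $-\hat\beta_m(k)e_2$. Distinct $z$ give distinct edges because $z$ is recovered as the edge midpoint. So the event ``particles at $z_1,\dots,z_r$'' is the event ``dimers cover $e_1,\dots,e_r$'', and
\[
\P(\text{particles at } z_1,\dots,z_r)=\det\big(K_{a,1}(y(z_i),x(z_i))\,K^{-1}_{a,1}(x(z_j),y(z_i))\big)_{i,j}.
\]
The indexing convention is the one in \cref{localstatisticsthm}: $K$ has rows indexed by black vertices and $K^{-1}$ has rows indexed by white vertices.

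\emph{Step 2: compute the Kasteleyn entries.} Using \eqref{pf:K} with $b=1$, evaluate $K_{a,1}(y(z),x(z))$ for both parities $\eps(z)\in\{0,1\}$. The lattices $\mathcal L^\eps_m(\ell)$ are chosen (offset $(2i-\eps+\tfrac12)e_1$, with $\ell\in 2\Z$) so that every such edge borders an $a$-face. The edge weight is then $a$, and the phase should be $\mathrm{i}$ in the $\eps=0$ case. In the $\eps=1$ case the phase may come out as $\pm\mathrm{i}$ or $\pm 1$ times $a$. This case check is the main obstacle, since one must track the $(j,k)$ cases in \eqref{pf:K} carefully for both the $\mathtt E$ and $\mathtt W$ classes of $y(z)$.

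\emph{Step 3: absorb the prefactors.} If every entry equals exactly $a\mathrm{i}$, the factor $K_{a,1}(y(z_i),x(z_i))$ depends only on the row $i$, so the kernel is literally $a\mathrm{i}K^{-1}_{a,1}(x(z'),y(z))$. If instead the entries are $a\mathrm{i}\,\sigma(z)$ with $|\sigma|=1$, the row factor $\sigma(z_i)$ is a diagonal multiplication on the left. Determinants are unchanged up to $\prod_i\sigma(z_i)$, which must equal $1$ because probabilities are real and positive. To remove any residual sign, I would check that $\sigma$ is identically $1$ by evaluating at the sign $(-1)^{\eps}$ built into $y(z)$; if it is not, I would replace $\sigma(z_i)$ by the conjugation $\sigma(z_i)\sigma(z_j)^{-1}$, which leaves all principal minors invariant. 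Either way the particle process is determinantal with kernel $\tilde{\mathcal K}_m(z,z')=a\mathrm{i}K^{-1}_{a,1}(x(z'),y(z))$, up to a gauge that preserves correlation functions.
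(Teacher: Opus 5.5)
Your route is the paper's: its proof is exactly \cref{localstatisticsthm} applied to the edges $(y(z),x(z))$, together with the fact that $K_{a,1}(y(z),x(z))=a\mathrm{i}$ for every $z\in\mathcal L_m$. Your Step 1 is fine. The gap is that this entrywise identity is the whole content of the argument. You leave it open, and your Step 3 fallback does not work. Write $K_{a,1}(y(z),x(z))=a\mathrm{i}\,\sigma(z)$. Then by \cref{localstatisticsthm} the true $r$-point function is $\prod_i\sigma(z_i)\cdot\det\big(a\mathrm{i}K^{-1}_{a,1}(x(z_j),y(z_i))\big)$. Positivity of this probability tells you nothing about $\prod_i\sigma(z_i)$, because the remaining determinant is not known to be positive. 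Nor can a row scaling $\operatorname{diag}(\sigma)M$ be traded for a conjugation $\operatorname{diag}(\sigma)M\operatorname{diag}(\sigma)^{-1}$: the first multiplies each principal minor by the corresponding product of the $\sigma(z_i)$, while the second changes none. Taking $r=1$ shows the proposition holds only if $\sigma(z)=1$ wherever the particle has positive probability, so there is no slack. For instance, were the entries $a$ instead of $a\mathrm{i}$, the correct kernel would be $aK^{-1}_{a,1}(x(z'),y(z))$, and $a\mathrm{i}K^{-1}_{a,1}$ would give non-real one-point functions.

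The missing verification is short. For $z=(2i-\eps+\tfrac12)e_1-\ell e_2$ with $\ell$ even, \eqref{eq:dimertoparticle} gives the following:
when $\eps(z)=0$, $y(z)=(2i+\ell,\,2i-\ell+1)$ and $x(z)=y(z)-e_2$;
when $\eps(z)=1$, $y(z)=(2i+\ell,\,2i-\ell-1)$ and $x(z)=y(z)+e_2$.
The coordinate sum of $y(z)$ is $\equiv 1$, resp.\ $\equiv 3 \pmod 4$, so $y(z)\in\mathtt W$, resp.\ $y(z)\in\mathtt E$. In both cases the particle edge is an $e_2$-edge bordering the $a$-face $f(y(z))=y(z)+(1,0)$, resp.\ $y(z)-(1,0)$. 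The Kasteleyn phase of an edge depends only on whether it is parallel to $e_1$ or $e_2$. Hence both parities give the same entry, $a$ times the $e_2$-phase, and your worry that $\eps=1$ might differ from $\eps=0$ is unfounded.

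The paper uses $K^{-1}_{a,1}$ in the convention of \cite{CJ16}, from which \cref{thm:prev:Kinverse} is imported (compare the factor $a\mathrm{i}$ in \eqref{E:Kasteleyn-sum}). There the $e_2$-phase is $\mathrm{i}$, which gives $a\mathrm{i}$ and the stated kernel. Be aware that if you read \eqref{pf:K} literally, the factor $\mathrm{i}$ sits on the $e_1$-edges ($k=0$), and you get the real entry $a$ for both parities. That is a convention issue, settled by fixing which Kasteleyn matrix $K^{-1}_{a,1}$ inverts; no gauge argument can absorb it.
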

This proposition follows immediately from the local statistics formula, see \cref{localstatisticsthm},  and the fact that $K_{a,1}(y(z),x(z))=a\mathrm{i}$ for $z\in \mathcal{L}_{m}$. We let $\mathbb{I} \subset \mathcal L_m$ denote the point process in \cref{prop:determinantalpp}.

The particle processes on $\mathcal L_m$ can be used to reconstruct height differences between faces along each of the lines $\mathcal L_m(\hat{\beta}_m(k))$. Indeed, for two distinct faces of the form $f = a e_1 - \hat{\beta}_m(k) e_2$ and $f' = b e_1 - \hat{\beta}_m(k) e_2$ with $a < b$, we can write
\begin{equation}
\label{E:height-diff}
\cH_n(f') - \cH_n(f) = 4\sum_{z \in \mathbb{I}} (-1)^{z(\eps)} \mathbf{1}(z = x e_1 - \hat{\beta}_m(k) e_2 \text{ for some } x \in (a, b)).
\end{equation}
Through \eqref{E:height-diff} and averaging, we can then construct differences of the mollified height function $\cH_n^{\phi_n}$ with $\phi_n = \phi_n = \{2\lfloor k \log^2 n \rfloor e_2 : k \in \{0, \dots, M\}\}$ from the particle process $\mathbb{I}$.

Following \cite{BCJ18}, introduce $\tau_m= \lfloor \hat{\beta}^2_m \lambda_1(2m)^{1/3} \rfloor$.  It will be convenient to use the following parameterization. Given $z \in \mathcal{L}_m(\hat{\beta}_m(k))$, there is a $t(z) \in \II{-2\lfloor m\xi_c \rfloor +\tau_m, 4m-2 \lfloor m\xi_c \rfloor +\tau_m}$ such that 
$$
z=(\lfloor 4m\xi_c\rfloor +2(t(z)-\tau_m)-\eps(z)+1/2)e_1-\hat{\beta}_m(k)e_2.
$$
Write for $s \in \mathbb{Z}$,
$$
z_{k}(s)=(\lfloor 4m\xi_c \rfloor +s-2\tau_m+1/2)e_1-\hat{\beta}_m(k)e_2
$$
with $z_{k}(s)\in  \mathcal{L}_m(\hat{\beta}_m(k))$. The function $s \mapsto z_k(s)$ and $z \mapsto t(z)$ are \textit{almost} inverses, with minor differences coming in so as to account for parity. We note that in the $\beta_n$-coordinates introduced prior to \cref{L:expectation-computation}, that
\begin{equation}
\label{E:scaling-translation}
z_k(s) = \beta_n(-\hat{\beta}_m(k), s + o(n^{1/3})).
\end{equation}
where the $o(n^{1/3})$ appears only because in the $\beta_n$-definition we centered around the limit curve $\xi$, whereas in the definition of $t(z)$ above, our centering is around the second-order Taylor approximation to $\xi$ at $(\xi_c, \xi_c)$ (this is where the parabolic correction $\tau_m$ is coming from). 

Next, for $L>0$ and $m \in \N$, let 
$$
\alpha_{1, m}<\alpha_{2, m}' \le \alpha_{2, m}< \alpha_{3, m}' \le \alpha_{4, m} < \dots \le \alpha_{L, m} < \alpha_{L+1, m}' = \log^3 m
$$
be real numbers such that $\alpha_{i, m}, \alpha_{i, m}' \to \alpha_i$, where the limiting sequence satisfies 
$$
\alpha_1 < \alpha_2 < \dots < \alpha_L \le \alpha_{L+1} := \infty.
$$
We allow $\alpha_L \in \R \cup \{\infty\}$, whereas all $\alpha_i, i \le L-1$ must be real.
Now let $A_p=[\alpha_p,\alpha_{p+1})$ for $1 \leq p \leq L$ and define disjoint discrete intervals
\begin{equation*}
	\tilde{A}_p=\{s \in \mathbb{Z}: 2 \lfloor\alpha_{p, m} \lambda_1 (2m)^{1/3}\rfloor-1 \leq s \leq  2\lfloor\alpha_{p+1, m}' \lambda_1 (2m)^{1/3}- 4\rfloor+1\}.
\end{equation*}
Here the separation by $2$ is simply to guarantee that the intervals $\tilde{A}_p$ are disjoint.
The embedding of the interval $\tilde{A}_p$ as a discrete interval in $\mathcal L_m(k)$ is then given by 
$$
I_{p, k} = \{z_k(s) : s \in \tilde{A}_p\}.
$$
We write $\mathbb{I}_{p, k}(z) = \mathbf{1}(z \in I_{p, k})$ for the indicator function for $I_{p, k}$. Now, define the random measure $\mu_m$ by the formula
\begin{equation}
	\label{E:mu-m}
\mu_m(\tilde{A}_p) =	\frac{1}{M} \sum_{k=1}^M \sum_{z \in \mathbb{I}} (-1)^{\eps(z)} \mathbb{I}_{p, k}(z).
\end{equation}
With the mollifier $\phi_n$ as above, and notation as in \eqref{E:mollified-height-fun}, \eqref{E:tilde-Hn}, by \eqref{E:height-diff} we have that
$$
\tilde \cH_n^{\phi_n}(\beta_n, \alpha_{i + 1, m} + o(1)) - \tilde \cH_n^{\phi_n}(\beta_n, \alpha_{i, m} + o(1)) = 4\mu_m(\tilde{A}_p)
$$
where the $o(1)$ terms are coming from scaling translation in \eqref{E:scaling-translation}. Through this translation, and since the approximating sequences $\alpha_{i, m}$ were arbitrary, we can rewrite \cref{T:BCJ18-strong} as the $L = 1$ case of the following theorem.

\begin{thm}\label{T:BCJ18-main-thm-strong-extended}
In the above setup, there exists $R > 0$ such that for all $\omega \in \mathbb{C}^L$ with $|\omega_p|<R$ we have
	\begin{equation}
		\begin{split}
			\lim_{m\to \infty} \mathbb{E}e^{\sum_{p=1}^{L} \omega_p \mu_m(\tilde{A}_p)}=\mathbb{E}e^{\sum_{p=1}^{L} \omega_p \mu_{\mathrm{Ai}}(A_p)}
		\end{split}
	\end{equation}
Here $\mu_{\mathrm{Ai}}$ is the counting measure for the Airy point process $\{\mathcal A_i(0) : i \in \N\}$.	
\end{thm}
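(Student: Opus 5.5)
We will follow the strategy of \cite{BCJ18}: expand the Laplace transform of a linear statistic of a determinantal point process as a Fredholm determinant, and show that this determinant converges to the corresponding Airy Fredholm determinant. By \cref{prop:determinantalpp}, $\mathbb{I}$ is determinantal with kernel $\tilde{\mathcal K}_m$. Define
\[
\psi(z)=\frac{1}{M}\sum_{p,k}\omega_p(-1)^{\eps(z)}\mathbb{I}_{p,k}(z).
\]
Then
\[
\mathbb{E}e^{\sum_p\omega_p\mu_m(\tilde A_p)}=\det(I+(e^{\psi}-1)\tilde{\mathcal K}_m)_{\ell^2(\mathcal L_m)},
\]
provided the series expansion is absolutely convergent, which is where the restriction $|\omega_p|<R$ will enter. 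We expand $\det(I+(e^{\psi}-1)\tilde{\mathcal K}_m)=\sum_r \frac{1}{r!}\sum_{z_1,\dots,z_r}\prod_j(e^{\psi(z_j)}-1)\det[\tilde{\mathcal K}_m(z_i,z_j)]$ and aim to prove termwise convergence together with a uniform summable bound.

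The first step is to decompose the kernel. By \cref{thm:previousRSasymptotics}, $\tilde{\mathcal K}_m = a\mathrm{i}(\mathbb{K}^{-1}_{1,1}-\mathbb{K}_A)$. The smooth part $\mathbb{K}^{-1}_{1,1}$ has no Airy scaling on its own, but after averaging over $k$ and summing with the parity signs $(-1)^{\eps}$, its contribution to each $r$-point term cancels or becomes negligible; this is the mechanism already used in \cite{BCJ18}. Because the smooth kernel decays exponentially and the lines $\mathcal L_m(\hat\beta_m(k))$ are separated by $\log^2 m$, contributions mixing different $k$ through $\mathbb{K}^{-1}_{1,1}$ are $O(e^{-c\log^2 m})$. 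The same-$k$ smooth contributions average out because of the alternating signs and because the expected flatness of the smooth phase gives the identity noted after \eqref{E:expectationexpansion}. The $\mathbb{K}_A$ part, after conjugation by the factors $|G(\mathrm{i})|^{\cdot}$ and $e^{\alpha\hat\beta}$ in \cref{thm:previousRSasymptotics}, converges pointwise, at scale $(2m)^{-1/3}$ per site, to $c_0\mathtt g_{\eps_x,\eps_y}\tilde{\mathcal A}$. Summing over the parities with the signs $(-1)^{\eps}$ should combine the $\mathtt g$ constants so that exactly the Airy kernel at equal times appears. Taking $M\to\infty$ turns the average over $k$ into a trivial average, since $\hat\beta_m(k)\to\hat\beta$ uniformly in $k$. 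In the limit this gives $\det(I+(e^{\omega}-1)\mathcal A)_{L^2(\cup A_p)}$, which is the Airy Laplace transform.

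The main obstacle, and the genuinely new part relative to \cite{BCJ18}, is that $A_L$ extends up to $\alpha'_{L+1,m}=\log^3 m\to\infty$. We therefore need bounds on $\mathbb{K}_A(x,y)$, uniform for $\alpha_x,\alpha_y\in[-C,\log^3 m]$, that are integrable at infinity and allow dominated convergence in the Fredholm expansion. The plan is to revisit the saddle point analysis behind \cref{thm:previousRSasymptotics} (the refined scaling \eqref{eq:def:rsrefined} already allows $\alpha<\log^3 m$) and extract an estimate of the form
\[
|\mathbb{K}_A(x,y)|\le C(2m)^{-1/3}e^{-c(\alpha_x+\alpha_y)}
\]
after conjugation, mirroring the Airy function decay. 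This estimate is presumably also the content of statement 2 of \cref{thm:prev:KinversearoundRS} once $\alpha\gtrsim \log^2 m$. With such a bound, Hadamard's inequality controls the $r$-th term by $(C R)^r r^{r/2}\big(\int e^{-c\alpha}\big)^r/r!$, which is summable for $R$ small. Dominated convergence then gives the theorem; the case $\alpha_L=\infty$, where $A_L$ is empty in the limit, follows because the mass of $\mathbb{K}_A$ beyond any fixed $\alpha$ vanishes.
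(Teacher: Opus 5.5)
There is a genuine gap, in the step you attribute to \cite{BCJ18}. You discard the smooth kernel $\mathbb K^{-1}_{1,1}$ between different lines as $O(e^{-c\log^2 m})$ and treat the average over $k$ as trivial. That smallness holds only before conjugation. To make the Airy part converge you must conjugate by factors that include $|G(\mathrm i)|^{\gamma_2(z)-\gamma_2(z')}\approx |G(\mathrm i)|^{2\lambda_2(k-k')\log^2 m}$. Determinants only see cyclic products, which are invariant under this conjugation, so the comparison has to be made in the conjugated frame. There, by statement 2(a) of \cref{P:airylimit_asymptotics}, $\mathcal K_{m,0}(z,z')$ with $k>k'$ is a heat kernel of width $\asymp\log m\sqrt{k-k'}$ and height $\asymp 1/(\log m\sqrt{k-k'})$; only the direction $k<k'$ is negligible. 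This is the discrete counterpart of the $\phi$-term of the extended Airy kernel (compare the two formulas in \cref{thm:previousRSasymptotics}), and it is what makes all $M$ lines see the \emph{same} Airy particles. In the paper's evaluation of $U_3^*$, each cycle alternates Airy steps with descending chains of these Gaussian steps. The chains integrate to $1$, and the ordering constraint gives $M^{-s}\sum_{\overline{k}}\prod_{i\in J'}\mathbf 1(k_i>k_{i+1})\to 1/(\ell_1!\cdots\ell_r!)$. This factor is exactly what turns $\omega_p$ into $e^{\omega_p}-1$. If you drop the chains, only cycles with all $\ell_i=1$ survive, and your limit is a Fredholm determinant in which $\omega_p$ appears linearly in place of $e^{\omega_p}-1$. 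That is not the Airy Laplace transform; it agrees with it only to first order in $\omega$.

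Your domination step also fails. The same-line smooth kernel has $O(1)$ entries that decay only on the lattice scale (statement 4 of \cref{P:airylimit_asymptotics}), not entries of size $(2m)^{-1/3}$ with Airy decay. Meanwhile $\mathcal L_m$ has at least $\asymp M m^{1/3}$ sites. Already the $r=1$ Fredholm term contains $\sum_z \frac{\omega_p^2}{2M^2}\tilde{\mathcal K}_m(z,z)$, which is of order $|\omega_p|^2 m^{1/3}/M\to\infty$ since $M=o(m^{1/3}\log^{-2}m)$. It is cancelled only by the $r=2$ term, because the signed count along a line telescopes to a height difference with bounded variance. The flatness identity you cite handles only the first-order term. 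So the individual terms of the Fredholm series have no limits, and Hadamard's inequality cannot supply a summable majorant.

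The paper instead expands $\log\det$ as a cumulant series and splits it over $D_{r,0},\dots,D_{r,3}$. It disposes of the purely smooth cycles $U_0,U_1,U_2$ via \cite[Lemmas 4.1, 4.2]{BCJ18}, which use this cancellation. Cycles with at least one Airy step are controlled by $|S_r|\le C^r$ (\cref{L:Srbound}), after rescaling each variable by the step-dependent scale $d_i$. The proof then reduces to distinct $k_i$ and resums using $\sum_{\overline{\eps}}Q(\overline{\eps})=(-1)^s$. You correctly identify the genuinely new input: Airy-type decay of the conjugated $\mathbb K_A$, uniformly for $\alpha$ up to $\log^3 m$. This matches the paper's extension of \cref{thm:previousRSasymptotics} and statement 1 of \cref{P:airylimit_asymptotics}, but it must be fed into this cumulant framework rather than into a Hadamard bound.
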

The term on the left is a determinant of a finite matrix, while the term on the right is a Fredholm determinant, noting that the interval $A_{L_2}$ is semi-infinite. The proof is developed over the next few subsections, with heavy reference to \cite{BCJ18}.  Note that we are only considering  one fixed time in \cref{T:BCJ18-main-thm-strong-extended} as opposed to  multiple time intervals in \cite[Theorem 1.1]{BCJ18}. 

\subsection{Using  the determinantal point process}
Now, it follows from \eqref{E:mu-m} that
\begin{equation}\label{E:heighttoparticles}
	\sum_{p=1}^L \omega_p \mu_m(\tilde{A}_p)=\frac{1}{M} \sum_{z \in \mathbb{I}} \psi(z)
\end{equation}
where
\begin{equation*}
	\psi(z)=\sum_{p=1}^{L} \sum_{k=1}^M \omega_p (-1)^{\eps(z)} \mathbb{I}_{p,k}(z).
\end{equation*}
As for the correlation kernel of the determinantal point process, with \cref{thm:previousRSasymptotics} in mind, define
\begin{equation*}
	\tilde{ \mathcal{K}}_{m,0}(z,z')=a\mathrm{i} \mathbb{K}_{1,1}^{-1}(x(z'),y(z))
\end{equation*}
and 
\begin{equation*}
	\tilde{ \mathcal{K}}_{m,1}(z,z')=a\mathrm{i} \mathbb{K}_A(x(z'),y(z)).
\end{equation*}
Define for $z \in \mathcal{L}_m(k)$,
\begin{equation*} 
	\begin{split}
		&\gamma_1(z)=\frac{t(z)}{\lambda_1(2m)^{1/3}} \hat{\beta}_m -\frac{1}{3} \hat{\beta}_m^3\\
		&\gamma_2(z)=\eps(z)+\hat{\beta}_m(k)\\
		&\gamma_3(z)=2(t(z)-\tau_m)+\hat{\beta}_m(k).
	\end{split}
\end{equation*}
and let $\mathcal{K}_{m,\delta}$ be such that 
\begin{equation}
	\begin{split}\label{Kmdelta}
		&\tilde{\mathcal{K}}_{m,\delta}(z,z')=\frac{a \mathrm{i}}{2} \sqrt{1-2c} \mathtt{g}_{\eps(z'),\eps(z)}|G(\mathrm{i})|^{2 \eps(z')-2}(-1)^{\eps(z')} e^{\gamma_1(z')-\gamma_1(z)} \\
		&\times |G(\mathrm{i})|^{\gamma_2(z)-\gamma_2(z')} \mathrm{i}^{\gamma_3(z')-\gamma_3(z)} {\mathcal{K}}_{m,\delta}(z,z')
	\end{split}
\end{equation}
for $\delta \in \{0,1\}$.  We set 
\begin{equation*}
	\mathtt{K}_{m, \delta}(z,z')=\frac{a \mathrm{i}}{2} \sqrt{1-2c} \mathtt{g}_{\eps(z'),\eps(z)}|G(\mathrm{i})|^{2 \eps(z')-2}(-1)^{\eps(z')}  \mathcal{K}_{m, \delta}(z,z')
\end{equation*}
and let $\mathtt{K}_m = \mathtt{K}_{m, 0} - \mathtt{K}_{m,1}.$  The above two equations indicate that $\mathtt{K}_m$ is the correlation kernel of the determinantal point process after conjugation and along with \eqref{E:heighttoparticles}, we have
\begin{equation*}
	\begin{split}
		\mathbb{E}e^{\sum_{p=1}^{L} \omega_p \mu_m(\tilde A_p)}&=\det[\mathbbm{I}+(e^{\psi/M}-1)\tilde{\mathcal{K}}_m]_{\mathcal{L}_m}\\
		&=\det[\mathbbm{I}+(e^{\psi/M}-1)\mathtt{K}_m]_{\mathcal{L}_m}.
	\end{split}
\end{equation*}
We have the following proposition, which is an extended version of \cite[Proposition 3.1]{BCJ18}.
\begin{prop} \label{P:airylimit_asymptotics}
	Let $z \in \mathcal{L}_m(k)$, $z' \in \mathcal{L}_m(k')$ and write $t=t(z)$, $t'=t(z')$.  Consider $\mathcal{K}_{m,\delta}(z,z')$ defined by~\eqref{Kmdelta}. 
	The asymptotic formulas and estimates below are uniform as $m\to\infty$ for $-C_1 (2m)^{1/3}< t,t' \leq C_1(2m)^{1/3}(\log m)^3$, for any fixed $C_1>0$ and $1 \leq k \leq M$. 
	\begin{enumerate}[label=\arabic*.]
		\item We have
		\begin{equation*}
			\mathcal{K}_{m,1}(z,z') = \frac{1}{\lambda_1(2m)^{1/3}} \tilde{\mathcal{A}}\left(\frac{t'}{\lambda_1(2m)^{1/3}};\frac{t}{\lambda_1(2m)^{1/3}} \right)(1+o(1)).
		\end{equation*}
		where $\tilde{\mathcal{A}}(x ;y) = \tilde{\mathcal{A}}(0, x; 0, y)$ is given in~\eqref{eq:Airymod}.
		\item Assume that $k>k'$. Then there are constants $c_1,c_2,C_1>0$, so that:
		\begin{enumerate}
			\item If $|t'-t| \leq c_2( (k-k')( \log m )^2)^{7/12}$, then
			\begin{equation*}
				\begin{split}
				\mathcal{K}_{m,0}(z,z')&= \frac{1}{\lambda_1(\log m)} \frac{1}{\sqrt{4 \pi (k-k')}} \\
				&\times \exp \left(-\frac{1}{4(k-k')} \left( \frac{t'-t}{\lambda_1 \log m }\right)^2 \right) (1+o(1)).
				\end{split}
			\end{equation*}
			\item If $ c_2( (k-k')( \log m )^2)^{7/12} \leq  |t'-t| \leq \lambda_2 (k-k') (\log m)^2$, then
			\begin{equation*}
				|\mathcal{K}_{m,0}(z,z')| \leq\frac{C}{(\log m)\sqrt{k-k'}} \exp\left(-\frac{c_1}{(k-k')} \left( \frac{t'-t}{\lambda_1 \log m }\right)^2 \right).
			\end{equation*}
			\item 
		If $|t'-t| \geq \lambda_2 (k-k') (\log m)^2$, then
			\begin{equation*}
				|\mathcal{K}_{m,0}(z,z')| \leq C e^{-c_1(k-k')(\log m)^2}. 
			\end{equation*}
			
		\end{enumerate}

		\item Assume that $k<k'$. Then there are constants $c_1,C>0$ so that
		\begin{equation*}
			|\mathcal{K}_{m,0}(z,z')| \leq C e^{-c_1(k'-k)(\log m)^2}.
		\end{equation*}
		\item  Assume that $k=k'$.  Then there are constants $c_1,C>0$ so that
		\begin{equation*}
			|\mathcal{K}_{m,0}(z,z')| \leq C e^{-c_1|t'-t|}.
		\end{equation*}
	\end{enumerate}
\end{prop}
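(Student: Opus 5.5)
The plan is to derive each item from the decomposition $K^{-1}_{a,1}=\mathbb{K}^{-1}_{1,1}-\mathbb{K}_A$ of \cref{thm:previousRSasymptotics}, together with the conjugation in \eqref{Kmdelta}, by steepest-descent analysis of the double and single contour integrals. The conjugating factors $e^{\gamma_1}$, $|G(\mathrm{i})|^{\gamma_2}$ and $\mathrm{i}^{\gamma_3}$ are precisely the prefactors produced by \cref{thm:previousRSasymptotics}. After removing them, what remains of $\mathcal{K}_{m,1}$ and $\mathcal{K}_{m,0}$ should be the rescaled $\tilde{\mathcal A}$ and the heat-kernel-type term $\phi$, evaluated at a common time. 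The difference from \cite[Proposition 3.1]{BCJ18} is that $t,t'$ may now reach $C_1(2m)^{1/3}\log^3 m$ rather than $O(m^{1/3})$. The work is therefore to check that each estimate in that proof stays uniform in this larger window.

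For part 1, I would use the double integral \eqref{eq:prev:B} with $x=x(z')$, $y=y(z)$ and rescale near the double critical point $\omega=\mathrm{i}$ via $\omega=\mathrm{i}(1+c\,w\,(2m)^{-1/3})$. Since $\hat\beta_x=\hat\beta_y$ up to $O(\log^2 m\cdot m^{-2/3})$, the exponent takes the Airy form $\tfrac{w^3}{3}-\alpha w$ with $\alpha=t/(\lambda_1(2m)^{1/3})$, plus errors of order $\alpha^2 w^2 m^{-1/3}$ and $w^4m^{-1/3}$. These errors are $o(1)$ on the local region $|w|\le m^{1/12}$ provided $\alpha\le\log^3m$. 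On the complement I would use the steepest-descent contour bounds from \cite[Section 3]{CJ16}, which give Gaussian-type decay $e^{-c|w|^3}$, together with the extra decay $e^{-\alpha|w|}$ coming from $\alpha>0$. The relative error $(1+o(1))$ then comes from comparing with $\tilde{\mathcal A}(\alpha';\alpha)$. For large $\alpha$ this is exponentially small, roughly $e^{-\frac{2}{3}(\alpha^{3/2}+\alpha'^{3/2})}$, so I would shift both contours through the actual critical points $\pm\sqrt{\alpha}$ rather than through $w=0$. This is the step needing care.

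For parts 2–4, $\mathcal{K}_{m,0}$ comes from the smooth-phase kernel $\mathbb{K}^{-1}_{1,1}$, which is a translation-invariant Fourier integral. I would use \eqref{E:KCS}, $\mathbb{K}^{-1}_{1,1}=C_1+S$, and write $C_{\omega_c}$ as a single integral over $\Gamma_{\omega_c}$ of $G(\omega)^{(t'-t)/2}G(\omega^{-1})^{(k-k')\lambda_2\log^2 m}$ times the conjugation. Because the kernel depends only on differences, the window size of $t,t'$ plays no role here, and the argument of \cite{BCJ18} transfers essentially verbatim.
\begin{itemize}
\item For $k>k'$ there is a saddle at $\omega=\mathrm{i}$. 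Expanding to second order gives the Gaussian of part 2(a) within the stated window, where the cubic error $|t'-t|^3/((k-k')\log^2m)^2$ is small because of the $7/12$ exponent.
\item In the intermediate range, a convexity bound on the real part along the contour gives 2(b).
\item Beyond $\lambda_2(k-k')\log^2m$, moving the contour radius gives the exponential bound of 2(c).
\end{itemize}
For $k<k'$ and $k=k'$, the relevant contour can be pushed to radius strictly less (respectively more) than $1$, with no saddle. This yields decay $e^{-c(k'-k)\log^2m}$, respectively $e^{-c|t-t'|}$. The $S$ term is supported where $y_1-x_1\le -1$ and $x_2-y_2\le-1$ (\cref{lem:switchcontoursBtilde}), so it only contributes in the $k>k'$ case, where it is absorbed into the same integral.

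The main obstacle I anticipate is the uniformity of part 1 up to $t\sim m^{1/3}\log^3m$. There the critical points of $g$ move to distance about $\sqrt{\log^3 m}\,m^{-1/3}$ from $\mathrm{i}$, and the claimed multiplicative $(1+o(1))$ requires controlling the error relative to an exponentially small main term. My first approach would be to deform through the $m$-dependent saddles $\omega_{c,X}$ of \eqref{eq:def:omegacX}, in the regime \eqref{eq:def:smclosetoRS}, and to use the estimate \eqref{thm:eq:KinversearoundRS:smclosetoRS} to dominate the contributions away from those saddles.
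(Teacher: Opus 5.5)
Your plan follows the paper's route: parts 2--4 are imported from \cite[Proposition 3.1]{BCJ18}, because $\mathcal{K}_{m,0}$ (the smooth-phase kernel together with its conjugation) is invariant under a common shift of $t,t'$. Part 1 is \cref{thm:previousRSasymptotics}, whose extension to $C<\alpha\le\log^3 m$ is proved as you suggest, by passing the $\omega_1$-contour through the moving saddle $\omega_{c,X}$ and matching the resulting $e^{-\frac23\alpha^{3/2}}$ decay to that of $\tilde{\mathcal A}$. One correction: \eqref{thm:eq:KinversearoundRS:smclosetoRS} bounds all of $\mathcal B$, with unspecified constants in the exponent, so it cannot control the contour tails; those come from the standard steepest-descent tail estimate, as in \cite[Proposition 3.12]{CJ16}.
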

\begin{proof}[Proof of \cref{P:airylimit_asymptotics}] 
	\cref{P:airylimit_asymptotics} differs from \cite[Proposition 3.1]{BCJ18} in that it allows the parameters to enter the smooth region (by at most $\log^3 m$ amount).  The asymptotics for statements 2, 3, 4 hold from statements 3, 4, 5 from \cite[Proposition 3.1]{BCJ18}.  Statement 1 holds by \cref{thm:previousRSasymptotics}.  	
\end{proof}

\subsection{Proof of \cref{T:BCJ18-main-thm-strong-extended}}

We give a condensed proof of \cref{T:BCJ18-main-thm-strong-extended} by signposting, where appropriate, to the proof of \cite[Theorem 1.1]{BCJ18}. The main difference is that here, the support interval of $\mu_m$ is no longer compact.

For a set $S$, we write $S^r = \{\overline{k} = (k_1, \dots, k_r) : k_i \in S \text{ for all } i = 1, \dots, r\}$. We adopt cyclic notation, that is, $k_{r+1}=k_1$.
Let 
\begin{equation*}
	D_r=\{0,1\}^r \times [M]^r \times [L]^r
\end{equation*}
where $[N]=\{1,2,\dots, N\}$.  
Similar to \cite{BCJ18},
define
\begin{align*}
	D_{r,0} &= \{(\overline{\delta},\overline{k}, \overline{p}) \in D_r; \delta_i =0,k_i=k_{i+1},p_i=p_{i+1} \mbox{ for } 1\leq i \leq r\}, \\
	D_{r,1} &= \{(\overline{\delta},\overline{k}, \overline{p}) \in D_r; \delta_i =0 \mbox{ for } 1\leq i \leq r\mbox{ and } p_i \ne p_{i+1} \mbox{ for some $i$} \}, \\
		D_{r,2} &= \{ (\overline{\delta},\overline{k}, \overline{p}) \in D_r; \delta_i =0, p_i = p_{i+1}\mbox{~for~}1\leq i\leq r  \mbox{~and~} k_i\not=k_{i+1} \mbox{~for some~}i\}, \\
	D_{r,3} &= \{ (\overline{\delta},\overline{k}, \overline{p}) \in D_r; \delta_i =1  \mbox{~for some~} i \}.
\end{align*}
Then $D_r=D_{r,0}\cup D_{r,1}  \cup D_{r,2} \cup D_{r,3}$. Introduce
\begin{equation}  \label{Tjmrell}
	\begin{split}
		T_j(m,r,\overline{\ell}) = &\sum_{\overline{\eps} \in \{0,1\}^r}  \prod_{i=1}^r (-1)^{\ell_i \eps_i} \sum_{(\overline{\delta},\overline{k},\overline{p}) \in D_{r,j}} \prod_{i=1}^r \omega_{p_i}^{\ell_i} (-1)^{\delta_i}  \\
		&\times \sum_{\overline{z} \in( \mathcal{L}_m)^r } \prod_{i=1}^r \mathbbm{I}_{p_i, k_i}^{\eps_i} (z_i)
		\mathtt{K}_{m,\delta_i}(z_i,z_{i+1}),
	\end{split}
\end{equation}
for $j \in \{0,1,2,3\}$ where $\mathbbm{I}_{p,k}^{\eps}$ is the indicator on $\mathcal{L}_m$ for the set
\begin{equation}
	\label{E:Ipke}
	I_{p,k}^{\eps}=\{z \in I_{p,k}: \eps(z)=\eps\}.
\end{equation}
By performing a cumulant expansion, see \cite[Equations (4.1)-(4.6)]{BCJ18} for details, we can write
\begin{equation*}
	\log \det( \mathbbm{I} +(e^{\frac{1}{M} \psi}-1) \mathtt{K}_m) = \sum_{j=0 }^3 U_j(m)
\end{equation*}
where we define
\begin{equation} \label{Ujm}
	U_j(m)=  \sum_{s=1}^\infty \frac{1}{M^s} \sum_{r=1}^s \frac{(-1)^{r+1}}{r} \sum_{\substack{\ell_1+\dots+\ell_r=s \\ \ell_1,\dots,\ell_r \geq 1}} \frac{T_j(m,r,\overline{\ell})}{\ell_1! \dots \ell_r!}.
\end{equation}
We have that $U_0(m)$, $U_1(m)$, and $U_2(m)$ all tend to zero as $m\to \infty$ by \cite[Lemma 4.1 and Lemma 4.2]{BCJ18}, since the same proofs hold as the same estimates are present in \cref{P:airylimit_asymptotics} as \cite[Proposition 3.1]{BCJ18} for $\mathcal{K}_{m,0}$.  It remains to consider $U_3(m)$.   
We introduce some additional notation similar to that of \cite{BCJ18}. Define 
\begin{equation*}
	z_{k}^{\eps}(t) =(\lfloor 4 m \xi_c \rfloor +2(t-\tau_m)-\eps +\frac{1}{2}) e_1 -\hat{\beta}_m(k)e_2,
\end{equation*}
and set
\begin{equation*}
	A_{p,m}=\{t \in \mathbb{Z}: \lfloor \alpha_p \lambda_1 (2m)^{1/3} \rfloor \leq t \leq  \lfloor \alpha_{p+1} \lambda_1 (2m)^{1/3}-\log^3m \rfloor \}.
\end{equation*}
We these definitions, we have that
\begin{equation*}
	I_{p,k}^{\eps} =\{z_{k}^{\eps}(t): t \in A_{p,m} \}. 
\end{equation*}
Next, define
\begin{equation*}
	S_r(\overline{\eps},\overline{\delta},\overline{k},\overline{p}) =\int_{\mathbb{R}^r} \mathrm{d}^r \overline{t} \prod_{i=1}^r \mathbbm{I}_{A_{p,m}}(\lfloor t_i \rfloor) \mathcal{K}^{(i)}_{m,\overline{\eps},\overline{\delta},\overline{k}} (t_i,t_{i+1}) 
\end{equation*}
where
\begin{equation*}
\mathcal{K}^{(i)}_{m,\overline{\eps},\overline{\delta},\overline{k}} (t_i,t_{i+1}) =\mathcal{K}_{m,\delta_i}(z_{k_i}^{\eps_i}(\lfloor t_i \rfloor),z_{k_{i+1}}^{\eps_{i+1}}(\lfloor t_{i+1}\rfloor)).
\end{equation*}
With this notation, following \cite[(4.7)-(4.10)]{BCJ18} we have 
\begin{equation*}
	T_j(m,r,\overline{\ell})=\sum_{\overline{\eps} \in \{0,1\}^r} P(\overline{\eps},\overline{\ell}) \sum_{(\overline{\delta},\overline{k}, \overline{p})\in D_{r,j}} \prod_{i=1}^r \omega_{p_i}^{\ell_i}(-1)^{\delta_i} S_r(\overline{\eps},\overline{\delta},\overline{\ell}, \overline{p}) 
\end{equation*}
with 
\begin{equation*}
	P(\overline{\eps},\overline{\ell})=\prod_{i=1}^r \frac{a \mathrm{i}}{2} \sqrt{1-2c} (-1)^{(1+\ell_i)\eps_i} \mathtt{g}_{\eps_i,\eps_{i+1}} |G(\mathrm{i})|^{\eps_i+\eps_{i+1}-2}.
\end{equation*}
Introduce 
\begin{equation}\label{E:cumulant:d_i}
	d_i=\left\{ \begin{array}{ll}
		\lambda_1 (2m)^{1/3} & \mbox{if }\delta_i=1 \\
		(\log m)\lambda_1 \sqrt{|k_{i+1}-k_i|} &\mbox{if }\delta_i=0,k_i\not = k_{i+1} \\
		1 &\mbox{if }\delta_i=0,k_i = k_{i+1} \end{array} \right. 
\end{equation}
and the change of variables $\tau_i=t_i/d_i$ for $1 \leq i \leq r$.  
After this change of variables, we obtain
\begin{equation*}
	S_r (\overline{\eps}, \overline{\delta} ,\overline{k},\overline{p}  ) = \int_{\mathbb{R}^r} \mathrm{d}^r \overline{\tau} \prod_{i=1}^r \mathbbm{I}_{A_{m}} (\lfloor \tau_i d_i \rfloor ) d_i  \mathcal{K}^{(i)}_{m,\overline{\eps},\overline{\delta},\overline{k} } (\tau_i d_i , \tau_{i+1}d_{i+1} ).
\end{equation*}
We have the following bound.
\begin{lem} \label{L:Srbound}
	There is a constant $C>0$ such that  
	\begin{equation*}
		|S_r (\overline{\eps}, \overline{\delta} ,\overline{k},\overline{p} ) | \leq C^r
	\end{equation*}
	for all $(\overline{\delta} ,\overline{k} ) \in D_{r,3}$ and $\overline{\eps} \in \{0,1\}^r$.  
\end{lem}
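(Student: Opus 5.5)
The plan is to follow \cite[Lemma 4.3]{BCJ18}, bounding $S_r$ by a product of one-variable kernel integrals via a cyclic Cauchy--Schwarz/H\"older argument. The only new issue compared with \cite{BCJ18} is that the domain $A_{L,m}$ is no longer compact, since the top interval reaches up to $t \approx \lambda_1(2m)^{1/3}\log^3 m$, or $\tau \approx \log^3 m$ in the rescaled variable when $\delta_i=1$. Uniform boundedness therefore has to come from decay of the kernels for large positive $t$, and not from the size of the domain.

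\textbf{Step 1: per-factor bounds.} For each factor $d_i\mathcal{K}^{(i)}_{m,\overline{\eps},\overline{\delta},\overline{k}}(\tau_i d_i,\tau_{i+1}d_{i+1})$ we record a bound of the form $g_i(\tau_i,\tau_{i+1})$ using \cref{P:airylimit_asymptotics}.
\begin{itemize}
\item If $\delta_i=1$: Part 1 together with the standard Airy-kernel bound $|\tilde{\mathcal A}(x;y)|\le Ce^{-(x+y)}$ for $x,y\ge -C$ gives $|d_i\mathcal K^{(i)}|\le Ce^{-\tau_i-\tau_{i+1}}$. Here the rescaled variables lie in $[-C_1,\log^3 m]$. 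The error factor $(1+o(1))$ is uniform on this range, since \cref{P:airylimit_asymptotics} is stated uniformly up to $t\le C_1(2m)^{1/3}\log^3 m$.
\item If $\delta_i=0$, $k_i\ne k_{i+1}$: Parts 2--3 give a Gaussian heat-kernel bound in $\tau_i-\tau_{i+1}\,d_{i+1}/d_i$ with total mass $O(1)$, or an exponentially small bound when $k_i<k_{i+1}$ or the displacement is large.
\item If $\delta_i=0$, $k_i=k_{i+1}$: Part 4 gives $Ce^{-c_1|t_i-t_{i+1}|}$, which is summable in $t$.
\end{itemize}

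\textbf{Step 2: integrate the cycle.} Since $(\overline{\delta},\overline{k})\in D_{r,3}$, at least one index $i_0$ has $\delta_{i_0}=1$. Cyclically rotate so that $i_0=r$. Then bound
\[
|S_r|\le \int \prod_i g_i,
\]
integrating out $\tau_1,\dots,\tau_{r-1}$ successively as in \cite{BCJ18}. Each $\delta_i=0$ factor acts as a convolution-type operator of norm $\le C$ on $L^\infty$, because it is a probability-like kernel in the difference variable. The variable change $\tau_i=t_i/d_i$ mismatches scales between neighbours, but this is handled exactly as in \cite{BCJ18}: integrate in the unscaled $t$ and use that the heat kernel has mass $O(1)$. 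This leaves
\[
C^{r-1}\int\!\!\int e^{-\tau_r}\,e^{-\tau_1}\,(\text{chain from }\tau_1\text{ to }\tau_r)\,d\tau .
\]
The exponential factors from the $\delta=1$ kernels make the final $\tau$-integrals over $[-C_1,\log^3 m]$ bounded by a constant independent of $m$. Several $\delta=1$ factors are handled the same way, each contributing its own exponential weight. Collecting the constants gives $C^r$.

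\textbf{Main obstacle.} The obstacle is Step 2 in the presence of both the semi-infinite domain and the chains of $\delta=0$ factors. A $\delta=0$ chain propagates a point $t$ to nearby points, and $L^\infty$ control alone would multiply by the domain length $\log^3 m$. To avoid this, I would carry the weight $e^{-\tau}$ through the chain: the Gaussian kernels satisfy $\int K(t,t')e^{-t'/d}\,dt'\le Ce^{-t/d}$ at the scale $d=\lambda_1(2m)^{1/3}$, because the heat-kernel spread $(\log m)\sqrt{M}=o(m^{1/3})$ is much smaller than $d$. That is exactly where the hypothesis $M=o(m^{1/3}\log^{-2}m)$ enters. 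With this weighted estimate the chain preserves exponential decay, and the final integral converges uniformly in $m$.
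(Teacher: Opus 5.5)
Your route is the paper's: the paper proves this lemma by citing \cite[Lemma 4.3]{BCJ18} together with the extended ranges of \cref{P:airylimit_asymptotics}. You correctly identify the one new point. Compactness of the $\delta_i=1$ variables must be replaced by decay of the Airy kernel, using $|\tilde{\mathcal A}(x;y)|\le Ce^{-x-y}$ on $[-C_1,\infty)^2$ and the uniform multiplicative $(1+o(1))$ in Part 1.

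\textbf{Drop the ``main obstacle'' paragraph.} The obstacle does not exist, and the proposed remedy is false as stated.

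Write $D=\lambda_1(2m)^{1/3}$ and work in the unscaled variables $t_i=\tau_i d_i$; the $d_i$ cancel in $S_r$. This also fixes your Step 1 bound, which should read $Ce^{-t_i/D-t_{i+1}/D}$, since $d_{i+1}\neq D$ in general. The argument then runs as follows.
\begin{itemize}
\item Rotate so that $\delta_r=1$, and bound $|\mathcal K^{(r)}(t_r,t_1)|\le CD^{-1}e^{C_1}e^{-t_1/D}$.
\item Bound every other factor only through its row sum $\sup_t\int|\mathcal K^{(i)}(t,t')|\,dt'$, integrating out $t_r,t_{r-1},\dots,t_2$ in turn.
\end{itemize}
These row sums are $O(1)$ for every type of factor:
\begin{itemize}
\item Gaussian factors, by Parts 2(a),(b);
\item factors with $k_i=k_{i+1}$, by Part 4;
\item the other Airy factors, by $\int D^{-1}e^{-t/D-t'/D}\,dt'\le e^{2C_1}$;
\item factors covered by Part 2(c) and Part 3, because $e^{-c_1\log^2 m}$ beats the polynomial domain length $O(m^{1/3}\log^3 m)$.
\end{itemize}
What remains is $CD^{-1}\int_{-C_1D}^{\infty} e^{-t_1/D}\,dt_1\le C$, so $|S_r|\le C^r$. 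There is no $\log^3 m$ loss, and the hypothesis on $M$ plays no role beyond its use inside \cref{P:airylimit_asymptotics}.

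By contrast, the weighted bound $\int K(t,t')e^{-t'/D}\,dt'\le Ce^{-t/D}$ cannot be propagated through a whole chain. It fails for the factors in Part 2(c) and Part 3, which are only bounded by $Ce^{-c_1|k-k'|\log^2 m}$, with no decay in $t-t'$. At $t\approx D\log^3 m$, the available bound on the left side is of order $De^{-c_1\log^2 m}$, while the right side is about $e^{-\log^3 m}$.

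So if your argument really depended on carrying the weight through the chain, it would break for chains containing such factors. It doesn't need to: the decay from a single Airy edge on the one free variable suffices.
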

\begin{proof}
	The proof of this lemma is the same as the one given in \cite[Lemma 4.3]{BCJ18}, noting the extended ranges given in \cref{P:airylimit_asymptotics}. 
\end{proof}
Let 
\begin{equation*}
	D_{s,3}^*=\{(\overline{\delta},\overline{k}, \overline{p}) \in D_{s,3}: k_i\not= k_j \text{ for all }i \not =j\}
\end{equation*}
and let $Q(\overline{\eps})=P(\overline{\eps},(1,\dots,1))$.  Then define 
\begin{equation*}
	U_3^*(m)=\sum_{s=1}^{\infty}\frac{(-1)^{s+1}}{sM^s}\sum_{\overline{\eps} \in \{0,1\}^r} Q(\overline{\eps}) \sum_{(\overline{\delta},\overline{k}, \overline{p}) \in D_{s,3}^*} \prod_{i=1}^s (-1)^{\delta_i} \omega_{p_i} S_s (\overline{\eps},\overline{\delta},\overline{k},\overline{p})
\end{equation*}
\begin{lem}
There is a constant $C > 0$ such that for $R$ sufficiently small and all $|\omega_p| \le R$, $U_3(m)$ is uniformly convergent and $|U_3^*(m)-U_3(m)|<C/M$.  
\end{lem}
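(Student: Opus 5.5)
The plan follows the strategy of \cite[Lemma 4.4]{BCJ18}. First I establish uniform convergence of $U_3(m)$, then I compare it with $U_3^*(m)$. By \cref{L:Srbound}, every $S_r(\overline{\eps},\overline{\delta},\overline{k},\overline{p})$ with $(\overline{\delta},\overline{k},\overline{p})\in D_{r,3}$ is bounded by $C^r$. The prefactors $P(\overline{\eps},\overline{\ell})$ are bounded by $C^r$ as well, since $\mathtt{g}_{\eps,\eps'}$ and $|G(\mathrm{i})|$ are fixed constants.

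For fixed $r$ and $\overline{\ell}$, the number of tuples $(\overline{\delta},\overline{k},\overline{p})\in D_{r,3}$ is at most $2^rM^rL^r$. This gives
\[
|T_3(m,r,\overline{\ell})|\le (CLM)^r R^{s}, \qquad s=\ell_1+\dots+\ell_r .
\]
Inserting this into \eqref{Ujm}, the $s$-th term of $U_3(m)$ is bounded by
\[
M^{-s}\sum_{r\le s}\binom{s-1}{r-1}(CLM)^rR^s \le (C'LR)^s ,
\]
using $M^r\le M^s$ and $\sum_{\overline{\ell}}1/\ell_1!\cdots\ell_r!\le r^s/s!\cdot s!$. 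More crudely, the multinomial identity gives $\sum_{\ell_1+\dots+\ell_r=s} s!/(\ell_1!\cdots\ell_r!)= r^s$, so the $1/\ell!$ weights are harmless. For $R$ small the series therefore converges geometrically, uniformly in $m$.

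For the comparison, I split the sum defining $U_3(m)$ into three parts. The main part has all $\ell_i=1$ (so $r=s$) and pairwise distinct $k_i$; this is exactly $U_3^*(m)$, since $P(\overline{\eps},(1,\dots,1))=Q(\overline{\eps})$. The remaining terms fall into two classes. In the first class some $\ell_i\ge2$, so $r\le s-1$, and the count $M^r$ gains a factor $M^{-1}$ against the $M^{-s}$ normalization. In the second class all $\ell_i=1$ but $k_i=k_j$ for some $i\ne j$. There are $\binom{s}{2}M^{s-1}$ such $\overline{k}$, again saving one factor of $M$. Each class is therefore bounded by $M^{-1}\sum_s s^2 (C''LR)^s\le C/M$ for $R$ small.

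The main obstacle is making sure \cref{L:Srbound} holds uniformly despite the non-compact interval $A_{L}$, which extends up to $\log^3 m$. The $\delta_i=1$ factors carry scale $d_i=\lambda_1(2m)^{1/3}$, and the integration over $\tau_i$ then runs over a range of length $\log^3 m$. Integrability in this range comes from the superexponential Airy decay of $\tilde{\mathcal A}(x;y)$ as $x,y\to+\infty$, as supplied by statement 1 of \cref{P:airylimit_asymptotics}. Once that lemma is granted, the counting argument above is routine.
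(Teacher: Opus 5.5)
Your proposal is correct and matches the paper's argument, which combines the bound $|S_r|\le C^r$ from \cref{L:Srbound} with the counting in \cite[Lemmas 4.4 and 4.5]{BCJ18}: uniform geometric convergence follows from $|T_3(m,r,\overline{\ell})|\le (CLM)^rR^s$, and the comparison follows because the terms with some $\ell_i\ge 2$ (so $r\le s-1$) and the terms with $r=s$ but a repeated $k_i$ each lose a factor $M^{-1}$. One slip: the aside ``$\le r^s/s!\cdot s!$'' should read $\le r^s/s!$, since a bound of $r^s$ alone would not give geometric decay; your displayed estimate via $\binom{s-1}{r-1}$ is already enough, so the argument is unaffected.
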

\begin{proof}
	Both results are a consequence of using the bound in~\cref{L:Srbound}; see \cite[Lemma 4.4 and Lemma 4.5]{BCJ18} for further details. 
\end{proof}

The above lemma means that we focus on $U_3^*(m)$. For $(\overline{\delta},\overline{k}, \overline{p}) \in D^*_{s, 3}$, let $1\le j_1<\dots <j_r\leq s$ be the indices $i$ where $\delta_i=1$. Let 
$$
\ell_1=j_1-j_r+s, \quad \ell_2=j_2-j_1, \dots, \quad \ell_r=j_r-j_{r-1}.
$$
Observe that $\ell_i \geq 1$ for all $i$ and $\ell_1+\dots +\ell_r=s$ and that we can uniquely reconstruct $j_1,\dots, j_r$ from $\ell_1,\dots, \ell_r$ provided $1 \leq j_1 \leq \ell_1$ is given.  Write $J=\{j_1,\dots, j_r\}$ and $J'=[s]\backslash J$.  Then, we have
\begin{equation*}
	\begin{split}
		&S_r (\overline{\eps}, \overline{\delta} ,\overline{k},\overline{p}  ) = \int_{\mathbb{R}^r} \mathrm{d}^r \overline{\tau}  \prod_{i \in J}  \mathbbm{I}_{A_{p_i, m}} (\lfloor \tau_i d_i \rfloor)d_i  \mathcal{K}^{(i)}_{m,\overline{\eps},\overline{\delta},\overline{k} } (\tau_i d_i, \tau_{i+1} d_{i+1}) \\
		&\times \prod_{i \in J'}  \mathbbm{I}_{A_{p_i,m}} (\lfloor \tau_id_i \rfloor)d_i  \mathcal{K}^{(i)}_{m,\overline{\eps},\overline{\delta},\overline{k} } (\tau_i d_i , \tau_{i+1} d_{i+1}) 
	\end{split}
\end{equation*}
We can use \cref{P:airylimit_asymptotics} (statements (2) and (3))  to write that for $i \in J'$ 
\begin{equation}\label{E:cumulant:gaussian}
	\begin{split}
		&d_i  \mathcal{K}^{(i)}_{m,\overline{\eps},\overline{\delta},\overline{k} } (\tau_i d_i , \tau_{i+1} d_{i+1}) \\
		&= \frac{1}{\sqrt{4 \pi}} e^{-\frac{(\tau_i-\frac{\tau_{i+1} d_{i+1}}{d_i})^2}{4}} (1+o(1)) \mathbbm{I}_{|\tau_i-\frac{d_{i+1}}{d_i} \tau_{i+1}| \leq c_2 (\log m)^{1/6}} \mathbbm{I}_{k_i>k_{i+1}}
	\end{split}
\end{equation}
Next we have that on $\lfloor d_i \tau_i \rfloor \in A_{p_i,m}$ with $i \in J$, 
\begin{equation*}
	d_i  \mathcal{K}^{(i)}_{m,\overline{\eps},\overline{\delta},\overline{k} } (\tau_i d_i , \tau_{i+1} d_{i+1})=\tilde{\mathcal{A}}(\tau_i;\tau_{i+1})(1+o(1))
\end{equation*}
by Statement (1) in \cref{P:airylimit_asymptotics}. The fact that the Airy functions decay when $\tau_i \to \infty$ means that we can write 
\begin{equation*}
	\begin{split}
		&\lim_{m \to \infty}S_r(\overline{\eps},\overline{\delta},\overline{k},\overline{p})=\prod_{i \in J'}\mathbbm{I}_{k_i>k_{i+1}} 
		\int_{\mathbb{R}^r} \prod_{j \in J}\mathrm{d}\tau_j \times \prod_{i=1}^{r} \mathbbm{I}_{A_{p_i}}(\tau_{j_i})   \tilde{\mathcal{A}}\big(\tau_{j_i},\tau_{j_{i+1}}\big)
	\end{split}
\end{equation*}
where we have computed the Gaussian integral from \eqref{E:cumulant:gaussian} and used the indicator functions for the integral with respect to $\tau_i$ for $i\in J$.  
The proof now proceeds exactly as in \cite{BCJ18} in that one has to use 
$$\lim_{m \to \infty} \frac{1}{M^s}\sum_{\overline{k}\in [M]^s}\prod_{i \in J'} \mathbbm{I}_{k_i>k_{i+1}}= \frac{1}{\ell_1! \dots \ell_r!}.$$
along with \cite[Lemma 4.6]{BCJ18} that states that $\sum_{\overline{\eps} \in \{0,1\}^s}Q(\overline{\eps})=(-1)^s$, and a resummation to get the right side of the statement of the theorem; we omit these details since they can be found in \cite{BCJ18}.

	\section{Proof of results from Section \ref{subsec:KasteleynTheory}} \label{subsec:Kinvasympproofs}
	
	We now give the proofs of the results given in \cref{subsec:KasteleynTheory}, including those relating to the refined asymptotic analysis of $K^{-1}_{a,1}$.  
	
	\subsection{Proof of \cref{lem:switchcontoursBtilde}}
	The next lemma is a restatement of~\cite[Lemma 3.6]{CJ16} and so will be stated without proof. 
	\begin{lem}\label{lem:prev:termsdecay}
		Rescale and center the Aztec diamond so that $x=([4m\eta_1]+\overline{x}_1,[4m\eta_1]+\overline{x}_2) \in \mathtt{W}_{\eps_1}$ and $y=([4m\eta_2]+\overline{y}_1,[4m\eta_2]+\overline{y}_2) \in \mathtt{B}_{\eps_2}$ for $\eps_1,\eps_2\in \{0,1\}$ and $-1<\eta_1,\eta_2<0$ where $\overline{x}_1,\overline{x}_2,\overline{y}_1,\overline{y}_2$ are at most order $m^{5/6}$, there are  positive constants $\mathtt{C}_1$ and $\mathtt{C}_2$ so that
        \begin{align*}
			\left|\frac{1}{a}\mathcal{B}_{1-\eps_1,\eps_2}(a^{-1},n-x_1,n+x_2,n-y_1,n+y_2 \right|&\leq  \mathtt{C}_1 e^{-\mathtt{C}_2 n}, \\
			\left|\frac{1}{a}\mathcal{B}_{\eps_1,1-\eps_2}(a^{-1},n+x_1,n-x_2,n+y_1,n-y_2) \right| &\leq  \mathtt{C}_1 e^{-\mathtt{C}_2 n},\\
			\left|\mathcal{B}_{1-\eps_1,1-\eps_2}(a,n-x_1,n-x_2,n-y_1,n-y_2) \right| &\leq  \mathtt{C}_1 e^{-\mathtt{C}_2 n}.
        \end{align*}
		
	\end{lem}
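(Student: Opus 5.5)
The plan is a saddle point analysis of the three reflected terms in \cref{thm:prev:Kinverse}, following \cite[Lemma 3.6]{CJ16}. Each term is an instance of $\mathcal{B}_{\eps_1,\eps_2}$, so I would write it in the $\omega$-variable form \eqref{eq:prev:B}, with integrand
\[
\frac{V(\omega_1,\omega_2)}{\omega_2-\omega_1}\frac{H_{\cdot}(\omega_1)}{H_{\cdot}(\omega_2)}.
\]
Here $H$ is evaluated at the reflected coordinates: for instance $n-x_1 = n - 4m\eta_1 + O(m^{5/6})$ and $n+x_2 = n+4m\eta_1+O(m^{5/6})$ for the first term. By \eqref{eq:prev:H}, each ratio is $\exp\bigl(2m[\tilde g(\omega_1)-\tilde g(\omega_2)]\bigr)$ times a factor $e^{O(m^{5/6})}$. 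The exponent $\tilde g$ is of the form \eqref{eq:prev:gxi12} with parameters $(\xi_1,\xi_2)$ replaced by reflected ones such as $(-\eta_1,\eta_1)$, $(\eta_1,-\eta_1)$ or $(-\eta_1,-\eta_1)$. For the two $\mathcal{B}(a^{-1},\cdot)$ terms, $c=a/(1+a^2)$ is invariant under $a\mapsto a^{-1}$, so $G$ and hence $\tilde g$ have the same form; only $V$ changes, and it stays bounded on the contours.

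The key step is to show that the reflected parameter point lies in a region where one can choose radii $p<1<1/p$ (or deform to suitable contours) with
\[
\max_{|\omega_1|=r_1}\mathrm{Re}\,\tilde g(\omega_1)-\min_{|\omega_2|=r_2}\mathrm{Re}\,\tilde g(\omega_2)\le -\kappa<0,
\]
uniformly for $\eta_1,\eta_2$ in compact subsets of $(-1,0)$. Heuristically, reflecting a third-quadrant diagonal point to the anti-diagonal or to the first quadrant puts it "outside" the region where the $e^{2m g}$ factor in the generating function has balanced growth. In CJ16 terms this corresponds to the frozen-boundary side where $\mathcal{B}$ is exponentially small. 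I would verify it on $|\omega|=1$ using the explicit $G$: on the unit circle $|G(\omega)G(\omega^{-1})|$ and the sign of $\mathrm{Re}\log G$ are computable. I would then perturb the radius slightly, keeping track of the branch cuts on $\mathrm{i}\R$.

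Next I would handle $\omega_1\neq\omega_2$ versus the diagonal pole. If the contours must cross to reach the favorable radii, the residue at $\omega_1=\omega_2$ gives a single integral $\int V(\omega,\omega)\,H(\omega)/H(\omega)\,d\omega$. In the reflected coordinates this has $\exp\bigl(2m(\tilde g_x-\tilde g_y)\bigr)$ with $x$ and $y$ reflected to different points only when $\eta_1\ne\eta_2$. I would check that this also decays exponentially, or that no crossing is needed because the nested ordering $|\omega_1|<|\omega_2|$ is already favorable. Finally, the $e^{O(m^{5/6})}$ corrections and the polynomial prefactors are absorbed, giving the bound $\mathtt{C}_1e^{-\mathtt{C}_2 n}$.

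The main obstacle is the strict sign of $\mathrm{Re}\,\tilde g$ on a pair of circles, uniformly in $\eta$. This is essentially a case analysis of the real-variable behavior of $\log G$ along $|\omega|=r$ near $r=1$. I would first try the radii $r_1=p$ and $r_2=1/p$ with $p$ close to $\sqrt{2c}$ or close to $1$, and use $G(-\omega)=-G(\omega)$ to reduce to the first quadrant.
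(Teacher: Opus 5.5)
There is a genuine gap, and it sits exactly at the step you flagged: for the two terms $\mathcal{B}(a^{-1},\cdot)$, no pair of circles works in general. For the first term, the reflected phase has parameters $(\xi_1,\xi_2)=(\xi,-\xi)$ with $\xi=-\eta_1\in(0,1)$; the second term gives $(-\xi,\xi)$. Hence
\[
\mathrm{Re}\,\tilde g(\omega)=\log|\omega|\pm\xi\,\phi(\omega),\qquad \phi(\omega):=-\log|G(\omega)G(\omega^{-1})|>0,\qquad \phi(\omega)=\phi(\omega^{-1})=\phi(\bar\omega).
\]
The unit circle does not help. There $\mathrm{Re}(\omega\tilde g'(\omega))=1$, but $\mathrm{Re}\,\tilde g=\pm\xi\phi(e^{\mathrm{i}\theta})$ is not constant, since $\phi(1)\neq\phi(\mathrm{i})$. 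So thin annuli around $|\omega|=1$ give $\max-\min\approx\xi\,\mathrm{osc}\,\phi>0$. Separating the radii gains at most $\log(1/(2c))$, which tends to $0$ as $a\to 1$, while the oscillation of $\phi$ does not.

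Concretely, take $a=1/2$ and $\eta_1=\eta_2=\xi_c$, a case the paper needs.
\begin{itemize}
\item On $\R_+$, $\mathrm{Re}\,\tilde g$ is increasing, with $\mathrm{Re}\,\tilde g(\sqrt{2c})\approx 0.32$.
\item On $\mathrm{i}(\sqrt{2c},1/\sqrt{2c})$, it is maximized at the critical point $\mathrm{i}t^*$, the root of $1+\xi(\theta(t)-\theta(t^{-1}))=0$, with value $\approx 0.28$.
\end{itemize}
Therefore $\max_{|\omega_1|=r_1}\mathrm{Re}\,\tilde g\ge \mathrm{Re}\,\tilde g(r_1)>\mathrm{Re}\,\tilde g(\mathrm{i}r_2)\ge\min_{|\omega_2|=r_2}\mathrm{Re}\,\tilde g$ for every admissible pair of radii.

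Your fallback of crossing the contours does not rescue this. The residue at $\omega_1=\omega_2$ has exponent with real part $\pm 2m(\xi_x-\xi_y)\phi(\omega)$, which has a fixed sign. So for $\eta_1\neq\eta_2$ its integrand is exponentially large in one of the two orderings.

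Your plan does work for the third term. There the parameters are $(\xi,\xi)$, $\mathrm{Re}\,\tilde g\equiv 0$ on $|\omega|=1$, and $\mathrm{Re}(\omega\tilde g')\ge 1$ there, so radii $1\mp\delta$ suffice.

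What is missing is a contour choice adapted to the branch cuts rather than circles. For $\eta_1=\eta_2$, collapse $\omega_1$ onto the inner cut $\mathrm{i}[-\sqrt{2c},\sqrt{2c}]$ and $\omega_2$ onto the outer cuts; there $|G(\omega_1)|=|G(\omega_2^{-1})|=1$. Write $\omega_1=\mathrm{i}t$ and $\omega_2=\mathrm{i}/t'$ with $t,t'\in(0,\sqrt{2c}]$. The exponent becomes $[\log t\pm\xi\phi(\mathrm{i}t)]+[\log t'\mp\xi\phi(\mathrm{i}t')]$. Since $\tfrac{d}{dt}\phi(\mathrm{i}t)=-1/(t\sqrt{1-2ct^2})$, both brackets increase when $\xi<\sqrt{1-4c^2}$, which holds near $\xi_c$. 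This gives the bound $\log(2c)<0$, attained at the cut endpoints, where the $\phi$-terms cancel.

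Outside such a range, and for $\eta_1\neq\eta_2$, you need a genuine steepest-descent deformation through the critical points on the imaginary axis, plus a strict comparison of the resulting values. This is the input the paper takes from \cite[Lemma 3.6]{CJ16} rather than reproving.

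Note also that \cref{cor:prev:KtoB}, and hence \eqref{eq:prev:B}, is stated only for negative centered coordinates, whereas the reflected terms have positive ones. You must therefore re-derive the $\omega$-representation from \eqref{eq:prev:B:first} before using it.
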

	The statement in \cite[Lemma 3.6]{CJ16} differs slightly in that the vertices $x$ and $y$ are at the same asymptotic  coordinate (that is, $\eta_1=\eta_2$) and that 
	$\overline{x}_1,\overline{x}_2, \overline{y}_1,\overline{y}_2$ is at most order $m^{2/3}$. However, it follows from the proof of the lemma given there that we can immediately extend to the conditions given in the above lemma. 
	
	Next, set
	\begin{equation}
		\begin{split} \label{eq:prev:Q}
			&Q_{\gamma_1,\gamma_2}^{\eps_1,\eps_2} \left( \omega_1,\omega_2 \right) = (-1)^{\eps_1 +\eps_2 +\eps_1 \eps_2 +\gamma_1 (1+\eps_2) +\gamma_2(1+\eps_1) } \\&
			\times s\left( G\left( \omega_1 \right)\right)^{\gamma_1}   s \left( G\left( \omega_2^{-1} \right)\right)^{\gamma_2} G\left( \omega_1 \right) ^{\eps_1} G\left( \omega_2^{-1}  \right)^{\eps_2} \mathtt{x}^{\eps_1,\eps_2}_{\gamma_1,\gamma_2} \left( a, \omega_1 , \omega_2^{-1}\right),
		\end{split}
	\end{equation}
    where notation is as in \eqref{eq:prev:s} and \eqref{eq:prev:x}. The following corollary from \cite{CJ16} expresses the integrals in \cref{thm:prev:Kinverse} in terms of the $\omega$-coordinates. 
    
	\begin{cor}[Part of Corollary 2.4, \cite{CJ16}] \label{cor:prev:KtoB}
		For $n=4m$, let $x=(x_1,x_2) \in \mathtt{W}_{\eps_1}$ and $y=(y_1,y_2) \in \mathtt{B}_{\eps_2}$ with $\eps_1,\eps_2 \in \{0,1\}$.  Suppose that  $-n < x_1,x_2,y_1,y_2 < 0$, $\sqrt{2c}<p<1$ and  $Q^{\eps_1,\eps_2}_{\gamma_1,\gamma_2}$ is as given in~\eqref{eq:prev:Q}. We have
		\begin{equation}
			\begin{split}
				~\label{cor:eq:prev:KtoB1}
				&\mathcal{B}_{\eps_1,\eps_2} (a,n+x_1,n+x_2,n+y_1,n+y_2) =  \frac{ \mathrm{i}^{\frac{x_2-x_1+y_1-y_2}{2}}}{(2\pi \mathrm{i})^2}  \int_{\Gamma_{p}}\frac{\mathrm{d} \omega_1}{\omega_1} \int_{\Gamma_{1/p}}  \mathrm{d} \omega_2 \\ & \times\frac{\omega_2  }{\omega_2^2-\omega_1^2}  \frac{ H_{x_1+1,x_2}(\omega_1)}{H_{y_1,y_2+1}(\omega_2)} \sum_{\gamma_1,\gamma_2=0}^1 Q_{\gamma_1,\gamma_2}^{\eps_1,\eps_2}(\omega_1 ,\omega_2),
			\end{split}
		\end{equation}
		
			\end{cor}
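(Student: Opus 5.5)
This corollary is a change of variables applied to the double contour integral \eqref{eq:prev:B:first} of \cref{thm:prev:Kinverse}. The plan is to substitute $u_j\mathrm{i}=G(\omega_j^{\pm1})$ in the two $u$-integrals and rewrite every factor in terms of $\omega_1,\omega_2$.

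The substitutions are $u_1\mathrm{i}=G(\omega_1)$ and $u_2\mathrm{i}=G(\omega_2^{-1})$, with the signs fixed so that the branch relation $G(1/\omega)=-\mu(-u\mathrm{i})/(c(u+u^{-1}))\mathrm{i}$ recorded after \eqref{eq:prev:mu_and_s} applies. The steps, in order, are:
\begin{enumerate}[label=\arabic*.]
\item Express the $F$-factors using \cite[Lemma 2.2]{CJ16}, which gives $F_s(\mathrm{i}\omega/\sqrt{2c})$ as $\mathrm{i}^{|s|}G(\omega^{-1})^{|s|}$ divided by $(1+a^2)\omega\sqrt{\omega^{-2}+2c}$. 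Here $\nu(u)$ is identified with $\mathrm{i}\omega/\sqrt{2c}$. Since $-n<x_2,y_1<0$, the indices $(n+x_2)/2$ and $(n+y_1)/2$ are nonnegative, so the absolute values are harmless.
\item Check that $\frac{1}{2c}u(2-\mu(-\mathrm{i}u))$ becomes $\omega$ or $\omega^{-1}$ (up to sign). Then the $m$-th power produces the $\omega_1^{2m}\omega_2^{-2m}$ in $H$, and the powers $u_1^{-(n+x_1-1)/2}$, $u_2^{-(n+y_2-1)/2}$ combine with the $G$-powers from step 1 into $H_{x_1+1,x_2}(\omega_1)/H_{y_1,y_2+1}(\omega_2)$. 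The leftover powers of $\mathrm{i}$ and the parity bookkeeping should collect into $\mathrm{i}^{(x_2-x_1+y_1-y_2)/2}$.
\item Compute the Jacobians $\mathrm{d}u/u$ in terms of $\omega$. One expects $\mathrm{d}u_1/u_1=-\mathrm{d}\omega_1/\sqrt{\omega_1^2+2c}$, and similarly for $u_2$.
\item Use the formula for $Y^{\eps_1,\eps_2}_{\gamma_1,\gamma_2}$ in terms of $\mathtt{y}$, from \cref{sec:appendix:Y}. Together with the square-root factors from steps 1 and 3, this should produce $\mathtt{x}^{\eps_1,\eps_2}_{\gamma_1,\gamma_2}$ from \eqref{eq:prev:x} and hence $Q^{\eps_1,\eps_2}_{\gamma_1,\gamma_2}$ from \eqref{eq:prev:Q}.
\item Identify the remaining rational factor, expected to be $(1-\omega_1^2\omega_2^{-2})^{-1}$ up to normalization, with the kernel $\omega_2/(\omega_2^2-\omega_1^2)$ multiplied by $\mathrm{d}\omega_2$.
\end{enumerate}

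For the contours, $u\mapsto G$ maps the circle $\Gamma_r$ with $a<r<1/a$ onto a closed curve in the $\omega$-plane avoiding the cuts $\mathrm{i}[-\sqrt{2c},\sqrt{2c}]$ and $\mathrm{i}(|t|\ge1/\sqrt{2c})$. Since $G(-\omega)=-G(\omega)$, this curve winds once around the inner cut. By analyticity of the integrand off the cuts it can be deformed to $\Gamma_p$ with $\sqrt{2c}<p<1$, provided no poles are crossed, and the $\omega_2$ contour correspondingly to $\Gamma_{1/p}$. The condition $p<1<1/p$ keeps the pole at $\omega_2^2=\omega_1^2$ on the correct side; this relative position is exactly what separates \eqref{eq:prev:B} from $\tilde{\mathcal B}$.

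I expect the main obstacle to be bookkeeping rather than ideas. The hard parts are getting the branch choices of $\sqrt{\omega^2+2c}$ and $s(G(\omega))$ in \eqref{eq:prev:s} consistent with $s(u)=1-\mu(u)$ and with the sign in \eqref{eq:prev:Q}, and checking that the global constant matches. I would first verify these identities numerically for one value of $a$ and small $m$, then do the symbolic check. Since the statement is quoted from \cite[Corollary 2.4]{CJ16}, it would also be legitimate to cite that derivation and only verify that the shifted index conventions here ($x_1+1$, $y_2+1$ in $H$) agree with theirs.
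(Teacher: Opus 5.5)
Your route is the derivation behind \cite[Corollary 2.4]{CJ16}, which the paper quotes without reproducing. It substitutes $u_1\mathrm{i}=G(\omega_1)$ and $u_2\mathrm{i}=G(\omega_2^{-1})$ in \eqref{eq:prev:B:first}, evaluates the $F$-factors by \cite[Lemma 2.2]{CJ16}, and deforms the image contours to $\Gamma_p,\Gamma_{1/p}$. However, step 2 is wrong as stated, and the bookkeeping would not close with it.

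With $u\mathrm{i}=G(\omega)$ one has $s(-\mathrm{i}u)=\omega\sqrt{\omega^{-2}+2c}$ and $1+\omega\sqrt{\omega^{-2}+2c}=-\sqrt{2c}\,\omega/G(\omega^{-1})$. Hence $u(2-\mu(-\mathrm{i}u))=\mathrm{i}\sqrt{2c}\,\omega G(\omega)/G(\omega^{-1})$, which is not a constant multiple of $\omega$. Also, the prefactor $\frac{1}{4c^2}$ multiplies the product over both variables, not each variable separately. The correct identity is
\[
\frac{1}{2c}\,u_1u_2\bigl(2-\mu(-\mathrm{i}u_1)\bigr)\bigl(2-\mu(-\mathrm{i}u_2)\bigr)=-\frac{\omega_1}{\omega_2}\,T_G(\omega_1,\omega_2),
\]
so the $m$-th power equals $(\omega_1/\omega_2)^{2m}\,T_G(\omega_1,\omega_2)^{2m}$.

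The factor $T_G^{2m}$ is what removes the $n/2=2m$ parts of the exponents in $u_1^{-(n+x_1-1)/2}F_{(n+x_2)/2}(\nu(u_1))$ and $u_2^{-(n+y_2-1)/2}F_{(n+y_1)/2}(\nu(u_2))$. Together with $(\omega_1/\omega_2)^{2m}$, and up to powers of $\mathrm{i}$ and the denominators of $F$, this leaves $G(\omega_1)H_{x_1+1,x_2}(\omega_1)$ and $G(\omega_2^{-1})/H_{y_1,y_2+1}(\omega_2)$. The spare factor $G(\omega_1)G(\omega_2^{-1})$ is exactly the prefactor of $\mathtt{x}^{\eps_1,\eps_2}_{\gamma_1,\gamma_2}(\omega_1,\omega_2^{-1})$ in $Q$. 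With your version of step 2, $n$-dependent powers of $G(\omega_j^{\pm1})$ would survive. These cannot match $H$, whose $G$-exponents involve only $x_j,y_j$.

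A smaller point concerns the contours. Since $|G|<1$ off the slit $\mathrm{i}[-\sqrt{2c},\sqrt{2c}]$, the substitution $u=-\mathrm{i}G(\omega)$ only parametrizes circles $\Gamma_r$ with $r<1$. The image of $\Gamma_r$ is the ellipse $\omega=\mathrm{i}\sqrt{c/2}\,(u+u^{-1})$ with foci $\pm\mathrm{i}\sqrt{2c}$. Take $r\in(a,1)$ close to $1$, so this ellipse hugs the slit and $\omega_2$ runs over its inversion. Then $|\omega_1|<1<|\omega_2|$ holds throughout the deformation to $\Gamma_p,\Gamma_{1/p}$, and the pole at $\omega_2^2=\omega_1^2$ is never crossed.
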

	
	\begin{proof}[Proof of \cref{lem:switchcontoursBtilde}]
    We have that \eqref{eq:prev:B} is \cite[Equation (3.11)]{CJ16} and we provide a short derivation. 
		The starting point for the formula for $K^{-1}_{a,1}$ is given in \cref{thm:prev:Kinverse}. Using \cref{lem:prev:termsdecay}, we get \eqref{E:first-eq-10.3} in \cref{lem:switchcontoursBtilde}.  We can use partial fractions on \eqref{cor:eq:prev:KtoB1} and a change of variables to get \eqref{eq:prev:B}, which was detailed in \cite[Corollary 2.4 and Eq. (3.11)]{CJ16}.  
		To get \eqref{eq:lem:switchcontours}, we switch contours in \eqref{eq:prev:B} which picks up a single residue at $\omega_2=\omega_1$.  \cref{E:KCS} is given in \cite[Lemma 3.3]{CJ16} while the last statement is \cite[Lemma 2.5]{CJ16}.
	\end{proof}

	\begin{proof}[Proof of \cref{L:Kinv:rotationestimate}]
		This lemma follows from the formulas given in \cite[Corollary 2.4]{CJ16} and using the same steps used in the proof of \cref{lem:prev:termsdecay}; see the proof of \cite[Lemma 3.6]{CJ16}.
	\end{proof}

	\subsection{Proof of \cref{thm:prev:KinversearoundRS}}
    \label{subsec:thm:KinversearoundRS}
	
	Here, we provide the proof of \cref{thm:prev:KinversearoundRS}.  We split into the four cases given in the statement of the theorem.  In each case, we ignore the integer parts as they are not important.  Before the proof of each case, we provide the key lemmas and their proofs. We also need the following lemma, which is used for all the cases given in \cref{thm:prev:KinversearoundRS}. 
	
	\begin{lem} \label{asym:lem:imaginary}
		The behavior of  $\mathrm{Im}\, g_{\xi_1,\xi_2} (\omega)$ for $\omega \in \partial \mathbb{H}_+$ is given by
		\begin{enumerate}
			\item $\mathrm{Im}\, g_{\xi_1,\xi_2} ( \mathrm{i}t)$ decreases from $(1-2\xi_1+3 \xi_2) \frac{\pi}{2}$ to $(1-\xi_1+3\xi_2 ) \frac{\pi}{2}$ for $t$ increasing in $(0,\sqrt{2c})$,
			\item $\mathrm{Im}\, g_{\xi_1,\xi_2} ( \mathrm{i}t)=(1-\xi_1+3\xi_2) \frac{\pi}{2}$ for $t \in(\sqrt{2c},1/\sqrt{2c})$,
			\item $\mathrm{Im}\, g_{\xi_1,\xi_2} ( \mathrm{i}t)$ increases from $(1-\xi_1+3\xi_2 ) \frac{\pi}{2}$ to $(1 -\xi_1+2\xi_2) \frac{\pi}{2}$ for $t$ increasing in $(1/\sqrt{2c},\infty)$,
			\item $\mathrm{Im}\, g_{\xi_1,\xi_2} ( t)= 0$ for $t \in (0,\infty)$,
			\item  $\mathrm{Im}\, g_{\xi_1,\xi_2} (\omega)$ increases from $0$ to $(1-2\xi_1+3\xi_2 ) \frac{\pi}{2}$  depending on the angle of approach as $\omega$ tends to 0,
			\item  $\mathrm{Im}\, g_{\xi_1,\xi_2} (\omega)$ decreases from $(1-\xi_1+2\xi_2 ) \frac{\pi}{2}$  to $0$ depending on the angle of approach as $\omega$ tends to infinity. 
			
		\end{enumerate}
		
	\end{lem}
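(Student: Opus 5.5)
We compute $\mathrm{Im}\, g_{\xi_1,\xi_2}$ directly from \eqref{eq:prev:gxi12}, namely $\mathrm{Im}\, g = \arg \omega - \xi_1 \arg G(\omega) + \xi_2 \arg G(\omega^{-1})$. The arguments are tracked along the branch of $\sqrt{\omega^2+2c}$ fixed before \eqref{eq:def:G}, with continuous choices of $\log$ on $\mathbb{H}_+$ matched to the conventions of \cite{CJ16}. The boundary $\partial\mathbb{H}_+$ consists of the positive real axis and the positive imaginary axis. The latter splits at the branch points $\mathrm{i}\sqrt{2c}$ and $\mathrm{i}/\sqrt{2c}$ (note $\omega\mapsto\omega^{-1}$ sends $\mathrm{i}t$ to $-\mathrm{i}/t$, so the cut of $G(\omega^{-1})$ sits at $\mathrm{i}[1/\sqrt{2c},\infty)$). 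We evaluate each of the three terms on each piece.

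\emph{Real axis (statement 4).} For $t>0$ we have $\sqrt{t^2+2c}>t$, so $G(t)=(t-\sqrt{t^2+2c})/\sqrt{2c}$ is a negative real number, and likewise $G(t^{-1})<0$. With the chosen logarithm, $\arg G(t)$ and $\arg G(t^{-1})$ are constants on $(0,\infty)$. We fix the additive normalisation of $\log G$ (as is implicit in \cite{CJ16}) so that $\mathrm{Im}\, g(t)=0$ there, equivalently $\log G(t)$ is taken real on this ray. This normalisation then determines every constant below.

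\emph{Imaginary axis (statements 1--3).} Write $\omega=\mathrm{i}t$. For $t\in(\sqrt{2c},1/\sqrt{2c})$, the displayed formulas give $\sqrt{(\mathrm{i}t)^2+2c}=\mathrm{i}\sqrt{t^2-2c}$, hence $G(\mathrm{i}t)=\mathrm{i}(t-\sqrt{t^2-2c})/\sqrt{2c}$, which lies on the positive imaginary axis. Similarly $G((\mathrm{i}t)^{-1})=G(-\mathrm{i}/t)$ is purely imaginary, using $\sqrt{\cdot}=-\mathrm{i}\sqrt{t^{-2}-2c}$. So every argument is a constant multiple of $\pi/2$, and summing gives the constant $(1-\xi_1+3\xi_2)\pi/2$ of statement 2, with the multiples fixed by continuation from the real axis through $\mathbb{H}_+$.

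On $(0,\sqrt{2c})$ the quantity $t^2-2c$ is negative, so $\sqrt{\omega^2+2c}$ is real. Then $G(\mathrm{i}t)=(\mathrm{i}t-\sqrt{2c-t^2})/\sqrt{2c}$ has modulus one, and its argument varies monotonically from $\pi$ to $\pi/2$. Meanwhile $G(\omega^{-1})$ stays on its constant ray, since $1/t>1/\sqrt{2c}$ puts it in the regime handled as above. Consequently $\mathrm{Im}\, g$ moves monotonically by exactly $\xi_1\pi/2$, which gives statement 1. The sign of the monotonicity is read off from $\xi_1<0$, or equivalently by checking $\frac{d}{dt}\mathrm{Im}\, g(\mathrm{i}t)=\mathrm{Re}\,(\omega g'(\omega))/t$ via \eqref{eq:def:gprime}. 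Statement 3 is the mirror case under $\omega\mapsto 1/\omega$: now $G(\omega^{-1})$ traverses the unit circle while $G(\omega)$ is constant, and $\mathrm{Im}\, g$ moves by $-\xi_2\pi/2$.

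\emph{Limits at $0$ and $\infty$ (statements 5--6).} As $\omega\to0$ we have $\arg\omega\in[0,\pi/2]$ depending on the approach angle, $G(\omega)\to -1$, and $G(\omega^{-1})\approx -\sqrt{2c}/(2\omega)\cdot(\dots)$ behaves like a multiple of $\omega^{-1}$. So $\mathrm{Im}\, g$ is an affine function of the approach angle. It interpolates between the real-axis value $0$ and the imaginary-axis endpoint value $(1-2\xi_1+3\xi_2)\pi/2$. The case $\omega\to\infty$ is symmetric, interpolating between $(1-\xi_1+2\xi_2)\pi/2$ and $0$.

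\emph{Main obstacle.} The delicate step is the consistent bookkeeping of the $2\pi$ and $\pi$ multiples of the arguments, so that the four constants agree across the branch points. We handle this by continuing $\log G$ along a path in $\mathbb{H}_+$ starting from the real axis and checking that the pieces match at $\mathrm{i}\sqrt{2c}$ and $\mathrm{i}/\sqrt{2c}$. As a cross-check, we use the corresponding lemma in \cite[Section 3]{CJ16} for the diagonal case $\xi_1=\xi_2$.
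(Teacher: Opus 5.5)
Your route (computing $\mathrm{Im}\,g=\arg\omega-\xi_1\arg G(\omega)+\xi_2\arg G(\omega^{-1})$ piece by piece on $\partial\mathbb{H}_+$) is the same as the paper's, which simply defers to the diagonal computation in \cite[Lemma 3.8]{CJ16}. Your treatment of the imaginary axis, and hence items 1--3, is right. The gap is the normalisation step. As you note, $G(t)<0$ and $G(t^{-1})<0$ for $t>0$, so $\log G(t)$ cannot be taken real. Your own later choices already fix the branch. $G(\omega)$ maps the open first quadrant into the second quadrant of the unit disc, and $G(\omega^{-1})$ maps it into the third. So the values $\arg G(\mathrm{i}t)\to\pi$ as $t\to0^+$ and $\arg G(-\mathrm{i}/t)=3\pi/2$, which are what reproduce the constants in items 1--3, force by continuity $\arg G(t)=\arg G(t^{-1})=\pi$. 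That gives $\mathrm{Im}\,g(t)=\pi(\xi_2-\xi_1)$.

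No other branch rescues item 4. Continuous branches on the closed quadrant minus the origin differ by constants in $2\pi\mathbb{Z}$, so differences of $\mathrm{Im}\,g$ between boundary points are branch independent. Now use $\partial_\theta\,\mathrm{Im}\,g(re^{\mathrm{i}\theta})=\mathrm{Re}(\omega g'(\omega))$ together with \eqref{eq:def:gprime}. This quantity tends to $1+\xi_1$ as $r\to\infty$ and to $1+\xi_2$ as $r\to0$, uniformly in $\theta$. Hence the limits of $\mathrm{Im}\,g(\mathrm{i}t)$ at $t=\infty$ and at $t=0^+$ exceed the real-axis value by $(1+\xi_1)\pi/2$ and $(1+\xi_2)\pi/2$ respectively. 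Items 3, 4 and items 1, 4 as written demand gaps of $(1-\xi_1+2\xi_2)\pi/2$ and $(1-2\xi_1+3\xi_2)\pi/2$. These agree with the true gaps iff $\xi_1=\xi_2$. For example, with $\xi_1=-0.3$ and $\xi_2=-0.2$ no branch gives $\mathrm{Im}\,g(t)=0$, since $\pi\bigl(0.3(2k_1+1)-0.2(2k_2+1)\bigr)\neq0$ for all integers $k_1,k_2$.

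What your computation actually establishes is items 1--3 verbatim, and items 4--6 with $0$ replaced by $\pi(\xi_2-\xi_1)$. Writing $\theta=\arg\omega\in[0,\pi/2]$, near $0$ one gets $\mathrm{Im}\,g\to(1+\xi_2)\theta+\pi(\xi_2-\xi_1)$, and near $\infty$ one gets $\mathrm{Im}\,g\to(1+\xi_1)\theta+\pi(\xi_2-\xi_1)$. So the lemma as stated is only correct on the diagonal, which is the case treated in \cite{CJ16}. That is exactly why your proposed diagonal cross-check cannot detect the problem. You should state and prove the corrected version. It suffices downstream: $|\xi_1-\xi_2|<m^{-1/6}$, and the saddle levels used later stay bounded away from $\pi(\xi_2-\xi_1)$, for instance $\frac{\pi}{2}(1-\xi_1+3\xi_2+2X)$ in the proof of \cref{lem:asym:contourssm}. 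So the level-line arguments are unaffected.

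Two smaller slips:
\begin{enumerate}
\item You wrote $\frac{d}{dt}\mathrm{Im}\,g(\mathrm{i}t)=\mathrm{Re}(\omega g'(\omega))/t$; it should be $\mathrm{Im}(\omega g'(\omega))/t$. On the middle segment $\omega g'(\omega)$ is real, which is consistent with item 2.
\item As $\omega\to0$, $G(\omega^{-1})\approx-\sqrt{2c}\,\omega/2$ is a multiple of $\omega$, not of $\omega^{-1}$, so $\arg G(\omega^{-1})\approx\pi+\arg\omega$. Your asymptotic would give slope $1-\xi_2$ in the approach angle and the wrong endpoint in item 5.
\end{enumerate}
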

	The proof of this lemma follows a very similar computation to \cite[Lemma 3.8]{CJ16}  and so we omit the proof and refer the reader there.  
	
	\begin{proof}[Proof of \eqref{thm:eq:KinversearoundRS:sm} in
		\cref{thm:prev:KinversearoundRS}] 
The case when $-\xi_c - \gamma<X,Y \leq  -\xi_c+\gamma$ (for $\gamma$ sufficiently small) follows immediately due to the choices in contours for each of the four integrals involving $\mathcal{B}_{\eps_1,\eps_2}$ having the $\omega_1$-contour contained inside the $\omega_2$-contour and taking a simple bound.  For the case when $m^{-1/6+\delta}\leq X,Y \leq -\xi_c - \gamma$ we need the following lemma, which is proven after the proof of \eqref{thm:eq:KinversearoundRS:sm}.

		\begin{lem}\label{lem:asym:contourssm}
			For the scaling of $x$ and $y$ given in \eqref{eq:def:sm},  we have that $\omega_{c,X}$ and $\tilde{\omega}_{c,Y}$ defined through \eqref{eq:def:omegacX} are such that 
			$\omega_{c,X}, \tilde{\omega}_{c,Y} \in \mathrm{i}(\sqrt{2c},1/\sqrt{2c})$.  Then, we have that 
			\begin{itemize}
				\item $\mathrm{Re}\, g_{\xi_1+X,\xi_2+X}(\omega_{c,X})<0$,
				\item $\mathrm{Re}\, g_{\tilde{\xi}_1+Y,\tilde{\xi}_2+Y}(\tilde{\omega}_{c,Y})>0$,
				\item $g''_{\xi_1+X,\xi_2+X}(\omega_{c,X})<0$,
				\item $g''_{\tilde{\xi}_1+Y,\tilde{\xi}_2+Y}(\tilde{\omega}_{c,Y})>0$,
				\item if $X=Y$ and $(\tilde{\xi}_1,\tilde{\xi}_2)=(\xi_1,\xi_2)$, then $|\omega_{c,X}| <|\tilde{\omega}_{c,Y}|$.
			\end{itemize}
			The curve of steepest descent of $\mathrm{Re}\,g_{\xi_1+X,\xi_2+X}$ goes from $\omega_{c,X}$, perpendicular to the imaginary axis, to 0 and a curve of steepest ascent of $\mathrm{Re}\,g_{\tilde{\xi}_1+Y,\tilde{\xi}_2+Y}$ from $\tilde{\omega}_{c,Y}$, along the imaginary axis, to $\infty$.  
		\end{lem}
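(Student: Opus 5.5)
The plan is to reduce all five bullet points, together with the contour description, to explicit one-variable computations along the imaginary segment $\mathrm{i}(\sqrt{2c},1/\sqrt{2c})$, and then read off the steepest descent and ascent curves from \cref{asym:lem:imaginary}. First we locate the critical points. Parametrize $\omega=\mathrm{i}t$ with $t\in(\sqrt{2c},1/\sqrt{2c})$. By the branch conventions, $\sqrt{(\mathrm{i}t)^2+2c}=\mathrm{i}\sqrt{t^2-2c}$ and $\sqrt{(\mathrm{i}t)^{-2}+2c}=-\mathrm{i}\sqrt{t^{-2}-2c}$, so \eqref{eq:def:gprime} becomes the real equation
\[
\omega g'_{\xi_1,\xi_2}(\omega)=1+\xi_1\theta(t)-\xi_2\,\theta(1/t).
\]
At $X=0$ this function has a double zero at $t=1$, which is the rough-smooth point. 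For $X\ge m^{-1/6+\delta}$, with $\xi_1+X,\xi_2+X<0$ and the perturbation $|\xi_1-\xi_2|<m^{-1/6}$ dominated by $X$, we can analyse the real function $f_X(t)=1+(\xi_1+X)\theta(t)-(\xi_2+X)\theta(1/t)$ directly. Since $\theta\to\infty$ at both endpoints of the interval and the coefficients have the right signs, $f_X$ tends to $-\infty$ at one end and $+\infty$ at the other. Adding $X>0$ lifts the double zero into two simple real zeros, and these are the roots $\omega_{c,X}$ and (after the symmetry $\omega\mapsto -1/\omega$ composed with conjugation) $\tilde\omega_{c,Y}$. By \cite[Lemma 3.7]{CJ16} there are at most four roots, so these exhaust them. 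Expanding to second order around $t=1$ gives $|\omega_{c,X}|-1\asymp -\sqrt{X}$ and $|\tilde\omega_{c,Y}|-1\asymp \sqrt{Y}$, which yields the ordering $|\omega_{c,X}|<|\tilde\omega_{c,Y}|$ when $X=Y$.

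Next we obtain the sign statements. The signs of $g''$ follow from the sign of $f_X'$ at the respective zeros, using $g''=f_X'/(\omega\cdot\mathrm{d}\omega/\mathrm{d}t)$ up to the factor $\mathrm{i}$ that converts the derivative along the imaginary axis. This gives opposite signs at the two zeros, matching a local maximum and minimum of $\mathrm{Re}\,g$ restricted to the axis. For $\mathrm{Re}\,g$ itself we integrate. We have
\[
\mathrm{Re}\,g_{\xi_1+X,\xi_2+X}(\mathrm{i}t)=\log t-(\xi_1+X)\log|G(\mathrm{i}t)|+(\xi_2+X)\log|G(1/(\mathrm{i}t))|,
\]
which vanishes at $t=1$ when $X=0$ because $|G(\mathrm{i})|\,|G(-\mathrm{i})|$ cancels appropriately. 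Its value at the critical point is then $\int_1^{t_c} f_X(s)\,\mathrm{d}s/s$ plus $X$ times a term of definite sign. A Taylor expansion gives $-cX^{3/2}<0$ for the $\omega_{c,X}$ root and $+cX^{3/2}>0$ for the other root. This matches \eqref{thm:eq:KinversearoundRS:smclosetoRS}. For $X$ of macroscopic size the sign persists by monotonicity in $X$: the derivative in $X$ is $-\log|G(\omega)|+\log|G(\omega^{-1})|$, which has a fixed sign on the segment.

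Finally we identify the contours. On the segment $\mathrm{Im}\,g$ is constant by \cref{asym:lem:imaginary}(2), so the imaginary axis is a level curve of $\mathrm{Im}\,g$. Through a nondegenerate critical point the two level curves are orthogonal. The ascent curve from $\tilde\omega_{c,Y}$ therefore runs along the axis, through the region $t>1/\sqrt{2c}$ where $\mathrm{Im}\,g$ is monotone by part (3), up to $\infty$. The descent curve from $\omega_{c,X}$ leaves perpendicular to the axis and must terminate where $\mathrm{Im}\,g$ equals its critical value, namely $(1-\xi_1+3\xi_2)\pi/2$. Parts (1), (4), (5) of \cref{asym:lem:imaginary} rule out termination anywhere on $\partial\mathbb{H}_+$ except by approaching $0$, and a maximum-principle argument for the harmonic function $\mathrm{Im}\,g$ in $\mathbb{H}_+$ rules out a bounded closed loop. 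The main obstacle is this global topology of the descent path: we must exclude the curve hitting the branch cut or $\infty$. We would handle it exactly as in \cite[Lemma 3.20]{CJ16}, using the boundary values of $\mathrm{Im}\,g$ and the count of at most four critical points.
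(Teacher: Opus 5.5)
Your route is to analyse the real function $f_X(t)=\mathrm{i}t\,g'_{\xi_1+X,\xi_2+X}(\mathrm{i}t)$ directly, rather than perturb the diagonal results \cite[Lemmas 3.9, 3.10]{CJ16} by continuity as the paper does. That route is workable and more self-contained, but its first step has a sign error that breaks the proof of bullets 3--4. With the stated branches, $\mathrm{i}t\cdot(-\mathrm{i})\sqrt{t^{-2}-2c}=t\sqrt{t^{-2}-2c}$, so $f_X(t)=1+(\xi_1+X)\theta(t)+(\xi_2+X)\theta(1/t)$. With your minus sign, $f_0(1)=1$ on the diagonal, so there would be no double zero at $t=1$. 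Consequently $f_X\to-\infty$ at \emph{both} ends of $(\sqrt{2c},1/\sqrt{2c})$, not $-\infty$ at one end and $+\infty$ at the other. The two zeros $t_1<1<t_2$ come from $f_X(1)\approx 2X/\sqrt{1-2c}>0$; there are no others by the four-root count and $\omega\mapsto-\omega$; and $f_X>0$ exactly on $(t_1,t_2)$. This hump shape is what assigns the signs. Since $\frac{\mathrm{d}}{\mathrm{d}t}\mathrm{Re}\,g(\mathrm{i}t)=f_X(t)/t$ and $g''(\mathrm{i}t_j)=-f_X'(t_j)/t_j$, you get $g''(\omega_{c,X})<0<g''(\tilde\omega_{c,Y})$; ``opposite signs'' alone does not decide which is which. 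The same identity gives the ordering of moduli for every $X$, not only near $t=1$. It also gives bullets 1--2 via $\mathrm{Re}\,g(\mathrm{i}t_j)=\mathrm{Re}\,g(\mathrm{i})+\int_1^{t_j}f_X(s)\,\mathrm{d}s/s$, with no Taylor expansion or monotonicity in $X$ needed. However, $\mathrm{Re}\,g_{\xi_1+X,\xi_2+X}(\mathrm{i})=(\xi_2-\xi_1)\log|G(\mathrm{i})|$ vanishes only on the diagonal. Off it, this offset (of order up to $m^{-1/6}$) is dominated by the integral (of order $X^{3/2}$) only when $X^{3/2}\gg|\xi_1-\xi_2|$. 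The paper's continuity argument glosses over the same non-uniformity.

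The contour step is where the argument genuinely fails. Steepest paths of $\mathrm{Re}\,g$ are level curves of $\mathrm{Im}\,g$, so none can run along $\mathrm{i}(1/\sqrt{2c},\infty)$, where $\mathrm{Im}\,g$ is strictly increasing by \cref{asym:lem:imaginary}(3). You also never check which of the two orthogonal level curves at a saddle is the ascent. At $\tilde\omega_{c,Y}=\mathrm{i}t_2$, $f$ changes from positive to negative, so $\mathrm{Re}\,g$ restricted to the axis has a local maximum there; equivalently, $s\mapsto g(\tilde\omega_{c,Y}+\mathrm{i}s)$ has second derivative $-g''(\tilde\omega_{c,Y})<0$ at $s=0$. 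So the imaginary axis is the \emph{descent} direction at $\tilde\omega_{c,Y}$, and the ascent curve leaves it perpendicular to the axis, just as the descent curve leaves $\omega_{c,X}$. The clause ``along the imaginary axis'' in the statement therefore contradicts bullet 4 and cannot be proved as written. It also clashes with \eqref{thmproof:aroundRSsm:B2}, whose factor $e^{-\frac12 g''(\tilde\omega_{c,Y})w_2^2}$ grows along the imaginary $w_2$-axis, and the paper's one-line justification via bullet 4 in fact yields the perpendicular direction. For the endpoints, \cite[Lemma 3.20]{CJ16} concerns the rough region and is not the right tool. Instead, note that $\mathrm{Re}\,g=(1+\xi_2+X)\log|\omega|+O(1)$ near $0$ and $\mathrm{Re}\,g=(1+\xi_1+X)\log|\omega|+O(1)$ near $\infty$. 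Hence a descent path cannot reach $\infty$ and an ascent path cannot reach $0$. By \cref{asym:lem:imaginary}, on $\partial\mathbb{H}_+$ the critical value of $\mathrm{Im}\,g$ is attained only on the closed segment $\mathrm{i}[\sqrt{2c},1/\sqrt{2c}]$ (apart from $0$ and $\infty$). Regular interior points of that segment carry no other level curve. Comparing $\mathrm{Re}\,g$ along the segment then rules out, for each curve, the other saddle and one branch point. The remaining branch point is excluded by non-crossing: for a fixed $g$, if the descent curve from the lower saddle ended at $\mathrm{i}/\sqrt{2c}$, it would enclose the start of the ascent curve from the upper saddle, and symmetrically for the ascent curve and $\mathrm{i}\sqrt{2c}$. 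This is the paper's ordering argument made precise.
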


		We assume the above lemma and proceed with the proof of \eqref{thm:eq:KinversearoundRS:sm}. We follow the saddle point analysis in \cite[Proposition 3.12]{CJ16} and refer the reader there for more details.  Our starting point is the formula given in \eqref{thmproof:aroundRSsm:B1}.   We deform the contours to the curves given in \cref{lem:asym:contourssm}, splitting the integral into a local contribution around the saddle points $\omega_{c,X}$ and $\tilde{\omega}_{c,Y}$, and a remaining contribution which is exponentially small which can be ignored; see \cite[Proposition 3.12]{CJ16}.  For the local contribution, we apply a change of variables $\omega_1=\omega_{c,X}+(2m)^{-1/2}w_1$ and $\omega_2=\tilde{\omega}_{c,Y}+(2m)^{-1/2}w_2$. Applying a Taylor expansion, the term in the first exponential in \eqref{thmproof:aroundRSsm:B1} becomes
		\begin{equation*}
			\begin{split}
				&2m g_{\xi_1+X,\xi_2+X}(\omega_1) -\log G(\omega_1) =
				2m g_{\xi_1+X,\xi_2+X}(\omega_{c,X}) -\log G(\omega_{c,X})\\
				& +\frac{1}{2}g''_{\xi_1+X,\xi_2+X}(\omega_{c,X})w_1^2+O(w_1^3m^{-1/2})\\
				&=\log H_{x_1+1,x_2}(\omega_{c,X})+\frac{1}{2}g''_{\xi_1+X,\xi_2+X}(\omega_{c,X})w_1^2+O(w_1^3m^{-1/2}).
			\end{split}
		\end{equation*}
		We have a similar formula in the $\omega_2$ variable for the term in the second exponential in \eqref{thmproof:aroundRSsm:B1}. Collecting the error terms in the above expansion means that \eqref{thmproof:aroundRSsm:B1} becomes
		\begin{equation}
			\begin{split}\label{thmproof:aroundRSsm:B2}
				&\mathcal{B}_{\eps_1,\eps_2}(a,n+x_1,n+x_2,n+y_1,n+y_2)\\&=\frac{(1+O(m^{-1/2}))}{2m}\frac{\mathrm{i}^{\frac{x_2-x_1+y_1-y_2}{2}}}{(2\pi \mathrm{i})^2} \int \frac{\mathrm{d}w_1}{\omega_{c,X}} \int \mathrm{d}w_2 \frac{H_{x_1+1,x_2}(\omega_{c,X})}{H_{y_1,y_2+1}(\tilde{\omega}_{c,Y})}\\
				&\times \frac{V_{\eps_1,\eps_2}(\omega_{c,X},\tilde{\omega}_{c,Y})e^{\frac{1}{2}g''_{\xi_1+X,\xi_2+X}(\omega_{c,X})w_1^2-\frac{1}{2}g''_{\tilde{\xi}_1+Y,\tilde{\xi}_2+Y}(\tilde{\omega}_{c,Y})w_2^2}}{\omega_{c,X} - \tilde{\omega}_{c,Y}+(2m)^{-\frac{1}{2}}(w_1-w_2)}  
			\end{split}
		\end{equation}
		where the contours are local contours around the origin (of length $m^{-\epsilon}$ for some $\epsilon>0$) with the $w_1$ and $w_2$ contours are positively oriented along the real and imaginary axis respectively, which follows from \cref{lem:asym:contourssm}. If $\omega_{c,X}=\tilde{\omega}_{c,Y}$, the above singularity is integrable. In any case and noting the signs in the exponent from \cref{lem:asym:contourssm}, we extend the contours to infinity which gives a Gaussian integral and so we obtain a bound
		\begin{equation*}
			|\mathcal{B}_{\eps_1,\eps_2}(a,n+x_1,n+x_2,n+y_1,n+y_2)| \leq \frac{\mathtt{C}_1}{\sqrt{m}} \bigg| \frac{H_{x_1+1,x_2}(\omega_{c,X})}{H_{y_1,y_2+1}(\tilde{\omega}_{c,Y})} \bigg|
		\end{equation*}
		The result then follows by using the exponential form of $H$ given in~\eqref{eq:def:logH} and third and fourth statements in \cref{lem:asym:contourssm}, and noting that $2m\mathrm{Re}\,g_{\xi_1+X,\xi_2+X}(\omega_{c,X})$ is at most $-m^{5/6+\delta}$ from the choice of scaling in \eqref{eq:def:sm}, and that $V$ is uniformly bounded on compact subsets.

		It remains to prove the assumed lemma stated above. 
		\begin{proof}[Proof of \cref{lem:asym:contourssm}]
			The proof of this lemma uses the following two lemmas from \cite{CJ16}.
			\begin{lem}[Lemma 3.9 in \cite{CJ16}]
            \label{lem:zeroes}
				The function $g_{\xi,\xi}(\omega)$ for $\omega \in \mathbb{H}_+ \cup \partial \mathbb{H}_+$ has two distinct saddle points in the interval $(\sqrt{2c},1/\sqrt{2c})\mathrm{i}$ (but not equal to $\mathrm{i}$) if and only if $-1/2 \sqrt{1-2c}<\xi<0$. These saddle points are given by $\omega_c \in (\sqrt{2c},1)\mathrm{i}$ and $-\omega_c^{-1}$. We also have $\mathrm{Re}\, g_{\xi,\xi}(\omega_c)<0$ and $\mathrm{Re}\, g_{\xi,\xi}(-\omega_c^{-1})>0$. 
			\end{lem}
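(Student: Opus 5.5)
The statement is Lemma 3.9 of \cite{CJ16}, quoted from there. Still, here is how I would verify it directly from the explicit saddle-point equation \eqref{eq:def:gprime} with $\xi_1=\xi_2=\xi$.

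\textbf{Reduction to the imaginary axis.} For $\omega=\mathrm{i}t$ with $t\in(\sqrt{2c},1/\sqrt{2c})$, the branch conventions give
\[
\sqrt{(\mathrm{i}t)^2+2c}=\mathrm{i}\sqrt{t^2-2c}
\qquad\text{and}\qquad
\sqrt{(\mathrm{i}t)^{-2}+2c}=-\mathrm{i}\sqrt{t^{-2}-2c}.
\]
Substituting these into \eqref{eq:def:gprime} makes the critical-point equation real:
\[
1+\xi\Big(\frac{t}{\sqrt{t^2-2c}}-\frac{1}{t\sqrt{t^{-2}-2c}}\Big)=0.
\]
In the notation of \eqref{eq:prev:theta} this reads $1+\xi\big(\theta(t)-\theta(1/t)\big)=0$.

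\textbf{Counting the roots.} Set $\phi(t)=\theta(t)-\theta(1/t)$. It is antisymmetric under $t\mapsto 1/t$ and vanishes at $t=1$. As $t\downarrow\sqrt{2c}$ we have $\phi\to+\infty$, and as $t\uparrow1/\sqrt{2c}$ we have $\phi\to-\infty$. I would check that $\phi$ is monotone decreasing by differentiating: $\theta'(t)=-2c(t^2-2c)^{-3/2}<0$, and the term $-\theta(1/t)$ is also decreasing. So $\phi$ is a decreasing bijection onto $\mathbb R$.

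Consequently the equation $\phi(t)=-1/\xi$ has exactly one root $t_c\in(\sqrt{2c},1/\sqrt{2c})$ whenever $\xi\ne0$. For $\xi<0$ we have $-1/\xi>0$, so $t_c<1$, giving $\omega_c=\mathrm{i}t_c\in(\sqrt{2c},1)\mathrm{i}$. The second saddle point is $-\omega_c^{-1}=\mathrm{i}/t_c$. This follows from the symmetry $\omega\mapsto-1/\omega$ of $\omega g'$, which I would verify from \eqref{eq:def:gprime} and the branch identities.

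\textbf{The threshold $\xi_c$.} The claim that the root lies in the admissible range exactly when $-\tfrac12\sqrt{1-2c}<\xi<0$ must come from the total root count. Equation \eqref{eq:def:gprime} has at most four roots off the cuts, and the other critical points sit in $\mathbb H_+$ on the unit circle. Combining the imaginary-axis roots with the unit-circle roots must not exceed four, and coalescence of $\omega_c$ with $\mathrm{i}$ (at $\xi=\xi_c$) marks the transition.

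Concretely, I would check that at $\xi=\xi_c$ the double root condition $g'=g''=0$ holds at $\omega=\mathrm{i}$. For $\xi<\xi_c$ the roots move onto the unit circle in $\mathbb H_+$, as in the rough case. Since $\phi(1)=0$, the correct monotone parametrization is really in $|\omega|$ from the double root. This is the step I expect to be the main obstacle, so I would instead follow \cite[Lemma 3.9]{CJ16}: rationalize $\omega g'=0$ into a quartic in $\omega^2$ and read off the discriminant, which vanishes at $\xi=\xi_c$.

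\textbf{Sign of $\mathrm{Re}\,g$.} Write $R(t)=\mathrm{Re}\,g_{\xi,\xi}(\mathrm{i}t)=\log t-\xi\log|G(\mathrm{i}t)|+\xi\log|G(1/(\mathrm{i}t))|$. By the symmetry above, $R(1/t)=-R(t)$ on the interval, so $R(1)=0$. Its derivative is $t^{-1}\big(1+\xi\phi(t)\big)$, which is negative on $(t_c,1)$. Hence $R(t_c)=R(1)-\int_{t_c}^{1}R'>0$ is the naive sign, and I would need to recheck the conventions. The intended conclusion $\mathrm{Re}\,g(\omega_c)<0$, with the opposite sign at $-\omega_c^{-1}$ by antisymmetry, should follow from integrating $R'$ from $t=1$ once the sign conventions of $G$ on this segment are fixed carefully.
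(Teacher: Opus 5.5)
\textbf{There is a genuine gap: a sign error in the reduction to the imaginary axis, which breaks everything after it.} At $\omega=\mathrm{i}t$ the denominator of the last term in \eqref{eq:def:gprime} is $\omega\sqrt{\omega^{-2}+2c}=(\mathrm{i}t)(-\mathrm{i}\sqrt{t^{-2}-2c})=t\sqrt{t^{-2}-2c}>0$. So the saddle-point equation is
\[
1+\xi\big(\theta(t)+\theta(1/t)\big)=0,
\]
which is exactly the identity used in the proof of \cref{lem:asym:contourssm}, and not $1+\xi(\theta(t)-\theta(1/t))=0$.

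Each difficulty you hit comes from this error:
\begin{itemize}
\item Your antisymmetric $\phi$ gives exactly one root in the interval for every $\xi\neq0$. So it can neither produce two saddle points nor detect the threshold $\xi_c$. That is why you ended up deferring to unit-circle root counts and a discriminant.
\item Since $\phi(1/t)=-\phi(t)$, the point $\mathrm{i}/t_c=-\omega_c^{-1}$ is not a root of your equation at all (there $1+\xi\phi=2$). The second saddle point therefore does not follow from your computation.
\item Your claimed antisymmetry $R(1/t)=-R(t)$ is incompatible with your own formula $R'(t)=t^{-1}(1+\xi\phi(t))$.
\item Even within your framework, $0<\phi<-1/\xi$ on $(t_c,1)$, so $1+\xi\phi>0$ there. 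The ``naive sign'' of $R'$ was also mis-evaluated.
\end{itemize}

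\textbf{The fix.} Set $\psi(t):=\theta(t)+\theta(1/t)$; with this your strategy works directly, with no discriminant or global root count. The function $\psi$ is invariant under $t\mapsto 1/t$ and tends to $+\infty$ at both ends of $(\sqrt{2c},1/\sqrt{2c})$. Moreover
\[
\psi'(t)=2c\Big(\frac{t}{(1-2ct^2)^{3/2}}-\frac{1}{(t^2-2c)^{3/2}}\Big)<0\qquad\text{on }(\sqrt{2c},1).
\]
To see the sign, raise to the power $2/3$: it reduces to $t^{8/3}-2ct^{2/3}+2ct^2-1<0$. This function is increasing in $t$ because $t^2>2c$, and it vanishes at $t=1$.

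Hence $\min\psi=\psi(1)=2/\sqrt{1-2c}$. The equation $\psi(t)=-1/\xi$ then has two distinct solutions $t_c<1<1/t_c$ iff $\xi<0$ and $-1/\xi>2/\sqrt{1-2c}$, i.e.\ iff $-\tfrac12\sqrt{1-2c}<\xi<0$. At $\xi=\xi_c$ the two solutions merge into the double root at $\mathrm{i}$, and for $\xi<\xi_c$ or $\xi\ge0$ there is none.

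For the real parts: on $(t_c,1)$ we have $\psi(s)<-1/\xi$, so $R'(s)=s^{-1}(1+\xi\psi(s))>0$ and $R(t_c)<R(1)=0$. The antisymmetry $R(1/t)=-R(t)$ is now consistent with $R'$; it follows from $G(-\omega)=-G(\omega)$. It gives $\mathrm{Re}\,g_{\xi,\xi}(-\omega_c^{-1})=-R(t_c)>0$.

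The paper gives no argument of its own here and simply cites \cite{CJ16}.
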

			\begin{lem}[Lemma 3.10 in \cite{CJ16}]\label{lem:lem310inCJ16}
				For $-\frac{1}{2} \sqrt{1-2c} < \xi <0$, choose $\omega_c$ such that $g_{\xi,\xi}'(\omega_c)=0$ with $\omega_c \in (\sqrt{2c},1)\mathrm{i}$. Then $g_{\xi,\xi}'(-\omega_c^{-1})=0$,
				\begin{equation*}
					g_{\xi,\xi}''(\omega_c)<0 \hspace{5mm}\mbox{and} \hspace{5mm} g_{\xi,\xi}''(-\omega_c^{-1})>0.
				\end{equation*}
			\end{lem}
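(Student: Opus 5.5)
The plan is a direct computation on the imaginary axis, using \eqref{eq:def:gprime} with $\xi_1=\xi_2=\xi$ and the branch conventions fixed around \eqref{eq:def:branchcutati}. Everything reduces to one explicit real function of $t$, whose sign behaviour comes from a convexity argument.

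\emph{Step 1: the second saddle point.} I would first show that the function $\omega g'_{\xi,\xi}(\omega)$ is invariant under $\omega\mapsto-\omega^{-1}$. With $w=-\omega^{-1}$, the chosen branch makes $\sqrt{w^2+2c}$ odd, so
\[
\sqrt{(-\omega^{-1})^2+2c}=-\sqrt{\omega^{-2}+2c}, \qquad \sqrt{(-\omega)^{2}+2c}=-\sqrt{\omega^2+2c}.
\]
Substituting these into \eqref{eq:def:gprime}, the term $\xi\omega/\sqrt{\omega^2+2c}$ is sent to $\xi/(\omega\sqrt{\omega^{-2}+2c})$, and vice versa. Hence $\omega g'_{\xi,\xi}(\omega)$ is invariant. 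For $\omega_c=\mathrm{i}t_c$ with $t_c\in(\sqrt{2c},1)$, the image $-\omega_c^{-1}=\mathrm{i}/t_c$ lies in $(1,1/\sqrt{2c})\mathrm{i}$, which is off the branch cuts. Therefore $g'_{\xi,\xi}(-\omega_c^{-1})=0$.

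\emph{Step 2: reduction to a real function.} For $t\in(\sqrt{2c},1/\sqrt{2c})$, the branch values in \eqref{eq:def:branchcutati} give
\[
f(t):=\mathrm{i}t\,g'_{\xi,\xi}(\mathrm{i}t)=1+\xi\big(\theta(t)+\theta(1/t)\big),
\]
with $\theta$ as in \eqref{eq:prev:theta}. This uses $\theta(1/t)=1/\sqrt{1-2ct^2}$. At a zero of $g'$, differentiating $\omega g'(\omega)$ along $\omega=\mathrm{i}t$ gives $\omega g''(\omega)=f'(t)/\mathrm{i}$. Consequently
\[
g''_{\xi,\xi}(\mathrm{i}t)=-f'(t)/t,
\]
which is real, and $f'(t)=\xi h'(t)$ where $h(t)=\theta(t)+\theta(1/t)$. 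Since $\xi<0$, the claim reduces to two sign statements: $h'<0$ on $(\sqrt{2c},1)$ and $h'>0$ on $(1,1/\sqrt{2c})$. Given these, $f'(t_c)>0$, so $g''(\omega_c)<0$. Likewise $f'(1/t_c)<0$, so $g''(-\omega_c^{-1})>0$.

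\emph{Step 3: the sign of $h'$ (main step).} I expect this to be the only real obstacle; I would handle it by convexity rather than brute force. Put $t=e^{u/2}$ and $F(u)=(1-2ce^{-u})^{-1/2}$, so that $\theta(t)=F(u)$ and $\theta(1/t)=F(-u)$. Then $h=F(u)+F(-u)$ is even in $u$. With $y=2ce^{-u}\in(0,1)$ one computes
\[
F'(u)=-\tfrac{y}{2}(1-y)^{-3/2}, \qquad F''(u)=\tfrac{y}{2}\Big[(1-y)^{-3/2}+\tfrac32 y(1-y)^{-5/2}\Big]>0.
\]
So $F$ is strictly convex, and hence so is $h$ as a function of $u$. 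An even, strictly convex function is strictly decreasing for $u<0$ and strictly increasing for $u>0$. Since $u<0$ exactly when $t<1$, and $t\mapsto u$ is increasing, this gives the required signs of $h'$.

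As a consistency check, $f(t)\to-\infty$ as $t\downarrow\sqrt{2c}$. Also $f(1)=1+2\xi/\sqrt{1-2c}>0$ precisely when $\xi>-\tfrac12\sqrt{1-2c}$. So the root $t_c\in(\sqrt{2c},1)$ exists and is unique by monotonicity, matching the assumed range of $\xi$ in the statement.
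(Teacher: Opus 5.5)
Your proof is correct and complete. The paper gives no proof of this lemma: it is quoted from \cite{CJ16} and used as a black box in the proof of \cref{lem:asym:contourssm}, so there is no in-paper argument to compare against. Your Step 2 reduction $\mathrm{i}t\,g'_{\xi,\xi}(\mathrm{i}t)=1+\xi\big(\theta(t)+\theta(t^{-1})\big)$ is exactly the reduction the paper itself uses in the proof of \cref{lem:asym:contourssm}. I checked your branch computations, the symmetry in Step 1, the identity $g''_{\xi,\xi}(\mathrm{i}t)=-f'(t)/t$ at a critical point, and the convexity computation for $F$; all are right.

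Two small economies are available.

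First, Step 1 already gives more than the location of the second saddle point. Differentiating $\phi(-\omega^{-1})=\phi(\omega)$, with $\phi(\omega)=\omega g'_{\xi,\xi}(\omega)$, and evaluating at a critical point gives $g''_{\xi,\xi}(-\omega_c^{-1})=-\omega_c^{4}\,g''_{\xi,\xi}(\omega_c)=-t_c^{4}\,g''_{\xi,\xi}(\omega_c)$. So only one of the two signs actually needs to be checked.

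Second, the convexity argument in Step 3 can be replaced by a direct computation from $\theta$. One has $h'(t)=2c\big[t(1-2ct^2)^{-3/2}-(t^2-2c)^{-3/2}\big]$, and $(t^2-2c)-(1-2ct^2)=(1+2c)(t^2-1)$. Together these give $h'<0$ on $(\sqrt{2c},1)$ and $h'>0$ on $(1,1/\sqrt{2c})$ immediately. If you take this route, differentiate $\theta$ directly rather than using \eqref{eq:def:gdoubleprime}: as printed, the second term there seems to be missing a factor $\omega^{-2}$, since $\tfrac{d}{d\omega}\big[1/(\omega\sqrt{\omega^{-2}+2c})\big]=-2c/\big(\omega^{2}(\omega^{-2}+2c)^{3/2}\big)$.

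Either version gives a self-contained proof of what the paper imports. Your final monotonicity remark also recovers the existence and uniqueness of the root $t_c\in(\sqrt{2c},1)$ from Lemma 3.9 of \cite{CJ16}.
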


Turning to the proof of \cref{lem:asym:contourssm}, letting $\omega = \mathrm{i} t$ and recalling $\theta$ defined in \eqref{eq:prev:theta}, we have that 
			\begin{equation*}
				0=\mathrm{i} t g'_{\xi_1+X,\xi_2+X}(\mathrm{i}t)= 1 +(\xi_1+X)\theta(t)+(\xi_2+X)\theta(t^{-1}).
			\end{equation*}
            When $\xi_1 = \xi_2$, the right-hand side above has zeroes at points $t \in (\sqrt{2c}, 1)$ and $t^{-1}$; this is part of \cref{lem:zeroes}. Since the function is continuous in $\xi_1, \xi_2$, these two zeroes will move continuously as we vary these parameters in an $O(m^{-1/6})$ window. This produces the two critical points defined in \eqref{eq:def:omegacX}, and verifies the fifth bullet point.

Next, using the above two lemmas and the fact that $|\xi_1-\xi_2|<m^{-\frac{1}{6}}$,  $g_{\xi_1,\xi_2}$ is analytic in $\omega$ and locally smooth as a function of $\omega, \xi_1,\xi_2$, we get the first four bullet points.

			Finally, since $g_{\xi_1+X,\xi_2+X}$ is analytic on $\mathbb{C}\backslash(\mathrm{i}(-\infty,-1/\sqrt{2c}]\cup \mathrm{i} [-\sqrt{2c},\sqrt{2c}] \cup \mathrm{i} [1/\sqrt{2c},\infty))$ the contours of steepest descent of $\mathrm{Re}\,g_{\xi_1+X,\xi_2+X}$ are the level lines of $\mathrm{Im}\, g_{\xi_1+X,\xi_2+X}$. \cref{asym:lem:imaginary} gives that $\mathrm{Im}\, g_{\xi_1+X,\xi_2+X} ( \omega_{c,X})=\frac{\pi}{2}(1-\xi_1+3\xi_2+2X)$ and statement 3 of this lemma means that the descent contour goes perpendicular to the imaginary axis and either ends at 0 or $\infty$ by the fifth and sixth statements in \cref{asym:lem:imaginary}. We have that the steepest ascent contour of  $\mathrm{Re}\,g_{\xi_1+X,\xi_2+X}$ must go along the imaginary axis, by the fourth statement of this lemma, to either 0 or $\infty$ by the fifth and sixth statements of \cref{asym:lem:imaginary}. Since the ascent and descent contours cannot cross and by the ordering given in the fifth statement of this lemma, we have that the descent contour must end at 0, while the ascent contour ends at $\infty$. 
		\end{proof}

	\end{proof}
	
	\begin{proof}[Proof of \eqref{thm:eq:KinversearoundRS:smclosetoRS} in \cref{thm:prev:KinversearoundRS}]
		We follow the same proof as given for \eqref{thm:eq:KinversearoundRS:sm} but the analysis needs to be slightly refined and we need  the following lemma which is proved below.
		
		\begin{lem}\label{lem:asym:secondderclosesm}
			We have that $\omega_{c,X}$ and $\tilde{\omega}_{c,Y}$ satisfy the statements of \cref{lem:asym:contourssm} along with $g''_{\xi_1+X,\xi_2+X}(\omega_{c,X})=O(X)$ and $g''_{\xi_1+Y,\xi_2+Y}(\tilde{\omega}_{c,Y})=O(Y)$.  We have that $|\omega_{c,0}|>|\omega_{c,X}|$ and $\omega_{c,0}-\omega_{c,X}=O(\sqrt{X})$ and 
			$|\tilde{\omega}_{c,0}|<|\tilde{\omega}_{c,Y}|$ and $\tilde{\omega}_{c,0}-\tilde{\omega}_{c,Y}=O(\sqrt{Y})$.
		\end{lem}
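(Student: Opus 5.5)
The plan is a perturbative analysis of the saddle-point equation along the imaginary axis, with $\omega=\mathrm{i}t$. Write
$$
\Phi(t;\xi_1,\xi_2,X)=\mathrm{i}t\,g'_{\xi_1+X,\xi_2+X}(\mathrm{i}t)=1+(\xi_1+X)\theta(t)+(\xi_2+X)\theta(t^{-1}),
$$
which is real-analytic in $t\in(\sqrt{2c},1/\sqrt{2c})$ and affine in $X$. At $X=0$ the point $(\xi_1,\xi_2)$ lies on the rough-smooth curve, so $\Phi$ has a double zero at $t_0$, where $\omega_{c,0}=\mathrm{i}t_0$. Indeed $g'=g''=0$ there by the definition of $\omega_{c,0}$. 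In the diagonal case this double zero is where the pair $\omega_c,-\omega_c^{-1}$ of \cref{lem:zeroes} merges at $\mathrm{i}$. Since $|\xi_1-\xi_2|<m^{-1/6}$ and everything depends smoothly on $(\xi_1,\xi_2)$, all constants below can be taken uniform.

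First I will Taylor expand $\Phi$ about $(t_0,X=0)$:
$$
\Phi(t)=\tfrac12\partial_t^2\Phi(t_0)(t-t_0)^2+X\big(\theta(t_0)+\theta(t_0^{-1})\big)+O(|t-t_0|^3+|X||t-t_0|).
$$
The key nondegeneracy facts to verify are the following.
\begin{itemize}
\item $\partial_t^2\Phi(t_0)\neq0$. This holds because the third derivative $g'''$ is nonzero at a generic point of the smooth part of the limit curve.
\item $\theta(t_0)+\theta(t_0^{-1})>0$. This is immediate since $\theta>0$ on the interval.
\end{itemize}
Given these signs, for $X>0$ (the smooth side) there are two real roots $t_\pm=t_0\pm\kappa\sqrt{X}+O(X)$. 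The root $t_-<t_0$ gives $\omega_{c,X}$, and the corresponding root for the $Y$-problem, on the other branch, gives $\tilde\omega_{c,Y}$. This yields $|\omega_{c,0}|>|\omega_{c,X}|$ with $\omega_{c,0}-\omega_{c,X}=O(\sqrt X)$, and symmetrically $|\tilde\omega_{c,0}|<|\tilde\omega_{c,Y}|$ with difference $O(\sqrt Y)$.

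Which side is which will be fixed by continuity with the diagonal picture. There the smaller-modulus root lies in $(\sqrt{2c},1)\mathrm{i}$ (\cref{lem:zeroes}), and the ordering is preserved under the $O(m^{-1/6})$ perturbation. Differentiating, $\omega g''(\omega)=\partial_t\Phi\cdot(\text{nonzero factor})-g'$, so at the critical point $g''(\omega_{c,X})\propto\partial_t\Phi(t_-)=\partial_t^2\Phi(t_0)(t_--t_0)+O(X)=O(\sqrt X)$. This is where the stated $O(X)$ bound on $g''$ needs care. If the claimed order is meant as a crude bound (valid since $\sqrt X\le1$ and $X\ge m^{-2/3}L$), I will only prove $O(\sqrt X)\subset O(1)$. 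Otherwise I will recheck the normalization, as it may refer to $X$ in the rescaled window. The sign of $g''$ (negative at $\omega_{c,X}$, positive at $\tilde\omega_{c,Y}$) follows from the sign of $t_\mp-t_0$, matching \cref{lem:lem310inCJ16}.

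The remaining statements of \cref{lem:asym:contourssm} transfer verbatim. These are the signs of $\mathrm{Re}\,g$ and the steepest descent/ascent contour geometry. The level-line argument via \cref{asym:lem:imaginary} only uses that the critical points lie on $\mathrm{i}(\sqrt{2c},1/\sqrt{2c})$ and are ordered in modulus, and both facts were established above. The signs of $\mathrm{Re}\,g$ follow by integrating $g'$ from the double point, where $\mathrm{Re}\,g$ is continuous and near zero at the limit curve, giving a contribution of order $\mp X^{3/2}$.

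The main obstacle is checking that $\partial_t^2\Phi(t_0)\ne0$ uniformly, i.e.\ that the rough-smooth point is a genuine cubic (Airy) degeneration. I would first verify this on the diagonal $\xi_1=\xi_2=\xi_c$, where $t_0=1$ and $\theta(t)=\theta(t^{-1})$ symmetry gives an explicit computation. I would then extend it by continuity.
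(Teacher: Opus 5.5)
Your argument is the paper's: Taylor-expand $1+(\xi_1+X)\theta(t)+(\xi_2+X)\theta(t^{-1})$ at the double zero $t_{c,0}$, use that the constant and linear terms vanish, and conclude that $\epsilon=t_{c,X}-t_{c,0}$ is real and of order $\sqrt X$. The modulus ordering then comes from the diagonal picture.

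What you call the main obstacle is immediate, and you need its sign, not just its nonvanishing. We have $\partial_t^2\Phi(t_{c,0})=\xi_1\theta''(t_{c,0})+\xi_2\frac{d^2}{dt^2}\theta(t^{-1})\big|_{t=t_{c,0}}$, where $\theta''(t)=6ct(t^2-2c)^{-5/2}>0$. Also $\theta(t^{-1})=(1-2ct^2)^{-1/2}$ has second derivative $(2c+8c^2t^2)(1-2ct^2)^{-5/2}>0$. Since $\xi_1,\xi_2<0$, $\partial_t^2\Phi$ is strictly negative on all of $(\sqrt{2c},1/\sqrt{2c})$. Together with $\theta(t_{c,0})+\theta(t_{c,0}^{-1})>0$, this forces two real roots straddling $t_{c,0}$ when $X>0$. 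This is exactly the paper's sign argument, and it needs no diagonal computation or continuity.

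Two corrections to the side claims.

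First, drop the hedging on $g''$. Your identity $\omega g''(\omega)=-\mathrm{i}\,\partial_t\Phi-g'$ gives $g''(\omega_{c,X})=-\partial_t\Phi(t_{c,X})/t_{c,X}$, which is negative and of exact order $\sqrt X$. So the $O(X)$ in the statement cannot hold as an upper bound and should read $\sqrt X$. The paper's proof never addresses this. Moreover, in the proof of \cref{thm:previousRSasymptotics} the paper itself replaces $\sqrt{-g''_{\xi_c+X,\xi_c+X}(\omega_{c,X})}$ by a constant times $X^{1/4}$. Downstream only the lower bound $|g''|\gtrsim\sqrt X$ matters, since $g''$ sits in the denominator of the Gaussian prefactor.

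Second, $\mathrm{Re}\,g$ is near zero at the double point only on the diagonal. Since $|G(-\mathrm{i})|=|G(\mathrm{i})|$, one has $\mathrm{Re}\,g_{\xi_1,\xi_2}(\omega_{c,0})=(\xi_2-\xi_1)\log|G(\mathrm{i})|+o(|\xi_1-\xi_2|)$. This can have either sign and be as large as order $m^{-1/6}$, which swamps $X^{3/2}$ near the bottom of the window $X\asymp m^{-2/3}$. Your integration of $g'$ along the imaginary axis therefore only proves $\mathrm{Re}\,g(\omega_{c,X})-\mathrm{Re}\,g(\omega_{c,0})\asymp -X^{3/2}$, plus the mirror statement for $\tilde\omega_{c,Y}$.

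That relative statement is what is actually used. It is why $|H_{x_1+1,x_2}(\omega_{c,0})/H_{y_1,y_2+1}(\tilde\omega_{c,0})|$ is pulled out in \eqref{thm:eq:KinversearoundRS:smclosetoRS} as a conjugation factor. But the absolute sign bullets of \cref{lem:asym:contourssm} do not transfer off the diagonal, and the paper only asserts them ``by analyticity''.
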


		As in the proof of \eqref{thm:eq:KinversearoundRS:sm}, we start with \eqref{thmproof:aroundRSsm:B1} and obtain \eqref{thmproof:aroundRSsm:B2} with the contours for the $w_1$ and $w_2$ as stated there. As given in the proof \eqref{thm:eq:KinversearoundRS:sm} we compute the Gaussian integrals and we obtain the bound
		\begin{equation} \label{thmproof:smclosetoRS:eq:B}
			\begin{split}
				&|\mathcal{B}_{\eps_1,\eps_2}(a,n+x_1,n+x_2,n+y_1,n+y_2) \leq \bigg| \frac{H_{x_1+1,x_2}(\omega_{c,X})}{H_{y_1,y_2+1}(\tilde{\omega}_{c,Y})}\bigg| \\&\times \frac{\mathtt{C}_1}{\sqrt{-mg''_{\xi_1+X,\xi_2+Y}(\omega_{c,X}) g''_{\tilde{\xi}_1+X,\tilde{\xi}_2+X}(\tilde{\omega}_{c,Y}) }}
			\end{split}
		\end{equation}
		where $\mathtt{C}_1$ is a positive constant.
		In the above equation, it suffices to estimate $H_{x_1+1,x_2}(\omega_{c,X})$ as the estimate for $H_{y_1,y_2+1}(\tilde{\omega}_{c,Y})$ follows by a similar computation.  Taking the logarithms and using \eqref{eq:def:logH}, we need to consider
		\begin{equation*}
			\begin{split}
				&g_{\xi_1+X,\xi_2+X}(\omega_{c,X})=g_{\xi_1,\xi_2}(\omega_{c,X}) -X (\log G(\omega_{c,X})-\log G(\omega_{c,X}^{-1}))\\
				&=g_{\xi_1,\xi_2}(\omega_{c,0}) -X(\log G(\omega_{c,0})-\log G(\omega_{c,0}^{-1}))\\ 
				&+(\omega_{c,X}-\omega_{c,0})\frac{X}{\mathrm{i}t_{c,0}}(\theta(t_{c,0})+\theta(t_{c,0}^{-1}))+O(X (\omega_{c,X}-\omega_{c,0})^2)\\
				&=g_{\xi_1+X,\xi_2+X}(\omega_{c,0}) +(t_{c,X}-t_{c,0})\frac{X}{t_{c,0}}(\theta(t_{c,0})+\theta(t_{c,0}^{-1}))+O(X  (\omega_{c,X}-\omega_{c,0})^2)
			\end{split}
		\end{equation*}
		where the second line follows by a Taylor series, setting $\omega_{c,0}=\mathrm{i}t_{c,0}$ and $\omega_{c,X}=\mathrm{i}t_{c,X}$, using that $\omega_{c,0}$ satisfies $g'_{\xi_1,\xi_2}(\omega_{c,0})=g''_{\xi_1,\xi_2}(\omega_{c,0})=0$, and using the definition of $\theta$ in \eqref{eq:prev:theta}. We then have
		\begin{equation*}
			|H_{x_1+1,x_2}(\omega_{c,X})|=|H_{x_1+1,x_2}(\omega_{c,0})| e^{(t_{c,X}-t_{c,0})\frac{X}{t_{c,0}} (\theta(t_{c,0})+\theta(t_{c,0}^{-1}))+O(X^2)}.
		\end{equation*}
		The result follows from using the bound on $(\omega_{c,X}-\omega_{c,0})$ in \cref{lem:asym:secondderclosesm}, applying the same type of computation to ${H_{y_1,y_2+1}(\tilde{\omega}_{c,Y})}$ and subsituting the estimates back into \eqref{thmproof:smclosetoRS:eq:B}.

		\begin{proof}[Proof of \cref{lem:asym:secondderclosesm}]
			We prove the lemma for $X$; the assertions hold for $Y$ by similar computations. 
			Since $(\xi_1+X,\xi_2+X)$ is not on the curve \eqref{eq:asymp:limitshape}, we have that that $g''_{\xi_1+X,\xi_2+X}(\omega_{c,X})\not = 0$ and so $\omega_{c,X}$ satisfy the statements of \cref{lem:asym:contourssm} by analyticity.  
			
			Let $\omega_{c,0}=\mathrm{i} t_{c,0}$, $\omega_{c,X}=\mathrm{i} t_{c,X}$ and recall \eqref{eq:prev:theta}.
			Suppose that $t_{c,X}-t_{c,0}=\epsilon$. From \eqref{eq:def:gprime} with $\xi_i \mapsto \xi_i+X$ for $i \in \{1,2\}$, we get from a Taylor expansion
			\begin{equation}
				\begin{split} \label{lemproof:eq:secondderclosesm}
					0&=1 + (\xi_1+X) \theta( t_{c,X}) + (\xi_2+X)\theta((t_{c,X})^{-1}) \\
					&=1+\xi_1\theta(t_{c,0})+ \xi_2 \theta(t_{c,0}^{-1}) +\epsilon\big(\xi_1\theta'(t_{c,0})+ \xi_2 \frac{\mathrm{d}}{\mathrm{d}t} \theta(t^{-1})\big|_{t=t_{c,0}}\big)\\&  +\frac{\epsilon^2}{2}\big(\xi_1\theta''(t_{c,0})+ \xi_2 \frac{\mathrm{d}^2}{\mathrm{d}t^2} \theta(t^{-1})\big|_{t=t_{c,0}}\big) +X(\theta( t_{c,X})+\theta(t_{c,X}^{-1}))+O(\epsilon^3)\\
					&=\frac{\epsilon^2}{2}\big(\xi_1\theta''(t_{c,0})+ \xi_2 \frac{\mathrm{d}^2}{\mathrm{d}t^2} \theta(t^{-1})\big|_{t=t_{c,0}}\big) +X(\theta( t_{c,X})+\theta(t_{c,X}^{-1}))+O(\epsilon^3)
				\end{split}
			\end{equation}  
			where the last line follows since $(\xi_1,\xi_2)$ is a double zero of \eqref{eq:def:gprime}. The remaining terms in the above equation are non-zero and noting that $\theta''(t_{c,0}),\theta(t_{c,0})>0$ and $\xi_1,\xi_2<0$ it follows $\epsilon$ is real and is $O(\sqrt{X})$.  Finally, we get $|\omega_{c,0}|>|\omega_{c,X}|$ which holds by the fifth statement of \cref{lem:asym:contourssm} and using analyticity. 
	k	
		\end{proof}
	\end{proof}
    
	\begin{proof}[Proof of \eqref{thm:eq:KinversearoundRS:RS} in \cref{thm:prev:KinversearoundRS}] 
	We need the following lemma which we prove first before commencing with the proof of \eqref{thm:eq:KinversearoundRS:RS}. 
		\begin{lem}\label{lem:asym:contoursRS}
			For $\omega_{c,0}=\mathrm{i} t_{c,0}$, we have that  $t_{c,0} \in (\sqrt{2c},1/\sqrt{2c})$  and that
			\begin{equation*}
				g'''_{\xi_1,\xi_2}(\mathrm{i} t_{c,0})=-\mathrm{i} \mathcal{G}_{c,t}
			\end{equation*}
			with $\omega_{c,0}=\mathrm{i} t_{c,0}$ and $t\in (\sqrt{2c},1/\sqrt{2c})$ where 
			\begin{equation*}
				\mathcal{G}_{c,t}=\frac{2c(2c-t^2+8c^2t^2-4ct^4-3t^6+6ct^8)}{t^5(2c-t^2)(-1+2c t^2)(-1+4ct^2-t^4)}.
			\end{equation*}	
			In $\mathbb{H}_+ \cup \partial \mathbb{H}_+$, there is a curve of steepest descent leaving $\omega_{c,0}$ at angle $-\pi/6$ going to the origin and a curve of steepest ascent leaving $\omega_{c,0}$ at angle $\pi/6$ going to $\infty$. 		
		\end{lem}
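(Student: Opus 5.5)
The plan has three steps: locate the double critical point on the imaginary axis, compute $g'''$ there by differentiating the formula \eqref{eq:def:gprime} for $\omega g'$, and then deduce the steepest descent and ascent directions from the argument of $g'''$. I will then continue the local curves globally using \cref{asym:lem:imaginary}.

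\textbf{Locating $\omega_{c,0}$.} Write $\omega=\mathrm{i}t$. By the branch conventions in \eqref{eq:def:branchcutati}, for $t\in(\sqrt{2c},1/\sqrt{2c})$ we have
$$
\mathrm{i}t\,g'_{\xi_1,\xi_2}(\mathrm{i}t)=1+\xi_1\theta(t)+\xi_2\theta(t^{-1}),
$$
which is a real function $\Phi(t)$. \cref{lem:zeroes} gives, for $\xi_1=\xi_2=\xi>\xi_c$, two simple zeros $t_c<1<t_c^{-1}$ in this interval. As $\xi\downarrow\xi_c$ these two zeros merge at a point of the interval. Since $g_{\xi_1,\xi_2}$ is smooth in $(\xi_1,\xi_2)$ and $|\xi_1-\xi_2|<m^{-1/6}$, the double zero $\omega_{c,0}$ for a point of \eqref{eq:asymp:limitshape} is a small perturbation of this merged point. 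It therefore also lies in $\mathrm{i}(\sqrt{2c},1/\sqrt{2c})$, and the open interval condition is stable under the perturbation.

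\textbf{Computing $g'''$.} Write $g'(\omega)=\Phi(t)/\omega$ with $\omega=\mathrm{i}t$, so $d/d\omega=-\mathrm{i}\,d/dt$. At a double zero, $\Phi(t_{c,0})=\Phi'(t_{c,0})=0$, so
$$
g'''(\omega_{c,0})=(-\mathrm{i})^2\,\frac{\Phi''(t_{c,0})}{\mathrm{i}t_{c,0}}=\mathrm{i}\,\frac{\Phi''(t_{c,0})}{t_{c,0}}.
$$
Next I solve the two linear equations $\Phi=\Phi'=0$ for $\xi_1,\xi_2$ in terms of $t=t_{c,0}$, using
$$
\theta'(t)=-\frac{2c}{(t^2-2c)^{3/2}},
$$
and the analogous derivative for $\theta(t^{-1})$. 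Substituting these values into $\Phi''(t)/t$ should be a routine rational simplification (possibly with computer algebra). It should yield $-\mathcal G_{c,t}$, so that $g'''=-\mathrm{i}\mathcal G_{c,t}$. I will check the sign of $\mathcal G_{c,t}$ on the interval, or at least at the relevant $t_{c,0}$. Near the diagonal one can also cross-check against \cite{CJ16}, where the same third derivative produces the Airy scaling constants.

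\textbf{Steepest directions.} Near the critical point,
$$
g(\omega)-g(\omega_{c,0})\approx\tfrac16 g'''(\omega_{c,0})(\omega-\omega_{c,0})^3=-\tfrac{\mathrm{i}}{6}\mathcal G_{c,t}\,r^3e^{3\mathrm{i}\phi}
\quad\text{for } \omega-\omega_{c,0}=re^{\mathrm{i}\phi}.
$$
The real part is $\tfrac16\mathcal G_{c,t}r^3\sin 3\phi$. With the sign of $\mathcal G_{c,t}$ fixed, $\mathrm{Re}\,g$ decreases fastest at $\phi=-\pi/6$ and increases fastest at $\phi=\pi/6$ (or the reverse if the sign flips). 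These are two of the six steepest rays, alternating descent and ascent; the others are $\pm\pi/2$ and $5\pi/6,7\pi/6$. To continue the local curves globally, note that $g$ is analytic off the cuts, so steepest curves are level lines of $\mathrm{Im}\,g$. By \cref{asym:lem:imaginary}, $\mathrm{Im}\,g$ is constant on the segment $\mathrm{i}(\sqrt{2c},1/\sqrt{2c})$ and strictly monotone elsewhere on $\partial\mathbb H_+$. Hence a level curve leaving $\omega_{c,0}$ into $\mathbb H_+$ cannot hit $\partial\mathbb H_+$ except at $0$ or $\infty$. It cannot return to the segment either, because that would enclose a region on whose boundary the harmonic function $\mathrm{Im}\,g$ is constant. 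Descent and ascent curves cannot cross, and $\mathrm{Re}\,g\to-\infty$ at $0$ and $+\infty$ at $\infty$, since $\log\omega$ dominates. So the descent curve ends at $0$ and the ascent curve at $\infty$, as in the proof of \cref{lem:asym:contourssm}.

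The main obstacle is the algebraic reduction to the explicit $\mathcal G_{c,t}$ and verifying its sign on the interval, since that sign fixes which of $\pm\pi/6$ is the descent direction.
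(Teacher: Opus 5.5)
Your route is the paper's: impose $g'=g''=0$ at $\mathrm{i}t$, solve the resulting linear system for $(\xi_1,\xi_2)$, substitute into $g'''$, read off the $\mp\pi/6$ directions from the sign of the cubic term, and continue the local curves as in \cref{lem:asym:contourssm}. Your identities $g'''(\omega_{c,0})=\mathrm{i}\,\Phi''(t_{c,0})/t_{c,0}$ and $\mathrm{Re}\,(g-g(\omega_{c,0}))\approx\tfrac16\mathcal{G}r^3\sin 3\phi$ are correct.

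The paper does not locate $t_{c,0}$ by perturbation. It reads the solved $(\xi_1(t),\xi_2(t))$ as a parametrization of the arc, passing through $(\xi_c,\xi_c)$ at $t=1$. If you keep the perturbation, note that closeness to $\mathrm{i}$ does not by itself put the double zero on the imaginary axis. Add the following: $\omega g'(\omega)$ is real on the segment, so by Schwarz reflection its zeros near $\mathrm{i}$ are symmetric under $\omega\mapsto-\bar\omega$. Only two of them (counted with multiplicity) lie near $\mathrm{i}$, so a double zero there must be self-symmetric, i.e.\ on the axis.

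Also, in the global step, $\mathrm{Im}\,g\equiv 0$ on the positive real axis rather than monotone. What you need is that the level $(1-\xi_1+3\xi_2)\pi/2$ is nonzero. This holds near the diagonal since $1+2\xi_c=1-\sqrt{1-2c}>0$.

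The step you flagged as the main obstacle cannot be completed as written, because the printed $\mathcal{G}_{c,t}$ is wrong for $t\neq 1$. Solving $\Phi(t)=\Phi'(t)=0$ gives, with $D=1-4ct^2+t^4$,
\[
\xi_1=-\frac{t(t^2-2c)^{3/2}}{D},\qquad \xi_2=-\frac{(1-2ct^2)^{3/2}}{D}.
\]
By contrast, \eqref{eq:def:xi12rs} carries a spurious factor $t^{-4}$ in $\xi_2$; with it, $\Phi(t)\neq 0$ unless $t=1$. The $\xi_2$-term of \eqref{eq:def:gdoubleprime} is also missing a factor $\omega^{-2}$. Substituting the correct values gives
\[
g'''_{\xi_1,\xi_2}(\mathrm{i}t)=-\mathrm{i}\,\frac{2c\,(4t^2-2ct^4-2c-8c^2t^2)}{t\,(t^2-2c)(1-2ct^2)(1-4ct^2+t^4)}.
\]
One checks that the printed $\mathcal{G}_{c,t}$ is exactly what the erroneous $\xi_2$ produces. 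Their difference is proportional to $(t^2-1)(t^2-2c)(4ct^2+1)(t^2+1)$, so on the interval they agree only at $t=1$, where both equal $4c(1+c)/(1-2c)^2$.

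What the paper actually uses is positivity, and that holds on the whole interval. With $s=t^2$, the numerator $4s-2cs^2-2c-8c^2s$ is concave in $s$. It equals $6c(1-4c^2)>0$ at $s=2c$ and $3(1-4c^2)/(2c)>0$ at $s=1/(2c)$. Meanwhile $t^2-2c$, $1-2ct^2$ and $D=t^2(t^2-2c)+(1-2ct^2)$ are all positive there. So your argument proves the lemma with this corrected coefficient, including both angle claims, but no computation will confirm the displayed formula.
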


\begin{proof}[Proof of \cref{lem:asym:contoursRS}]
			We can differentiate ~\eqref{eq:def:gprime} which gives
			\begin{equation}\label{eq:def:gdoubleprime}
				\omega g''_{\xi_1,\xi_2}(\omega)+ g'_{\xi_1,\xi_2}(\omega)= \xi_1 \frac{2c}{(\omega^2+2c)^\frac{3}{2}} -\xi_2 \frac{2c}{(\omega^{-2}+2c)^{\frac{3}{2}}}.
			\end{equation}
			Asserting that $g'_{\xi_1,\xi_2}(\omega_{c,0})=g''_{\xi_1,\xi_2}(\omega_{c,0})=0$ in \eqref{eq:def:gprime} and the above equation gives a matrix equation for $(\xi_1,\xi_2)$ which can be solved giving
			\begin{equation}
				\begin{split}\label{eq:def:xi12rs}
					(\xi_1,\xi_2)&=\bigg( \frac{-\omega_{c,0}(\omega_{c,0}^2+2c)^{\frac{3}{2}}}{1+4c \omega_{c,0}^2+\omega_{c,0}^4} , \frac{-(\omega_{c,0}^{-2}+2c)^{\frac{3}{2}}}{\omega_{c,0}(1+4c \omega_{c,0}^2+\omega_{c,0}^4)} \bigg)\\
					&=\bigg( \frac{-t_{c,0}(t_{c,0}^2-2c)^{\frac{3}{2}}}{1-4c t_{c,0}^2+t_{c,0}^4} , \frac{-(t_{c,0}^{-2}-2c)^{\frac{3}{2}}}{t_{c,0}(1-4c t_{c,0}^2+t_{c,0}^4)} \bigg)
				\end{split}
			\end{equation}
			where $\omega_{c,0}=\mathrm{i} t_{c,0}$. The above equation solves \eqref{eq:asymp:limitshape} and we need to verify that it is on the rough-smooth boundary. To see this, notice that for $t_{c,0}\in (\sqrt{2c},1/\sqrt{2c})$, we have that $\xi_1,\xi_2<0$ and for $t_{c,0}=1$, $\xi_1=\xi_2=\xi_c$ as needed. 
			
			We differentiate \eqref{eq:def:gdoubleprime} which gives
			\begin{equation*}
				\omega g'''_{\xi_1,\xi_2}(\omega)+ 2g''_{\xi_1,\xi_2}(\omega)=-\frac{6c \omega}{(\omega^2+2c)^{\frac{5}{2}}} \xi_1+ \frac{2c(-1+4c\omega^2)}{\sqrt{2c+\omega^{-2}}(\omega+2c \omega^3)^2} \xi_2.
			\end{equation*}
			This gives 
			\begin{equation*}
				g'''_{\xi_1,\xi_2}(\mathrm{i} t_{c,0})=-\frac{6c }{\mathrm{i}(t_{c,0}^2-2c)^{\frac{5}{2}}}+\frac{2c(-1-4ct_{c,0}^2)}{\mathrm{i}t_{c,0}\sqrt{2c+t_{c,0}^{-2}}(t_{c,0}-2c t_{c,0}^3)^2} \xi_2.
			\end{equation*}
			Substituting the values of $\xi_1$ and $\xi_2$ found in \eqref{eq:def:xi12rs} gives the result for $ g'''_{\xi_1,\xi_2}$ after a computation. 
			
			The local angles of the curves of steepest descent and ascent around the double critical point follows from the fact that $\mathcal{G}_{c,t}>0$ for $t \in (\sqrt{2c},1/\sqrt{2c})$.  The rest of the argument for the location of the curves of steepest descent and ascent is the same as the one in \cref{lem:asym:contourssm}. 	
		\end{proof}

    We now proceed with the proof of \eqref{thm:eq:KinversearoundRS:RS} in \cref{thm:prev:KinversearoundRS}.
		We follow the saddle point analysis in \cite[Proposition 3.17]{CJ16} and refer the reader there for more details. 
		Our starting point is the formula given in \eqref{thmproof:aroundRSsm:B1}.   We deform the contours to those given in \cref{lem:asym:contoursRS}, splitting the integral to a local contribution around the saddle points $\omega_{c,0}$ and $\tilde{\omega}_{c,0}$ defined through \eqref{eq:def:omegacX}, and a remaining contribution which is exponentially small which can be ignored; see \cite[Proposition 3.17]{CJ16}.  For the local contribution, we apply a change of variables $\omega_1=\omega_{c,0}+(2m)^{-1/3}w_1$ and $\omega_2=\tilde{\omega}_{c,0}+(2m)^{-1/3}w_2$.  Applying a  Taylor series gives the terms in the exponent with respect to $\omega_1$ becoming
		\begin{equation*}
			\begin{split}
				&2m g_{\xi_1+X,\xi_2+X}(\omega_1) -\log G(\omega_1) \\&=
2m g_{\xi_1,\xi_2}(\omega_1) -(2mX+1)\log G(\omega_1) +2mX \log G(\omega_1^{-1})
\\&=2m g_{\xi_1+X,\xi_2+X}(\omega_{c,0}) -\log G(\omega_{c,0})\\
				&+\frac{1}{6}g'''_{\xi_1,\xi_2}(\omega_{c,0})w_1^3 +(2m)^{\frac{2}{3}}Xw_1\bigg(\frac{1}{\sqrt{\omega_{c,0}^2+2c}}+\frac{1}{\omega_{c,0}^2\sqrt{\omega_{c,0}^{-2}+2c}}\bigg)\\
				& +O(w_1^3m^{-1/3})=\log H_{x_1+1,x_2}(\omega_{c,0})
				+\frac{1}{6}g'''_{\xi_1,\xi_2}(\omega_{c,0})w_1^2\\&+\frac{(2m)^{\frac{2}{3}}Xw_1}{\omega_{c,0}} (\theta (-\mathrm{i}\omega_{c,0})-\theta(\mathrm{i}\omega_{c,0}^{-1}))+O(w_1^3m^{-1/3}).
			\end{split}
		\end{equation*}
		 In the above display, we have rewrite $g_{\xi_1+X,\xi_2+X}(\omega_1) $ in terms of $g_{\xi_1,\xi_2}(\omega_1)$ and an additional term from \eqref{eq:prev:gxi12} in order to facilitate the Taylor series. We have also used $-\mathrm{i}\omega_{c,0}\in (\sqrt{2c},1/\sqrt{2c})$  to keep the notation compact. 
	A similar expression holds for the $\omega_2$ variable. 	
    Collecting the error terms and the above expansion means that \eqref{thmproof:aroundRSsm:B1} becomes
		\begin{equation*}
			\begin{split}
				&\mathcal{B}_{\eps_1,\eps_2}(a,n+x_1,n+x_2,n+y_1,n+y_2)=\frac{(1+O(m^{-1/3}))}{(2m)^{2/3}}\frac{\mathrm{i}^{\frac{x_2-x_1+y_1-y_2}{2}}}{(2\pi \mathrm{i})^2}  \\
				&\times \int \frac{\mathrm{d}w_1}{\omega_{c,X}} \int \mathrm{d}w_2 \frac{V_{\eps_1,\eps_2}(\omega_{c,0},\tilde{\omega}_{c,0})e^{\frac{1}{6}g'''_{\xi_1,\xi_2}(\omega_{c,0})w_1^3-\frac{1}{6}g'''_{\tilde{\xi}_1,\tilde{\xi}_2}(\tilde{\omega}_{c,0})w_2^3}}{\omega_{c,0} - \tilde{\omega}_{c,0}+(2m)^{-\frac{1}{3}}(w_1-w_2)}  \frac{H_{x_1+1,x_2}(\omega_{c,X})}{H_{y_1,y_2+1}(\tilde{\omega}_{c,Y})} \\
				&\times e^{\frac{(2m)^{\frac{2}{3}}Xw_1}{\omega_{c,0}} (\theta (-\mathrm{i}\omega_{c,0})-\theta(\mathrm{i}\omega_{c,0}^{-1}))-\frac{(2m)^{\frac{2}{3}}Yw_2}{\tilde{\omega}_{c,0}} (\theta (-\mathrm{i}\tilde{\omega}_{c,0})-\theta(\mathrm{i}\tilde{\omega}_{c,0}^{-1}))}
			\end{split}
		\end{equation*}
		where the contours are local contours around the origin (of length $m^{-\epsilon}$ for some $\epsilon>0$) with the $w_1$ contour leaving the origin at $-\pi/6$ and $w_2$ contour leaving the origin at $\pi/6$, which follows from \cref{lem:asym:contoursRS}.  From the choice $|\xi_1-\tilde{\xi}_1|<m^{-1/3}$, we have that  $\omega_{c,0}=\tilde{\omega}_{c,0}$ up to an error of $O(m^{-1/3})$ by using \eqref{eq:def:xi12rs} to estimate $|\omega_{c,0}-\tilde{\omega}_{c,0}|$ from $|\xi_1-\tilde{\xi}_1|$.  Note that the above singularity is integrable.  We take the change of variables $w_i\mapsto \mathrm{i}w_i$, extend the contours to infinity and so the above formula is equal to the Airy kernel evaluated at 
		$$
		\bigg(\frac{(2m)^{\frac{2}{3}}X}{\omega_{c,0}} (\theta (-\mathrm{i}\omega_{c,0})-\theta(\mathrm{i}\omega_{c,0}^{-1})),\frac{(2m)^{\frac{2}{3}}Y}{{\omega}_{c,0}} (\theta (-\mathrm{i}{\omega}_{c,0})-\theta(\mathrm{i}{\omega}_{c,0}^{-1}))\bigg)
		$$
		which is bounded due to the choice of $X$ and $Y$ in \eqref{eq:def:rs}.  The result follows.

	\end{proof}
	
	\begin{proof}[Proof of \eqref{eq:thm:KinversearoundRS:rough} in \cref{thm:prev:KinversearoundRS}]
We give a shortened proof of this result since we mirror the proof of \cite[Section 3.6]{CJ16}. 
We need to find the asymptotic behaviour of \eqref{thmproof:aroundRSsm:B1} and so we first analyze~\eqref{eq:def:gprime} to understand the roots of this equation. These roots will enable us to determine the choice of contours in \eqref{thmproof:aroundRSsm:B1}.

It is sufficient to analyze~\eqref{eq:def:gprime} by a Taylor series. 
For $\omega_{c,X}$ defined by \eqref{eq:def:omegacX},  we have that $\overline{\omega_{c,X}}$, $-\omega_{c,X}$ and $-\overline{\omega_{c,X}}$ are also roots of $g'_{\xi_1+X,\xi_2+X}(\omega)=0$. We next determine that $\omega_{c,X} \in \mathbb{H}_+$ for the choice of $X$ given in \eqref{eq:def:rgclosetoRS} since if $\omega_{c,X} \in \mathbb{H}_+$, then these four points are all distinct and define four roots of \eqref{eq:def:gprime} --- \cite[Lemma 3.7]{CJ16} states there are at most four roots of \eqref{eq:def:gprime}.   By a Taylor series (cf. with the computation for~\eqref{lemproof:eq:secondderclosesm}), we get that 
		\begin{equation*}
			0=\frac{\epsilon^2}{2}(\xi_1\theta''(-\mathrm{i} \omega_{c,0})+ \xi_2 \theta''(\mathrm{i}\omega_{c,0}^{-1})) +X(\theta(-\mathrm{i} \omega_{c,X})+\theta(\mathrm{i}\omega_{c,X}^{-1}))+O(\epsilon^3)
		\end{equation*}
		where $\omega_{c,X}=\omega_{c,0}+\epsilon$. Since $X<0,\theta(t)>0,\theta''(t)<0$ for $t \in (\sqrt{2c},1/\sqrt{2c})$ $\xi_1,\xi_2<0$ we have that $\epsilon$ is $O(\sqrt{|X|})$ and real and so $\omega_{c,X}\in \mathbb{H}_+$.   Moreover, we can write $\omega_{c,X}=r_{c,X}e^{\mathrm{i} \theta_{c,X}}$ with $r_{c,X}>0$ and $0<\theta_{c,X}<\pi/2$. Since both $|X-Y| <Cm^{-1}$ and $|\xi_1-\tilde{\xi}_1|<Cm^{-1}$ for some $C>1$ , we have that $\omega_{c,X}=\tilde{\omega}_{c,Y}$ because $g_{\xi_1+X,\xi_2+X}$ and $g_{\tilde{\xi}_1+Y,\tilde{\xi}_2+Y}$ differ by $O(m^{-1})$.

We now choose the contours for the integral in \eqref{thmproof:aroundRSsm:B1} so that they pass through the four roots $\omega_{c,X},-\omega_{c,X}, \overline{\omega_{c,X}} $ and $-\overline{\omega_{c,X}}$ and are given by the curves of steepest ascent and descent for $\mathrm{Re}g_{\xi_1+X,\xi_2+X}$.   By analyticity and since $|\xi_1-\xi_2|<m^{-\frac{1}{6}}$, we get similar steepest descent and ascent curves for $\mathrm{Re}g_{\xi_1+X,\xi_2+X}$ as given in \cite[Lemma 3.20]{CJ16}, namely
\begin{itemize}
\item  a contour of steepest ascent leaving $\omega_{c,X}$ at an angle $\theta_{c,X}-\pi/4$ ending at infinity,
\item a contour of steepest descent leaving $\omega_{c,X}$  at an angle $\theta_{c,X}+\pi/4$ ending at a cut and a descent
contour ending at  $\mathrm{i}/\sqrt{2c}$  traveling via the cut $\mathrm{i}[1/\sqrt{2c},\infty)$,
\item a contour of steepest ascent leaving $\omega_{c,X}$  at an angle $\theta_{c,X}+3\pi/4$ ending at a cut and an ascent
contour ending at $\mathrm{i}/\sqrt{2c}$ traveling via the cut $\mathrm{i}[0,\sqrt{2c}]$, and
\item a contour of steepest descent leaving $\omega_{c,X}$  at an angle $\theta_{c,X}-3\pi/4$ ending at zero.
\end{itemize}
see \cite[Lemma 3.20]{CJ16} for full details on obtaining these contours.  Deforming the contours of \eqref{thmproof:aroundRSsm:B1} to these curves of steepest descent and ascent means that the contours cross, which picks up a single integral term, $C_{\omega_{c,X}}(x,y)$ while the double integral term is $O(m^{-1/2})$ which follows readily by standard saddle point arguments in a rough region.  The length of the contour for $C_{\omega_{c,X}}(x,y)$, $|\Gamma_{\omega_{c,X}}|$, is $2|-\overline{\omega_{c,X}}-\omega_{c,X}|$ which is order $\sqrt{|X|}$ as shown above. Since the integrand for $C_{\omega_{c,X}}(x,y)$ is bounded away from zero, we get that $|C_{\omega_{c,X}}(x,y)|$ is order $\sqrt{|X|}$ as required.  
	\end{proof}

	\subsection{Proof of \cref{thm:previousRSasymptotics}}
	\label{S:thmpreviousRSasymptotics}
	Here, we give the proof of \cref{thm:previousRSasymptotics}.  
	\begin{proof}
		The first equation is simply a restatement of the first equation in \cref{lem:switchcontoursBtilde}. The second equation, holds for $-C<\alpha_x,\alpha_y<C$ by \cite[Theorem 2.7]{CJ16}. To extend the range, the same proof as the one given in \cite[Proposition 3.4]{CJ16} holds; see \cite[Proposition 3.4]{CJ16} for further details as we omit this computation. 
		
		The final equation holds for $-C<\alpha_x,\alpha_y<C$ by \cite[Theorem 2.7]{CJ16}.  We need to check the equation when either $C<\alpha_x<\log^3 m$ or $C<\alpha_y<\log^3 m$ (or both), that is, we need to check that the asymptotics has the same decay as the one for the extended Airy kernel.  We use a refined asymptotic analysis of the one used to prove  \eqref{thm:eq:KinversearoundRS:smclosetoRS}, checking only the case $C<\alpha_x<\log^3 m$ as the others are similar.
		
		We start with \eqref{eq:prev:B}. We note that with the scaling given by \eqref{eq:def:rsrefined} with $f_x=(1,2\eps)$, we have after expanding out that 
		\begin{equation*}
			\begin{split}
				&\log H_{x_1+1,x_2}(\omega_1)=2m g_{\xi_c+X,\xi_c+X}(\omega_1)-\log G(\omega_1)\\
				&-\big(2\lfloor \hat{\beta}_x \lambda_2 (2m)^{\frac{2}{3}} +k_x \lambda_2 \log^2 m\rfloor \big) \big(\log G(\omega_1)+\log (G(\omega_1^{-1})) \big)
			\end{split}
		\end{equation*}
		where $X=\frac{1}{2(2m)}\lfloor  \alpha_x \lambda_1 (2m)^{1/3} \rfloor$ and 
		\begin{equation*}
			g_{\xi_c+X,\xi_c+X}(\omega_1)=g_{\xi_c,\xi_c}(\omega_1)-X\log G(\omega_1)+X\log G(\omega_1^{-1}).
		\end{equation*}
		The $\omega_2$ integral has the exact same computation as the one given in \cite[Proposition 3.17]{CJ16} so we primarily focus on $\omega_1$ terms. 
		
		We have that the critical point of $g_{\xi_c+X,\xi_c+X}(\omega_1)$ is given by $\omega_{c,X}$ where $\omega_{c,X}$ satifies the statements given in \cref{lem:asym:contourssm} and \cref{lem:asym:secondderclosesm}.   We deform the contour with respect to $\omega_1$ to the steepest desecent contour given in \cref{lem:asym:contourssm} while we deform the contour with respect to $\omega_2$ to the steepest ascent contour given in \cite[Lemma 3.15]{CJ16}. 
		
		We next split the integral to a local contribution around the saddle points $\omega_{c,X}$ and $\tilde{\omega}_{c,Y}$, determined in \eqref{lem:asym:contourssm}, and a remaining contribution which is exponentially small which can be ignored; see \cite[Proposition 3.12]{CJ16}. For the local contribution, we apply a change of variables $\omega_1=\omega_{c,X}+(2m)^{-1/3}c_0 w$ and $\omega_2=\mathrm{i}+(2m)^{-1/3}c_0 z$ where $|w|,|z|<m^{\delta}$ for $\delta>0$.  We get from a Taylor series 
		\begin{equation*}
			\begin{split}
				&\frac{H_{x_1+1,x_2}(\omega_1)}{H_{y_1,y_2+1}(\omega_2)}=\frac{H_{x_1+1,x_2}(\omega_{c,X})}{G(\mathrm{i}^{-1})^{\frac{y_2-y_1+1}{2}}}  \exp \Bigg(g_{\xi_c+X,\xi_c+X}''(\omega_{c,X}) c_0^2w^2\\
				&-2\lfloor \hat{\beta}_x \lambda_2 (2m)^{\frac{1}{6}} \rfloor  \bigg(-\frac{1}{\sqrt{\omega_{c,X}^2+2c}}+\frac{1}{\omega_{c,X}^2\sqrt{\omega_{c,X}^{-2}+2c}} \bigg)c_0w\\
				&+\frac{\mathrm{i}}{3}{z^3} +\hat{\beta}_y z^2 -\mathrm{i} \alpha_y z +\mathrm{Err}\Bigg)
			\end{split}
		\end{equation*}
		where $\mathrm{Err}$ is equal to $m^{-1/2}w^3 R_m(w)+m^{-1/3}z^4 S_m(z)$ and both $R_m(w)$ and $S_m(z)$ tend to constants as $m$ tends to infinity for $|z|,|w| < m^\delta$.   We also have that 
		\begin{equation*}
			\frac{V_{\eps_1,\eps_2}(\omega_1,\omega_2)}{\omega_2-\omega_1}=\frac{V_{\eps_1,\eps_2}(\omega_{c,X},\mathrm{i})}{\mathrm{i}-\omega_{c,X}}+O(m^{-\frac{1}{3}}).
		\end{equation*}
		Substituting both of the two expansions back into \eqref{eq:prev:B}, along with the local change of variables, after extending the contours to infinity, we compute the Airy and Gaussian integrals with respect to $z$ and $w$ respectively.     We have that $\omega_{c,X}=\mathrm{i}+c_2\mathrm{i}\sqrt{X}$ from \cref{lem:asym:secondderclosesm}. In fact, from \eqref{lemproof:eq:secondderclosesm} with $\xi_1=\xi_2=\xi_c$, $t_{c,0}=1$, $t_{c,X}=c_2\sqrt{X}$ in that equation,  we get that $c_2=-\frac{(1-2c)^{3/4}}{\sqrt{c(1+c)}}$. Applying another Taylor series and computing the relevant derivatives gives
		\begin{equation*}
			\begin{split}
				& H_{x_1+1,x_2}(\omega_{c,X})= H_{x_1+1,x_2}(\mathrm{i}) \exp \Bigg(-2m \frac{2\mathrm{i}c(1+c)}{3(1-2c)^2} (\mathrm{i} c_2 \sqrt{X})^3 \\
				&-\frac{2 \mathrm{i}  \alpha_xm^{1/3} \lambda_1}{\sqrt{1-2c}}(\mathrm{i} c_2 \sqrt{X}) -\frac{2 c \hat{\beta}_x\lambda_2m^{2/3}}{(1-2c)^{3/2}}(\mathrm{i} c_2 \sqrt{X})^2 +o(1)\Bigg)
			\end{split}
		\end{equation*}
		as $m\to \infty$.  Plugging in the constants for $\lambda_1$, $\lambda_2$ and $c_2$ gives
		\begin{equation*}
			H_{x_1+1,x_2}(\omega_{c,X})= H_{x_1+1,x_2}(\mathrm{i}) \exp \big(-\frac{2}{3} \alpha_x^{3/2} + \hat{\beta}_x \alpha_x \big).
		\end{equation*}
		We also have the coefficient term of 
		\begin{equation*}
			\begin{split}
				&\frac{V_{\eps_1,\eps_2}(\omega_{c,X},\mathrm{i})}{\mathrm{i}-\omega_{c,X}} \frac{c_0 m^{-5/6}}{(2\pi)^{3/2}\sqrt{-g_{\xi_c+X,\xi_c+X}''(\omega_{c,X})}}\\
				&=\frac{V_{\eps_1,\eps_2}(\mathrm{i},\mathrm{i})}{-c_2 \mathrm{i}\sqrt{X}} \frac{c_0m^{-5/6}}{(2 \pi)^{3/2}\sqrt{\frac{4c(c+1)}{(1-2c)^2}c_2}X^{1/4}}+o(1)\\
				&=\frac{V_{\eps_1,\eps_2}(\mathrm{i},\mathrm{i}) c_0m^{-1/3}2^{3/4}}{-c_2\mathrm{i}(2 \pi)^{3/2}\sqrt{\frac{4c(c+1)}{(1-2c)^2}c_2}\alpha^{3/4}\lambda_1^{3/4}}+o(1)\\
			\end{split}
		\end{equation*}
		Multiplying the above two equations, expanding out the constants and multiplying the omitted coefficients gives the same decay and coefficients as the third equation of \cref{thm:previousRSasymptotics} when $C<\alpha_x <\log^3m$ as required.  
	\end{proof}

	\begin{appendix}
		\label{A:limit-shape-curve}

		\section{Limit Shape Curve}
		
		The limit shape curve $\xi$ of the two-periodic Aztec diamond with corners $(-1,-1)$, $(1,-1)$, $(1,1)$ and $(-1,1)$ with $c=a/(1+a^2)$ is given by the equation
		\begin{equation}\label{eq:asymp:limitshape}
			\begin{split} &64 c^6 \left(-1+\xi _1^2\right) \left(-1+\xi _2^2\right)-\left(\xi _1^4+\left(-1+\xi _2^2\right){}^2-2 \xi _1^2 \left(1+\xi _2^2\right)\right){}^2\\ &
				-16 c^4 \left(3 \left(-1+\xi _2^2\right){}^2+\xi _1^2 \left(-6+27 \xi _2^2-20 \xi _2^4\right)+\xi _1^4 \left(3-20 \xi _2^2+16 \xi _2^4\right)\right) \\
				&
				-4 c^2 \left(3 \left(-1+\xi _2^2\right){}^3+\xi _1^6 \left(3+8 \xi _2^2\right)+\xi _1^4 \left(-9+13 \xi _2^2-16 \xi _2^4\right)\right. \\
				&\left.+\xi _1^2 \left(9-30 \xi _2^2+13 \xi _2^4+8 \xi _2^6\right)\right) =0.
			\end{split}
		\end{equation}
		The set of solutions of the above equation has two connected components, which are given by two simple closed curves surrounding the origin. The inner of these two curves is the \textit{rough-smooth limit curve}, and the out curve is the \textit{frozen-rough limit curve}. Only the rough-smooth limit curve is relevant in our setting.

		\section{The formula for $Y_{\gamma_1,\gamma_2}^{\varepsilon_1,\varepsilon_2}$ }\label{sec:appendix:Y}
		Let
		\begin{equation*}
			\begin{split}
				f_{a,b}(u,v)=&
				\left(2 a^2 u v+2 b^2 u v-a b \left(-1+u^2\right) \left(-1+v^2\right)\right)\\&\times \left(2 a^2 u v+2 b^2 u v+a b \left(-1+u^2\right) \left(-1+v^2\right)\right).
			\end{split}
		\end{equation*}
		Define the following rational functions:
        \begin{align*}
            \mathtt{y}_{0,0}^{0,0}(a,b,u,v) &=  \frac{1}{4 \left(a^2+b^2\right)^2f_{a,b}(u,v)}\\
            &\times \Big(
				2 a^7 u^2 v^2-a^5 b^2 \left(1+u^4+u^2 v^2-u^4 v^2+v^4-u^2 v^4\right) \\
				&-a^3 b^4 \left(1+3 u^2+3 v^2+2 u^2 v^2+u^4 v^2+u^2 v^4-u^4 v^4\right) \\
                &-a b^6 \left(1+v^2+u^2 +3u^2 v^2\right)
				\Big),\\
                \mathtt{y}_{0,1}^{0,0}(a,b,u,v)&= \frac{a \left(b^2+a^2 u^2\right) \left(2 a^2 v^2+b^2 \left(1+v^2-u^2+u^2 v^2\right)\right)}{4 \left(a^2+b^2\right)f_{a,b}(u,v)}, \\
                \mathtt{y}_{1,0}^{0,0}(a,b,u,v)&= \frac{a \left(b^2+a^2 v^2\right) \left(2 a^2 u^2+b^2 \left(1-v^2+u^2+u^2 v^2\right)\right)}{4 \left(a^2+b^2\right)f_{a,b}(u,v)}, \\
                \mathtt{y}_{1,1}^{0,0}(a,b,u,v)&= \frac{ a \left(2 a^2 u^2 v^2+b^2 \left(-1+v^2+u^2+u^2 v^2\right)\right) }{4 f_{a,b}(u,v)}.\\
        \end{align*}
		For $i,j \in \{0,1\}$,  define $\mathtt{y}^{0,1}_{i,j}(a,b,u,v)$, $\mathtt{y}^{1,0}_{i,j}(a,b,u,v)$ and $\mathtt{y}^{1,1}_{i,j}(a,b,u,v)$ by
		\begin{equation*}
        \begin{split}
			\mathtt{y}^{0,1}_{i,j}(a,b,u,v)&=\frac{\mathtt{y}^{0,0}_{i,j}(b,a,u,v^{-1})}{v^2}, \\
			\mathtt{y}^{1,0}_{i,j}(a,b,u,v)&=\frac{\mathtt{y}^{0,0}_{i,j}(b,a,u^{-1},v)}{u^2}, \\
			\mathtt{y}^{1,1}_{i,j}(a,b,u,v)&=\frac{\mathtt{y}^{0,0}_{i,j}(a,b,u^{-1},v^{-1})}{u^2v^2}.
            \end{split}
		\end{equation*}
		When $b=1$, we write $y^{\eps_1,\eps_2}_{i,j}(a,1,u,v)=y^{\eps_1,\eps_2}_{i,j}(u,v)$.
		We then have
		\begin{equation}
			\label{asfo:eq:Y}
			\begin{split}
				&Y^{\eps_1,\eps_2}_{\gamma_1,\gamma_2}(u_1,u_2)= Y^{\eps_1,\eps_2}_{\gamma_1,\gamma_2}(a,u_1,u_2)=(1+a^2)^2  \\
				&\times (-1)^{\eps_1 \eps_2+\gamma_1(1 +\eps_2)+\gamma_2(1+\eps_1)}
				s(-\mathrm{i}u_1)^{\gamma_1} s(-\mathrm{i} u_2)^{\gamma_2} \mathtt{y}_{\gamma_1,\gamma_2}^{\eps_1,\eps_2} (-\mathrm{i}u_1 ,- \mathrm{i} u_2)u_1^{\eps_1} u_2^{\eps_2},
			\end{split}
		\end{equation}
		where $s$ is defined in~\eqref{eq:prev:mu_and_s}.

	\end{appendix}

	\bibliographystyle{alpha}
	\bibliography{Biblio}
\end{document}